\DeclareMathAlphabet{\mathpzc}{OT1}{pzc}{m}{it}
\DeclareMathAlphabet{\mathpzc}{OT1}{pzc}{m}{it}
\newcounter{oldtocdepth}
\let\emptyset\varnothing
\theoremstyle{definition}
\newtheorem{Definition}{Definition}[subsection]
\newtheorem{Example}[Definition]{Example}
\newtheorem{Proposition-Definition}[Definition]{Proposition-Definition}
\newtheorem{Remark}[Definition]{Remark}
\newtheorem*{Remark*}{Remark}
\newtheorem{Theorem}[Definition]{Theorem}
\newtheorem{Lemma}[Definition]{Lemma}
\newtheorem{Proposition}[Definition]{Proposition}
\newtheorem{Corollary}[Definition]{Corollary}
\newtheorem{Theorem*}{Theorem}[]
\newtheorem*{hyp}{Hypothesis}
\DeclareMathOperator\A{\mathbf{A}}
\DeclareMathOperator\B{\mathbf{B}}
\DeclareMathOperator\C{\mathbf{C}}
\DeclareMathOperator\F{\mathbf{F}\!}
\DeclareMathOperator\Q{\mathbf{Q}}
\DeclareMathOperator\R{\mathbf{R}}
\DeclareMathOperator\T{\mathbf{T}}
\DeclareMathOperator\V{\mathbf{V}}
\DeclareMathOperator\Z{\mathbf{Z}}
\DeclareMathOperator\OC{\mathbf{O\!C}}
\DeclareMathOperator\bfu{\mathbf{u}}
\DeclareMathOperator\bfits{\textbf{\textit{s}}}
\DeclareMathOperator\bfitu{\textbf{\textit{u}}}
\DeclareMathOperator\bfitx{\textbf{\textit{x}}}
\DeclareMathOperator\bfitz{\textbf{\textit{z}}}
\DeclareMathOperator\bfitm{\textbf{\textit{m}}}
\DeclareMathOperator\bbA{\mathbb{A}}
\DeclareMathOperator\bbB{\mathbb{B}}
\DeclareMathOperator\bbD{\mathbb{D}}
\DeclareMathOperator\bbG{\mathbb{G}}
\DeclareMathOperator\bbH{\mathbb{H}}
\DeclareMathOperator\bbT{\mathbb{T}}
\DeclareMathOperator\calA{\mathcal{A}}
\DeclareMathOperator\calB{\mathcal{B}}
\DeclareMathOperator\calC{\mathcal{C}}
\DeclareMathOperator\calE{\mathcal{E}}
\DeclareMathOperator\calG{\mathcal{G}}
\DeclareMathOperator\calH{\mathcal{H}}
\DeclareMathOperator\calM{\mathcal{M}}
\DeclareMathOperator\calO{\mathcal{O}}
\DeclareMathOperator\calS{\mathcal{S}}
\DeclareMathOperator\calT{\mathcal{T}}
\DeclareMathOperator\calU{\mathcal{U}}
\DeclareMathOperator\calV{\mathcal{V}}
\DeclareMathOperator\calW{\mathcal{W}}
\DeclareMathOperator\calX{\mathcal{X}}
\DeclareMathOperator\calY{\mathcal{Y}}
\DeclareMathOperator\calZ{\mathcal{Z}}
\DeclareMathOperator\scrC{\mathscr{C}}
\DeclareMathOperator\scrD{\mathscr{D}}
\DeclareMathOperator\scrE{\mathscr{E}}
\DeclareMathOperator\scrF{\mathscr{F}}
\DeclareMathOperator\scrG{\mathscr{G}}
\DeclareMathOperator\scrH{\mathscr{H}}
\DeclareMathOperator\scrI{\mathscr{I}}
\DeclareMathOperator\scrK{\mathscr{K}}
\DeclareMathOperator\scrM{\mathscr{M}}
\DeclareMathOperator\scrO{\mathscr{O}}
\DeclareMathOperator\scrS{\mathscr{S}}
\DeclareMathOperator\scrT{\mathscr{T}}
\DeclareMathOperator\scrV{\mathscr{V}}
\DeclareMathOperator\scrW{\mathscr{W}}
\DeclareMathOperator\frakC{\mathfrak{C}}
\DeclareMathOperator\frakG{\mathfrak{G}}
\DeclareMathOperator\frakS{\mathfrak{S}}
\DeclareMathOperator\frakX{\mathfrak{X}}
\DeclareMathOperator\frakY{\mathfrak{Y}}
\DeclareMathOperator\frakZ{\mathfrak{Z}}
\DeclareMathOperator\fraka{\mathfrak{a}}
\DeclareMathOperator\frakm{\mathfrak{m}}
\DeclareMathOperator\fraks{\mathfrak{s}}
\DeclareMathOperator\frakz{\mathfrak{z}}
\DeclareMathOperator\GL{GL}
\DeclareMathOperator\GSp{GSp}
\DeclareMathOperator\Aut{Aut}
\DeclareMathOperator\End{End}
\DeclareMathOperator\Hom{Hom}
\DeclareMathOperator\sheafIm{\scrI\!\!\!\textit{m}}
\DeclareMathOperator\sheafKer{\scrK\!\!\!\!\textit{er}}
\DeclareMathOperator\sheafOC{\scrO\!\!\scrC}
\DeclareMathOperator\sheafOD{\scrO\!\!\scrD}
\DeclareMathOperator\sheafOV{\scrO\!\!\scrV}
\DeclareMathOperator\adicFL{\mathcal{F}\!\mathbf{\ell}}
\DeclareMathOperator\adicIW{\mathcal{IW}}
\DeclareMathOperator\AIP{AIP}
\DeclareMathOperator\AL{AL}
\DeclareMathOperator\alg{alg}
\DeclareMathOperator\an{an}
\DeclareMathOperator\can{can}
\DeclareMathOperator\cl{cl}
\DeclareMathOperator\Cov{Cov}
\DeclareMathOperator\cts{cts}
\DeclareMathOperator\cusp{cusp}
\DeclareMathOperator\diag{diag}
\DeclareMathOperator\ES{ES}
\DeclareMathOperator\EScal{\mathcal{E}\!\mathcal{S}}
\DeclareMathOperator\et{\textrm{\'{e}t}}
\DeclareMathOperator\fet{\textrm{f\'{e}t}}
\DeclareMathOperator\Fil{Fil}
\DeclareMathOperator\fket{\textrm{fk\'{e}t}}
\DeclareMathOperator\Fl{Fl}
\DeclareMathOperator\Gal{Gal}
\DeclareMathOperator\Ha{Ha}
\DeclareMathOperator\hst{hst}
\DeclareMathOperator\HT{HT}
\DeclareMathOperator\id{id}
\DeclareMathOperator\image{image}
\DeclareMathOperator\Isom{Isom}
\DeclareMathOperator\Iw{Iw}
\DeclareMathOperator\ket{\textrm{k\'{e}t}}
\DeclareMathOperator\Lie{Lie}
\DeclareMathOperator\mf{mf}
\DeclareMathOperator\oc{oc}
\DeclareMathOperator\one{\mathbbm{1}}
\DeclareMathOperator\oneanti{\breve{\one}}
\DeclareMathOperator\opp{opp}
\DeclareMathOperator\Par{par}
\DeclareMathOperator\pr{pr}
\DeclareMathOperator\proet{\textrm{pro\'{e}t}}
\DeclareMathOperator\proket{\textrm{prok\'{e}t}}
\DeclareMathOperator\rig{rig}
\DeclareMathOperator\Res{Res}
\DeclareMathOperator\Sen{Sen}
\DeclareMathOperator\Siegel{Siegel}
\DeclareMathOperator\Spa{Spa}
\DeclareMathOperator\Spec{Spec}
\DeclareMathOperator\Spf{Spf}
\DeclareMathOperator\Sym{Sym}
\DeclareMathOperator\std{std}
\DeclareMathOperator\tol{tol}
\DeclareMathOperator\Tr{Tr}
\DeclareMathOperator\trans{^{\mathtt{t}}\!}
\DeclareMathOperator\univ{univ}
\DeclareMathOperator\wt{wt}
\DeclareMathOperator\Alg{\textbf{\textsc{Alg}}}
\DeclareMathOperator\Groups{\textbf{\textsc{Group}}}
\DeclareMathOperator\Sets{\textbf{\textsc{Sets}}}
\DeclareMathOperator\FSets{\textbf{\textsc{FSets}}}
\DeclareMathOperator\Mod{\textbf{\textsc{Mod}}}
\DeclareMathOperator\bfalpha{\boldsymbol{\alpha}}
\DeclareMathOperator\bfbeta{\boldsymbol{\beta}}
\DeclareMathOperator\bfgamma{\boldsymbol{\gamma}}
\DeclareMathOperator\bfdelta{\boldsymbol{\delta}}
\DeclareMathOperator\bflambda{\boldsymbol{\lambda}}
\DeclareMathOperator\bfnu{\boldsymbol{\nu}}
\DeclareMathOperator\bfsigma{\boldsymbol{\sigma}}
\DeclareMathOperator\bftau{\boldsymbol{\tau}}
\DeclareMathOperator\bfupsilon{\boldsymbol{\upsilon}}
\DeclareMathOperator\llbrack{\![\![\!}
\DeclareMathOperator\rrbrack{\!]\!]}
\DeclareMathOperator\bla{\boldsymbol{\langle}}
\DeclareMathOperator\bra{\boldsymbol{\rangle}}
\title{Perfectoid overconvergent Siegel modular forms and the overconvergent Eichler--Shimura morphism}
\author{Hansheng Diao, Giovanni Rosso, and Ju-Feng Wu}
\date{}
\begin{document}

\begin{abstract}
The aim of this paper is twofold. We first present a construction of the overconvergent automorphic sheaves for Siegel modular forms by generalising the perfectoid method, originally introduced by Chojecki--Hansen--Johansson for automorphic forms on compact Shimura curves over $\Q$. The global sections of these automorphic sheaves are precisely the overconvergent Siegel modular forms. In particular, one can compare these automorphic sheaves with the ones constructed by Andreatta--Iovita--Pilloni. Secondly, we establish an (explicit) overconvergent Eichler--Shimura morphism for Siegel modular forms, generalising the result of Andreatta--Iovita--Stevens for the elliptic modular forms. 
\end{abstract}

\maketitle
\thispagestyle{empty}
\tableofcontents

\section{Introduction}
A classical result, due to M. Eichler and G. Shimura, states that the first cohomology of the complex modular curve with coefficients in $\mathrm{Sym}^{k-2}\Q^2$, after scalar extension to $\C$, admits a Hecke-equivariant decomposition as the direct sum of the space of weight $k$ holomorphic modular forms and the space of weight $k$ anti-holomorphic cusp forms.

G. Faltings establishes a $p$-adic analogue of the Eichler--Shimura decomposition. Let $p$ be a prime number and let $\C_p$ be the completion of a fixed algebraic closure $\overline{\Q}_p$ of $\Q_p$. Suppose $X$ is the modular curve (of some tame level $N$) over $\Q_p$ and let $\overline{X}$ be its compactification. Let $\pi: E\rightarrow \overline{X}$ be the universal generalised elliptic curve with identity section $e$ and let $\underline{\omega}:=e^*\Omega^1_{E/\overline{X}}$. In \cite{FaltingsHT}, Faltings constructs a Hecke- and $\mathrm{Gal}(\overline{\Q}_p/\Q_p)$-equivariant decomposition
\[
H_{\et}^1(X_{\overline{\Q}_p},\mathrm{Sym}^{k}\Q_p^2)\otimes_{\Q_p} \C_p(1) \simeq \big(H^0(\overline{X},\underline{\omega}^{k+2})\otimes_{\Q_p} \C_p\big) \oplus \big(H^1(\overline{X},\underline{\omega}^{-k})\otimes_{\Q_p} \C_p (k+1)\big) 
\]
where the Galois actions on the coherent cohomology groups are trivial. 

An analogue of Faltings' result for overconvergent modular forms is established by F. Andreatta, A. Iovita and G. Stevens in \cite{AIS-2015} and later extended to the case of compact Shimura curves over $\Q$ by P. Chojecki, D. Hansen and C. Johansson in \cite{CHJ-2017}. The novelty of the second work consists of a ``perfectoid construction'' of families of overconvergent automorphic forms as well as the usage of the pro-\'etale site. The same method is also adapted to the cases of elliptic and Hilbert modular forms in \cite{BHW-2019}.

There are two main goals in this paper. We first construct the automorphic sheaves for overconvergent Siegel modular forms (of genus $g$) using the perfectoid method. Secondly, we establish an overconvergent Eichler--Shimura morphism for Siegel modular forms, generalising the results of \cite{AIS-2015} and \cite{CHJ-2017}.

\subsection{Overconvergent automorphic sheaves}

Compared with the aforementioned works, one of the new ingredients in this paper is the {\it toroidally compactified} perfectoid Siegel modular variety $\overline{\calX}_{\Gamma(p^{\infty})}$ at the infinite level studied in \cite{Pilloni-Stroh-CoherentCohomologyandGaloisRepresentations}. This perfectoid space is equipped with the \emph{Hodge-Tate period map} $\pi_{\HT}: \overline{\calX}_{\Gamma(p^{\infty})}\rightarrow \adicFL$ where $\adicFL$ is the flag variety parameterising maximal lagrangian subspaces of a fixed symplectic space of rank $2g$. We also consider the (toroidally compactified) Siegel modular variety $\overline{\calX}_{\Iw^+}$ at the \emph{strict Iwahori level} (see Definition \ref{Definition: Siegel modular varieties of (strict) Iwahoris level} for details). The strict Iwahori level here is a certain deeper level compared with the usual Iwahori level.\footnote{ The use of the strict Iwahori level is due to a technicality in our construction of the overconvergent Eichler--Shimura morphisms. Our discussions in \S \ref{section:constructionsheaf} and \S \ref{section:overconvergentcohomologies} can also be carried out using the usual Iwahori level. } Moreover, in order to investigate the overconvergent Siegel modular forms, we consider certain open subspaces $\overline{\calX}_{\Gamma(p^{\infty}), w}$ and $\overline{\calX}_{\Iw^+, w}$ of $\overline{\calX}_{\Gamma(p^{\infty})}$ and $\overline{\calX}_{\Iw^+}$, respectively. They are strict neighborhoods of the usual ordinary loci and are referred to as the \emph{$w$-ordinary loci} (see Definition \ref{Definition: w-ordinary}).

We briefly describe our construction of the overconvergent automorphic sheaves. Let $(R_{\calU}, \kappa_{\calU})$ be a weight (see Definition \ref{Definition: weights}) and let $w\in \Q_{>0}$ such that $w>1+r_{\calU}$ for some integer $r_{\calU}$ defined in Definition \ref{Definition: w-analytic weight}. One can think of $(R_{\calU}, \kappa_{\calU})$ as a family of $p$-adic weights. Given these data, we construct a sheaf $\underline{\omega}_w^{\kappa_{\calU}}$ over $\overline{\calX}_{\Iw^+, w}$ whose global sections are precisely the $w$-overconvergent Siegel modular forms of strict Iwahori level and weight $\kappa_{\calU}$. The construction goes as follows: consider the natural projection $\overline{\calX}_{\Gamma(p^{\infty})}\rightarrow \overline{\calX}_{\Iw^+}$ which is a Galois cover with Galois group equal to the strict Iwahori subgroup $\Iw_{\GSp_{2g}}^+$ of $\GSp_{2g}(\Z_p)$, then the sections of $\underline{\omega}_w^{\kappa_{\calU}}$ consist of functions $f$ on $\overline{\calX}_{\Gamma(p^{\infty}),w}$ which 
\begin{enumerate}
    \item[$\bullet$] take value in a certain analytic representation $C_{\kappa_{\calU}}^{w-\an}(\Iw_{\GL_g}, \C_p\widehat{\otimes}R_{\calU})$ of the Iwahori subgroup $\Iw_{\GL_g}$ of $\GL_g(\Z_p)$, and
    \item[$\bullet$] satisfy the following condition with respect to the action of $\Iw_{\GSp_{2g}}^+$:
    \[
        \bfgamma^* f=\rho_{\kappa_{\calU}}(\bfgamma_a+\frakz\bfgamma_c)^{-1}f\qquad\text{for any} \qquad\bfgamma=\begin{pmatrix}\bfgamma_a & \bfgamma_b\\ \bfgamma_c & \bfgamma_d\end{pmatrix}\in \Iw^+_{\GSp_{2g}},
    \]
    where $\frakz$ stands for the pullback of the coordinate function on the flag variety $\adicFL$ along $\pi_{\HT}$ and  $\rho_{\kappa_{\calU}}(\bfgamma_a+\frakz\bfgamma_c)$ stands for a certain automorphism on $C_{\kappa_{\calU}}^{w-\an}(\Iw_{\GL_g}, \C_p\widehat{\otimes}R_{\calU})$ (see Definition \ref{Definition: strict Iwahori action on w-analytic representation for IwGL_g}). 
\end{enumerate}

When $p>2g$, we show that $\underline{\omega}_w^{\kappa_{\calU}}$ coincides with (the pullback to the strict Iwahori level of) the automorphic sheaf constructed in \cite{AIP-2015}. See \S \ref{section:constructionsheaf} for a complete story.

\begin{Remark}\label{Remark: analogy to the complex version}
\normalfont Our definition yields a strong analogy to the complex theory of classical Siegel modular forms!

Suppose $k=(k_1, \ldots, k_g)\in \Z_{\geq 0}^g$ is a dominant weight for $\GL_g$ and let $\rho_k:\GL_g(\C)\rightarrow \GL(\V_k)$ be the corresponding irreducible representation of $\GL_g$ of highest weight $k$. Let $\bbH_g^+$ be the (complex) Siegel upper-half space. Then a classical Siegel modular form of weight $k$ and level $\Gamma$ is a holomorphic function $f:\bbH_g^+\rightarrow \V_k$ such that
$$f(\bfgamma\cdot \bfitx)=\rho_k(\bfgamma_c\bfitx+\bfgamma_d)f(\bfitx)$$
for all $\bfitx\in \bbH_g^+$ and $\bfgamma=\begin{pmatrix}\bfgamma_a & \bfgamma_b\\ \bfgamma_c & \bfgamma_d\end{pmatrix}\in \Gamma\subset \GSp_{2g}(\Z)$.

In our case, a $w$-overconvergent Siegel modular form $f$ can be viewed as a function 
$$f: \overline{\calX}_{\Gamma(p^{\infty}), w}\rightarrow C^{w-\an}_{\kappa_{\calU}}(\Iw_{\GL_g}, \C_p\widehat{\otimes}R_{\calU})$$ 
satisfying $$f(\bfitx\cdot \bfgamma)=\rho_{\kappa_{\calU}}(\bfgamma_a + \frakz\bfgamma_c)^{-1}f(\bfitx)$$
for all $\bfitx\in \overline{\calX}_{\Gamma(p^{\infty}), w}$ and $\bfgamma=\begin{pmatrix}\bfgamma_a & \bfgamma_b\\ \bfgamma_c & \bfgamma_d\end{pmatrix}\in \Iw_{\GSp_{2g}}^+\subset \GSp_{2g}(\Z_p)$. Notice that $C^{w-\an}_{\kappa_{\calU}}(\Iw_{\GL_g}, \C_p\widehat{\otimes}R_{\calU})$ is an analytic analogue of the algebraic representation $\V_k$ and $\rho_{\kappa_{\calU}}(\bfgamma_a + \frakz\bfgamma_c)^{-1}$ is an analogue of the automorphy factor $\rho_k(\bfgamma_c\bfitx+\bfgamma_d)$.

In fact, it is possible to modify $\rho_{\kappa_{\calU}}(\bfgamma_a + \frakz\bfgamma_c)^{-1}$ into $\rho_{\kappa_{\calU}}(\bfgamma_c \frakz+\bfgamma_d)$, which yields a perfect match with the classical theory, if we choose to work with the left action of $\GSp_{2g}$ on the Siegel modular variety. However, we eventually work with the right action in order to be compatible with  popular literature such as \cite{Boxer--Pilloni--higherColeman}.
\end{Remark}

\subsection{The (explicit) overconvergent Eichler--Shimura morphisms}

The second goal of the paper is to construct the overconvergent Eichler--Shimura morphisms which relates the so-called overconvergent cohomology groups with the space of overconvergent modular forms.

To describe the overconvergent cohomology groups, we restrict our attention to a \emph{small} weight $(R_{\calU}, \kappa_{\calU})$ in the sense of Definition \ref{Definition: weights}. For such a weight, we consider a space $D^r_{\kappa}(\T_0, R_{\calU})$ of analytic distributions on a certain $p$-adic manifold $\T_0$. (See \S \ref{subsection:continuousfunctions} for the precise definitions.) This space of distributions is equipped with a natural action of the strict Iwahori subgroup and hence gives rise to a sheaf $\scrD_{\kappa_{\calU}}^r$ on the Siegel modular variety $\overline{\calX}_{\Iw^+}$. The cohomology groups of $\scrD_{\kappa_{\calU}}^r$ are referred as the \emph{overconvergent cohomology groups}, also known as the \emph{overconvergent modular symbols} studied by A. Ash and G. Stevens, among others (see for example \cite{Ash-Stevens, Hansen-PhD, Johansson-Newton}).

The goal is to construct a natural morphism from an overconvergent cohomology group to the space of overconvergent Siegel modular forms. The morphism should be compatible with Eichler--Shimura morphisms at classical weights and should interpolate in $p$-adic families. To this end, we compute both objects on the so-called \emph{pro-Kummer \'etale site}. Our situation is very different from previous works in the literature, such as \cite{CHJ-2017}, where the authors work exclusively with compact Shimura curves which have no \emph{boundary issues}. The presence of the boundaries in the case of Siegel modular varieties introduces several technical difficulties. Nonetheless, we obtain two sheaves $\sheafOD_{\kappa_{\calU}}^r$ and $\widehat{\underline{\omega}}_{w}^{\kappa_{\calU}}$ on the pro-Kummer \'etale site, coming from the overconvergent cohomology group and the automorphic sheaf, respectively.

The final and the key step is in \S \ref{subsection: OES} where we \emph{explicitly} construct a morphism of sheaves $\sheafOD_{\kappa_{\calU}}^r\rightarrow \widehat{\underline{\omega}}_{w}^{\kappa_{\calU}}$. This morphism can be viewed as an analytic analogue of the ``projection onto the highest weight vector'' and it is the main novelty of the paper compared with other works in the literature. We remark that the construction involves taking transposes of matrices while the usual Iwahori subgroup is not closed under taking transposes. This is why we have to work with a smaller subgroup---the strict Iwahori subgroup---as a compromise.

Putting everything together, we obtain the \emph{overconvergent Eichler--Shimura morphism} \[
    \ES_{\kappa_{\calU}}: H_{\proket}^{n_0}(\overline{\calX}_{\Iw^+}, \sheafOD_{\kappa_{\calU}}^r) \rightarrow H^0(\overline{\calX}_{\Iw^+, w}, \,\underline{\omega}_w^{\kappa_{\calU}+g+1})(-n_0),
\] where $n_0 = \dim_{\C_p} \overline{\calX}_{\Iw^+}$ and ``$(-n_0)$'' stands for the Tate twist. It is a Hecke- and Galois-equivariant morphism from the ($p$-adic family of) overconvergent cohomology groups to the ($p$-adic family of) overconvergent Siegel modular forms.

We list some properties of this map. First of all, it is compatible with base changes on the weights. Secondly, we are able to control its image when specialising to a dominant algebraic weight $k\in \Z_{\geq 0}^{g}$. In particular, we show in Theorem \ref{thm:imageclassicalweight} that the image of $\ES_{k}$ is contained in the space of classical algebraic Siegel modular forms. The proof uses the fact that when the weight is a dominant algebraic weight, the ``highest weight vector'' is an algebraic function. Lastly, we conclude the paper by showing that $\ES_{\kappa_{\calU}}$ can be glued to a morphism of sheaves on a suitable cuspidal eigenvariety.

A very interesting by-product of the Eichler--Shimura morphisms is an explicit construction of the Galois representations associated with overconvergent Siegel modular forms. In Theorem \ref{thm:GaloisRepresentations}, under certain hypotheses on the eigenvariety and the system of Hecke-eigenvalues, we show that the middle degree cohomology of $\sheafOD_{\kappa_{\calU}}^r$ realises the $2^g$-dimensional Galois representations associated with (a $p$-adic family of) overconvergent Siegel modular forms. These Galois representations are usually obtained using deformation of pseudo-representations. However, a direct construction like ours is more useful for arithmetic applications, for example in the construction of $p$-adic $L$-functions as in \cite{LoefflerZerbesColeman}.

There are many things we don't do: we don't control the cokernel of the map $\ES_{\kappa_{\calU}}$ in general and we do not investigate the kernel of this map at all. A natural expectation is to construct a full filtration of $H_{\proket}^{n_0}(\overline{\calX}_{\Iw^+}, \sheafOD_{\kappa_{\calU}}^r) $ using higher Coleman theory, recently developed by G. Boxer and V. Pilloni \cite{Boxer--Pilloni--higherColeman}, in terms of information on suitable strata of $\overline{\calX}_{\Iw^+}$. One result in this direction is recently announced by J. E. Rodr\'iguez Camargo (\cite{Rodriguez-BGG}) in terms of $p$-adic completed cohomology groups. Comparing with Camargo's work, our construction using overconvergent cohomology groups has the advantage that everything is explicit, which is useful for computational purpose. For example, in the case of $g=1$, an explicit overconvergent Eichler--Shimura morphism is used to deduce a complete proof of the Halo conjecture in \cite{DY-2023}.

We also remark that while preparing this paper, Andreatta and Iovita have posted the article \cite{AI-2020}, in which they upgrade their previous work \cite{AIS-2015} to an ``overconvergent de Rham Eichler--Shimura morphism'', meaning that their Eichler--Shimura map has as source the overconvergent cohomology groups tensored with $\bbB_{\mathrm{dR}}$. They have also announced upcoming work concerning this type of de Rham Eichler--Shimura morphisms for overconvergent Siegel modular forms of genus $2$. This suggests that the results in this paper can be upgraded to investigate finer $p$-adic Hodge theoretic properties of overconvergent cohomology groups; for example, the construction of de Rham (or even crystalline) periods in $p$-adic families. We shall leave this to future studies.

\subsection{Plan of the paper}

The paper is organised as follows. In \S \ref{section:PerfectoidSMV}, we define the main geometric objects of interest, including the Siegel modular varieties for various level structures, the perfectoid Siegel modular variety at infinite level, the flag variety, and the Hodge--Tate period map. The next section, \S \ref{section:constructionsheaf}, contributes to the construction of the overconvergent automorphic sheaves. In particular, when $p>2g$, we show that our construction coincides with the (pullback to the strict Iwahori level of the) automorphic sheaves of Andreatta--Iovita--Pilloni. We warn the reader that \S \ref{subsection: admissibility}-\ref{subsection:comparison sheaf aip} is the most technical part of the paper. These subsections can be skipped on a first reading. In \S \ref{section:overconvergentcohomologies}, we study the space of analytic distributions and the overconvergent cohomology groups. Finally, in \S \ref{section:EichlerShimura} and \S \ref{section:oncuspidaleigenvariety}, we construct the Eichler--Shimura morphism as well as its upgraded version on the cuspidal eigenvariety. 

In \S \ref{Section: Kummer etale and pro-Kummer etale sites of log adic spaces}, we recall the basics of logarithmic adic spaces and their pro-Kummer \'etale topology following \cite{Diao}. Also in the same section, we include some technical calculations of the derived functor $R^i \nu_*$ and a generalised projection formula, both of which are used in the main text and could be of general interest and utility. In \S \ref{section: boundary}, we recall the basics of the toroidal compactifications of Siegel modular varieties. We also review the ``modified integral structures'' studied in \cite{Pilloni-Stroh-CoherentCohomologyandGaloisRepresentations} as well as the construction of the perfectoid Siegel modular variety at the infinite level.

\section*{Acknowledgement} 
The authors would like to express their gratitude to Przemy{\l}aw Chojecki, David Hansen, and Christian Johansson for discussions on the perfectoid construction of sheaves of families of Siegel modular forms. We also thank David Hansen, Adrian Iovita, James Newton, and Ruishen Zhao for pointing out inaccuracies in the early versions of this paper. G.R. would like to thank Daniel Barrera-Salazar and Riccardo Brasca. J.-F.W. would like to thank Ulrich G\"ortz for helpful conversations about toroidal compactifications of Siegel modular varieties and $p$-divisible groups. He would also like to thank Marc Levine for a succinct introduction on $1$-motives during a casual conversation. During the preparation of the article, H.D. was partially supported by the National Natural Science Foundation of China (Grant No. 12422101) and the National Key R{\&}D Program of China (Grant Nos. 2023YFA1009703 and 2021YFA1000704); G.R. and J.-F.W. were partially supported by the Natural Sciences and Engineering Research Council of Canada (NSERC), funding reference number 2018-04392, and by the Fonds de recherche du Qu\'ebec Nature and technologies (FRQNT), grant 2019-NC-254031. While revising the paper, J.-F.W. was supported by the  ERC Consolidator grant ``Shimura varieties and the BSD conjecture'' and Taighde \'{E}ireann -- Research Ireland under Grant number IRCLA/2023/849 (HighCritical).

\section*{Conventions and notations} 
Throughout this paper, we fix the following: \begin{enumerate}
    \item[$\bullet$] $g\in \Z_{\geq 1}$.
    \item[$\bullet$] $p\in \Z_{> 0}$ is an odd prime number. Due to a certain technicality, we will have to assume $p>2g$ at some places. Such an assumption shall be clear in the context. 
    \item[$\bullet$] $N\in \Z_{\geq 3}$ is an integer coprime to $p$.
    \item[$\bullet$] We fix once and forever an algebraic closure $\overline{\Q}_p$ of $\Q_p$ and an algebraic isomorphism $\C_p\simeq \C$, where $\C_p$ is the $p$-adic completion of $\overline{\Q}_p$. We write $G_{\Q_p}$ for the absolute Galois group $\Gal(\overline{\Q}_p/\Q_p)$. We also fix the $p$-adic absolute value on $\C_p$ so that $|p|=p^{-1}$.
    \item[$\bullet$] For any $w\in \Q_{>0}$, we denote by ``$p^w$'' an element in $\C_p$ with absolute value $p^{-w}$. All constructions in the paper will not depend on such choices.
    \item[$\bullet$] We adopt the language of almost mathematics. In particular, for an $\calO_{\C_p}$-module $M$, we denote by $M^a$ for the associated almost $\calO_{\C_p}$-module.
    \item[$\bullet$] For $n\in \Z_{\geq 1}$ and any set $R$, we denote by $M_n(R)$ the set of $n$ by $n$ matrices with coefficients in $R$.
    \item[$\bullet$] The transpose of a matrix $\bfalpha$ is denoted by $\trans\bfalpha$.
    \item[$\bullet$] For any $n\in \Z_{\geq 1}$, we denote by $\one_n$ the $n\times n$ identity matrix and denote by $\oneanti_n$ the $n\times n$ anti-diagonal matrix whose non-zero entries are $1$; \emph{i.e.,} $$\one_n=\begin{pmatrix} 1& & \\ & \ddots & \\ & &1\end{pmatrix}\quad\text{ and }\quad\oneanti_n=\begin{pmatrix} & & 1\\ & \iddots & \\ 1 & &\end{pmatrix}$$
    \item[$\bullet$] We use $\cong$ to denote canonical isomorphisms and $\simeq$ to denote non-canonical ones.
    \item[$\bullet$] In principle (except for \S \ref{Section: Kummer etale and pro-Kummer etale sites of log adic spaces}), symbols in Gothic font (\emph{e.g.} $\frakX, \frakY, \frakZ$) stand for formal schemes; symbols in calligraphic font (\emph{e.g.} $\calX, \calY, \calZ$) stand for adic spaces; and symbols in script font (\emph{e.g.} $\scrO, \scrF, \scrE$) stand for sheaves (over various geometric objects). 
\end{enumerate}
\section{Siegel modular varieties and the Hodge--Tate period map}\label{section:PerfectoidSMV}
In this section, we introduce the Siegel modular varieties for various level structures, viewed as adic spaces, as well as their toroidal compactifications. We also recall the perfectoid Siegel modular variety introduced in \cite{Pilloni-Stroh-CoherentCohomologyandGaloisRepresentations} together with the Hodge--Tate period map. The notion of perfectoid Siegel modular variety has its root in \cite{Scholze-2015}. However, we point out that the author of \textit{loc. cit.} only considers minimal compactifications while it is important for us to work with the toroidal compactifications.

\subsection{\texorpdfstring{Algebraic and $p$-adic groups}{Algebraic and p-adic groups}}\label{subsection: Algebraic and p-adic groups}
We start with a list of algebraic and $p$-adic groups that will appear repeatedly throughout the paper. 

Let $V:=\Z^{2g}$ equipped with an alternating pairing \footnote{We choose to work with $\begin{pmatrix} & -\oneanti_g\\ \oneanti_g & \end{pmatrix}$ instead of $\begin{pmatrix} & -\one_g\\ \one_g & \end{pmatrix}$ because we prefer to work with Borel subgroups that are strictly upper-triangular.} $$\bla\cdot, \cdot\bra:V\times V\rightarrow \Z, \quad (\vec{v}, \vec{v}')\mapsto \vec{v}\begin{pmatrix} & -\oneanti_g\\ \oneanti_g & \end{pmatrix}\trans\vec{v}',$$ where we view elements in $V$ as row vectors. Let $e_1, ..., e_{2g}$ be the standard basis for $V$ so that $\vec{v}=(a_1, \cdots, a_{2g})\in V$ corresponds to $a_1e_1+\cdots+a_{2g}e_{2g}$. Then we have $$\bla e_i, e_j\bra=\left\{\begin{array}{ll}
    -1, & \text{if }i<j\text{ and } j=2g+1-i\\
    1, & \text{if }i>j\text{ and }j=2g+1-i\\
    0, & \text{else}
\end{array}\right. .$$ The group $\GL_{2g}$ acts on $V$ via right multiplication. We define the algebraic group $\GSp_{2g}$ to be the subgroup of $\GL_{2g}$ that preserves this pairing up to a unit. In other words, for any ring $R$, $$\GSp_{2g}(R):=\left\{\bfgamma\in \GL_{2g}(R): \bfgamma \begin{pmatrix} & -\oneanti_g\\ \oneanti_g\end{pmatrix}\trans\bfgamma = \varsigma(\bfgamma)\begin{pmatrix} & -\oneanti_g\\ \oneanti_g\end{pmatrix}\text{ for some }\varsigma(\bfgamma)\in R^{\times}\right\}.$$ Equivalently, for any $\bfgamma=\begin{pmatrix}\bfgamma_a & \bfgamma_b\\ \bfgamma_c & \bfgamma_d\end{pmatrix}\in \GL_{2g}$, $\bfgamma\in \GSp_{2g}$ if and only if \begin{equation}\label{eq: relation of blocks for GSp}
    \trans\bfgamma_a\oneanti_g\bfgamma_c=\trans\bfgamma_c\oneanti_g\bfgamma_a, \quad \trans\bfgamma_b\oneanti_g\bfgamma_d=\trans\bfgamma_d\oneanti_g\bfgamma_b, \text{ and }\trans\bfgamma_a\oneanti_g\bfgamma_d-\trans\bfgamma_c\oneanti_g\bfgamma_b=\varsigma(\bfgamma)\oneanti_g
\end{equation}for some $\varsigma(\bfgamma)\in \bbG_m$.

Taking base change to $\Z_p$, we consider $V_p:=V\otimes_{\Z}\Z_p$, equipped with the induced alternating pairing
$$\bla\cdot, \cdot\bra:V_p\times V_p\rightarrow \Z_p, \quad (\vec{v}, \vec{v}')\mapsto \vec{v}\begin{pmatrix} & -\oneanti_g\\ \oneanti_g & \end{pmatrix}\trans\vec{v}'.$$ Let $e_1, \ldots, e_{2g}$ be the standard basis for $V_p$ and let $\Fil^{\std}_{\bullet}$ denote the standard increasing filtration on $V_p$ defined by $\Fil^{\std}_0=0$ and $$\Fil^{\std}_i=\langle e_1, \ldots, e_i\rangle,$$ for $i=1, \ldots, 2g$.

The algebraic and $p$-adic subgroups of $\GL_g$ and $\GSp_{2g}$ considered in the present paper are the following:\begin{enumerate}
    \item[$\bullet$] For every $m\in \Z_{\geq 1}$, we write $$\Gamma(p^m):=\ker\left(\GSp_{2g}(\Z_p)\xrightarrow{\mod p^m}\GSp_{2g}(\Z/p^m\Z)\right).$$

    \item[$\bullet$] The Borel subgroups are \begin{align*}
        B_{\GL_g} & := \text{the Borel subgroup of upper triangular matrices in $\GL_g$},\\
        B_{\GSp_{2g}} & := \text{the Borel subgroup of upper triangular matrices in $\GSp_{2g}$.}
    \end{align*}
    
    \item[$\bullet$] The corresponding unipotent radicals are \begin{align*}
        U_{\GL_g} & := \text{ the upper triangular $g\times g$ matrices whose diagonal entries are all $1$},\\
        U_{\GSp_{2g}} & := \text{ the upper triangular $2g\times 2g$ matrices in $\GSp_{2g}$ whose diagonal entries are all $1$}.
    \end{align*} 
    
    \item[$\bullet$] The corresponding maximal tori for $\GL_g$ and $\GSp_{2g}$ are the maximal tori of diagonal matrices, denoted by $T_{\GL_g}$ and $T_{\GSp_{2g}}$, respectively. The Levi decomposition then yields $$B_{\GL_g}=T_{\GL_g}U_{\GL_g}\quad \text{ and }\quad B_{\GSp_{2g}}=T_{\GSp_{2g}}U_{\GSp_{2g}}.$$
    
    \item[$\bullet$] Let $B_{\GL_g}^{\opp}$ and $B_{\GSp_{2g}}^{\opp}$ be the opposite Borel subgroups of $B_{\GL_g}$ and $B_{\GSp_{2g}}$, respectively. They consist of lower triangular matrices of the corresponding algebraic groups. Similarly, $U_{\GL_g}^{\opp}$ and $U_{\GSp_{2g}}^{\opp}$ stand for the opposite unipotent radicals of $U_{\GL_g}$ and $U_{\GSp_{2g}}$, respectively. 
    
    \item[$\bullet$] To simplify the notations, we write $$T_{\GL_g, 0}=T_{\GL_g}(\Z_p),\quad U_{\GL_g, 0}=U_{\GL_g}(\Z_p), \quad B_{\GL_g, 0}=B_{\GL_g}(\Z_p),$$
$$T_{\GSp_{2g}, 0}=T_{\GSp_{2g}}(\Z_p),\quad U_{\GSp_{2g}, 0}=U_{\GSp_{2g}}(\Z_p),\quad B_{\GSp_{2g}, 0}=B_{\GSp_{2g}}(\Z_p).$$ 

The subgroups $B^{\opp}_{\GL_g,0}$, $B^{\opp}_{\GSp_{2g},0}$, $U^{\opp}_{\GL_g,0}$, and $U^{\opp}_{\GSp_{2g},0}$ are defined similarly.
 
For every $s\in \Z_{\geq 1}$, define \begin{align*}
        T_{\GL_g, s}:=\ker(T_{\GL_g}(\Z_p)\rightarrow T_{\GL_g}(\Z/p^s\Z)), & \qquad  T_{\GSp_{2g}, s}:=\ker(T_{\GSp_{2g}}(\Z_p)\rightarrow T_{\GSp_{2g}}(\Z/p^s\Z)),\\
        U_{\GL_g, s}:=\ker(U_{\GL_g}(\Z_p)\rightarrow U_{\GL_g}(\Z/p^s\Z)), & \qquad U_{\GSp_{2g}, s}:=\ker(U_{\GSp_{2g}}(\Z_p)\rightarrow U_{\GSp_{2g}}(\Z/p^s\Z)),\\
        B_{\GL_g, s}:=\ker(B_{\GL_g}(\Z_p)\rightarrow B_{\GL_g}(\Z/p^s\Z)), & \qquad  B_{\GSp_{2g}, s}:=\ker(B_{\GSp_{2g}}(\Z_p)\rightarrow B_{\GSp_{2g}}(\Z/p^s\Z)),
    \end{align*} where all of the maps are reduction modulo $p^s$. 
    
    The subgroups $B^{\opp}_{\GL_g,s}$, $B^{\opp}_{\GSp_{2g},s}$, $U^{\opp}_{\GL_g,s}$, and $U^{\opp}_{\GSp_{2g},s}$ are defined similarly.
    
    \item[$\bullet$] The Iwahori subgroups of $\GL_g(\Z_p)$ and $\GSp_{2g}(\Z_p)$ are \begin{align*}
        \Iw_{\GL_g} & := \text{ the preimage of $B_{\GL_g}(\F_p)$ under the reduction map $\GL_g(\Z_p)\rightarrow \GL_g(\F_p)$},\\
        \Iw_{\GSp_{2g}} & :=\text{ the preimage of $B_{\GSp_{2g}}(\F_p)$ under the reduction map $\GSp_{2g}(\Z_p)\rightarrow \GSp_{2g}(\F_p)$}.
    \end{align*} The Iwahori decomposition yields $$\Iw_{\GL_g}= U_{\GL_g, 1}^{\opp}T_{\GL_g, 0}U_{\GL_g, 0}\quad \text{ and }\quad \Iw_{\GSp_{2g}} =  U_{\GSp_{2g}, 1}^{\opp}T_{\GSp_{2g}, 0}U_{\GSp_{2g}, 0}.$$
    
    \item[$\bullet$] We consider the \textbf{\textit{strict Iwahori subgroups}} of $\GL_g(\Z_p)$ and $\GSp_{2g}(\Z_p)$ defined as \begin{align*}
        \Iw_{\GL_g}^+ & := \text{ the preimage of $T_{\GL_g}(\F_p)$ under the reduction map $\GL_g(\Z_p)\rightarrow \GL_g(\F_p)$},\\
        \Iw_{\GSp_{2g}}^+ & := \text{ the preimage of $T_{\GSp_{2g}}(\F_p)$ under the reduction map $\GSp_{2g}(\Z_p) \rightarrow \GSp_{2g}(\F_p)$.}
    \end{align*} 
    
    Clearly, $\Iw_{\GL_g}^+\subset \Iw_{\GL_g}$ and $\Iw_{\GSp_{2g}}^+\subset \Iw_{\GSp_{2g}}$. Also observe that, for any $\bfgamma= \begin{pmatrix}\bfgamma_a & \bfgamma_b\\ \bfgamma_c & \bfgamma_d\end{pmatrix}\in \Iw_{\GSp_{2g}}^+$, we have $\bfgamma_a\in \Iw_{\GL_g}^+$. Moreover, the Iwahori decomposition induces decompositions 
    \[
    \Iw_{\GL_g}^+ = U_{\GL_g, 1}^{\opp}T_{\GL_g, 0}U_{\GL_g, 1} \quad \text{ and }\quad \Iw_{\GSp_{2g}}^+ = U_{\GSp_{2g}, 1}^{\opp} T_{\GSp_{2g}, 0} U_{\GSp_{2g}, 1}.
    \] 

    \item[$\bullet$] Finally, we introduce the notion of ``$w$-neighbourhood'' of some aforementioned $p$-adic groups. For any $w\in \Q_{>0}$, define
\[T^{(w)}_{\GL_g, 0}:=\left\{ \bflambda=(\bflambda_{ij})_{i,j}\in T_{\GL_g}(\calO_{\C_p}):|\bflambda_{ij}-\bflambda'_{ij}|\leq p^{-w}\,\,\textrm{for some }\bflambda'=(\bflambda'_{ij})_{i,j}\in T_{\GL_g, 0}\right\},\]
\[U^{(w)}_{\GL_g, 0}:=\left\{ \bflambda=(\bflambda_{ij})_{i,j}\in U_{\GL_g}(\calO_{\C_p}):|\bflambda_{ij}-\bflambda'_{ij}|\leq p^{-w}\,\,\textrm{for some }\bflambda'=(\bflambda'_{ij})_{i,j}\in U_{\GL_g, 0}\right\},\]
\[B^{(w)}_{\GL_g, 0}:=\left\{ \bflambda=(\bflambda_{ij})_{i,j}\in B_{\GL_g}(\calO_{\C_p}):|\bflambda_{ij}-\bflambda'_{ij}|\leq p^{-w}\,\,\textrm{for some }\bflambda'=(\bflambda'_{ij})_{i,j}\in B_{\GL_g, 0}\right\}.\]
The groups $B^{\opp, (w)}_{\GL_g, 0}$ and $U^{\opp, (w)}_{\GL_g, 0}$ are defined similarly.

For every $s\in \Z_{\geq 1}$, define
\[T^{(w)}_{\GL_g, s}:=\left\{ \bflambda=(\bflambda_{ij})_{i,j}\in T_{\GL_g}(\calO_{\C_p}):|\bflambda_{ij}-\bflambda'_{ij}|\leq p^{-w}\,\,\textrm{for some }\bflambda'=(\bflambda'_{ij})_{i,j}\in T_{\GL_g, s}\right\},\]
\[U^{(w)}_{\GL_g, s}:=\left\{ \bflambda=(\bflambda_{ij})_{i,j}\in U_{\GL_g}(\calO_{\C_p}):|\bflambda_{ij}-\bflambda'_{ij}|\leq p^{-w}\,\,\textrm{for some }\bflambda'=(\bflambda'_{ij})_{i,j}\in U_{\GL_g, s}\right\},\]
\[B^{(w)}_{\GL_g, s}:=\left\{ \bflambda=(\bflambda_{ij})_{i,j}\in B_{\GL_g}(\calO_{\C_p}):|\bflambda_{ij}-\bflambda'_{ij}|\leq p^{-w}\,\,\textrm{for some }\bflambda'=(\bflambda'_{ij})_{i,j}\in B_{\GL_g, s}\right\}.\]
The groups $U^{\opp, (w)}_{\GL_g, s}$ and $B^{\opp, (w)}_{\GL_g, s}$ are defined similarly.

Define
\[\Iw^{(w)}_{\GL_g}:=\left\{ \bflambda=(\bflambda_{ij})_{i,j}\in \GL_g(\calO_{\C_p}):|\bflambda_{ij}-\bflambda'_{ij}|\leq p^{-w}\,\,\textrm{for some }\bflambda'=(\bflambda'_{ij})_{i,j}\in \Iw_{\GL_g}\right\}\]
and
\[\Iw^{+, (w)}_{\GL_g}:=\left\{ \bflambda=(\bflambda_{ij})_{i,j}\in \GL_g(\calO_{\C_p}):|\bflambda_{ij}-\bflambda'_{ij}|\leq p^{-w}\,\,\textrm{for some }\bflambda'=(\bflambda'_{ij})_{i,j}\in \Iw^+_{\GL_g}\right\}.\]
The Iwahori decomposition induces
$$\Iw_{\GL_g}^{(w)}=U_{\GL_g, 1}^{\opp,(w)}T_{\GL_g,0}^{(w)}U_{\GL_g,0}^{(w)}$$
and
$$\Iw_{\GL_g}^{+, (w)}=U_{\GL_g, 1}^{\opp,(w)}T_{\GL_g,0}^{(w)}U_{\GL_g,1}^{(w)}.$$

We also define
$$T_w=\ker(T_{\GL_g}(\calO_{\C_p})\rightarrow T_{\GL_g}(\calO_{\C_p}/p^w)),$$
$$U_w=\ker(U_{\GL_g}(\calO_{\C_p})\rightarrow U_{\GL_g}(\calO_{\C_p}/p^w)),$$
$$B_w=\ker(B_{\GL_g}(\calO_{\C_p})\rightarrow B_{\GL_g}(\calO_{\C_p}/p^w)).$$
The groups $U_w^{\opp}$ and $B_w^{\opp}$ are defined similarly.

Then we have 
$$T_{\GL_g,0}^{(w)}=T_{\GL_g,0}T_w,\quad U_{\GL_g,0}^{(w)}=U_{\GL_g,0}U_w,\quad B_{\GL_g,0}^{(w)}=B_{\GL_g,0}B_w.$$
There are similarly identities for $U_{\GL_g,0}^{\opp, (w)}$ and $B_{\GL_g,0}^{\opp, (w)}$.
\end{enumerate}

\subsection{Siegel modular varieties}\label{subsection: Siegel modular varieties} 
We consider Siegel modular varieties of genus $g$ (of principal tame level $N$) for various level structures at $p$.
\begin{Definition}\label{Definition: Siegel modular varieties of (strict) Iwahoris level}
\begin{enumerate}
\item[(i)] The \textbf{Siegel modular scheme} is the scheme $X_0$ over $\calO_{\C_p}$ that parameterises triples $(A, \lambda, \psi_N)$, where $(A, \lambda)$ is a principally polarised abelian scheme over $\calO_{\C_p}$ and $$\psi_N: V\otimes_{\Z}(\Z/N\Z)\xrightarrow[]{\sim} A[N]$$ 
is a symplectic isomorphism with respect to the pairing induced by $\bla\cdot, \cdot\bra$ on the left and the Weil pairing on the right. Let $X$ denote the base change of $X_0$ to $\C_p$.
\item[(ii)] For every $n\in \Z_{\geq 1}$, the \textbf{Siegel modular variety of principal $p^n$-level} is the algebraic variety $X_{\Gamma(p^n)}$ over $\C_p$ that parameterises quadruples $(A, \lambda, \psi_N, \psi_{p^n})$, where $(A, \lambda)$ is a principally polarised abelian variety over $\C_p$ and $$\psi_N: V\otimes_{\Z}(\Z/N\Z)\xrightarrow[]{\sim} A[N]$$ 
and
$$\psi_{p^n}:V\otimes_{\Z}(\Z/p^n\Z)\xrightarrow[]{\sim} A[p^n]$$
are symplectic isomorphisms.
 \item[(iii)] The \textbf{Siegel modular variety of Iwahori level} is the algebraic variety $X_{\Iw}$ over $\C_p$ that parameterises quadruples $(A, \lambda, \psi_N, \Fil_{\bullet}A[p])$, where $(A, \lambda, \psi_N)$ is as in (ii) and $\Fil_{\bullet}A[p]$ is a full flag of $A[p]$ that satisfies $$(\Fil_{\bullet}A[p])^{\perp}\cong \Fil_{2g-\bullet}A[p]$$ with respect to the Weil pairing.
    \item[(iv)] The \textbf{Siegel modular variety of strict Iwahori level} is the algebraic variety $X_{\Iw^+}$ over $\C_p$ that parameterises quadruples $(A, \lambda, \psi_N, \{C_i:i=1, ..., g\})$, where $(A, \lambda, \psi_N)$ is as in (i) and $\{C_i: i=1, ..., g\}$ is a collection of subgroups $C_i \subset A[p]$ of order $p$ such that \[
        C_i \cap C_j = 0
    \] for any $i\neq j$.
\end{enumerate}
\end{Definition}

For $\Gamma \in \{\Gamma(p^n), \Iw^+ = \Iw_{\GSp_{2g}}^+, \Iw = \Iw_{\GSp_{2g}}, \emptyset\}$, it is well-known that the $\C$-points of the algebraic variety $X_{\Gamma}$ can be identified with the locally symmetric space \[
    X_{\Gamma}(\C) = \GSp_{2g}(\Q)\backslash \GSp_{2g}(\A_f)\times \bbH_g/\Gamma(N)\cdot\Gamma,
\] where \begin{enumerate}
    \item[$\bullet$] $\A_f$ is the ring of finite ad\`eles of $\Q$;
    \item[$\bullet$] $\bbH_g$ is the disjoint union of the Siegel upper- and lower-half spaces;
    \item[$\bullet$] $\Gamma(N) = \{\bfgamma \in \GSp_{2g}(\widehat{\Z}): \bfgamma \equiv 1 \mod N\}$.
\end{enumerate} Here, we use the fixed isomorphism $\C_p \simeq \C$ to view $\C$ as a $\C_p$-algebra.

\begin{Remark}\label{Remark: moduli problem for the strict Iwahori level}
    For $\Gamma \in \{\Gamma(p^n), \Iw, \emptyset\}$, the above identification is well-known. To justify the moduli problem in Definition \ref{Definition: Siegel modular varieties of (strict) Iwahoris level} (iv) corresponds to the level $\Iw_{\GSp_{2g}}^+$, observe that the automorphisms on $A[p]$ preserving the Weil pairing can be identified with $\GSp_{2g}(\F_p)$. The subgroup fixing the prescribed order-$p$ subgroups $\{C_i: i=1, ...g\}$ is then, up to conjugation, isomorphic to \[
        \overline{\Gamma} = \left\{ \bfgamma = \begin{pmatrix} \bfgamma_a & \\  & \bfgamma_d \end{pmatrix} \in \GSp_{2g}(\F_p): \bfgamma_a \in T_{\GL_g}(\F_p) \right\}.
    \]
    However, by the relations \eqref{eq: relation of blocks for GSp}, one sees that, for $\bfgamma \in \overline{\Gamma}$, $\bfgamma_d\in T_{\GL_g}(\F_p)$ and so $\overline{\Gamma} = T_{\GSp_{2g}}(\F_p)$.
\end{Remark}

Moreover, we have a chain of forgetful maps
$$X_{\Gamma(p^n)}\rightarrow X_{\Gamma(p)}\rightarrow X_{\Iw^+}\rightarrow X_{\Iw}\rightarrow X$$
with the arrows described as follows:
\begin{enumerate}
\item[$\bullet$] The first arrow sends $(A, \lambda, \psi_N, \psi_{p^n})$ to $(A, \lambda, \psi_N, p^{n-1}\psi_{p^n})$.
\item[$\bullet$] The second arrow sends $(A, \lambda, \psi_N, \psi_p)$ to $(A, \lambda, \psi_N, \{\langle \psi_p(e_i)\rangle: i=1, ..., g\})$. 
\item[$\bullet$] The third arrow sends $(A, \lambda, \psi_N, \{C_i: i=1, ..., g\})$ to $(A, \lambda, \psi_N, \Fil_{\bullet}A[p])$, where \[
    \Fil_iA[p] = \left\{ \begin{array}{ll}
        \langle C_1, ..., C_i \rangle, & 1 \leq i \leq g  \\
        (\Fil_{2g-i}A[p])^{\perp}, & g+1 \leq i \leq 2g 
    \end{array} \right. .
\]
\item[$\bullet$] The fourth arrow sends $(A, \lambda, \psi_N, \Fil_{\bullet}A[p])$ to $(A, \lambda, \psi_N)$.
\end{enumerate}

For $\Gamma \in \{\Gamma(p^n), \Iw^+, \Iw, \emptyset\}$, let $\calX_{\Gamma}$ be the adic space over $\Spa(\C_p, \calO_{\C_p})$ associated with $X_{\Gamma}$. The analytifications of the forgetful maps yield 
\begin{equation}\label{eq: chain of adic Siegel varieties}
    \calX_{\Gamma(p^n)} \rightarrow \calX_{\Gamma(p)} \rightarrow \calX_{\Iw^+} \rightarrow \calX_{\Iw} \rightarrow \calX.
\end{equation} By fixing a $\GSp_{2g}(\Z)$-admissible polyhedral cone decomposition as in \S \ref{section: boundary}, we show in \S \ref{subsection: boundary strata} that the chain (\ref{eq: chain of adic Siegel varieties}) extends to a chain of log adic spaces\footnote{For a quick review of log adic spaces and the pro-Kummer \'etale site, see \S \ref{Section: Kummer etale and pro-Kummer etale sites of log adic spaces}.} \[
    \overline{\calX}_{\Gamma(p^n)} \rightarrow \overline{\calX}_{\Gamma(p)} \rightarrow \overline{\calX}_{\Iw^+} \rightarrow \overline{\calX}_{\Iw} \rightarrow \overline{\calX},
\] where, for each $\Gamma \in \{\Gamma(p^n), \Iw^+, \Iw, \emptyset\}$, 
\begin{enumerate}
    \item[$\bullet$] $\overline{\calX}_{\Gamma}$ is the adic space over $\Spa(\C_p, \calO_{\C_p})$ associated with the toroidal compactification $\overline{X}_{\Gamma}$ of $X_{\Gamma}$, determined by the fixed polyhedral cone decomposition; 
    \item[$\bullet$] the log structure on $\overline{\calX}_{\Gamma}$ is the divisorial log structure associated with the boundary divisor $\calZ_{\Gamma} := \overline{\calX}_{\Gamma}\smallsetminus \calX_{\Gamma}$; namely, the corresponding sheaf of monoids $\scrM_{\Gamma}$ on $\overline{\calX}_{\Gamma, \et}$ consists of sections of $\scrO_{\overline{\calX}_{\Gamma,\et}}$ that are invertible on the locus away from the boundary divisor;
    \item[$\bullet$] $\overline{\calX}_{\Gamma}$ is finite Kummer \'etale over $\overline{\calX}$ and \begin{enumerate}
        \item[(i)] $\overline{\calX}_{\Gamma(p^n)}\rightarrow \overline{\calX}$ is Galois with Galois group $\GSp_{2g}(\Z/p^n\Z)$;
        \item[(ii)] $\overline{\calX}_{\Gamma(p)}\rightarrow \overline{\calX}_{\Iw}$ is Galois with Galois group $B_{\GSp_{2g}}(\Z/p\Z)$;
        \item[(iii)] $\overline{\calX}_{\Gamma(p)} \rightarrow \overline{\calX}_{\Iw^+}$ is Galois with Galois group $T_{\GSp_{2g}}(\Z/p\Z)$. 
    \end{enumerate}
\end{enumerate}
We call $\overline{\calX}_{\Gamma}$ the \textbf{\textit{toroidal compactification}} of $\calX_{\Gamma}$ (determined by the fixed polyhedral cone decomposition). 

Furthermore, we have the \emph{perfectoid Siegel modular variety of infinite level} constructed in \cite{Pilloni-Stroh-CoherentCohomologyandGaloisRepresentations}. See \S \ref{subsection: perfectoid Siegel modular variety} for details.

\begin{Theorem}[\text{\cite[Corollaire 4.14]{Pilloni-Stroh-CoherentCohomologyandGaloisRepresentations}}]\label{Theorem: perfectoid toroidally compactified Siegel modular variety}
There exists a perfectoid space $\overline{\calX}_{\Gamma(p^{\infty})}$ such that $$\overline{\calX}_{\Gamma(p^{\infty})}\sim \varprojlim_{n}\overline{\calX}_{\Gamma(p^n)},$$ where ``$\sim$'' is in the sense of \cite[Definition 2.4.1]{Scholze-Weinstein}.
\end{Theorem}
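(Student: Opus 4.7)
The plan is to follow the strategy of \cite{Pilloni-Stroh-CoherentCohomologyandGaloisRepresentations}, which adapts Scholze's construction in \cite{Scholze-2015} of the infinite-level perfectoid Siegel modular variety to the toroidally compactified setting. The argument proceeds locally on $\overline{\calX}$, treating the interior and the boundary strata separately, and then glues the resulting local perfectoid models. Since both the perfectoid property and the tilde-limit condition of \cite[Definition 2.4.1]{Scholze-Weinstein} are local on the limit, it suffices to cover $\overline{\calX}$ by affinoid opens and exhibit a perfectoid tilde-limit above each.

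On the interior, I would invoke Scholze's theorem that $\varprojlim_n \calX_{\Gamma(p^n)}$ is represented by a perfectoid space, constructed via the Hodge--Tate period map $\pi_{\HT}: \calX_{\Gamma(p^\infty)} \rightarrow \adicFL$. The key point is that sufficiently small affinoid opens $V \subset \adicFL$ pull back to affinoid perfectoid opens, because the Hodge--Tate coordinates provide, in the limit, the $p$-power compatible system of roots of local coordinates required to verify the perfectoid property.

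Near the toroidal boundary, I would invoke the Mumford-type local description of $\overline{\calX}$ coming from the theory of toroidal compactifications: along a cusp label, a formal neighbourhood of the boundary stratum is (after passing to a suitable cover) isomorphic to a twisted torus embedding over a smaller Siegel variety of genus $g'<g$, determined by the fixed rational polyhedral cone decomposition. Adding principal $p^n$-level structure has the effect, on the toric direction, of refining the character lattice by a factor of $p^n$, i.e.\ of adjoining $p^n$-th roots of the toric monomials. Passing to the limit yields algebras of the form $\widehat{\varinjlim_n}\, R_n\langle T_1^{1/p^n},\ldots,T_r^{1/p^n}\rangle$, which are perfectoid; on the boundary-stratum factor, one treats the open and boundary cases by induction on $g$, starting from the easy case $g=0$.

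The main obstacle is not the construction of the local perfectoid models per se, but the careful bookkeeping required to identify, at each finite level $n$, the local toroidal charts coming from the Mumford construction with charts compatible under the transition maps $\overline{\calX}_{\Gamma(p^{n+1})} \to \overline{\calX}_{\Gamma(p^n)}$, in a manner consistent with the fixed $\GSp_{2g}(\Z)$-admissible cone decomposition and with the degeneration data of the universal semi-abelian variety. Once this compatibility is established, the tilde-limit condition follows by verifying on each affinoid chart that the structure sheaf on the limit is the completed colimit of the structure sheaves at finite level, and the global $\overline{\calX}_{\Gamma(p^{\infty})}$ is obtained by gluing.
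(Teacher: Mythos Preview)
Your outline is a plausible route, but it differs from the construction the paper actually relies on. The paper does not prove this theorem itself; it cites \cite{Pilloni-Stroh-CoherentCohomologyandGaloisRepresentations}, and the Remark following the statement together with \S\ref{subsection: perfectoid Siegel modular variety} summarise that argument. The approach there is global and formal-scheme-theoretic rather than local and inductive: one starts from the formal models $\overline{\frakX}^{\tor}_{\Gamma(p^n)}$, blows up along the ideals $\scrI_1,\ldots,\scrI_g$ generated by lifts of the minors of the linearised Hodge--Tate map $\HT_{\Gamma(p^n)}\otimes\id$, normalises, and then takes the projective limit $\overline{\frakX}^{\tor-\text{mod}}_{\Gamma(p^{\infty})}=\varprojlim_n \overline{\frakX}^{\tor-\text{mod}}_{\Gamma(p^n)}$ in the category of $p$-adic formal schemes. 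The adic generic fibre of this limit is $\overline{\calX}_{\Gamma(p^{\infty})}$, and one checks perfectoidness directly on this global formal model.

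The contrast is instructive. Your approach treats the interior via Scholze's theorem as a black box and handles the boundary by induction on $g$ together with the observation that the toric direction acquires all $p$-power roots; the price, as you yourself note, is the ``careful bookkeeping'' needed to make the local charts cohere across strata and across levels. The Pilloni--Stroh construction sidesteps that gluing entirely: the admissible blowups are defined globally by the Hodge--Tate ideals, so one obtains a single tower of formal schemes whose limit is manifestly well-defined, and the perfectoidness check becomes a local computation on a global object rather than a compatibility verification among local models. A further dividend of their approach, exploited elsewhere in the paper, is that the blowup makes $\underline{\Omega}^{\text{mod}}_{\Gamma(p^n)}$ locally free, which is exactly what is needed to define the sheaves $\underline{\omega}^{\text{mod},+}_{\Gamma(p^n)}$ used in \S\ref{subsection: admissibility} and \S\ref{subsection:comparison sheaf aip}.
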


\begin{Remark}
\normalfont The perfectoid Siegel modular variety is constructed by introducing certain \emph{modified integral structures} at  finite levels. More precisely, consider the toroidal compactification $\overline{X}_0$ of $X_0$ and let $\overline{\frakX}$ denote the formal scheme obtained by taking completion along the special fibre of $\overline{\frakX}_0$. Let $\overline{\frakX}_{\Gamma(p^n)}$ denote the normalisation of $\overline{X}$ inside $\overline{\calX}_{\Gamma(p^n)}$. In \cite{Pilloni-Stroh-CoherentCohomologyandGaloisRepresentations}, the authors consider the modified formal schemes $\overline{\frakX}^{\text{mod}}_{\Gamma(p^n)}$ obtained by taking certain admissible formal blowups from $\overline{\frakX}_{\Gamma(p^n)}$ and then consider the projective limit
$$\overline{\frakX}^{\text{mod}}_{\Gamma(p^{\infty})}=\varprojlim_n \overline{\frakX}^{\text{mod}}_{\Gamma(p^n)}$$
in the category of $p$-adic formal schemes. Finally, $\overline{\calX}_{\Gamma(p^{\infty})}$ is defined to be the adic generic fibre of $\overline{\frakX}^{\text{mod}}_{\Gamma(p^{\infty})}$.
\end{Remark}

We summarise the discussion above in the following commutative diagram $$\begin{tikzcd}
\calX_{\Gamma(p^\infty)}\arrow[r, hook]\arrow[d] & \overline{\calX}_{\Gamma(p^\infty)}\arrow[d]\\
\calX_{\Gamma(p^n)}\arrow[r, hook]\arrow[d] & \overline{\calX}_{\Gamma(p^n)}\arrow[d]\\
\calX_{\Gamma(p)}\arrow[r, hook]\arrow[d] & \overline{\calX}_{\Gamma(p)}\arrow[d]\\
\calX_{\Iw^+}\arrow[r, hook]\arrow[d] & \overline{\calX}_{\Iw^+}\arrow[d]\\
\calX_{\Iw}\arrow[r, hook]\arrow[d] & \overline{\calX}_{\Iw}\arrow[d]\\
\calX \arrow[r, hook] & \overline{\calX}\\
\end{tikzcd}.$$ 
where $\calX_{\Gamma(p^\infty)}$ is the part of $\overline{\calX}_{\Gamma(p^\infty)}$ away from the boundary.

There is a natural $\GSp_{2g}(\Z_p)$-action on $\overline{\calX}_{\Gamma(p^{\infty})}$ permuting the $p$-power level structures. In particular, the chain of natural projections
$$\overline{\calX}_{\Gamma(p^{\infty})}\rightarrow \overline{\calX}_{\Gamma(p^n)}\rightarrow \overline{\calX}_{\Gamma(p)}\rightarrow \overline{\calX}_{\Iw^+}\rightarrow \overline{\calX}_{\Iw}\rightarrow  \overline{\calX}$$
is $\GSp_{2g}(\Z_p)$-equivariant. According to Proposition \ref{Proposition: perfectoid toroidal compactification} (ii), the projection $h_{\Gamma(p^n)}:\overline{\calX}_{\Gamma(p^{\infty})}\rightarrow \overline{\calX}_{\Gamma(p^n)}$ (resp., $h_{\Iw^+}:\overline{\calX}_{\Gamma(p^{\infty})}\rightarrow \overline{\calX}_{\Iw^+}$, $h_{\Iw}:\overline{\calX}_{\Gamma(p^{\infty})}\rightarrow \overline{\calX}_{\Iw}$) is a pro-Kummer \'{e}tale Galois cover with Galois group $\Gamma(p^n)$ (resp., $\Iw^+_{\GSp_{2g}}$, $\Iw_{\GSp_{2g}}$). \footnote{Here we have abused the notation and identify the perfectoid space $\overline{\calX}_{\Gamma(p^{\infty})}$ with the object $\varprojlim_n \overline{\calX}_{\Gamma(p^n)}$ in the pro-Kummer \'{e}tale site $\overline{\calX}_{\proket}$.}

\begin{Lemma}\label{Lemma: structure sheaves at the infinite level and the structure sheaves at the Iwahori level}
We have the following identities of sheaves \begin{align*}
    \scrO_{\overline{\calX}_{\Iw}}^{+}=\left(h_{\Iw, *}\scrO_{\overline{\calX}_{\Gamma(p^{\infty})}}^+\right)^{\Iw_{\GSp_{2g}}}, &  \quad \scrO_{\overline{\calX}_{\Iw}}=\left(h_{\Iw, *}\scrO_{\overline{\calX}_{\Gamma(p^{\infty})}}\right)^{\Iw_{\GSp_{2g}}}\\
    \scrO_{\overline{\calX}_{\Iw^+}}^{+}=\left(h_{\Iw^+, *}\scrO_{\overline{\calX}_{\Gamma(p^{\infty})}}^+\right)^{\Iw^+_{\GSp_{2g}}}, &  \quad \scrO_{\overline{\calX}_{\Iw^+}}=\left(h_{\Iw, *}\scrO_{\overline{\calX}_{\Gamma(p^{\infty})}}\right)^{\Iw^+_{\GSp_{2g}}}.
\end{align*}
\end{Lemma}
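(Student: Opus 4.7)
The plan is to deduce all four identities from a single Galois descent principle on the pro-Kummer \'etale site: if $h: \calY \to \calZ$ is a Galois pro-Kummer \'etale cover of log adic spaces with profinite Galois group $G$ acting continuously, then the adjunction maps
\[
\scrO_{\calZ} \longrightarrow (h_* \scrO_{\calY})^G \quad\text{and}\quad \scrO_{\calZ}^+ \longrightarrow (h_* \scrO_{\calY}^+)^G
\]
are isomorphisms. Such descent is part of the foundational framework of the pro-Kummer \'etale site recalled in \S\ref{Section: Kummer etale and pro-Kummer etale sites of log adic spaces} (following \cite{Diao}). By the discussion immediately preceding the lemma, both $h_{\Iw}$ and $h_{\Iw^+}$ are Galois pro-Kummer \'etale covers with Galois groups $\Iw_{\GSp_{2g}}$ and $\Iw^+_{\GSp_{2g}}$ respectively, so the principle applies directly.

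To verify descent concretely I would factor the projection through finite level. Writing
\[
\overline{\calX}_{\Gamma(p^\infty)} \longrightarrow \overline{\calX}_{\Gamma(p^n)} \xrightarrow{\,f_n\,} \overline{\calX}_{\Iw},
\]
the second map $f_n$ is a finite Kummer \'etale Galois cover with finite Galois group $\Iw_{\GSp_{2g}}/\Gamma(p^n)$. For such finite Kummer \'etale covers, the descent identities
\[
\scrO_{\overline{\calX}_{\Iw}}^+ = \bigl(f_{n,*}\scrO_{\overline{\calX}_{\Gamma(p^n)}}^+\bigr)^{\Iw_{\GSp_{2g}}/\Gamma(p^n)}, \qquad \scrO_{\overline{\calX}_{\Iw}} = \bigl(f_{n,*}\scrO_{\overline{\calX}_{\Gamma(p^n)}}\bigr)^{\Iw_{\GSp_{2g}}/\Gamma(p^n)}
\]
are standard. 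The infinite-level identities then follow by passing to the limit: Theorem~\ref{Theorem: perfectoid toroidally compactified Siegel modular variety} gives $\overline{\calX}_{\Gamma(p^\infty)} \sim \varprojlim_n \overline{\calX}_{\Gamma(p^n)}$, and the compatibility of pro-Kummer \'etale pushforward with such sim-limits identifies $h_{\Iw,*}\scrO_{\overline{\calX}_{\Gamma(p^\infty)}}^{(+)}$ with the appropriate colimit of the $f_{n,*}\scrO_{\overline{\calX}_{\Gamma(p^n)}}^{(+)}$; taking $\Iw_{\GSp_{2g}}$-invariants commutes with this colimit and recovers $\scrO_{\overline{\calX}_{\Iw}}^{(+)}$ by the finite-level descent. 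The argument for $h_{\Iw^+}$ with $\Iw^+_{\GSp_{2g}}$ is formally identical and yields the remaining two identities.

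The main technical point I anticipate is ensuring that the integral statement for $\scrO^+$ is an honest equality rather than only an almost-equality; this is often where subtleties appear in the perfectoid setting. Here the difficulty should be mild since the descent is carried out at each finite Kummer \'etale stage (where integral descent is clean), so the infinite-level assertion is formally inherited from the limit.
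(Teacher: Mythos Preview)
Your overall strategy matches the paper's: reduce to finite Kummer \'etale Galois descent at each level $\Gamma(p^n)$ and then pass to the limit. The paper likewise invokes descent at finite level (citing \cite[Lemma 4.1.7 \& Corollary 4.4.13]{Diao}) and then assembles the infinite-level statement from these.

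However, there is a gap in your limit step. The integral structure sheaf at infinite level is not the colimit of the finite-level ones but its $p$-adic completion:
\[
\scrO_{\overline{\calX}_{\Gamma(p^{\infty})}}^+(\calV_{\infty}) \;=\; \Bigl(\varinjlim_{n}\scrO_{\overline{\calX}_{\Gamma(p^n)}}^+(\calV_n)\Bigr)^{\wedge}.
\]
Your claim that ``pushforward with sim-limits identifies $h_{\Iw,*}\scrO^{(+)}_{\overline{\calX}_{\Gamma(p^\infty)}}$ with the appropriate colimit'' and that ``taking $\Iw_{\GSp_{2g}}$-invariants commutes with this colimit'' glosses over the completion. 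Commuting $G$-invariants with a filtered colimit is fine, but commuting $G$-invariants with $p$-adic completion is not automatic, and this is exactly where the argument could break down (not merely an almost-versus-honest issue, as you anticipated).

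The paper's maneuver to close this gap is to work modulo $p^m$ throughout: one first shows
\[
\bigl(\scrO_{\overline{\calX}_{\Gamma(p^n)}}^+(\calV_n)/p^m\bigr)^{\Iw_{\GSp_{2g}}/\Gamma(p^n)} \;=\; \scrO_{\overline{\calX}_{\Iw}}^+(\calV)/p^m
\]
for every $m$ (finite Kummer \'etale descent, no completion involved), deduces $\bigl(\scrO^+_{\overline{\calX}_{\Iw,\proket}}(\widetilde{\calV}_{\infty})/p^m\bigr)^{\Iw_{\GSp_{2g}}}=\scrO_{\overline{\calX}_{\Iw}}^+(\calV)/p^m$, and only then takes $\varprojlim_m$. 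Since invariants commute with inverse limits, this yields the integral identity cleanly. You should insert this mod-$p^m$ step in place of the vague colimit argument.
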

\begin{proof}
We give the proof of the first pair of identities. The second pair can be proven by the same argument.

It suffices to prove the first identity. For any affinoid open $\calV\subset \overline{\calX}_{\Iw}$ with preimages $\calV_{n}\subset \overline{\calX}_{\Gamma(p^{n})}$ for $n\in \Z_{>0}\cup \{\infty\}$ such that \[
    \scrO_{\overline{\calX}_{\Gamma(p^{\infty})}}^+(\calV_{\infty}) = \left(\varinjlim_{n}\scrO_{\overline{\calX}_{\Gamma(p^n)}}^+(\calV_n)\right)^{\wedge},
\] we have to show $$\scrO_{\overline{\calX}_{\Iw}}^+(\calV)=\left(\scrO_{\overline{\calX}_{\Gamma(p^{\infty})}}^+(\calV_{\infty})\right)^{\Iw_{\GSp_{2g}}}.$$ Here,
{``$\wedge$''} stands for the $p$-adic completion.
Consider the object $\widetilde{\calV}_{\infty}:=\varprojlim_{n}\calV_n$ in the pro-Kummer \'etale site $\overline{\calX}_{\Iw, \proket}$. By Lemma \ref{Kummer etale Galois cover}, each $\calV_n$ is finite Kummer \'{e}tale over $\calV$ with Galois group $G_n:=\Iw_{\GSp_{2g}}/\Gamma(p^n)$. Thus, $$\scrO_{\overline{\calX}_{\Gamma(p^{\infty})}}^+(\calV_{\infty})=\left(\varinjlim_{n}\scrO_{\overline{\calX}_{\Gamma(p^n)}}^+(\calV_n)\right)^{\wedge}=\left(\scrO_{\overline{\calX}_{\Iw, \proket}}^{+}(\widetilde{\calV}_{\infty})\right)^{\wedge}.$$ By \cite[Lemma 4.1.7 \& Corollary 4.4.13]{Diao}, we know $$\left(\scrO_{\overline{\calX}_{\Gamma(p^n)}}^+(\calV_n)/p^m\right)^{\Iw_{\GSp_{2g}}}=\left(\scrO_{\overline{\calX}_{\Gamma(p^n)}}^+(\calV_n)/p^m\right)^{G_n}=\scrO_{\overline{\calX}_{\Iw}}^+(\calV)/p^m$$ for every $m\in \Z_{\geq 1}$. This implies $$\left(\scrO_{\overline{\calX}_{\Iw, \proket}}^+(\widetilde{\calV}_{\infty})/p^m\right)^{\Iw_{\GSp_{2g}}}=\scrO_{\overline{\calX}_{\Iw}}^+(\calV)/p^m.$$ Consequently, we have \begin{align*}
    \left(\scrO_{\overline{\calX}_{\Gamma(p^{\infty})}}^+(\calV_{\infty})\right)^{\Iw_{\GSp_{2g}}} & = \left(\left(\scrO_{\overline{\calX}_{\Iw, \proket}}^{+}(\widetilde{\calV}_{\infty})\right)^{\wedge}\right)^{\Iw_{\GSp_{2g}}}
    = \left(\varprojlim_{m}\left(\scrO_{\overline{\calX}_{\Iw, \proket}}^{+}(\widetilde{\calV}_{\infty})/p^m\right)\right)^{\Iw_{\GSp_{2g}}}\\
    & = \varprojlim_{m}\left(\left(\scrO_{\overline{\calX}_{\Iw, \proket}}^{+}(\widetilde{\calV}_{\infty})/p^m\right)^{\Iw_{\GSp_{2g}}}\right)
    = \varprojlim_{m} \scrO_{\overline{\calX}_{\Iw}}^+(\calV)/p^m
    = \scrO_{\overline{\calX}_{\Iw}}^+(\calV).
\end{align*}
\end{proof}

\begin{Remark}\label{Remark: why strict Iwahori level}
\normalfont We point out that the main geometric object studied in \cite{AIP-2015} is the (toroidally compactified) Siegel modular variety of Iwahori level while ours is of strict Iwahori level. We introduce this deeper level to deal with a certain technical issue involved in the construction of the overconvergent Eichler--Shimura morphism in \S \ref{subsection: OES}. 
\end{Remark}

\subsection{The flag variety}\label{subsection: flag varieties}
The Hodge--Tate period map is a $\GSp_{2g}(\Z_p)$-equivariant morphism from the perfectoid Siegel modular variety to certain flag variety. In this section, let us first describe the target flag variety (and its variants) carefully. 

Recall that $V_p=V\otimes_{\Z}\Z_p$ is the standard symplectic space of rank $2g$ over $\Z_p$. Let $P_{\Siegel}$ be the Siegel parabolic subgroup of $\GSp_{2g}$ defined by \[P_{\Siegel}:=\begin{pmatrix}\GL_g &  \\ M_g & \GL_g\end{pmatrix}\cap \GSp_{2g}.\] Let $\Fl:=P_{\Siegel}\backslash\GSp_{2g}$ be the flag variety over $\Z_p$, parameterising the maximal lagrangians $W\subset V_p$; any representative $\bfgamma \in \GSp_{2g}$ corresponds to the maximal lagrangian spanned by the first $g$ rows of $\bfgamma$. There is a natural action of $\GSp_{2g}$ on $\Fl$ by right multiplication. Let $\adicFL$ be the associated adic space of $\Fl$ over $\Spa(\Q_p, \Z_p)$, equipped with the induced right action of $\GSp_{2g}(\Q_p)$. Hence, for any $p$-adically complete sheafy $(\Q_p, \Z_p)$-algebra $(R, R^+)$, $\adicFL(R, R^+)$ parameterises maximal lagrangians $W\subset V_p\otimes_{\Z_p}R$. Consider the open subset $\adicFL^{\times}\subset \adicFL$ whose $(R, R^+)$-points are $$\adicFL^{\times}(R, R^+)=\left\{(W\subset V_p\otimes_{\Z_p}R)\in \adicFL(R, R^+):\begin{array}{l}
    \text{there exists a basis $\{w_i\}$ of $W$ such that}  \\
    \text{the matrix $(\bla w_i, e_{2g+1-j}\bra)_{1\leq i,j\leq g}$ is invertible}     
\end{array} \right\}.$$ For any $\bfitx_W=(W\subset V_p\otimes_{\Z_p}R)\in \adicFL^\times(R, R^+)$, there exists a unique basis $\{w_i^{\square}\}$ of $W$ such that \[(\bla w_i^{\square}, e_{2g+1-j}\bra)_{1\leq i,j\leq g}=\one_g.\] Therefore, there exist global sections $\bfitz_{i,j}\in \scrO_{\adicFL^\times}(\adicFL^\times)$ such that for any $\bfitx_{W}\in \adicFL^\times(R, R^+)$, $$w_i^{\square}=e_{i}+\sum_{j=1}^{g}\bfitz_{i,j}(\bfitx_W)e_{g+j}.$$ Since $\bla w_i^{\square}, w_j^{\square}\bra= 0$, we have \begin{align*}
    0 & = \bla w_i^{\square}, w_j^{\square}\bra\\
    & = \bla e_{i}, \sum_{k=1}^g\bfitz_{j,k}(\bfitx_W)e_{g+k}\bra + \bla \sum_{k=1}^g\bfitz_{i,k}(\bfitx_W)e_{g+k}, e_{j}\bra\\ 
    & = \bfitz_{j,g+1-i}(\bfitx_W)-\bfitz_{i,g+1-j}(\bfitx_W).
\end{align*} That is, the matrix $$\bfitz:=\begin{pmatrix}
\bfitz_{1,1} & \cdots & \bfitz_{1,g}\\
\vdots & & \vdots\\
\bfitz_{g,1} & \cdots & \bfitz_{g,g}
\end{pmatrix}$$ 
is symmetric with respect to the anti-diagonal. Moreover, we may use the matrix $\begin{pmatrix}\one_g & \bfitz(\bfitx_W)\end{pmatrix}$ (or just the matrix $\bfitz(\bfitx_W)$) to represent the element $\bfitx_W\in \adicFL^\times(R, R^+)$ because the basis $\{w_i^{\square}\}$ is represented by the matrix $$\begin{pmatrix}
1 & & & \bfitz_{1,1}(\bfitx_W) & \cdots & \bfitz_{1,g}(\bfitx_W)\\
& \ddots & & \vdots & & \vdots\\
& & 1 & \bfitz_{g,1}(\bfitx_W) & \cdots & \bfitz_{g,g}(\bfitx_W)
\end{pmatrix}=\begin{pmatrix}\one_g & \bfitz(\bfitx_W)\end{pmatrix}$$ with respect to the standard basis $e_1, \ldots, e_{2g}$ of $V_p$. 

In the rest of the paper, we take base change of the adic spaces $\adicFL$ and $\adicFL^{\times}$ to $\Spa(\C_p, \calO_{\C_p})$.

For every $w\in \Q_{>0}$, consider an open adic subspace $\adicFL_w^\times\subset \adicFL^\times$ defined by $$\adicFL^\times_w:=\left\{\bfitx\in \adicFL^\times: \max_{i, j}\inf_{t\in \Z_p}\{|\bfitz_{i,j}(\bfitx)-t|\}\leq p^{-w}\right\}.$$ 
For any algebraically closed complete nonarchimedean field $C$ containing $\Q_p$, let
$$\GSp_{2g, w}(C):=\left\{\bfgamma=\begin{pmatrix}\bfgamma_a & \bfgamma_b\\ \bfgamma_c & \bfgamma_d\end{pmatrix}\in\GSp_{2g}(C):\begin{array}{l}
    \bfgamma_a\in \GL_g(C),\text{ and}\\
    \max_{i, j}\inf_{h\in \Z_p}\{|(\bfgamma_a^{-1}\bfgamma_b)_{ij}-h|\}\leq p^{-w}
\end{array} \right\}$$
where $(\bfgamma_a^{-1}\bfgamma_b)_{ij}$ is the $(i,j)$-th entry of the matrix $\bfgamma_a^{-1}\bfgamma_b$. Then the $(C, \calO_C)$-points of $\adicFL^\times_w$ can be identified with the quotient $$\adicFL^\times_w(C, \calO_C)=P_{\Siegel}(C)\backslash\GSp_{2g, w}(C)$$
so that the natural inclusion $\adicFL^\times_w(C, \calO_C)\subset \adicFL(C, \calO_C)$ is induced by
$$\adicFL^\times_w(C, \calO_C)=P_{\Siegel}(C)\backslash\GSp_{2g, w}(C)\hookrightarrow P_{\Siegel}(C)\backslash\GSp_{2g}(C)=\adicFL(C, \calO_C).$$

Recall that there is a natural right action of $\GSp_{2g}(\Q_p)$ on $\adicFL$. The following lemma shows that $\adicFL_w^\times$ is stable under the action of the subgroup $\Iw_{\GSp_{2g}}\subset \GSp_{2g}(\Q_p)$.

\begin{Lemma}\label{Lemma: Iw_GSp stabilises FL_w^times}
The adic space $\adicFL_w^\times$ is stable under the right action of $\Iw_{\GSp_{2g}}$. Coordinate-wise, the action is given by $$\adicFL_w^{\times}\times \Iw_{\GSp_{2g}}\rightarrow \adicFL_w^{\times}, \quad \left(\bfitz,\begin{pmatrix}\bfgamma_a & \bfgamma_b\\ \bfgamma_c & \bfgamma_d\end{pmatrix} \right)\mapsto (\bfgamma_a+\bfitz\bfgamma_c)^{-1} (\bfgamma_b+\bfitz\bfgamma_d).$$
In particular, $\adicFL_w^{\times}$ is also stable under the right action of the subgroup $\Iw_{\GSp_{2g}}^+$.
\end{Lemma}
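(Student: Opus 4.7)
The plan is to work out the action of $\GSp_{2g}$ on the coordinate $\bfitz$ explicitly, then check that, for $\bfgamma\in\Iw_{\GSp_{2g}}$, the relevant inversion makes sense over $\calO_{\C_p}$ and the $w$-closeness condition is preserved.

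First, I would unwind the definition. A point $\bfitx_W\in\adicFL^\times(R,R^+)$ with coordinate $\bfitz$ corresponds to the lagrangian $W$ whose canonical basis is given by the rows of the matrix $(\one_g\,\,\bfitz)$. Since the right action of $\GSp_{2g}$ on $\adicFL$ comes from right multiplication on $V_p$, the new basis of $W\cdot\bfgamma$ is the rows of
\[
(\one_g\,\,\bfitz)\begin{pmatrix}\bfgamma_a & \bfgamma_b\\ \bfgamma_c & \bfgamma_d\end{pmatrix}=(\bfgamma_a+\bfitz\bfgamma_c\,\,\,\,\bfgamma_b+\bfitz\bfgamma_d).
\]
Provided the first block is invertible, one left-multiplies by $(\bfgamma_a+\bfitz\bfgamma_c)^{-1}$ to return to the canonical form, producing the claimed coordinate $\bfitz'=(\bfgamma_a+\bfitz\bfgamma_c)^{-1}(\bfgamma_b+\bfitz\bfgamma_d)$.

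Second, I would check invertibility of $\bfgamma_a+\bfitz\bfgamma_c$ in $\GL_g(\calO_{\C_p})$. Since $\bfgamma\in\Iw_{\GSp_{2g}}$, its reduction mod $p$ lies in $B_{\GSp_{2g}}(\F_p)$, so $\bfgamma_c\equiv 0\pmod{p}$ and $\bfgamma_a$ reduces to an upper triangular matrix with invertible diagonal; in particular $\bfgamma_a\in\GL_g(\Z_p)$. The defining condition of $\adicFL^\times_w$ forces the entries of $\bfitz$ to lie in $\calO_{\C_p}$, so $\bfitz\bfgamma_c$ has entries in $p\calO_{\C_p}$. Thus $\bfgamma_a+\bfitz\bfgamma_c\equiv\bfgamma_a\pmod{p}$ is invertible in $\GL_g(\calO_{\C_p})$, and the formula for $\bfitz'$ makes sense.

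Third, and this is the real content, I would verify that $\bfitz'$ still satisfies $\max_{i,j}\inf_{h\in\Z_p}|\bfitz'_{i,j}-h|\leq p^{-w}$. Choose $\bfitz_0\in M_g(\Z_p)$ with $\bfitz\equiv\bfitz_0\pmod{p^w}$ entrywise, and set
\[
\bfitz'':=(\bfgamma_a+\bfitz_0\bfgamma_c)^{-1}(\bfgamma_b+\bfitz_0\bfgamma_d).
\]
By the same mod $p$ argument, $\bfgamma_a+\bfitz_0\bfgamma_c\in\GL_g(\Z_p)$, so $\bfitz''\in M_g(\Z_p)$. Writing $\bfitz=\bfitz_0+p^w\bfeta$ with $\bfeta$ entrywise in $\calO_{\C_p}$ and expanding the inverse via a Neumann-type argument (using that $(\bfgamma_a+\bfitz_0\bfgamma_c)^{-1}p^w\bfeta\bfgamma_c$ has entries in $p^w\calO_{\C_p}$), one finds $\bfitz'\equiv\bfitz''\pmod{p^w}$ entrywise. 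Hence $\bfitz'$ is within $p^{-w}$ of $M_g(\Z_p)$ as required, and $\bfitx_W\cdot\bfgamma\in\adicFL^\times_w(R,R^+)$.

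The last assertion is immediate from $\Iw^+_{\GSp_{2g}}\subset\Iw_{\GSp_{2g}}$. The only place where one must be careful is the combination of the Iwahori condition $\bfgamma_c\equiv 0\pmod p$ with the hypothesis $|\bfitz_{i,j}|\leq 1$: together they force the mod $p$ reduction of $\bfgamma_a+\bfitz\bfgamma_c$ to agree with $\bfgamma_a$, which is exactly what guarantees both the invertibility over $\calO_{\C_p}$ and the preservation of the $w$-neighborhood.
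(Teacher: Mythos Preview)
Your proof is correct and follows the same approach as the paper's: first derive the coordinate formula via matrix multiplication and renormalisation, then check that the result lands back in $\adicFL_w^\times$. The paper's proof only writes out the first step and declares the second step ``straightforward''; you have spelled out precisely what that straightforward verification entails, namely the invertibility of $\bfgamma_a+\bfitz\bfgamma_c$ over $\calO_{\C_p}$ using $\bfgamma_c\equiv 0\pmod p$, and the preservation of the $p^{-w}$-closeness to $M_g(\Z_p)$ via an integral approximation $\bfitz_0$ and a Neumann-series estimate.
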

\begin{proof}
It follows from the definition that the right action of $\bfgamma\in\Iw_{\GSp_{2g}}$ indeed sends $\begin{pmatrix}\one_g & \bfitz(\bfitx_W)\end{pmatrix}$ to 
$$\begin{pmatrix}\one_g &  (\bfgamma_a+\bfitz(\bfitx_W)\bfgamma_c)^{-1} (\bfgamma_b+\bfitz(\bfitx_W)\bfgamma_d)\end{pmatrix} =(\bfgamma_a+\bfitz(\bfitx_W)\bfgamma_c)^{-1} \begin{pmatrix}\one_g & \bfitz(\bfitx_W)\end{pmatrix} \begin{pmatrix}\bfgamma_a & \bfgamma_b\\ \bfgamma_c & \bfgamma_d\end{pmatrix}.$$
It remains to show that, for every $\bfitx_W\in \adicFL^{\times}_w$, the matrix $(\bfgamma_a+\bfitz(\bfitx_W)\bfgamma_c)^{-1} (\bfgamma_b+\bfitz(\bfitx_W)\bfgamma_d)$ lands in $\adicFL^{\times}_w$. But this is straightforward.
\end{proof}

\subsection{Vector bundles on the flag variety}\label{subsection: vector bundles on the flag variety}
Let $\scrW_{\Fl}\subset\scrO_{\Fl}^{2g}$ be the universal maximal lagrangian over $\Fl$. The total space of $\scrW_{\Fl}$ can be naturally identified with $$\scrW_{\Fl}\simeq P_{\Siegel}\backslash (\bbA^g\times \GSp_{2g})$$ where \begin{enumerate}
    \item[$\bullet$] by viewing elements $\vec{v}\in \bbA^g$ as row vectors, $P_{\Siegel}$ acts on $\bbA^g$ from the left via $ \bfgamma * \vec{v}  = \vec{v}\cdot\bfgamma_a^{-1}$, for any $\bfgamma=\begin{pmatrix}\bfgamma_a & \bfgamma_b\\ \bfgamma_c & \bfgamma_d\end{pmatrix}\in P_{\Siegel}$;
    \item[$\bullet$] $P_{\Siegel}$ acts on $\GSp_{2g}$ via the left multiplication. 
\end{enumerate} 

Similarly, consider the linear dual $\scrW_{\Fl}^{\vee}$ of $\scrW_{\Fl}$. Then the total space of $\scrW_{\Fl}^{\vee}$ can be naturally identified with $$\scrW_{\Fl}^{\vee}\simeq P_{\Siegel}\backslash (\bbA^g\times \GSp_{2g})$$ where, by viewing elements $\vec{v}\in \bbA^g$ as column vectors, $P_{\Siegel}$ acts on $\bbA^g$ from the left via $ \bfgamma * \vec{v}  = \bfgamma_a\cdot\vec{v}$, for any $\bfgamma=\begin{pmatrix}\bfgamma_a & \bfgamma_b\\ \bfgamma_c & \bfgamma_d\end{pmatrix}\in P_{\Siegel}$. Under this identification, global sections of $\scrW_{\Fl}^{\vee}$ are identified with $$\left\{\text{algebraic functions }\phi: \GSp_{2g}\rightarrow \bbA^g: \phi(\bfgamma\bfalpha)=\bfgamma_a\cdot \phi(\bfalpha),\,\,\forall \bfgamma\in P_{\Siegel},\,\, \bfalpha\in\GSp_{2g}\right\}.$$
For every $i=1,\ldots, g$, we consider a global section $\bfits_i$ of $\scrW_{\Fl}^{\vee}$ defined by $$\bfits_i(\bfalpha):= \text{the $i$-th column of }\bfalpha_a$$ for all $\bfalpha=\begin{pmatrix}\bfalpha_a & \bfalpha_b\\ \bfalpha_c & \bfalpha_d\end{pmatrix}\in \GSp_{2g}$. If we write 
$$\bfits:=\begin{pmatrix}\bfits_1& \cdots& \bfits_g\end{pmatrix}\in (\scrW_{\Fl}^{\vee})^g$$ 
then we have $\bfits(\bfalpha)=\bfalpha_a$.

By passing to the adic space $\adicFL$ and restricting to $\adicFL_w^{\times}$, the (algebraic) sheaves $\scrW_{\Fl}$ and $\scrW_{\Fl}^{\vee}$ yield (analytic) sheaves $\scrW_{\adicFL_w^{\times}}$ and $\scrW_{\adicFL_w^{\times}}^{\vee}$ on $\adicFL_w^\times$. We still use $\bfits_i$'s to denote the restrictions on $\adicFL_w^{\times}$ of the corresponding algebraic sections. By definition, the sections $\bfits_i$'s are non-vanishing on $\adicFL_w^{\times}$ and hence $\bfits_i^{\vee}$'s are well-defined sections on $\scrW_{\adicFL_w^{\times}}$. We similarly write
$$\bfits^{\vee}:=\begin{pmatrix}\bfits_1^{\vee} \\ \vdots \\  \bfits_g^{\vee}\end{pmatrix}\in (\scrW_{\adicFL_w^{\times}})^g.$$
Moreover, the right action of $\Iw_{\GSp_{2g}}$ on $\adicFL_w^\times$ induces a right action of $\Iw_{\GSp_{2g}}$ on $\scrW_{\adicFL_w^{\times}}$. For later use, we would like to understand the behaviour of $\bfits_i^{\vee}$'s under this action.

\begin{Lemma}\label{Lemma: invariance of fake Hasse invariants}
For any $\bfgamma=\begin{pmatrix}\bfgamma_a &\bfgamma_b\\ \bfgamma_c & \bfgamma_d\end{pmatrix}\in \Iw_{\GSp_{2g}}$, we have
$$ \bfgamma^*(\bfits^{\vee}) = (\bfgamma_a + \bfitz\bfgamma_c)^{-1}\cdot \bfits^{\vee}.$$ 
\end{Lemma}
\begin{proof}
To prove the identity, it suffices to check on the level of $(C, \calO_C)$-points. Using the identification
$$\adicFL^\times_w(C, \calO_C)=P_{\Siegel}(C)\backslash\GSp_{2g, w}(C),$$
the sections of $\scrW_{\adicFL_w^{\times}}$ can be identified with
$$\left\{\text{analytic functions }\phi: \GSp_{2g,w}\rightarrow C^g: \phi(\bfgamma\bfalpha)=\phi(\bfalpha)\cdot\bfgamma_a^{-1},\,\,\forall \bfgamma\in P_{\Siegel}(C),\,\, \bfalpha\in\GSp_{2g,w}(C)\right\}$$ where elements in $C^g$ are viewed as row vectors. Under this identification, $\bfits^{\vee}$ sends $\bfalpha\in \GSp_{2g,w}(C)$ to $\bfalpha_a^{-1}$. Notice that a section $\phi:\GSp_{2g,w}(C)\rightarrow C^g$ of $\scrW_{\adicFL_w^{\times}}$ is determined by its restriction on $$\left\{\begin{pmatrix}\one_g & \bfitz \\ &\one_g \end{pmatrix}: \trans\bfitz\oneanti_g=\oneanti_g\bfitz,\,\,\max_{i, j}\inf_{h\in \Z_p}\{|\bfitz_{i,j}(\bfitx)-h|\}\leq p^{-w} \right\}.$$ Let $\bfalpha=\begin{pmatrix}\one_g & \bfitz\\ & \one_g\end{pmatrix}$. Then $\bfits^{\vee}(\bfalpha)=\one_g$ and $$(\bfgamma^*(\bfits^{\vee}))(\bfalpha)=\bfits^{\vee}(\bfalpha\bfgamma)=\bfits^{\vee} \left(\begin{pmatrix} \bfgamma_a+ \bfitz\bfgamma_c & \bfgamma_b+\bfitz\bfgamma_d\\\bfgamma_c& \bfgamma_d\end{pmatrix}\right)= (\bfgamma_a+\bfitz\bfgamma_c)^{-1}=(\bfgamma_a+\bfitz\bfgamma_c)^{-1}\cdot\bfits^{\vee}(\bfalpha) $$ as desired.
\end{proof}

An immediate corollary of the lemma above is the following:

\begin{Corollary}\label{Corollary: Iw-action on s}
For any $\bfgamma = \begin{pmatrix}\bfgamma_a & \bfgamma_b\\ \bfgamma_c & \bfgamma_d\end{pmatrix}\in \Iw_{\GSp_{2g}}$, we have \[
    \bfgamma^*(\bfits) = \bfits\cdot  (\bfgamma_a + \bfitz \bfgamma_c) 
\] 
where we view $\bfits$ as a section of $\scrW_{\adicFL_w^{\times}}^{\vee}$.
\end{Corollary}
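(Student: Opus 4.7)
The plan is to deduce the corollary directly from Lemma \ref{Lemma: invariance of fake Hasse invariants} via the duality between $\bfits$ and $\bfits^{\vee}$. By construction, the row $\bfits^{\vee} \in (\scrW_{\adicFL_w^{\times}})^g$ and the column $\bfits \in (\scrW_{\adicFL_w^{\times}}^{\vee})^g$ are mutually dual local frames, so the canonical pairing $\scrW_{\adicFL_w^{\times}}^{\vee} \otimes \scrW_{\adicFL_w^{\times}} \to \scrO_{\adicFL_w^{\times}}$ produces the matrix identity $\bfits \cdot \bfits^{\vee} = \one_g$ (its $(i,j)$-entry being $\bfits_j^{\vee}(\bfits_i) = \delta_{ij}$).

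Since the right action of $\Iw_{\GSp_{2g}}$ is compatible with the canonical pairing, applying $\bfgamma^*$ yields $(\bfgamma^*\bfits) \cdot (\bfgamma^*\bfits^{\vee}) = \one_g$. Writing $\bfgamma^*\bfits = A \cdot \bfits$ for an a priori unknown $g \times g$ matrix $A$ of sections, and substituting the formula $\bfgamma^*\bfits^{\vee} = \bfits^{\vee} \cdot \trans(\bfgamma_a + \bfitz\bfgamma_c)^{-1}$ supplied by the lemma, we obtain
\[
A \cdot \bfits \cdot \bfits^{\vee} \cdot \trans(\bfgamma_a + \bfitz\bfgamma_c)^{-1} = A \cdot \trans(\bfgamma_a + \bfitz\bfgamma_c)^{-1} = \one_g,
\]
which forces $A = \trans(\bfgamma_a + \bfitz\bfgamma_c)$, as required.

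If one prefers to avoid invoking compatibility of $\bfgamma^*$ with duality, the computation can be run in parallel with the proof of the lemma. Using the identification $\adicFL_w^{\times}(C, \calO_C) = P_{\Siegel}(C)\backslash \GSp_{2g, w}(C)$, sections of $\scrW_{\adicFL_w^{\times}}^{\vee}$ correspond to analytic functions $\phi: \GSp_{2g, w}(C) \to C^g$ of row vectors satisfying $\phi(\bfgamma\bfalpha) = \phi(\bfalpha) \cdot \trans\bfgamma_a$, and $\bfits$ corresponds to $\bfalpha \mapsto \trans\bfalpha_a$. Evaluating at the standard representative $\bfalpha = \begin{pmatrix}\one_g & \bfitz \\ & \one_g \end{pmatrix}$ and using $\bfalpha\bfgamma = \begin{pmatrix}\bfgamma_a + \bfitz\bfgamma_c & \bfgamma_b + \bfitz\bfgamma_d \\ \bfgamma_c & \bfgamma_d\end{pmatrix}$, one finds
\[
(\bfgamma^*\bfits)(\bfalpha) = \bfits(\bfalpha\bfgamma) = \trans(\bfgamma_a + \bfitz\bfgamma_c) = \trans(\bfgamma_a + \bfitz\bfgamma_c) \cdot \bfits(\bfalpha),
\]
which is the desired identity. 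No serious obstacle is anticipated; the statement is essentially the dual reformulation of the previous lemma, and the only care required is tracking rows versus columns correctly so that the inverse-transpose conversion between the two representations is transparent.
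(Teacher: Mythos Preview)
Your proposal is correct and matches the paper's approach: the paper states this as an immediate corollary of Lemma~\ref{Lemma: invariance of fake Hasse invariants} without further proof, and your duality argument (using $\bfits \cdot \bfits^{\vee} = \one_g$ together with the lemma) is precisely the intended one-line deduction. Your alternative direct computation, mirroring the proof of the lemma on the level of $(C,\calO_C)$-points, is also valid and simply reproduces the same calculation on the dual side.
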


\subsection{The Hodge--Tate period map and the \texorpdfstring{$w$}{w}-ordinary locus}\label{subsection: Hodge--Tate period map and the w-ordinary locus}

We briefly recall the well-known Hodge--Tate period map in the setup of (toroidally compactified) Siegel modular variety.

The Hodge--Tate period map (see \cite[\S 1]{Pilloni-Stroh-CoherentCohomologyandGaloisRepresentations} and \S \ref{subsection: perfectoid Siegel modular variety}) is a morphism of adic spaces
$$\pi_{\HT}:\overline{\calX}_{\Gamma(p^{\infty})}\rightarrow \adicFL.$$
On the level of points, and away from the boundary, the Hodge--Tate period map has the following explicit description. Suppose 
$C$ is an algebraically closed and complete extension of $\Q_p$ and $(A, \lambda)$ is a principally polarised abelian variety over $C$. The Hodge--Tate sequence of $A$ is
\[0\rightarrow \Lie A\rightarrow T_pA\otimes_{\Z_p}C\rightarrow \omega_{A^{\vee}}\rightarrow 0,\] where ${\omega}_{A^{\vee}}$ is the dual of the Lie algebra of the dual abelian variety $A^{\vee}$ and the second last map is induced from the Hodge--Tate map $\HT_A:T_pA\rightarrow \omega_{A^{\vee}}$. 
Here, we ignore the Tate twist by fixing a compatible system of $p$-power roots of unity $(\zeta_{p^n})_{n\in \Z_{\geq 1}}$ in $\C_p$. Notice that every point $\bfitx\in \calX_{\Gamma(p^{\infty})}(C, \calO_C)$ corresponds to a quadruple $(A, \lambda, \psi_N, \psi)$ where $(A, \lambda, \psi_N)$ is a principally polarised abelian variety over $C$ with a principal level $N$ structure and $\psi$ is a symplectic isomorphism $\psi: V_p\simeq T_p A$. Then $\pi_{\HT}$ sends $\bfitx$ to the maximal lagrangian $$\Lie A\subset T_pA\otimes_{\Z_p}C\overset{\psi^{-1}}{\cong} V_p\otimes_{\Z_p} C.$$

One can extend such an explicit description to the boundary points as well using the language of 1-motives.\footnote{The formal definition can be found in \cite[\S 1.2]{Stroh-TorComp} but intuitively one can think of them in the following way: over the boundary, the universal abelian variety degenerates into a semi-abelian variety $G$, which locally is an extension of an abelian scheme of dimension $g-a$ by a torus of rank $a$. The problem is that the Tate module of $G$ has rank $2g-a$. A $1$-motive is a complex $[Y \rightarrow G]$ where $Y$ is locally a lattice of rank $a$, the same $a$ as the toric rank of $G$. The key fact is that the $\mathrm{H}^1$ of the $1$-motive is an extension of the $\mathrm{H}^1$ of $G$ and the $\mathrm{H}^1$ of $Y$. In particular, even if the Tate module of $G$ doesn't have constant rank, the $\mathrm{H}^1$ of the $1$-motive does! And concretely one uses this as the extension of the Tate module of the universal abelian variety to the boundary.} The details are left to the interested readers.

\begin{Remark}\label{Remark: GSp2g-equivariance}
\normalfont On $\adicFL$, there is a $\GSp_{2g}(\Q_p)$-action given in \S \ref{subsection: flag varieties}. 
On $\calX_{\Gamma(p^{\infty})}$, there is also a $\GSp_{2g}(\Q_p)$-action described as follows. Let $\bfgamma\in \GSp_{2g}(\Q_p)$ and let $m\in\Z$ such that $p^m\bfgamma\in M_{2g}(\Z_p)$ and $p^{m-1}\bfgamma\not\in M_{2g}(\Z_p)$. Choose $k\in \Z_{\geq 0}$ sufficiently large such that the kernel of $p^m\bfgamma: A[p^k]\rightarrow A[p^k]$ stabilises. Let $H\subset A[p^k]$ denote the corresponding kernel. Then $\bfgamma$ sends $(A, \lambda, \psi_N, \psi)$ to $(A'=A/H, \lambda', \psi'_N, \psi')$ where
\begin{itemize}
\item $\lambda'$ is the induced polarisation on $A'$;
\item $\psi'_N$ is induced from $\psi_N$ via the isomorphism $A[N]\simeq A'[N]$;
\item $\psi'$ is given by the composition
$$V_p\rightarrow V_p\otimes_{\Z_p}\Q_p\xrightarrow[]{\psi}T_pA\otimes_{\Z_p}\Q_p\rightarrow T_pA'\otimes_{\Z_p}\Q_p$$
with the first map $V_p\rightarrow V_p\otimes_{\Z_p}\Q_p$ sending $\vec{v}$ to $p^{m}\vec{v}\cdot\bfgamma^{-1}$. One checks that the composition induces a symplectic isomorphism $V_p\simeq T_pA'$.
\end{itemize}
It turns out $\pi_{\HT}$ respects the $\GSp_{2g}(\Q_p)$-action on $\calX_{\Gamma(p^{\infty})}$ and $\adicFL$. In fact, in \cite{Scholze-2015}, Scholze showed that the $\GSp_{2g}(\Q_p)$-action extends to the minimal compactification and the Hodge--Tate period map for the minimal compactification (denoted by $\pi_{\HT}^{\min}$) is $\GSp_{2g}(\Q_p)$-equivariant. However, this is not the case after extending $\pi_{\HT}$ to the toroidal compactification since one needs to change the cone decomposition in the construction of the toroidal compactification for the action of an element $\bfgamma\in \GSp_{2g}(\Q_p)\smallsetminus \GSp_{2g}(\Z_p)$.  Nevertheless, one can still see from construction of $\pi_{\HT}$ that it is $\GSp_{2g}(\Z_p)$-equivariant (\cite[\S 1.14]{Pilloni-Stroh-CoherentCohomologyandGaloisRepresentations} or \S \ref{subsection: perfectoid Siegel modular variety}; see also \cite[\S 1.2.4]{BP-highercolemannots}). Moreover, $\pi_{\HT}$ factors through $\pi_{\HT}^{\min}$.
\end{Remark}

Let $\calG^{\univ}$ be the tautological semiabelian variety over $\overline{\calX}$ extending the universal abelian variety $\calA^{\univ}$ over $\calX$. Let $\pi: \calG^{\univ} \rightarrow \overline{\calX}$ be the structure morphism with identity section $e$ and let $$\underline{\omega}:=e^*\Omega^1_{\calG^{\univ}/\overline{\calX}}$$ which is a vector bundle of rank $g$ over $\overline{\calX}$. Pulling back along the projection $h: \overline{\calX}_{\Gamma(p^{\infty})}\rightarrow \overline{\calX}$, we obtain a vector bundle $$\underline{\omega}_{\Gamma(p^{\infty})}:=h^*\underline{\omega}$$ over $\overline{\calX}_{\Gamma(p^{\infty})}$.

\begin{Proposition}\label{Proposition: fake Hasse invariants and the coherent automorphic sheaf}
There is a natural isomorphism
$$\pi_{\HT}^*\scrW_{\adicFL}^{\vee}\cong \underline{\omega}_{\Gamma(p^{\infty})}.$$
\end{Proposition}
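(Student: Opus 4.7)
The plan is to unwind the definitions on both sides and identify the common answer with (the dual of) the Lie algebra of the universal (semi)abelian variety, using the Hodge--Tate sequence that underlies the definition of $\pi_{\HT}$.

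First, I would work on the interior $\calX_{\Gamma(p^{\infty})}$. The explicit description of $\pi_{\HT}$ recalled above says that a point $\bfitx$ corresponding to $(A,\lambda,\psi_N,\psi)$ is sent to the maximal lagrangian $\psi^{-1}(\Lie A)\subset V_p\otimes_{\Z_p}C$. Since $\scrW_{\adicFL}$ is tautologically the universal maximal lagrangian inside $\scrO_{\adicFL}^{2g}$, its pullback $\pi_{\HT}^*\scrW_{\adicFL}$ is the subsheaf of $V_p\otimes_{\Z_p}\scrO_{\overline{\calX}_{\Gamma(p^{\infty})}}$ whose fiber at each $\bfitx$ is $\psi^{-1}(\Lie A)$. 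Via the universal level structure $\psi$, this subsheaf is canonically identified with $h^*\Lie\calA^{\univ}$. Dualizing yields $\pi_{\HT}^*\scrW_{\adicFL}^{\vee}\cong h^*(\Lie\calA^{\univ})^{\vee}= h^*\omega_{\calA^{\univ}}=\underline{\omega}_{\Gamma(p^{\infty})}|_{\calX_{\Gamma(p^{\infty})}}$, exactly as desired on the open part.

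Next, I would extend the identification across the toroidal boundary. The Hodge--Tate period map is defined on all of $\overline{\calX}_{\Gamma(p^{\infty})}$ in \cite{Pilloni-Stroh-CoherentCohomologyandGaloisRepresentations}, and its behavior on boundary points is governed by the theory of $1$-motives: at a cusp, the universal semiabelian variety $\calG^{\univ}$ replaces $A$, the associated $1$-motive provides a natural Hodge--Tate-type filtration whose sub is $\Lie\calG^{\univ}$, and $\pi_{\HT}$ still sends the point to the resulting lagrangian in $V_p\otimes_{\Z_p}\scrO$. Matching $\pi_{\HT}^*\scrW_{\adicFL}$ with $h^*\Lie\calG^{\univ}$ on all of $\overline{\calX}_{\Gamma(p^{\infty})}$ and dualizing then produces the global isomorphism $\pi_{\HT}^*\scrW_{\adicFL}^{\vee}\cong h^*\underline{\omega}=\underline{\omega}_{\Gamma(p^{\infty})}$.

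The main obstacle is carrying out the boundary step cleanly, i.e. verifying that the identification of the universal lagrangian with $h^*\Lie\calG^{\univ}$ extends across the toroidal boundary divisor. I would do this either via the $1$-motive description of the Hodge--Tate filtration for semiabelian degenerations, or by appealing directly to the construction of $\pi_{\HT}$ in terms of the modified integral models of \cite{Pilloni-Stroh-CoherentCohomologyandGaloisRepresentations} recalled in \S\ref{section: boundary}, where both $\scrW_{\adicFL}$ and $\underline{\omega}$ admit natural integral avatars that are forced to match. Naturality and $\GSp_{2g}(\Z_p)$-equivariance of the resulting isomorphism are then automatic, in view of Remark \ref{Remark: GSp2g-equivariance}.
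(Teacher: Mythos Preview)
Your proposal is correct and close in spirit to the paper's proof, but the tactic differs in one useful way. You identify $\pi_{\HT}^*\scrW_{\adicFL}$ with the \emph{sub} $h^*\Lie\calG^{\univ}$ in the Hodge--Tate sequence and then dualise. The paper instead works directly with the \emph{quotient}: it observes that $\scrW_{\adicFL}^{\vee}$ is tautologically the universal maximal lagrangian quotient of $V_p^{\vee}\otimes_{\Z_p}\scrO_{\adicFL}$, and that $\underline{\omega}_{\Gamma(p^{\infty})}$ is realised as exactly such a quotient via the linearised Hodge--Tate surjection $\HT_{\Gamma(p^{\infty})}\otimes\id:V_p^{\vee}\otimes_{\Z_p}\scrO_{\overline{\calX}_{\Gamma(p^{\infty})}}\twoheadrightarrow \underline{\omega}_{\Gamma(p^{\infty})}$. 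Since this surjection is precisely what \emph{defines} $\pi_{\HT}$ on all of $\overline{\calX}_{\Gamma(p^{\infty})}$ (see \S\ref{subsection: perfectoid Siegel modular variety}), the identification is then tautological and the boundary extension comes for free.

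The practical payoff is that what you flag as ``the main obstacle'' --- extending across the toroidal boundary --- dissolves in the paper's formulation: no separate argument via $1$-motives or integral avatars is needed, because the global surjection onto $\underline{\omega}_{\Gamma(p^{\infty})}$ is already part of the construction of $\pi_{\HT}$ from \cite{Pilloni-Stroh-CoherentCohomologyandGaloisRepresentations}. Your route would of course reach the same conclusion, but only after redoing at the boundary what the quotient description gives immediately.
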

\begin{proof}
Let $\calA^{\univ}_{\Gamma(p^{\infty})}$ be the pullback of $\calA^{\univ}$ to $\calX_{\Gamma(p^{\infty})}$. Away from the boundary, we have a universal trivialisation $\psi^{\univ}:V_p\cong T_p\calA^{\univ}_{\Gamma(p^{\infty})}$. Let $\psi^{\univ, \vee}:V_p^{\vee}\cong T_p\calA^{\univ, \vee}_{\Gamma(p^{\infty})}$ be the dual trivialisation. The Hodge--Tate map on the universal abelian variety $\calA^{\univ}_{\Gamma(p^{\infty})}$ induces a map
$$\HT_{\Gamma(p^{\infty})}:V_p^{\vee}\overset{\psi^{\univ,\vee}}{\cong} T_p\calA^{\univ, \vee}_{\Gamma(p^{\infty})}\rightarrow \underline{\omega}_{\Gamma(p^{\infty})}|_{\calX_{\Gamma(p^{\infty})}}$$
which induces a surjection
$$\HT_{\Gamma(p^{\infty})}\otimes \id:V_p^{\vee}\otimes_{\Z_p}\scrO_{\calX_{\Gamma(p^{\infty})}}\twoheadrightarrow \underline{\omega}_{\Gamma(p^{\infty})}|_{\calX_{\Gamma(p^{\infty})}}.$$
According to \S \ref{subsection: perfectoid Siegel modular variety},
this surjection extends to a surjection
$$\HT_{\Gamma(p^{\infty})}\otimes \id:V_p^{\vee}\otimes_{\Z_p}\scrO_{\overline{\calX}_{\Gamma(p^{\infty})}}\twoheadrightarrow \underline{\omega}_{\Gamma(p^{\infty})}$$
on the entire perfectoid Siegel modular variety.

Consequently, the sheaf $\pi_{\HT}^* \scrW_{\adicFL}^{\vee}$, being the universal maximal Lagrangian quotient of $V_p^{\vee}\otimes_{\Z_p} \scrO_{\overline{\calX}_{\Gamma(p^{\infty})}}$, coincides with $\underline{\omega}_{\Gamma(p^{\infty})}$.
\end{proof}

Recall the sections $\bfits_i$ of $\scrW_{\adicFL}^{\vee}$ defined in \S \ref{subsection: vector bundles on the flag variety}. We define sections $\fraks_i\in \underline{\omega}_{\Gamma(p^{\infty})}$ by \begin{equation}\label{eq: fake Hasse invariants}
    \fraks_i:=\pi_{\HT}^*\bfits_i.
\end{equation} 
From the construction, one sees that
\begin{equation}\label{eq:fake Hasse invariants basis}
\fraks_i=\HT_{\Gamma(p^{\infty})}(e_i^{\vee})
\end{equation}
for all $i=1, \ldots, g$. These $\fraks_i$'s are examples of \textit{fake Hasse invariants} studied in \cite{Scholze-2015}. We also write $$\fraks:=\begin{pmatrix}\fraks_1& \cdots& \fraks_g\end{pmatrix}=\pi_{\HT}^*\bfits.$$ 

To wrap up the section, we introduce the notion of ``$w$-ordinary locus'' of the perfectoid Siegel modular variety. In particular, it is an open subset of $\overline{\calX}_{\Gamma(p^{\infty})}$ which contains the usual ordinary locus.

\begin{Definition}\label{Definition: w-ordinary}
For every $w\in \Q_{>0}$, define
\[\overline{\calX}_{\Gamma(p^{\infty}), w}:=\pi_{\HT}^{-1}(\adicFL_{w}^\times).\]
We also define
\[\overline{\calX}_{\Gamma(p^n), w}:=h_n(\overline{\calX}_{\Gamma(p^{\infty}), w}), \quad\overline{\calX}_{\Iw^+, w}:=h_{\Iw^+}(\overline{\calX}_{\Gamma(p^{\infty}), w}),\quad \overline{\calX}_{\Iw, w}:=h_{\Iw}(\overline{\calX}_{\Gamma(p^{\infty}), w}), \quad \text{and }\quad \overline{\calX}_w:=h(\overline{\calX}_{\Gamma(p^{\infty}), w}),\]
where $h_n: \overline{\calX}_{\Gamma(p^{\infty})} \rightarrow \overline{\calX}_{\Gamma(p^{n})}$, $h_{\Iw^+}: \overline{\calX}_{\Gamma(p^{\infty})}\rightarrow \overline{\calX}_{\Iw^+}$,  $h_{\Iw}: \overline{\calX}_{\Gamma(p^{\infty})}\rightarrow \overline{\calX}_{\Iw}$, and $h: \overline{\calX}_{\Gamma(p^{\infty})}\rightarrow \overline{\calX}$ are the natural projections. The subspaces $\overline{\calX}_{\Gamma(p^{\infty}), w}$, $\overline{\calX}_{\Gamma(p^n), w}$, $\overline{\calX}_{\Iw^+, w}$, $\overline{\calX}_{\Iw, w}$, and $\overline{\calX}_w$ are called the \textit{\textbf{$w$-ordinary loci}} of $\overline{\calX}_{\Gamma(p^{\infty})}$, $\overline{\calX}_{\Gamma(p^n)}$, $\overline{\calX}_{\Iw^+}$, $\overline{\calX}_{\Iw}$, and $\overline{\calX}$, respectively.
\end{Definition}

\begin{Remark}
\normalfont We point out that the $w$-ordinary loci defined above are analogues of the ``anti-canonical loci'' introduced in \cite{Scholze-2015} (also see \cite{BHW-2019}). They are different from the ``canonical loci '' used in \cite{CHJ-2017}. One can use the Atkin--Lehner operator (see Remark \ref{Remark: Atkin--Lehner operator}) to pass between the two types of loci.
\end{Remark}

We still denote by
$$\pi_{\HT}:\overline{\calX}_{\Gamma(p^{\infty}), w}\rightarrow \adicFL_{w}^\times$$
the restriction of the Hodge--Tate period map on the $w$-ordinary locus. It is equivariant under the right $\Iw_{\GSp_{2g}}$-actions on both sides.

Denote by $\frakz_{ij}:=\pi_{\HT}^*\bfitz_{ij}$ and $\frakz:=(\frakz_{i,j})_{1\leq i,j\leq g}=\pi_{\HT}^*\bfitz$. Let $\fraks_i^{\vee}:=\pi_{\HT}^*(\bfits_i^{\vee})$ and $$\fraks^{\vee}:=\begin{pmatrix}\fraks_1^{\vee} \\  \vdots \\  \fraks_g^{\vee}\end{pmatrix}=\pi_{\HT}^*(\bfits^{\vee}).$$
By Lemma \ref{Lemma: invariance of fake Hasse invariants} and Corollary \ref{Corollary: Iw-action on s}, we have
$$\bfgamma^*(\fraks^{\vee})  =(\bfgamma_a + \frakz\bfgamma_c)^{-1}\cdot \fraks^{\vee} $$ 
and 
\begin{equation}\label{eq: action on fake Hasse invariants}
    \bfgamma^*\fraks =  \fraks \cdot (\bfgamma_a + \frakz\bfgamma_c)
\end{equation}
for all $\bfgamma=\begin{pmatrix}\bfgamma_a &\bfgamma_b\\ \bfgamma_c & \bfgamma_d\end{pmatrix}\in \Iw_{\GSp_{2g}}$. We will need these sections $\fraks_i$'s and $\fraks_i^{\vee}$'s in \S \ref{subsection: admissibility} and \S \ref{subsection:comparison sheaf aip}.
\section{Overconvergent automorphic sheaves}\label{section:constructionsheaf}
In this section, we construct the overconvergent automorphic sheaves using the geometric objects introduced in the previous section. In particular, we generalise the ``perfectoid method'' which was originally adopted by Chojecki--Hansen--Johansson in \cite{CHJ-2017} to handle the compact Shimura curves over $\Q$. Notice that overconvergent automorphic sheaves are first introduced by Andreatta--Iovita--Pilloni in \cite{AIP-2015} using a different approach. At the end of the section we shall compare the two constructions (when $p>2g$).

\subsection{The perfectoid construction}\label{subsection: the perfectoid construction}
Let $\Alg_{(\Z_p, \Z_p)}$ be the category of complete sheafy $(\Z_p, \Z_p)$-algebras. We consider the functor $$\Alg_{(\Z_p, \Z_p)}\rightarrow \Sets, \quad (R, R^+)\mapsto \Hom_{\Groups}^{\cts}(T_{\GL_g, 0}, R^{\times}),$$ which is represented by the $(\Z_p, \Z_p)$-algebra $(\Z_p\llbrack T_{\GL_g,0}\rrbrack, \Z_p\llbrack T_{\GL_g, 0}\rrbrack)$. The \textit{\textbf{weight space}} is $$\calW:=\Spa(\Z_p\llbrack T_{\GL_g,0}\rrbrack,\Z_p\llbrack T_{\GL_g,0}\rrbrack)^{\rig},$$ where the superscript ``rig'' stands for taking the generic fibre. Every continuous group homomorphism $\kappa: T_{\GL_g, 0}\rightarrow R^{\times}$ can be expressed as $\kappa=(\kappa_{1}, ..., \kappa_{g})$ where each $\kappa_{i}:\Z_p^\times\rightarrow R^\times$ is a continuous group homomorphism. We write $\kappa^{\vee}:=(-\kappa_{g}, ..., -\kappa_{1})$ where $-\kappa_i$ is the inverse of $\kappa_i$. 

We adapt the terminologies of \say{small weights} and \say{affinoid weights} introduced in \cite{CHJ-2017} to our setting:

\begin{Definition}\label{Definition: weights}
\begin{enumerate}
    \item[(i)] A \textbf{small $\Z_p$-algebra} is a $p$-torsion free reduced ring which is also a finite $\Z_p\llbrack T_1, ..., T_d\rrbrack$-algebra for some $d\in \Z_{\geq 0}$. In particular, a small $\Z_p$-algebra is equipped with a canonical adic profinite topology and is complete with respect to the $p$-adic topology.
    \item[(ii)] A \textbf{small weight} is a pair $(R_{\calU}, \kappa_{\calU})$ where $R_{\calU}$ is a small $\Z_p$-algebra and $\kappa_{\calU}:T_{\GL_g,0}\rightarrow R^{\times}_{\calU}$ is a continuous group homomorphism such that $\kappa_{\calU}((1+p)\one_g)-1$ is topologically nilpotent in $R_{\calU}$ with respect to the $p$-adic topology. By the universal property of the weight space, we obtain a natural morphism
    \[\Spa(R_{\calU}, R_{\calU})^{\rig}\rightarrow \calW.\]
Occasionally, we abuse the terminology and call $\calU:=\Spa(R_{\calU}, R_{\calU})$ a small weight. For later use, we write $R_{\calU}^+:=R_{\calU}$ in this situation.
    \item[(iii)] An \textbf{affinoid weight} is a pair $(R_{\calU}, \kappa_{\calU})$ where $R_{\calU}$ is a reduced Tate algebra topologically of finite type over $\Q_p$ and $\kappa_{\calU}:T_{\GL_g,0}\rightarrow R^{\times}_{\calU}$ is a continuous group homomorphism. By the universal property of weight space, we obtain a natural morphism
     \[\Spa(R_{\calU}, R^{\circ}_{\calU})\rightarrow \calW.\]
Occassionally, we abuse the terminology and call $\calU:=\Spa(R_{\calU}, R^{\circ}_{\calU})$ an affinoid weight. For later use, we write $R_{\calU}^+=R_{\calU}^{\circ}$ in this situation.
    \item[(iv)] By a \textbf{weight}, we shall mean either a small weight or an affinoid weight.
\end{enumerate}
\end{Definition}

\begin{Remark}\label{Remark: convention on weights}
\normalfont For any $n\in \Z_{\geq 0}$, we view $n$ as a weight by identifying it with the character $$T_{\GL_g, 0}\rightarrow \Z_p^\times, \quad \diag(\bftau_1, ..., \bftau_g)\mapsto \prod_{i=1}^g\bftau_i^n.$$ Moreover, for any weight $\kappa=(\kappa_1, ..., \kappa_g)$, we write $\kappa+n $ for the weight $ (\kappa_1+n, ..., \kappa_g+n)$ defined by $$\diag(\bftau_1, ..., \bftau_g)\mapsto \prod_{i=1}^g\kappa_i(\bftau_i)\bftau_i^n.$$
\end{Remark}

We adopt the notation of \say{mixed completed tensor} used in \cite{CHJ-2017}:

\begin{Definition}\label{Definition: unadorned completed tensor}
Let $R$ be a small $\Z_p$-algebra.
\begin{enumerate}
\item[(i)] For any $\Z_p$-module $M$, we define \footnote{Our notation $\widehat{\otimes}'$ corresponds to the notation $\widehat{\otimes}$ in \cite[Definition 6.3]{CHJ-2017}. We make this change to distinguish from the one in Definition \ref{Definition: unadorned completed tensor} (ii).}
$$M\widehat{\otimes}' R:=\varprojlim _{j\in J} (M\otimes_{\Z_p}R/I_j)$$
where $\{I_j: j\in J\}$ runs through a cofinal system of neighborhood of $0$ consisting of $\Z_p$-submodules of $R$. If, in addition, $M$ is a $\Z_p$-algebra, then $M\widehat{\otimes}' R$ is also a $\Z_p$-algebra.
\item[(ii)] Let $B$ be a $\Q_p$-Banach space and let $B_0$ be an open and bounded $\Z_p$-submodule. We define the \textbf{mixed completed tensor}
$$B\widehat{\otimes}R:=(B_0\widehat{\otimes}'R)[\frac{1}{p}].$$
which is in fact independent of the choice of $B_0$. 
\end{enumerate}
\end{Definition}

\begin{Definition}\label{Definition: mixed completed tensor}
Let $(R_{\calU}, \kappa_{\calU})$ be a weight.
\begin{enumerate}
\item[(i)] For any $\Z_p$-module $M$, the term $M\widehat{\otimes}R^+_{\calU}$ will either stand for $M\widehat{\otimes}' R_{\calU}$ in the case of small weights (notice that $R_{\calU}=R^+_{\calU}$ in this case), or stand for the $p$-adically completed tensor over $\Z_p$ in the case of affinoid weights.
\item[(ii)]
For any $\Q_p$-Banach space $B$, the term $B\widehat{\otimes} R_{\calU}$ will either stand for the mixed completed tensor in the case of small weights, or stand for the usual $p$-adically completed tensor over $\Q_p$ in the case of affinoid weights. 
\end{enumerate}
\end{Definition}

\begin{Remark}\label{Remark: uniform Banach algebra structure 1}\normalfont 
For a uniform Banach $\Q_p$-algebra $B$ and any weight $(R_{\calU}, \kappa_{\calU})$, the tensor product $B\widehat{\otimes}R_{\calU}$ also admits a structure of uniform $\Q_p$-Banach algebra. In particular,
if $B^{\circ}$ is the unit ball of $B$ (with respect to the unique power-multiplicative Banach algebra norm), then the unit ball in $B\widehat{\otimes}R_{\calU} = B\widehat{\otimes}_{\Q_p}R_{\calU}^+[1/p]$ is given by $B^{\circ}\widehat{\otimes} R^+_{\calU}$. Here, note that $R_{\calU}^+[1/p]$ has a structure of a uniform Banach $\Q_p$-algebra given by the corresponding spectral norm (see \cite[pp. 202]{CHJ-2017}).
\end{Remark}

Next, we introduce the notion of ``$r$-analytic functions''.

\begin{Definition}\label{Definition: w-analytic functions} Let $r\in \Q_{>0}$ and $n\in \Z_{\geq 1}$. Let $B$ be a uniform $\C_p$-Banach algebra and let $B^{\circ}$ be the corresponding unit ball.
\begin{enumerate}
\item[(i)] A function $f: \Z_p^n\rightarrow B$ (resp., a function $f:(\Z_p^{\times})^n\rightarrow B$) is called \textbf{$r$-analytic} if for every $\underline{a}=(a_1, \ldots, a_n)\in \Z_p^n$ (resp., every $\underline{a}=(a_1, \ldots, a_n)\in (\Z_p^{\times})^n$), there exists a power series $f_{\underline{a}}\in B\llbrack T_1, \ldots, T_n\rrbrack$ which converges on the $n$-dimensional closed unit ball $\mathbf{B}^n(0, p^{-r})\subset \C_p^n$ of radius $p^{-r}$ such that 
$$f(x_1+a_1, \ldots, x_n+a_n)=f_{\underline{a}}(x_1, \ldots, x_n)$$
for all $x_i\in p^{\lceil r\rceil}\Z_p$, $i=1,\ldots, n$. Here $\lceil r\rceil$ stands for the smallest integer that is greater or equal to $r$.
\item[(ii)] Let $C^{r-\an}(\Z_p^n, B)$ (resp., $C^{r-\an}((\Z_p^{\times})^n, B)$) denote the set of $r$-analytic functions from $\Z_p^n$ (resp., $(\Z_p^{\times})^n$) to $B$.
\item[(iii)] Let $C^{r-\an}(\Z_p^n, B^{\circ})$ (resp., $C^{r-\an}((\Z_p^{\times})^n, B^{\circ})$) denote the subset of $C^{r-\an}(\Z_p^n, B)$ (resp., $C^{r-\an}((\Z_p^{\times})^n, B)$) consisting of those functions with value in $B^{\circ}$.
\end{enumerate}
\end{Definition}

\begin{Remark}\label{Remark: uniform Banach algebra structure 2}
\normalfont We claim that $C^{r-\an}(\Z_p^n, B)$ (resp., $C^{r-\an}((\Z_p^{\times})^n, B)$) admits a natural structure of uniform $\C_p$-Banach algebra. Indeed, express $\Z_p^n$ as the disjoint union of $p^{n\lceil r\rceil}$ closed balls of radius $p^{\lceil r\rceil}$, labelled by an index set $A$ of size $p^{n\lceil r\rceil}$. Then, for every $f\in \calC^{r-\an}(\Z_p^n, B)$, the restriction of $f$ on each closed ball (with label $a\in A$) is given by a power series $$f_a\in B\langle \frac{T_1}{p^r}, \ldots, \frac{T_n}{p^r}\rangle$$
where $B\langle \frac{T_1}{p^r}, \ldots, \frac{T_n}{p^r}\rangle$ stands for the subset of $B\llbrack T_1, \ldots, T_n\rrbrack$ which converges on the $n$-dimensional closed unit ball $\mathbf{B}^n(0, p^{-r})\subset \C_p^n$. Let $|\bullet|_B$ be the unique power-multiplicative norm on $B$. We can equip $B\langle \frac{T_1}{p^r}, \ldots, \frac{T_n}{p^r}\rangle$ with the following norm: for every $g=\sum_{\nu\in \Z_{\geq 0}^n} b_{\nu}T^{\nu}$, we put 
$$|g|:=\sup_{\nu\in \Z_{\geq 0}^n}|b_{\nu}|_B\cdot p^{-r |\nu|}.$$
Finally, if $f\in C^{r-\an}(\Z_p^n, B)$ is represented by $\{f_a\}_{a\in A}$, we put $|f|:=\sup_{a\in A} |f_a|$. This is indeed a uniform Banach norm with unit ball $\calC^{r-\an}(\Z_p^n, B^{\circ})$.
\end{Remark}

\begin{Definition}\label{Definition: w-analytic weight}
\begin{enumerate}
\item[(i)] A weight $(R_{\calU}, \kappa_{\calU})$ is called \textbf{$r$-analytic} if it is $r$-analytic when viewed as a function
$$\kappa_{\calU}:(\Z_p^{\times})^g\rightarrow R_{\calU}^{\times}\subset \C_p\widehat{\otimes}R_{\calU}$$
via the identification $T_{\GL_g,0}\cong (\Z_p^{\times})^g$.
\item[(ii)] For a weight $(R_{\calU}, \kappa_{\calU})$, we write $r_{\calU}$ for the smallest positive integer $r$ such that the weight is $r$-analytic.
\end{enumerate}
\end{Definition}

\begin{Remark}
\normalfont \begin{enumerate}
\item[(i)] It is well-known that every continuous character $\Z_p^{\times}\rightarrow R_{\calU}^{\times}$ is $r$-analytic for sufficiently large $r$. Moreover, if such a character is $r$-analytic, it necessarily extends to a character
$$\Z_p^{\times}(1+p^{r+1}\calO_{\C_p})\rightarrow (\calO_{\C_p}\widehat{\otimes}R_{\calU}^+)^{\times}\subset \C_p\widehat{\otimes}R_{\calU}.$$ See, for example, \cite[Proposition 2.6]{CHJ-2017}. 
\item[(ii)] If we write $\kappa_{\calU}=(\kappa_{\calU, 1}\ldots, \kappa_{\calU, g})$ with components $\kappa_{\calU,i}:\Z_p^{\times}\rightarrow R_{\calU}^{\times}$, then $\kappa_{\calU}$ is $r$-analytic if and only if all $\kappa_{\calU,i}$'s are $r$-analytic.
In this case, for any $w\in \Q_{>0}$ with $w>1+r_{\calU}$, $\kappa_{\calU}$ extends to a character
$$\kappa_{\calU}: T^{(w)}_{\GL_g, 0}\rightarrow (\calO_{\C_p}\widehat{\otimes}R_{\calU}^+)^{\times}\subset \C_p\widehat{\otimes}R_{\calU}.$$
\end{enumerate}
\end{Remark}

\begin{Definition}
Let $B$ be a uniform $\C_p$-Banach algebra.
\begin{enumerate}
\item[(i)] A function $\psi: U_{\GL_g, 1}^{\opp}\rightarrow B$ is called \textbf{$r$-analytic} if, under the (topological) identification
$$U^{\opp}_{\GL_g, 1}=\begin{pmatrix}1&&&\\p\Z_p&1&&\\\vdots&&\ddots&\\p\Z_p&\ldots&p\Z_p&1\end{pmatrix}\simeq \Z_p^{\frac{(g-1)g}{2}},$$
the function $$\psi:\Z_p^{\frac{(g-1)g}{2}}\rightarrow B$$
is $r$-analytic. Let $C^{r-\an}(U^{\opp}_{\GL_g,1}, B)$ denote the space of such functions. 
\item[(ii)] Let $(R_{\calU}, \kappa_{\calU})$ be an $r$-analytic weight. Using the decomposition $B_{\GL_g,0}=T_{\GL_g,0}U_{\GL_g,0}$, we extend $\kappa_{\calU}$ to a group homomorphism $\kappa_{\calU}: B_{\GL_g,0}\rightarrow R_{\calU}^{\times}$ by setting $\kappa_{\calU}|_{U_{\GL_g,0}}=1$. Define 
$$C^{r-\an}_{\kappa_{\calU}}(\Iw_{\GL_g}, B):=\left\{f: \Iw_{\GL_g}\rightarrow B: \begin{array}{l}
    f(\bfgamma\bfbeta)=\kappa_{\calU}(\bfbeta)f(\bfgamma),\,\,\forall \bfbeta\in B_{\GL_g, 0},\,\,\bfgamma \in \Iw_{\GL_g}\\
    f|_{U_{\GL_g, 1}^{\opp}}\text{ is $r$-analytic} 
\end{array}\right\}.$$ 
\item[(iii)] Let $C^{r-\an}_{\kappa_{\calU}}(\Iw_{\GL_g}, B^{\circ})$ denote the subset of $C^{r-\an}_{\kappa_{\calU}}(\Iw_{\GL_g}, B)$ consisting of those functions with value in $B^{\circ}$.
\end{enumerate}
\end{Definition}

\begin{Remark}\label{Remark: uniform Banach algebra structure 3}
\normalfont According to Remark \ref{Remark: uniform Banach algebra structure 2}, $C^{r-\an}(U^{\opp}_{\GL_g,1}, B)$ admits a structure of uniform $\C_p$-Banach algebra. Notice that an element in $C^{r-\an}_{\kappa_{\calU}}(\Iw_{\GL_g}, B)$ is determined by its restriction on $U^{\opp}_{\GL_g, 1}$. Consequently, $C^{r-\an}_{\kappa_{\calU}}(\Iw_{\GL_g}, B)$ admits a structure of uniform $\C_p$-Banach algebra via the identification $$C^{r-\an}_{\kappa_{\calU}}(\Iw_{\GL_g}, B)\cong C^{r-\an}(U^{\opp}_{\GL_g,1}, B).$$ In particular, $C^{r-\an}_{\kappa_{\calU}}(\Iw_{\GL_g}, B^{\circ})$ is the corresponding unit ball in $C^{r-\an}_{\kappa_{\calU}}(\Iw_{\GL_g}, B)$.
\end{Remark}

\begin{Remark}\label{Remark: extend to w-analytic nbhd}
\normalfont Let $\kappa_{\calU}$ be a weight and let $w\in \Q_{>0}$ with $w>r_{\calU}+1$. Recall that we have a decomposition $B_{\GL_g, 0}^{(w)}=T_{\GL_g,0}^{(w)}U_{\GL_g,0}^{(w)}$. Since $w>1+r_{\calU}$, $\kappa_{\calU}$ extends to a character on $T_{\GL_g,0}^{(w)}$, and hence to a character on $B_{\GL_g,0}^{(w)}$ by setting $\kappa_{\calU}|_{U_{\GL_g,0}^{(w)}}=0$.

We claim that every element $f$ in $C^{w-\an}_{\kappa_{\calU}}(\Iw_{\GL_g}, B)$ (resp., $C^{w-\an}_{\kappa_{\calU}}(\Iw_{\GL_g}, B^{\circ})$) naturally extends to a function $$f:\Iw_{\GL_g}^{(w)}\rightarrow B\quad(\text{resp., }\,\,f:\Iw_{\GL_g}^{(w)}\rightarrow B^{\circ})$$ such that $f(\bfgamma\bfbeta)=\kappa_{\calU}(\bfbeta)f(\bfgamma)$ for all $\bfbeta\in B_{\GL_g, 0}^{(w)}$ and $\bfgamma\in \Iw_{\GL_g}^{(w)}$. Indeed, we have a decomposition $$\Iw_{\GL_g}^{(w)}=U_{\GL_g, 1}^{\opp,(w)}T_{\GL_g,0}^{(w)}U_{\GL_g,0}^{(w)}.$$ Then for every $\bfnu\in U^{\opp,(w)}_{\GL_g,1}$, $\bftau\in T_{\GL_g,0}^{(w)}$, and $\bfnu'\in U_{\GL_g,0}^{(w)}$, we put $$f(\bfnu\bftau\bfnu')=\kappa_{\calU}(\bftau)f(\bfnu).$$ It is straightforward to check that $f$ is well-defined and satisfies the required condition. 
\end{Remark}

\begin{Definition}\label{Definition: strict Iwahori action on w-analytic representation for IwGL_g}
As a consequence of Remark \ref{Remark: extend to w-analytic nbhd}, given $w\in \Q_{>0}$ with $w>1+r_{\calU}$, there is a natural left action of $\Iw_{\GL_g}^{+, (w)}$ on $C^{w-\an}_{\kappa_{\calU}}(\Iw_{\GL_g}, B)$ and $C^{w-\an}_{\kappa_{\calU}}(\Iw_{\GL_g}, B^{\circ})$ (hence also a left action of $\Iw_{\GL_g}^+$) given by $$
    (\bfgamma\cdot f)(\bfgamma')=f(\trans\bfgamma\bfgamma')
$$
for all $\bfgamma\in \Iw^{+, (w)}_{\GL_g}$, $\bfgamma'\in \Iw_{\GL_g}$, and $f\in C^{w-\an}_{\kappa_{\calU}}(\Iw_{\GL_g}, B)$ (resp., $C^{w-\an}_{\kappa_{\calU}}(\Iw_{\GL_g}, B^{\circ})$). This left action is denoted by $\rho_{\kappa_{\calU}}: \Iw^{+,(w)}_{\GL_g}\rightarrow \Aut(C^{w-\an}_{\kappa_{\calU}}(\Iw_{\GL_g}, B))$ (resp., $\rho_{\kappa_{\calU}}: \Iw^{+,(w)}_{\GL_g}\rightarrow \Aut(C^{w-\an}_{\kappa_{\calU}}(\Iw_{\GL_g}, B^{\circ}))$).
\end{Definition}

We are ready to define the sheaf of overconvergent Siegel modular forms. 

\begin{Definition}\label{Definition: the sheaf of overconvergent Siegel forms}Let $(R_{\calU}, \kappa_{\calU})$ be a weight and let $w\in \Q_{>0}$ with $w> 1+r_{\calU}$.
\begin{enumerate}
\item[(i)] Let $\scrO_{\overline{\calX}_{\Gamma(p^{\infty}), w}}\widehat{\otimes}R_{\calU}$ be the sheaf on $\overline{\calX}_{\Gamma(p^{\infty}), w}$ given by $$\calY\mapsto \scrO_{\overline{\calX}_{\Gamma(p^{\infty}), w}}(\calY)\widehat{\otimes}R_{\calU}$$ for every affinoid open subset $\calY\subset \overline{\calX}_{\Gamma(p^{\infty}), w}$. This is in fact a sheaf of uniform $\C_p$-Banach algebra; i.e., $(\scrO_{\overline{\calX}_{\Gamma(p^{\infty}), w}}\widehat{\otimes}R_{\calU})(\calY)$ is a uniform $\C_p$-Banach algebra for every affinoid open $\calY$.

Similarly, let $\scrO^+_{\overline{\calX}_{\Gamma(p^{\infty}), w}}\widehat{\otimes}R^+_{\calU}$ be the sheaf on $\overline{\calX}_{\Gamma(p^{\infty}), w}$ given by $$\calY\mapsto \scrO^+_{\overline{\calX}_{\Gamma(p^{\infty}), w}}(\calY)\widehat{\otimes}R^+_{\calU}$$ for every affinoid open subset $\calY\subset \overline{\calX}_{\Gamma(p^{\infty}), w}$.
\item[(ii)] For any $r\in \Q_{>0}$ with $r>1+ r_{\calU}$, let $\scrC^{r-\an}_{\kappa_{\calU}}(\Iw_{\GL_g}, \scrO_{\overline{\calX}_{\Gamma(p^{\infty}), w}}\widehat{\otimes}R_{\calU})$ denote the sheaf on $\overline{\calX}_{\Gamma(p^{\infty}), w}$ given by 
$$\calY\mapsto C^{r-\an}_{\kappa_{\calU}}(\Iw_{\GL_g}, \scrO_{\overline{\calX}_{\Gamma(p^{\infty}), w}}(\calY)\widehat{\otimes}R_{\calU})$$ for every affinoid open subset $\calY\subset \overline{\calX}_{\Gamma(p^{\infty}), w}$. This is also a sheaf of uniform $\C_p$-Banach algebra.

The sheaf $\scrC^{r-\an}_{\kappa_{\calU}}(\Iw_{\GL_g}, \scrO^+_{\overline{\calX}_{\Gamma(p^{\infty}), w}}\widehat{\otimes}R^+_{\calU})$ is defined in the same way.

\item[(iii)] The \textbf{sheaf of $w$-overconvergent Siegel modular forms of strict Iwahori level and weight $\kappa_{\calU}$} is a subsheaf $\underline{\omega}_w^{\kappa_{\calU}}$ of $h_{\Iw^+, *}\scrC^{w-\an}_{\kappa_{\calU}}(\Iw_{\GL_g}, \scrO_{\overline{\calX}_{\Gamma(p^{\infty}), w}}\widehat{\otimes}R_{\calU})$ defined as follows. For every affinoid open subset $\calV\subset \overline{\calX}_{\Iw^+, w}$ with $\calV_{\infty} = h_{\Iw^+}^{-1}(\calV)$, we put $$\underline{\omega}_w^{\kappa_{\calU}}(\calV):=\left\{f\in C^{w-\an}_{\kappa_{\calU}}(\Iw_{\GL_g}, \scrO_{\overline{\calX}_{\Gamma(p^{\infty}), w}}(\calV_{\infty})\widehat{\otimes}R_{\calU}): \bfgamma^*f=\rho_{\kappa_{\calU}}(\bfgamma_a+\frakz\bfgamma_c)^{-1} f,\,\,\forall \bfgamma=\begin{pmatrix}\bfgamma_a & \bfgamma_b\\ \bfgamma_c & \bfgamma_d\end{pmatrix}\in \Iw_{\GSp_{2g}}^+\right\}.$$ 
Here, $\bfgamma^*f$ stands for the left action of $\bfgamma$ on $\scrO_{\overline{\calX}_{\Gamma(p^{\infty}), w}}$ induced by the natural right $\Iw_{\GSp_{2g}}^+$-action on $\overline{\calX}_{\Gamma(p^{\infty}), w}$ defined in \S \ref{subsection: Hodge--Tate period map and the w-ordinary locus}. 
    
Similarly, the \textbf{sheaf of integral $w$-overconvergent Siegel modular forms of strict Iwahori level and weight $\kappa_{\calU}$} is a subsheaf $\underline{\omega}_w^{\kappa_{\calU},+}$ of $h_{\Iw^+, *}\scrC^{w-\an}_{\kappa_{\calU}}(\Iw_{\GL_g}, \scrO^+_{\overline{\calX}_{\Gamma(p^{\infty}), w}}\widehat{\otimes}R^+_{\calU})$ defined as follows. For every affinoid open subset $\calV\subset \overline{\calX}_{\Iw^+, w}$ with $\calV_{\infty} = h_{\Iw^+}^{-1}(\calV)$, we put $$\underline{\omega}_w^{\kappa_{\calU},+}(\calV):=\left\{f\in C^{w-\an}_{\kappa_{\calU}}(\Iw_{\GL_g}, \scrO^+_{\overline{\calX}_{\Gamma(p^{\infty}), w}}(\calV_{\infty})\widehat{\otimes}R^+_{\calU}): \bfgamma^*f=\rho_{\kappa_{\calU}}(\bfgamma_a+\frakz\bfgamma_c)^{-1} f,\,\,\forall \bfgamma=\begin{pmatrix}\bfgamma_a & \bfgamma_b\\ \bfgamma_c & \bfgamma_d\end{pmatrix}\in \Iw_{\GSp_{2g}}^+\right\}.$$

\item[(iv)] The \textbf{space of $w$-overconvergent Siegel modular forms of strict Iwahori level and weight $\kappa_{\calU}$} is defined to be $$M_{\Iw^+, w}^{\kappa_{\calU}}:=H^0(\overline{\calX}_{\Iw^+, w},\, \underline{\omega}_w^{\kappa_{\calU}}).$$ We similarly define the \textbf{space of integral $w$-overconvergent Siegel modular forms of strict Iwahori level and weight $\kappa_{\calU}$} to be $$M_{\Iw^+, w}^{\kappa_{\calU},+}:=H^0(\overline{\calX}_{\Iw^+, w}, \,\underline{\omega}_w^{\kappa_{\calU}, +}).$$
    
\item[(v)] Taking limit with respect to $w$, the \textbf{space of overconvergent Siegel modular forms of strict Iwahori level and weight $\kappa_{\calU}$} is 
$$M_{\Iw^+}^{\kappa_{\calU}}:=\lim_{w\rightarrow\infty}M^{\kappa_{\calU}}_{\Iw, w}.$$
Similarly, the \textbf{space of integral overconvergent Siegel modular forms of strict Iwahori level and weight $\kappa_{\calU}$} is 
$$M^{\kappa_{\calU}, +}_{\Iw^+}:=\lim_{w\rightarrow\infty}M^{\kappa_{\calU},+}_{\Iw^+, w}.$$

\item[(vi)] Recall that $\calZ_{\Iw^+} = \overline{\calX}_{\Iw^+}\smallsetminus \calX_{\Iw^+}$ is the boundary divisor. The \textbf{sheaf of $w$-overconvergent Siegel cuspforms of strict Iwahori level and weight $\kappa_{\calU}$} is defined to be the subsheaf $\underline{\omega}_{w, \cusp}^{\kappa_{\calU}} = \underline{\omega}_{w}^{\kappa_{\calU}}(-\calZ_{\Iw^+})$ of $\underline{\omega}_{w}^{\kappa_{\calU}}$ consisting of sections that vanish along $\calZ_{\Iw^+}$.

A $w$-overconvergent Siegel modular form of strict Iwahori level and weight $\kappa_{\calU}$ is called \textbf{cuspidal} if it is an element of  \[
        S_{\Iw^+, w}^{\kappa_{\calU}}:= H^0(\overline{\calX}_{\Iw^+, w},\, \underline{\omega}_{w, \cusp}^{\kappa_{\calU}}).
\] 

Moreover, by taking limit with respect to $w$, the \textbf{space of overconvergent Siegel cuspforms of strict Iwahori level and weight $\kappa_{\calU}$} is defined to be $$S^{\kappa_{\calU}}_{\Iw^+}:=\lim_{w\rightarrow\infty}S^{\kappa_{\calU}}_{\Iw^+, w}.$$
\end{enumerate}
\end{Definition}

\begin{Remark}\label{Remark: well-defined}
\normalfont Notice that, in Definition \ref{Definition: the sheaf of overconvergent Siegel forms} (iii), for every $\bfitx\in \overline{\calX}_{\Gamma(p^{\infty}), w}(\C_p, \calO_{\C_p})$ and any $\begin{pmatrix}\bfgamma_a & \bfgamma_b \\ \bfgamma_c & \bfgamma_d\end{pmatrix}\in \Iw_{\GSp_{2g}}^+$, we have $\bfgamma_a+\frakz(\bfitx)\bfgamma_c\in \Iw^{+, (w)}_{\GL_g}$. Hence, $\rho_{\kappa_{\calU}}(\bfgamma_a+\frakz\bfgamma_c)$ is well-defined.
\end{Remark}

To simplify the notation, we defined a ``twisted'' left action of $\Iw^+_{\GSp_{2g}}$ on $\scrC^{w-\an}_{\kappa_{\calU}}(\Iw_{\GL_g}, \scrO_{\overline{\calX}_{\Gamma(p^{\infty}),w}}\widehat{\otimes}R_{\calU})$ by
$$\bfgamma. f:= \rho_{\kappa_{\calU}}(\bfgamma_a+\frakz \bfgamma_c)\bfgamma^*f.$$
Then sections of $\underline{\omega}^{\kappa_{\calU}}_w$ are precisely the $\Iw^+_{\GSp_{2g}}$-invariant sections of $h_{\Iw^+, *}\scrC^{w-\an}_{\kappa_{\calU}}(\Iw_{\GL_g}, \scrO_{\overline{\calX}_{\Gamma(p^{\infty}),w}}\widehat{\otimes}R_{\calU})$ under the twisted action.

\begin{Remark}
\normalfont The sheaf $\underline{\omega}_w^{\kappa_{\calU}}$ is functorial in the weight $(R_{\calU}, \kappa_{\calU})$. Given a map of weights $R_{\calU} \rightarrow R_{\calU'}$ and $w > \mbox{max}\left\{1+r_{\calU}, 1+r_{\calU'}\right\}$, we obtain a natural map $\underline{\omega}_w^{\kappa_{\calU}} \rightarrow \underline{\omega}_w^{\kappa_{\calU'}}$ induced from
\[C^{w-\an}_{\kappa_{\calU}}(\Iw_{\GL_g}, \scrO_{\overline{\calX}_{\Gamma(p^{\infty}), w}}\widehat{\otimes}R_{\calU}) \rightarrow C^{w-\an}_{\kappa_{\calU'}}(\Iw_{\GL_g}, \scrO_{\overline{\calX}_{\Gamma(p^{\infty}), w}}\widehat{\otimes}R_{\calU'}).\]  
\end{Remark}

\subsection{Hecke operators}\label{subsection: Hecke operators on the overconvergent automorphic forms}
In this subsection, we spell out how the Hecke operators act on the overconvergent Siegel modular forms. The Hecke operators at the primes dividing the tame level $N$ are not considered in this paper.

Throughout this subsection, let $(R_{\calU}, \kappa_{\calU})$ be a weight and $w>1+r_{\calU}$.\\

\noindent\textbf{Hecke operators outside $Np$.} We define the Hecke operators outside $Np$ using correspondences. Let $\ell$ be a rational prime that does not divide $Np$. For every $\bfgamma\in \GSp_{2g}(\Q_{\ell})\cap M_{2g}(\Z_{\ell})$, consider the moduli space $X_{\bfgamma, \Iw^+}$ over $X_{\Iw^+}$ parameterising \textit{isogenies of type} $\bfgamma$. More precisely, $X_{\bfgamma, \Iw^+}$ is the moduli space of quintuple $$(A, \lambda, \psi_N, \{C_i: i=1, \ldots, g\}, L)$$ where $(A, \lambda, \psi_N, \{C_i: i=1, \ldots, g\})\in X_{\Iw^+}$ and $L\subset A$ is a subgroup of finite order such that the isogeny $(A, \lambda)\rightarrow (A/L, \lambda')$ is of type $\bfgamma$ in the sense of \cite[Chapter VII, \S 3]{Faltings-Chai}, where $\lambda'$ stands for the induced principal polarisation. According to \emph{loc. cit.}, for every isogeny of type $\bfgamma$, its dual isogeny is also of type $\bfgamma$. In particular, the assignment $$(A, \lambda, \psi_N,  \{C_i: i=1, \ldots, g\}, L)\mapsto (A'=A/L, \lambda', \psi'_N,  \{C'_i: i=1, \ldots, g\}, L')$$ defines an isomorphism $\Phi_{\bfgamma}: X_{\bfgamma, \Iw^+}\xrightarrow[]{\sim}X_{\bfgamma, \Iw^+}$, where 
\begin{itemize}
\item $\lambda'$ is the induced polarisation on $A'$;
\item $\psi'_N$ and $C'_i$'s are induced from $\psi_N$ and $C_i$'s, respectively, via the isomorphisms $A[N]\simeq A'[N]$ and $A[p]\simeq A'[p]$;
\item $L'$ is defined by the dual isogeny of $(A,\lambda)\rightarrow (A',\lambda')$.
\end{itemize}

There are two finite \'etale projections 
$$
\begin{tikzcd}
& X_{\bfgamma, \Iw^+}\arrow[ld, "\pr_1"']\arrow[rd, "\pr_2"] \\
X_{\Iw^+} & & X_{\Iw^+}
\end{tikzcd}$$ 
where $\pr_1$ is the forgetful map and $\pr_2$ sends the quintuple $(A, \lambda, \psi_N, \{C_i: i=1, \ldots, g\}, L)$ to the quintuple $(A'=A/L, \lambda', \psi'_N, \{C'_i: i=1, \ldots, g\})$ described as above. Clearly, we have $\pr_1=\pr_2\circ \Phi_{\bfgamma}$.

Let $\calX_{\bfgamma, \Iw^+}$ be the adic space associated with $X_{\bfgamma, \Iw^+}$ by taking analytification. We obtain finite \'etale morphisms $\pr_1, \pr_2:\calX_{\bfgamma, \Iw^+}\rightrightarrows \calX_{\Iw^+}$ as well as an isomorphism $\Phi_{\bfgamma}:\calX_{\bfgamma, \Iw^+}\rightarrow \calX_{\bfgamma, \Iw^+}$. We further pass to the $w$-ordinary loci. More precisely, let $\calX_{\bfgamma, \Iw^+, w}$ denote the preimage of $\calX_{\Iw^+, w}$ under the projection $\pr_1$. Notice that $\Phi_{\bfgamma}$ preserves $\calX_{\bfgamma, \Iw^+, w}$ as the isogeny $(A, \lambda)\rightarrow (A', \lambda')$ induces a symplectic isomorphism $T_pA\cong T_pA'$. Hence, we obtain finite \'etale morphisms 
\begin{equation}\label{eq: correspondence-strict-Iw-level}
\begin{tikzcd}
& \calX_{\bfgamma, \Iw^+,w}\arrow[ld, "\pr_1"']\arrow[rd, "\pr_2"] \\
\calX_{\Iw^+,w} & & \calX_{\Iw^+,w}
\end{tikzcd}
\end{equation}
and an isomorphism $\Phi_{\bfgamma}:\calX_{\bfgamma,\Iw^+,w}\xrightarrow[]{\sim} \calX_{\bfgamma,\Iw^+,w}$. We still have $\pr_1=\pr_2\circ \Phi_{\bfgamma}$. 

In order to define the Hecke operator, we shall first construct a natural isomorphism
$$\varphi_{\bfgamma}: \pr_2^*\underline{\omega}^{\kappa_{\calU}}_w\xrightarrow[]{\sim}\pr_1^*\underline{\omega}^{\kappa_{\calU}}_w.$$
Here we have abused the notation and still write $\underline{\omega}^{\kappa_{\calU}}_w$ for its restriction to $\calX_{\Iw^+, w}$.

Indeed, pulling back the diagram (\ref{eq: correspondence-strict-Iw-level}) along the projection $h_{\Iw^+}:\calX_{\Gamma(p^{\infty}), w}\rightarrow \calX_{\Iw^+, w}$, we obtain finite \'etale morphisms
$$
\begin{tikzcd}
& \calX_{\bfgamma, \Gamma(p^{\infty}), w}\arrow[ld, "\pr_{1, \infty}"']\arrow[rd, "\pr_{2, \infty}"] \\
\calX_{\Gamma(p^{\infty}),w} & & \calX_{\Gamma(p^{\infty}),w}
\end{tikzcd}
$$
between perfectoid spaces and an $\Iw_{\GSp_{2g}}^+$-equivariant isomorphism $\Phi_{\bfgamma, \infty}:\calX_{\bfgamma,\Gamma(p^{\infty}),w}\xrightarrow[]{\sim} \calX_{\bfgamma,\Gamma(p^{\infty}),w}$. The isomorphism $\Phi_{\bfgamma, \infty}$ induces an isomorphism
$$\Phi^*_{\bfgamma, \infty}:\pr_{2,\infty}^*\scrO_{\calX_{\Gamma(p^{\infty})}, w}\xrightarrow[]{\sim}\pr_{1,\infty}^*\scrO_{\calX_{\Gamma(p^{\infty})}, w}.$$
It then induces an isomorphism
$$\Phi^*_{\bfgamma, \infty}:\scrC^{w-\an}_{\kappa_{\calU}}(\Iw_{\GL_g}, \pr_{2,\infty}^*\scrO_{\calX_{\Gamma(p^{\infty})}, w}\widehat{\otimes}R_{\calU})\xrightarrow[]{\sim}\scrC^{w-\an}_{\kappa_{\calU}}(\Iw_{\GL_g}, \pr_{1,\infty}^*\scrO_{\calX_{\Gamma(p^{\infty})}, w}\widehat{\otimes}R_{\calU})$$
by taking the identity on $R_{\calU}$.

Recall that $\frakz$ is the pullback of the coordinate $\bfitz$ via the Hodge--Tate period map $\pi_{\HT}: \calX_{\Gamma(p^{\infty}), w}\rightarrow \adicFL^{\times}_w$. Let $\frakz':=\pr^*_{1, \infty}\frakz$ and $\frakz'':=\pr^*_{2, \infty}\frakz$. Since $\Phi_{\bfgamma, \infty}$ induces an isomorphism on the $p$-adic Tate module, we have $\frakz'=\frakz''$. Consequently, a section $f$ of $\scrC^{w-\an}_{\kappa_{\calU}}(\Iw_{\GL_g}, \pr_{2,\infty}^*\scrO_{\calX_{\Gamma(p^{\infty})}, w}\widehat{\otimes}R_{\calU})$ satisfies 
$$\bfgamma^*f=\rho_{\kappa_{\calU}}(\bfgamma_a+\frakz''\bfgamma_c)^{-1} f$$ for all $\bfgamma=\begin{pmatrix}\bfgamma_a & \bfgamma_b\\ \bfgamma_c & \bfgamma_d\end{pmatrix}\in \Iw_{\GSp_{2g}}^+$, if and only if the section $\Phi^*_{\bfgamma, \infty}(f)$ of $\scrC^{w-\an}_{\kappa_{\calU}}(\Iw_{\GL_g}, \pr_{1,\infty}^*\scrO_{\calX_{\Gamma(p^{\infty})}, w}\widehat{\otimes}R_{\calU})$ satisfies
$$\bfgamma^*(\Phi^*_{\bfgamma, \infty}(f))=\rho_{\kappa_{\calU}}(\bfgamma_a+\frakz'\bfgamma_c)^{-1} (\Phi^*_{\bfgamma, \infty}(f))$$ for all $\bfgamma\in \Iw^+_{\GSp_{2g}}$. This yields the desired isomorphism
$$\varphi_{\bfgamma}: \pr_2^*\underline{\omega}^{\kappa_{\calU}}_w\xrightarrow[]{\sim}\pr_1^*\underline{\omega}^{\kappa_{\calU}}_w.$$

Given this, we consider the composition 
$$
    \begin{tikzcd}
    T_{\bfgamma}: & H^0(\calX_{\Iw^+, w}, \underline{\omega}_w^{\kappa_{\calU}})\arrow[r, "\pr_2^*"] & H^0(\calX_{\bfgamma, \Iw^+, w}, \pr_2^*\underline{\omega}_w^{\kappa_{\calU}}) \arrow[ld, out=-10, in=170, "\varphi_{\bfgamma}"']\\
    & H^0(\calX_{\bfgamma, \Iw^+, w}, \pr_1^*\underline{\omega}_w^{\kappa_{\calU}}) \arrow[r, "\Tr\pr_1"] & H^0(\calX_{\Iw^+, w}, \underline{\omega}_w^{\kappa_{\calU}}). \end{tikzcd}
$$

Finally, we have to extend the construction to the boundary. In fact, we shall prove that the sections of $\underline{\omega}^{\kappa_{\calU}}_w$ on $\overline{\calX}_{\Iw^+,w}$ are precisely the \emph{bounded} sections of $\underline{\omega}^{\kappa_{\calU}}_w$ over the open part $\calX_{\Iw^+,w}$, at least when $g \geq 2$. 

\begin{Lemma}\label{Lemma: extend to the boundary}
Suppose $g\geq 2$. Every bounded section of $\underline{\omega}^{\kappa_{\calU}}_w$ on $\calX_{\Iw^+,w}$ uniquely extends to a section of $\overline{\calX}_{\Iw^+,w}$. In particular, the definition of overconvergent Siegel modular forms of weight $\kappa_{\calU}$ is independent of the choice of the polyhedral cone decomposition in the toroidal compactification. 
\end{Lemma}

\begin{proof}
By the discussion in \S \ref{subsection: admissibility} below, for a sufficiently large $n$, every section of $\underline{\omega}_{w}^{\kappa_{\calU}}$ can be viewed as a section of some auxiliary sheaf $\underline{\omega}_{n,w}^{\kappa_{\calU}}$ on $\overline{\calX}_{\Gamma(p^n), w}$. Moreover, by Proposition \ref{Proposition: omega is admissible}, there is a torsor $\adicIW_{w}^+$ over $\overline{\calX}_{\Gamma(p^n), w}$ such that every section of $\underline{\omega}_{n,w}^{\kappa_{\calU}}$ can be viewed as an element in $\scrO_{\adicIW_w^+}\widehat{\otimes}R_{\calU}$. 

If $R_{\calU}$ is a small weight, by choosing a pseudo-basis $(e_i)_{i\in I}$ of $R_{\calU}$ in the sense of \cite[Proposition 6.2]{CHJ-2017}, we can identify 
\[\scrO_{\adicIW_w^+}\widehat{\otimes}R_{\calU}\simeq \prod_{i\in I}\scrO_{\adicIW_w^+}\] 
using Proposition 6.4 of \emph{loc. cit.}. Hence, a bounded section of $\underline{\omega}_{w}^{\kappa_{\calU}}$ on $\calX_{\Iw^+,w}$ can be identified with a collection of bounded functions on the open part of $\adicIW_w^+$ (\emph{i.e.}, the part away from the boundary of $\overline{\calX}_{\Gamma(p^n), w}$) indexed by $I$. By applying \cite[Theorem 1.6]{Lutkebohmert} to $\adicIW_w^+$, the result follows.

If $R_{\calU}$ is an affinoid weight, a bounded section of $\underline{\omega}_{w}^{\kappa_{\calU}}$ on $\calX_{\Iw^+,w}$ can be identified with a bounded function on the open part of $\adicIW_w^+\times_{\Spa(\Q_p, \Z_p)} \Spa(R_{\calU}, R^+_{\calU})$ (\emph{i.e.}, the part away from the boundary of $\overline{\calX}_{\Gamma(p^n), w}$). Once again, applying \cite[Theorem 1.6]{Lutkebohmert} to $\adicIW_w^+
\times_{\Spa(\Q_p, \Z_p)} \Spa(R_{\calU}, R^+_{\calU})$ does the job.
\end{proof}

Thanks to Lemma \ref{Lemma: extend to the boundary} and the fact that 
$$\Phi^*_{\bfgamma, \infty}:\scrC^{w-\an}_{\kappa_{\calU}}(\Iw_{\GL_g}, \pr_{2,\infty}^*\scrO_{\calX_{\Gamma(p^{\infty})}, w}\widehat{\otimes}R_{\calU})\xrightarrow[]{\sim}\scrC^{w-\an}_{\kappa_{\calU}}(\Iw_{\GL_g}, \pr_{1,\infty}^*\scrO_{\calX_{\Gamma(p^{\infty})}, w}\widehat{\otimes}R_{\calU})$$
sends bounded sections to bounded sections, we know that $T_{\bfgamma}$ extends to the boundary. We arrive at the Hecke operator $$T_{\bfgamma}: M_{\Iw^+, w}^{\kappa_{\calU}}=H^0(\overline{\calX}_{\Iw^+,w}, \underline{\omega}^{\kappa_{\calU}}_w)\rightarrow H^0(\overline{\calX}_{\Iw^+,w}, \underline{\omega}^{\kappa_{\calU}}_w)= M_{\Iw^+, w}^{\kappa_{\calU}}.$$

For $g=1$, Lemma \ref{Lemma: extend to the boundary} is not true (think of the $j$-invariant) but as the toroidal compactification coincides with the minimal, we can extend the projections $\pr_1$  and $\pr_2$ to finite maps 

\begin{equation}\label{eq: correspondence-strict-Iw-level g=1}
\begin{tikzcd}
& \overline{\calX}_{\bfgamma, \Iw^+,w}\arrow[ld, "\pr_1"']\arrow[rd, "\pr_2"] \\
\overline{\calX}_{\Iw^+,w} & & \overline{\calX}_{\Iw^+,w}
\end{tikzcd}
\end{equation}
and the Hecke operator $T_{\bfgamma}$ naturally extends to the boundary
$$T_{\bfgamma}: M_{\Iw^+, w}^{\kappa_{\calU}}=H^0(\overline{\calX}_{\Iw^+,w}, \underline{\omega}^{\kappa_{\calU}}_w)\rightarrow H^0(\overline{\calX}_{\Iw^+,w}, \underline{\omega}^{\kappa_{\calU}}_w)= M_{\Iw^+, w}^{\kappa_{\calU}}.$$ 

\noindent\textbf{Hecke operators at $p$.} For $1\leq i\leq g$, we consider matrices $\bfu_{p,i}\in \GSp_{2g}(\Q_p)\cap M_{2g}(\Z_p)$ defined by
$$
\bfu_{p,i}:=\begin{pmatrix} \one_i \\ & p\one_{g-i}\\ & & p\one_{g-i}\\ & & & p^2\one_i\end{pmatrix}$$
for $1\leq i\leq g-1$, and
$$\bfu_{p,g}:=\begin{pmatrix}\one_g\\ & p\one_g\end{pmatrix}.$$
For later use, we write $$\bfu_{p, i}=\begin{pmatrix}\bfu_{p,i}^{\square} & \\ & \bfu_{p, i}^{\blacksquare}\end{pmatrix}$$ where $\bfu_{p, i}^{\square}$ and $\bfu_{p, i}^{\blacksquare}$ are the corresponding $g\times g$ diagonal matrices.

Notice that the $\bfu_{p,i}$-action on $\overline{\calX}_{\Gamma(p^{\infty})}$ preserves $\overline{\calX}_{\Gamma(p^{\infty}),w}$. This can be checked at the infinite level via local coordinates; \emph{i.e.,} the action of $\bfu_{p, i}$ on $\bfitz$ is given by $$\bfitz.\bfu_{p,i}=\bfu_{p,i}^{\square, -1}\bfitz\bfu_{p,i}^{\blacksquare}=\left\{\begin{array}{cl}
    \begin{pmatrix}
    p\bfitz_{1,1} & \cdots & p\bfitz_{1,g-i} & p^2\bfitz_{1,g+1-i} & \cdots & p^2\bfitz_{1,g}\\
    \vdots & & \vdots & \vdots && \vdots\\
    p\bfitz_{i,1} & \cdots & p\bfitz_{i,g-i} & p^2\bfitz_{i,g+1-i} & \cdots & p^2\bfitz_{i,g}\\
    \bfitz_{i+1,1} & \cdots & \bfitz_{i+1,g-i} & p\bfitz_{i+1,g+1-i} & \cdots & p\bfitz_{i+1,g}\\
    \vdots && \vdots & \vdots && \vdots\\
    \bfitz_{g,1} & \cdots & \bfitz_{g,g-i} & p\bfitz_{g,g+1-i} & \cdots & p\bfitz_{g,g}
    \end{pmatrix}, & \text{ if }i=1, ..., g-1 \\ \\
    p\bfitz, & \text{ if }i=g
\end{array}\right..$$ In particular, when $i=g$, the $\bfu_{p,g}$-action actually sends $\overline{\calX}_{\Gamma(p^{\infty}), w}$ into $\overline{\calX}_{\Gamma(p^{\infty}), w+1}$.

Recall the twisted left action of $\Iw^+_{\GSp_{2g}}$ on $\scrC^{w-\an}_{\kappa_{\calU}}(\Iw_{\GL_g}, \scrO_{\overline{\calX}_{\Gamma(p^{\infty}),w}}\widehat{\otimes}R_{\calU})$, given by the formula
$$\bfgamma. f:= \rho_{\kappa_{\calU}}(\bfgamma_a+\frakz \bfgamma_c)\bfgamma^*f.$$

\begin{Definition}\label{Definition:Upi}
\begin{enumerate}
\item[(i)] For $f\in \scrC^{w-\an}_{\kappa_{\calU}}(\Iw_{\GL_g}, \scrO_{\overline{\calX}_{\Gamma(p^{\infty}),w}}\widehat{\otimes}R_{\calU})$, we define $$\bfu_{p,i}.f\in \scrC^{w-\an}_{\kappa_{\calU}}(\Iw_{\GL_g}, \scrO_{\overline{\calX}_{\Gamma(p^{\infty}),w}}\widehat{\otimes}R_{\calU})$$ by
$$\bfu_{p,i}.f(\bfgamma'):=\bfu_{p,i}^*f(\bfu_{p,i}^{\square}\bfgamma'_0\bfu_{p,i}^{\square,-1}\bfbeta'_0)$$ where $\bfgamma'=\bfgamma'_0\bfbeta'_0\in \Iw_{\GL_g}$ with $\bfgamma'_0\in U^{\opp}_{\GL_g,1}$ and $\bfbeta'_0\in B_{\GL_g,0}$.
\item[(ii)] Suppose $f\in \scrC^{w-\an}_{\kappa_{\calU}}(\Iw_{\GL_g}, \scrO_{\overline{\calX}_{\Gamma(p^{\infty}),w}}\widehat{\otimes}R_{\calU})$ satisfies $$\bfgamma^*f=\rho_{\kappa_{\calU}}(\bfgamma_a+\frakz\bfgamma_c)^{-1}f$$ for all $\bfgamma\in \Iw^+_{\GSp_{2g}}$; i.e., $\bfgamma.f=f$. Pick a decomposition of the double coset
$$\Iw^+_{\GSp_{2g}}\bfu_{p,i}\Iw^+_{\GSp_{2g}}=\bigsqcup_{j=1}^m \bfdelta_{ij}\bfu_{p,i}\Iw^+_{\GSp_{2g}}$$
with $\bfdelta_{i,j}\in \Iw^+_{\GSp_{2g}}$. Define
$$U_{p,i}(f):=p^{\nu_i}\sum_{j=1}^m \bfdelta_{i,j}.(\bfu_{p,i}.f)\in \scrC^{w-\an}_{\kappa_{\calU}}(\Iw_{\GL_g}, \scrO_{\overline{\calX}_{\Gamma(p^{\infty}),w}}\widehat{\otimes}R_{\calU}),$$ 
where $\nu_i = -(g-i)(g+1)$ for $i = 1, ..., g-1$ and $\nu_g = \frac{-g(g+1)}{2}$. Here, we follow the normalisation as in \cite[\S 6.2]{AIP-2015}.
\end{enumerate}
\end{Definition}

Similarly as in \cite[\S 2.2]{Hansen-PhD}, the action of $\bfu_{p,i}$'s extends to an action of the semigroup $\Delta$ generated by the double cosets $[\Iw_{\GSp_{2g}}^+ \bfitu_{p,i} \Iw_{\GSp_{2g}}^+]$'s. If $\{\bfdelta_{i,j}'\}_{j=1}^m$ is another set of representatives for the double coset $[\Iw_{\GSp_{2g}}^+ \bfitu_{p,i} \Iw_{\GSp_{2g}}^+]$, up to re-labelling, we may assume \[
    \bfdelta_{ij}'\bfu_{p,i} = \bfdelta_{ij}\bfu_{p,i}\bfgamma_j
\] for some $\bfgamma_j\in \Iw_{\GSp_{2g}}^+$. Then, given a section $f\in \scrC^{w-\an}_{\kappa_{\calU}}(\Iw_{\GL_g}, \scrO_{\overline{\calX}_{\Gamma(p^{\infty}),w}}\widehat{\otimes}R_{\calU})$ satisfying \[
    \bfgamma^* f = \rho_{\kappa_{\calU}}(\bfgamma_a + \frakz\bfgamma_c)^{-1}f
\]
for any $\bfgamma\in \Iw_{\GSp_{2g}}^+$, we have \[
    \bfdelta_{ij}' . (\bfitu_{p,i} f) = \bfdelta_{ij} . (\bfitu_{p,i} \bfgamma_j. f) = \bfdelta_{ij}. (\bfitu_{p,i} f),
\]
which shows that the definition of $U_{p,i}$'s is independent to the choice of the representatives $\{\bfdelta_{ij}\}_{j=1}^m$.

\begin{Lemma}
Suppose $f\in \scrC_{\kappa_{\calU}}^{w-\an}(\Iw_{\GL_g}, \scrO_{\overline{\calX}_{\Gamma(p^{\infty}), w}}\widehat{\otimes} R_{\calU})$ such that $\bfgamma . f = f$. Then, the section $U_{p,i}(f)\in \scrC^{w-\an}_{\kappa_{\calU}}(\Iw_{\GL_g}, \scrO_{\overline{\calX}_{\Gamma(p^{\infty}),w}}\widehat{\otimes}R_{\calU})$ satisfies $\bfgamma.(U_{p,i}(f))=U_{p,i}(f)$ for all $\bfgamma\in \Iw^+_{\GSp_{2g}}$.
\end{Lemma}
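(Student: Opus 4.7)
The plan is to carry out the standard coset manipulation argument for Hecke operators, but with careful bookkeeping of the twisted action. The key structural fact I would establish first is that the twisted operation $\bfgamma.f = \rho_{\kappa_{\calU}}(\bfgamma_a + \frakz\bfgamma_c)\,\bfgamma^*f$ is a genuine left action of $\Iw^+_{\GSp_{2g}}$: for $\bfgamma_1, \bfgamma_2 \in \Iw^+_{\GSp_{2g}}$, one checks that $(\bfgamma_1\bfgamma_2).f = \bfgamma_1.(\bfgamma_2.f)$ by combining two ingredients --- the fact that $\rho_{\kappa_{\calU}}$ is a homomorphism (visible from Definition \ref{Definition: strict Iwahori action on w-analytic representation for IwGL_g}, which makes it the right-regular representation on $C^{w-\an}_{\kappa_{\calU}}$), and the cocycle identity
\[
(\bfgamma_1\bfgamma_2)_a + \frakz(\bfgamma_1\bfgamma_2)_c = (\bfgamma_{1a} + \frakz\bfgamma_{1c})\bigl(\bfgamma_{2a} + (\bfgamma_1^*\frakz)\bfgamma_{2c}\bigr),
\]
which follows directly from Lemma \ref{Lemma: Iw_GSp stabilises FL_w^times} together with the block-matrix formula for $\bfgamma_1\bfgamma_2$.

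Next, I would establish the analogous compatibility needed to mix $\bfu_{p,i}$ with $\Iw^+_{\GSp_{2g}}$: namely, for $\bfeta \in \Iw^+_{\GSp_{2g}}$, one has $\bfu_{p,i}.(\bfeta.f) = (\bfu_{p,i}\bfeta\bfu_{p,i}^{-1}\cdot \bfu_{p,i}).f$ in the appropriate sense, so that whenever $\bfgamma\bfdelta_{ij}\bfu_{p,i} = \bfdelta_{i\sigma(j)}\bfu_{p,i}\bfeta_j$ with $\bfeta_j \in \Iw^+_{\GSp_{2g}}$, we can write
\[
(\bfgamma\bfdelta_{ij}).(\bfu_{p,i}.f) = \bfdelta_{i\sigma(j)}.\bigl(\bfu_{p,i}.(\bfeta_j.f)\bigr).
\]
This reduces to carefully unwinding Definition \ref{Definition:Upi}, exactly as was done in the preceding verification that $U_{p,i}(f)$ is independent of the choice of coset representatives. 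In particular, the conjugation by $\bfu_{p,i}^{\square}$ appearing in that definition is precisely what makes this compatibility hold.

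With these ingredients in hand, the proof itself proceeds as follows. Fix $\bfgamma \in \Iw^+_{\GSp_{2g}}$. Since the double coset decomposition $\Iw^+_{\GSp_{2g}}\bfu_{p,i}\Iw^+_{\GSp_{2g}} = \bigsqcup_{j=1}^m \bfdelta_{ij}\bfu_{p,i}\Iw^+_{\GSp_{2g}}$ is disjoint, left multiplication by $\bfgamma$ permutes the cosets: there is a permutation $\sigma$ of $\{1,\ldots,m\}$ and elements $\bfeta_j \in \Iw^+_{\GSp_{2g}}$ with $\bfgamma\bfdelta_{ij}\bfu_{p,i} = \bfdelta_{i\sigma(j)}\bfu_{p,i}\bfeta_j$. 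Then, using the fact that the twisted operation is a left action and that $\bfeta_j.f = f$ by hypothesis,
\[
\bfgamma.(U_{p,i}(f)) = \sum_{j=1}^m (\bfgamma\bfdelta_{ij}).(\bfu_{p,i}.f) = \sum_{j=1}^m \bfdelta_{i\sigma(j)}.\bigl(\bfu_{p,i}.(\bfeta_j.f)\bigr) = \sum_{j=1}^m \bfdelta_{i\sigma(j)}.(\bfu_{p,i}.f),
\]
which equals $U_{p,i}(f)$ after re-indexing by $\sigma$.

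The main obstacle is the second step: verifying the mixed compatibility between $\bfu_{p,i}$ and elements of $\Iw^+_{\GSp_{2g}}$. Unlike the pure $\Iw^+_{\GSp_{2g}}$-compatibility, here the $(\bfitz \mapsto \bfu_{p,i}^{\square,-1}\bfitz\,\bfu_{p,i}^{\blacksquare})$-rescaling of the coordinate on $\adicFL^{\times}_w$ under $\bfu_{p,i}$ must be tracked through the factor $\rho_{\kappa_{\calU}}(\bfdelta_{a}+\frakz\bfdelta_{c})$, and the non-standard definition $\bfu_{p,i}.f(\bfgamma') = \bfu_{p,i}^*f(\bfu_{p,i}^{\square}\bfgamma'_0\bfu_{p,i}^{\square,-1}\bfbeta'_0)$ is precisely designed to make this work. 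The actual manipulation mirrors, and in fact extends, the calculation already performed in the excerpt to show independence from the coset representatives.
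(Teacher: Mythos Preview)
Your approach is correct and is essentially the same coset-permutation argument the paper uses, but you are doing more work than necessary. The paper's proof is two lines: it observes that the twisted operation is a left action, so
\[
\bfgamma.(U_{p,i}(f)) = \sum_{j=1}^m (\bfgamma\bfdelta_{ij}).(\bfu_{p,i}.f),
\]
and then notes that $\{\bfgamma\bfdelta_{ij}\}_{j}$ is itself a valid set of coset representatives, whence the sum equals $U_{p,i}(f)$ by the independence of representatives already established. Your ``mixed compatibility'' step --- proving $(\bfgamma\bfdelta_{ij}).(\bfu_{p,i}.f) = \bfdelta_{i\sigma(j)}.\bigl(\bfu_{p,i}.(\bfeta_j.f)\bigr)$ and then applying $\bfeta_j.f=f$ --- is precisely a repackaging of that independence verification; you do not need to redo it, only cite it. In short, the paper treats the independence check as a black box, while you unfold it inside the proof; both are fine, but the former is more economical.
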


\begin{proof}
We have $$\bfgamma.(U_{p,i}(f))= p^{\nu_i}\sum_{j=1}^m \bfgamma.(\bfdelta_{ij}.(\bfu_{p,i}(f)))= p^{\nu_i}\sum_{j=1}^m (\bfgamma\bfdelta_{ij}).(\bfu_{p,i}(f)).$$
The last term indeed computes $U_{p,i}(f)$ because $\{\bfgamma\bfdelta_{ij}: 1\leq j \leq m\}$ is also a valid set of representatives.
\end{proof}

Consequently, we arrive at the Hecke operator $$U_{p,i}: M_{\Iw^+, w}^{\kappa_{\calU}}=H^0(\overline{\calX}_{\Iw^+,w}, \underline{\omega}^{\kappa_{\calU}}_w)\rightarrow H^0(\overline{\calX}_{\Iw^+,w}, \underline{\omega}^{\kappa_{\calU}}_w)= M_{\Iw^+, w}^{\kappa_{\calU}}.$$

\begin{Definition}\label{Definition: Hecke algebra}
The \textbf{Hecke algebra outside $Np$} is defined to be
$$\bbT^{Np}:=\Z_p\left[T_{\bfgamma};  \bfgamma\in \GSp_{2g}(\Q_{\ell})\cap M_{2g}(\Z_{\ell}), \,\,\ell\nmid Np\right]$$
and the \textbf{total Hecke algebra} is defined to be
$$\bbT:= \bbT^{Np}\otimes_{\Z_p}\Z_p[U_{p,i}; i=1, \ldots, g].$$
\end{Definition}
We now define  $U_{p}:=\prod_{i=1}^{g}U_{p,i}$ and conclude with the following proposition
\begin{Proposition} The operator $U_p$  is a compact operator on $M_{\Iw^+, w}^{\kappa_{\calU}}$. 
\end{Proposition}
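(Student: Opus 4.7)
The plan is to realize $U_p$ as the composite of a continuous ``extension'' operator followed by a compact restriction map, which is the standard strategy for $U_p$ operators in the theory of overconvergent modular forms. Concretely, for $w > 1 + r_{\calU}$ (and, slightly enlarging $w$ if necessary so that $w-1 > 1 + r_{\calU}$), I will produce a factorisation
\[
U_p:\ M_{\Iw^+, w}^{\kappa_{\calU}} \xrightarrow{\ \widetilde{U_p}\ } M_{\Iw^+, w-1}^{\kappa_{\calU}} \xrightarrow{\ \mathrm{res}\ } M_{\Iw^+, w}^{\kappa_{\calU}},
\]
where $\widetilde{U_p}$ is continuous and $\mathrm{res}$ is the natural restriction induced by the open embedding $\overline{\calX}_{\Iw^+, w} \hookrightarrow \overline{\calX}_{\Iw^+, w-1}$. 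Compactness of $U_p$ then follows from compactness of $\mathrm{res}$, since compactness is preserved under composition with continuous maps.

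For the factorisation, it suffices to treat the factor $U_{p,g}$: each of the other $U_{p,i}$ is already a continuous endomorphism of $M_{\Iw^+, w}^{\kappa_{\calU}}$, and the product is compact as soon as one factor is. The crucial observation is the explicit description $\bfitz.\bfu_{p,g} = p\bfitz$, which shows that $\bfu_{p,g}$ sends the \emph{larger} $w$-ordinary locus $\overline{\calX}_{\Gamma(p^{\infty}), w-1}$ into the \emph{smaller} locus $\overline{\calX}_{\Gamma(p^{\infty}), w}$. Consequently, for any $f \in M_{\Iw^+, w}^{\kappa_{\calU}}$ the pullback $\bfu_{p,g}.f = \bfu_{p,g}^* f$ (noting $\bfu_{p,g}^{\square} = \one_g$, so the conjugation of $\bfgamma'_0$ is trivial) automatically defines a section of $\scrC^{w-\an}_{\kappa_{\calU}}(\Iw_{\GL_g}, \scrO_{\overline{\calX}_{\Gamma(p^\infty),w-1}}\widehat{\otimes}R_{\calU})$. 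Summing over the coset representatives $\bfdelta_j \in \Iw_{\GSp_{2g}}^+$, each of which has $p$-adically integral entries and therefore preserves both $\overline{\calX}_{\Gamma(p^\infty), w-1}$ and the corresponding $(w-1)$-analytic Banach structure (so the twist $\rho_{\kappa_{\calU}}(\bfdelta_{ja} + \frakz\bfdelta_{jc})$ remains a bounded automorphism, since $\bfdelta_{ja}+\frakz\bfdelta_{jc}$ lands in $\Iw^{+,(w-1)}_{\GL_g}$), produces the desired section $\widetilde{U_{p,g}}(f) \in M_{\Iw^+, w-1}^{\kappa_{\calU}}$, whose restriction back to $\overline{\calX}_{\Iw^+,w}$ recovers $U_{p,g}(f)$.

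For the compactness of $\mathrm{res}$, the inclusion $\overline{\calX}_{\Iw^+, w} \hookrightarrow \overline{\calX}_{\Iw^+, w-1}$ is a strict (relatively compact) inclusion of subdomains, cut out by the strict inequality $p^{-w} < p^{-(w-1)}$ on the flag variety coordinates $\bfitz_{ij}$ pulled back via $\pi_{\HT}$. The restriction between the associated uniform $\C_p$-Banach algebras of sections is therefore completely continuous, and this passes to the subsheaf $\underline{\omega}_w^{\kappa_{\calU}} \subset h_{\Iw^+,*}\scrC^{w-\an}_{\kappa_{\calU}}(\Iw_{\GL_g}, \scrO_{\overline{\calX}_{\Gamma(p^\infty),w}}\widehat{\otimes}R_{\calU})$ and to its global sections, the compatibility at the toroidal boundary being handled by the extension statement of Lemma~\ref{Lemma: extend to the boundary}.

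The main point requiring care is the bookkeeping in the second step: one must verify that the twisted action $\bfgamma.f = \rho_{\kappa_{\calU}}(\bfgamma_a+\frakz\bfgamma_c)\bfgamma^*f$ for $\bfgamma = \bfdelta_j$ is well-defined on the enlarged locus, which amounts to checking that $\bfdelta_{ja}+\frakz\bfdelta_{jc} \in \Iw^{+,(w-1)}_{\GL_g}$ — a consequence of $\bfdelta_j$ being in $\Iw_{\GSp_{2g}}^+$ and the uniform bound $|\frakz_{ij}-h|\le p^{-(w-1)}$ on $\overline{\calX}_{\Gamma(p^\infty),w-1}$. Once this is in place, the composition $\mathrm{res}\circ\widetilde{U_{p,g}}$ is compact, and composing further with the continuous operators $U_{p,1},\ldots,U_{p,g-1}$ yields the claimed compactness of $U_p$.
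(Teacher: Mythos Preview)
Your overall strategy---factor $U_p$ through a larger locus and use compactness of restriction---is the right shape, but the reduction to $U_{p,g}$ alone breaks the argument. The space $M_{\Iw^+,w-1}^{\kappa_{\calU}}$ is built from $(w-1)$-analytic functions $C_{\kappa_{\calU}}^{(w-1)-\an}(\Iw_{\GL_g},-)$, whereas $\bfu_{p,g}.f$ remains only $w$-analytic on the $\Iw_{\GL_g}$-variable: precisely because $\bfu_{p,g}^{\square}=\one_g$, the conjugation $\bfgamma'_0\mapsto \bfu_{p,g}^{\square}\bfgamma'_0\bfu_{p,g}^{\square,-1}$ is trivial and cannot improve the radius of analyticity. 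Hence your map $\widetilde{U_{p,g}}$ does not land in $M_{\Iw^+,w-1}^{\kappa_{\calU}}$. Nor can you sidestep this by working with ``$w$-analytic sections over the $(w-1)$-locus'': on $\overline{\calX}_{\Gamma(p^\infty),w-1}$ one only has $\bfgamma_a+\frakz\bfgamma_c\in\Iw_{\GL_g}^{+,(w-1)}$, and elements of $\Iw_{\GL_g}^{+,(w-1)}\smallsetminus\Iw_{\GL_g}^{+,(w)}$ do not act on $C^{w-\an}_{\kappa_{\calU}}$ (the extension in Remark~\ref{Remark: extend to w-analytic nbhd} only reaches $\Iw_{\GL_g}^{(w)}$). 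So neither the target sheaf nor the twist by $\bfdelta_j$ makes sense as written.

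The paper treats the whole product $U_p=\prod_{i=1}^g U_{p,i}$ at once, and this is essential: the composite conjugation by $\prod_i\bfu_{p,i}^{\square}$ multiplies the $(s,t)$-entry of $\bfgamma'_0\in U^{\opp}_{\GL_g,1}$ by $p^{s-t}$, so it genuinely improves the coefficient analyticity from $w$ to $w-1$. Combining this with the locus improvement from $\bfu_{p,g}$ (which you correctly identified) yields a factorisation through $H^0(\overline{\calX}_{\Iw^+,w+1},\underline{\omega}^{\kappa_{\calU}}_{w-1})$, and compactness then follows from \emph{two} ingredients: compactness of the restriction on the structure sheaf, and compactness of the inclusion $C^{(w-1)-\an}\hookrightarrow C^{w-\an}$ (cf.\ \cite[\S2.2]{Hansen-PhD}). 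Your argument invokes only the first of these; to repair it, drop the reduction to $U_{p,g}$ and track the analyticity gain coming from the other $\bfu_{p,i}^{\square}$.
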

\begin{proof}
Note that the action of $\bfu_{p,g} $ on $\bfitz$ is given by $p \bfitz$ and that, by definition, the action of $\prod_{i=1}^{g}\bfu_{p,i}$ on $ \scrC^{w-\an}_{\kappa_{\calU}}(\Iw_{\GL_g}, \scrO_{\overline{\calX}_{\Gamma(p^{\infty}),w}}\widehat{\otimes}R_{\calU})$ factors through the inclusion  \[
 \scrC^{(w-1)-\an}_{\kappa_{\calU}}(\Iw_{\GL_g}, \scrO_{\overline{\calX}_{\Gamma(p^{\infty}),w}}\widehat{\otimes}R_{\calU}) \hookrightarrow \scrC_{\kappa_{\calU}}^{w-an}(\Iw_{\GL_g}, \scrO_{\overline{\calX}_{\Gamma(p^{\infty}), w}}\widehat{\otimes}R_{\calU}).
\]  This means that $U_{p}$ factors as 
\[
U_{p}: H^0(\overline{\calX}_{\Iw^+,w}, \underline{\omega}^{\kappa_{\calU}}_w)\rightarrow H^0(\overline{\calX}_{\Iw^+,w+1}, \underline{\omega}^{\kappa_{\calU}}_w){\rightarrow}   H^0(\overline{\calX}_{\Iw^+,w+1}, \underline{\omega}^{\kappa_{\calU}}_{w-1}){\rightarrow} H^0(\overline{\calX}_{\Iw^+,w}, \underline{\omega}^{\kappa_{\calU}}_w),
\]
where the first arrow is the natural restriction map. 

To show the desired result, note that it is known that restrictions of the structure sheaf of $\overline{\calX}_{\Iw^+,w}$ are compact operators. Moreover, by the discussion in \cite[\S 2.2]{Hansen-PhD}, the injection of $(w-1)$-analytic functions into $w$-analytic functions is compact. The assertion then follows by combining these two facts.
\end{proof}

\begin{Remark}\label{Remark: cuspforms are stable under Hecke actions}
\normalfont Note that the subspace $S_{\Iw^+, w}^{\kappa_{\calU}}\subset M_{\Iw^+, w}^{\kappa_{\calU}}$ of $w$-overconvergent Siegel cuspforms of weight $\kappa_{\calU}$ is stable under the action of $\bbT$. Moreover, as $U_p$ is a compact operator on $M_{\Iw^+, w}^{\kappa_{\calU}}$, it is also a compact operator on $S_{\Iw^+, w}^{\kappa_{\calU}}$.
\end{Remark}

\subsection{Admissibility}\label{subsection: admissibility}
Throughout this subsection, let $(R_{\calU}, \kappa_{\calU})$ be a weight and $w>1+r_{\calU}$. Whenever $(R_{\calU}, \kappa_{\calU})$ is a small weight (as we will explcitly point out), we fix an ideal $\fraka_{\calU}\subset R_{\calU}$ defining the profinite adic topology on $R_{\calU}$ and we assume $p\in \fraka_{\calU}$.

The purpose of this subsection is to show that, when $(R_{\calU}, \kappa_{\calU})$ is a small weight, the overconvergent automorphic sheaf $\underline{\omega}^{\kappa_{\calU}}_w$ can be identified with the $G$-invariants of an \emph{admissible Kummer \'etale Banach sheaf} in the sense of Definition \ref{Definition: admissible Banach sheaf}, where $G$ is a finite group. Such a description allows us to apply Corollary \ref{Corollary: generalised projection formula with invariants} to the sheaf $\underline{\omega}^{\kappa_{\calU}}_w$. This will be used in \S \ref{subsection: OES}.

Firstly, we introduce the notion of $w$-compatibility inspired by \cite[\S 4.5]{AIP-2015}.

\begin{Definition}\label{Definition: w-compatible}
Let $R$ be a flat $\calO_{\C_p}$-algebra and suppose $M$ is a free $R$-module of rank $g$. We write $R_w:=R\otimes_{\calO_{\C_p}}\calO_{\C_p}/p^w$ and $M_w:=M\otimes_R R_w$. Let $\underline{\bfitm}:=(\bfitm_1, \ldots, \bfitm_g)$ be an $R_w$-basis for $M_w$. We denote by $\Fil_{\bullet}^{\underline{\bfitm}}$ the full flag
$$0\subset \langle\bfitm_1\rangle\subset \langle \bfitm_1, \bfitm_2\rangle\subset\cdots\subset\langle \bfitm_1, \ldots, \bfitm_g\rangle$$
of the free $R_w$-module $M_w$. Namely, $\Fil_i^{\underline{\bfitm}}=\langle \bfitm_1, \ldots, \bfitm_i\rangle$ for all $i=1, \ldots, g$.
\begin{enumerate}
\item[(i)] A full flag $\Fil_{\bullet}$ of the free $R$-module $M$ is called \textbf{\textit{$w$-compatible with $\underline{\bfitm}$}} if 
$$\Fil_{i}\otimes_{R}R_w=\Fil_i^{\underline{\bfitm}}$$
for all $i=1, \ldots, g$.
\item[(ii)] Suppose $\Fil_{\bullet}$ is a $w$-compatible full flag as in (i). Consider a collection $\{v_i: i=1, \ldots, g\}$ where each $v_i$ is an $R$-basis for $\Fil_i/\Fil_{i-1}$. Then $\{v_i: i=1, \ldots, g\}$ is called \textbf{\textit{$w$-compatible with $\underline{\bfitm}$}} if 
$$v_i\mod (p^wM+\Fil_{i-1})=\bfitm_i\mod \Fil_{i-1}^{\underline{\bfitm}}$$
for all $i=1, \ldots, g$.
\end{enumerate}
\end{Definition} 

Pick a positive integer $n>\sup\{w, \frac{g}{p-1}\}$. Recall from \S \ref{subsection: perfectoid Siegel modular variety} the locally free $\scrO^+_{\overline{\calX}_{\Gamma(p^n)}}$-module $\underline{\omega}^{\mathrm{mod},+}_{\Gamma(p^n)}$ over $\overline{\calX}_{\Gamma(p^n)}$. Also recall the Hodge--Tate map
$$\HT_{\Gamma(p^n)}:V^{\vee}\otimes_{\Z}(\Z/p^n\Z)\rightarrow \underline{\omega}^{\mathrm{mod}, +}_{\Gamma(p^n)}/p^n\underline{\omega}^{\mathrm{mod}, +}_{\Gamma(p^n)}$$
over $\overline{\calX}_{\Gamma(p^n)}$. Restricting to the $w$-ordinary locus $\overline{\calX}_{\Gamma(p^n),w}$ and composing with a natural projection, we obtain
$$\HT_{\Gamma(p^n),w}:V^{\vee}\otimes_{\Z}(\Z/p^n\Z)\rightarrow \underline{\omega}^{\mathrm{mod}, +}_{\Gamma(p^n),w}/p^n\underline{\omega}^{\mathrm{mod}, +}_{\Gamma(p^n),w}\twoheadrightarrow \underline{\omega}^{\mathrm{mod}, +}_{\Gamma(p^n),w}/p^w\underline{\omega}^{\mathrm{mod}, +}_{\Gamma(p^n),w}$$
where $\underline{\omega}^{\mathrm{mod}, +}_{\Gamma(p^n),w}$ is the restriction of $\underline{\omega}^{\mathrm{mod}, +}_{\Gamma(p^n)}$ on $\overline{\calX}_{\Gamma(p^n),w}$.

\begin{Lemma}
The sheaf $\underline{\omega}^{\mathrm{mod}, +}_{\Gamma(p^n),w}/p^w\underline{\omega}^{\mathrm{mod}, +}_{\Gamma(p^n),w}$ is a free $\scrO^+_{\overline{\calX}_{\Gamma(p^n),w}}/p^w$-module of rank $g$ generated by the basis $\HT_{\Gamma(p^n),w}(e_1^{\vee})$, ..., $\HT_{\Gamma(p^n),w}(e_g^{\vee})$.
\end{Lemma}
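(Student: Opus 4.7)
The plan is to reduce the statement to the infinite level $\Gamma(p^\infty)$, where an explicit description via $\pi_{\HT}$ and $\adicFL_w^\times$ is available. By the construction of the modified integral structure in \cite{Pilloni-Stroh-CoherentCohomologyandGaloisRepresentations}, the sheaf $\underline{\omega}^{\mathrm{mod},+}_{\Gamma(p^n),w}/p^w$ is locally free of rank $g$ over $\scrO^+_{\overline{\calX}_{\Gamma(p^n),w}}/p^w$. Thus it suffices to prove that $\HT_{\Gamma(p^n),w}(e_{g+1}),\dots,\HT_{\Gamma(p^n),w}(e_{2g})$ generate this module, and a rank count will upgrade "generate" to "form a basis". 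Since this is a local statement, it can be checked after pulling back along the pro-Kummer \'etale Galois cover $h_n: \overline{\calX}_{\Gamma(p^\infty),w} \to \overline{\calX}_{\Gamma(p^n),w}$; concretely, invertibility of the change-of-basis matrix over $\scrO^+/p^w$ can be tested at the level of geometric points, each of which lifts to $\overline{\calX}_{\Gamma(p^\infty),w}$.

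After pullback, the integral model $h_n^*\underline{\omega}^{\mathrm{mod},+}_{\Gamma(p^n),w}/p^w$ is identified with the integral reduction of $\pi_{\HT}^*\scrW_{\adicFL}^\vee|_{\overline{\calX}_{\Gamma(p^\infty),w}}/p^w$ via Proposition \ref{Proposition: fake Hasse invariants and the coherent automorphic sheaf} (upgraded to integral coefficients using the modified structures of \S \ref{section: boundary}). Applying the symplectic identification $V_p \cong V_p^\vee$ described in the footnote to that proposition, this is the quotient of $V_p \otimes (\scrO^+_{\overline{\calX}_{\Gamma(p^\infty),w}}/p^w)$ by the image of the pullback of the universal Lagrangian $W$ over $\adicFL_w^\times$, and the images of $e_{g+1},\dots,e_{2g}$ in the present statement correspond under the identification to the classes $\HT_{\Gamma(p^\infty)}(e_g^\vee),\dots,\HT_{\Gamma(p^\infty)}(e_1^\vee)$.

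Now invoke the explicit description of $\adicFL_w^\times$: the Lagrangian $W$ is globally generated by $w_i^\square = e_i + \sum_{j=1}^g \bfitz_{i,j}\,e_{g+j}$ for $i=1,\dots,g$, and by the defining inequality of $\adicFL_w^\times$ there exist $h_{i,j} \in \Z_p$ with $\bfitz_{i,j} \equiv h_{i,j} \pmod{p^w}$. Hence modulo $p^w$ the Lagrangian is generated by $e_i + \sum_j h_{i,j}\,e_{g+j}$ for $i=1,\dots,g$, so in the quotient each $e_i$ with $i \leq g$ becomes a $\Z_p$-linear combination of $e_{g+1},\dots,e_{2g}$. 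These $g$ elements therefore generate the quotient, and since the quotient is locally free of rank $g$ they form an $\scrO^+/p^w$-basis at the infinite level. Descending back through $h_n$ completes the proof.

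The main technical obstacle will be making the identification in the second step rigorous at the level of integral structures, since Proposition \ref{Proposition: fake Hasse invariants and the coherent automorphic sheaf} is formulated over the rational structure sheaf. One must invoke the explicit modified formal models from \cite{Pilloni-Stroh-CoherentCohomologyandGaloisRepresentations}, and also be careful with the descent step, where one needs the injectivity (or almost injectivity) of $\scrO^+_{\overline{\calX}_{\Gamma(p^n),w}}/p^w \to h_{n,*}\scrO^+_{\overline{\calX}_{\Gamma(p^\infty),w}}/p^w$ to ensure that an invertible matrix upstairs descends to an invertible matrix downstairs.
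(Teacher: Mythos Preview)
Your approach is correct and follows essentially the same route as the paper, which gives a two-line proof: the sheaf is locally free of rank $g$, and ``by definition of the $w$-ordinary locus'' the sections $\HT_{\Gamma(p^n),w}(e_{g+1}),\dots,\HT_{\Gamma(p^n),w}(e_{2g})$ span it, hence form a basis. Your steps 2--4 are precisely what it means to unpack the phrase ``by definition of the $w$-ordinary locus'', since that locus is defined via $\pi_{\HT}^{-1}(\adicFL_w^\times)$ at infinite level and then pushed down.

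Your closing worries about integral structures and descent are overblown. You do not need the full integral version of Proposition~\ref{Proposition: fake Hasse invariants and the coherent automorphic sheaf}; all you need is that the map $\HT_{\Gamma(p^n)}\otimes\id$ surjects onto $\underline{\omega}^{\mathrm{mod},+}_{\Gamma(p^n)}/p^n$ (this is built into the construction of the modified integral structure) and that, over the $w$-ordinary locus, each $\HT_{\Gamma(p^n),w}(e_i)$ for $i\leq g$ lies in the $\scrO^+/p^w$-span of $\HT_{\Gamma(p^n),w}(e_{g+1}),\dots,\HT_{\Gamma(p^n),w}(e_{2g})$. The latter is exactly the relation $e_i\equiv -\sum_j h_{i,j}\,e_{g+j}$ you extracted from the coordinates on $\adicFL_w^\times$. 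As for descent: checking that $g$ given sections generate a locally free rank-$g$ sheaf can be done pointwise, and every point of $\overline{\calX}_{\Gamma(p^n),w}$ lifts to $\overline{\calX}_{\Gamma(p^\infty),w}$ by definition of the former as the image of the latter. No subtle almost-injectivity statement is needed.
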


\begin{proof}
Notice that $\underline{\omega}^{\mathrm{mod}, +}_{\Gamma(p^n),w}/p^w\underline{\omega}^{\mathrm{mod}, +}_{\Gamma(p^n),w}$ is locally free of rank $g$. It follows from the definition of $w$-ordinary locus that $\HT_{\Gamma(p^n),w}(e_1^{\vee})$, ..., $\HT_{\Gamma(p^n),w}(e_g^{\vee})$ span $\underline{\omega}^{\mathrm{mod}, +}_{\Gamma(p^n),w}/p^w\underline{\omega}^{\mathrm{mod}, +}_{\Gamma(p^n),w}$. Hence they must form a set of free generators.
\end{proof}

We consider an adic space $\adicIW^+_w$ over $\overline{\calX}_{\Gamma(p^n),w}$ parameterising certain $w$-compatible objects. Let $\Iw_{\GL_g}(\Z/p^n\Z)$ denote the preimage of $B_{\GL_g}(\Z/p\Z)$ under the surjection $\GL_g(\Z/p^n\Z)\xrightarrow[]{\mathrm{mod}\,\,p} \GL_g(\Z/p\Z)$. For every affinoid open $\mathcal{Y}=\Spa(R, R^+)\subset \overline{\calX}_{\Gamma(p^n),w}$ on which $\underline{\omega}^{\mathrm{mod}, +}_{\Gamma(p^n),w}$ is free, the set $\adicIW^+_w(\mathcal{Y})$ consists of pairs $$(\Fil_{\bullet}, \{w_i: i=1, \ldots, g\})$$ where, for some $\bfsigma\in \Iw_{\GL_g}(\Z/p^n\Z)$,
\begin{enumerate}
\item[(i)] $\Fil_{\bullet}$ is a full flag of the free $R^+$-module $\underline{\omega}^{\mathrm{mod}, +}_{\Gamma(p^n),w}(\mathcal{Y})$, which is $w$-compatible with $$\left(\HT_{\Gamma(p^n),w}(e_1^{\vee}),\ldots, \HT_{\Gamma(p^n),w}(e_g^{\vee})\right) \cdot\bfsigma;$$
\item[(ii)] Each $w_i$ is an $R^+$-basis for $\Fil_i/\Fil_{i-1}$ which is $w$-compatible with $$\left(\HT_{\Gamma(p^n),w}(e_1^{\vee}),\ldots, \HT_{\Gamma(p^n),w}(e_g^{\vee})\right) \cdot\bfsigma.$$
\end{enumerate}

Let $\pi: \adicIW^+_w\rightarrow \overline{\calX}_{\Gamma(p^n),w}$ denote the natural projection. There is a natural action of $\Iw_{\GL_g}^{(w)}$ on $\adicIW^+_w$ with the subgroup $U_{\GL_g,0}^{(w)}$ acting trivially. In particular, $\adicIW^+_w$ admits a natural action of $B_{\GL_g,0}^{(w)}/U_{\GL_g,0}^{(w)}=T_{\GL_g,0}^{(w)}$. We construct two auxiliary sheaves $\widetilde{\underline{\omega}}^{\kappa_{\calU},+}_{n,w}$ and $\widetilde{\underline{\omega}}^{\kappa_{\calU}}_{n,w}$. 

\begin{Definition}\label{definition: auxiliary sheaves}
\begin{enumerate}
\item[(i)] The sheaf $\widetilde{\underline{\omega}}^{\kappa_{\calU},+}_{n,w}$ over $\overline{\calX}_{\Gamma(p^n),w}$ is defined to be
$$\widetilde{\underline{\omega}}^{\kappa_{\calU},+}_{n,w}:=\left(\pi_*\scrO^+_{\adicIW^+_w}\widehat{\otimes}R_{\calU}\right)[\kappa_{\calU}^{\vee}];$$
i.e., the subsheaf of $\pi_*\scrO^+_{\adicIW^+_w}\widehat{\otimes}R_{\calU}$ consisting of those sections on which $T_{\GL_g,0}$-acts through the character $\kappa_{\calU}^{\vee}$.
\item[(ii)] The sheaf
$$\widetilde{\underline{\omega}}^{\kappa_{\calU}}_{n,w}:=\left(\pi_*\scrO_{\adicIW^+_w}\widehat{\otimes}R_{\calU}\right)[\kappa_{\calU}^{\vee}]$$
is defined similarly.
\end{enumerate}
\end{Definition}

\begin{Remark}\label{remark: no difference}
\normalfont Since $\kappa_{\calU}$ is $w$-analytic, the character $\kappa_{\calU}^{\vee}:T_{\GL_g,0}\rightarrow R_{\calU}^{\times}$ extends to a character on $T^{(w)}_{\GL_g,0}$. It turns out, in Definition \ref{definition: auxiliary sheaves}, there is no difference between taking $\kappa_{\calU}^{\vee}$-eigenspaces with respect to $T_{\GL_g,0}$- or $T^{(w)}_{\GL_g,0}$-actions.
\end{Remark}

\begin{Lemma}\label{Lemma: omega is projective Banach sheaf}
Let $(R_{\calU}, \kappa_{\calU})$ be a small weight. Then the sheaf $\widetilde{\underline{\omega}}^{\kappa_{\calU}}_{n,w}$ is a projective Banach sheaf of $\scrO_{\overline{\calX}_{\Gamma(p^n),w}}\widehat{\otimes}R_{\calU}$-modules in the sense of Definition \ref{Definition: Banach sheaf} (ii). Moreover, $\widetilde{\underline{\omega}}^{\kappa_{\calU},+}_{n,w}$ is an integral model of $\widetilde{\underline{\omega}}^{\kappa_{\calU}}_{n,w}$ in the sense of Definition \ref{Definition: Banach sheaf} (iv).
\end{Lemma}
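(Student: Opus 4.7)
The plan is to reduce both assertions to a local computation using the torsor structure of $\adicIW^+_w$ over $\overline{\calX}_{\Gamma(p^n),w}$. Since the question of being a (projective) Banach sheaf is local on the base, it suffices to work over an affinoid open $\Spa(R,R^+)\subset \overline{\calX}_{\Gamma(p^n),w}$ small enough that the torsor trivialises, giving
\[
\adicIW^+_w\big|_{\Spa(R,R^+)} \;\simeq\; \Spa(R,R^+)\times_{\Spa(\C_p,\calO_{\C_p})}\left(\calU^{\opp,(w)}_{\GL_g,1}\times \calT^{(w)}_{\GL_g,0}\times U_{\GL_g,1}(\Z/p^n\Z)\right).
\]
Taking global sections and completing, we obtain a description of $(\pi_*\scrO^+_{\adicIW^+_w}\widehat{\otimes}R_{\calU})(\Spa(R,R^+))$ as the completed tensor product of $R^+\widehat{\otimes}R_{\calU}$ with the ring of integral functions on the model group $\calU^{\opp,(w)}_{\GL_g,1}\times \calT^{(w)}_{\GL_g,0}\times U_{\GL_g,1}(\Z/p^n\Z)$.

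The next step is to compute the $\kappa_{\calU}^{\vee}$-isotypic piece for the $T_{\GL_g,0}$-action and the trivial piece for the $U_{\GL_g,1}(\Z/p^n\Z)$-action. By Remark \ref{remark: no difference}, the $w$-analyticity of $\kappa_{\calU}$ allows us to replace $T_{\GL_g,0}$ by $\calT^{(w)}_{\GL_g,0}$ in the eigenspace condition; consequently, one can use the character $\kappa_{\calU}^{\vee}$ to kill the entire torus factor. Writing out the disjoint-union decompositions of $\calT^{(w)}_{\GL_g,0}$ and $\calU^{\opp,(w)}_{\GL_g,1}$ in Definition \ref{definition: torsors}, the resulting $\kappa_{\calU}^\vee$-eigenspace is identified, locally on the base, with
\[
\widetilde{\underline{\omega}}^{\kappa_{\calU},+}_{n,w}(\Spa(R,R^+))\;\cong\; (R^+\widehat{\otimes}R_{\calU}^+)\,\widehat{\otimes}_{\Z_p}\, C^{w-\an}(U^{\opp}_{\GL_g,1},\calO_{\C_p})^{\oplus N},
\]
where $N = p^{\frac{g(g-1)}{2}(n-1)}$ records the number of connected components of $\calU^{\opp,(w)}_{\GL_g,1}$ (the torus components are collapsed by the eigenspace condition, and the $U_{\GL_g,1}(\Z/p^n\Z)$-invariants pick out one point of the finite factor). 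Inverting $p$ gives the corresponding description of $\widetilde{\underline{\omega}}^{\kappa_{\calU}}_{n,w}$.

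The space $C^{w-\an}(U^{\opp}_{\GL_g,1},\C_p)$ is orthonormalisable as a $\C_p$-Banach space (an explicit ON basis is given by the monomials appearing in the power-series expansions of Remark \ref{Remark: uniform Banach algebra structure 2}, rescaled by $p^{w|\nu|}$). Hence locally $\widetilde{\underline{\omega}}^{\kappa_{\calU}}_{n,w}$ is of the form $\scrO_{\overline{\calX}_{\Gamma(p^n),w}}\widehat{\otimes}R_{\calU}\,\widehat{\otimes}\,B$ for an orthonormalisable $\C_p\widehat{\otimes}R_{\calU}$-Banach module $B$, and $\widetilde{\underline{\omega}}^{\kappa_{\calU},+}_{n,w}$ is locally $\scrO^+_{\overline{\calX}_{\Gamma(p^n),w}}\widehat{\otimes}R^+_{\calU}\,\widehat{\otimes}\,B^{\circ}$. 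This is exactly the definition of a projective Banach sheaf together with an integral model (Definition \ref{Definition: Banach sheaf} (ii), (iv)).

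The main technical point to verify carefully is the second paragraph: extracting the $\kappa_{\calU}^{\vee}$-eigenspace for the torus action commutes with the completed tensor product, and does produce exactly the space of $w$-analytic functions on $U^{\opp}_{\GL_g,1}$ (rather than a larger or smaller subspace). This is where one needs $w>1+r_{\calU}$ and the extension of $\kappa_{\calU}^{\vee}$ to $\calT^{(w)}_{\GL_g,0}$ with values in $(\calO_{\C_p}\widehat{\otimes}R_{\calU}^+)^{\times}$; once this is in place, the remaining verifications—compatibility with restriction maps between affinoids of $\overline{\calX}_{\Gamma(p^n),w}$, and the fact that $\widetilde{\underline{\omega}}^{\kappa_{\calU},+}_{n,w}$ is $p$-adically complete with $\widetilde{\underline{\omega}}^{\kappa_{\calU},+}_{n,w}[1/p]=\widetilde{\underline{\omega}}^{\kappa_{\calU}}_{n,w}$—are formal consequences of the local trivialisation.
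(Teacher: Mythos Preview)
Your approach is the same as the paper's: trivialise the torsor locally and observe that the $\kappa_{\calU}^{\vee}$-eigenspace is an ON-able module; the paper records this in one line as $\widetilde{\underline{\omega}}^{\kappa_{\calU},+}_{n,w}|_{\calV_{n,i}}\simeq \scrO^+_{\calV_{n,i}}\langle T_{st}:1\le s<t\le g\rangle\widehat{\otimes}R_{\calU}$ after choosing a basis of $\underline{\omega}^{\mathrm{mod},+}_{\Gamma(p^n)}$. One bookkeeping slip worth fixing: after collapsing the torus factor and the finite $U_{\GL_g,1}(\Z/p^n\Z)$-factor you are left with analytic functions on the adic space $\calU^{\opp,(w)}_{\GL_g,1}$ itself, i.e.\ a direct sum of $N$ copies of a single Tate algebra on a ball of radius $p^{-w}$, not $N$ copies of $C^{w-\an}(U^{\opp}_{\GL_g,1},\calO_{\C_p})$ (the latter already decomposes into $p^{\frac{g(g-1)}{2}(\lceil w\rceil-1)}$ components, so your formula double-counts). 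This does not affect the conclusion, since either description is visibly the $p$-adic completion of a free $\scrO^+\widehat{\otimes}R_{\calU}^+$-module.
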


\begin{proof}
Let $\{\calV_{n, i} : i\in I\}$ be an affinoid open covering of $\overline{\calX}_{\Gamma(p^n), w}$ such that $\underline{\omega}_{\Gamma(p^n)}^{\mathrm{mod}, +}|_{\calV_{n, i}}$ is free, for every $i\in I$. By choosing a basis for $\underline{\omega}_{\Gamma(p^n)}^{\mathrm{mod}, +}|_{\calV_{n, i}}$, we can identify \[
    \widetilde{\underline{\omega}}_{n, w}^{\kappa_{\calU}, +}|_{\calV_{n, i}} \simeq \scrO^+_{\calV_{n, i}} \langle T_{st}: 1\leq s<t\leq g\rangle \widehat{\otimes} R_{\calU}
\] 
which is the $p$-adic completion of a free $\scrO^+_{\calV_{n, i}} \widehat{\otimes} R_{\calU}$-module, as desired.
\end{proof}

We also consider the associated $p$-adically completed sheaves on the Kummer \'etale site.

\begin{Definition}
Let 
$$
\widetilde{\underline{\omega}}^{\kappa_{\calU},+}_{n,w,\ket}:= \varprojlim_m\left(\widetilde{\underline{\omega}}^{\kappa_{\calU},+}_{n,w} \bigotimes_{\scrO^+_{\overline{\calX}_{\Gamma(p^n),w}}}\scrO^+_{\overline{\calX}_{\Gamma(p^n),w}, \ket}/p^m\right)
$$
and let
$$
\widetilde{\underline{\omega}}^{\kappa_{\calU}}_{n,w,\ket}:=\widetilde{\underline{\omega}}^{\kappa_{\calU},+}_{n,w,\ket}[\frac{1}{p}].
$$
\end{Definition}

If $(R_{\calU}, \kappa_{\calU})$ is a small weight, by Lemma \ref{Lemma: omega is projective Banach sheaf} and Corollary \ref{Corollary: Banach induce Kummer etale Banach}, $\widetilde{\underline{\omega}}^{\kappa_{\calU}}_{n,w,\ket}$ is a projective Kummer \'etale Banach sheaf of $\scrO_{\overline{\calX}_{\Gamma(p^n),w}, \ket}\widehat{\otimes}R_{\calU}$-modules in the sense of Definition \ref{Definition: Kummer etale Banach sheaf} (ii). Moreover, $\widetilde{\underline{\omega}}^{\kappa_{\calU},+}_{n,w,\ket}$ is an integral model of $\widetilde{\underline{\omega}}^{\kappa_{\calU}}_{n,w,\ket}$ in the sense of Definition \ref{Definition: Kummer etale Banach sheaf} (iv). In fact, we show that the Kummer \'etale Banach sheaf $\widetilde{\underline{\omega}}^{\kappa_{\calU}}_{n,w,\ket}$ is \emph{admissible}.

\begin{Lemma}\label{Lemma: omega is admissible Kummer etale Banach sheaf}
Let $(R_{\calU}, \kappa_{\calU})$ be a small weight. Then the sheaf $\widetilde{\underline{\omega}}^{\kappa_{\calU}}_{n,w,\ket}$ is an admissible Kummer \'etale Banach sheaf of $\scrO_{\overline{\calX}_{\Gamma(p^n),w,\ket}}\widehat{\otimes}R_{\calU}$-modules (in the sense of Definition \ref{Definition: admissible Banach sheaf}) with integral model $\widetilde{\underline{\omega}}^{\kappa_{\calU},+}_{n,w,\ket}$.
\end{Lemma}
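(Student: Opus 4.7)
The strategy is to leverage the explicit torsor structure on $\adicIW^+_w$ together with the $w$-analyticity of $\kappa_{\calU}$ to produce an affinoid open cover on which $\widetilde{\underline{\omega}}^{\kappa_{\calU},+}_{n,w}$ is (up to $p$-adic completion) a free module of countable rank, and then transport this along the morphism of sites $\nu: \overline{\calX}_{\Gamma(p^n),w,\ket}\to \overline{\calX}_{\Gamma(p^n),w,\et}$. Since admissibility is a local condition, the core of the work happens on the analytic site, and the Kummer \'etale version follows by pullback.

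First, I would pick an affinoid open cover $\{\calV_{n,i}\}_{i\in I}$ of $\overline{\calX}_{\Gamma(p^n),w}$ on which $\underline{\omega}^{\mathrm{mod},+}_{\Gamma(p^n)}$ is free and over which the torsor $\adicIW^+_w\to \overline{\calX}_{\Gamma(p^n),w}$ trivialises. Over such a $\calV_{n,i}$, the choice of a free basis for $\underline{\omega}^{\mathrm{mod},+}_{\Gamma(p^n)}$ together with the torsor structure identifies
\[
\adicIW^+_w|_{\calV_{n,i}} \simeq \calV_{n,i}\times_{\Spa(\C_p,\calO_{\C_p})} \bigl(\calU^{\opp,(w)}_{\GL_g,1}\times \calT^{(w)}_{\GL_g,0}\times U_{\GL_g,1}(\Z/p^n\Z)\bigr).
\]
Pushing forward and using Remark \ref{remark: no difference}, the sheaf $\pi_*\scrO^+_{\adicIW^+_w}\widehat{\otimes}R_{\calU}$ decomposes as a $p$-adically completed tensor product of the structure sheaf of $\calV_{n,i}$ with the ring of $\calO_{\C_p}$-integral analytic functions on the group $\calU^{\opp,(w)}_{\GL_g,1}\times \calT^{(w)}_{\GL_g,0}$, completed--tensored with $R_{\calU}$ and decorated by the finite factor $U_{\GL_g,1}(\Z/p^n\Z)$.

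Next I extract the $\kappa_{\calU}^{\vee}$-eigenspace. Because $\kappa_{\calU}$ is $w$-analytic, it defines a character on $\calT^{(w)}_{\GL_g,0}$ valued in $(\calO_{\C_p}\widehat{\otimes}R_{\calU}^+)^{\times}$, and the projection onto the $\kappa_{\calU}^{\vee}$-eigencomponent (under the $T_{\GL_g,0}$-action, equivalently the $\calT^{(w)}_{\GL_g,0}$-action) together with the projection onto $U_{\GL_g,1}(\Z/p^n\Z)$-invariants is split by a continuous idempotent. The resulting sheaf $\widetilde{\underline{\omega}}^{\kappa_{\calU},+}_{n,w}|_{\calV_{n,i}}$ is then identified with
\[
\scrO^+_{\calV_{n,i}}\langle T_{st} : 1\leq s<t\leq g\rangle \,\widehat{\otimes}\, R_{\calU},
\]
i.e.\ the $p$-adic completion of a free $\scrO^+_{\calV_{n,i}}\widehat{\otimes}R_{\calU}$-module on the coordinates of $\calU^{\opp,(w)}_{\GL_g,1}$, as already observed in the proof of Lemma \ref{Lemma: omega is projective Banach sheaf}. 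In particular, each $\widetilde{\underline{\omega}}^{\kappa_{\calU},+}_{n,w,\ket}|_{\calV_{n,i}}/p^m$ is the pullback to the Kummer \'etale site of a quasi-coherent sheaf on $\calV_{n,i}$, and the family $\{\widetilde{\underline{\omega}}^{\kappa_{\calU},+}_{n,w,\ket}|_{\calV_{n,i}}/p^m\}_{m\geq 1}$ forms a compatible system with surjective transition maps.

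Finally, I would verify the conditions of Definition \ref{Definition: admissible Banach sheaf} by checking them on this cover: each $\widetilde{\underline{\omega}}^{\kappa_{\calU},+}_{n,w,\ket}|_{\calV_{n,i}}/p^m$ comes from a coherent (in fact free, hence flat) sheaf on $\calV_{n,i,\et}$ pulled back to $\calV_{n,i,\ket}$, the reductions mod $p^m$ are compatible, and inverting $p$ after taking the inverse limit recovers $\widetilde{\underline{\omega}}^{\kappa_{\calU}}_{n,w,\ket}$. The main subtlety I anticipate is checking that the $p$-adic completion commutes with pullback from the analytic to the Kummer \'etale site in the precise sense required by the admissibility definition; this will rely on the fact that $\scrO^+_{\overline{\calX}_{\Gamma(p^n),w},\ket}/p^m$ is flat over $\scrO^+_{\overline{\calX}_{\Gamma(p^n),w}}/p^m$ (Kummer \'etale localisation being flat modulo $p^m$), so that tensoring a free module and then completing behaves well. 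Once this technical compatibility is in hand, admissibility on the chosen cover gives admissibility globally, and $\widetilde{\underline{\omega}}^{\kappa_{\calU},+}_{n,w,\ket}$ is visibly the required integral model.
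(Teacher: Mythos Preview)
Your proposal correctly reproduces the local identification
\[
\widetilde{\underline{\omega}}^{\kappa_{\calU},+}_{n,w}|_{\calV_{n,i}} \simeq \scrO^+_{\calV_{n,i}}\langle T_{st} : 1\leq s<t\leq g\rangle \,\widehat{\otimes}\, R_{\calU},
\]
which is indeed the starting point of the paper's argument (and is essentially the content of Lemma \ref{Lemma: omega is projective Banach sheaf}). But the final paragraph contains a genuine error that prevents the proof from going through.

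You assert that $\widetilde{\underline{\omega}}^{\kappa_{\calU},+}_{n,w,\ket}|_{\calV_{n,i}}/p^m$ ``comes from a coherent (in fact free, hence flat) sheaf''. This is false: the reduction is a polynomial ring in $\binom{g}{2}$ variables over $\scrO^+_{\calV_{n,i}}\otimes(R_{\calU}/\fraka_{\calU}^m)$, which is free of \emph{infinite} rank and therefore not coherent in the sense of Definition \ref{Definition: coherent sheaf on ringed space}. Definition \ref{Definition: admissible Banach sheaf} does not ask for $\scrF^+_m$ itself to be coherent; it asks for a filtration $\scrF^+_m=\varinjlim_d \scrF^+_{m,d}$ by \emph{global} coherent $\scrO^+_{X_{\ket}}\otimes(R/\fraka^m)$-submodules subject to the given atlas. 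Producing this filtration is the actual content of the lemma, and your proposal skips it entirely. (A secondary point: the reduction should be modulo $\fraka_{\calU}^m$, not $p^m$; since $p\in\fraka_{\calU}$, this is what makes the Tate algebra collapse to a bare polynomial ring.)

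The paper supplies the missing step as follows. On each $\calV_{n,i}$ one takes the subsheaf $(\scrF^+_m|_{\calV_{n,i}})^{\leq d}$ of polynomials of total degree $\leq d$, which is finite free and hence coherent. These local truncations do not glue on the nose (the degree filtration depends on the chosen trivialisation), so one defines the global coherent subsheaf
\[
\scrF^+_{m,d}:=\ker\Bigl(\prod_{i\in I}(\scrF^+_m|_{\calV_{n,i}})^{\leq d}\longrightarrow \prod_{i,j\in I}\scrF^+_m|_{\calV_{n,i}\cap\calV_{n,j}}\Bigr)
\]
via a \v{C}ech kernel. One then checks $\scrF^+_m=\varinjlim_d\scrF^+_{m,d}$, which gives admissibility. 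The flatness and base-change compatibilities you worry about at the end are not the crux; the degree-truncation filtration is.
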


\begin{proof}
The proof is inspired by the discussion in \cite[\S 8.1]{AIP-2015}. We provide a sketch of proof. 

To simplify the notation, we write $\scrF^+=\widetilde{\underline{\omega}}_{n, w, \ket}^{\kappa_{\calU}, +}$ and $\scrF=\widetilde{\underline{\omega}}_{n, w, \ket}^{\kappa_{\calU}}$. We also write $\scrF^+_m:=\scrF^+/\fraka_{\calU}^m$, for every $m\in \Z_{\geq 1}$.

Let $\mathfrak{U}=\{\calV_{n, i} : i\in I\}$ be an open affinoid covering for $\overline{\calX}_{\Gamma(p^n), w}$ such that $\underline{\omega}_{\Gamma(p^n)}^{\mathrm{mod}, +}|_{\calV_{n, i}}$ is free, for every $i\in I$. We equip each $\calV_{n,i}$ the induced log structure from $\overline{\calX}_{\Gamma(p^n), w}$. By choosing a basis for $\underline{\omega}_{\Gamma(p^n)}^{\mathrm{mod}, +}|_{\calV_{n, i}}$, we can identify \[
    \scrF^+|_{\calV_{n, i}} \simeq \scrO^+_{\calV_{n, i}}\langle T_{st}: 1\leq s<t\leq g\rangle \widehat{\otimes} R_{\calU}
\] 
which is the $p$-adic completion of a free $\scrO^+_{\calV_{n, i}} \widehat{\otimes} R_{\calU}$-module.
Modulo $\fraka_{\calU}^m$, we obtain
\[
\scrF^+_m|_{\calV_{n, i}} \simeq \left(\scrO^+_{\calV_{n, i}} \otimes_{\Z_p} (R_{\calU}/\fraka_{\calU}^m)\right)[T_{st}: 1\leq s<t\leq g].
\]
For any $d\in \Z_{\geq 0}$, consider the subsheaf $(\scrF^+_m|_{\calV_{n,i}})^{\leq d}\subset \scrF^+_m|_{\calV_{n,i}}$ consisting of those polynomials of degree $\leq d$, and consider
$$
\scrF^+_{m,d}:=\ker\left( \prod_{i\in I}(\scrF^+_m|_{\calV_{n,i}})^{\leq d}\rightarrow\prod_{i, j\in I} \scrF^+_m|_{\calV_{n,i}\cap \calV_{n,j}}\right).
$$
Then each $\scrF^+_{m,d}$ is a coherent $\scrO^+_{\overline{\calX}_{\Gamma(p^n), w, \ket}}\otimes_{\Z_p} (R_{\calU}/\fraka_{\calU}^m)$-module and we have $\scrF^+_m=\varinjlim_d \scrF^+_{m,d}$, as desired.
\end{proof}

Next, we are going to relate the overconvergent automorphic sheaves $\underline{\omega}^{\kappa_{\calU},+}_w$ and $\underline{\omega}^{\kappa_{\calU}}_w$ with the auxiliary sheaves $\widetilde{\underline{\omega}}^{\kappa_{\calU},+}_{n,w}$ and $\widetilde{\underline{\omega}}^{\kappa_{\calU}}_{n,w}$. To this end, we need two intermediate sheaves $\underline{\omega}_{n, w}^{\kappa_{\calU},+}$ and $\underline{\omega}_{n, w}^{\kappa_{\calU}}$ over $\overline{\calX}_{\Gamma(p^n),w}$.

\begin{Definition}\label{Definition: overconvergent automorphic sheaf at Gamma(p^n)}
Let $h_{\Gamma(p^n)}: \overline{\calX}_{\Gamma(p^{\infty}),w}\rightarrow \overline{\calX}_{\Gamma(p^n),w}$ be the natural projection. 
\begin{enumerate}
\item[(i)] The subsheaf $\underline{\omega}_{n,w}^{\kappa_{\calU}}$ of $h_{\Gamma(p^n), *}\scrC^{w-\an}_{\kappa_{\calU}}(\Iw_{\GL_g}, \scrO_{\overline{\calX}_{\Gamma(p^{\infty}), w}}\widehat{\otimes}R_{\calU})$ is defined as follows. For every affinoid open subset $\calV\subset \overline{\calX}_{\Gamma(p^n), w}$ with $\calV_{\infty} = h_{\Gamma(p^n)}^{-1}(\calV)$, we put $$\underline{\omega}_{n,w}^{\kappa_{\calU}}(\calV):=\left\{f\in C^{w-\an}_{\kappa_{\calU}}(\Iw_{\GL_g}, \scrO_{\overline{\calX}_{\Gamma(p^{\infty}), w}}(\calV_{\infty})\widehat{\otimes}R_{\calU}): \bfgamma^*f=\rho_{\kappa_{\calU}}(\bfgamma_a+\frakz\bfgamma_c)^{-1} f,\,\,\forall \bfgamma=\begin{pmatrix}\bfgamma_a & \bfgamma_b\\ \bfgamma_c & \bfgamma_d\end{pmatrix}\in \Gamma(p^n)\right\}.$$ 
\item[(ii)] The subsheaf $\underline{\omega}_{n,w}^{\kappa_{\calU},+}$ of $h_{\Gamma(p^n), *}\scrC^{w-\an}_{\kappa_{\calU}}(\Iw_{\GL_g}, \scrO^+_{\overline{\calX}_{\Gamma(p^{\infty}), w}}\widehat{\otimes}R_{\calU})$ is defined as follows. For every affinoid open subset $\calV\subset \overline{\calX}_{\Gamma(p^n), w}$ with $\calV_{\infty} = h_{\Gamma(p^n)}^{-1}(\calV)$, we put $$\underline{\omega}_{n,w}^{\kappa_{\calU},+}(\calV):=\left\{f\in C^{w-\an}_{\kappa_{\calU}}(\Iw_{\GL_g}, \scrO^+_{\overline{\calX}_{\Gamma(p^{\infty}), w}}(\calV_{\infty})\widehat{\otimes}R_{\calU}): \bfgamma^*f=\rho_{\kappa_{\calU}}(\bfgamma_a+\frakz\bfgamma_c)^{-1} f,\,\,\forall \bfgamma=\begin{pmatrix}\bfgamma_a & \bfgamma_b\\ \bfgamma_c & \bfgamma_d\end{pmatrix}\in \Gamma(p^n)\right\}.$$ 
\end{enumerate}
\end{Definition}

\begin{Remark}\label{remark: invariants}
\normalfont Let $h_n: \overline{\calX}_{\Gamma(p^n),w}\rightarrow \overline{\calX}_{\Iw^+,w}$ denote the natural projection. Then the overconvergent Siegel modular sheaf $\underline{\omega}^{\kappa_{\calU}}_w$ can be identified as the $\Iw^+_{\GSp_{2g}}/\Gamma(p^n)$-invariants of the sheaf $h_{n,*}\underline{\omega}_{n,w}^{\kappa_{\calU}}$ with respect to the ``twisted'' action
$\bfgamma.f:=\rho_{\kappa_{\calU}}(\bfgamma_a+\frakz\bfgamma_c) \bfgamma^*f$
for every $\bfgamma=\begin{pmatrix}\bfgamma_a & \bfgamma_b\\ \bfgamma_c & \bfgamma_d\end{pmatrix}\in \Gamma(p^n)$ and $f\in \underline{\omega}_{n,w}^{\kappa_{\calU}}$. A similar result holds for the integral sheaf $\underline{\omega}^{\kappa_{\calU},+}_w$.
\end{Remark}

\begin{Proposition}\label{Proposition: omega is admissible}
There is a natural isomorphism of $\scrO^+_{\overline{\calX}_{\Gamma(p^n),w}}\widehat{\otimes}R_{\calU}$-modules $\Psi^+: \underline{\omega}^{\kappa_{\calU},+}_{n,w}\simeq \widetilde{\underline{\omega}}^{\kappa_{\calU},+}_{n,w}$. Inverting $p$, we obtain a natural isomorphism of $\scrO_{\overline{\calX}_{\Gamma(p^n),w}}\widehat{\otimes}R_{\calU}$-modules $\Psi: \underline{\omega}^{\kappa_{\calU}}_{n,w}\simeq \widetilde{\underline{\omega}}^{\kappa_{\calU}}_{n,w}$.
\end{Proposition}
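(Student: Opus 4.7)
The plan is to construct $\Psi^+$ locally by exploiting the canonical trivialization of $\underline{\omega}^{\mathrm{mod},+}_{\Gamma(p^n),w}$ at the infinite level that is furnished by the Hodge--Tate map. Once $\Psi^+$ is constructed, $\Psi$ is obtained by inverting $p$, so we focus on $\Psi^+$.

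The key observation is that, on $\overline{\calX}_{\Gamma(p^{\infty}), w}$, the sections $\fraks_{g+1},\dots,\fraks_{2g}$ defined by (\ref{eq:fake Hasse invariants basis}) give an $\scrO$-basis of $\underline{\omega}_{\Gamma(p^\infty)}$ whose reduction modulo $p^{w}$ agrees with the Hodge--Tate basis $\HT_{\Gamma(p^n), w}(e_{g+1}), \dots, \HT_{\Gamma(p^n), w}(e_{2g})$ used to define $\adicIW^+_w$. Hence, after pulling back along $h_{\Gamma(p^n)}: \overline{\calX}_{\Gamma(p^\infty),w}\to \overline{\calX}_{\Gamma(p^n),w}$, the adic space $\adicIW^+_w$ acquires a tautological section, corresponding to the triple with $\bfsigma=\one_g$, the flag $\Fil_\bullet^{\fraks}$ generated by $\{\fraks_{g+i}\}$, and the basis $\{\fraks_{g+i}\}$ itself. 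For any affinoid open $\calV \subset \overline{\calX}_{\Gamma(p^n),w}$ with preimage $\calV_\infty$, every triple $(\bfsigma,\Fil_\bullet,\{w_i\}) \in \adicIW^+_w(R,R^+)$ over $\calV_\infty$ then differs from the tautological section by a unique element
\[
\bfgamma(\bfsigma,\Fil_\bullet,\{w_i\}) \in \Iw^{+,(w)}_{\GL_g}(R),
\]
obtained from the change-of-basis matrix sending $(\fraks_{g+i})$ to $(w_i)$ (the Iwahori shape coming from $w$-compatibility).

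I would then define $\Psi^+$ by sending $f \in \underline{\omega}_{n,w}^{\kappa_{\calU},+}(\calV)$, viewed as a function $\Iw_{\GL_g}\to \scrO^+_{\overline{\calX}_{\Gamma(p^\infty),w}}(\calV_\infty)\widehat{\otimes}R_{\calU}^+$ extended to $\Iw^{+,(w)}_{\GL_g}$ via Remark \ref{Remark: extend to w-analytic nbhd}, to the assignment
\[
\Psi^+(f)\colon (\bfsigma,\Fil_\bullet,\{w_i\}) \longmapsto f\bigl(\bfgamma(\bfsigma,\Fil_\bullet,\{w_i\})\bigr).
\]
The inverse is the obvious one: given $\widetilde f \in \widetilde{\underline{\omega}}^{\kappa_{\calU},+}_{n,w}(\calV)$ and $\bfgamma\in\Iw_{\GL_g}$, evaluate $\widetilde f$ on the triple $(\one_g,\bfgamma\cdot\Fil_\bullet^{\fraks},\bfgamma\cdot\{\fraks_{g+i}\})$. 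The fact that $\Psi^+(f)$ lies in the $\kappa_{\calU}^{\vee}$-eigenspace for the $\calT^{(w)}_{\GL_g,0}$-action and is $U_{\GL_g,1}(\Z/p^n\Z)$-invariant is a formal consequence of the right $B_{\GL_g,0}$-equivariance of $f$: the torsor action on $(\Fil_\bullet,\{w_i\})$ translates into right multiplication of $\bfgamma$ by elements of $\calT^{(w)}_{\GL_g,0}$ and of $U_{\GL_g,1}(\Z/p^n\Z)\subset B^{(w)}_{\GL_g,0}$, on which $f$ transforms by $\kappa_{\calU}$ (respectively trivially). The $w$-analyticity of $\Psi^+(f)$ along the lower-unipotent directions is precisely the $w$-analyticity of $f|_{U^{\opp}_{\GL_g,1}}$.

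The remaining point is to show that $\Psi^+(f)$ descends from $\calV_\infty$ to $\calV$; this is where the $\Gamma(p^n)$-invariance of $f$ under the twisted action is needed. For $\bftheta=\begin{pmatrix}\bftheta_a &\bftheta_b\\ \bftheta_c&\bftheta_d\end{pmatrix}\in\Gamma(p^n)$, formula (\ref{eq: action on fake Hasse invariants}) shows that $\bftheta^*\fraks= \trans(\bftheta_a+\frakz\bftheta_c)\cdot\fraks$, so $\bftheta$ changes the tautological basis by $\trans(\bftheta_a+\frakz\bftheta_c)\in\Iw^{+,(w)}_{\GL_g}$; this exactly matches the factor $\rho_{\kappa_{\calU}}(\bftheta_a+\frakz\bftheta_c)^{-1}$ appearing in the definition of $\underline{\omega}^{\kappa_{\calU},+}_{n,w}$, proving $\Gamma(p^n)$-invariance of $\Psi^+(f)$. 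Conversely, the same computation shows that the inverse map respects the transformation rule. The two maps are then mutually inverse by construction (plugging the tautological triple into $\Psi^+$ returns $f$). The main technical obstacle is not conceptual but book-keeping: one must verify compatibility of the construction with restriction to affinoid subdomains and with the various completions entering the definition of $\widehat{\otimes}R_{\calU}$, and in particular that $\bfgamma(\bfsigma,\Fil_\bullet,\{w_i\})$ depends $w$-analytically on its arguments so that $f(\bfgamma(-))$ indeed lies in $\pi_*\scrO^+_{\adicIW^+_w}\widehat{\otimes}R_{\calU}^+$.
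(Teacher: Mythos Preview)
Your overall strategy---trivialize the pulled-back torsor $\adicIW^+_{w,\infty}$ using the fake Hasse invariants and read off a function on the structure group---is exactly the paper's approach (this is the content of the maps $\Psi_2$ and $\Psi_3$ there). However, two issues prevent your argument from going through as written.

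First, a notational point: the sections $\fraks_{g+1},\dots,\fraks_{2g}$ do not exist. The fake Hasse invariants are $\fraks_1,\dots,\fraks_g$ only, defined by $\fraks_i=\HT_{\Gamma(p^\infty)}(e_i^\vee)$. Under the symplectic identification $V_p\simeq V_p^\vee$ used in the paper (see the footnote in the proof of Proposition~\ref{Proposition: fake Hasse invariants and the coherent automorphic sheaf}), the basis $\HT_{\Gamma(p^n),w}(e_{g+1}),\dots,\HT_{\Gamma(p^n),w}(e_{2g})$ that defines $\adicIW^+_w$ lifts to $(\fraks_g,\fraks_{g-1},\dots,\fraks_1)=\fraks^{\ddagger}$, the \emph{reversed} tuple. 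This is why the paper works with $\fraks^{\ddagger}$ rather than $\fraks$.

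Second, and more seriously, there is a weight mismatch you have not addressed. Your map sends $f\in C^{w\text{-}\an}_{\kappa_{\calU}}(\Iw_{\GL_g},-)$ to the function $(\bfsigma,\Fil_\bullet,\{w_i\})\mapsto f(\bfgamma(\bfsigma,\Fil_\bullet,\{w_i\}))$. Under the right $T_{\GL_g,0}$-action on the torsor, the change-of-basis matrix transforms by $\bfgamma\mapsto\bfgamma\bftau$, so your output satisfies $\Psi^+(f)(x\cdot\bftau)=f(\bfgamma(x)\bftau)=\kappa_{\calU}(\bftau)\,\Psi^+(f)(x)$. But $\widetilde{\underline{\omega}}^{\kappa_{\calU},+}_{n,w}$ is by definition the $\kappa_{\calU}^{\vee}$-eigenspace, where $\kappa_{\calU}^\vee=(-\kappa_{\calU,g},\dots,-\kappa_{\calU,1})$, not the $\kappa_{\calU}$-eigenspace. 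These are genuinely different characters. The paper resolves this by inserting an extra isomorphism $\Psi_1\colon C^{w\text{-}\an}_{\kappa_{\calU}}\xrightarrow{\sim} C^{w\text{-}\an}_{\kappa_{\calU}^\vee}$ given by the anti-transpose involution $\Psi_1(f)(\bfgamma')=f(\oneanti_g\,{}^{\mathtt t}\bfgamma'^{-1}\oneanti_g)$, and then checks that this is compatible with the $\Gamma(p^n)$-twist: under $\Psi_1$, the factor $\rho_{\kappa_{\calU}}(\bfgamma_a+\frakz\bfgamma_c)^{-1}$ becomes $\rho_{\kappa_{\calU}^\vee}(\bfgamma_a^{\ddagger}+\frakz\bfgamma_c^{\ddagger})$, which is precisely what the $\fraks^{\ddagger}$-trivialization produces (since $\bfgamma^*\fraks^{\ddagger}=\fraks^{\ddagger}\,{}^{\mathtt t}(\bfgamma_a^{\ddagger}+\frakz\bfgamma_c^{\ddagger})$). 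Without this step, neither the torus-eigenvalue nor the $\Gamma(p^n)$-descent computation matches up.
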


\begin{proof}
As a preparation, consider the pullback diagram \[
    \begin{tikzcd}
        \adicIW_{w, \infty}^+ \arrow[r]\arrow[d, "\pi_{\infty}"'] & \adicIW_w^+\arrow[d, "\pi"]\\
        \overline{\calX}_{\Gamma(p^{\infty}),w} \arrow[r, "h_{\Gamma(p^n)}"] & \overline{\calX}_{\Gamma(p^n),w}
    \end{tikzcd}
\]
in the category of adic spaces. To show the existence of such a pullback, it suffices to notice that $\adicIW_w^+$ is locally isomorphic to $\overline{\calX}_{\Gamma(p^n),w}$ times finitely many copies of $\mathbf{B}(0,1)^{\frac{g(g+1)}{2}}$ where $\mathbf{B}(0,1)$ stands for the closed unit ball over $\C_p$. By \cite[Proposition 6.3.3 (3)]{Scholze-Weinstein-Berkeley}, the fibre product $\overline{\calX}_{\Gamma(p^{\infty}),w}\times_{\Spa(\C_p, \calO_{\C_p})} \mathbf{B}(0,1)^{\frac{g(g+1)}{2}}$ exists and is a sousperfectoid space.

For every affinoid open $\calV\subset \overline{\calX}_{\Gamma(p^n),w}$ and $\calV_{\infty}:=h_{\Gamma(p^n)}^{-1}\calV$, the desired isomorphism $\Psi^+$ will be established via a sequence of isomorphisms 
$$ \Psi^+: \underline{\omega}_{n,w}^{\kappa_{\calU},+}(\calV) \xrightarrow[\Psi_1]{\sim} \omega^{(1)} \xrightarrow[\Psi_2]{\sim} \omega^{(2)}\xrightarrow[\Psi_3]{\sim} \widetilde{\underline{\omega}}_{n,w}^{\kappa_{\calU},+}(\calV),
$$
where 
$$
\omega^{(1)} := \left\{ f\in C_{\kappa_{\calU}^{\vee}}^{w-\an}(\Iw_{\GL_g}, \scrO^+_{\calV_{\infty}}(\calV_{\infty})\widehat{\otimes}R_{\calU}): \bfgamma^* f = \rho_{\kappa_{\calU}^{\vee}}(\bfgamma_a^{\ddagger} + \frakz \bfgamma_c^{\ddagger}) f, \quad \forall \bfgamma = \begin{pmatrix} \bfgamma_a & \bfgamma_b \\ \bfgamma_c & \bfgamma_d\end{pmatrix} \in \Gamma(p^n)\right\}
$$
and
$$
\omega^{(2)} := \left\{f\in \pi_{\infty, *}\scrO^+_{\adicIW_{w, \infty}^+}(\calV_{\infty})\widehat{\otimes}R_{\calU}: \begin{array}{l}
        \bfgamma^* f = f, \quad \bftau^*f = \kappa_{\calU}^{\vee}(\bftau)f,\\
        \forall (\bfgamma, \bftau)\in \Gamma(p^n)\times T_{\GL_g, 0}
    \end{array}\right\}.
$$
Here, for any $\bfdelta \in M_g$, we write $\bfdelta^{\ddagger} := \oneanti_g \trans\bfdelta \oneanti_g$, which can be viewed as the ``transpose with respect to the anti-diagonal''. Notice that $\frakz^{\ddagger} = \frakz$. \\

\noindent\textbf{Construction of $\Psi_1$.}
Observe that there is an isomorphism of $\scrO^+_{\calV_{\infty}}(\calV_{\infty})\widehat{\otimes} R_{\calU}$-modules \[
    \Psi_1: C_{\kappa_{\calU}}^{w-\an}(\Iw_{\GL_g}, \scrO^+_{\calV_{\infty}}(\calV_{\infty})\widehat{\otimes} R_{\calU}) \rightarrow C_{\kappa_{\calU}^{\vee}}^{w-\an}(\Iw_{\GL_g}, \scrO^+_{\calV_{\infty}}(\calV_{\infty})\widehat{\otimes} R_{\calU})
\] 
defined by 
$$\Psi_1(f)(\bfgamma'):=f(\oneanti_g \trans\bfgamma'^{-1}\oneanti_g)$$
for all $f\in C_{\kappa_{\calU}}^{w-\an}(\Iw_{\GL_g}, \scrO^+_{\calV_{\infty}}(\calV_{\infty})\widehat{\otimes} R_{\calU})$ and $\bfgamma'\in \Iw_{\GL_g}$.

We claim that $\Psi_1$ induces an isomorphism $\underline{\omega}_{n,w}^{\kappa_{\calU},+}(\calV)\simeq \omega^{(1)}$. It suffices to check that if $\bfgamma^* f = \rho_{\kappa_{\calU}}(\bfgamma_a + \frakz\bfgamma_c)^{-1} f$ for every $\bfgamma = \begin{pmatrix}\bfgamma_a & \bfgamma_b\\ \bfgamma_c & \bfgamma_d\end{pmatrix} \in \Gamma(p^n)$, then $\bfgamma^*(\Psi_1(f)) = \rho_{\kappa_{\calU}^{\vee}}(\bfgamma_a^{\ddagger} + \frakz\bfgamma_c^{\ddagger})\Psi_1(f)$. Indeed, for any $\bfgamma'\in \Iw_{\GL_g}$, we have \begin{align*}
    \bfgamma^*(\Psi_1(f))(\bfgamma') & = \rho_{\kappa_{\calU}}(\bfgamma_a + \frakz\bfgamma_c)^{-1} f(\oneanti_g \trans\bfgamma'^{-1}\oneanti_g)\\
    & = f\left(\trans(\bfgamma_a + \frakz \bfgamma_c)^{-1}\oneanti_g \trans\bfgamma'^{-1}\oneanti_g\right)\\
    & = f\left(\oneanti_g \oneanti_g\trans(\bfgamma_a + \frakz \bfgamma_c)^{-1}\oneanti_g \trans\bfgamma'^{-1}\oneanti_g\right)\\
    & = f\left(\oneanti_g \trans(\oneanti_g \bfgamma_a \oneanti_g + \oneanti_g\frakz\oneanti_g \oneanti_g \bfgamma_c \oneanti_g)^{-1}\trans\bfgamma'^{-1}\oneanti_g\right)\\
    & = f\left(\oneanti_g \trans(\trans(\bfgamma_a^{\ddagger}+\frakz\bfgamma_c^{\ddagger})\bfgamma')^{-1}\oneanti_g\right)\\
    & = \rho_{\kappa_{\calU}^{\vee}}(\bfgamma_a^{\ddagger} + \frakz\bfgamma_c^{\ddagger}) \Psi_1(f)(\bfgamma').
\end{align*}

\noindent\textbf{Construction of $\Psi_2$.}
To construct $\Psi_2$, consider $\fraks^{\ddagger} = \begin{pmatrix}\fraks_g & \cdots & \fraks_1\end{pmatrix}\in \underline{\omega}_{\Gamma(p^{\infty})}(\calV_{\infty})^g$. Recall that $\fraks = \begin{pmatrix}\fraks_1& \cdots & \fraks_g\end{pmatrix}$ and thus \[
    \fraks^{\ddagger} = \fraks\oneanti_g.
\] Moreover, for any $\bfgamma = \begin{pmatrix}\bfgamma_a & \bfgamma_b \\ \bfgamma_c & \bfgamma_d\end{pmatrix} \in \Gamma(p^n)$, we have $\bfgamma^* \fraks = \fraks\cdot(\bfgamma_a + \frakz\bfgamma_c)$ by (\ref{eq: action on fake Hasse invariants}). Hence \[
    \bfgamma^* \fraks^{\ddagger}  = (\bfgamma^* \fraks) \oneanti_g = \fraks(\bfgamma_a + \frakz \bfgamma_c)\oneanti_g= \fraks \oneanti_g \oneanti_g (\bfgamma_a + \frakz \bfgamma_c)\oneanti_g = (\fraks \oneanti_g) \trans(\bfgamma_a^{\ddagger} + \frakz \bfgamma_c^{\ddagger})= \fraks^{\ddagger} \trans(\bfgamma_a^{\ddagger} + \frakz \bfgamma_c^{\ddagger}).
\]

Let $\Fil_{\bullet}^{\ddagger}$ be the full flag of the free $\scrO^+_{\calV_{\infty}}(\calV_{\infty})$-module $\underline{\omega}_{\Gamma(p^{\infty})}(\calV_{\infty})$ given by \[
    \Fil_{\bullet}^{\ddagger} = 0 \subset \langle \fraks_g \rangle \subset \langle \fraks_g, \fraks_{g-1} \rangle\subset \cdots \langle \fraks_g, \ldots, \fraks_1\rangle
\] and let $w_i^{\ddagger}$ be the image of $\fraks_{g+1-i}$ in $\Fil_{i}^{\ddagger}/\Fil_{i-1}^{\ddagger}$, for all $i=1, \ldots, g$. Then $(\Fil_{\bullet}^{\ddagger}, \{w_i^{\ddagger}\})$ defines a global section of $\pi_{\infty}^{-1}(\calV_{\infty})$. We obtain a surjection \[\Iw_{\GL_g}^{(w)}\rightarrow \pi_{\infty}^{-1}(\calV_{\infty}),\quad \bfgamma' \mapsto (\Fil_{\bullet}^{\ddagger}, \{w_i^{\ddagger}\}) \cdot \bfgamma'\]
with kernel $U_{\GL_g,0}^{(w)}$. This induces an isomorphism
\begin{align*}
    \Phi: \pi_{\infty, *}\scrO^+_{\adicIW_{w, \infty}^+}(\calV_{\infty})\widehat{\otimes} R^+_{\calU} & \xrightarrow{\sim} \left\{
        \text{analytic functions }
        g: \Iw_{\GL_g}^{(w)} \rightarrow \scrO^+_{\calV_{\infty}}(\calV_{\infty})\widehat{\otimes}R_{\calU} \textrm{ such that }g|_{U_{\GL_g,0}^{(w)}}=1 \right\}\\
    f & \mapsto \left(\bfgamma'\mapsto f((\Fil_{\bullet}^{\ddagger}, \{w_i^{\ddagger}\}) \cdot \bfgamma')\right).
\end{align*} We claim that if $\bfgamma^* f = f$ for any $\bfgamma = \begin{pmatrix}\bfgamma_a & \bfgamma_b \\ \bfgamma_c & \bfgamma_d\end{pmatrix}\in \Gamma(p^n)$, then $\bfgamma^*\Phi(f) = \rho_{\kappa_{\calU}^{\vee}}(\bfgamma_a^{\ddagger} + \frakz\bfgamma_c^{\ddagger})\Phi(f)$. Indeed, for any $\bfgamma' \in \Iw_{\GL_g}^{(w)}$, we have \begin{align*}
    (\bfgamma^*\Phi(f)) (\bfgamma') & = (\bfgamma^* f)(\bfgamma^*(\Fil_{\bullet}^{\ddagger}, \{w_i^{\ddagger}\}) \cdot \bfgamma')\\ 
    & = f\left((\Fil_{\bullet}^{\ddagger}, \{w_i^{\ddagger}\})\cdot  \trans(\bfgamma_a^{\ddagger} + \frakz\bfgamma_c^{\ddagger})\cdot\bfgamma'\right)\\
    & = \rho_{\kappa_{\calU}^{\vee}}(\bfgamma_a^{\ddagger} + \frakz\bfgamma_c^{\ddagger}) \Phi(f)(\bfgamma'),
\end{align*} where the second equation follows from the identity $\bfgamma^* \fraks^{\ddagger} = \fraks^{\ddagger} \trans(\bfgamma_a^{\ddagger} + \frakz\bfgamma_c^{\ddagger})$.

On the other hand, we can identify $\omega^{(1)}$ with the set of analytic functions
$$f: \Iw_{\GL_g}^{(w)} \rightarrow \scrO^+_{\calV_{\infty}}(\calV_{\infty})\widehat{\otimes}R_{\calU} $$
satisfying
\begin{itemize}
\item $f(\bfupsilon \bftau \bfnu) = \kappa_{\calU}^{\vee}(\bftau)f(\bfupsilon)$ for all $(\bfupsilon, \bftau, \bfnu)\in U_{\GL_g, 1}^{\opp, (w)}\times T_{\GL_g, 0}^{(w)}\times U_{\GL_g,0}^{(w)}$;
\item $\bfgamma^* f = \rho_{\kappa_{\calU}^{\vee}}(\bfgamma_a^{\ddagger}+ \frakz\bfgamma_c^{\ddagger})f$ for all $\bfgamma = \begin{pmatrix}\bfgamma_a & \bfgamma_b \\ \bfgamma_c & \bfgamma_d\end{pmatrix}\in \Gamma(p^n)$.
\end{itemize}

Therefore, putting $\Psi_2 := \Phi^{-1}$, one obtains the desired isomorphism \[
    \Psi_2: \omega^{(1)} \xrightarrow{\sim} \omega^{(2)}.
\]

\noindent\textbf{Construction of $\Psi_3$.} By the construction of $\underline{\omega}_{n,w}^{\kappa_{\calU}, +}$ and Lemma \ref{Lemma: structure sheaves at the infinite level and the structure sheaves at the Iwahori level}, one immediately obtains an identification of $\omega^{(2)}$ with $\widetilde{\underline{\omega}}_{n,w}^{\kappa_{\calU},+}(\calV)$. We simply take $\Psi_3$ to be this identification. 

Putting everything together, the composition $\Psi^+=\Psi_3\circ\Psi_2\circ\Psi_1$ yields an isomorphism
$$\Psi^+:\underline{\omega}_{n,w}^{\kappa_{\calU},+}(\calV)\xrightarrow{\sim} \widetilde{\underline{\omega}}_{n,w}^{\kappa_{\calU},+}(\calV).$$
It is also straightforward to check that the construction is functorial in $\calV$. By gluing, we arrive at an isomorphism
$$\Psi^+:\underline{\omega}_{n,w}^{\kappa_{\calU},+}\xrightarrow{\sim} \widetilde{\underline{\omega}}_{n,w}^{\kappa_{\calU},+}.$$
\end{proof}

Consider the $p$-adically completed sheaf of $\scrO_{\overline{\calX}_{\Iw^+,w,\ket}}$-modules associated with $\underline{\omega}^{\kappa_{\calU}}_w$; namely, let
$$
\underline{\omega}^{\kappa_{\calU},+}_{w,\ket}:=\varprojlim_m\left(\underline{\omega}^{\kappa_{\calU},+}_{w}\otimes_{\scrO^+_{\overline{\calX}_{\Iw^+,w}}}\scrO^+_{\overline{\calX}_{\Iw^+,w,\ket}}/p^m\right)
$$
and
$$\underline{\omega}^{\kappa_{\calU}}_{w,\ket}:=\underline{\omega}^{\kappa_{\calU},+}_{w,\ket}[\frac{1}{p}].$$

By Remark \ref{remark: invariants} and Proposition \ref{Proposition: omega is admissible}, $\underline{\omega}^{\kappa_{\calU}}_w$ can be identified with the sheaf of $\Iw^+_{\GSp_{2g}}/\Gamma(p^n)$-invariants of $h_{n,*}\widetilde{\underline{\omega}}_{n,w}^{\kappa_{\calU}}$. Hence, $\underline{\omega}^{\kappa_{\calU}}_{w,\ket}$ can be identified with the sheaf of $\Iw^+_{\GSp_{2g}}/\Gamma(p^n)$-invariants of $h_{n,*}\widetilde{\underline{\omega}}_{n,w, \ket}^{\kappa_{\calU}}$. When $(R_{\calU}, \kappa_{\calU})$ is a small weight, $h_{n,*}\widetilde{\underline{\omega}}_{n,w, \ket}^{\kappa_{\calU}}$ is an admissible Kummer \'etale Banach sheaf of $\scrO_{\overline{\calX}_{\Iw^+,w,\ket}}\widehat{\otimes}R_{\calU}$-modules by Lemma \ref{Lemma: omega is admissible Kummer etale Banach sheaf} and Lemma \ref{Lemma: pushforward along finite Kummer etale map}. Consequently, such a description allows us to apply Corollary \ref{Corollary: generalised projection formula with invariants} to the sheaf $\underline{\omega}^{\kappa_{\calU}}_{w,\ket}$. This will be used in the proof of Lemma \ref{Lemma: Leray spectral sequence for the automorphic sheaf}.

\subsection{Classical Siegel modular forms}\label{subsection: classical forms}
In this subsection, we show that the space of $w$-overconvergent Siegel modular forms does contain all of the classical Siegel modular forms.

Let $k = (k_1, ..., k_g)\in \Z_{\geq 0}^g$ be a dominant weight and consider $k^{\vee}=(-k_g, \ldots, -k_1)$. Recall the vector bundle $\underline{\omega}_{\Iw^+} = h_{\Iw^+}^*\underline{\omega}$, where $h_{\Iw^+}: \overline{\calX}_{\Iw^+} \rightarrow \overline{\calX}$ is the natural projection. Let \[
    \calM:= \Isom_{\overline{\calX}_{\Iw^+}}(\scrO^g_{\overline{\calX}_{\Iw^+}}, \underline{\omega}_{\Iw^+})
\]be the $\GL_g$-torsor over $\overline{\calX}_{\Iw^+}$ together with the structure morphism $\vartheta: \calM \rightarrow \overline{\calX}_{\Iw^+}$. Then the sheaf $\underline{\omega}_{\Iw^+}^{k}$ of \textit{\textbf{classical Siegel modular forms of weight $k$ (of strict Iwahori level)}} is defined to be 
\[
    \underline{\omega}_{\Iw^+}^k := \vartheta_*\scrO_{\calM}[k^{\vee}];
\] 
namely, the subsheaf of $\vartheta_*\scrO_{\calM}$ on which $T_{\GL_g}$ acts through the character $k^{\vee}$. The \textit{\textbf{space of classical Siegel modular forms of weight $k$ (of strict Iwahori level)}} is defined to be
$$M^{k, \mathrm{cl}}_{\Iw^+}:=H^0(\overline{\calX}_{\Iw^+}, \underline{\omega}_{\Iw^+}^k)$$
equipped with naturally defined Hecke operators.

\begin{Remark}\label{Remark: integral classical sheaf}
\normalfont One can also define the sheaf of integral classical Siegel modular forms by
$$\underline{\omega}_{\Iw^+}^{k,+} := \vartheta_*\scrO^+_{\calM}[k^{\vee}].$$
But we do not need this in the current subsection.
\end{Remark}

Restricting to the $w$-ordinary locus, we may consider the sheaf $\underline{\omega}_{\Iw^+}^{k}|_{\overline{\calX}_{\Iw^+, w}}$. Repeating the strategy as in the proof of Proposition \ref{Proposition: omega is admissible}, we arrive at the following explicit description of $\underline{\omega}_{\Iw^+}^{k}|_{\overline{\calX}_{\Iw^+, w}}$.

\begin{Definition}\label{Definition: algebraic functions}
\begin{enumerate}
\item[(i)] Let $P(\GL_g, \mathbb{A}^1)$ denote the $\Q_p$-vector space of maps $\GL_g\rightarrow \mathbb{A}^1$ between algebraic varieties over $\Q_p$.
\item[(ii)] For every uniform $\C_p$-Banach algebra $B$, define $$P(\GL_g, B):=P(\GL_g, \mathbb{A}^1)\widehat{\otimes}_{\Q_p}B$$
and let $P_k(\GL_g, B)$ denote the subspace of $P(\GL_g, B)$ consisting of those $f:\GL_g \rightarrow B$ such that $f(\bfgamma\bfbeta) =k(\bfbeta)f(\bfgamma)$ for all $\bfgamma\in \GL_{g}$ and  $\bfbeta\in B_{\GL_{g}}$.
\item[(iii)] There is a natural left action of $\GL_g$ on $P_k(\GL_g, B)$ given by
$$(\bfgamma.f)(\bfgamma')=f(\trans \bfgamma\bfgamma')$$
for all $\bfgamma, \bfgamma'\in \GL_g$ and $f\in P_k(\GL_g, B)$. This left action is denoted by $$\rho_k: \GL_g\rightarrow \Aut(P_k(\GL_g, B)).$$
\end{enumerate}
\end{Definition}

\begin{Proposition}\label{Proposition: explicit description of classical modular sheaf}
For any affinoid open $\calV\subset \overline{\calX}_{\Iw^+, w}$ with preimage $\calV_{\infty}$ in $\overline{\calX}_{\Gamma(p^{\infty}), w}$, we have a natural identification
$$
        \underline{\omega}_{\Iw^+}^{k}(\calV) = \left\{f \in P_k(\GL_g, \scrO_{\overline{\calX}_{\Gamma(p^{\infty}), w}}(\calV_{\infty})) :  \bfgamma^* f = \rho_{k}(\bfgamma_a+\frakz\bfgamma_c)^{-1}f, \,\,\,\forall \bfgamma = \begin{pmatrix}\bfgamma_a & \bfgamma_b\\ \bfgamma_c & \bfgamma_d\end{pmatrix} \in \Iw_{\GSp_{2g}}^+ 
        \right\}.
$$
In particular, there is a natural injection
\end{Proposition}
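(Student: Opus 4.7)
The plan is to repeat the three-step argument used in the proof of Proposition \ref{Proposition: omega is admissible}, with the character $\kappa_{\calU}$ replaced by the dominant algebraic weight $k$ and with $w$-analytic functions replaced by algebraic polynomial functions. Concretely, over any affinoid $\calV\subset\overline{\calX}_{\Iw^+,w}$ with preimages $\calV_n\subset\overline{\calX}_{\Gamma(p^n),w}$ and $\calV_\infty\subset\overline{\calX}_{\Gamma(p^\infty),w}$, I will factor the identification as a composition
\[\underline{\omega}_{\Iw^+}^{k}(\calV)\;\xrightarrow[\Psi_1]{\sim}\;\omega^{(1)}\;\xrightarrow[\Psi_2]{\sim}\;\omega^{(2)}\;\xrightarrow[\Psi_3]{\sim}\;\bigl\{\text{the RHS of the proposition}\bigr\},\]
where $\omega^{(1)}$ is the space of $f\in P_{k^{\vee}}(\GL_g,\scrO_{\overline{\calX}_{\Gamma(p^\infty),w}}(\calV_\infty))$ satisfying $\bfgamma^*f=\rho_{k^{\vee}}(\bfgamma_a^{\ddagger}+\frakz\bfgamma_c^{\ddagger})f$ for all $\bfgamma\in\Iw_{\GSp_{2g}}^{+}$, and $\omega^{(2)}$ is the $\kappa^\vee$-eigenspace (under $T_{\GL_g}$) of the $\Iw_{\GSp_{2g}}^{+}$-invariants of $\vartheta_{\infty,*}\scrO_{\calM_\infty}$, with $\calM_\infty$ the pullback of the $\GL_g$-torsor $\calM$ to $\calV_\infty$.

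For $\Psi_1$, I would use the anti-diagonal transpose involution $f\mapsto(\bfgamma'\mapsto f(\oneanti_g\trans\bfgamma'^{-1}\oneanti_g))$ exactly as in the proof of Proposition \ref{Proposition: omega is admissible}; the identity $\oneanti_g\trans(\bfgamma_a+\frakz\bfgamma_c)^{-1}\oneanti_g=\trans(\bfgamma_a^{\ddagger}+\frakz\bfgamma_c^{\ddagger})^{-1}$ (together with $\frakz^{\ddagger}=\frakz$) converts the cocycle $\rho_k(\bfgamma_a+\frakz\bfgamma_c)^{-1}$ into $\rho_{k^{\vee}}(\bfgamma_a^{\ddagger}+\frakz\bfgamma_c^{\ddagger})$ and swaps between the two conventions of $B_{\GL_g}$-equivariance for $P_k$ versus $P_{k^\vee}$. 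For $\Psi_2$, the key point is that on $\calV_\infty$ the fake Hasse invariants form a basis $\fraks^{\ddagger}=(\fraks_g,\dots,\fraks_1)$ of $\underline{\omega}_{\Gamma(p^\infty)}(\calV_\infty)$, which yields a tautological section of the $\GL_g$-torsor $\calM_\infty\to\calV_\infty$ and hence a trivialisation $\calM_\infty\simeq\calV_\infty\times\GL_g$; under this trivialisation, sections of $\vartheta_{\infty,*}\scrO_{\calM_\infty}$ on which $T_{\GL_g}$ acts via $k^{\vee}$ correspond to $f\in P_{k^{\vee}}(\GL_g,\scrO(\calV_\infty))$, and the transformation formula \eqref{eq: action on fake Hasse invariants} $\bfgamma^*\fraks=\trans(\bfgamma_a+\frakz\bfgamma_c)\fraks$ (applied to $\fraks^{\ddagger}$) translates the $\Iw_{\GSp_{2g}}^{+}$-action into exactly the cocycle appearing in $\omega^{(1)}$, as in the middle step of the proof of Proposition \ref{Proposition: omega is admissible}. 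Finally, $\Psi_3$ is the identification
\[\underline{\omega}_{\Iw^+}^{k}(\calV)=\bigl(h_{\Iw^+,*}\vartheta_{\infty,*}\scrO_{\calM_\infty}\bigr)^{\Iw_{\GSp_{2g}}^{+}}[k^{\vee}],\]
which is Lemma \ref{Lemma: structure sheaves at the infinite level and the structure sheaves at the Iwahori level} together with the elementary fact that forming $T_{\GL_g}$-eigenspaces commutes with taking $\Iw_{\GSp_{2g}}^{+}$-invariants.

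For the ``in particular'' clause, any $f\in P_k(\GL_g,\scrO(\calV_\infty))$ restricts to a function $\Iw_{\GL_g}\to\scrO(\calV_\infty)$ which, being polynomial, is trivially $w$-analytic for every $w>1+r_k=1$, and the right-$B_{\GL_g}$-equivariance in Definition \ref{Definition: algebraic functions} exactly matches the right-$B_{\GL_g,0}$-equivariance demanded in the definition of $C^{w-\an}_k(\Iw_{\GL_g},-)$; together with the matching $\Iw_{\GSp_{2g}}^{+}$-transformation rules, this gives the natural injection $M^{k,\mathrm{cl}}_{\Iw^+}\hookrightarrow M^{k}_{\Iw^+,w}$. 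The only genuinely delicate point, and the one where I would be most careful, is the bookkeeping in $\Psi_1$--$\Psi_2$: keeping track of the $\ddagger$-involution, of the switch between $k$ and $k^{\vee}$, and of the fact that the tautological trivialisation is $\fraks^{\ddagger}$ rather than $\fraks$ so that the transformation rule of $\fraks^{\ddagger}$ produces $\bfgamma_a^{\ddagger}+\frakz\bfgamma_c^{\ddagger}$ on the intermediate step and $\bfgamma_a+\frakz\bfgamma_c$ after applying $\Psi_1^{-1}$. Beyond this, functoriality in $\calV$ and gluing are routine, so the proof reduces to a direct transcription of the proof of Proposition \ref{Proposition: omega is admissible} in the algebraic setting.
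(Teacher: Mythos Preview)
Your proposal is correct and follows exactly the approach the paper itself indicates: it says the proof of Proposition \ref{Proposition: omega is admissible} applies verbatim with the $\GL_g$-torsor $\calM$ in place of $\adicIW^+_w$, and your three steps (anti-diagonal transpose, trivialisation via $\fraks^{\ddagger}$, descent via Lemma \ref{Lemma: structure sheaves at the infinite level and the structure sheaves at the Iwahori level}) are precisely that argument. One small bookkeeping point: in your displayed chain the roles of $\Psi_1$ and $\Psi_3$ are swapped relative to your own descriptions --- the Lemma-identification $\underline{\omega}_{\Iw^+}^{k}(\calV)\cong\omega^{(2)}$ should come first (since the classical sheaf is the torsor-style object here), then the torsor trivialisation, then the anti-diagonal transpose to land in the perfectoid-style description on the right-hand side.
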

\begin{equation}\label{eq: alg. sheaf into overconvergent sheaf}
    \underline{\omega}_{\Iw^+}^k|_{\overline{\calX}_{\Iw^+, w}} \hookrightarrow \underline{\omega}_w^{k}.
\end{equation} 

\begin{proof}
For the first statement, the strategy in the proof of Proposition \ref{Proposition: omega is admissible} applies verbatim, except that we consider the torsor $\calM$ in place of $\adicIW^+_w$. The details are left to the reader. The inclusion $\underline{\omega}_{\Iw^+}^k|_{\overline{\calX}_{\Iw^+, w}} \hookrightarrow \underline{\omega}_w^{k}$ follows from the natural inclusion $P_k(\GL_g, \scrO_{\overline{\calX}_{\Gamma(p^{\infty}), w}}(\calV_{\infty}))\hookrightarrow C^{w-\an}_k(\Iw_{\GL_g}, \scrO_{\overline{\calX}_{\Gamma(p^{\infty}), w}}(\calV_{\infty}))$.
\end{proof}

The following result shows that the space of classical forms naturally injects into the space of overconvergent modular forms.

\begin{Lemma}\label{Lemma: injection of classical forms}
The Hecke-equivariant composition of maps
$$M^{k, \mathrm{cl}}_{\Iw^+}=H^0(\overline{\calX}_{\Iw^+}, \underline{\omega}_{\Iw^+}^k)\xrightarrow[]{\Res} H^0(\overline{\calX}_{\Iw^+, w}, \underline{\omega}_{\Iw^+}^k) \hookrightarrow M_{\Iw^+, w}^{k}
$$
is injective. 
\end{Lemma}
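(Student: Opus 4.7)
The composition decomposes into two maps, and the second one, $H^0(\overline{\calX}_{\Iw^+, w}, \underline{\omega}_{\Iw^+}^k) \hookrightarrow M_{\Iw^+, w}^{k}$, is already injective: it is the map on global sections induced by the inclusion of sheaves (\ref{eq: alg. sheaf into overconvergent sheaf}) of Proposition~\ref{Proposition: explicit description of classical modular sheaf}, which is injective stalkwise because at each point it specialises to the inclusion $P_k(\GL_g, -) \hookrightarrow C_k^{w-\an}(\Iw_{\GL_g}, -)$ of algebraic functions into $w$-analytic ones. Hence the problem reduces to showing that the restriction map
\[
    \Res: H^0(\overline{\calX}_{\Iw^+}, \underline{\omega}_{\Iw^+}^k) \longrightarrow H^0(\overline{\calX}_{\Iw^+, w}, \underline{\omega}_{\Iw^+}^k)
\]
is injective and that the resulting composition is Hecke-equivariant.

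For the injectivity of $\Res$, I would argue via the identity principle. The sheaf $\underline{\omega}_{\Iw^+}^k$ is a locally free $\scrO_{\overline{\calX}_{\Iw^+}}$-module of finite rank, being the $k^{\vee}$-isotypic component of the pushforward of the structure sheaf of the $\GL_g$-torsor $\calM \to \overline{\calX}_{\Iw^+}$. The adic space $\overline{\calX}_{\Iw^+}$ is smooth (the fixed polyhedral cone decomposition provides a smooth toroidal compactification), and each of its connected components is irreducible. The $w$-ordinary locus $\overline{\calX}_{\Iw^+, w}$ is open and non-empty, and in fact meets every connected component of $\overline{\calX}_{\Iw^+}$: it is the preimage under $\pi_{\HT}$ of $\adicFL_w^{\times}$, which contains the usual ordinary locus (corresponding to $\bfitz \in \Z_p^{g \times g}$), and the ordinary locus is well-known to meet every connected component of the Siegel modular variety. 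By the identity principle for locally free sheaves on smooth connected adic spaces over $\C_p$, a section which vanishes on a non-empty open subset of a given connected component vanishes on the whole component; hence $\ker(\Res) = 0$.

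For the Hecke-equivariance, I would check compatibility separately for the Hecke operators outside $Np$ and the operators $U_{p,i}$. In both cases, the Hecke operator on classical forms is defined by precisely the same correspondences as the Hecke operator on overconvergent forms (the latter being obtained by restricting the classical correspondences to the $w$-ordinary locus, which is preserved up to a slight shrinking of $w$ as explained in \S \ref{subsection: Hecke operators on the overconvergent automorphic forms}). Since the inclusion (\ref{eq: alg. sheaf into overconvergent sheaf}) is induced by the natural $\Iw_{\GSp_{2g}}^+$-equivariant inclusion of algebraic into analytic induction, each Hecke double coset acts on both sides via the same formula, and equivariance follows by unravelling definitions.

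The main obstacle is really only the identity principle step: one must be slightly careful to invoke a version valid for locally free sheaves on smooth adic spaces over $\C_p$ (as opposed to merely for the structure sheaf on a rigid analytic space), and to check that the $w$-ordinary locus really does meet every connected component of $\overline{\calX}_{\Iw^+}$. Both points are standard, but worth stating explicitly; the rest of the argument is formal.
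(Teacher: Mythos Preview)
Your approach is essentially the same as the paper's: reduce to injectivity of $\Res$, then apply the identity principle on each irreducible component once one knows the $w$-ordinary locus meets all of them. Two points of comparison are worth noting.

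First, the step you flag as ``standard'' --- that $\overline{\calX}_{\Iw^+, w}$ meets every connected component of $\overline{\calX}_{\Iw^+}$ --- is in fact where the paper spends most of its effort. It is well known that the ordinary locus meets every component of $\overline{\calX}$, but this does not immediately transfer to the strict Iwahori cover: one must rule out the possibility that the fibre over an ordinary component splits into several components, some of which miss the $w$-ordinary locus. The paper handles this by computing $\pi_0(\overline{\calX}_{\Iw^+})$ via Deligne's description $\Q_{>0}\backslash \bbG_m(\A_f)/\varsigma(\Gamma(N)\Iw_{\GSp_{2g}}^+)$ and observing that $\varsigma(\Iw_{\GSp_{2g}}^+)=\varsigma(\GSp_{2g}(\Z_p))$, so $\pi_0(\overline{\calX}_{\Iw^+})=\pi_0(\overline{\calX})$ and the components correspond bijectively. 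You should supply this argument rather than defer it.

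Second, for the identity principle, the paper avoids invoking a version for locally free sheaves on possibly non-smooth adic spaces (note that smoothness of $\overline{\calX}_{\Iw^+}$ is not established in the text; it is only given as a finite Kummer \'etale cover of $\overline{\calX}$). Instead it views a section of $\underline{\omega}_{\Iw^+}^k$ as a function on the $\GL_g$-torsor $\calM$, observes that $\calM\times_{\overline{\calX}_{\Iw^+}}\calC$ is irreducible for each irreducible component $\calC$, and applies Berthelot's identity principle for the structure sheaf on an irreducible rigid analytic variety. Your direct route via local freeness is fine in spirit, but the paper's detour through $\calM$ sidesteps any regularity hypotheses on the base.
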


\begin{proof}
It suffices to show that 
$$\Res: H^0(\overline{\calX}_{\Iw^+}, \underline{\omega}_{\Iw^+}^k)\rightarrow H^0(\overline{\calX}_{\Iw^+, w}, \underline{\omega}_{\Iw^+}^k)$$
is injective; namely, given any global section $f$ of $\underline{\omega}_{\Iw^+}^k$ that vanishes on $\overline{\calX}_{\Iw^+, w}$, we have to show that $f = 0$ on every irreducible component of $\overline{\calX}_{\Iw^+}$.

For every algebraic variety $Y$ over $\C_p$, we know that $Y$ is irreducible if and only if the associated adic space $\calY$ over $\Spa(\C_p, \calO_{\C_p})$ is irreducible (see \cite[Theorem 2.3.1]{conrad-conn} and \cite[\S 1.1.11.(c)]{Huber-2013}). In particular, the irreducible components of $\overline{\calX}_{\Iw^+}$ coincide with the irreducible components of $\overline{X}_{\Iw^+}$. As $\overline{X}_{\Iw^+}$ is a compactification of $X_{\Iw^+}$, its irreducible components correspond to the irreducible components of $X_{\Iw^+}$. Under the identification \[
    X_{\Iw^+}(\C) = \GSp_{2g}(\Q)\backslash \GSp_{2g}(\A_f)\times \bbH_g/\Iw_{\GSp_{2g}}^+\Gamma(N),
\] \cite[\S 2]{Deligne-Shimura} provides the following description of the irreducible components of $\overline{\calX}_{\Iw^+}$:
\[
\pi_0(\overline{\calX}_{\Iw^+}) =  {\Q_{>0}}\backslash \mathbb{G}_m(\A_f)/ \varsigma\left(\Gamma(N) \Iw_{\GSp_{2g}}^+\right).
\]
where $\varsigma$ is the character of similitude involved in the definition of $\GSp_{2g}$. There is a similar description for $\pi_0(\overline{\calX})$. Note that $\pi_0(\overline{\calX}_{\Iw^+})$ is the same as  $\pi_0(\overline{\calX})$ because $\Iw_{\GSp_{2g}}^+$ and $\GSp_{2g}(\Z_p)$ have the same image via $\varsigma$. In particular, since every irreducible component in $\pi_0(\overline{\calX})$ contains an ordinary point, every irreducible component of $\overline{\calX}_{\Iw^+}$ intersects $\overline{\calX}_{\Iw^+, w}$.

By definition, $f$ can be viewed as a global section of the structure sheaf of $\calM$. Let $\calC$ be any irreducible component of $\overline{\calX}_{\Iw^+}$, it remains to show that $f$ vanishes on $\calM \times_{\overline{\calX}_{\Iw^+}}\calC$. Indeed, observe that $\mathcal{M}\times_{\overline{\calX}_{\Iw^+}} {\mathcal{C}} $ is irreducible and $f$ vanishes on $\calM \times_{\overline{\calX}_{\Iw}}(\calC \cap \overline{\calX}_{\Iw^+, w})$. Hence, the desired vanishing follows from \cite[Proposition 0.1.13]{Berthelot-rigid_cohomology} which states that a rigid analytic function vanishing on an open subset of an irreducible rigid analytic variety is identically zero.
\end{proof}

\subsection{The construction \`{a} la Andreatta--Iovita--Pilloni}\label{subsection: construction of AIP}
The sheaves $\underline{\omega}^{\kappa_{\calU}}_w$ constructed in \S \ref{subsection: the perfectoid construction} are analogues of the overconvergent automorphic sheaves constructed by Andreatta-Iovita-Pilloni in \cite{AIP-2015}. It is a natural question whether these two constructions coincide. In this subsection, we recall the construction in \cite{AIP-2015}. Later in \S \ref{subsection:comparison sheaf aip}, we will present a comparison result.

Choose $v\in \Q_{>0}\cap [0,\frac{1}{2})$ and let $n$ be a positive integer such that $v<\frac{1}{2p^{n-1}}$. Consider the open subset $$\overline{\calX}(v):=\{\bfitx\in \overline{\calX}: |\widetilde{\Ha}(\bfitx)|\geq p^{-v}\}\subset \overline{\calX},$$ where $\widetilde{\Ha}$ is a fixed lift of the Hasse invariant. (We point out that, for those $\bfitx$ at the boundary,  the Hasse invariant of $\bfitx$ is defined to be the Hasse invariant of the abelian part of the semiabelian scheme associated with $\bfitx$.) Thanks to \cite[Proposition 4.1.3]{AIP-2015}, for every $1\leq m\leq n$, there is a universal canonical subgroup $\calH_m$ of level $m$ of the tautological semiabelian variety over $\overline{\calX}(v)$. Let $\underline{\omega}_v$ denote the restriction of $\underline{\omega}$ on $\overline{\calX}(v)$.

We also consider the following finite covers of $\overline{\calX}(v)$:
\begin{enumerate}
\item[$\bullet$] Let $$\overline{\calX}_1(p^n)(v):=\Isom_{\overline{\calX}(v)}((\Z/p^n\Z)^g, \calH_n^{\vee})$$ be the adic space over $\overline{\calX}(v)$ which parameterises trivialisations of $\calH_n^{\vee}$. Notice that the group $\GL_g(\Z/p^n\Z)$ naturally acts on $\overline{\calX}_1(p^n)(v)$ from the right by permuting the trivialisations. 
\item[$\bullet$] Let $$\overline{\calX}_1(v):=\Isom_{\overline{\calX}(v)}((\Z/p\Z)^g, \calH_1^{\vee})$$ be the adic space over $\overline{\calX}(v)$ which parameterises trivialisations of $\calH_1^{\vee}$. 
\item[$\bullet$] The group $\GL_g(\Z/p\Z)$ naturally acts on $\overline{\calX}_1(v)$ from the right by permuting the trivialisations. By taking the quotient
$$\overline{\calX}_{\Iw}(v):=\overline{\calX}_1(v)/B_{\GL_g}(\Z/p\Z),$$ we obtain an adic space $\overline{\calX}_{\Iw}(v)$ over $\overline{\calX}(v)$ which parameterises full flags $\Fil_{\bullet}\calH_1^{\vee}$ of $\calH_1^{\vee}$.
\item[$\bullet$] Let $\underline{\omega}_{n,v}$ be the pullback of $\underline{\omega}_v$ along $\overline{\calX}_1(p^n)(v)\rightarrow \overline{\calX}(v)$. 
\item[$\bullet$] Let $\underline{\omega}_{\Iw,v}$ be the pullback of $\underline{\omega}_v$ along $\overline{\calX}_{\Iw}(v)\rightarrow \overline{\calX}(v)$.
\end{enumerate}

In order to proceed, we need to introduce formal models of aforementioned geometric objects: 
\begin{enumerate}
\item[$\bullet$] Recall that $\overline{\frakX}$ is the formal completion of $\overline{X}_0$ along the special fibre. Let $\widetilde{\frakX}(v)$ be the blowup of $\overline{\frakX}$ along the ideal $(\widetilde{\Ha}, p^v)$. Let $\overline{\frakX}(v)$ be the $p$-adic completion of the normalisation of the largest open formal subscheme of $\widetilde{\frakX}(v)$ where the ideal $(\widetilde{\Ha}, p^v)$ is generated by $\widetilde{\Ha}$. Then $\overline{\frakX}(v)$ is a formal model of $\overline{\calX}(v)$. 
\item[$\bullet$] Let $\overline{\frakX}_1(p^n)(v)$ be the normalisation of $\overline{\frakX}(v)$ in $\overline{\calX}_1(p^n)(v)$. The group $\GL_g(\Z/p^n\Z)$ naturally acts on $\overline{\frakX}_1(p^n)(v)$.
\item[$\bullet$] Let $\overline{\frakX}_1(v)$ be the normalisation of $\overline{\frakX}(v)$ in $\overline{\calX}_1(v)$. The group $\GL_g(\Z/p\Z)$ naturally acts on $\overline{\frakX}_1(v)$.
\item[$\bullet$] Let $\overline{\frakX}_{\Iw}(v)$ be the normalisation of $\overline{\frakX}(v)$ in $\overline{\calX}_{\Iw}(v)$. We can identify $\overline{\frakX}_{\Iw}(v)$ with the quotient $\overline{\frakX}_1(v)/B_{\GL_g}(\Z/p\Z)$.
\item[$\bullet$] Let $\frakG_v^{\univ}$ be the tautological semiabelian scheme over $\overline{\frakX}(v)$ with the structure morphism 
$\pi: \frakG_v^{\univ}\rightarrow \overline{\frakX}(v)$ and the identity section $e$.
Define 
$$\underline{\Omega}_v:=e^*\Omega^1_{\frakG_v^{\univ}/\overline{\frakX}(v)}.$$
\item[$\bullet$] Let $\underline{\Omega}_{n,v}$ be the pullback of $\underline{\Omega}_v$ along $\overline{\frakX}_1(p^n)(v)\rightarrow \overline{\frakX}(v)$. 
\item[$\bullet$] Let $\underline{\Omega}_{\Iw,v}$ be the pullback of $\underline{\Omega}_v$ along $\overline{\frakX}_{\Iw}(v)\rightarrow \overline{\frakX}(v)$.
\end{enumerate}

Now suppose $w\in \Q_{>0}$ lies in the interval $\left(n-1+\frac{v}{p-1}, n-\frac{vp^n}{p-1}\right]$. Let $$\psi_n^{\univ}: (\Z/p^n\Z)^g\cong \calH_n^{\vee}$$ denote the universal trivialisation of $\calH_n^{\vee}$ over $\overline{\frakX}_1(p^n)(v)$. Then \cite[Proposition 4.3.1]{AIP-2015} yields a locally free $\scrO_{\overline{\frakX}_1(p^n)(v)}$-submodule $\scrF\subset \underline{\Omega}_{n,v}$ of rank $g$, equipped with a map
$$\HT_{n,v,w}:  (\Z/p^n\Z)^g\overset{\psi_n^{\univ}}{\cong} \calH_n^{\vee}\rightarrow \scrF\otimes_{\scrO_{\overline{\frakX}_1(p^n)(v)}}\scrO_{\overline{\frakX}_1(p^n)(v)}/p^w$$
which induces an isomorphism
$$\HT_{n,v,w}\otimes\id: (\Z/p^n\Z)^g\otimes_{\Z}\scrO_{\overline{\frakX}_1(p^n)(v)}/p^w\cong \scrF\otimes_{\scrO_{\overline{\frakX}_1(p^n)(v)}}\scrO_{\overline{\frakX}_1(p^n)(v)}/p^w.$$
More precisely, locally on $\overline{\frakX}_1(p^n)(v)$, consider the family version of the Hodge-Tate map 
$$\HT_n: (\Z/p^n\Z)^g\overset{\psi_n^{\univ}}{\cong}\calH_n^{\vee}\rightarrow \omega_{\calH_n}$$
studied in \cite[\S 4]{AIP-2015}. Let $\epsilon_1, \ldots, \epsilon_g$ be the standard $(\Z/p^n\Z)$-basis for $(\Z/p^n\Z)^g$ and let $\widetilde{\HT}_n(\epsilon_i)$ be lifts of $\HT_n(\epsilon_i)$ from $\omega_{\calH_n}$ to $\underline{\Omega}_{n,v}$. Then $\scrF$ is generated by $\widetilde{\HT}_n(\epsilon_1), \ldots, \widetilde{\HT}_n(\epsilon_g)$. It turns out this local construction glues to a locally free $\scrO_{\overline{\frakX}_1(p^n)(v)}$-module of rank $g$.

In \cite[\S 4.5]{AIS-2015}, Andreatta--Iovita--Pilloni constructs a formal scheme $\mathfrak{IW}^+_{w,v}$ over $\overline{\frakX}_1(p^n)(v)$ which parameterises certain $w$-compatible objects. More precisely, $\mathfrak{IW}^+_{w,v}$ is the formal schemes over $\overline{\frakX}_1(p^n)(v)$ such that for every affine open subset $\Spf R\subset \overline{\frakX}_1(p^n)(v)$ on which $\scrF$ is free, $\mathfrak{IW}^+_{w,v}(R)$ consists of pairs $(\Fil_{\bullet}, \{w_i: i=1,\ldots, g\})$ where $\Fil_{\bullet}$ is a full flag of $\scrF$, each $w_i$ is an $R$-basis for $\Fil_i/\Fil_{i-1}$, and both $\Fil_{\bullet}$ and $\{w_i: i=1, \ldots, g\}$ are $w$-compatible with $\HT_{n,v,w}(\epsilon_1), \ldots, \HT_{n,v,w}(\epsilon_g)$ in the sense of Definition \ref{Definition: w-compatible}.

Now we go back to the generic fibres. Let $\adicIW_{w,v}^+$ be the adic space associated with the formal scheme $\mathfrak{IW}^+_{w,v}$ over $\Spa(\C_p, \calO_{\C_p})$. Then we have a chain of morphisms of adic spaces
$$\pi^{\AIP}: \adicIW_{w,v}^+\rightarrow \overline{\calX}_1(p^n)(v)\rightarrow\overline{\calX}_1(v)\rightarrow \overline{\calX}_{\Iw}(v).$$ As pointed out in \cite[\S 5.2.2]{AIS-2015}, there is a natural action of $B_{\GL_g,0}^{(w)}$ acting on $\adicIW_{w,v}^+$.

Finally, we are ready to define the overconvergent automorphic sheaves of Andreatta--Iovita--Pilloni. For a $w$-analytic weight $(R_{\calU}, \kappa_{\calU})$, the $T_{\GL_g,0}$-character $\kappa_{\calU}^{\vee}$ extends to a character of $T_{\GL_g,0}^{(w)}$ and further extends to a character of $B_{\GL_g,0}^{(w)}$ by setting $\kappa_{\calU}^{\vee}|_{U_{\GL_g,0}^{(w)}}=1$.

\begin{Definition}\label{definition: automorphic sheaves of AIP}
Let $(R_{\calU}, \kappa_{\calU})$ be a $w$-analytic weight. 
\begin{enumerate}
\item[(i)] Andreatta--Iovita--Pilloni's \textbf{sheaf of $w$-analytic $v$-overconvergent Siegel modular forms of weight $\kappa_{\calU}$ (of Iwahori level)} is defined to be \footnote{The notations $\underline{\omega}_{w, v}^{\kappa_{\calU}, \AIP}$, $M^{\kappa_{\calU}, \AIP}_{\Iw, w,v}$, and $M^{\kappa_{\calU}, \AIP}_{\Iw}$ correspond to the notations $\omega^{\dagger\, \kappa_{\calU}}_w$, $M^{\dagger\,\kappa_{\calU}}_w(\calX_{\Iw}(p)(v))$, and $M^{\dagger\,\kappa_{\calU}}(\calX_{\Iw}(p))$, respectively, in \cite{AIP-2015}.}
$$\underline{\omega}_{w, v}^{\kappa_{\calU}, \AIP}:=\pi_{*}^{\AIP}\scrO_{\adicIW_{w,v}^+}[\kappa_{\calU}^{\vee}],$$ 
where $\pi_{*}^{\AIP}\scrO_{\adicIW_{w,v}^+}[\kappa_{\calU}^{\vee}]$ stands for the subsheaf of $\pi_{*}^{\AIP}(\scrO_{\adicIW_{w,v}^+}\widehat{\otimes}R_{\calU})$ consisting of sections on which $B_{\GL_g,0}^{(w)}$ acts via the character $\kappa_{\calU}^{\vee}$. 
\item[(ii)] Andreatta--Iovita--Pilloni's \textbf{space of $w$-analytic $v$-overconvergent Siegel modular forms of weight $\kappa_{\calU}$ (of Iwahori level)} is 
$$M^{\kappa_{\calU}, \AIP}_{\Iw, w,v}:=H^0(\overline{\calX}_{\Iw}(v), \underline{\omega}_{w, v}^{\kappa_{\calU}, \AIP}).$$
\item[(iii)]The \textbf{space of locally analytic overconvergent Siegel modular forms of weight $\kappa_{\calU}$ (of Iwahori level)} is
$$M^{\kappa_{\calU}, \AIP}_{\Iw}:=\lim_{\substack{v\rightarrow 0\\ w\rightarrow\infty}}M^{\kappa_{\calU}, \AIP}_{\Iw, w, v}.$$
\item[(iv)] Recall that $\calZ_{\Iw} = \overline{\calX}_{\Iw}\smallsetminus \calX_{\Iw}$ is the boundary divisor. Andreatta--Iovita--Pilloni's \textbf{sheaf of $w$-analytic $v$-overconvergent Siegel cuspforms of weight $\kappa_{\calU}$ (of Iwahori level)} is defined to be the subseaf $\underline{\omega}_{w, v, \cusp}^{\kappa_{\calU}, \AIP} = \underline{\omega}_{w, v}^{\kappa_{\calU}, \AIP}(-\calZ_{\Iw})$ of $\underline{\omega}_{w, v}^{\kappa_{\calU}, \AIP}$ consisting of sections that vanish along $\calZ_{\Iw}$. 

Andreatta--Iovita--Pilloni's \textbf{space of $w$-analytic $v$-overconvergent Siegel cuspforms of weight $\kappa_{\calU}$ (of Iwahori level)} is defined to be \[
    S_{\Iw, w, v}^{\kappa_{\calU}, \AIP} := H^0(\overline{\calX}_{\Iw}(v), \underline{\omega}_{w, v, \cusp}^{\kappa_{\calU}, \AIP}),
\] and the \textbf{space of locally analytic overconvergent Siegel cuspforms of weight $\kappa_{\calU}$ (of Iwahori level)} is \[
    S_{\Iw}^{\kappa_{\calU}, \AIP} := \lim_{\substack{v\rightarrow 0 \\ w \rightarrow \infty}} S_{\Iw, w, v}^{\kappa_{\calU}, \AIP}.
\]
\end{enumerate}
\end{Definition}

\subsection{The \texorpdfstring{$w$}{w}-ordinary loci and the (pseudo-)canonical subgroups}\label{subsection: pseudocanonical subgroups}
In \S \ref{subsection:comparison sheaf aip}, we will prove the comparison between our perfectoid construction of the overconvergent Siegel modular forms and the construction of Andreatta--Iovita--Piloni. Immediate from the definitions, one observes the incompatibility of the underlying adic spaces used in the two constructions. That is, we employ the $w$-ordinary locus in the perfectoid construction while the authors of \cite{AIP-2015} make use of the ``$v$-locus'' $\overline{\calX}_{\Iw}(v)$. Therefore, as a preparation for the comparison result, we have to first compare these two different loci. Due to a technical reason (see Remark \ref{Remark: technical assumption on p>2g}), we assume $p>2g$ in this subsection. 

As a starter, we introduce open subsets $\adicFL^\times_{\can}$ and $\adicFL^\times_{\can, w}$ of the adic flag variety $\adicFL$. They are variants of $\adicFL^\times$ and $\adicFL^\times_w$ introduced in \S \ref{subsection: flag varieties}.

Consider the open subset $\adicFL^{\times}_{\can}\subset \adicFL$ whose $(R, R^+)$-points are $$\adicFL^{\times}_{\can}(R, R^+)=\left\{(W\subset V_p\otimes_{\Z_p}R)\in \adicFL(R, R^+):\begin{array}{l}
    \text{there exists a basis $\{w_i\}$ of $W$ such that}  \\
    \text{the matrix $(\bla w_i, e_j\bra)_{1\leq i,j\leq g}$ is invertible}     
\end{array} \right\}.$$
By the same argument as in \S \ref{subsection: flag varieties}, elements in $\adicFL^{\times}_{\can}$ can be represented by homogeneous coordinates
\[
   \begin{pmatrix} \bfitz' &\one_g \end{pmatrix}= \begin{pmatrix}
        \bfitz'_{1, 1} & \cdots & \bfitz'_{1,g} & 1 & & \\
        \vdots & & \vdots & & \ddots & \\
        \bfitz'_{g, 1} & \cdots & \bfitz'_{g,g} & & & 1
    \end{pmatrix},
\] In fact, $\adicFL^\times_{\can}$ is the translate of $\adicFL^\times$ by the longest Weyl element of the Weyl group of $\GSp_{2g}$. For any $w\in \Q_{>0}$, we then define $\adicFL^{\times}_{\can, w} \subset \adicFL_{\can}^\times$ to be \[
    \adicFL^{\times}_{\can, w} := \left\{\bfitx\in \adicFL_{\can}^\times: \max_{i,j}\inf_{t\in p\Z_p}\{|\bfitz_{i,j}(\bfitx) - t|\leq p^{-w}\}\right\}.
\] Similar to Definition \ref{Definition: w-ordinary}, we put \begin{align*}
    \overline{\calX}_{\Gamma(p^{\infty}), \can, w} & := \pi_{\HT}^{-1}(\adicFL^{\times}_{\can, w}),\\
    \overline{\calX}_{\Iw^+, \can, w} & := h_{\Iw^+}(\overline{\calX}_{\Gamma(p^{\infty}), \can, w}),\\
    \overline{\calX}_{\Iw, \can, w} & := h_{\Iw}(\overline{\calX}_{\Gamma(p^{\infty}), \can, w}),\\
    \overline{\calX}_{\can, w} &:= h(\overline{\calX}_{\Gamma(p^{\infty}), \can, w}).
\end{align*} These are referred as the \textbf{\textit{canonical $w$-ordinary loci}}.

To proceed, we also have to clarify the notion of ``$v$-locus'' at the strict Iwahori level. Recall from \S \ref{subsection: construction of AIP} that, for any $v\in \Q_{>0}\cap [0,\frac{1}{2})$, $\overline{\calX}_1(p^n)(v)$ (resp., $\overline{\calX}_1(v)$; resp., $\overline{\calX}_{\Iw}(v)$) is the adic space over $\overline{\calX}(v)$ which parameterises trivialisations of $\calH_n^{\vee}$ (resp., trivialisations of $\calH_1^{\vee}$; resp., full flags of $\calH_1^{\vee}$). In particular, $\overline{\calX}_1(v)$ is equipped with a natural right action of $\GL_g(\Z/p\Z)$ permuting the trivialisations. Consider the quotient
$$\overline{\calX}_{\Iw^+}(v):=\overline{\calX}_1(v)/T_{\GL_g}(\Z/p\Z)$$
which is an adic space over $\overline{\calX}(v)$ parametersing the ``strict Iwahori structures'' of $\calH_1^{\vee}$; namely, it parameterises 
collections of subgroups $\{D_i: i=1, \ldots, g\}$ of $\calH_1^{\vee}$ of order $p$ such that \[ D_i \cap D_j = 0\] for all $i\neq j$.
for all $i=1, \ldots, g$. There is a chain of natural projections among these $v$-loci
$$\overline{\calX}_1(p^n)(v)\rightarrow\overline{\calX}_1(v)\rightarrow\overline{\calX}_{\Iw^+}(v)\rightarrow\overline{\calX}_{\Iw}(v)\rightarrow\overline{\calX}(v).$$ 

The main result of this subsection is the following:

\begin{Theorem}\label{Theorem: cofinal system of w- and v-loci}
Given $\Gamma \in \{\Iw^+, \Iw\}$, the system of canonical $w$-ordinary loci $\{\overline{\calX}_{\Gamma, \can, w} : w\in \Q_{>0}\}$ and the system of $v$-loci $\{\overline{\calX}_{\Gamma}(v): v\in \Q_{>0}\cap [0, 1/2)\}$ are mutually cofinal. More precisely, \begin{enumerate}
    \item[(i)] For any given $v\in \Q_{>0}\cap [0, 1/2)$, there exists sufficiently large $w\in \Q_{>0}$ such that $\overline{\calX}_{\Gamma, \can, w}\subset \overline{\calX}_{\Gamma}(v)$.
    \item[(ii)] For any given $w\in \Q_{>0}$, there exists sufficiently small $v\in \Q_{>0}\cap [0, 1/2)$ such that $\overline{\calX}_{\Gamma}(v) \subset \overline{\calX}_{\Gamma, \can, w}$.
\end{enumerate}
\end{Theorem}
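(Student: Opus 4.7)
The strategy is to exploit the perfectoid space $\overline{\calX}_{\Gamma(p^{\infty})}$, on which both the Hodge--Tate period map (governing the $w$-ordinary locus via Definition~\ref{Definition: w-ordinary}) and the universal canonical subgroup (underlying the $v$-locus, \emph{cf.} \S\ref{subsection: construction of AIP}) are defined. Since all the projections involved are finite Kummer \'etale, the assertion reduces to the analogous set-theoretic comparison at tame level, together with a matching of the (strict) Iwahori data. The latter follows from the observation that on the overlap of both loci, the canonical subgroup $\calH_1\subset A[p]$ is the Lagrangian middle step of every strict Iwahori flag of $A[p]$ (as it is then the ``multiplicative'' sublagrangian); such a flag is therefore equivalent to giving a flag of $\calH_1^{\vee}=A[p]/\calH_1$ together with the canonical complementary flag on $\calH_1$ determined by the symplectic form. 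This produces a canonical bijection between the two kinds of level structures on the overlap.

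For direction (ii), given $w$ we pick $n$ large and $v<1/(2p^{n-1})$ small enough that $n-\frac{v p^n}{p-1}>w+c$ for a suitable constant $c=c(g)$. The fundamental theorem on canonical subgroups invoked in \cite[Proposition 4.1.3]{AIP-2015} provides over $\overline{\calX}(v)$ a canonical subgroup $\calH_n\subset A[p^n]$ for which the Hodge--Tate map $\HT: T_p\calH_n\to \omega_{A^{\vee}}$ is almost an isomorphism modulo a controlled power of $p$. At any point $x\in \overline{\calX}(v)$, after lifting to $\overline{\calX}_{\Gamma(p^{\infty})}$ via a symplectic trivialisation $\psi: V_p\xrightarrow{\sim} T_pA$ adapted to $\calH_n$ (sending $\Fil^{\std}_g\otimes \Z/p^n\Z$ onto $T_p\calH_n\otimes \Z/p^n\Z$), the description of $\pi_{\HT}$ in Remark~\ref{Remark: GSp2g-equivariance} yields a coordinate matrix $\bfitz(x,\psi)$ satisfying $\max_{i,j}\inf_{h\in \Z_p}|\bfitz_{ij}(x,\psi)-h|\leq p^{-w}$, so that $\pi_{\HT}(x,\psi)\in \adicFL^{\times}_w$. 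For direction (i), given $v$ we take $w$ sufficiently large; for $x\in \overline{\calX}_w$ with lift $(x,\psi)$ giving $\bfitz(x,\psi)=\bfitz_0+p^w\bfitz_1$ with $\bfitz_0\in M_g(\Z_p)$, the image under $\psi$ of the symplectic sublattice of $V_p\otimes \Z/p^n\Z$ encoded by $\bfitz_0$ (for a suitable $n$ comparable to $w$) produces a subgroup $H\subset A[p^n]$ whose HT image is $p^{w-n}$-close to the Hodge filtration. For $w$ sufficiently large relative to $v$, the Abbes--Mokrane/Fargues characterisation identifies $H$ with the canonical subgroup $\calH_n$ and forces $|\widetilde{\Ha}(x)|\geq p^{-v}$, placing $x$ in $\overline{\calX}(v)$.

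The main obstacle will be the extension of these pointwise arguments across the boundary divisor $\overline{\calX}\smallsetminus \calX$, since the Hasse invariant is defined via the abelian part of the semi-abelian degeneration while $\pi_{\HT}$ at the boundary is computed through the 1-motive description alluded to in Remark~\ref{Remark: GSp2g-equivariance}. We plan to handle this uniformly by working on the modified formal model $\overline{\frakX}^{\text{tor-mod}}_{\Gamma(p^{\infty})}$ of \cite{Pilloni-Stroh-CoherentCohomologyandGaloisRepresentations}, where both constructions glue compatibly to the boundary via the functoriality of 1-motives. Once the comparison is secured on the open part, the openness of both loci and the density of the interior in each irreducible component propagate it to a neighbourhood of the boundary divisor, completing the proof.
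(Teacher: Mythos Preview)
Your overall strategy is sound and parallels the paper's, but there are two places where the paper is sharper and where your sketch is either vague or slightly misguided.

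\textbf{Direction (i) --- from $w$-ordinary to small Hodge height.} You propose to build, from the integral part $\bfitz_0$ of the period matrix, a subgroup $H\subset A[p^n]$ and then invoke ``the Abbes--Mokrane/Fargues characterisation'' to identify $H$ with $\calH_n$ and bound $|\widetilde{\Ha}|$. The paper carries this out through a clean intermediate object: for a $w$-ordinary (semi)abelian scheme it defines the \emph{pseudocanonical subgroup} $H_n$ as the kernel of the reduced Hodge--Tate map $G[p^n](C)\to (\mathrm{image}\,\HT_G)/p^{\min\{n,w\}}$. The key step is a degree estimate via Oort--Tate theory: writing $H_1\simeq\prod_i\Spec(\calO_C[X]/(X^p-a_iX))$, one shows $w\le \frac{p}{p-1}v_p(a_i)$, whence $\deg H_1>g-\tfrac12$ once $w>\tfrac{(2g-1)p}{2g(p-1)}$ (this is where $p>2g$ enters). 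Then \cite[Prop.~3.1.2]{AIP-2015} identifies $H_1$ with the canonical subgroup and bounds the Hodge height by $\tfrac12$; an induction using $G/H_1$ gives the level-$n$ statement. Your ``$H$ has HT image $p^{w-n}$-close to the Hodge filtration'' does not by itself yield a Hodge-height bound --- the degree computation is the missing ingredient you should make explicit.

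\textbf{The boundary.} Your final sentence --- ``openness of both loci and density of the interior propagate it to a neighbourhood of the boundary divisor'' --- is not a valid argument: an inclusion of open subsets on a dense open does not in general extend to the closure. The paper (and indeed your own earlier remark about 1-motives) handles this correctly by checking the inclusion pointwise on \emph{all} $(C,\calO_C)$-points, including boundary points, using that both the Hodge height and the $w$-ordinarity condition are defined for the semiabelian scheme $\widetilde{G}_{\tilde{\bfitx}}$ attached to the 1-motive at $\bfitx$. You should drop the density remark and run the pseudocanonical/degree argument uniformly for semiabelian schemes with constant toric rank, as the paper does.

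Finally, your matching of (strict) Iwahori structures is correct and matches Lemma~\ref{Lemma: alternative definition of w-locus of (strict) Iwahori level variety}: on the overlap, the full flag of $A[p]$ is recovered from a flag of $\calH_1^{\vee}$ via the projection $A[p]\xrightarrow{\sim} A[p]^{\vee}\twoheadrightarrow\calH_1^{\vee}$ together with orthogonal complements.
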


\begin{Remark}\label{Remark: Atkin--Lehner operator}
\normalfont To go back to the $w$-ordinary loci from the canonical ones, we use the \emph{Atkin--Lehner operator} \[
    \AL: \adicFL_{w-1}^\times \rightarrow \adicFL^{\times}_{\can, w}, \quad \begin{pmatrix}\one_g & \bfitz\end{pmatrix} \mapsto \begin{pmatrix}\one_g & \bfitz\end{pmatrix}\begin{pmatrix} & \one_g\\ -p\one_g & \end{pmatrix} = \begin{pmatrix}-p\bfitz & \one_g\end{pmatrix}
\] 
for $w\in \Q_{>1}$. This is an isomorphism with inverse $\AL^{-1}: \adicFL^{\times}_{\can, w} \rightarrow \adicFL_{w-1}^\times$ given by right multiplication by the matrix $\begin{pmatrix} & -\frac{1}{p}\one_g\\ \one_g\end{pmatrix}$. It induces an isomorphism $\AL:\overline{\calX}_{\Gamma, w-1}\xrightarrow[]{\sim}\overline{\calX}_{\Gamma, \can, w}$. Therefore, as an immediate corollary of Theorem \ref{Theorem: cofinal system of w- and v-loci}, the systems $\{\overline{\calX}_{\Gamma, w}: w\in \Q_{>0}\}$ and $\{\AL^{-1}\overline{\calX}_{\Gamma}(v): v\in \Q_{>0}\cap [0, 1/2)\}$ are mutually cofinal. 
\end{Remark}

To prove Theorem \ref{Theorem: cofinal system of w- and v-loci}, we follow the strategy in \cite[\S 2.3]{CHJ-2017}. However, we have to generalise their study of pseudocanonical subgroups to the case of semiabelian schemes with constant toric rank.

Let $C$ be an algebraically closed complete nonarchimedean field containing $\Q_p$ and let $\calO_C$ be its ring of integers. Suppose the valuation $v_p$ on $C$ is normalised so that $v_p(p)=1$. Let $G$ be a semiabelian scheme over $\calO_C$ of dimension $g$ with constant toric rank $r \leq g$. That is, $G$ sits inside an extension \[
    0 \rightarrow T \rightarrow G \rightarrow A \rightarrow 0,
\] where $T$ is a torus of rank $r$ over $\calO_C$ and $A$ is an abelian scheme of dimension $g-r$ over $\calO_C$. (We say that $G$ is \textit{\textbf{principally polarised}} if $A$ is principally polarised.) One sees that the $p$-adic Tate module $T_p G := \varprojlim_{n} G[p^n](C)$ is isomorphic to $\Z_p^{2g-r}$. 

Recall the Hodge--Tate complex over $\calO_C$ \[
    0 \rightarrow \Lie G \rightarrow T_p G\otimes_{\Z_p}\calO_C \rightarrow \omega_{G^{\vee}} \rightarrow 0,
\]  where ${\omega}_{G^{\vee}}$ is the dual of the Lie algebra $\Lie G^{\vee}$ of the dual semiabelian scheme $G^{\vee}$, and the second last map is induced from the Hodge--Tate map $\HT_G:T_pG\rightarrow \omega_{G^{\vee}}$. By \cite[Th\'eor\`eme II. 1.1]{Fargues-Genestier-Lafforgue}, the cohomology of this complex is killed by $p^{1/(p-1)}$.  

\begin{Definition}\label{Definition: subspace of Vp}
Recall that $V_p=V\otimes_{\Z} \Z_p\simeq \Z_p^{2g}$ is equipped with the standard basis $e_1, \ldots, e_{2g}$ together with a symplectic pairing. For every $0\leq r \leq g$, let $V_{p,r}$ denote the $\Z_p$-submodule of $V_p$ spanned by $e_{r+1}, e_{r+2}, \ldots, e_{2g-r}$, equipped with the induced symplectic pairing. We also write $V'_{p,r}$ to be $\Z_p$-submodule of $V_p$ spanned by $e_1, \ldots, e_{2g-r}$ and write $W_{p,r}$ to be the one spanned by $e_1, \ldots, e_r$. There is an obvious split exact sequence
$$0\rightarrow W_{p,r}\rightarrow V'_{p,r}\rightarrow V_{p,r}\rightarrow 0.$$
\end{Definition}

\begin{Definition}\label{Definition: w-ordinary semiabelian scheme with constant toric rank}
Let $G$ be a principally polarised semiabelian scheme over $\calO_C$ of dimension $g$ with constant toric rank $r \leq g$. \begin{enumerate}
    \item[(i)] An isomorphism $\alpha: V'_{p,r} \xrightarrow[]{\sim} T_p G$ is called a \textbf{trivialisation} of $T_p G$ if it is part of a commutative diagram \[
        \begin{tikzcd}
            V_{p,r}\arrow[r,"\sim"] & T_pA\\
            V'_{p,r} \arrow[r,"\sim"]\arrow[u, two heads] & T_pG\arrow[u, two heads]\\
            W_{p,r} \arrow[u, hook]\arrow[r,"\sim"] & T_p T\arrow[u, hook]
        \end{tikzcd}
        \] where \begin{itemize}
            \item the vertical arrows on the left are the ones as in Definition \ref{Definition: subspace of Vp};
            \item the vertical arrows on the right are induced from the exact sequence $0\rightarrow T\rightarrow G\rightarrow A\rightarrow 0$;
            \item the top arrow preserves the symplectic pairings.
        \end{itemize}
    \item[(ii)] A trivialisation $\alpha: V'_{p,r} \rightarrow T_p G$ is \textbf{$w$-ordinary} if $\HT_G(\alpha(e_i))\in p^w {\omega}_{G^{\vee}}$ for all $i=1, ..., g$. 
    \item[(iii)] We say that $G$ is \textbf{$w$-ordinary} if it admits a $w$-ordinary trivialisation. 
\end{enumerate}
\end{Definition}

\begin{Remark}\label{Remark: w-ordinary semiabelian schemes}
\normalfont \begin{enumerate}
    \item[(i)] From the definition, if $G$ is $w$-ordinary, it is $w'$-ordinary for any $w'>w$. It is also clear that $G$ is ordinary if and only if it is $w$-ordinary for all $w\in \Q_{>0}$. 
    \item[(ii)] One sees from the definition that classical points in $\overline{\calX}_{\Gamma(p^{\infty}), \can, w}$ correspond to principally polarised semiabelian schemes $G$ together with a $w$-ordinary trivialisation. 
\end{enumerate} 
\end{Remark}

\begin{Lemma}\label{Lemma: pseudocanonical subgroup}
Let $G$ be a $w$-ordinary semiabelian scheme (of dimension $g$ with constant toric rank $r$) over $\calO_C$ and let $n\in \Z_{\geq 1}$ such that $n < w+1$. The Hodge-Tate map $\HT_G$ induces a map
$$G[p^n](C) \rightarrow (\image\HT_{G})/p^{\min\{n, w\}} (\image \HT_{G}).
$$ Then the schematic closure of the kernel of this map defines a flat subgroup scheme $H_n \subset G[p^n]$ whose generic fibre is isomorphic to $(\Z/p^n\Z)^{g}$. Moreover, if $\alpha$ is a $w$-ordinary trivialisation of $T_p G$, then $H_n(C)$ is generated by $\alpha(e_1)$, ..., $\alpha(e_g)$. Here we have abused the notations and still use $\alpha(e_i)$'s to denote their images in $G[p^n](C)$.
\end{Lemma}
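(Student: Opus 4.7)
The plan is to identify $H_n$ explicitly through a $w$-ordinary trivialisation $\alpha$, following the strategy of \cite{CHJ-2017} adapted to semiabelian schemes of toric rank $r$. First, well-definedness of the map $\phi_n: G[p^n](C) \to \image(\HT_G)/p^{\min\{n,w\}}\image(\HT_G)$ is straightforward: a lift $\tilde{x} \in T_pG$ of $x \in G[p^n](C) = T_pG/p^n T_pG$ is determined up to $p^n T_pG$, and $\HT_G$ sends this ambiguity into $p^n \image(\HT_G) \subseteq p^{\min\{n,w\}} \image(\HT_G)$, so $\HT_G(\tilde{x})$ has a well-defined class in $\image(\HT_G)/p^{\min\{n,w\}}\image(\HT_G)$.

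Next, fix a $w$-ordinary trivialisation $\alpha: V'_{p,r} \xrightarrow{\sim} T_pG$. For $i = 1, \ldots, g$, the relation $\HT_G(\alpha(e_i)) \in p^w \omega_{G^{\vee}}$, combined with the Fargues--Genestier--Lafforgue theorem (cokernel of $\HT_G \otimes \id: T_pG \otimes \calO_C \to \omega_{G^{\vee}}$ killed by $p^{1/(p-1)}$) and the hypothesis $n < w+1$, should imply that $\HT_G(\alpha(e_i)) \in p^{\min\{n,w\}}\image(\HT_G)$; hence $\alpha(e_1), \ldots, \alpha(e_g) \in \ker\phi_n$. Since $\alpha$ is an isomorphism and $e_1, \ldots, e_{2g-r}$ forms a $\Z_p$-basis of $V'_{p,r}$, their images in $G[p^n](C)$ generate a subgroup of order $p^{ng}$. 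To see this exhausts $\ker\phi_n$, I would supply a matching count on the image side: namely that the remaining vectors $\alpha(e_{g+1}), \ldots, \alpha(e_{2g-r})$ map to $\Z_p$-linearly independent elements of $\image(\HT_G)/p^{\min\{n,w\}}\image(\HT_G)$, so $|\image \phi_n| \geq p^{n(g-r)}$ and hence $|\ker\phi_n| \leq p^{ng}$. Combined with the previous inclusion, this forces $\ker\phi_n$ to be the subgroup of order $p^{ng}$ generated by $\alpha(e_1), \ldots, \alpha(e_g)$, which is abstractly isomorphic to $(\Z/p^n\Z)^g$.

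Finally, $G[p^n]$ is a finite flat group scheme over $\calO_C$; the schematic closure in $G[p^n]$ of the closed subgroup $\ker\phi_n$ of the generic fibre is automatically flat over $\calO_C$, and it is the desired $H_n$, with generic fibre $\ker\phi_n \cong (\Z/p^n\Z)^g$ generated by $\alpha(e_1), \ldots, \alpha(e_g)$.

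The hardest step is the second one: converting the ``analytic'' bound $\HT_G(\alpha(e_i)) \in p^w \omega_{G^{\vee}}$, which is a divisibility in the $\calO_C$-module $\omega_{G^{\vee}}$, into a genuine divisibility inside the $\Z_p$-submodule $\image(\HT_G) \subseteq \omega_{G^{\vee}}$. This is where the quantitative Fargues--Genestier--Lafforgue theorem enters, and where one must carefully track the contribution of the toric part of rank $r$ --- the new ingredient compared to the purely abelian case treated in \cite{CHJ-2017}.
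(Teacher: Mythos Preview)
Your overall outline is reasonable, but the crucial upper bound on $|\ker\phi_n|$ in your step 3 is where the argument breaks down, not step 2.

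You propose to show $|\ker\phi_n|\leq p^{ng}$ by proving that $\alpha(e_{g+1}),\ldots,\alpha(e_{2g-r})$ map to ``$\Z_p$-linearly independent elements'' in $\image(\HT_G)/p^{\min\{n,w\}}\image(\HT_G)$. But you give no argument for this independence, and it is essentially equivalent to what you are trying to prove. Indeed, $\image\HT_G$ is a free $\Z_p$-module of rank $(2g-r)-\rank_{\Z_p}(\ker\HT_G)$, and since $\phi_n$ is surjective, your counting inequality $|\ker\phi_n|\leq p^{ng}$ amounts precisely to $\rank_{\Z_p}(\ker\HT_G)\leq g$. There is no way to extract this from the $g-r$ vectors $\alpha(e_{g+1}),\ldots,\alpha(e_{2g-r})$ alone without already knowing something about the kernel. (There is also a numerical slip: even granting independence, $g-r$ vectors in a quotient by $p^{\min\{n,w\}}$ give image of size $p^{\min\{n,w\}(g-r)}$, not $p^{n(g-r)}$, so the count does not close when $w<n$.)

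The paper's proof avoids this entirely by bounding the kernel directly: since the Hodge--Tate complex $0\to\Lie G\to T_pG\otimes\calO_C\to\omega_{G^{\vee}}\to 0$ is exact after inverting $p$, the kernel of $\HT_G\otimes\id_C$ has $C$-dimension exactly $g=\dim\Lie G$; hence $\ker\HT_G\subset T_pG$ has $\Z_p$-rank at most $g$. The same bound then passes to $\ker\HT_{G[p^n]}$ via the identification $\omega_{G[p^n]^{\vee}}=\omega_{G^{\vee}}/p^n$. This one-line rank argument is where the Fargues--Genestier--Lafforgue input actually enters, and it replaces your steps 2 and 3 simultaneously. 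Your worry about converting divisibility in $\omega_{G^{\vee}}$ to divisibility in $\image\HT_G$ is a genuine subtlety of the map as literally stated, but the paper sidesteps it by working with the map $G[p^n](C)\to\omega_{G^{\vee}}/p^n\omega_{G^{\vee}}$ instead, where the inclusion $\HT_G(\alpha(e_i))\in p^w\omega_{G^{\vee}}$ suffices directly.
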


\begin{proof}
Since the Hodge--Tate complex is exact after inverting $p$, the image of $\Lie G$ in $T_p G\otimes_{\Z_p}\calO_C$ is a rank $g$ sub-lattice in the kernel of $T_pG\otimes_{\Z_p}\calO_C\rightarrow \omega_{G^{\vee}}$. Hence, the kernel of $\HT_{G}: T_p G \rightarrow {\omega}_{G^{\vee}}$ has rank at most $g$.

On the other hand, there is a commutative diagram \[
    \begin{tikzcd}
        T_p G\arrow[d]\arrow[r, "\HT_{G}"] &  {\omega}_{G^{\vee}}\arrow[d]\\
        G[p^n](C)\arrow[r, "\HT_{G[p^n]}"] & {\omega}_{G[p^n]^{\vee}}
    \end{tikzcd},
\] where the right vertical arrow is induced from the natural identification ${\omega}_{G[p^n]^{\vee}} = {\omega}_{G^{\vee}}/p^n{\omega}_{G^{\vee}}$. Consequently, $\ker\HT_{G[p^n]}$ also has rank at most $g$.

Let $\alpha$ be a $w$-ordinary trivialisation of $T_p G$. Since $n<w+1$, the kernel of the composition \[
    T_p G \xrightarrow{\HT_{G}} {\omega}_{G^{\vee}} \rightarrow {\omega}_{G^{\vee}}/p^n{\omega}_{G^{\vee}}
\] necessarily contains $\alpha(e_i)$, for all $i=1, ..., g$. Since $\alpha(e_i)$'s are $\Z_p$-linearly independent, their images in $G[p^n](C)$ are $(\Z/p^n\Z)$-linearly independent and hence generate $\ker\HT_{G[p^n]}$. Consequently, $H_n$ is precisely the schematic closure in $G[p^n]$ of the subgroup of $G[p^n](C)$ generated by $\{\alpha(e_i): i=1, ..., g\}$. Flatness of $H_n$ follows from the flatness of $G$.
\end{proof}

\begin{Definition}
The subgroup scheme $H_n$ defined in Lemma \ref{Lemma: pseudocanonical subgroup} is called the \textbf{pseudocanonical subgroup of level $n$}. When $n= 1$, we simply call $H_1$ the \textbf{pseudocanonical subgroup} of $G$.
\end{Definition}

\begin{Lemma}\label{Lemma: pseudocanonical subgroups behave like canonical subgroups}
Let $m\leq n$ be positive integers and let $w\in \Q_{>0}$ such that $w>n$. Let $G$ be a semiabelian scheme (of dimension $g$ with constant toric rank $r$) over $\calO_C$. Suppose $G$ is $w$-ordinary. Then, $G/H_{m}$ is $(w-m)$-ordinary, and for any $m'\in \Z$ with $m < m' \leq  n$, we have $H'_{m'-m} = H_{m'}/H_{m}$, where $H'_{m'-m}$ is the pseudocanonical subgroup of $G/H_{m}$ of level $m'-m$.
\end{Lemma}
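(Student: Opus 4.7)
The plan is to construct an explicit $(w-m)$-ordinary trivialisation $\alpha'$ of $T_p(G/H_m)$ from the given $w$-ordinary trivialisation $\alpha$ of $T_pG$, and then to read off the equality $H_{m'}/H_m = H'_{m'-m}$ from the explicit systems of generators provided by the previous lemma.

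First I would analyse the isogeny $\phi:G\twoheadrightarrow G':=G/H_m$. Let $\tilde H_m\subset T_pG$ denote the preimage of $H_m(C)$ under reduction modulo $p^m$. Since $H_m\subset G[p^m]$ has order $p^{gm}$ and is generated at the $C$-points level by the reductions of $\alpha(e_1),\ldots,\alpha(e_g)$, the map $T_p\phi$ identifies $T_pG'$ with $p^{-m}\tilde H_m$ inside $V_pG$, so that
\[
T_pG'=\langle p^{-m}\alpha(e_1),\ldots,p^{-m}\alpha(e_g),\alpha(e_{g+1}),\ldots,\alpha(e_{2g-r})\rangle.
\]
I would then define $\alpha':V'_{p,r}\to T_pG'$ by $\alpha'(e_i)=p^{-m}\alpha(e_i)$ for $i\leq g$ and $\alpha'(e_i)=\alpha(e_i)$ for $i>g$. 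Compatibility with the toric/abelian decomposition follows by applying the same rescaling termwise along the exact sequence $0\to T\to G\to A\to 0$; the symplectic condition on the abelian part uses that $H_m/T[p^m]\subset A[p^m]$ is maximal isotropic, hence $A'$ inherits a principal polarisation $\lambda_{A'}$ satisfying $\phi_A^{\vee}\lambda_{A'}\phi_A=p^m\lambda_A$, which translates into $e_{A'}(p^{-m}\phi_A u,p^{-m}\phi_A v)=p^{-m}e_A(u,v)$ and makes the rescaled basis symplectic. Finally, functoriality of the Hodge--Tate map gives $\HT_{G'}\circ T_p\phi=\phi^{\vee,*}\circ\HT_G$, so for $i\leq g$ one has $\HT_{G'}(\alpha(e_i))\in p^w\omega_{G'^{\vee}}$. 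Writing $\alpha(e_i)=p^m\alpha'(e_i)$ inside $T_pG'$ and dividing out (which is legitimate since $\omega_{G'^{\vee}}$ is $p$-torsion-free) yields $\HT_{G'}(\alpha'(e_i))\in p^{w-m}\omega_{G'^{\vee}}$, proving that $\alpha'$ is $(w-m)$-ordinary.

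For the second assertion, I would identify $G[p^{m'}](C)=p^{-m'}T_pG/T_pG$. Then $H_{m'}=\langle p^{-m'}\alpha(e_i)+T_pG:i\leq g\rangle$ and $H_m=p^{m'-m}H_{m'}$, so $H_m\subset H_{m'}$ and $H_{m'}/H_m\subset G'[p^{m'-m}]$. The projection $G\to G'$ sends the class $p^{-m'}\alpha(e_i)+T_pG$ to $p^{-m'}\alpha(e_i)+T_pG'=p^{-(m'-m)}\alpha'(e_i)+T_pG'$ in $G'[p^{m'-m}](C)$, which by the previous lemma applied to $G'$ with trivialisation $\alpha'$ is precisely a set of generators of $H'_{m'-m}(C)$. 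Hence $H_{m'}/H_m$ and $H'_{m'-m}$ share the same $C$-points, and equality as flat closed subgroup schemes of $G'[p^{m'-m}]$ follows by taking schematic closures. The main obstacle I expect is the polarisation bookkeeping in the semiabelian/toric direction---in particular, checking carefully that $H_m$ respects the principal polarisation so that $G'$ is again principally polarised of constant toric rank $r$, and that the Hodge--Tate map and its functoriality behave correctly in the presence of a nontrivial toric part---but once these structural facts are in place, the remainder reduces to the basis-change computation above.
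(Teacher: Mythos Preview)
Your proposal is correct and is essentially the same argument the paper has in mind. The paper's own proof is a one-line reference to \cite[Lemma 2.11]{CHJ-2017}, instructing the reader to replace the matrix $\diag(1,p^m)$ there by $\diag(p^m\one_g,\one_{g-r})$; your explicit construction $\alpha'(e_i)=p^{-m}\alpha(e_i)$ for $i\leq g$ and $\alpha'(e_i)=\alpha(e_i)$ for $i>g$ is precisely what that matrix encodes, and your verification of $(w-m)$-ordinarity via functoriality of the Hodge--Tate map and the identification $H_{m'}/H_m=H'_{m'-m}$ via the generators supplied by the preceding lemma is exactly how the CHJ argument unfolds in this generality.
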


\begin{proof}
The proof is the same as in \cite[Lemma 2.11]{CHJ-2017} as long as we use the matrix $\diag(p^m\one_{g}, \one_{g-r})$ in place of $\diag(1, p^m)$. Notice that the ``$p^m$'' factor appears at the bottom right corner in \textit{loc. cit.} because they work with a slightly different action of $\GL_2(\Q_p)$. 
\end{proof}

Before stating the next lemma, let us recall the notion of the \emph{degree} of a finite flat group scheme over $\calO_C$ studied in \cite{Fargues-canonical}. If $M$ is a $p$-power torsion $\calO_C$-module of finite presentation, we can write $$M\simeq \bigoplus_{i=1}^l \calO_C/a_i\calO_C$$
for some $a_i\in \calO_C$, $i=1,\ldots, l$. Then the degree of $M$ is defined to be $\deg M:= \sum_{i=1}^{l}v_p(a_i)$. Now, if $H$ is a finite flat group scheme over $\calO_C$ and let $\omega_H$ denote the $\calO_C$-module of invariant differentials on $H$, then we define the \textbf{\textit{degree}} of $H$ to be $\deg H:=\deg \omega_H$.

\begin{Lemma}\label{Lemma: Oort--Tate theory}
Let $G$ be a $w$-ordinary semiabelian scheme (of dimension $g$ with constant toric rank $r$) over $\calO_C$ and let $\alpha$ be a $w$-ordinary trivialisation. Let ${\omega}_{H_1}$ be the dual of $\Lie H_1$ and let $\omega_{H_1^{\vee}}$ be the dual of $\Lie H_1^{\vee}$. For $i=1, ..., g$, let $H_{1, i}$ be the schematic closure in $H_1$ of the subgroup generated by $\alpha(e_i)$. Then \begin{enumerate}
    \item[(i)] Each $H_{1, i}$ is isomorphic to $\Spec(\calO_C[X]/(X^p - a_{i}X))$ for some $a_i\in \calO_C$. The dual $H_{1, i}^{\vee}$ is isomorphic to $\Spec(\calO_C[X]/(X^p-b_iX))$ with $a_ib_i = p$. 
    \item[(ii)] We have isomorphisms ${\omega}_{H_1} \simeq \bigoplus_{i=1}^g \calO_C/a_i\calO_C$ and 
${\omega}_{H_1^{\vee}} \simeq \bigoplus_{i=1}^g \calO_C/b_i\calO_C$. In particular, we have $\deg H_1 = \sum_{i=1}^{g}v_p(a_i)$ and $\deg H_1^{\vee}=\sum_{i=1}^{g}v_p(b_i)=g-\sum_{i=1}^{g}v_p(a_i)$.
    \item[(iii)] Under the identification ${\omega}_{H_1^{\vee}} \simeq \bigoplus_{i=1}^g \calO_C/b_i\calO_C$, the image of the (linearised) Hodge--Tate map \[
        H_1(C) \otimes_{\Z_p} \calO_C \rightarrow {\omega}_{H_1^{\vee}}
    \] is equal to $\bigoplus_{i=1}^g c_i\calO_C/b_i\calO_C$ for some $c_i\in \calO_C$ such that $v_p(c_i) = v_p(a_i)/(p-1)$, $i=1, \ldots, g$.
\end{enumerate}
\end{Lemma}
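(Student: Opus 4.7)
The strategy is to reduce everything to the classical Oort--Tate theory applied to each cyclic factor $H_{1,i}$. First I would establish that the natural homomorphism
\[
    \prod_{i=1}^{g} H_{1,i} \longrightarrow H_1
\]
is an isomorphism of finite flat $\calO_C$-group schemes. Each $H_{1,i}$, being the schematic closure inside the flat group scheme $H_1$ of a subgroup of its generic fibre, is finite flat of order $p$. The morphism above induces an isomorphism on generic fibres since $\alpha(e_1),\ldots,\alpha(e_g)$ form a basis of $H_1(C)\cong (\Z/p\Z)^g$, and both sides are finite flat over $\calO_C$ of the same order $p^g$; a scheme-theoretic image argument, using flatness of $H_1$, then forces it to be an isomorphism.

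For (i), I would apply the Oort--Tate classification of finite flat commutative group schemes of order $p$ over $\calO_C$ to each $H_{1,i}$, presenting it as $\Spec(\calO_C[X]/(X^p - a_i X))$ for some $a_i \in \calO_C$ and its Cartier dual as $\Spec(\calO_C[Y]/(Y^p - b_i Y))$ with the normalised relation $a_i b_i = p$. The assumption $p > 2g$, combined with $C$ being algebraically closed, ensures the existence of the auxiliary roots of unity required by the normalisation.

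For (ii), I would compute $\omega_{H_{1,i}}$ directly from the Oort--Tate presentation: with augmentation ideal $I_i = (X)$, the relation $a_i X \equiv X^p \equiv 0 \pmod{I_i^2}$ yields $\omega_{H_{1,i}} \cong I_i/I_i^2 \cong \calO_C/a_i\calO_C$, and the symmetric computation gives $\omega_{H_{1,i}^\vee} \cong \calO_C/b_i\calO_C$. The product decomposition then produces the claimed direct sum expressions for $\omega_{H_1}$ and $\omega_{H_1^\vee}$, and the degree formulas follow from $\deg H = \deg \omega_H$ combined with $v_p(b_i) = 1 - v_p(a_i)$, itself a consequence of $a_i b_i = p$.

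For (iii), the decomposition of $H_1$ and the parallel decomposition of $\omega_{H_1^\vee}$ reduce the computation of the image to each cyclic factor. Under the Oort--Tate presentation of $H_{1,i}$, a nonzero generator $\alpha(e_i) \in H_{1,i}(C)$ corresponds to an element $x_i \in C$ satisfying $x_i^{p-1} = a_i$, whence $v_p(x_i) = v_p(a_i)/(p-1)$. The explicit Oort--Tate pairing $H_{1,i} \times H_{1,i}^\vee \to \bbG_m$, or equivalently Fargues' formalism of the degree of the Hodge--Tate map, identifies the image of $\alpha(e_i)$ in $\omega_{H_{1,i}^\vee} \cong \calO_C/b_i\calO_C$ with the class of $x_i$ up to a unit, delivering the constant $c_i$ with the required valuation. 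The main technical obstacle is the bookkeeping of normalisations in Oort--Tate theory and the explicit identification of the image under the pairing with $x_i$ up to a unit; invoking Fargues' general results is the cleanest shortcut.
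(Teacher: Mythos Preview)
Your approach is exactly what the paper's one-line proof has in mind: the paper simply says each $H_{1,i}$ is finite flat of order $p$ and defers everything to classical Oort--Tate theory together with \cite[\S 6.5, Lemme 9]{Fargues-canonical}. Your elaboration through the product decomposition $\prod_i H_{1,i}\xrightarrow{\sim} H_1$, followed by the explicit Oort--Tate computation on each factor, is the natural way to unpack that citation, and your treatment of (i)--(iii) on each cyclic factor is correct.

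The one step that deserves more care is the isomorphism $\prod_i H_{1,i}\to H_1$ itself. Your justification (``same order, isomorphism on generic fibres, scheme-theoretic image argument using flatness'') is not sufficient: a morphism of finite flat commutative group schemes over $\calO_C$ of the same order that is an isomorphism on generic fibres need \emph{not} be an isomorphism. For a concrete failure in the same spirit, take an ordinary elliptic curve $E/\calO_C$ so that $E[p]\simeq \mu_p\times\Z/p\Z$, and choose a basis $P_1,P_2$ of $E[p](C)$ with both $P_i$ having nontrivial \'etale component; then each schematic closure $\overline{\langle P_i\rangle}$ is isomorphic to $\Z/p\Z$, and the multiplication map $(\Z/p\Z)^2\to E[p]$ is a generic isomorphism between order-$p^2$ flat group schemes that is not an isomorphism (the degrees are $0$ and $1$). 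So either exploit the specific structure of the pseudocanonical subgroup and the $w$-ordinary trivialisation to rule out such behaviour, or---as both you and the paper ultimately do---invoke Fargues' results directly rather than treating the decomposition as a formal consequence of flatness.
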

\begin{proof}
Since each $H_{1, i}$ is a finite flat group scheme over $\calO_{C}$ of degree $p$, the assertion follows from classical Oort--Tate theory. See, for example, \cite[\S 6.5, Lemme 9]{Fargues-canonical}. 
\end{proof}

Recall from \cite[\S 3.1]{AIP-2015} that the \emph{Hodge height} of $G$ is defined to be the ``truncated'' $p$-adic valuation of the Hasse invariant of $G$. See \emph{loc. cit.} for details.

\begin{Lemma}\label{Lemma: pseudocanonical = canonical}
Let $G$ be a $w$-ordinary semiabelian scheme (of dimension $g$ with constant toric rank $r$) over $\calO_C$. Suppose $\frac{(2g-1)p}{2g(p-1)} < w \leq 1$. \footnote{The inequalities are valid because of the assumption $p>2g$ at the beginning of the subsection.} Then $H_1$ coincides with the canonical subgroup of $G$. Moreover, the Hodge height of $G$ is smaller than $1/2$. 
\end{Lemma}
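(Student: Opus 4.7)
The plan is to follow and generalise the strategy of \cite[Lemma 2.12]{CHJ-2017}, which handles the elliptic case ($g=1$, $r=0$). The key input is Lemma \ref{Lemma: Oort--Tate theory}: for each $i = 1, \ldots, g$, the rank-$p$ subgroup $H_{1,i}$ generated by $\alpha(e_i)$ is described by Oort--Tate parameters $a_i, b_i \in \calO_C$ with $v_p(a_i) + v_p(b_i) = 1$, and the image of $\HT_{H_1}$ inside $\omega_{H_1^{\vee}} \simeq \bigoplus_i \calO_C/b_i\calO_C$ is $\bigoplus_i c_i\calO_C/b_i\calO_C$, with $v_p(c_i) = v_p(a_i)/(p-1)$. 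For $1 \leq i \leq r$ the generator $\alpha(e_i)$ lies in $T[p](C)$, so $H_{1,i}\simeq \mu_p$ and automatically $v_p(a_i) = 1$; these indices contribute maximally to $\deg H_1$, and the content of the argument concerns the abelian indices $r < i \leq g$.

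Functoriality of the Hodge--Tate map, together with the identification of $\omega_{H_1^{\vee}}$ as a quotient of $\omega_{G[p]^{\vee}} = \omega_{G^{\vee}}/p\omega_{G^{\vee}}$ (arising from the Cartier dual of $0\to H_1\to G[p]\to G[p]/H_1\to 0$), gives a commutative square that transfers the $w$-ordinary condition $\HT_G(\alpha(e_i)) \in p^w\omega_{G^{\vee}}$ to $\HT_{H_1}(\overline{\alpha(e_i)}) \in p^w\omega_{H_1^{\vee}}$. In Oort--Tate terms, this reads $v_p(c_i) \geq \min\{w, v_p(b_i)\}$. Since $v_p(a_i) \leq 1$ forces $v_p(c_i) \leq 1/(p-1)$, while the hypothesis $w > (2g-1)p/(2g(p-1))$ already implies $w > 1/(p-1)$ (valid for every odd prime $p$, as $p \geq 3$), the case $v_p(c_i) \geq w$ is impossible. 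Hence $v_p(c_i) \geq v_p(b_i)$, which rearranges, via $v_p(c_i) = v_p(a_i)/(p-1)$ and $v_p(b_i) = 1 - v_p(a_i)$, to $v_p(a_i) \geq (p-1)/p$ for every $r < i \leq g$.

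Summing the contributions,
\[
\deg H_1 \,=\, \sum_{i=1}^g v_p(a_i) \,\geq\, r + (g-r)\tfrac{p-1}{p} \,=\, g - \tfrac{g-r}{p} \,>\, g - \tfrac{1}{2},
\]
where the last inequality uses the standing hypothesis $p > 2g$. Equivalently, $\deg H_1^{\vee} < 1/2$. By the theory of canonical subgroups (Fargues \cite{Fargues-canonical}; for the semiabelian setup see \cite[\S 3.2, \S 4.1]{AIP-2015}), the existence of a finite flat subgroup scheme of $G[p]$ of order $p^g$ whose Cartier dual has degree strictly less than $1/2$ forces both (i) the Hodge height of $G$ to be strictly less than $1/2$ and (ii) this subgroup to coincide with the canonical subgroup. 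Applied to $H_1$, this yields both conclusions of the lemma.

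The main obstacle is the transfer of the $w$-ordinary condition from $G$ to $H_1$: one must verify that the image of $\HT_G(\alpha(e_i))$ in the quotient $\omega_{H_1^{\vee}}$ is precisely $\HT_{H_1}(\overline{\alpha(e_i)})$, which is where the Cartier-dual exact sequence and the identification $\omega_{G[p]^{\vee}} = \omega_{G^{\vee}}/p\omega_{G^{\vee}}$ enter. A secondary subtlety is the semiabelian setting: one must check that Fargues' canonical subgroup theorem, originally formulated for truncated Barsotti--Tate groups, still applies. This is handled via the exact sequence $0\to T\to G\to A\to 0$, observing that the toric part is ordinary and contributes trivially to the Hodge height, so that the Hodge height of $G$ coincides with that of $A$, and the canonical subgroup of $G$ is simply $T[p]$ together with the canonical subgroup of $A$.
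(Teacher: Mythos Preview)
Your overall architecture is right, and the final appeal to \cite[Proposition 3.1.2]{AIP-2015} is exactly what the paper does. But the central step---the ``transfer'' of the $w$-ordinary condition from $G$ to $H_1$---is based on a mistaken identification, and the resulting inequality $v_p(c_i) \geq \min\{w, v_p(b_i)\}$ is in general false.

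The Cartier dual of $0\to H_1\to G[p]\to G[p]/H_1\to 0$ is $0\to (G[p]/H_1)^{\vee}\to G[p]^{\vee}\to H_1^{\vee}\to 0$, so $H_1^{\vee}$ is a \emph{quotient} of $G[p]^{\vee}$. Pulling back differentials along the surjection $G[p]^{\vee}\twoheadrightarrow H_1^{\vee}$ gives an \emph{injection} $\omega_{H_1^{\vee}}\hookrightarrow \omega_{G[p]^{\vee}}$, not a quotient. Concretely (as in the paper), this inclusion is $\bigoplus_i \calO_C/b_i\hookrightarrow (\calO_C/p)^g$ via $(x_i)\mapsto (a_i x_i)$. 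The commutative square therefore reads $a_i\,\HT_{H_1}(\alpha(e_i))=\HT_{G[p]}(\alpha(e_i))$, and the $w$-ordinary condition yields
\[
v_p(a_i)+v_p(c_i)\;\geq\; w,\qquad\text{i.e.}\qquad v_p(a_i)\;\geq\;\frac{w(p-1)}{p}.
\]
Summing over $i$ gives $\deg H_1\geq \frac{gw(p-1)}{p}>g-\tfrac{1}{2}$, using only $w>\tfrac{(2g-1)p}{2g(p-1)}$. Your claimed inequality, by contrast, would force $v_p(a_i)\geq (p-1)/p$ independently of $w$; taking $w$ near the lower endpoint and $v_p(a_i)=w(p-1)/p$ gives $v_p(c_i)=w/p<1-w(p-1)/p=v_p(b_i)$ (since $p>2g$), contradicting your bound. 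So the inequality genuinely fails for $w<1$. The fix is simply to reverse the direction of the map and run the argument as above; the toric/abelian splitting you introduce is harmless but unnecessary.
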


\begin{proof}
We follow the strategy of the proof of \cite[Lemma 2.14]{CHJ-2017}. Consider the commutative diagram \[
    \begin{tikzcd}
        0 \arrow[r] & H_1(C) \arrow[r]\arrow[d, "\HT_{H_1}"] & G[p](C)\arrow[d, "\HT_{G[p]}"]\\
        0 \arrow[r] & {\omega}_{H_1^{\vee}} \arrow[r] & {\omega}_{G[p]^{\vee}}
    \end{tikzcd}
\]  with exact rows. Notice that we have an identification ${\omega}_{G[p]^{\vee}} = {\omega}_{G^{\vee}}/p{\omega}_{G^{\vee}}$. Let $\alpha$ be a $w$-ordinary trivialisation of $T_pG$. According to Lemma \ref{Lemma: pseudocanonical subgroup}, $\alpha(e_1), \ldots, \alpha(e_g)$ form a basis for $H_1(C)$. Also, by definition, we have $\HT_{G[p]}(\alpha(e_i))\in p^w {\omega}_{G[p]^{\vee}}$. 

Now, with respect to the generators $\alpha(e_1) \ldots, \alpha(e_g)$ of $H_1(C)$, the map $\omega_{H_1^{\vee}} \rightarrow \omega_{G[p]^{\vee}}$ can be identified with the inclusion \[
    \bigoplus_{i=1}^g \calO_C/b_i\calO_C \rightarrow (\calO_C/p\calO_C)^g, \quad (x_1, ..., x_g)\mapsto (a_1x_1, ..., a_gx_g).
\] 
Therefore, we see that \[
    a_i\HT_{H_1}(\alpha(e_i)) = \HT_{G[p]}(\alpha(e_i)) \in p^w {\omega}_{G[p]^{\vee}}.
\]  
By Lemma \ref{Lemma: Oort--Tate theory} (iii), we know that $\HT_{H_1}(\alpha(e_i))$ has valuation $v_p(a_i)/(p-1)$. This implies \[
    w \leq v_p(a_i) + \frac{v_p(a_i)}{p-1} = \frac{p v_p(a_i)}{p-1}. 
\] Consequently, we have \[
    \deg H_1 = \sum_{i=1}^g v_p(a_i) \geq \frac{gw(p-1)}{p} > \frac{g(p-1)}{p}\cdot \frac{(2g-1)p}{2g(p-1)} = \frac{2g-1}{2} = g - \frac{1}{2}.
\] It follows from \cite[Proposition 3.1.2]{AIP-2015} that $H_1$ is exactly the canonical subgroup of $G$ and the Hodge height of $G$ is less than $\frac{1}{2}$.
\end{proof}

\begin{Remark}\label{Remark: technical assumption on p>2g}
\normalfont The lemma might hold without the assumption $p>2g$ as long as one can produce finer estimates on the degree and the Hodge height. However, we do not attempt to find these better estimates.
\end{Remark}

\begin{Proposition}\label{Proposition: pseudocanonical = canonical}
Let $G$ be a $w$-ordinary semiabelian scheme (of dimension $g$ with constant toric rank $r$) over $\calO_C$. Suppose $\frac{(2g-1)p}{2g(p-1)}+n-1 < w \leq n$, then $H_{n}$ coincides with the canonical subgroup of $G$ of level $n$. In this case, the Hodge height of $G$ is less than $\frac{1}{2p^{n-1}}$.
\end{Proposition}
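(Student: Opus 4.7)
The plan is to proceed by induction on $n$, with the base case $n=1$ provided by Lemma \ref{Lemma: pseudocanonical = canonical}. The input for the induction will be Lemma \ref{Lemma: pseudocanonical subgroups behave like canonical subgroups}, which lets us ``quotient out'' the pseudocanonical subgroup of level $1$, and the theory of canonical subgroups of higher level developed in \cite{AIP-2015} (essentially Fargues' theory), which characterises the canonical subgroup of level $n$ inductively as a lift of the canonical subgroup of level $n-1$ of the quotient by the level-$1$ canonical subgroup, and also gives the formula that the Hodge height multiplies by $p$ when passing to that quotient.

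More precisely, fix $n \geq 2$ and assume the proposition for $n-1$. Given $G$ that is $w$-ordinary with $\frac{(2g-1)p}{2g(p-1)} + n - 1 < w \leq n$, the weight satisfies $w > 1$, so in particular $G$ is $w_0$-ordinary for some $w_0 \in \bigl(\frac{(2g-1)p}{2g(p-1)}, 1\bigr]$. Lemma \ref{Lemma: pseudocanonical = canonical} then implies that $H_1$ is the canonical subgroup of $G$ and that the Hodge height of $G$ is less than $1/2$. Next, Lemma \ref{Lemma: pseudocanonical subgroups behave like canonical subgroups} (applied with $m=1$, $m'=n$) tells us that $G/H_1$ is $(w-1)$-ordinary and that $H_n/H_1$ is the pseudocanonical subgroup of level $n-1$ of $G/H_1$. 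Since $w - 1$ falls in the range $\bigl(\frac{(2g-1)p}{2g(p-1)} + n - 2,\, n-1\bigr]$, the induction hypothesis applies: $H_n/H_1$ is the canonical subgroup of level $n-1$ of $G/H_1$, and the Hodge height of $G/H_1$ is less than $1/(2p^{n-2})$.

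To conclude the Hodge height bound for $G$, I would invoke the fact from \cite[\S 3]{AIP-2015} that, as long as the Hodge height $\mathrm{hh}(G)$ is $< 1/2$, the quotient $G/H_1$ has Hodge height exactly $p \cdot \mathrm{hh}(G)$. Combining with the induction bound on $\mathrm{hh}(G/H_1)$ yields $\mathrm{hh}(G) < 1/(2p^{n-1})$, which is exactly the required inequality. This bound is precisely the condition under which $G$ admits a canonical subgroup of level $n$, call it $\calH_n$. By the characterisation of higher-level canonical subgroups in \emph{loc.~cit.}, $\calH_n$ contains $\calH_1 = H_1$ and $\calH_n/H_1$ is the canonical subgroup of level $n-1$ of $G/H_1$; but we have just seen that $H_n/H_1$ has the same property. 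Since $H_n$ and $\calH_n$ are both flat subgroup schemes of $G[p^n]$ whose generic fibres have the same order and agree after reduction by $H_1$, they must coincide.

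The main subtlety is the last comparison step, together with the clean propagation $\mathrm{hh}(G/H_1) = p\cdot \mathrm{hh}(G)$: both are standard consequences of Fargues' theory of the canonical subgroup as packaged in \cite[\S 3]{AIP-2015}, but they are the load-bearing external inputs. Everything else is a bookkeeping exercise in the induction, built upon the fact that the pseudocanonical subgroups transform functorially under the quotient map $G \to G/H_1$, which is Lemma \ref{Lemma: pseudocanonical subgroups behave like canonical subgroups}.
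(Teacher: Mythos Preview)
Your proposal is correct and follows essentially the same approach as the paper: induction on $n$ with base case Lemma~\ref{Lemma: pseudocanonical = canonical}, passage to $G/H_1$ via Lemma~\ref{Lemma: pseudocanonical subgroups behave like canonical subgroups}, and then the inductive characterisation of higher canonical subgroups together with the relation $\mathrm{hh}(G/H_1) = p\cdot\mathrm{hh}(G)$ from Fargues' theory (the paper cites \cite[Th\'eor\`eme 6(4)]{Fargues-canonical} and \cite[Theorem 3.1.1(5)]{AIP-2015} for this step). The only cosmetic difference is that you establish $H_1 = \calH_1$ and the bound $\mathrm{hh}(G)<1/2$ at the start, whereas the paper invokes this after applying the induction hypothesis; the logic is identical.
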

\begin{proof}
The proof follows from induction. The case for $n=1$ is precisely Lemma \ref{Lemma: pseudocanonical = canonical}. 

Assume that the statement is affirmative for $n-1$. By Lemma \ref{Lemma: pseudocanonical subgroups behave like canonical subgroups}, $G/H_1$ is $(w-1)$-ordinary and we have $\frac{(2g-1)p}{2g(p-1)}+n-2 < w-1 \leq n-1$. The induction hypothesis implies that the pseudocanonical subgroup $H_{n}/H_1$ of of level $n-1$ of $G/H_1$ is the canonical subgroup of level $n-1$ and that the Hodge height of $G/H_1$ is less than $\frac{1}{2p^{n-2}}$.

However, $H_1$ coincides with the canonical subgroup of $G$ by Lemma \ref{Lemma: pseudocanonical = canonical}. Hence, by \cite[Th\'eor\`em 6 (4)]{Fargues-canonical} (see also \cite[Theorem 3.1.1 (5)]{AIP-2015}), we see that the Hodge height of $G$ is bounded by $\frac{1}{2p^{n-1}}$ and that $H_n$ is the canonical subgroup of level $n$ of $G$.
\end{proof}

\begin{Corollary}\label{Corollary: w-locus injects into v-locus}
Let $n\in \Z_{\geq 1}$ and suppose $w\in \Q_{>0}$ such that $\frac{(2g-1)p}{2g(p-1)}<w\leq n$. Then there exists $v\in \Q_{>0}\cap [0, \frac{1}{2p^{n-1}})$ and a natural inclusion $\overline{\calX}_{\can, w}\hookrightarrow \overline{\calX}(v)$.
\end{Corollary}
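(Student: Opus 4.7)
The plan is to reduce the inclusion to a pointwise statement and then invoke Proposition \ref{Proposition: pseudocanonical = canonical}. Since $\overline{\calX}(v)$ is cut out by the value condition $|\widetilde{\Ha}|\geq p^{-v}$ on $\overline{\calX}$, it suffices to show that for every algebraically closed complete nonarchimedean field $C/\Q_p$ and every point $\bfitx\in\overline{\calX}_w(C,\calO_C)$, the associated principally polarised semiabelian scheme $G/\calO_C$ has Hodge height strictly less than $\frac{1}{2p^{n-1}}$. One may then pick any rational $v$ in the interval $\bigl(\mathrm{Hht}(G),\tfrac{1}{2p^{n-1}}\bigr)$ and deduce the inclusion globally by quasi-compactness.

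First I would lift $\bfitx$ to a point $\widetilde{\bfitx}\in\overline{\calX}_{\Gamma(p^\infty),w}(C,\calO_C)$, which exists because $\overline{\calX}_w=h(\overline{\calX}_{\Gamma(p^\infty),w})$. Away from the boundary, this lift equips $G$ with a symplectic trivialisation $\psi\colon V_p\xrightarrow{\sim} T_pG$; at the boundary, the analogous datum is given by the $1$-motive description recalled in Remark \ref{Remark: GSp2g-equivariance}, and in particular when $G$ has constant toric rank $r$ one obtains a trivialisation of the shape $V'_{p,r}\xrightarrow{\sim}T_pG$ compatible with the filtration $W_{p,r}\subset V'_{p,r}\subset V_{p,r}$ of Definition \ref{Definition: subspace of Vp}.

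Next I would translate the condition $\pi_{\HT}(\widetilde{\bfitx})\in\adicFL^{\times}_w(C,\calO_C)$ into the algebraic $w$-ordinary condition of Definition \ref{Definition: w-ordinary semiabelian scheme with constant toric rank}. By Proposition \ref{Proposition: fake Hasse invariants and the coherent automorphic sheaf} and the description of the Hodge--Tate period map, the lagrangian $\pi_{\HT}(\widetilde{\bfitx})$ corresponds to $\Lie G\subset V_p\otimes_{\Z_p}C$, and the hypothesis $\pi_{\HT}(\widetilde{\bfitx})\in\adicFL^{\times}_w$ means that the matrix representative $\bfitz(\pi_{\HT}(\widetilde{\bfitx}))$ lies within $p^{-w}$ of a $\Z_p$-matrix. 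Using $\adicFL^{\times}_w(C,\calO_C)=P_{\Siegel}(C)\backslash\GSp_{2g,w}(C)$ together with the $\Iw_{\GSp_{2g}}$-equivariance (Lemma \ref{Lemma: Iw_GSp stabilises FL_w^times} and Remark \ref{Remark: GSp2g-equivariance}), one can right-translate $\psi$ by a suitable element of $\Iw_{\GSp_{2g}}^+$ to a new trivialisation for which $\HT_G(\psi(e_i))\in p^w\omega_{G^\vee}$ for $i=1,\ldots,g$; this is exactly the assertion that $G$ is $w$-ordinary.

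Finally, I would apply Proposition \ref{Proposition: pseudocanonical = canonical} (to $G$ itself when $G$ is abelian, and to the abelian quotient $A$ of $G$ in general, noting that $\widetilde{\Ha}(\bfitx)$ is defined via $A$ and that the $w$-ordinariness of $G$ descends to that of $A$ by virtue of the compatibility with the toric filtration built into Definition \ref{Definition: w-ordinary semiabelian scheme with constant toric rank}). The Proposition gives $\mathrm{Hht}(G)<\tfrac{1}{2p^{n-1}}$, concluding the pointwise statement.

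The main obstacle is the second step: carefully aligning the adic-geometric incarnation of the $w$-ordinary locus (as a preimage of $\adicFL^{\times}_w$ under $\pi_{\HT}$) with the semi-abelian-theoretic notion of Definition \ref{Definition: w-ordinary semiabelian scheme with constant toric rank}, especially at the boundary. Away from the boundary this is a direct linear-algebra reformulation via the Hodge--Tate exact sequence, but at the boundary one must verify that the $1$-motive trivialisations produced by the infinite-level perfectoid compactification are indeed compatible with the toric-part filtration, so that the $w$-ordinary trivialisation extracted after the Iwahori adjustment belongs to the restricted class required by Definition \ref{Definition: w-ordinary semiabelian scheme with constant toric rank}. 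Once this compatibility is settled, the remaining steps are formal consequences of the already-established Proposition \ref{Proposition: pseudocanonical = canonical}.
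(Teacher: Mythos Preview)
Your approach is essentially the same as the paper's: reduce to $(C,\calO_C)$-points, extract the associated semiabelian scheme (over $\calO_C$, via properness/the integral model and the $1$-motive description at the boundary), verify that the condition $\pi_{\HT}(\widetilde{\bfitx})\in\adicFL^{\times}_w$ is precisely the $w$-ordinariness of Definition \ref{Definition: w-ordinary semiabelian scheme with constant toric rank}, and then invoke Proposition \ref{Proposition: pseudocanonical = canonical} to bound the Hodge height. The paper is considerably terser about your ``main obstacle'' (it simply appeals to the definition of the Hodge--Tate period map reviewed in \S\ref{subsection: perfectoid Siegel modular variety}), and it applies Proposition \ref{Proposition: pseudocanonical = canonical} directly to the semiabelian $G$ rather than passing to its abelian quotient; your extra care about the boundary compatibility and the quasi-compactness argument for a uniform $v$ is reasonable but not new content.
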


\begin{proof}
It suffices to work with $(C, \calO_C)$-points for an algebraically closed complete nonarchimedean field $C$ containing $\Q_p$. (Notice that the classical points determine these adic spaces by \cite[(1.1.11)]{Huber-2013}). Let $\bfitx\in \overline{\calX}_{\can, w}(C, \calO_C)$. By the properness of $\overline{\calX}$, the point $\bfitx$ extends to an $\calO_C$-point $\tilde{\bfitx}$ of $\overline{\frakX}$. One can associate with $\tilde{\bfitx}$ a 1-motive $\widetilde{M}_{\tilde{\bfitx}}=[Y\rightarrow \widetilde{G}_{\tilde{\bfitx}}]$ where $\widetilde{G}_{\tilde{\bfitx}}$ is a semiabelian scheme (of dimension $g$ with constant toric rank) over $\calO_C$ and $Y$ is a free $\Z$-module of finite rank (see, for example, \cite{Stroh-TorComp}). 

From the definition of the Hodge--Tate period map (see \S \ref{subsection: perfectoid Siegel modular variety} for a quick review), we see that $\widetilde{G}_{\tilde{\bfitx}}$ is $w$-ordinary. By Proposition \ref{Proposition: pseudocanonical = canonical}, the Hodge height of $\widetilde{G}_{\tilde{\bfitx}}$ is smaller than $\frac{1}{2p^{n-1}}$. This means $\bfitx\in \overline{\calX}(v)(C, \calO_C)$ for some $v<\frac{1}{2p^{n-1}}$ and so we are done. 
\end{proof}

Recall that, for any $v\in \Q_{>0}\cap [0, \frac{1}{2})$, $\calH_1$ is the universal canonical subgroup of the tautological semiabelian variety over $\overline{\calX}(v)$. Let $w>\frac{(2g-1)p}{2g(p-1)}$ and pick $v$ so that $\overline{\calX}_{\can, w}\hookrightarrow \overline{\calX}(v)$ as in Corollary \ref{Corollary: w-locus injects into v-locus}. We still write $\calH_1$ for its pullback to $\overline{\calX}_{\can, w}$.

In this case, consider
\[
   \overline{\calX}_{1, \can, w} := \Isom_{\overline{\calX}_{\can, w}}((\Z/p\Z)^g, \calH_1^{\vee});
\] namely, the adic space over $\overline{\calX}_{\can, w}$ which parameterises trivialisations of $\calH_1^{\vee}$. The group $\GL_g(\Z/p\Z)$ naturally acts on $\overline{\calX}_{1, \can, w}$ by permuting the trivialisations.

\begin{Lemma}\label{Lemma: alternative definition of w-locus of (strict) Iwahori level variety}
For $w>\frac{(2g-1)p}{2g(p-1)}$, there are natural identifications
\[
    \overline{\calX}_{1, \can, w}/B_{\GL_g}(\Z/p\Z) = \overline{\calX}_{\Iw, \can, w}\quad \text{ and }\quad \overline{\calX}_{1, \can, w}/T_{\GL_g}(\Z/p\Z) = \overline{\calX}_{\Iw^+, \can, w}.
\]
\end{Lemma}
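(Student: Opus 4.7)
The plan is to establish both identifications by giving each side a common moduli interpretation on the $w$-ordinary locus. I treat the strict Iwahori identification first; the Iwahori one will follow by further quotienting by $U_{\GL_g}(\Z/p\Z) = B_{\GL_g}(\Z/p\Z)/T_{\GL_g}(\Z/p\Z)$ on both sides.

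The crucial preliminary step is to show that, on $\overline{\calX}_w$ with $w > \frac{(2g-1)p}{2g(p-1)}$, the canonical subgroup $\calH_1 \subset A[p]$ is a maximal isotropic subgroup for the Weil pairing. Indeed, Proposition \ref{Proposition: pseudocanonical = canonical} identifies $\calH_1$ with the pseudocanonical subgroup of Lemma \ref{Lemma: pseudocanonical subgroup}; hence for any $w$-ordinary symplectic trivialisation $\psi: V_p \simeq T_p A$ at infinite level, $\calH_1$ is the reduction mod $p$ of $\psi(\Fil_g^{\std} V_p)$. As $\psi_p$ is symplectic and $\Fil_g^{\std} V_p / p$ is maximal isotropic in $V_p/p$, so is $\calH_1$ in $A[p]$. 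A direct consequence is that every point of $\overline{\calX}_{\Iw^+, w} = h_{\Iw^+}(\overline{\calX}_{\Gamma(p^\infty), w})$ satisfies $\Fil_g A[p] = \calH_1$, so the full symplectic flag is determined by $\Fil_1 \subset \cdots \subset \Fil_g = \calH_1$ together with the relations $\Fil_{g+i} A[p] = (\Fil_{g-i}A[p])^{\perp}$.

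Next, I will show that both sides of the claimed identification parameterise \emph{strict Iwahori structures} on $\calH_1$, namely ordered $g$-tuples $(C_1, \ldots, C_g)$ of subgroups of order $p$ in $\calH_1$ whose partial sums $\langle C_1, \ldots, C_i \rangle$ have order $p^i$ for each $i$. On the right-hand side $\overline{\calX}_{\Iw^+, w}$: by the above, the datum $(\Fil_\bullet A[p], \{C_i\})$ is equivalent to the tuple $(C_1, \ldots, C_g)$ inside $\calH_1$, with $\Fil_\bullet A[p]$ reconstructed as $\Fil_i A[p] = \langle C_1, \ldots, C_i\rangle$ for $i \leq g$ and by symplectic duality for $i > g$. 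On the left-hand side $\overline{\calX}_{1, w}/T_{\GL_g}(\Z/p\Z)$: a trivialisation $(\Z/p\Z)^g \xrightarrow{\sim} \calH_1^\vee$ modulo the torus action is equivalent to an ordered $g$-tuple of spanning lines in $\calH_1^\vee$, i.e., a strict Iwahori structure on $\calH_1^\vee$. Cartier duality between the \'etale group scheme $\calH_1^\vee$ and the multiplicative group scheme $\calH_1$ then provides a canonical bijection between strict Iwahori structures on $\calH_1^\vee$ and those on $\calH_1$. These bijections assemble into the desired isomorphism of adic spaces $\overline{\calX}_{1, w}/T_{\GL_g}(\Z/p\Z) \xrightarrow{\sim} \overline{\calX}_{\Iw^+, w}$. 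The Iwahori identification follows by further quotienting both sides by $U_{\GL_g}(\Z/p\Z)$: on the right this forgets the ordering splittings $\{C_i\}$, leaving only a full flag of $\calH_1$, while on the left one obtains the flag of $\calH_1^\vee$ dual to it.

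The main obstacle is verifying the surjectivity direction in the moduli description of $\overline{\calX}_{\Iw^+, w}$: that \emph{every} strict Iwahori structure on $\calH_1$ (over a point of $\overline{\calX}_w$) lifts to a $w$-ordinary point at infinite level. Given such a structure and a choice of basis $(x_1, \ldots, x_g)$ of $\calH_1$ compatible with it, one extends to a symplectic basis of $A[p]$ using the maximal isotropicity of $\calH_1$, and then lifts to a symplectic trivialisation $\psi$ of $T_p A$. The flexibility in this lifting, combined with the stability of the $w$-ordinary locus under the $\Iw_{\GSp_{2g}}^+$-action (which follows from Lemma \ref{Lemma: Iw_GSp stabilises FL_w^times}), ensures that one can arrange $\psi$ to be $w$-ordinary. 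Once this is verified, both adic spaces represent the same functor on $\overline{\calX}_w$ and the stated identifications follow.
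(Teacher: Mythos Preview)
Your strategy of matching moduli interpretations is the same as the paper's, but the execution differs and there is a genuine gap.

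The paper proves the Iwahori identity first (leaving the strict Iwahori case to the reader) and works directly on the $\calH_1^{\vee}$ side: using the projection $\pr: A[p]\xrightarrow{\sim} A[p]^{\vee}\twoheadrightarrow \calH_1^{\vee}$ coming from the polarisation, it writes down mutually inverse formulas
\[
\Fil_i A[p]=\begin{cases}\pr^{-1}\Fil_{i-g}\calH_1^{\vee},& i>g,\\ (\pr^{-1}\Fil_{g-i}\calH_1^{\vee})^{\perp},& i\leq g,\end{cases}
\qquad
\Fil_i\calH_1^{\vee}=\pr\big((\Fil_{g-i}A[p])^{\perp}\big).
\]
This bypasses both your Cartier-duality translation step and the need for a separate surjectivity argument: once the two formulas are seen to be inverse to one another, the identification is complete.

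The gap in your argument is the surjectivity step. You want to show that \emph{every} strict Iwahori structure on $\calH_1$ arises from a $w$-ordinary trivialisation, and you invoke the $\Iw_{\GSp_{2g}}^{+}$-stability of $\overline{\calX}_{\Gamma(p^{\infty}),w}$. But the $\Iw_{\GSp_{2g}}^{+}$-orbits at infinite level are \emph{exactly} the fibres over individual strict Iwahori structures (since $\Iw_{\GSp_{2g}}^{+}$ reduces to $B^{+}_{\GSp_{2g}}(\F_p)$, which fixes each $C_i$). So invoking that stability is circular: it cannot move you from one strict Iwahori structure to another. What you actually need is that the full Siegel parahoric $\{\bfgamma\in\GSp_{2g}(\Z_p):\bfgamma_c\equiv 0\pmod p\}$ stabilises $\adicFL_w^{\times}$; this follows by the same computation as in Lemma~\ref{Lemma: Iw_GSp stabilises FL_w^times}, and its reduction mod $p$ acts on $\calH_1$ through all of $\GL_g(\F_p)$, hence transitively on strict Iwahori structures. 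With this fix your argument goes through.

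A smaller point: your ``Cartier duality'' step is stated loosely. Over the generic fibre both $\calH_1$ and $\calH_1^{\vee}$ are \'etale, and duality gives a perfect pairing, not an isomorphism; the induced correspondence on flags reverses inclusions, so for strict Iwahori structures one must send $(D_1,\ldots,D_g)$ to the reversed dual lines $(D_g^{*},\ldots,D_1^{*})$. One then has to check this matches the identification implicit in the definitions---which is precisely what the paper's explicit $\pr$-formulas make transparent.
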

\begin{proof}
We only give the proof for the first identity. The second one is similar and left to the readers.

We first focus on the part away from the boundary. Let $\calX_{\can, w} = \overline{\calX}_{\can, w} \cap \calX$ and let $\calA_{w}^{\univ}$ be the universal abelian variety over $\calX_{\can, w}$. 

The key observation is that any trivialisation $\psi:(\Z/p\Z)^g \rightarrow \calH_{1}^{\vee}$ induces a full flag $\Fil_{\bullet}^{\psi} \calA^{\univ}_{w}[p]$ on $\calA^{\univ}_{w}[p]$. Indeed, let $\epsilon_1, \ldots, \epsilon_g$ denote the standard basis for $(\Z/p\Z)^g$ and let $\Fil_{\bullet}^{\psi}\calH_1^{\vee}$ be the full flag of $\calH_1^{\vee}$ given by \[
    0\subset \langle \psi(\epsilon_1) \rangle \subset \langle \psi(\epsilon_1), \psi(\epsilon_2) \rangle \subset \cdots \subset \langle \psi(\epsilon_1), ..., \psi(\epsilon_g)\rangle.
\] 
Consider the natural projection \[
    \pr: \calA_w^{\univ}[p] \xrightarrow{\sim} \calA_w^{\univ}[p]^{\vee} \twoheadrightarrow \calH_1^{\vee}
\] where the first isomorphism is induced from the principal polarisation. Then the desired full flag $\Fil_{\bullet}^{\psi}\calA_w^{\univ}[p]$ is given by \[
    \Fil_{i}^{\psi}\calA_w^{\univ}[p] := \left\{\begin{array}{ll}
        \pr^{-1}\Fil_{i-g}^{\psi}\calH_1^{\vee}, & i > g \\
        (\pr^{-1}\Fil_{g-i}^{\psi}\calH_{1}^{\vee})^{\perp}, & i \leq g 
    \end{array}\right..
\] 
Moreover, if two such $\psi$'s induce the same $\Fil_{\bullet}^{\psi}\calH_1^{\vee}$, then the associated $\Fil_{\bullet}^{\psi}\calA_w^{\univ}[p]$ coincide. Hence, the assignment $\psi\mapsto \Fil_{\bullet}^{\psi} \calA^{\univ}_{w}[p]$ induces a natural inclusion $\calX_{1, \can,  w}/B_{\GL_g}(\Z/p\Z) \subset \calX_{\Iw, \can, w}$ away from the boundary.

Conversely, using the $w$-ordinarity, one sees that the universal full flag $\Fil_{\bullet}\calA_w^{\univ}[p]$ on $\calX_{\Iw, \can, w}$ induces a full flag $\Fil_{\bullet}\calH_1^{\vee}$ of $\calH_1^{\vee}$ given by 
\[
    \Fil_{i}\calH_{1}^{\vee} = \pr\left((\Fil_{g-i}\calA_{w}^{\univ}[p])^{\perp}\right)
\] for $i=1, ..., g$.
This yields the opposite inclusion away from the boundary. 

In order to extend to the boundary, one considers the 1-motives on the boundary strata and same argument as above applies verbatim. The details are left to the reader.
\end{proof}

Finally, we prove Theorem \ref{Theorem: cofinal system of w- and v-loci}.

\begin{proof}[Proof of Theorem \ref{Theorem: cofinal system of w- and v-loci}] 
\begin{enumerate}
    \item[(i)] We may assume $v=\frac{1}{2p^{n-1}}$ for some sufficiently large $n$. In this case, we can take any $\frac{(2g-1)p}{2g(p-1)}+n-1 < w \leq n$. Indeed, by Corollary \ref{Corollary: w-locus injects into v-locus}, we have a Cartesian diagram
     \[
        \begin{tikzcd}
            \overline{\calX}_{1, \can, w}\arrow[r, hook]\arrow[d] & \overline{\calX}_1(v)\arrow[d]\\
        \overline{\calX}_{\can, w}\arrow[r, hook] & \overline{\calX}(v)
        \end{tikzcd}
    \]
    where the top arrow is equivariant under the action of $\GL_g(\Z/p\Z)$. Taking the quotient by either $B_{\GL_g}(\Z/p\Z)$ or $T_{\GL_g}(\Z/p\Z)$, and applying Lemma \ref{Lemma: alternative definition of w-locus of (strict) Iwahori level variety}, we obtain the desired inclusions.
        
    \item[(ii)] We may assume $n-1<w<n$ for some sufficiently large $n$. Pick $v\in \Q_{>0}\cap [0, \frac{1}{2p^{n-1}})$ such that $w\in \left(n-1+\frac{v}{p-1}, n-\frac{vp^n}{p-1}\right]$. Applying \cite[Proposition 3.2.1]{AIP-2015}, on the level of classical points, we obtain a natural inclusion $\overline{\calX}(v)(C, \calO_C) \hookrightarrow \overline{\calX}_{\can, w}(C,\calO_C)$ and hence an inclusion $\overline{\calX}(v)\hookrightarrow \overline{\calX}_{\can, w}$. There is a Cartesian diagram \[
        \begin{tikzcd}
            \overline{\calX}_1(v)\arrow[r, hook]\arrow[d] & \overline{\calX}_{1,\can,  w}\arrow[d]\\
            \overline{\calX}(v)\arrow[r, hook] & \overline{\calX}_{\can, w}
        \end{tikzcd}
    \] 
    Once again, applying Lemma \ref{Lemma: alternative definition of w-locus of (strict) Iwahori level variety} and taking the corresponding quotients yield the desired inclusions.
\end{enumerate}
\end{proof}

\subsection{Comparison of the two constructions}\label{subsection:comparison sheaf aip} 
In this section, we still assume $p>2g$. The aim of this subsection is to prove the following theorem which compares the overconvergent automorphic sheaf $\underline{\omega}_w^{\kappa_{\calU}}$ constructed in \S \ref{subsection: the perfectoid construction} and the sheaf $\underline{\omega}_{w, v}^{\kappa_{\calU}, \AIP}$ of Andreatta--Iovita--Pilloni.

For any $v\in \Q_{>0}\cap [0, \frac{1}{2})$, let $h_{\diamond}:\overline{\calX}_{\Iw^+}(v)\rightarrow \overline{\calX}_{\Iw}(v)$ denote the natural projection. 

Suppose $n>\textrm{max}\{1, \frac{g}{p-1}\}$ and let $v\in \Q_{>0}\cap [0, \frac{1}{2p^{n-1}})$, $w\in\Q_{>0}\cap (n-1+\frac{v}{p-1}, n-\frac{vp^n}{p-1}]$. According to Theorem \ref{Theorem: cofinal system of w- and v-loci} (ii), there is a natural inclusion $\overline{\calX}_{\Iw^+}(v) \hookrightarrow \overline{\calX}_{\Iw^+,\can, w}$. On the other hand, by Remark \ref{Remark: Atkin--Lehner operator}, we have an isomorphism $\AL:\overline{\calX}_{\Iw^+, w-1}\xrightarrow[]{\sim}\overline{\calX}_{\Iw^+, \can, w}$ induced by the Atkin-Lehner operator. These combined induces an inclusion $\AL^{-1}\overline{\calX}_{\Iw^+}(v) \hookrightarrow \overline{\calX}_{\Gamma, w-1}$.

\begin{Theorem}\label{Theorem: comparison with AIP}
Let $n,v,w$ be as above and let $(R_{\calU}, \kappa_{\calU})$ be a weight such that $w>2+r_{\calU}$. Then, over $\AL^{-1}\overline{\calX}_{\Iw^+}(v)$, there is a canonical isomorphism of sheaves $$\Psi:\underline{\omega}_{w-1}^{\kappa_{\calU}}|_{\AL^{-1}\overline{\calX}_{\Iw^+}(v)}\xrightarrow{\sim} \AL^*h_{\diamond}^*\underline{\omega}_{w, v}^{\kappa_{\calU}, \AIP}.$$
\end{Theorem}

Recall that the space of overconvergent Siegel modular forms of weight $\kappa_{\calU}$ of strict Iwahori level (see Definition \ref{Definition: the sheaf of overconvergent Siegel forms} (v)) is defined to be
$$M^{\kappa_{\calU}}_{\Iw^+}=\varinjlim_{w\rightarrow \infty} M^{\kappa_{\calU}}_{\Iw^+,w}$$
where $$M^{\kappa_{\calU}}_{\Iw^+,w}=H^0(\overline{\calX}_{\Iw^+, w}, \,\,\underline{\omega}_w^{\kappa_{\calU}}).$$
We can also extend the notion of overconvergent Siegel modular forms of Andreatta--Iovita--Pilloni to the case of strict Iwahori level.

\begin{Definition}\label{Definition: AIP's Siegel modular forms for strict Iwahori level}
Let $(R_{\calU}, \kappa_{\calU})$ be a weight.
\begin{enumerate}
\item[(i)] Let $v\in \Q_{>0}\cap [0, 1/2)$ and $w\in \Q_{>0}$. Suppose $\kappa_{\calU}$ is $w$-analytic. The \textbf{space of $w$-analytic $v$-overconvergent Siegel modular forms of weight $\kappa_{\calU}$ (of strict Iwahori level)} of Andreatta--Iovita--Pilloni is defined to be
$$M^{\kappa_{\calU}, \AIP}_{\Iw^+,w,v}:=H^0(\overline{\calX}_{\Iw^+}(v), h_{\diamond}^*\underline{\omega}_{w, v}^{\kappa_{\calU}, \AIP}).$$
\item[(ii)] The \textbf{space of locally analytic overconvergent Siegel modular forms of weight $\kappa_{\calU}$ (of strict Iwahori level)} of Andreatta--Iovita--Pilloni is defined to be
$$M^{\kappa_{\calU}, \AIP}_{\Iw^+}:=\lim_{\substack{v\rightarrow 0\\ w\rightarrow\infty}}M^{\kappa_{\calU}, \AIP}_{\Iw^+,w,v}.$$
\item[(iii)] Similarly, the \textbf{space of $w$-analytic $v$-overconvergent Siegel cuspforms of weight $\kappa_{\calU}$ (of strict Iwahori level)} of Andreatta--Iovita--Pilloni is defined to be \[
    S_{\Iw^+, w, v}^{\kappa_{\calU}, \AIP} := H^0(\overline{\calX}_{\Iw^+}(v), h_{\diamond}^* \underline{\omega}_{w, v, \cusp}^{\kappa_{\calU}, \AIP}),
\] and the \textbf{space of locally analytic overconvergent Siegel cuspforms of weight $\kappa_{\calU}$ (of strict Iwahori level)} of Andreatta--Iovita--Pilloni is defined to be \[
    S_{\Iw^+}^{\kappa_{\calU}, \AIP} := \lim_{\substack{v\rightarrow 0\\ w\rightarrow\infty}}S^{\kappa_{\calU}, \AIP}_{\Iw^+,w,v}.
\]
\end{enumerate}
\end{Definition}

Then we have the following immediate corollary of Theorem \ref{Theorem: comparison with AIP} and Theorem \ref{Theorem: cofinal system of w- and v-loci}.

\begin{Corollary}\label{Corollary: comparison with AIP}
There are canonical isomorphisms
$$M^{\kappa_{\calU}}_{\Iw^+}\cong M^{\kappa_{\calU}, \AIP}_{\Iw^+}\quad \text{ and }\quad S_{\Iw^+}^{\kappa_{\calU}} \cong S_{\Iw^+}^{\kappa_{\calU}, \AIP}.$$
\end{Corollary}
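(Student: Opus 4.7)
The plan is to combine the sheaf-level identification of Theorem \ref{Theorem: comparison with AIP} with the cofinality of the two systems of loci from Theorem \ref{Theorem: cofinal system of w- and v-loci}, then pass to global sections and to the colimit. First, for any admissible triple $(n, v, w)$ satisfying the hypotheses of Theorem \ref{Theorem: comparison with AIP}, the canonical isomorphism $\Psi \colon \underline{\omega}_w^{\kappa_{\calU}}|_{\overline{\calX}_{\Iw^+}(v)} \xrightarrow{\sim} h_{\diamond}^{*} \underline{\omega}_{w, v}^{\kappa_{\calU}, \AIP}$ yields, upon taking $H^0$, an isomorphism
\[
    H^0\!\left(\overline{\calX}_{\Iw^+}(v),\, \underline{\omega}_w^{\kappa_{\calU}}|_{\overline{\calX}_{\Iw^+}(v)}\right) \xrightarrow{\ \sim\ } M^{\kappa_{\calU}, \AIP}_{\Iw^+, w, v}.
\]
Precomposing with the restriction from $\overline{\calX}_{\Iw^+, w}$ to its open subspace $\overline{\calX}_{\Iw^+}(v)$ (an open immersion when $w$ is sufficiently large relative to $v$, by the proof of Theorem \ref{Theorem: cofinal system of w- and v-loci}) produces a map $M^{\kappa_{\calU}}_{\Iw^+, w} \to M^{\kappa_{\calU}, \AIP}_{\Iw^+, w, v}$ for each admissible pair.

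Next, I will pass to the colimit. For any fixed $w > 1 + r_{\calU}$, Theorem \ref{Theorem: cofinal system of w- and v-loci} (i) furnishes admissible $(n, v)$ with $\overline{\calX}_{\Iw^+, w} \subset \overline{\calX}_{\Iw^+}(v)$; conversely, for any $v$, Theorem \ref{Theorem: cofinal system of w- and v-loci} (ii) furnishes an admissible $w$ with $\overline{\calX}_{\Iw^+}(v) \subset \overline{\calX}_{\Iw^+, w}$. In this way, the two directed systems $\{M^{\kappa_{\calU}}_{\Iw^+, w}\}_w$ and $\{M^{\kappa_{\calU}, \AIP}_{\Iw^+, w, v}\}_{(w, v)}$ become mutually cofinal, linked at each admissible level by the comparison isomorphisms above. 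The canonicity of $\Psi$ together with the functoriality of restriction then identifies them as isomorphic ind-systems, and taking colimits produces the desired $M^{\kappa_{\calU}}_{\Iw^+} \cong M^{\kappa_{\calU}, \AIP}_{\Iw^+}$.

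Finally, for the cuspidal variant I will observe that $\Psi$ sends cuspidal sections to cuspidal sections: the construction of $\Psi$ proceeds via a geometric identification of torsors that is compatible with the boundary stratification, so $\Psi$ restricts to an isomorphism $\underline{\omega}_{w, \cusp}^{\kappa_{\calU}}|_{\overline{\calX}_{\Iw^+}(v)} \simeq h_{\diamond}^{*} \underline{\omega}_{w, v, \cusp}^{\kappa_{\calU}, \AIP}$ between the subsheaves of sections vanishing along $\overline{\calX}_{\Iw^+}(v) \smallsetminus \calX_{\Iw^+}(v)$. The same colimit argument as above then yields $S_{\Iw^+}^{\kappa_{\calU}} \cong S_{\Iw^+}^{\kappa_{\calU}, \AIP}$. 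The main (and only nontrivial) step is verifying that the family $\{\Psi_{w,v}\}$ is compatible with the restriction maps in both parameters so as to define a morphism of ind-systems; this should be immediate from inspecting the explicit construction of $\Psi$ in the proof of Theorem \ref{Theorem: comparison with AIP}, since all ingredients (the Hodge--Tate map, the fake Hasse invariants $\fraks_i$, and the torsor structure on $\adicIW_w^+$ versus $\adicIW_{w,v}^+$) are defined canonically and transform naturally under shrinking $w$ or $v$.
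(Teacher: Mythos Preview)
Your proposal is correct and follows exactly the approach the paper indicates: the paper simply declares this an ``immediate corollary of Theorem \ref{Theorem: comparison with AIP} and Theorem \ref{Theorem: cofinal system of w- and v-loci}'' without further elaboration, and you have spelled out precisely the cofinality-plus-sheaf-comparison argument that makes this work. One small cosmetic point: you have the labels (i) and (ii) of Theorem \ref{Theorem: cofinal system of w- and v-loci} swapped relative to the paper's statement (in the paper, (i) starts from a given $v$ and produces $w$, while (ii) starts from a given $w$ and produces $v$), but this does not affect the substance of the argument.
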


\begin{Remark}
\normalfont In fact, it will follow from the construction of $\Psi$ that the isomorphisms in Corollary \ref{Corollary: comparison with AIP} is also Hecke-equivariant. 
\end{Remark}

The rest of the subsection is dedicated to the proof of Theorem \ref{Theorem: comparison with AIP}. 

Let $n$, $v$, $w$, and $(R_{\calU}, \kappa_{\calU})$ be as in Theorem \ref{Theorem: comparison with AIP}. Recall that the $\scrO_{\overline{\calX}_{\Iw^+}(v)}$-module (resp., $\scrO_{\overline{\calX}_{\Iw}(v)}$-module) $\underline{\omega}_{\Iw^+, v}$ (resp., $\underline{\omega}_{\Iw, v}$) is locally free of rank $g$. Let $\calV' \subset \overline{\calX}_{\Iw}(v)$ be an affinoid open subset such that $\underline{\omega}_{\Iw, v}|_{\calV'}$ is free, and let $\calV \subset \overline{\calX}_{\Iw^+}(v)$ be the preimage of $\calV'$. To construct $\Psi$, it suffices to establish a canonical isomorphism
$$\Psi:\underline{\omega}_{w-1}^{\kappa_{\calU}}(\AL^{-1}\calV)\xrightarrow{\sim} h_{\diamond}^*\underline{\omega}_{w, v}^{\kappa_{\calU}, \AIP}(\calV)$$
for every such $\calV$, which is also functorial in $\calV$.

As a preparation, consider the pullback diagram \[
    \begin{tikzcd}
        \adicIW_{w,v, \infty}^+ \arrow[r]\arrow[d, "\pi_{\infty}^{\AIP}"'] & \adicIW_{w,v}^+\arrow[d, "\pi^{\AIP}"]\\
        \overline{\calX}_{\Gamma(p^{\infty})}(v) \arrow[r, "h_{\Iw}"] & \overline{\calX}_{\Iw}(v)
    \end{tikzcd}
\] where $\overline{\calX}_{\Gamma(p^{\infty})}(v)$ is the preimage of $\overline{\calX}_{\Iw}(v)$ under the natural projection $h_{\Iw}: \overline{\calX}_{\Gamma(p^{\infty})} \rightarrow \overline{\calX}_{\Iw}$. The existence of the pullback follows from the same argument as in the proof of Proposition \ref{Proposition: omega is admissible}. For later usage, we denote by $\calV_{\infty}$ (resp., $\calV^+_{\infty}$) the preimage of $\calV'$ in $\overline{\calX}_{\Gamma(p^{\infty})}(v)$ (resp., in $\adicIW_{w,v, \infty}^+$) under the projection $h_{\Iw}$ (resp., $h_{\Iw}\circ \pi_{\infty}^{\AIP}$).

In what follows, we provide an explicit moduli interpretation of $\adicIW_{w,v, \infty}^+$, in three steps.\\

\noindent\textbf{Step 1.} Observe that the natural projection $h_{\Iw}:\overline{\calX}_{\Gamma(p^{\infty})}(v) \rightarrow \overline{\calX}_{\Iw}(v)$ factors as \[
    h_{\Iw}: \overline{\calX}_{\Gamma(p^{\infty})}(v) \xrightarrow{h_1} \overline{\calX}_1(p^n)(v) \rightarrow \overline{\calX}_{\Iw}(v).
\] Indeed, away from the boundary, the map $h_1$ can be described as follows. Let $\calX_{\Gamma(p^{\infty})}(v)$ be the part of $\overline{\calX}_{\Gamma(p^{\infty})}(v)$ away from the boundary. For every point $(A, \lambda, \psi_N, \psi_{p^{\infty}})\in \calX_{\Gamma(p^{\infty})}(v)$, consider the dual trivialisation $$\psi_{p^{\infty}}^{\vee}: V^{\vee}_p\xrightarrow[]{\sim} T_pA^{\vee}.$$
Modulo $p^n$, we obtain a symplectic isomorphism $$\psi_{p^n}^{\vee}:V_p^{\vee}\otimes_{\Z_p}(\Z/p^n\Z)\xrightarrow[]{\sim} A[p^n]^{\vee}.$$
Then $h_1$ sends $(A, \lambda, \psi_N, \psi_{p^{\infty}})$ to $(A, \lambda, \psi_N, \psi)$ where $\psi$ is the composition $$\psi:(\Z/p^n\Z)^g\hookrightarrow V_p^{\vee}\otimes_{\Z_p}(\Z/p^n\Z)\xrightarrow[]{\psi_{p^n}^{\vee}} A[p^n]^{\vee}\twoheadrightarrow H_n^{\vee}$$ with the first arrow sending $\epsilon_i$ to $e^{\vee}_{g+1-i}\otimes 1$, for all $i=1,\ldots, g$, and the last arrow being the natural surjection. From the proof of Lemma \ref{Lemma: alternative definition of w-locus of (strict) Iwahori level variety}, we see that $\psi$ is indeed a trivialisation of $H_n^{\vee}$.

Using the language of 1-motives, this description of $h_1$ also extends to the boundary. The details are left to the readers.\\

\noindent\textbf{Step 2.} Recall that, in \S \ref{subsection: construction of AIP}, we defined a locally free $\scrO_{\overline{\frakX}_1(p^n)(v)}$-submodule $\scrF\subset \underline{\Omega}_{n, v}$ on $\overline{\frakX}_{1}(p^n)(v)$. Passing to the adic generic fibre, let $\underline{\omega}_{n, v}^+$ denote the sheaf of $\scrO^+_{\overline{\calX}_{1}(p^n)(v)}$-module on $\overline{\calX}_{1}(p^n)(v)$ associated with $\underline{\Omega}_{n, v}$. Then $\scrF$ can be identified with a locally free $\scrO_{\overline{\calX}_{1}(p^n)(v)}^+$-submodule of $\underline{\omega}_{n, v}^+$, which is still denoted by $\scrF$. Moreover, let $\scrF_{\infty}$ be the pullback of $\scrF$ to $\overline{\calX}_{\Gamma(p^{\infty})}(v)$ along $h_1$.

Recall as well the $\scrO_{\overline{\frakX}_{\Gamma(p^n)}}$-modules $\underline{\Omega}_{\Gamma(p^n)}^{\mathrm{mod}}\subset\underline{\Omega}_{\Gamma(p^n)}$ constructed in \S \ref{subsection: perfectoid Siegel modular variety}. Passing to the adic generic fibre, they induce
$\scrO^+_{\overline{\calX}_{\Gamma(p^{n})}}$-modules $\underline{\omega}_{\Gamma(p^n)}^{\mathrm{mod}, +} \subset \underline{\omega}_{\Gamma(p^{n})}^+$ on $\overline{\calX}_{\Gamma(p^{n})}$. Let $\underline{\omega}_{\Gamma(p^{\infty})}^{\mathrm{mod}, +}\subset\underline{\omega}_{\Gamma(p^{\infty})}^+$ be their pullbacks to $\overline{\calX}_{\Gamma(p^{\infty})}$ and let $\underline{\omega}_{\Gamma(p^{\infty}), v}^{\mathrm{mod}, +}\subset\underline{\omega}_{\Gamma(p^{\infty}), v}^+$ be their restrictions on $\overline{\calX}_{\Gamma(p^{\infty})}(v)$.

We claim that there is a natural inclusion \[
    \scrF_{\infty} \subset \underline{\omega}_{\Gamma(p^{\infty}), v}^{\mathrm{mod}, +}.
\] 
Indeed, recall the map 
$$\HT_n:(\Z/p^n\Z)^g\rightarrow \omega_{\calH_n}$$
on $\overline{\calX}_1(p^n)(v)$ constructed in \S \ref{subsection: construction of AIP}. Pulling back to $\overline{\calX}_{\Gamma(p^\infty)}(v)$, we obtain a map
$$\HT_{n,\infty}: (\Z/p^n\Z)^g\rightarrow \omega_{\calH_{n,\infty}}$$
where $\calH_{n,\infty}$ is the pullback of $\calH_n$ along the projection $\overline{\calX}_{\Gamma(p^\infty)}(v)\rightarrow \overline{\calX}_1(p^n)(v)$.
On the other hand, recall the map $\HT_{\Gamma(p^{\infty})}$ on $\overline{\calX}_{\Gamma(p^\infty)}$ constructed in \S \ref{subsection: perfectoid Siegel modular variety}. Restricting to $\overline{\calX}_{\Gamma(p^\infty)}(v)$ and modulo $p^n$, we obtain a map
$$\HT_{\Gamma(p^{\infty}),n,v}: V^{\vee}\otimes_{\Z}(\Z/p^n\Z)\rightarrow \underline{\omega}^{\mathrm{mod}, +}_{\Gamma(p^{\infty}),v}/p^n \underline{\omega}^{\mathrm{mod}, +}_{\Gamma(p^{\infty}),v}.$$

These maps fit into a commutative diagram
\[
    \begin{tikzcd}
        &&\underline{\omega}^{\mathrm{mod},+}_{\Gamma(p^{\infty}),v} \arrow[dd, two heads] \arrow[rr, hook] && \underline{\omega}_{\Gamma(p^{\infty}), v}^+\arrow[d, two heads]\\
        (\Z/p^n\Z)^{g}\arrow[rrrr, crossing over, "\HT_{n, \infty}"]\arrow[d, hook] &&& & \omega_{\calH_{n, \infty}}\arrow[d, two heads]\\
        V^{\vee}\otimes_{\Z}(\Z/p^n\Z)\arrow[rr, "\HT_{\Gamma(p^{\infty}),n,v}"] &&\underline{\omega}^{\mathrm{mod},+}_{\Gamma(p^{\infty}),v}/p^n \arrow[r, hook] & \underline{\omega}_{\Gamma(p^{\infty}), v}^+/p^n \arrow[r, equal] & \omega_{\calH_{n, \infty}}/p^n
    \end{tikzcd}.
\] where the left inclusion sends $\epsilon_i$ to $e_i^{\vee}\otimes 1$, for all $i=1, \ldots, g$. The equality at the bottom right corner follows from \cite[Proposition 3.2.1]{AIP-2015}. By definition, $\scrF_{\infty}$ is generated by the lifts of $\HT_{n,\infty}(\epsilon_i)$'s from $\omega_{\calH_{n,\infty}}$ to $\underline{\omega}_{\Gamma(p^{\infty}), v}^+$ and hence the desired inclusion follows.\\

\noindent\textbf{Step 3.} We are now able to describe the torsor. Recall that there is a universal full flag $\Fil^{\univ}_{\bullet}\calH_1^{\vee}$ of $\calH_1^{\vee}$ on $\overline{\calX}_{\Iw}(v)$. Pulling back to $\overline{\calX}_{\Gamma(p^{\infty})}(v)$, we obtain universal full flag $\Fil^{\univ}_{\bullet}\calH_{1, \infty}^{\vee}$ of $\calH_{1, \infty}^{\vee}$. There is a natural projection $\Theta: \calH_{n,\infty}^{\vee}\rightarrow \calH_{1,\infty}^{\vee}$. Moreover, the Hodge--Tate map on $\calH_{n,\infty}^{\vee}$ induces a map
$$\HT_{\calH_{n,\infty}^{\vee}}: \calH_{n,\infty}^{\vee}\rightarrow \scrF_{\infty}\otimes_{\scrO^+_{\overline{\calX}_{\Gamma(p^{\infty})}(v)}}\scrO^+_{\overline{\calX}_{\Gamma(p^{\infty})}(v)}/p^w.$$

Then, for every affinoid open $\calY = \Spa(R, R^+)\subset \overline{\calX}_{\Gamma(p^{\infty})}(v)$ on which $\scrF_{\infty}(\mathcal{Y})$ is free, the sections $\adicIW_{w, v, \infty}^+(\calY)$ parametrise triples $(\psi, \Fil_{\bullet}, \{w_i: i=1,\ldots, g\})$ where
\begin{itemize}
\item $\psi: (\Z/p^n\Z)^g\xrightarrow{\sim} \calH_{n, \infty}^{\vee}|_{\calY}$ is a trivialisation such that
$$\psi\langle \epsilon_1, \ldots, \epsilon_i\rangle=\Theta(\Fil_i^{\univ}\calH_{1,\infty}^{\vee})$$ for all $i=1, \ldots, g$.
\item $\Fil_{\bullet}$ is a full flag of the free $R^+$-module $\scrF_{\infty}(\calY)$, which is $w$-compatible with $$\HT_{\calH_{n,\infty}^{\vee}}(\psi(\epsilon_1)), \ldots, \HT_{\calH_{n,\infty}^{\vee}}(\psi(\epsilon_g))$$ in the sense of Definition \ref{Definition: w-compatible} (i).
\item Each $w_i$ is an $R^+$-basis for $\Fil_i/\Fil_{i-1}$, which is $w$-compatible with $\HT_{\calH_{n,\infty}^{\vee}}(\psi(\epsilon_1)), \ldots, \HT_{\calH_{n,\infty}^{\vee}}(\psi(\epsilon_g))$ in the sense of Definition \ref{Definition: w-compatible} (ii).
\end{itemize}

We are now ready to prove Theorem \ref{Theorem: comparison with AIP}.

\begin{proof}[Proof of Theorem \ref{Theorem: comparison with AIP}]
Let $\calV'$, $\calV$, and $\calV_{\infty}$ be as above. We want to construct an isomorphism
$$\Psi:\underline{\omega}_{w-1}^{\kappa_{\calU}}(\AL^{-1}\calV)\xrightarrow{\sim} h_{\diamond}^*\underline{\omega}_{w, v}^{\kappa_{\calU}, \AIP}(\calV).$$

Recall the auxiliary sheaves $\underline{\omega}_{n,w}^{\kappa_{\calU}}$ and $\widetilde{\underline{\omega}}_{n,w}^{\kappa_{\calU}}$ constructed in \S \ref{subsection: admissibility}. By Proposition \ref{Proposition: omega is admissible} and Remark \ref{remark: invariants}, we have isomorphisms
$$\underline{\omega}_{w-1}^{\kappa_{\calU}}(\AL^{-1}\calV)\cong \left(\underline{\omega}_{n,w-1}^{\kappa_{\calU}}(\AL^{-1}\calV)\right)^{\Iw^+_{\GSp_{2g}}/\Gamma(p^n)}\cong \left(\widetilde{\underline{\omega}}_{n,w-1}^{\kappa_{\calU}}(\AL^{-1}\calV)\right)^{\Iw^+_{\GSp_{2g}}/\Gamma(p^n)}$$
where the $\Iw^+_{\GSp_{2g}}/\Gamma(p^n)$-action on the middle term is the twisted action in Remark \ref{remark: invariants}, while the $\Iw^+_{\GSp_{2g}}/\Gamma(p^n)$-action on the last term is the natural action. Finally, by comparing the moduli interpretation of $\adicIW_{w, v, \infty}^+$ above with the moduli interpretation of $\adicIW_{w, \infty}^+$ in \S \ref{subsection: admissibility} and noticing that $\epsilon_i$ corresponds to $e_i^{\vee}$ via $\AL$, we arrive at an isomorphism
$$\left(\widetilde{\underline{\omega}}_{n,w-1}^{\kappa_{\calU}}(\AL^{-1}\calV)\right)^{\Iw^+_{\GSp_{2g}}/\Gamma(p^n)}\cong h_{\diamond}^*\underline{\omega}_{w, v}^{\kappa_{\calU}, \AIP}(\calV).$$ This finishes the proof.
\end{proof}

\begin{Remark}\label{Remark: comparison of the integral sheaf}
\normalfont In fact, the method above provides a strategy to compare our integral sheaf $\underline{\omega}_{w}^{\kappa_{\calU}, +}$ with the integral overconvergent automorphic sheaf constructed in \cite{AIP-2015}. The details are left to the interested readers.
\end{Remark}
\section{Overconvergent cohomology groups}\label{section:overconvergentcohomologies} 
In this section, we introduce the overconvergent cohomology groups that will later appear on the other side of the overconvergent Eichler--Shimura morphism. Our construction follows the standard constructions in the literature (see, for example, \cite{Hansen-PhD} and \cite{Johansson-Newton}). 

\subsection{Analytic functions and analytic distributions}\label{subsection:continuousfunctions}
Consider $$\T_0:=\left\{(\bfgamma, \bfupsilon)\in \Iw_{\GL_g}\times M_g(p\Z_p): \trans\bfgamma\oneanti_g\bfupsilon= \trans\bfupsilon\oneanti_g\bfgamma\right\}.$$ Notice that a pair $(\bfgamma, \bfupsilon)\in \Iw_{\GL_g}\times M_g(p\Z_p)$ lies in $\T_0$ if and only if there exist $\bfalpha_b, \bfalpha_d\in M_g(\Z_p)$ such that \[\begin{pmatrix}\bfgamma & \bfalpha_b\\ \bfupsilon & \bfalpha_d\end{pmatrix}\in \GSp_{2g}(\Q_p)\cap M_{2g}(\Z_p).\] 
In fact, there is a natural embedding
$$\T_0\hookrightarrow \Iw_{\GSp_{2g}}, \quad (\bfgamma, \bfupsilon)\mapsto \begin{pmatrix}\bfgamma & \\ \bfupsilon & \oneanti_g\trans\bfgamma^{-1}\oneanti_g\end{pmatrix}.$$
Also consider the subset $\T_{00}$ of $\T_0$ defined by $$\T_{00}:=\left\{(\bfgamma, \bfupsilon)\in \T_0: \bfgamma\in U_{\GL_g,1}^{\opp}\right\}.$$ We can identify $\T_{00}$ with $U_{\GSp_{2g}, 1}^{\opp}$ through the bijection $$\T_{00}\rightarrow U_{\GSp_{2g}, 1}^{\opp}, \quad (\bfgamma, \bfupsilon)\mapsto \begin{pmatrix}\bfgamma & \\ \bfupsilon & \oneanti_g\trans\bfgamma^{-1}\oneanti_g\end{pmatrix}.$$ Observe that $\T_0$ admits two natural actions:\begin{enumerate}
    \item[(i)] There is a right action of $\Iw_{\GL_g}$ given by $$\T_0\times \Iw_{\GL_g}\rightarrow \T_0, \quad ((\bfgamma, \bfupsilon), \bfgamma')\mapsto (\bfgamma\bfgamma', \bfupsilon\bfgamma').$$ To see that this is indeed a right action, we embed $\Iw_{\GL_g}$ into $\Iw_{\GSp_{2g}}$ through $\bfgamma'\mapsto \begin{pmatrix}\bfgamma' & \\ & \oneanti_g\trans\bfgamma'^{-1}\oneanti_g \end{pmatrix}$ and verify that 
    \[\begin{pmatrix}\bfgamma & * \\ \bfupsilon & *\end{pmatrix}\begin{pmatrix}\bfgamma' & \\  & \oneanti_g\trans\bfgamma'^{-1}\oneanti_g\end{pmatrix}=\begin{pmatrix}\bfgamma\bfgamma' & * \\ \bfupsilon\bfgamma' & *\end{pmatrix}\]
    
     \item[(ii)] There is a left action of $\Xi:=\begin{pmatrix}\Iw_{\GL_g} & M_g(\Z_p)\\ M_g(p\Z_p) & M_g(\Z_p)\end{pmatrix}\cap \GSp_{2g}(\Q_p)$ given by $$\Xi\times \T_0\rightarrow \T_0, \quad \left( \begin{pmatrix}\bfalpha_a & \bfalpha_b\\ \bfalpha_c & \bfalpha_d\end{pmatrix}, (\bfgamma, \bfupsilon)\right)\mapsto (\bfalpha_a\bfgamma+\bfalpha_b\bfupsilon, \bfalpha_c\bfgamma+\bfalpha_d\bfupsilon).$$ To see this is indeed a left action, it suffices to observe that $$\begin{pmatrix}\bfalpha_a & \bfalpha_b\\ \bfalpha_c & \bfalpha_d\end{pmatrix} \begin{pmatrix}\bfgamma & *\\ \bfupsilon & *\end{pmatrix}=\begin{pmatrix}\bfalpha_a\bfgamma+\bfalpha_b\bfupsilon & *\\ \bfalpha_c\bfgamma+\bfalpha_d\bfupsilon & *\end{pmatrix}.$$
\end{enumerate} 
Since $\Iw_{\GSp_{2g}}^+$ is a subset of $\Xi$, we also obtain a natural left action of $\Iw_{\GSp_{2g}}^+$ on $\T_0$.\\ 

Let $r\in \Z_{\geq 1}$ and let $(R_{\calU}, \kappa_{\calU})$ be an $r$-analytic weight. In what follows, we will study ``$r$-analytic'' functions on $U_{\GSp_{2g}, 1}^{\opp}$, $\T_{00}$, and $\T_0$. Let us fix a (topological) isomorphism $$\Z_p^{g^2}\simeq U_{\GSp_{2g}, 1}^{\opp}.$$ 

\begin{Definition}
\begin{enumerate}
\item[(i)] We say that a function $f: U_{\GSp_{2g}, 1}^{\opp}\rightarrow R^+_{\calU}$ is \textbf{$r$-analytic} if the composition
$$\Z_p^{g^2}\simeq U_{\GSp_{2g}, 1}^{\opp}\xrightarrow[]{f}R_{\calU}^+\hookrightarrow \C_p\widehat{\otimes}R_{\calU}$$
is $r$-analytic in the sense of Definition \ref{Definition: w-analytic functions} (i). 
\item[(ii)] We say that a function $f: \T_{00}\rightarrow R^+_{\calU}$ is \textbf{$r$-analytic} if it is $r$-analytic viewed as a function on $U^{\opp}_{\GSp_{2g}, 1}$, via the identification $\T_{00}\cong U^{\opp}_{\GSp_{2g}, 1}$.
\end{enumerate}
\end{Definition}

Before proceeding, we need the following statement.

\begin{Theorem}[Amice]\label{Theorem: Amice's basis}
Let $r\in \Q_{\geq 0}$.
For any $d\in \Z_{>0}$ and for any $i= (i_1, ..., i_d)\in \Z_{\geq 0}^d$, define the function \[
    e_i^{(r)}: \Z_p^d \rightarrow \Z_p, \quad (x_1, ..., x_d)\mapsto \prod_{t=1}^d \lfloor p^{-r}i_t\rfloor! \begin{pmatrix} x_t\\ i_t\end{pmatrix}.
\] Then, $\{e_i^{(r)}\}_i$ provides an othonormal basis for $C^{r-\an}(\Z_p^d, \Z_p)$.
\end{Theorem}
\begin{proof}
This is a reformulation of \cite[Chapter III, 1.3.8]{Larzard}, which is based on the work of Y. Amice \cite[\S 10]{Amice}. 
\end{proof}

Given an $r$-analytic weight $(R_{\calU}, \kappa_{\calU})$, we define \[
    A^{r, \circ}(\T_{00}, R_{\calU}):= C^{r-\an}(\T_{00}, \Z_p)\widehat{\otimes} R_{\calU}^+ \quad \text{ and }\quad A^r(\T_{00}, R_{\calU}):=A^{r, \circ}(\T_{00}, R_{\calU})[\frac{1}{p}].
\] By identifying $\T_{00}$ with $\Z_p^{g^2}$, Theorem \ref{Theorem: Amice's basis} implies that \[
    A^{r, \circ}(\T_{00}, R_{\calU}) \simeq \widehat{\oplus}_{i\in \Z_{\geq 0}^{g^2}} R_{\calU}^+ e_i^{(r)}
\] and so we view elements in $A^{r, \circ}(\T_{00}, R_{\calU})$ as functions from $\T_{00}$ to $R_{\calU}^+$. In other words, we have \[
    A^{r, \circ}(\T_{00}, R_{\calU}) =  \left\{ \sum_{i\in \Z_{\geq 0}^{g^2}} c_i e_i^{(r)}: c_i\in R_{\calU}^+\text{ and }c_i \rightarrow 0 \text{ $\fraka_{\calU}$-adically}\right\},
\] where $\fraka_{\calU} = pR_{\calU}^+$ if $(R_{\calU}, \kappa_{\calU})$ is an affinoid weight and $\fraka_{\calU}$ is an ideal of definition of the profinite topology on $R_{\calU}^+$ if $(R_{\calU}, \kappa_{\calU})$ is a small weight. By definition, these functions are $r$-analytic. In fact, if $(R_{\calU}, \kappa_{\calU})$ is an affinoid weight, we have the identification \[
    A^r(\T_{00}, R_{\calU}) = \left\{\text{$r$-analytic functions }f: \T_{00} \rightarrow R_{\calU}\right\}.
\]

On the other hand, define 
$$A^{r, \circ}_{\kappa_{\calU}}(\T_0, R_{\calU}):=\left\{f:\T_0\rightarrow R_{\calU}^+: \begin{array}{l}
    f(\bfgamma\bfbeta, \bfupsilon\bfbeta)=\kappa_{\calU}(\bfbeta)f(\bfgamma, \bfupsilon)\,,\,\forall (\bfgamma, \bfupsilon)\in \T_0, \,\,\bfbeta\in B_{\GL_g, 0}\\
    f|_{\T_{00}} \in A^{r, \circ}(\T_{00}, R_{\calU}) 
\end{array}\right\}$$
and
$$A^{r}_{\kappa_{\calU}}(\T_0, R_{\calU}):=A^{r, \circ}_{\kappa_{\calU}}(\T_0, R_{\calU})[\frac{1}{p}].$$
There is an identification $$A_{\kappa_{\calU}}^{r, \circ}(\T_0, R_{\calU})\xrightarrow{\sim} A^{r, \circ}(\T_{00}, R_{\calU}), \quad f\mapsto f|_{\T_{00}}.$$ 
Taking duals, we obtain the corresponding spaces of $r$-analytic distributions
$$
    D_{\kappa_{\calU}}^{r, \circ}(\T_0, R_{\calU}):=\Hom_{R_{\calU}^+}^{\cts}(A_{\kappa_{\calU}}^{r, \circ}(\T_0, R_{\calU}), R_{\calU}^+)
$$
and
$$
D_{\kappa_{\calU}}^{r}(\T_0, R_{\calU}):=D_{\kappa_{\calU}}^{r, \circ}(\T_0, R_{\calU})[\frac{1}{p}].
$$
The left action of $\Xi$ on $\T_0$ then induces a left action of $\Xi$ on both $D_{\kappa_{\calU}}^{r, \circ}(\T_0, R_{\calU})$ and $D_{\kappa_{\calU}}^{r}(\T_0, R_{\calU})$. Furthermore, if $r'\geq r$, there is a natural injection $A_{\kappa_{\calU}}^{r, \circ}(\T_0, R_{\calU})\hookrightarrow A_{\kappa_{\calU}}^{r', \circ}(\T_0, R_{\calU})$ which induces injections (see \cite[\S 2.2]{Hansen-PhD}) \begin{align*}
    D_{\kappa_{\calU}}^{r', \circ}(\T_0, R_{\calU})\hookrightarrow D_{\kappa_{\calU}}^{r, \circ}(\T_0, R_{\calU})\quad \text{ and }\quad D_{\kappa_{\calU}}^{r'}(\T_0, R_{\calU})\hookrightarrow D_{\kappa_{\calU}}^{r}(\T_0, R_{\calU}).
\end{align*} We then write \[
    D_{\kappa_{\calU}}^{\dagger}(\T_0, R_{\calU}) := \varprojlim_{r} D_{\kappa_{\calU}}^{r}(\T_0, R_{\calU}).
\]

Suppose now that $(R_{\calU}, \kappa_{\calU})$ is a small weight and take $r>1+r_{\calU}$ (see Definition \ref{Definition: w-analytic weight}). Fix an ideal $\fraka_{\calU}$ of $R_{\calU}$ defining the profinite topology on $R_{\calU}$. Similar to \cite[Proposition 3.1]{CHJ-2017}, $D_{\kappa_{\calU}}^{r, \circ}(\T_0, R_{\calU})$ admits a decreasing filtration $\Fil^{\bullet} D_{\kappa_{\calU}}^{r, \circ}(\T_0, R_{\calU})$ defined by $$\Fil^jD_{\kappa_{\calU}}^{r, \circ}(\T_0, R_{\calU}):=\ker\left(D_{\kappa_{\calU}}^{r, \circ}(\T_0, R_{\calU})\rightarrow D_{\kappa_{\calU}}^{r-1, \circ}(\T_0, R_{\calU})/\fraka_{\calU}^jD_{\kappa_{\calU}}^{r-1, \circ}(\T_0, R_{\calU})\right).$$ Write $$D_{\kappa_{\calU}, j}^{r, \circ}(\T_0, R_{\calU}):=D_{\kappa_{\calU}}^{r, \circ}(\T_0, R_{\calU})/\Fil^j D_{\kappa_{\calU}}^{r, \circ}(\T_0, R_{\calU})$$ for every $j\in \Z_{\geq 1}$. 

\begin{Lemma}
Given a small weight $(R_{\calU}, \kappa_{\calU})$ and $r>1+r_{\calU}$.
\begin{enumerate}
    \item[(i)] For any $j\in \Z_{\geq 0}$, $\Fil^j D_{\kappa_{\calU}}^{r,\circ}$ is $\Xi$-stable.
    \item[(ii)] For any $j\in \Z_{\geq 0}$, $D_{\kappa_{\calU}, j}^{r, \circ}(\T_0, R_{\calU})$ is a finitely abelian group. Therefore, $$D_{\kappa_{\calU}}^{r, \circ}(\T_0, R_{\calU})=\varprojlim_{j} D_{\kappa_{\calU}, j}^{r, \circ}(\T_0, R_{\calU}),$$ is a profinite flat $\Z_p$-module in the sense of \cite[Definition 6.1]{CHJ-2017}.
\end{enumerate}
\end{Lemma}
\begin{proof}
To show (i), one observes that \[
    \fraka_{\calU}^jD_{\kappa_{\calU}}^{r-1, \circ}(\T_0, R_{\calU}) = \left\{\mu \in D_{\kappa_{\calU}}^{r-1, \circ}(\T_0, R_{\calU}): \mu(f)\in \fraka_{\calU}^j, \,\, \forall f\in A_{\kappa_{\calU}}^{r-1, \circ}(\T_0, R_{\calU})\right\}
\] Since $A_{\kappa_{\calU}}^{r-1, \circ}(\T_0, R_{\calU})$ is stable under the action of $\Xi$, $\fraka_{\calU}^j D_{\kappa_{\calU}}^{r-1, \circ}(\T_0, R_{\calU})$ is stable under the action of $\Xi$. This then implies the desired result. 

The proof for (ii) is inspired by the discussion in \cite[\S 2.1]{Hansen-Iwasawa}. We first simplify the notation by writing $d = g^2$. From the construction, the collection $\{e_i^{(r)}\}_{i}$ provides an orthonormal basis for $A_{\kappa_{\calU}}^{r, \circ}(\T_0, R_{\calU})$, \emph{i.e.}, we have an isomorphism \[
    A_{\kappa_{\calU}}^{r, \circ}(\T_0, R_{\calU}) \simeq  \widehat{\oplus}_{i\in \Z_{\geq 0}^d} R_{\calU} e_i^{(r)}.
\] Consequently, we have an isomorphism \[
    D_{\kappa_{\calU}}^{r, \circ}(\T_0, R_{\calU}) \simeq \prod_{i\in \Z_{\geq 0}^d} R_{\calU}, \quad \mu\mapsto (\mu(e_{i}^{(r)}))_{i}.
\]

For any $i\in \Z_{\geq 0}^d$, write $c_{r, i} := \prod_{t} \frac{\lfloor p^{-(r-1)}i_t \rfloor !}{\lfloor p^{-r} i_t \rfloor !}$. Then, the natural injection $A_{\kappa_{\calU}}^{r-1, \circ}(\T_0, R_{\calU}) \hookrightarrow A_{\kappa_{\calU}}^{r, \circ}(\T_0, R_{\calU})$ is given by \[
    \widehat{\oplus}_i R_{\calU} e_{i}^{(r)} \rightarrow \widehat{\oplus}_i R_{\calU} e_{i}^{(r-1)}, \quad e_{i}^{(r)} \mapsto e_{i}^{(r)} = c_{r, i} e_{i}^{(r-1)}.
\] Hence, the natural inclusion $D_{\kappa_{\calU}}^{r, \circ}(\T_0, R_{\calU}) \hookrightarrow D_{\kappa_{\calU}}^{r-1, \circ}(\T_0, R_{\calU})$ is given by \[
    \prod_{i\in \Z_{\geq 0}^d}R_{\calU} \rightarrow \prod_{i\in \Z_{\geq 0}^d} R_{\calU}, \quad (\mu(e_i^{(r)}))_i\mapsto (c_{r, i} \mu(e_i^{(r-1)})).
\] Moreover, by Legendre's formula, we have $v_p(c_{r, i}) = \sum_{t=1}^d \lfloor p^{-r} i_t\rfloor$. Therefore, we see that \[
    D_{\kappa_{\calU}, j}^{r, \circ}(\T_0, R_{\calU}) \simeq \oplus_{\substack{i\in \Z_{\geq 0}^d\\ v_p(c_{r,i}) < j}} R_{\calU}/(\fraka_{\calU}^j, p^{j-v_p(c_{r, i})}).
\] Since this is a finite direct sum and each direct summand is a finite abelian group, we conclude that each $D_{\kappa_{\calU}, j}^{r, \circ}(\T_0, R_{\calU})$ is a finite abelian group. 

Finally, from the construction, we see that the natural map \[
    D_{\kappa_{\calU}}^{r, \circ}(\T_0, R_{\calU}) \rightarrow \varprojlim_{j} D_{\kappa_{\calU}, j}^{r, \circ}(\T_0, R_{\calU}),
\] has dense image. Since both sides are compact, this natural map is an isomorphism. 
\end{proof}


\subsection{Overconvergent cohomology groups}\label{subsection: overconvergent cohomology groups}
Fix a small weight $(R_{\calU}, \kappa_{\calU})$ and we consider the \'{e}tale site $\calX_{\Iw^+, \et}$. Recall that, for every $n\in\Z_{\geq 1}$, $\calX_{\Gamma(p^n)}$ is a finite \'{e}tale Galois cover over $\calX_{\Iw^+}$ with Galois group $\Iw_{\GSp_{2g}}^+/\Gamma(p^n)$, and hence $\varprojlim_n\calX_{\Gamma(p^n)}$ is a pro-\'{e}tale Galois cover of $\calX_{\Iw^+}$ with Galois group $\Iw_{\GSp_{2g}}^+$. For each $j\in \Z_{\geq 1}$, let $\scrD_{\kappa_{\calU}, j}^{r, \circ}$ be the locally constant sheaf on $\calX_{\Iw^+, \et}$ associated with $D_{\kappa_{\calU}, j}^{r, \circ}(\T_0, R_{\calU})$ via $$\pi_1^{\et}(\calX_{\Iw^+})\rightarrow \Iw_{\GSp_{2g}}^+\rightarrow \Aut\left(D_{\kappa_{\calU}, j}^{r, \circ}(\T_0, R_{\calU})\right).$$ We obtain an inverse system of \'etale locally constant sheaves $(\scrD_{\kappa_{\calU}, j}^{r, \circ})_{j\in \Z_{\geq 1}}$ on $\calX_{\Iw^+, \et}$. This allows us to consider the \'{e}tale cohomology groups \begin{align*}
    & H_{\et}^t(\calX_{\Iw^+}, \scrD_{\kappa_{\calU}}^{r, \circ}):=\varprojlim_{j}H_{\et}^t(\calX_{\Iw^+}, \scrD_{\kappa_{\calU}, j}^{r, \circ}),\\
    & H_{\et}^t(\calX_{\Iw^+}, \scrD_{\kappa_{\calU}}^{r}):=H_{\et}^t(\calX_{\Iw^+}, \scrD_{\kappa_{\calU}}^{r, \circ})[\frac{1}{p}]
\end{align*} for every $t\in \Z_{\geq 0}$. 

\begin{Remark}
\normalfont On the algebraic variety $X_{\Iw^+}$, one can define locally constant sheaves $\scrD_{\kappa_{\calU}, j}^{r, \circ}$ and \'etale cohomology groups $H_{\et}^t(X_{\Iw^+}, \scrD_{\kappa_{\calU}}^{r, \circ})$ and $H_{\et}^t(X_{\Iw^+}, \scrD_{\kappa_{\calU}}^{r})$ in the same way.
\end{Remark}

Recall the identification \[
    X_{\Iw^+}(\C) = \GSp_{2g}(\Q)\backslash \GSp_{2g}(\A_f)\times \bbH_g/\Iw_{\GSp_{2g}}^+\Gamma(N).
\] By taking the trivial $\GSp_{2g}(\Z_{\ell})$-action on $D_{\kappa_{\calU}}^r(\T_0, R_{\calU})$ for every prime number $\ell\neq p$ and letting $\Iw_{\GSp_{2g}}^+$ act on $D_{\kappa_{\calU}}^r(\T_0, R_{\calU})$ via the left action of $\Xi$, we see that $D_{\kappa_{\calU}}^r(\T_0, R_{\calU})$ defines a local system on the locally symmetric space $X_{\Iw^+}(\C)$. In particular, for every $t\in \Z_{\geq 0}$, we can consider the Betti cohomology group \[
    H^t(X_{\Iw^+}(\C), D_{\kappa_{\calU}}^r(\T_0, R_{\calU})).
\]

\begin{Proposition}\label{Proposition: Comparison theorem of cohomologies}
For every $t\in \Z_{\geq 0}$, there is a natural isomorphism $$H_{\et}^t(\calX_{\Iw^+}, \scrD_{\kappa_{\calU}}^r)\cong H^t(X_{\Iw^+}(\C), D_{\kappa_{\calU}}^r(\T_0, R_{\calU})).$$\end{Proposition}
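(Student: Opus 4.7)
The plan is to reduce the statement to a combination of two standard comparison theorems: first, the comparison between the \'etale cohomology of an adic space and that of its underlying algebraic variety; second, the classical comparison (due to Artin) between \'etale cohomology of a complex algebraic variety with finite coefficients and Betti cohomology of its analytification. Throughout, we keep track of the system indexed by $j$ and pass to the inverse limit at the very end.

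First, I would fix $j \in \Z_{\geq 1}$. By construction, $D^{r,\circ}_{\kappa_{\calU}, j}(\T_0, R_{\calU})$ is a finitely generated $\Z/p^j\Z$-module (this uses smallness of the weight), and the sheaves $\scrD^{r,\circ}_{\kappa_{\calU}, j}$ on $\calX_{\Iw^+, \et}$ and on $X_{\Iw^+, \et}$ are both defined as the locally constant sheaves attached to this module via the representation of $\pi_1^{\et}$ factoring through $\Iw^+_{\GSp_{2g}}/\Gamma(p^j') \to \Aut(D^{r,\circ}_{\kappa_{\calU}, j}(\T_0, R_{\calU}))$ for some sufficiently large $j'$. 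In particular, these sheaves become trivialised on a finite \'etale Galois cover by some $X_{\Gamma(p^{j'})}$, and the associated sheaf on $\calX_{\Iw^+, \et}$ is the analytification of the associated sheaf on $X_{\Iw^+, \et}$. By Huber's comparison theorem (see \cite[Theorem 3.8.1]{Huber-2013} or its variants for torsion coefficients on finite type $\Q_p$-schemes), one has a canonical isomorphism
\[
H^t_{\et}(\calX_{\Iw^+}, \scrD^{r,\circ}_{\kappa_{\calU}, j}) \cong H^t_{\et}(X_{\Iw^+}, \scrD^{r,\circ}_{\kappa_{\calU}, j}).
\]

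Next, using the fixed isomorphism $\C_p \simeq \C$, the algebraic variety $X_{\Iw^+}$ base-changes to a smooth quasi-projective variety over $\C$ whose complex points form the locally symmetric space $X_{\Iw^+}(\C)$ described above the statement. The locally constant \'etale sheaf $\scrD^{r,\circ}_{\kappa_{\calU}, j}$ corresponds, under the dictionary between finite \'etale sheaves and local systems, exactly to the local system on $X_{\Iw^+}(\C)$ associated with $D^{r,\circ}_{\kappa_{\calU}, j}(\T_0, R_{\calU})$ via the action of $\Iw^+_{\GSp_{2g}}$ (with trivial action at primes away from $p$). Artin's comparison theorem then gives
\[
H^t_{\et}(X_{\Iw^+, \C}, \scrD^{r,\circ}_{\kappa_{\calU}, j}) \cong H^t(X_{\Iw^+}(\C), D^{r,\circ}_{\kappa_{\calU}, j}(\T_0, R_{\calU})),
\]
and insensitivity of \'etale cohomology to algebraically closed base change (applicable since the coefficients are finite and the variety is of finite type) lets us pass between $X_{\Iw^+, \C_p}$ and $X_{\Iw^+, \C}$.

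Finally, I would take the inverse limit over $j$ and invert $p$. On the left, $H^t_{\et}(\calX_{\Iw^+}, \scrD^r_{\kappa_{\calU}}) = (\varprojlim_j H^t_{\et}(\calX_{\Iw^+}, \scrD^{r,\circ}_{\kappa_{\calU}, j}))[1/p]$ by definition. On the Betti side, since each $D^{r,\circ}_{\kappa_{\calU}, j}(\T_0, R_{\calU})$ is finite, one has $D^{r,\circ}_{\kappa_{\calU}}(\T_0, R_{\calU}) = \varprojlim_j D^{r,\circ}_{\kappa_{\calU}, j}(\T_0, R_{\calU})$, and the corresponding Mittag--Leffler / compactness argument for singular cohomology of the finite-type locally symmetric space $X_{\Iw^+}(\C)$ (whose cohomology can be computed by a finite CW-model, so that $R\varprojlim$ agrees with $\varprojlim$ on $H^t$ with finite coefficients) yields
\[
\varprojlim_j H^t(X_{\Iw^+}(\C), D^{r,\circ}_{\kappa_{\calU}, j}(\T_0, R_{\calU})) \cong H^t(X_{\Iw^+}(\C), D^{r,\circ}_{\kappa_{\calU}}(\T_0, R_{\calU})).
\]
Inverting $p$ gives the desired isomorphism.

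The main technical obstacle, I expect, is the careful justification of commuting the inverse limit over $j$ with cohomology on both sides. On the \'etale side, the Leray / inverse limit argument for constructible sheaves on a Noetherian adic space must be set up with the Mittag--Leffler condition coming from the profinite flat $\Z_p$-module structure on $D^{r,\circ}_{\kappa_{\calU}}(\T_0, R_{\calU})$ (as recorded just before the statement, via the filtration $\Fil^{\bullet}$). On the Betti side, one uses finiteness of the cohomology of the finite-type complex analytic space with torsion coefficients. Once the $\varprojlim$ is handled cleanly, the rest is a concatenation of standard comparison theorems applied to each finite level.
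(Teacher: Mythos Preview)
Your proposal is correct and follows essentially the same route as the paper: apply Huber's comparison between algebraic and adic \'etale cohomology, then Artin's comparison between algebraic \'etale and Betti cohomology (via the fixed $\C_p\simeq\C$), at each finite level $j$, and pass to the inverse limit and invert $p$. The paper's own proof is considerably terser and does not spell out the inverse-limit step on the Betti side or the base-change along $\C_p\simeq\C$; your explicit Mittag--Leffler justification (using that $X_{\Iw^+}(\C)$ has a finite CW model and the coefficients are finite at each stage) is exactly what is implicitly being used there.
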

\begin{proof}
For any $j\in \Z_{>0}$, we have isomorphisms \[
    H_{\et}^t(\calX_{\Iw^+}, \scrD_{\kappa_{\calU},j }^{r, \circ})\cong H_{\et}^t(X_{\Iw^+}, \scrD_{\kappa_{\calU}, j}^{r, \circ})\cong H^t(X_{\Iw^+}(\C), D_{\kappa_{\calU},j}^{r, \circ}(\T_0, R_{\calU})),
\] where \begin{enumerate}
    \item[$\bullet$] the first isomorphism follows from the comparison isomorphism between the \'{e}tale cohomology groups of an algebraic variety and the ones on the corresponding adic spaces (see \cite[Theorem 3.8.1]{Huber-2013});\footnote{ On the algebraic variety $X_{\Iw^+} = X_{\Iw^+, \C_p}$, the locally constant sheaves $\scrD_{\kappa_{\calU}, j}^{r, \circ}$ and \'etale cohomology groups $H_{\et}^t(X_{\Iw^+}, \scrD_{\kappa_{\calU}}^{r, \circ})$ and $H_{\et}^t(X_{\Iw^+}, \scrD_{\kappa_{\calU}}^{r})$ are defined analogously as those on $\calX_{\Iw^+}$.} and
    \item[$\bullet$] the second isomorphism follows from the fact that $\Iw_{\GSp_{2g}}^+$ acts continuously on the module $D_{\kappa_{\calU},j}^{r, \circ}(\T_0, R_{\calU})$ and the well-known Artin comparison between the \'{e}tale cohomology of a complex algebraic variety and the Betti cohomology of the associated complex manifold. 
\end{enumerate} Note that we have used the algebraic isomorphism $\C_p\simeq \C$ fixed at the beginning of the paper.

Taking limit and inverting $p$, we then arrive at the isomorphisms \[
    H_{\et}^t(\calX_{\Iw^+}, \scrD_{\kappa_{\calU}}^r) \cong H_{\et}^t(X_{\Iw^+}, \scrD_{\kappa_{\calU}}^r)\cong \left(\varprojlim_{j} H^t(X_{\Iw^+}(\C), D_{\kappa_{\calU}, j}^{r, \circ}(\T_0, R_{\calU}))\right)[1/p].
\] By applying \cite[Lemma 3.18]{Scholze_2013}, one deduced that \[
    \varprojlim_{j} H^t(X_{\Iw^+}(\C), D_{\kappa_{\calU}, j}^{r, \circ}(\T_0, R_{\calU})) = H^t(X_{\Iw^+}(\C), D_{\kappa_{\calU}}^{r, \circ}(\T_0, R_{\calU})).
\] The assertion then follows.
\end{proof}

Finally, we define the Hecke operators acting on $H_{\et}^t(\calX_{\Iw^+},\scrD_{\kappa_{\calU}}^r)$. We begin with a brief recollection of the Hecke operators on $H^t(X_{\Iw^+}(\C), D_{\kappa_{\calU}}^r(\T_0, R_{\calU}))$ studied in \cite{Hansen-PhD}. We refer the readers to \textit{loc. cit.} for a more detailed discussion.\\

\noindent\textbf{Hecke operators outside $pN$.} Let $\ell$ be a prime number not dividing $pN$. For any $\bfgamma \in \GSp_{2g}(\Q_{\ell})\cap M_{2g}(\Z_{\ell})$, consider a double coset decomposition \[
    \GSp_{2g}(\Z_{\ell}) \bfgamma \GSp_{2g}(\Z_{\ell}) = \bigsqcup_j \bfdelta_{j}\bfgamma\GSp_{2g}(\Z_{\ell})
\] for some $\bfdelta_j\in \GSp_{2g}(\Z_{\ell})$. If we take the trivial $\GSp_{2g}(\Q_{\ell})$-action on $D_{\kappa_{\calU}}^r(\T_0, R_{\calU})$, then the natural left action of $\GSp_{2g}(\Q_{\ell})$ on $X_{\Iw^+}(\C)$ induces the Hecke operator 
\begin{equation}\label{eq: Hecke on coh. away from Np}
    T_{\bfgamma}: H^t(X_{\Iw^+}(\C), D_{\kappa_{\calU}}^r(\T_0, R_{\calU})) \rightarrow H^t(X_{\Iw^+}(\C), D_{\kappa_{\calU}}^r(\T_0, R_{\calU})), \quad [\mu]\mapsto \sum_{j} (\bfdelta_j\bfgamma) .  [\mu].
\end{equation}

\noindent\textbf{Hecke operators at $p$.} For the Hecke operators at $p$, recall from \S \ref{subsection: Hecke operators on the overconvergent automorphic forms} the matrices \begin{align*}
    \bfu_{p,i} = \left\{\begin{array}{cc}
        \begin{pmatrix} \one_i \\ & p\one_{g-i}\\ & & p\one_{g-i}\\ & & & p^2\one_i\end{pmatrix}, & 1\leq i \leq g-1 \\
        \\
        \begin{pmatrix}\one_g\\ & p\one_g\end{pmatrix}, & i=g
    \end{array}\right.
\end{align*} and we write \[
    \bfu_{p,i} = \begin{pmatrix}\bfu_{p,i}^{\square} & \\ & \bfu_{p,i}^{\blacksquare}\end{pmatrix}.
\] 

For every $i=1, \ldots, g$, consider a $\bfu_{p,i}$-action on $\T_0$ defined as follows: for every $(\bfgamma, \bfupsilon)\in \T_0$, we put
\[
\bfu_{p,i} . (\bfgamma, \bfupsilon) = (\bfu_{p,i}^{\square} \bfgamma_0 \bfu_{p,i}^{\square, -1}, \bfu_{p,i}^{\blacksquare} \bfupsilon_0 \bfu_{p,i}^{\square, -1})\bfbeta
\]
where we write $(\bfgamma, \bfupsilon) = (\bfgamma_0, \bfupsilon_0)\bfbeta$ with $\bfgamma_0\in U_{\GL_g, 1}^{\opp}$ and $\bfbeta\in B_{\GL_g, 0}$. This then induces a $\bfu_{p,i}$-action on $D_{\kappa_{\calU}}^{r}(\T_0, R_{\calU})$.

Similar to \S \ref{subsection: Hecke operators on the overconvergent automorphic forms}, for every $i=1, \ldots, g$, choose a double coset decomposition \[
    \Iw_{\GSp_{2g}}^+ \bfu_{p,i} \Iw_{\GSp_{2g}}^+ = \bigsqcup_{j}\bfdelta_{ij}\bfu_{p,i}\Iw_{\GSp_{2g}}^+.
\]
with $\bfdelta_{ij}\in \Iw_{\GSp_{2g}}^+$. The natural left action of $\GSp_{2g}(\Q_p)$ on $X_{\Iw^+}(\C)$ together with the actions of $\Iw_{\GSp_{2g}}^+$ and $\bfu_{p,i}$ on $D_{\kappa_{\calU}}^r(\T_0, R_{\calU})$ induce the Hecke operator \begin{equation}\label{eq: Hecek on coh. at p}
    U_{p,i}: H^t(X_{\Iw^+}(\C), D_{\kappa_{\calU}}^r(\T_0, R_{\calU})) \rightarrow H^t(X_{\Iw^+}(\C), D_{\kappa_{\calU}}^r(\T_0, R_{\calU})), \quad [\mu]\mapsto \sum_{j} \bfdelta_{ij} . (\bfu_{p,i} . [\mu]).
\end{equation}

\begin{Definition}\label{Definition: Hecke operators on overconvergent cohomology}
\begin{enumerate}
    \item[(i)] The Hecke operators $T_{\bfgamma}$ (for $\bfgamma\in \GSp_{2g}(\Q_{\ell})\cap M_{2g}(\Z_{\ell})$ with $\ell\nmid Np$) and $U_{p,i}$ (for $i=1, ..., g$) acting on the overconvergent cohomology groups $H_{\et}^t(\calX_{\Iw^+}, \scrD_{\kappa_{\calU}}^r)$ are defined to be the operators $T_{\bfgamma}$ and $U_{p,i}$ acting on $H^t(X_{\Iw^+}(\C), D_{\kappa_{\calU}}^r(\T_0, R_{\calU}))$ via the isomorphism in Proposition \ref{Proposition: Comparison theorem of cohomologies}.
    \item[(ii)] We define the operator $U_{p}$ as the composition $U_p = \prod_{i=1}^gU_{p,i}$.
\end{enumerate}
\end{Definition}
\section{The overconvergent Eichler--Shimura morphism}\label{section:EichlerShimura}
In this section, we establish the second main result of this paper; {\it i.e.,} the construction of the overconvergent Eichler--Shimura morphism for Siegel modular forms. Our approach is similar to the one in \cite{CHJ-2017}. However, one major difference between our situation and the one in \emph{loc. cit.} is that the Siegel modular variety is non-compact. As a remedy, we apply the theory of (pro-)Kummer \'etale topology on log adic spaces developed in \cite{Diao} and \cite{Diao-Lan-Liu-Zhu} to handle the boundaries of the compactifications. (See \S \ref{subsection: review of Kummer etale and pro-Kummer etale sites} for a brief review.)

\subsection{The Kummer \'{e}tale and the pro-Kummer \'{e}tale cohomology groups}\label{subsection: pro-Kummer etale cohomology groups}

Recall from \S \ref{subsection: Siegel modular varieties} that $\overline{\calX}_{\Gamma(p^n)}$, $\overline{\calX}_{\Iw^+}$, and $\overline{\calX}$ are endowed with the divisorial log structures defined by the boundary divisors. The corresponding sheaves of monoids are denoted by $\scrM_n$, $\scrM_{\Iw^+}$, and $\scrM$, respectively. In what follows, we shall construct a sheaf $\sheafOD_{\kappa_{\calU}}^r$ on the pro-Kummer \'{e}tale site $\overline{\calX}_{\Iw^+, \proket}$ which computes the overconvergent cohomology groups introduced in \S \ref{subsection: overconvergent cohomology groups}. 

Consider the natural morphism of sites \[\jmath_{\ket}: \calX_{\Iw^+, \et}\rightarrow \overline{\calX}_{\Iw^+, \ket}.\]
Recall that, for every small weight $(R_{\calU}, \kappa_{\calU})$ and any integer $r\geq 1+r_{\calU}$, there is an inverse system of \'{e}tale locally constant sheaves $(\scrD_{\kappa_{\calU}, j}^{r, \circ})_{j\in \Z_{\geq 1}}$ on $\calX_{\Iw^+, \et}$. Applying \cite[Corollary 4.6.7]{Diao}, we obtain an isomorphism $$\varprojlim_{j}H_{\et}^t(\calX_{\Iw^+}, \scrD_{\kappa_{\calU}, j}^{r, \circ})\cong \varprojlim_{j} H_{\ket}^{t}(\overline{\calX}_{\Iw^+}, \jmath_{\ket, *}\scrD_{\kappa_{\calU}, j}^{r, \circ})$$ for every $t\in \Z_{\geq 0}$. Write \[H_{\ket}^t(\overline{\calX}_{\Iw^+}, \scrD_{\kappa_{\calU}}^r):=\varprojlim_{j} H_{\ket}^{t}(\overline{\calX}_{\Iw^+}, \jmath_{\ket, *}\scrD_{\kappa_{\calU}, j}^{r, \circ})[1/p].\] By Proposition \ref{Proposition: Comparison theorem of cohomologies}, we arrive at isomorphisms $$H_{\ket}^t(\overline{\calX}_{\Iw^+}, \scrD_{\kappa_{\calU}}^{r})\cong H_{\et}^t(\calX_{\Iw^+}, \scrD_{\kappa_{\calU}}^{r})\cong H^t(X_{\Iw^+}(\C), D_{\kappa_{\calU}}^{r}(\T_0, R_{\calU})).$$ To simplify the notation, we introduce the following abbreviations. 

\begin{Definition}\label{Definition: simple notations for overconvergent cohomologies}
Let $(R_{\calU}, \kappa_{\calU})$ be a small weight and let $r\geq 1+r_{\calU}$. We set \begin{align*}
    \OC_{\kappa_{\calU}}^{r, \circ} & := \varprojlim_{j} H_{\ket}^{n_0}(\overline{\calX}_{\Iw^+}, \jmath_{\ket, *}\scrD_{\kappa_{\calU}, j}^{r, \circ}),\\
    \OC_{\kappa_{\calU}}^{r} & := \OC_{\kappa_{\calU}}^{r, \circ}[\frac{1}{p}]=H_{\ket}^{n_0}(\overline{\calX}_{\Iw^+}, \scrD_{\kappa_{\calU}}^r),\\
    \OC_{\kappa_{\calU}, \calO_{\C_p}}^{r, \circ} & := \varprojlim_{j}\left(H_{\ket}^{n_0}(\overline{\calX}_{\Iw^+}, \jmath_{\ket, *}\scrD_{\kappa_{\calU}, j}^{r, \circ})\otimes_{\Z_p}\calO_{\C_p}\right),\\
    \OC_{\kappa_{\calU}, \C_p}^{r} & := \OC_{\kappa_{\calU}, \calO_{\C_p}}^{r, \circ}[\frac{1}{p}].
\end{align*}
where $n_0=\dim_{\C_p}\calX_{\Iw^+}$.
\end{Definition}

Let \[\nu: \overline{\calX}_{\Iw^+, \proket}\rightarrow \overline{\calX}_{\Iw^+, \ket}\] be the natural projection of sites. Consider the sheaf $\sheafOD_{\kappa_{\calU}}^r$ on the pro-Kummer \'{e}tale site $\overline{\calX}_{\Iw^+, \proket}$ defined by $$\sheafOD_{\kappa_{\calU}}^r:=\left(\varprojlim_{j}\left(\nu^{-1}\jmath_{\ket, *}\scrD_{\kappa_{\calU}, j}^{r, \circ}\otimes_{\Z_p}\scrO_{\overline{\calX}_{\Iw^+, \proket}}^+\right)\right)[\frac{1}{p}].$$ 

\begin{Proposition}\label{Proposition: overconvergent cohomology computed by the pro-Kummer etale cohomology}
There is a $G_{\Q_p}$-equivariant isomorphism $$\OC_{\kappa_{\calU}, \C_p}^r\cong H_{\proket}^{n_0}(\overline{\calX}_{\Iw^+}, \sheafOD_{\kappa_{\calU}}^r).$$
\end{Proposition}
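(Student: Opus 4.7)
The plan is to reduce the assertion to a primitive comparison theorem for Kummer \'etale cohomology at each finite torsion level, and then pass to the limit.

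First I would dispose of the boundary by appealing to \cite[Corollary 4.6.7]{Diao}: since $\scrD_{\kappa_{\calU},j}^{r,\circ}$ is an \'etale locally constant sheaf of finite abelian groups on $\calX_{\Iw^+}$ (killed by some power of $p$), $\jmath_{\ket,*}\scrD_{\kappa_{\calU},j}^{r,\circ}$ is a Kummer \'etale locally constant sheaf of finite abelian groups on $\overline{\calX}_{\Iw^+}$, and its Kummer \'etale cohomology agrees with the \'etale cohomology of the pullback to $\calX_{\Iw^+}$. In particular, for each fixed $j$ and $m$, standard finiteness (the algebraic proper base change plus GAGA for $X_{\Iw^+}$, which is proper and smooth away from the boundary) gives that $H^t_{\ket}(\overline{\calX}_{\Iw^+}, \jmath_{\ket,*}\scrD_{\kappa_{\calU},j}^{r,\circ})$ is a finite $\Z_p$-module. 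This finiteness is what will let me commute tensor products and inverse limits in the later steps.

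Next, for each fixed $j$, I would apply the primitive comparison theorem for Kummer \'etale cohomology on proper log smooth adic spaces developed in \cite{Diao-Lan-Liu-Zhu} (a logarithmic analogue of Scholze's primitive comparison theorem) to $\jmath_{\ket,*}\scrD_{\kappa_{\calU},j}^{r,\circ}$. Concretely, this yields an almost isomorphism
\[
H^{t}_{\ket}(\overline{\calX}_{\Iw^+}, \jmath_{\ket,*}\scrD_{\kappa_{\calU},j}^{r,\circ}) \otimes_{\Z_p} \calO_{\C_p}/p^m \;\simeq^a\; H^{t}_{\proket}\bigl(\overline{\calX}_{\Iw^+},\; \nu^{-1}\jmath_{\ket,*}\scrD_{\kappa_{\calU},j}^{r,\circ}\otimes_{\Z_p}\scrO^+_{\overline{\calX}_{\Iw^+,\proket}}/p^m\bigr)
\]
for every $t,m,j$. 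Taking $\varprojlim_m$ on both sides (using the finiteness from the previous paragraph so that $\varprojlim^1$ vanishes on the left, and a Mittag--Leffler argument via the almost-vanishing of $R^{i}\nu_*\scrO^+/p^m$ in positive degrees on the right) upgrades this to an almost isomorphism after $p$-adic completion, and then inverting $p$ kills the ``almost'' ambiguity. I restrict to $t=n_0$.

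Finally, I would take $\varprojlim_j$. The left-hand side manifestly becomes $\OC_{\kappa_{\calU},\calO_{\C_p}}^{r,\circ}$, and after inverting $p$ we obtain $\OC_{\kappa_{\calU},\C_p}^{r}$. For the right-hand side, one has to commute $\varprojlim_j$ past $H^{n_0}_{\proket}$; this follows by a Mittag--Leffler/$\varprojlim^1$-vanishing argument using that the transition maps $\jmath_{\ket,*}\scrD_{\kappa_{\calU},j+1}^{r,\circ}\to \jmath_{\ket,*}\scrD_{\kappa_{\calU},j}^{r,\circ}$ are surjective and that the cohomology groups at finite level are finite $\Z_p$-modules. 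After inverting $p$, the right-hand side becomes $H^{n_0}_{\proket}(\overline{\calX}_{\Iw^+}, \sheafOD_{\kappa_{\calU}}^{r})$ by the very definition of $\sheafOD_{\kappa_{\calU}}^{r}$. The Galois equivariance is automatic since the constructions are $G_{\Q_p}$-functorial.

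The main obstacle I anticipate is the limit interchange: one must verify carefully that (i) the almost-vanishing of the higher $R^i\nu_*$ of $\scrO^+/p^m$ on the log adic space $\overline{\calX}_{\Iw^+}$ in degrees $>0$ is available (this is supplied by the generalised projection formula and vanishing results recalled in \S\ref{Section: Kummer etale and pro-Kummer etale sites of log adic spaces}), and (ii) the double inverse limit (in $j$ and $m$) commutes with $H^{n_0}_{\proket}$. Once these two technical points are settled, the chain of isomorphisms assembles formally into the desired $G_{\Q_p}$-equivariant comparison.
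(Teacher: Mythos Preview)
Your proposal is correct and follows essentially the same two-step strategy as the paper: a primitive comparison at each finite level $j$, followed by commuting the inverse limit over $j$ with cohomology. There are, however, two points of streamlining in the paper's argument worth noting.

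First, rather than working modulo $p^m$ and then taking $\varprojlim_m$, the paper invokes \cite[Theorem 6.2.1 \& Corollary 6.3.4]{Diao} directly, which already packages the primitive comparison as an almost isomorphism
\[
\left(H^{n_0}_{\ket}(\overline{\calX}_{\Iw^+}, \jmath_{\ket, *}\scrD_{\kappa_{\calU}, j}^{r, \circ})\otimes_{\Z_p}\calO_{\C_p}\right)^a\cong H_{\proket}^{n_0}(\overline{\calX}_{\Iw^+}, \nu^{-1}\jmath_{\ket, *}\scrD_{\kappa_{\calU}, j}^{r, \circ}\otimes_{\Z_p}\scrO_{\overline{\calX}_{\Iw^+, \proket}}^+)^a,
\]
so your double limit collapses to a single one. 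Second, for commuting $\varprojlim_j$ with $H^{n_0}_{\proket}$, the paper does not appeal to finiteness of the cohomology groups and Mittag--Leffler at the level of modules; instead it argues at the sheaf level, showing that $R^i\varprojlim_j\bigl(\nu^{-1}\jmath_{\ket,*}\scrD^{r,\circ}_{\kappa_{\calU},j}\otimes_{\Z_p}\scrO^+_{\proket}\bigr)$ almost vanishes for $i\geq 1$, via an almost version of \cite[Lemma 3.18]{Scholze_2013} together with \cite[Proposition 6.1.11]{Diao}. This sidesteps the issue that after tensoring with $\calO_{\C_p}$ the cohomology groups are no longer finite $\Z_p$-modules, which would require a bit more care in your Mittag--Leffler argument. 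Both routes land in the same place, but the sheaf-level $R\varprojlim$ vanishing is cleaner here.
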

\begin{proof}
By \cite[Theorem 6.2.1 \& Corollary 6.3.4]{Diao}, there is an almost isomorphism $$\left(H^{n_0}_{\ket}(\overline{\calX}_{\Iw^+}, \jmath_{\ket, *}\scrD_{\kappa_{\calU}, j}^{r, \circ})\otimes_{\Z_p}\calO_{\C_p}\right)^a\cong H_{\proket}^{n_0}(\overline{\calX}_{\Iw^+}, \nu^{-1}\jmath_{\ket, *}\scrD_{\kappa_{\calU}, j}^{r, \circ}\otimes_{\Z_p}\scrO_{\overline{\calX}_{\Iw^+, \proket}}^+)^a.$$ It remains to establish an almost isomorphism $$\varprojlim_{j} H_{\proket}^{n_0}\left(\overline{\calX}_{\Iw^+}, \nu^{-1}\jmath_{\ket, *}\scrD_{\kappa_{\calU}, j}^{r, \circ}\otimes_{\Z_p}\scrO_{\overline{\calX}_{\Iw^+, \proket}}^+\right)^a\cong H_{\proket}^{n_0}\left(\overline{\calX}_{\Iw^+}, \varprojlim_j\left(\nu^{-1}\jmath_{\ket, *}\scrD_{\kappa_{\calU}, j}^{r, \circ}\otimes_{\Z_p}\scrO_{\overline{\calX}_{\Iw^+, \proket}}^+\right)\right)^a.$$ Indeed, observe that the higher inverse limit $R^i \varprojlim_j\left(\nu^{-1}\jmath_{\ket, *}\scrD_{\kappa_{\calU}, j}^{r, \circ}\otimes_{\Z_p}\scrO_{\overline{\calX}_{\Iw^+, \proket}}^+\right)$ almost vanishes for $i\geq 1$ by an almost version of \cite[Lemma 3.18]{Scholze_2013} and \cite[Proposition 6.1.11]{Diao}. This then allows us to commute the inverse limit with taking cohomology, hence the result.
\end{proof}

We now discuss the Hecke operators acting on $H_{\proket}^{n_0}(\overline{\calX}_{\Iw^+}, \sheafOD_{\kappa_{\calU}}^{r})$. For Hecke operators at $p$, we define $U_{p, i}$ as in \eqref{eq: Hecek on coh. at p}. For Hecke operators away from $Np$, we use correspondences. More precisely, for any prime number $\ell \nmid Np$ and any $\bfgamma\in \GSp_{2g}(\Q_{\ell}) \cap M_{2g}(\Z_{\ell})$, consider the correspondence \[
    \begin{tikzcd}
        & \calX_{\bfgamma, \Iw^+}\arrow[dl, "\pr_1"']\arrow[rd, "\pr_2"]\\
        \calX_{\Iw^+} && \calX_{\Iw^+}
    \end{tikzcd},
\] studied in \S \ref{subsection: Hecke operators on the overconvergent automorphic forms}. Similar to the construction in \S \ref{subsection: Hecke operators on the overconvergent automorphic forms}, one obtains an isomorphism \[
    \varphi_{\bfgamma} : \pr_2^*\sheafOD_{\kappa_{\calU}}^r|_{\calX_{\Iw^+}}  \xrightarrow{\sim} \pr_1^*\sheafOD_{\kappa_{\calU}}^r|_{\calX_{\Iw^+}} .
\] Consider the composition \[
    \begin{tikzcd}
    T_{\bfgamma}: &  H^{n_0}_{\proet}(\calX_{\Iw^+}, \sheafOD_{\kappa_{\calU}}^r|_{\calX_{\Iw^+}} )\arrow[r, "\pr_2^*"] & H^{n_0}_{\proet}(\calX_{\bfgamma, \Iw^+}, \pr_2^*\sheafOD_{\kappa_{\calU}}^r|_{\calX_{\Iw^+}} )\arrow[ld, out=-10, in=170, "\varphi_{\bfgamma}"']\\
    &  H^{n_0}_{\proet}(\calX_{\Iw^+}, \pr_1^*\sheafOD_{\kappa_{\calU}}^r|_{\calX_{\Iw^+}} )\arrow[r, "\Tr_{\pr_1}"] & H_{\proet}^{n_0}(\calX_{\Iw^+}, \sheafOD_{\kappa_{\calU}}^r|_{\calX_{\Iw^+}})
    \end{tikzcd}.
\] 
However, since $H_{\et}^{n_0}(\calX_{\Iw^+}, \scrD_{\kappa_{\calU}, j}^{r, \circ})\cong H_{\ket}^{n_0}(\overline{\calX}_{\Iw^+}, \jmath_{\ket, *}\scrD_{\kappa_{\calU}, j}^{r, \circ})$ for every $j$, we have an identification
$$H^{n_0}_{\proet}(\calX_{\Iw^+}, \sheafOD_{\kappa_{\calU}}^r|_{\calX_{\Iw^+}} )\cong H_{\proket}^{n_0}(\overline{\calX}_{\Iw^+}, \sheafOD_{\kappa_{\calU}}^r)$$ and hence an operator $T_{\bfgamma}$ on $H_{\proket}^{n_0}(\overline{\calX}_{\Iw^+}, \sheafOD_{\kappa_{\calU}}^r)$. The isomorphism in Proposition \ref{Proposition: overconvergent cohomology computed by the pro-Kummer etale cohomology} is then Hecke-equivariant.

\subsection{The overconvergent Eichler--Shimura morphism}\label{subsection: OES}
In this subsection, we construct the overconvergent Eichler--Shimura morphism by first constructing a morphism between sheaves on the pro-Kummer \'{e}tale site $\overline{\calX}_{\Iw^+, w, \proket}$.

Let $(R_{\calU}, \kappa_{\calU})$ be a small weight and let $w \geq r\geq 1+r_{\calU}$. Recall that we have defined a sheaf $\sheafOD_{\kappa_{\calU}}^{r}$ on the pro-Kummer \'{e}tale site $\overline{\calX}_{\Iw^+, \proket}$ in \S \ref{subsection: pro-Kummer etale cohomology groups}. The following lemma is an analogue of \cite[Lemma 4.5]{CHJ-2017}.

\begin{Lemma}\label{Lemma: Explicit description of the sheaf OD}
Let $\calV=\varprojlim_{n}\calV_n\rightarrow \overline{\calX}_{\Iw^+}$ be a pro-Kummer \'{e}tale presentation of a log affinoid perfectoid object in $\overline{\calX}_{\Iw^+, \proket}$. Let $\calV_{\infty}:=\calV\times_{\overline{\calX}_{\Iw^+}}\overline{\calX}_{\Gamma(p^{\infty})}$. (Here we have abused the notation and identify $\overline{\calX}_{\Gamma(p^{\infty})}$ with the object $\varprojlim_n \overline{\calX}_{\Gamma(p^n)}$ in $\overline{\calX}_{\Iw^+, \proket}$.) Then there is a natural isomorphism $$\sheafOD_{\kappa_{\calU}}^r(\calV)\cong\left(D_{\kappa_{\calU}}^{r, \circ}(\T_0, R_{\calU})\widehat{\otimes}_{\Z_p}\widehat{\scrO}_{\overline{\calX}_{\Iw^+, \proket}}(\calV_{\infty})\right)^{\Iw^+_{\GSp_{2g}}}.$$
\end{Lemma}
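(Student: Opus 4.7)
The plan is to compute the sections of $\sheafOD_{\kappa_{\calU}}^r$ at $\calV$ by first computing them on the larger pro-Kummer \'etale cover $\calV_\infty\to\calV$, then descending via Galois invariants. Since $\overline{\calX}_{\Gamma(p^{\infty})}\to \overline{\calX}_{\Iw^+}$ is a pro-Kummer \'etale Galois cover with group $\Iw^+_{\GSp_{2g}}$ (Proposition~\ref{Proposition: perfectoid toroidal compactification}), the base change $\calV_\infty\to\calV$ is likewise a pro-Kummer \'etale Galois cover with the same group, and $\calV_\infty$ is again log affinoid perfectoid (it is the fiber product of log affinoid perfectoid objects over a log adic space). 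This reduces the problem to identifying $\sheafOD_{\kappa_{\calU}}^r(\calV_\infty)$ together with its $\Iw^+_{\GSp_{2g}}$-action.

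First I would analyze the sheaf $\nu^{-1}\jmath_{\ket,*}\scrD_{\kappa_{\calU},j}^{r,\circ}$ on $\calV_\infty$. By construction, $\scrD_{\kappa_{\calU},j}^{r,\circ}$ is the \'etale locally constant sheaf on $\calX_{\Iw^+,\et}$ attached to the finite $\Iw^+_{\GSp_{2g}}$-module $D_{\kappa_{\calU},j}^{r,\circ}(\T_0, R_{\calU})$ via $\pi_1^{\et}(\calX_{\Iw^+})\to \Iw^+_{\GSp_{2g}}$. Applying $\jmath_{\ket,*}$ yields a Kummer \'etale locally constant sheaf on $\overline{\calX}_{\Iw^+,\ket}$ corresponding to the same representation under the analogous Kummer \'etale fundamental group (using \cite[Corollary 4.6.7]{Diao}). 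Pulling back along the pro-Kummer \'etale cover $\overline{\calX}_{\Gamma(p^{\infty})}\to \overline{\calX}_{\Iw^+}$ trivializes this sheaf, so evaluating on the log affinoid perfectoid $\calV_\infty$ yields exactly $D_{\kappa_{\calU},j}^{r,\circ}(\T_0, R_{\calU})$ (as a continuous $\Iw^+_{\GSp_{2g}}$-module).

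Next I would tensor with $\scrO^+_{\overline{\calX}_{\Iw^+,\proket}}$ and take the inverse limit over $j$. Since $D_{\kappa_{\calU},j}^{r,\circ}(\T_0, R_{\calU})$ is a finite $\Z/p^j\Z$-module for each $j$, the underived tensor product with $\scrO^+(\calV_\infty)$ commutes with taking sections on the log affinoid perfectoid $\calV_\infty$. Passing to the inverse limit and inverting $p$, and recalling from \S\ref{subsection:continuousfunctions} that $D_{\kappa_{\calU}}^{r,\circ}(\T_0,R_{\calU})=\varprojlim_j D_{\kappa_{\calU},j}^{r,\circ}(\T_0,R_{\calU})$ is a profinite flat $\Z_p$-module, the inverse limit identifies with the completed tensor product in the sense of Definition~\ref{Definition: mixed completed tensor}, giving
\[
\sheafOD_{\kappa_{\calU}}^r(\calV_\infty) \;\cong\; D_{\kappa_{\calU}}^{r,\circ}(\T_0, R_{\calU})\,\widehat{\otimes}_{\Z_p}\,\widehat{\scrO}_{\overline{\calX}_{\Iw^+,\proket}}(\calV_\infty),
\]
equivariantly for the diagonal $\Iw^+_{\GSp_{2g}}$-action (the left $\Xi$-action on distributions restricted to $\Iw^+_{\GSp_{2g}}\subset\Xi$, combined with the geometric action on $\calV_\infty$).

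Finally I would take $\Iw^+_{\GSp_{2g}}$-invariants. Because $\calV_\infty\to\calV$ is a pro-Kummer \'etale Galois cover, Lemma~\ref{Lemma: structure sheaves at the infinite level and the structure sheaves at the Iwahori level} (and its almost-version for $\widehat{\scrO}$) together with the almost vanishing of continuous group cohomology of $\Iw^+_{\GSp_{2g}}$ with values in $\scrO^+(\calV_\infty)/p^m$ (Proposition~\ref{Proposition: overconvergent cohomology computed by the pro-Kummer etale cohomology} uses a similar input) yield the desired identification after inverting $p$. The main technical obstacle is exactly this last step: one must commute the profinite inverse limit with the $\Iw^+_{\GSp_{2g}}$-invariants and with the $p$-adic completion of $\scrO^+(\calV_\infty)$, and check that no almost-mathematics correction survives after inverting $p$. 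This is handled by reducing modulo $p^m$, applying \cite[Lemma 4.1.7 \& Corollary 4.4.13]{Diao} together with the fact that each $D_{\kappa_{\calU},j}^{r,\circ}$ is a finite $\Z_p$-module, and then passing to the double limit.
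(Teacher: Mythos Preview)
Your proposal is correct and follows essentially the same approach as the paper. The only difference is organizational: the paper takes $\Iw^+_{\GSp_{2g}}$-invariants at each finite level $j$ first (using \cite[Theorem 5.4.3]{Diao} to identify the invariants with sections on $\calV$ almost), and only then passes to the inverse limit and inverts $p$; you instead compute $\sheafOD_{\kappa_{\calU}}^r(\calV_\infty)$ after the full limit and take invariants at the end. Your final paragraph correctly recognizes that justifying this last step forces you to reduce modulo powers of $p$ and work at finite level anyway, which brings you back to the paper's order of operations. One minor point: the references you invoke in that step (Lemma~\ref{Lemma: structure sheaves at the infinite level and the structure sheaves at the Iwahori level}, Proposition~\ref{Proposition: overconvergent cohomology computed by the pro-Kummer etale cohomology}, and \cite[Lemma 4.1.7 \& Corollary 4.4.13]{Diao}) are not quite the right ones here---the clean input is the almost acyclicity on log affinoid perfectoids from \cite[Theorem 5.4.3]{Diao}, which both gives the identification on $\calV_\infty$ and, via the sheaf property for the Galois cover $\calV_\infty\to\calV$, the descent to $\calV$.
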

\begin{proof}
Recall that $\scrD_{\kappa_{\calU}, j}^{r, \circ}$ is the locally constant sheaf on $\calX_{\Iw^+, \et}$ induced by $$\pi_1^{\et}(\calX_{\Iw^+})\rightarrow \Iw^+_{\GSp_{2g}}\rightarrow \Aut\left(D_{\kappa_{\calU}, j}^{r, \circ}(\T_0, R_{\calU})\right).$$ 
Since $\overline{\calX}_{\Gamma(p^{\infty})}$ is a profinite Galois cover of $\overline{\calX}_{\Iw^+}$ with Galois group $\Iw^+_{\GSp_{2g}}$, one sees that $\nu^{-1}\jmath_{\ket, *}\scrD_{\kappa_{\calU}, j}^{r, \circ}$ becomes the constant local system associated with $D_{\kappa_{\calU}, j}^{r, \circ}(\T_0, R_{\calU})$ after restricting to the localised site $\overline{\calX}_{\Iw^+, \proket}/\overline{\calX}_{\Gamma(p^{\infty})}$. 

Applying \cite[Theorem 5.4.3]{Diao}, we obtain an almost isomorphism $$\left(D_{\kappa_{\calU}, j}^{r, \circ}(\T_0, R_{\calU})\otimes_{\Z_p}\widehat{\scrO}_{\overline{\calX}_{\Iw^+, \proket}}^{+}(\calV_{\infty})\right)^a\cong \left(\left(\nu^{-1}\jmath_{\ket,*}\scrD_{\kappa_{\calU}, j}^{r, \circ}\otimes_{\Z_p}\scrO_{\overline{\calX}_{\Iw^+, \proket}}^+\right)(\calV_{\infty})\right)^a.$$ By taking $\Iw^+_{\GSp_{2g}}$-invariants, we obtain almost isomorphisms \begin{align*}
    \left(\left(D_{\kappa_{\calU}, j}^{r, \circ}(\T_0, R_{\calU})\otimes_{\Z_p}\widehat{\scrO}_{\overline{\calX}_{\Iw^+, \proket}}^{+}(\calV_{\infty})\right)^{\Iw^+_{\GSp_{2g}}}\right)^a & \cong \left(\left(\left(\nu^{-1}\jmath_{\ket,*}\scrD_{\kappa_{\calU}, j}^{r, \circ}\otimes_{\Z_p}\scrO_{\overline{\calX}_{\Iw^+, \proket}}^+\right)(\calV_{\infty})\right)^{\Iw^+_{\GSp_{2g}}}\right)^a\\
    & = \left(\left(\nu^{-1}\jmath_{\ket,*}\scrD_{\kappa_{\calU}, j}^{r, \circ}\otimes_{\Z_p}\scrO_{\overline{\calX}_{\Iw^+, \proket}}^+\right)(\calV)\right)^{a}.
\end{align*} Finally, taking inverse limits over $j$ and inverting $p$, we conclude that \begin{align*}
    \sheafOD_{\kappa_{\calU}}^r(\calV) & = \left(\varinjlim_{j}\left(\nu^{-1}\jmath_{\ket, *}\scrD_{\kappa_{\calU}, j}^{r, \circ}\otimes_{\Z_p}\scrO_{\overline{\calX}_{\Iw^+, \proket}}^+\right)(\calV)\right)[\frac{1}{p}]\\
    & \cong \left(D_{\kappa_{\calU}}^{r, \circ}(\T_0, R_{\calU})\widehat{\otimes}_{\Z_p}\widehat{\scrO}_{\overline{\calX}_{\Iw^+, \proket}}(\calV_{\infty})\right)^{\Iw^+_{\GSp_{2g}}}.
\end{align*}
\end{proof}

To deal with the overconvergent automorphic sheaves, we recall the Kummer \'etale sheaves $\underline{\omega}^{\kappa_{\calU},+}_{w,\ket}$ and $\underline{\omega}^{\kappa_{\calU}}_{w,\ket}$ associated with $\underline{\omega}^{\kappa_{\calU},+}_{w}$ and $\underline{\omega}^{\kappa_{\calU}}_{w}$ defined by the end of \S \ref{subsection: admissibility}. Then we consider the $p$-adically completed pullback of them to the pro-Kummer \'etale site; namely,
\[
\widehat{\underline{\omega}}_{w}^{\kappa_{\calU},+}:=\varprojlim_m\left(\underline{\omega}^{\kappa_{\calU},+}_{w,\ket}\bigotimes_{\scrO^+_{\overline{\calX}_{\Iw^+, w,\ket}}}\scrO^+_{\overline{\calX}_{\Iw^+, w, \proket}}/p^m\right)
\]
and \[\widehat{\underline{\omega}}_{w}^{\kappa_{\calU}}:=\widehat{\underline{\omega}}_{w}^{\kappa_{\calU},+}[\frac{1}{p}].\]

\begin{Lemma}\label{Lemma: Leray spectral sequence for the automorphic sheaf}
There is a canonical Hecke- and $G_{\Q_p}$-equivariant morphism $$H_{\proket}^{n_0}(\overline{\calX}_{\Iw^+, w}, \widehat{\underline{\omega}}_w^{\kappa_{\calU}}) \rightarrow H^0(\overline{\calX}_{\Iw^+, w}, \underline{\omega}_w^{\kappa_{\calU}+g+1})(-n_0).$$
\end{Lemma}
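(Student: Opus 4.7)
The plan is to obtain the map as the edge map of the Leray spectral sequence for $\nu: \overline{\calX}_{\Iw^+, w, \proket} \to \overline{\calX}_{\Iw^+, w, \ket}$ applied to $\widehat{\underline{\omega}}_w^{\kappa_{\calU}}$, after explicitly identifying the top-degree higher direct image. Concretely, the Leray spectral sequence
\[
E_2^{p,q} = H^p_{\ket}(\overline{\calX}_{\Iw^+, w}, R^q\nu_* \widehat{\underline{\omega}}_w^{\kappa_{\calU}}) \;\Rightarrow\; H^{p+q}_{\proket}(\overline{\calX}_{\Iw^+, w}, \widehat{\underline{\omega}}_w^{\kappa_{\calU}})
\]
produces an edge map $H^{n_0}_{\proket}(\overline{\calX}_{\Iw^+, w}, \widehat{\underline{\omega}}_w^{\kappa_{\calU}}) \twoheadrightarrow E_\infty^{0, n_0} \hookrightarrow H^0(\overline{\calX}_{\Iw^+, w}, R^{n_0}\nu_*\widehat{\underline{\omega}}_w^{\kappa_{\calU}})$, so the whole problem reduces to exhibiting a natural identification
\[
R^{n_0}\nu_*\widehat{\underline{\omega}}_w^{\kappa_{\calU}} \;\cong\; \underline{\omega}_w^{\kappa_{\calU}+g+1}(-n_0).
\]

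To get this identification I would combine two inputs. First, the logarithmic Hodge--Tate-type computation for the structure sheaf on the log adic space $\overline{\calX}_{\Iw^+, w}$ gives $R^i\nu_*\widehat{\scrO}_{\overline{\calX}_{\Iw^+, w}} \cong \Omega^{i,\log}_{\overline{\calX}_{\Iw^+, w}}(-i)$, where $\Omega^{i,\log}$ denotes the $i$-th exterior power of logarithmic differentials (this is the extension of Scholze's computation to the log setting recorded in \S \ref{Section: Kummer etale and pro-Kummer etale sites of log adic spaces}). Second, the discussion of \S \ref{subsection: admissibility} identifies $\underline{\omega}_w^{\kappa_{\calU}}$ (and hence its pro-Kummer \'etale avatar $\widehat{\underline{\omega}}_w^{\kappa_{\calU}}$) with the $\Iw^+_{\GSp_{2g}}/\Gamma(p^n)$-invariants of an admissible Kummer \'etale Banach sheaf obtained from a projective one $\widetilde{\underline{\omega}}_{n,w}^{\kappa_{\calU}}$. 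Corollary \ref{Corollary: generalised projection formula with invariants}, the generalised projection formula with invariants, then applies and yields
\[
R^i\nu_*\widehat{\underline{\omega}}_w^{\kappa_{\calU}} \;\cong\; \underline{\omega}_w^{\kappa_{\calU}}\otimes_{\scrO_{\overline{\calX}_{\Iw^+, w}}}\Omega^{i,\log}_{\overline{\calX}_{\Iw^+, w}}(-i).
\]

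It remains to identify $\Omega^{n_0,\log}_{\overline{\calX}_{\Iw^+, w}}$ with $\underline{\omega}_w^{g+1}$. For this I invoke the Kodaira--Spencer isomorphism at strict Iwahori level, $\Omega^{1,\log}_{\overline{\calX}_{\Iw^+}} \cong \Sym^2\underline{\omega}$, which is classical for Siegel varieties (the log structure is set up precisely so that the isomorphism extends over the boundary). Taking top exterior powers and using $\dim \Sym^2(\underline{\omega}) = g(g+1)/2 = n_0$, one has
\[
\Omega^{n_0,\log}_{\overline{\calX}_{\Iw^+, w}} \;\cong\; \det\bigl(\Sym^2\underline{\omega}_w\bigr) \;\cong\; (\det\underline{\omega}_w)^{\otimes(g+1)} \;\cong\; \underline{\omega}_w^{g+1},
\]
where the last identification is the convention of Remark \ref{Remark: convention on weights}. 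Combined with the evident tensor identity $\underline{\omega}_w^{\kappa_{\calU}}\otimes\underline{\omega}_w^{g+1}\cong\underline{\omega}_w^{\kappa_{\calU}+g+1}$, this yields the required description of $R^{n_0}\nu_*\widehat{\underline{\omega}}_w^{\kappa_{\calU}}$ and hence the edge map has the claimed target.

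The hardest part will be justifying the projection formula in the present setting: one must check that the admissible Kummer \'etale Banach sheaf structure of $\widetilde{\underline{\omega}}_{n,w,\ket}^{\kappa_{\calU}}$ (established in Lemma \ref{Lemma: omega is admissible Kummer etale Banach sheaf}) together with the formation of finite-group invariants are compatible with the log Hodge--Tate computation, in a way that is functorial and $G_{\Q_p}$-equivariant. Tracking Galois equivariance through the edge map and through the Kodaira--Spencer identification is then straightforward since all ingredients are canonical and the coherent cohomology carries the trivial Galois action, so the only Tate twist that survives is the one coming from the Hodge--Tate computation, namely $(-n_0)$.
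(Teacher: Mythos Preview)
Your approach matches the paper's proof essentially step for step: projection formula with invariants (Corollary \ref{Corollary: generalised projection formula with invariants}) plus the log Hodge--Tate computation to identify $R^{n_0}\nu_*\widehat{\underline{\omega}}_w^{\kappa_{\calU}}$, then the Leray edge map, then Kodaira--Spencer. One small imprecision: the paper does not claim an \emph{isomorphism} $R^{n_0}\nu_*\widehat{\underline{\omega}}_w^{\kappa_{\calU}}\cong\underline{\omega}_w^{\kappa_{\calU}+g+1}(-n_0)$, only an \emph{injection}, because $\Omega^{\log,n_0}$ is identified with the \emph{classical} line bundle $\underline{\omega}_{\Iw^+,w}^{g+1}$, which then embeds into the overconvergent $\underline{\omega}_w^{g+1}$ via Lemma \ref{Lemma: injection of classical forms}; your ``evident tensor identity'' $\underline{\omega}_w^{\kappa_{\calU}}\otimes\underline{\omega}_w^{g+1}\cong\underline{\omega}_w^{\kappa_{\calU}+g+1}$ between overconvergent sheaves is not established in the paper and is not needed, since an injection suffices for the lemma.
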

\begin{proof}
By the discussion at the end of \S \ref{subsection: admissibility}, we have seen that $\underline{\omega}^{\kappa_{\calU}}_{w,\ket}$ can be identified with the sheaf of $\Iw^+_{\GSp_{2g}}/\Gamma(p^n)$-invariants of an admissible Kummer \'etale Banach sheaf of $\scrO_{\overline{\calX}_{\Iw^+, w,\ket}}\widehat{\otimes}R_{\calU}$-modules. Corollary \ref{Corollary: generalised projection formula with invariants} then yields a canonical isomorphism
$$\underline{\omega}_{w, \ket}^{\kappa_{\calU}}\otimes_{\scrO_{\overline{\calX}_{\Iw^+, w,\ket}}}R^i\nu_*\widehat{\scrO}_{\overline{\calX}_{\Iw^+, w, \proket}}\xrightarrow{\sim}R^i\nu_*\widehat{\underline{\omega}}_{w}^{\kappa_{\calU}}$$ for every $i\in \Z_{\geq 0}$. 
On the other hand, by Proposition \ref{Proposition: compatibility with completed tensor}, we have a canonical isomorphism $$R^i\nu_*\widehat{\scrO}_{\overline{\calX}_{\Iw^+, w, \proket}}\cong \Omega^{\log, i}_{\overline{\calX}_{\Iw^+, w, \ket}}(-i).$$ Combining the two isomorphisms, we obtain
$$
R^i\nu_*\widehat{\underline{\omega}}_{w}^{\kappa_{\calU}}\cong \underline{\omega}_{w, \ket}^{\kappa_{\calU}}\otimes_{\scrO_{\overline{\calX}_{\Iw^+, w,\ket}}} \Omega^{\log, i}_{\overline{\calX}_{\Iw^+, w, \ket}}(-i).
$$

Moreover, there is a Leray spectral sequence $$E_2^{j, i}=H_{\ket}^{j}(\overline{\calX}_{\Iw^+, w}, R^i\nu_*\widehat{\underline{\omega}}_w^{\kappa_{\calU}})\Rightarrow H_{\proket}^{j+i}(\overline{\calX}_{\Iw^+, w}, \widehat{\underline{\omega}}_w^{\kappa_{\calU}}).$$ The edge map yields a Galois-equivariant morphism 
$$H^{n_0}_{\proket}(\overline{\calX}_{\Iw^+, w}, \widehat{\underline{\omega}}_w^{\kappa_{\calU}})\rightarrow H^0_{\ket}(\overline{\calX}_{\Iw^+, w}, R^{n_0}\nu_*\widehat{\underline{\omega}}_w^{\kappa_{\calU}})\cong H^{0}_{\ket}(\overline{\calX}_{\Iw^+, w}, \underline{\omega}_{w, \ket}^{\kappa_{\calU}}\otimes_{\scrO_{\overline{\calX}_{\Iw^+, w,\ket}}} \Omega^{\log, n_0}_{\overline{\calX}_{\Iw^+, w, \ket}})(-n_0).$$ 

Finally, let $\pi_{\Iw^+}:\calG_{\Iw^+, w}^{\univ}\rightarrow \overline{\calX}_{\Iw^+, w}$ denote the universal semiabelian variety over $\overline{\calX}_{\Iw^+, w}$ with identity section $e$ and let $$\underline{\omega}_{\Iw^+, w}:=e^*\Omega^1_{\calG_{\Iw^+, w}^{\univ}/\overline{\calX}_{\Iw^+ ,w}}.$$ 
Note that $\underline{\omega}_{\Iw^+, w}$ agrees with $\underline{\omega}^k_{\Iw^+}|_{\overline{\calX}_{\Iw^+ ,w}}$ studied in \S \ref{subsection: classical forms} for $k=(1,0,\ldots, 0)$. The Kodaira--Spencer isomorphism \cite[Theorem 1.41 (4)]{LanKS} yields an isomorphism $$\Sym^2\underline{\omega}_{\Iw^+, w}\cong \Omega_{\overline{\calX}_{\Iw^+, w}}^{\log, 1}.$$ 
Hence, 
$$\Omega_{\overline{\calX}_{\Iw^+, w}}^{\log, n_0}\cong \bigwedge^{n_0}\left(\Sym^2\underline{\omega}_{\Iw^+, w}\right)=\underline{\omega}_{\Iw^+, w}^{g+1}\subset \underline{\omega}_w^{g+1}$$
where the last inclusion follows from Lemma \ref{Lemma: injection of classical forms}. We obtain an injection $$
H^{0}_{\ket}(\overline{\calX}_{\Iw^+, w}, \underline{\omega}_{w, \ket}^{\kappa_{\calU}}\otimes_{\scrO_{\overline{\calX}_{\Iw^+, w,\ket}}} \Omega^{\log, n_0}_{\overline{\calX}_{\Iw^+, w, \ket}})(-n_0) \hookrightarrow H_{\ket}^0(\overline{\calX}_{\Iw^+, w}, \underline{\omega}_{w,\ket}^{\kappa_{\calU}+g+1})(-n_0)=H^0(\overline{\calX}_{\Iw^+, w}, \underline{\omega}_w^{\kappa_{\calU}+g+1})(-n_0).$$
Note that, due to the normalisation of the Hecke operators, the Kodaira--Spencer isomorphism is Hecke-equivariant (see \cite[pp. 258]{Faltings-Chai}). 
\end{proof}

For any matrix $\bfsigma\in M_g(\calO_{\C_p})$ and $\mu\in D_{\kappa_{\calU}}^{r}(\T_0, R_{\calU})$, we define a function $f_{\mu, \bfsigma}\in C_{\kappa_{\calU}}^{w-\an}(\Iw_{\GL_g}, \C_p\widehat{\otimes}R_{\calU})$ as follows. For any $\bfgamma'\in \Iw_{\GL_g}$, we define
$$f_{\mu, \bfsigma}(\bfgamma'):= \int_{(\bfgamma, \bfupsilon)\in \T_0}e_{\kappa_{\calU}}^{\hst}(\trans\bfgamma'(\bfgamma+\bfsigma\bfupsilon))\quad d\mu,$$ where $e_{\kappa_{\calU}}^{\hst}$ sends a matrix $X=(X_{ij})_{1\leq i,j\leq g}$ in $\Iw^{(w)}_{\GL_g}$ to 
$$    e_{\kappa_{\calU}}^{\hst}(X)=\frac{\kappa_{\calU, 1}(X_{11})}{\kappa_{\calU, 2}(X_{11})}\times \frac{\kappa_{\calU, 2}(\det((X_{ij})_{1\leq i,j\leq 2}))}{\kappa_{\calU, 3}(\det((X_{ij})_{1\leq i,j\leq 2}))}\times \cdots \times \kappa_{\calU, g}(\det(X)).
$$

The following lemma justifies this definition.
\begin{Lemma}
\begin{enumerate}
    \item[(i)] For every $\bfsigma\in M_g(\calO_{\C_p})$ and $\bfgamma'\in \Iw_{\GL_g}$, the assignment $$(\bfgamma, \bfupsilon)\mapsto e_{\kappa_{\calU}}^{\hst}(\trans\bfgamma'(\bfgamma+\bfsigma\bfupsilon))$$ defines an element in $A^r_{\kappa_{\calU}}(\T_0, R_{\calU})$.
    \item[(ii)] For every $\bfgamma'\in \Iw_{\GL_g}$ and $\bfbeta\in B_{\GL_g, 0}$, we have $$f_{\mu, \bfsigma}(\bfgamma'\bfbeta)=\kappa_{\calU}(\bfbeta)f_{\mu, \bfsigma}(\bfgamma').$$
\end{enumerate} 
\end{Lemma}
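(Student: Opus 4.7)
The plan is to reduce both parts of the lemma to a pair of transformation identities for $e^{\hst}_{\kappa_{\calU}}$ under (transposes of) elements of $B_{\GL_g,0}$, together with a polynomial-composition argument for the analyticity in $(\bfgamma,\bfupsilon)$. The key step is to establish, for every $\bfbeta\in B_{\GL_g,0}$ and every $X$ in the domain of $e^{\hst}_{\kappa_{\calU}}$,
\[
    e^{\hst}_{\kappa_{\calU}}(X\bfbeta) = \kappa_{\calU}(\bfbeta)\cdot e^{\hst}_{\kappa_{\calU}}(X) \qquad\text{and}\qquad e^{\hst}_{\kappa_{\calU}}(\trans\bfbeta\cdot X) = \kappa_{\calU}(\bfbeta)\cdot e^{\hst}_{\kappa_{\calU}}(X).
\]

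To prove these identities, I would decompose $\bfbeta = DU$ with $D = \diag(\bfbeta_{11},\ldots,\bfbeta_{gg})$ and $U$ upper-unipotent. Right multiplication by $U$ preserves every upper-left principal minor of $X$, since $(XU)_{\leq k} = (X)_{\leq k}\cdot U_{\leq k}$ and $\det U_{\leq k} = 1$; right multiplication by $D$ scales $\det((X)_{\leq k})$ by $\prod_{j\leq k}\bfbeta_{jj}$. Substituting into the defining product for $e^{\hst}_{\kappa_{\calU}}$ and collecting the factors contributed by each $\bfbeta_{ii}$, a telescoping cancellation yields
\[
    \prod_{k=i}^{g-1}\frac{\kappa_{\calU,k}(\bfbeta_{ii})}{\kappa_{\calU,k+1}(\bfbeta_{ii})}\cdot \kappa_{\calU,g}(\bfbeta_{ii}) = \kappa_{\calU,i}(\bfbeta_{ii}),
\]
so the global scaling factor equals $\prod_{i=1}^{g}\kappa_{\calU,i}(\bfbeta_{ii}) = \kappa_{\calU}(\bfbeta)$. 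The second identity follows by the symmetric argument applied to $\trans\bfbeta = \trans U\cdot D$: left multiplication by the lower-unipotent $\trans U$ preserves each upper-left principal minor, while left multiplication by $D$ scales it by $\prod_{i\leq k}\bfbeta_{ii}$, and the same telescoping applies.

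Granting these two identities, part (i)(a) is immediate from
\[
    e^{\hst}_{\kappa_{\calU}}(\trans\bfgamma'(\bfgamma\bfbeta+\bfsigma\bfupsilon\bfbeta)) = e^{\hst}_{\kappa_{\calU}}\left((\trans\bfgamma'(\bfgamma+\bfsigma\bfupsilon))\bfbeta\right) = \kappa_{\calU}(\bfbeta)\cdot e^{\hst}_{\kappa_{\calU}}(\trans\bfgamma'(\bfgamma+\bfsigma\bfupsilon)),
\]
and part (ii) follows by applying the second identity inside the integral:
\[
    f_{\mu,\bfsigma}(\bfgamma'\bfbeta) = \int_{\T_0} e^{\hst}_{\kappa_{\calU}}\left(\trans\bfbeta\cdot\trans\bfgamma'(\bfgamma+\bfsigma\bfupsilon)\right)d\mu = \kappa_{\calU}(\bfbeta)\cdot f_{\mu,\bfsigma}(\bfgamma').
\]
For part (i)(b), namely the $r$-analyticity of the restriction to $\T_{00}\simeq U^{\opp}_{\GSp_{2g},1}\simeq \Z_p^{g^2}$, I would observe that $(\bfgamma,\bfupsilon)\mapsto \trans\bfgamma'(\bfgamma+\bfsigma\bfupsilon)$ is affine-linear in the coordinates with $\calO_{\C_p}$-coefficients, so each upper-left principal minor is a polynomial in $(\bfgamma,\bfupsilon)$. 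On $\T_{00}$ one has $\bfgamma+\bfsigma\bfupsilon\equiv\one_g\pmod{p}$, and together with $\bfgamma'\in\Iw_{\GL_g}$ a direct computation shows that each such minor takes values in a neighborhood of the unit $\prod_{j\leq k}\bfgamma'_{jj}\in\Z_p^{\times}$ on which $\kappa_{\calU,j}$ admits an $r$-analytic expansion. Since polynomial composition preserves $r$-analyticity, the product defining $e^{\hst}_{\kappa_{\calU}}$ is $r$-analytic in $(\bfgamma,\bfupsilon)$.

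The main obstacle I anticipate is the bookkeeping in the telescoping identity, where one must isolate the contribution of each $\bfbeta_{ii}$ to the correct factor $\kappa_{\calU,i}(\bfbeta_{ii})$ across the entire product $\prod_{k}\kappa_{\calU,k}/\kappa_{\calU,k+1}$; a secondary technicality is verifying that the image of the affine-linear map in part (i)(b) genuinely lands in the domain of $r$-analyticity of each $\kappa_{\calU,i}$, which reduces to the elementary check that the relevant principal minors are congruent modulo $p$ to explicit units in $\Z_p^{\times}$.
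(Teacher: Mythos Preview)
Your argument is correct and supplies exactly the kind of details the paper omits: the paper's proof reads in its entirety ``This is straightforward.'' The two transformation identities for $e^{\hst}_{\kappa_{\calU}}$ under right multiplication by $\bfbeta\in B_{\GL_g,0}$ and left multiplication by $\trans\bfbeta$, proved via the behaviour of upper-left principal minors under triangular multiplication and the telescoping in the weight, are precisely what is needed, and your deduction of (i)(a) and (ii) from them is clean.

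One small remark on (i)(b): as literally stated the lemma asks for a function with values in $R_{\calU}$, but with $\bfsigma\in M_g(\calO_{\C_p})$ the principal minors lie only in $\Z_p^{\times}+p\calO_{\C_p}$, so one is implicitly extending scalars to $\calO_{\C_p}\widehat{\otimes}R_{\calU}^+$ and using that each $\kappa_{\calU,i}$, being $r_{\calU}$-analytic, is given around any point of $\Z_p^{\times}$ by a power series converging on the closed ball of radius $p^{-r_{\calU}}$. In the actual application one has $\bfsigma=\frakz$ with entries in $\Z_p+p^w\calO_{\C_p}$ (where $w>1+r_{\calU}$), so the minors land well inside the domain of convergence; for general $\bfsigma\in M_g(\calO_{\C_p})$ this works when $r_{\calU}=1$ and otherwise requires a mild reinterpretation. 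This is a precision issue in the paper's statement rather than a flaw in your argument.
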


\begin{proof}
This is straightforward.
\end{proof}

\begin{Remark}
\normalfont The function $e^{\hst}_{\kappa_{\calU}}$ is an analogue of the highest weight vector in an algebraic representation of $\GL_g$. Moreover, $e^{\hst}_{\kappa_{\calU}}$ has the following alternative interpretation: for every $\bfgamma\in \Iw^{(w)}_{\GL_g}$, if we write $\bfgamma=\bfnu\bftau\bfnu'$ with $\bfnu\in U^{\opp,(w)}_{\GL_g, 1}$, $\bftau\in T^{(w)}_{\GL_g, 0}$, and $\bfnu'\in U^{(w)}_{\GL_g, 0}$, then we have $e^{\hst}_{\kappa_{\calU}}(\bfgamma)=\kappa_{\calU}(\bftau)$.
\end{Remark}

We are ready to construct the desired morphism $\eta_{\kappa_{\calU}}:\sheafOD_{\kappa_{\calU}}^r\rightarrow\widehat{\underline{\omega}}_{w}^{\kappa_{\calU}}$ between sheaves on the pro-Kummer \'{e}tale site $\overline{\calX}_{\Iw^+, w, \proket}$. Indeed, it suffices to construct a map $\sheafOD_{\kappa_{\calU}}^r(\calV)\rightarrow \widehat{\underline{\omega}}_{w}^{\kappa_{\calU}}(\calV)$ for every log affinoid perfectoid object $\calV$ in $\overline{\calX}_{\Iw^+, \proket}$. By Lemma \ref{Lemma: Explicit description of the sheaf OD}, we have
$$\sheafOD_{\kappa_{\calU}}^r(\calV)\simeq\left(D_{\kappa_{\calU}}^{r, \circ}(\T_0, R_{\calU})\widehat{\otimes}_{\Z_p}\widehat{\scrO}_{\overline{\calX}_{\Iw^+, \proket}}(\calV_{\infty})\right)^{\Iw^+_{\GSp_{2g}}}$$
where $\calV_{\infty}:=\calV\times_{\overline{\calX}_{\Iw^+}}\overline{\calX}_{\Gamma(p^{\infty})}$. 

On the other hand, by definition, we know that $\widehat{\underline{\omega}}_{w}^{\kappa_{\calU}}(\calV)$ consists of $f\in C^{w-\an}_{\kappa_{\calU}}(\Iw_{\GL_g}, \widehat{\scrO}_{\overline{\calX}_{\Iw^+, \proket}}(\calV_{\infty})\widehat{\otimes}R_{\calU})$ satisfying $\bfalpha^*f=\rho_{\kappa_{\calU}}(\bfalpha_a+\frakz\bfalpha_c)^{-1}f$, for all $\bfalpha=\begin{pmatrix}\bfalpha_a & \bfalpha_b\\ \bfalpha_c & \bfalpha_d\end{pmatrix}\in \Iw_{\GSp_{2g}}^+$. This is equivalent to saying that $\widehat{\underline{\omega}}_{w}^{\kappa_{\calU}}(\calV)$ consists of $\Iw_{\GSp_{2g}}^+$-invariant elements $f\in C^{w-\an}_{\kappa_{\calU}}(\Iw_{\GL_g}, \widehat{\scrO}_{\overline{\calX}_{\Iw^+, \proket}}(\calV_{\infty})\widehat{\otimes}R_{\calU})$ with respect to the twisted $\Iw_{\GSp_{2g}}^+$-action 
$$\bfalpha.f:=\rho_{\kappa_{\calU}}(\bfalpha_a+\frakz\bfalpha_c)(\bfalpha^*f).$$

Consider the map
$$D_{\kappa_{\calU}}^{r, \circ}(\T_0, R_{\calU})\widehat{\otimes}_{\Z_p}\widehat{\scrO}_{\overline{\calX}_{\Iw^+, \proket}}(\calV_{\infty})\rightarrow C^{w-\an}_{\kappa_{\calU}}(\Iw_{\GL_g}, \widehat{\scrO}_{\overline{\calX}_{\Iw^+, \proket}}(\calV_{\infty})\widehat{\otimes}R_{\calU}), \quad \mu\otimes \delta\mapsto \delta f_{\mu, \frakz}.$$
We claim that this map is $\Iw_{\GSp_{2g}}^+$-equivariant, and hence taking the $\Iw_{\GSp_{2g}}^+$-invariants yields the desired map $\sheafOD_{\kappa_{\calU}}^r(\calV)\rightarrow\widehat{\underline{\omega}}_{w}^{\kappa_{\calU}}(\calV)$. Indeed, for any $\bfalpha=\begin{pmatrix}\bfalpha_a & \bfalpha_b\\ \bfalpha_c & \bfalpha_d\end{pmatrix}\in \Iw_{\GSp_{2g}}^+$ and any $\bfgamma'\in \Iw_{\GL_g}$, we have 
\begin{align*}
(\bfalpha^* \delta) f_{\bfalpha\cdot\mu, \frakz}(\bfgamma') & =  (\bfalpha^* \delta)\left(\int_{\T_0} e_{\kappa_{\calU}}^{\hst}(\trans\bfgamma'(\bfgamma+\frakz\bfupsilon))\quad d\bfalpha\cdot \mu\right) \\
    & = (\bfalpha^* \delta)\left(\int_{\T_0}e_{\kappa_{\calU}}^{\hst}\left(\trans\bfgamma'\left((\bfalpha_a\bfgamma+\bfalpha_b\bfupsilon)+\frakz(\bfalpha_c\bfgamma+\bfalpha_d\bfupsilon)\right)\right)\quad d\mu\right)\\
    & = (\bfalpha^* \delta) \left(\int_{\T_0}e_{\kappa_{\calU}}^{\hst}\left(\trans\bfgamma'((\bfalpha_a+\frakz\bfalpha_c)\bfgamma+(\bfalpha_b+\frakz\bfalpha_d)\bfupsilon)\right)\quad d\mu\right)\\
    & = (\bfalpha^* \delta)\left(\int_{\T_0}e_{\kappa_{\calU}}^{\hst}\left(\trans\bfgamma'(\bfalpha_a+\frakz\bfalpha_c)(\bfgamma+(\bfalpha_a+\frakz\bfalpha_c)^{-1}(\bfalpha_b+\frakz\bfalpha_d)\bfupsilon)\right)\quad d\mu\right) \\
    & = (\bfalpha^* \delta)\left(\int_{\T_0}e_{\kappa_{\calU}}^{\hst}\left(\trans(\trans(\bfalpha_a+\frakz\bfalpha_c)\bfgamma')(\bfgamma+(\frakz\cdot \bfalpha)\bfupsilon)\right)\quad d\mu\right) \\
    & = (\bfalpha^* \delta) \left(\rho_{\kappa_{\calU}}(\bfalpha_a+\frakz\bfalpha_c) \int_{\T_0}e_{\kappa_{\calU}}^{\hst}\left(\trans\bfgamma'(\bfgamma+(\frakz\cdot\bfalpha)\bfupsilon)\right)\quad d\mu\right)\\
    &= \bfalpha.(\delta f_{\mu, \frakz})(\bfgamma')
\end{align*}
as desired.

Putting everything together, we consider the composition
\begin{align*}
 \OC_{\kappa_{\calU}, \C_p}^{r} \cong & H_{\proket}^{n_0}(\overline{\calX}_{\Iw^+}, \sheafOD_{\kappa_{\calU}}^r)\xrightarrow[]{\Res}  H_{\proket}^{n_0}(\overline{\calX}_{\Iw^+,w}, \sheafOD_{\kappa_{\calU}}^r)\\
  \xrightarrow[]{\eta_{\kappa_{\calU}}} & H_{\proket}^{n_0}(\overline{\calX}_{\Iw^+, w}, \widehat{\underline{\omega}}_w^{\kappa_{\calU}}) \rightarrow H^0(\overline{\calX}_{\Iw^+, w}, \underline{\omega}_w^{\kappa_{\calU}+g+1})(-n_0)=M_{\Iw^+, w}^{\kappa_{\calU}+g+1}(-n_0)
 \end{align*}
where the second last morphism is given by Lemma \ref{Lemma: Leray spectral sequence for the automorphic sheaf}. We arrive at the \textbf{\textit{overconvergent Eichler--Shimura morphism}}
\begin{align*}
    \ES_{\kappa_{\calU}}:\OC_{\kappa_{\calU}, \C_p}^{r}\rightarrow M_{\Iw^+, w}^{\kappa_{\calU}+g+1}(-n_0).
\end{align*} 

\begin{Proposition}\label{Proposition: OES for sheaves on the pro-Kummer etale site}
The overconvergent Eichler--Shimura morphism\[
    \ES_{\kappa_{\calU}}: \OC_{\kappa_{\calU}, \C_p}^{r} \rightarrow M_{\Iw^+, w}^{\kappa_{\calU+g+1}}(-n_0)
\] is Hecke- and $G_{\Q_p}$-equivariant. 
\end{Proposition}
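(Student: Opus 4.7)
The plan is to decompose $\ES_{\kappa_{\calU}}$ as the four-step composition
\[
\OC_{\kappa_{\calU}, \C_p}^{r} \xrightarrow{\sim} H^{n_0}_{\proket}(\overline{\calX}_{\Iw^+}, \sheafOD_{\kappa_{\calU}}^r) \xrightarrow{\Res} H^{n_0}_{\proket}(\overline{\calX}_{\Iw^+, w}, \sheafOD_{\kappa_{\calU}}^r) \xrightarrow{\eta_{\kappa_{\calU}}} H^{n_0}_{\proket}(\overline{\calX}_{\Iw^+, w}, \widehat{\underline{\omega}}_w^{\kappa_{\calU}}) \to M^{\kappa_{\calU}+g+1}_{\Iw^+, w}(-n_0)
\]
and verify the required equivariance for each factor. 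The first isomorphism is Galois- and Hecke-equivariant by construction (Proposition \ref{Proposition: overconvergent cohomology computed by the pro-Kummer etale cohomology} and the discussion after it), and restriction to a Hecke- and Galois-stable open is trivially so. For the Galois equivariance of $\eta_{\kappa_{\calU}}$, I use the description in Lemma \ref{Lemma: Explicit description of the sheaf OD}: sections of $\sheafOD_{\kappa_{\calU}}^r$ on a log affinoid perfectoid $\calV$ are identified with $\Iw_{\GSp_{2g}}^+$-invariants in $D_{\kappa_{\calU}}^{r,\circ}(\T_0, R_{\calU})\widehat\otimes_{\Z_p}\widehat{\scrO}_{\overline{\calX}_{\Iw^+,\proket}}(\calV_\infty)$, and $G_{\Q_p}$ acts only on the completed structure sheaf factor. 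The defining formula $f_{\mu,\frakz}(\bfgamma') = \int e^{\hst}_{\kappa_{\calU}}(\trans\bfgamma'(\bfgamma + \frakz\bfupsilon))\,d\mu$ depends only on $\scrO$ and on $\frakz = \pi_{\HT}^*\bfitz$, and $\bfitz$ is Galois-equivariant since $\adicFL^\times$ descends to $\Q_p$ and $\pi_{\HT}$ is $G_{\Q_p}$-equivariant. The Tate twist $(-n_0)$ in the last map arises from the identification $R^{n_0}\nu_*\widehat{\scrO} \cong \Omega^{\log, n_0}(-n_0)$ of Proposition \ref{Proposition: compatibility with completed tensor}, yielding the stated Galois equivariance.

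For a Hecke operator $T_{\bfgamma}$ with $\ell \nmid Np$, both sides are defined by the same correspondence $\calX_{\Iw^+} \xleftarrow{\pr_1} \calX_{\bfgamma, \Iw^+} \xrightarrow{\pr_2} \calX_{\Iw^+}$ (using the reformulation of the pro-Kummer étale Hecke operator via $\varphi_{\bfgamma}$ explained at the end of \S \ref{subsection: pro-Kummer etale cohomology groups}, and Definition-type construction of $T_{\bfgamma}$ on modular forms in \S \ref{subsection: Hecke operators on the overconvergent automorphic forms}). The key fact is that $\Phi_{\bfgamma,\infty}$ induces a symplectic isomorphism on $p$-adic Tate modules, hence $\Phi_{\bfgamma,\infty}^*\pr_{2,\infty}^*\frakz = \pr_{1,\infty}^*\frakz$. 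Since the formula defining $\eta_{\kappa_{\calU}}$ depends on the structure sheaf and on $\frakz$ only, this identity implies that $\eta_{\kappa_{\calU}}$ intertwines the two $\varphi_{\bfgamma}$'s. Compatibility with $\Tr_{\pr_1}$ and with the Leray edge map is formal, using that the latter is a morphism of derived functors and that finite-étale trace commutes with $R\nu_*$. Putting these together yields $T_{\bfgamma}\circ \ES_{\kappa_{\calU}} = \ES_{\kappa_{\calU}}\circ T_{\bfgamma}$.

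The main obstacle is $U_{p,i}$-equivariance. Both operators are defined by (i) acting by $\bfu_{p,i}$ and (ii) summing over a fixed set of representatives $\{\bfdelta_{ij}\}$ of $\Iw_{\GSp_{2g}}^+\bfu_{p,i}\Iw_{\GSp_{2g}}^+ / \Iw_{\GSp_{2g}}^+$. The summation step on both sides is compatible via $\eta_{\kappa_{\calU}}$ thanks to the twisted $\Iw_{\GSp_{2g}}^+$-equivariance of $\eta_{\kappa_{\calU}}$ already established in its construction. The crux is to verify
\[
\eta_{\kappa_{\calU}}(\bfu_{p,i}.(\mu \otimes \delta)) \;=\; \bfu_{p,i}.\,\eta_{\kappa_{\calU}}(\mu \otimes \delta),
\]
where on the left the $\bfu_{p,i}$-action on $\mu$ is given by the change of variables $(\bfgamma, \bfupsilon) \mapsto (\bfu_{p,i}^{\square}\bfgamma_0\bfu_{p,i}^{\square,-1}, \bfu_{p,i}^{\blacksquare}\bfupsilon_0\bfu_{p,i}^{\square,-1})\bfbeta$ on $\T_0$ (\S \ref{subsection: overconvergent cohomology groups}) and on $\delta$ by pullback, while on the right we apply Definition \ref{Definition:Upi}. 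Using the explicit formula $\frakz.\bfu_{p,i} = \bfu_{p,i}^{\square,-1}\frakz\bfu_{p,i}^{\blacksquare}$ from \S \ref{subsection: Hecke operators on the overconvergent automorphic forms} to compute $\bfu_{p,i}^*\frakz$ inside the integrand $e^{\hst}_{\kappa_{\calU}}\bigl(\trans\bfgamma'(\bfgamma + (\bfu_{p,i}^*\frakz)\bfupsilon)\bigr)$, then changing variables in the distribution and using the left-$B_{\GL_g,0}$-homogeneity of $e^{\hst}_{\kappa_{\calU}}$ to absorb the factor $\bfu_{p,i}^{\square}\bfgamma_0\bfu_{p,i}^{\square,-1}\bfbeta$ written in Iwahori form, a direct calculation parallel to the $\Iw_{\GSp_{2g}}^+$-equivariance computation in \S \ref{subsection: OES} produces the desired identity. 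Finally, the Kodaira--Spencer inclusion and the Leray edge map are $U_{p,i}$-equivariant by standard functoriality, completing the proof.
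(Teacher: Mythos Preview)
Your proposal is correct and follows essentially the same approach as the paper's own proof: both reduce the Galois-equivariance to Lemma \ref{Lemma: Leray spectral sequence for the automorphic sheaf}, handle the $T_{\bfgamma}$'s via the correspondence description at the end of \S\ref{subsection: pro-Kummer etale cohomology groups}, and for $U_{p,i}$ reduce to checking the $\bfu_{p,i}$-equivariance of $\eta_{\kappa_{\calU}}$ by an explicit integrand manipulation using $\frakz\cdot\bfu_{p,i}=\bfu_{p,i}^{\square,-1}\frakz\bfu_{p,i}^{\blacksquare}$ and the determinantal/homogeneity properties of $e^{\hst}_{\kappa_{\calU}}$. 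The paper simply carries out that last computation line by line, while you sketch it; your remarks about the edge map, trace, and Kodaira--Spencer being compatible with the Hecke operators are extra care the paper leaves implicit, but they do not change the argument.
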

\begin{proof}
The Galois-equivariance follows immediately from Lemma \ref{Lemma: Leray spectral sequence for the automorphic sheaf}. For Hecke operators away from $Np$, notice that the operators $T_{\bfgamma}$'s on both sides are defined in the same way using correspondences. Hence, it is straightforward to verify the $T_{\bfgamma}$-equivariances. It remains to check the $U_{p,i}$-equivariance for all $i=1, ..., g$.

To this end, due to the $\Iw_{\GSp_{2g}}^+$-equivariance of $\eta_{\kappa_{\calU}}$, we only have to check the $\bfu_{p,i}$-equivariance. Indeed, for every $\bfgamma'=\bfgamma_0'\bfbeta_0'\in \Iw_{\GL_g}$ with $\bfgamma'_0\in U^{\opp}_{\GL_g,1}$ and $\bfbeta'_0\in B_{\GL_g,0}$, we have  
\begin{align*}
    (\bfu_{p,i}^*\delta) f_{\bfu_{p,i}\cdot\mu, \frakz}(\bfgamma') & = (\bfu_{p,i}^*\delta)\left(\kappa_{\calU}(\bfbeta_0')\int_{\T_0}e_{\kappa_{\calU}}^{\hst}(\trans\bfgamma'_0(\bfgamma+\frakz\bfupsilon))\quad d\bfu_{p,i}\cdot\mu\right) \\
    & = (\bfu_{p,i}^*\delta)\left(\kappa_{\calU}(\bfbeta_0')\int_{\T_0} \kappa_{\calU}(\bfbeta)e_{\kappa_{\calU}}^{\hst}\left(\trans\bfgamma_0'(\bfgamma_0+\frakz\bfupsilon_0)\right)\quad d\bfu_{p,i}\cdot\mu\right)  \\
    & = (\bfu_{p,i}^*\delta) \left(\kappa_{\calU}(\bfbeta_0')\int_{\T_0}\kappa_{\calU}(\bfbeta)e_{\kappa_{\calU}}^{\hst}\left(\trans\bfgamma_0'(\bfu_{p,i}^{\square}\bfgamma_0\bfu_{p,i}^{\square,-1}+\frakz\bfu_{p,i}^{\blacksquare}\bfupsilon_0\bfu_{p,i}^{\square, -1})\right)\quad d\mu\right) \\
    & = (\bfu_{p,i}^*\delta)\left(\kappa_{\calU}(\bfbeta_0')\int_{\T_0}\kappa_{\calU}(\bfbeta)e_{\kappa_{\calU}}^{\hst}\left(\trans\bfgamma_0'\bfu_{p,i}^{\square}(\bfgamma_0+\bfu_{p,i}^{\square, -1}\frakz\bfu_{p,i}^{\blacksquare}\bfupsilon_0)\bfu_{p,i}^{\square, -1}\right)\quad d\mu\right) \\
    & = (\bfu_{p,i}^*\delta)\left(\kappa_{\calU}(\bfbeta_0')\int_{\T_0}\kappa_{\calU}(\bfbeta) e_{\kappa_{\calU}}^{\hst}\left(\trans\bfgamma_0'\bfu_{p,i}^{\square}(\bfgamma_0+(\frakz\cdot\bfu_{p,i})\bfupsilon_0)\bfu_{p,i}^{\square, -1}\right)\quad d\mu\right) \\
    & = (\bfu_{p,i}^*\delta)\left(\kappa_{\calU}(\bfbeta_0')\int_{\T_0} \kappa_{\calU}(\bfbeta) e_{\kappa_{\calU}}^{\hst}\left(\bfu_{p,i}^{\square, -1}\trans\bfgamma'_0\bfu_{p, i}^{\square}(\bfgamma_0+(\frakz\cdot\bfu_{p,i})\bfupsilon_0)\right)\quad d\mu \right) \\
    & = (\bfu_{p,i}^*\delta)\left(\kappa_{\calU}(\bfbeta_0')\int_{\T_0}\kappa_{\calU}(\bfbeta) e_{\kappa_{\calU}}^{\hst}\left(\trans(\bfu_{p,i}^{\square}\bfgamma_0'\bfu_{p,i}^{\square, -1})(\bfgamma_0+(\frakz\cdot\bfu_{p,i})\bfupsilon_0)\right)\quad d\mu\right) \\
    & = \bfu_{p,i} . (\delta f_{\mu, \frakz}), 
\end{align*} 
where we have written $(\bfgamma, \bfupsilon)=(\bfgamma_0, \bfupsilon_0)\bfbeta$ for $(\bfgamma_0, \bfupsilon_0)\in \T_{00}$ and $\bfbeta\in B_{\GL_g,0}$.
The antepenultimate equation follows from the property of matrix determinants. 
\end{proof}

\begin{Remark}\label{Remark: OES for cuspforms}
\normalfont There is an analogue for compactly supported cohomology groups and overconvergent cuspforms. Let $r$, $w$, and $(R_{\calU}, \kappa_{\calU})$ be the same as before. On one hand, consider
$$\sheafOD_{\kappa_{\calU}}^{r, \cusp}:=\left(\varprojlim_{j}\left(\nu^{-1}\jmath_{\ket, !}\scrD_{\kappa_{\calU}, j}^{r, \circ}\otimes_{\Z_p}\scrO_{\overline{\calX}_{\Iw^+, \proket}}^+\right)\right)[\frac{1}{p}].$$
Since \[H_{\ket}^{n_0}(\overline{\calX}_{\Iw^+}, \jmath_{\ket, !}\scrD_{\kappa_{\calU}, j}^{r, \circ})=H^{n_0}_{\et, c}(\calX_{\Iw^+}, \scrD_{\kappa_{\calU}, j}^{r, \circ}),\] an analogue of Proposition \ref{Proposition: overconvergent cohomology computed by the pro-Kummer etale cohomology} implies that $\sheafOD_{\kappa_{\calU}}^{r, \cusp}$ computes \[
    \OC_{\kappa_{\calU}, \C_p}^{r, c}:=\left(\varprojlim_{j}H_{\et, c}^{n_0}(\calX_{\Iw^+}, \scrD_{\kappa_{\calU}, j}^{r, \circ})\otimes_{\Z_p}\calO_{\C_p}\right)[\frac{1}{p}].
\]

On the other hand, recall the sheaf $\underline{\omega}_{w, \cusp}^{\kappa_{\calU}}$ of $w$-overconvergent Siegel cuspforms of weight $\kappa_{\calU}$ and consider the $p$-adically completed pullback $\widehat{\underline{\omega}}_{w, \cusp}^{\kappa_{\calU}}$ to the pro-Kummer \'etale site. Repeating the construction above, we obtain a morphism $\eta_{\kappa_{\calU}}^{\cusp}: \sheafOD_{\kappa_{\calU}}^{r, \cusp}\rightarrow \widehat{\underline{\omega}}_{w, \cusp}^{\kappa_{\calU}}$ which induces a morphism
\[\ES_{\kappa_{\calU}}^{\cusp}:\OC_{\kappa_{\calU}, \C_p}^{r, c}\rightarrow H^0(\overline{\calX}_{\Iw^+, w}, \underline{\omega}_{w, \cusp}^{\kappa_{\calU}+g+1})(-n_0)
\]
rendering the following Galois- and Hecke-equivariant diagram commutative:
$$\begin{tikzcd}
\OC_{\kappa_{\calU}, \C_p}^{r}\arrow[r, "\ES_{\kappa_{\calU}}"] & H^0(\overline{\calX}_{\Iw^+, w}, \underline{\omega}_{w}^{\kappa_{\calU}+g+1})(-n_0)\\
\OC_{\kappa_{\calU}, \C_p}^{r, c}\arrow[r, "\ES_{\kappa_{\calU}}^{\cusp}"]\arrow[u] & H^0(\overline{\calX}_{\Iw^+, w}, \underline{\omega}_{w, \cusp}^{\kappa_{\calU}+g+1})(-n_0)\arrow[u, hook]
\end{tikzcd},$$ where the vertical arrow on the left is the natural map from the compactly supported cohomology group to the usual cohomology group. Let \begin{equation}\label{eq: definition of OCcusp}
    \OC_{\kappa_{\calU}, \C_p}^{r, \cusp}:=\image\left(\OC_{\kappa_{\calU}, \C_p}^{r, c}\rightarrow \OC_{\kappa_{\calU}, \C_p}^{r}\right).
\end{equation} We arrive at the \textbf{\textit{overconvergent Eichler--Shimura morphism for overconvergent Siegel cuspforms}} $$\ES_{\kappa_{\calU}}^{\cusp}: \OC_{\kappa_{\calU}, \C_p}^{r, \cusp}\rightarrow S_{\Iw^+, w}^{\kappa_{\calU}+g+1}(-n_0),$$ where
\[S_{\Iw^+, w}^{\kappa_{\calU}+g+1}:=H^0(\overline{\calX}_{\Iw^+, w}, \underline{\omega}_{w, \cusp}^{\kappa_{\calU}+g+1})\]
is the space of $w$-overconvergent Siegel cuspforms of strict Iwahori level and weight $\kappa_{\calU}+g+1$.

Lastly, we point out that, by construction, both $\ES_{\kappa_{\calU}}^{\cusp}$ and $\ES_{\kappa_{\calU}}$ are functorial in the small weights $(R_{\calU}, \kappa_{\calU})$.
\end{Remark}

\subsection{The image of the overconvergent Eichler--Shimura morphism at classical weights}\label{subsection:imageOES}

The aim of this last part of the section is to describe the image of the overconvergent Eichler--Shimura morphism at classical algebraic weights. Let $k=(k_1,\ldots, k_g) \in \Z_{\geq 0}^g$ be a dominant weight. Note that the character group of $T_{\GSp_{2g}}$ is isomorphic to $\Z^{g+1}$ through $$\Z^{g+1}\times T_{\GSp_{2g}}\rightarrow \bbG_m, \quad ((k_1, ..., k_g; k_0), \diag(\bftau_1, ..., \bftau_g, \bftau_0\bftau_g^{-1}, ..., \bftau_0\bftau_1^{-1}))\mapsto \prod_{i=0}^{g}\bftau_i^{k_i}.$$ We view $k=(k_1,..., k_g)\in \Z^g$ as a character of $T_{\GSp_{2g}}$ via $$\Z^g\hookrightarrow\Z^{g+1}, \quad (k_1, ..., k_g)\mapsto (k_1, ..., k_g;0).$$

We first introduce some algebraic representations. Consider the algebraic representation $$\V_{\GSp_{2g}, k}^{\alg}:=\left\{\phi: \GSp_{2g}\rightarrow \bbA^1: \begin{array}{l}
    \phi\text{ is a morphism of algebraic varieties over $\Q_p$ such that}  \\
    \phi(\bfgamma\bfbeta)=k(\bfbeta)\phi(\bfgamma),\,\,\forall \bfgamma\in \GSp_{2g} , \bfbeta\in B_{\GSp_{2g}}  
\end{array}\right\}$$ 
equipped with a left $\GSp_{2g}$-action given by
$$(\bfgamma\cdot\phi)(\bfgamma') = \phi(\trans\bfgamma\bfgamma')$$ for any $\bfgamma, \bfgamma'\in \GSp_{2g}$ and $\phi\in \V_{\GSp_{2g}, k}^{\alg}$. 

Let $\V_{\GSp_{2g}, k}^{\alg, \vee} $ be the linear dual of $\V_{\GSp_{2g}, k}^{\alg}$. We equip $\V_{\GSp_{2g}, k}^{\alg, \vee}$ with a left $\GSp_{2g}$-action induced from the following right $\GSp_{2g}$-action on $\V_{\GSp_{2g}, k}^{\alg}$:
$$(\phi\cdot\bfgamma)(\bfgamma') = \phi(\bfgamma\bfgamma')$$ for any $\bfgamma, \bfgamma'\in \GSp_{2g}$ and $\phi\in \V_{\GSp_{2g}, k}^{\alg}$. Observe that there is a natural morphism of $\GSp_{2g}$-representations 
$$
    \beta:\V_{\GSp_{2g}, k}^{\alg, \vee} \rightarrow \V_{\GSp_{2g}, k}^{\alg}, \quad \mu \mapsto \left(\bfgamma \mapsto \int_{\bfalpha\in \GSp_{2g}}e_{k}^{\hst}(\trans\bfgamma \bfalpha)\quad d\mu\right),
$$where $e_k^{\hst}$ is the ``highest weight vector'' inside $\V_{\GSp_{2g}, k}^{\alg}$; namely, for any matrix $X=(X_{ij})_{1\leq i,j\leq 2g}\in \GSp_{2g}$, we define
$$e^{\hst}_k(X)=X_{11}^{k_1-k_2}\times \det((X_{ij})_{1\leq i,j\leq 2})^{k_2-k_3}\times\cdots \times\det((X_{ij})_{1\leq i,j\leq g})^{k_g}.$$

Secondly, we consider the cohomology groups induced by these algebraic representations. Notice that the left $\GSp_{2g}$-actions on $\V_{\GSp_{2g}, k}^{\alg}$ and $\V_{\GSp_{2g}, k}^{\alg, \vee}$ induce \'etale $\Q_p$-local systems on $\calX_{\Iw^+}$ which we still denote by the same symbols. In particular, we can consider the cohomology group $H_{\et}^{n_0}(\calX_{\Iw^+}, \V_{\GSp_{2g}, k}^{\alg, \vee})$. Similar to \S \ref{subsection: OES}, we introduce the sheaves $\sheafOV_k$ and $\sheafOV_k^{\vee}$ on $\overline{\calX}_{\Iw^+, \proket}$ defined by
$$
 \sheafOV_{k} := \nu^{-1}j_{\ket, *}\V_{\GSp_{2g}, k}^{\alg}\otimes_{\Q_p} \widehat{\scrO}_{\overline{\calX}_{\Iw^+, \proket}}
$$
and
$$
 \sheafOV_{k}^{\vee} := \nu^{-1}j_{\ket, *}\V_{\GSp_{2g}, k}^{\alg, \vee}\otimes_{\Q_p} \widehat{\scrO}_{\overline{\calX}_{\Iw^+, \proket}}.
$$
By the same argument as in Proposition \ref{Proposition: overconvergent cohomology computed by the pro-Kummer etale cohomology}, we obtain a natural identification
$$
H_{\et}^{n_0}(\calX_{\Iw^+}, \V_{\GSp_{2g}, k}^{\alg, \vee})\otimes_{\Q_p}\C_p \cong H_{\proket}^{n_0}(\overline{\calX}_{\Iw^+}, \sheafOV_k^{\vee}).
$$
Moreover, if $\calV = \varprojlim_{n}\calV_n \rightarrow \overline{\calX}_{\Iw^+}$ is a pro-Kummer \'etale presentation of a log affinoid perfectoid object in $\overline{\calX}_{\Iw^+, \proket}$ and we let $\calV_{\infty} := \calV \times_{\overline{\calX}_{\Iw^+}}\overline{\calX}_{\Gamma(p^{\infty})}$, then, following the same argument as in the proof of Lemma \ref{Lemma: Explicit description of the sheaf OD}, we obtain identifications 
$$
\sheafOV_{k}(\calV) = \left(\V_{\GSp_{2g}, k}^{\alg}\otimes_{\Q_p} \widehat{\scrO}_{\overline{\calX}_{\Iw^+, \proket}}(\calV_{\infty})\right)^{\Iw_{\GSp_{2g}}^+}
$$
and
$$
\sheafOV_{k}^{\vee}(\calV) = \left(\V_{\GSp_{2g}, k}^{\alg, \vee}\otimes_{\Q_p} \widehat{\scrO}_{\overline{\calX}_{\Iw^+, \proket}}(\calV_{\infty})\right)^{\Iw_{\GSp_{2g}}^+}.
$$

\begin{Remark}\label{Remark: Hecke operators for alg. rep.}
\normalfont There are naturally defined Hecke operators on $H_{\et}^{n_0}(\calX_{\Iw^+}, \V_{\GSp_{2g}, k}^{\alg, \vee})$. More precisely, similar to Proposition \ref{Proposition: Comparison theorem of cohomologies}, we have \[
    H_{\et}^{n_0}(\calX_{\Iw^+}, \V_{\GSp_{2g}, k}^{\alg, \vee}) \cong H^{n_0}(X_{\Iw^+}(\C), \V_{\GSp_{2g}, k}^{\alg, \vee}),
\] where the right-hand side is the Betti cohomology of $X_{\Iw^+}(\C)$ with coefficients in $\V_{\GSp_{2g}, k}^{\alg, \vee}$. Hence, it suffices to define the Hecke operators acting on $H^{n_0}(X_{\Iw^+}(\C), \V_{\GSp_{2g}, k}^{\alg, \vee})$. They are defined as follows. \begin{enumerate}
    \item[$\bullet$] For any Hecke operator $T_{\bfgamma}$ away from $Np$, its action on $H^{n_0}(X_{\Iw^+}(\C), \V_{\GSp_{2g}, k}^{\alg, \vee})$ is defined by the same formula as (\ref{eq: Hecke on coh. away from Np}).
    \item[$\bullet$] For the $U_{p,i}$-action, let $\bfu_{p,i}$ act on $\GSp_{2g}(\Q_p)$ via conjugation \[
        \bfu_{p,i} . \bfgamma = \bfu_{p,i} \bfgamma \bfu_{p,i}^{-1}.
    \] Observe that if $\bfgamma\in B_{\GSp_{2g}}(\Q_p)$, then $\bfu_{p,i} . \bfgamma \in B_{\GSp_{2g}}(\Q_p)$ and the diagonal entries of $\bfgamma$ coincide with the diagonal entries of $\bfu_{p,i} . \bfgamma$. This action then induces a left $\bfu_{p,i}$-action on $\V_{\GSp_{2g}, k}^{\alg, \vee}$. The operator $U_{p,i}$ acting on $H^{n_0}(X_{\Iw^+}(\C), \V_{\GSp_{2g}, k}^{\alg, \vee})$ is defined by the same formula as (\ref{eq: Hecek on coh. at p}). 
\end{enumerate} 
\end{Remark}

We also consider the $p$-adically completed automorphic sheaf $\widehat{\underline{\omega}}_{\Iw^+}^{k}$ on $\overline{\calX}_{\Iw^+, \proket}$ defined by
$$
        \widehat{\underline{\omega}}_{\Iw^+}^{k} := \varprojlim_m\left(\underline{\omega}_{\Iw^+}^{k,+}\otimes_{\scrO^+_{\overline{\calX}_{\Iw^+}}}\scrO^+_{\overline{\calX}_{\Iw^+, \proket}}/p^m\right)[\frac{1}{p}]
$$
where $\underline{\omega}_{\Iw^+}^{k,+}$ is defined in Remark \ref{Remark: integral classical sheaf}. It follows from Proposition \ref{Proposition: explicit description of classical modular sheaf} that 
$$
        \widehat{\underline{\omega}}_{\Iw^+}^{k}(\calV) = \left\{f \in P_k(\GL_g, \widehat{\scrO}_{\overline{\calX}_{\Iw^+, w, \proket}}(\calV_{\infty})) :  \bfgamma^* f = \rho_{k}(\bfgamma_a+\frakz\bfgamma_c)^{-1}f, \,\,\,\forall \bfgamma = \begin{pmatrix}\bfgamma_a & \bfgamma_b\\ \bfgamma_c & \bfgamma_d\end{pmatrix} \in \Iw_{\GSp_{2g}}^+ 
        \right\}.
$$
for any log affinoid perfectoid object $\calV\in \overline{\calX}_{\Iw^+, \proket}$ and $\calV_{\infty} = \calV \times_{\overline{\calX}_{\Iw^+}}\overline{\calX}_{\Gamma(p^{\infty})}$.\\

\noindent\textbf{The algebraic Eichler-Shimura morphism}
Recall the Hodge--Tate morphism \[
    \HT_{\Gamma(p^{\infty})}: V_p \rightarrow \underline{\omega}_{\Gamma(p^{\infty})}
\] from \S \ref{subsection: perfectoid Siegel modular variety}. It follows from the definition that 
$$
        \underline{\omega}_{\Iw^+}^k = (\Sym^{k_1-k_2}\underline{\omega}_{\Iw^+})\otimes_{\scrO_{\overline{\calX}_{\Iw^+}}} (\Sym^{k_2-k_3}\wedge^2 \underline{\omega}_{\Iw^+}) \otimes_{\scrO_{\overline{\calX}_{\Iw^+}}} \cdots \otimes_{\scrO_{\overline{\calX}_{\Iw^+}}} (\Sym^{k_g}\det \underline{\omega}_{\Iw^+})
$$
and hence
$$
        \underline{\omega}_{\Gamma(p^{\infty})}^k = (\Sym^{k_1-k_2}\underline{\omega}_{\Gamma(p^{\infty})})\otimes_{\scrO_{\overline{\calX}_{\Gamma(p^{\infty})}}} (\Sym^{k_2-k_3}\wedge^2 \underline{\omega}_{\Gamma(p^{\infty})}) \otimes_{\scrO_{\overline{\calX}_{\Gamma(p^{\infty})}}} \cdots \otimes_{\scrO_{\overline{\calX}_{\Gamma(p^{\infty})}}} (\Sym^{k_g}\det \underline{\omega}_{\Gamma(p^{\infty})}).
$$

Let $V_{\mathrm{std}}$ denote the standard representation of $\GSp_{2g}$ over $\Q_p$, with standard basis $x_1, \ldots, x_{2g}$. There is an isomorphism of $\GSp_{2g}(\Q_p)$-representations $V_{\mathrm{std}}\simeq V_{\Q_p}:=V_p\otimes_{\Z_p}\Q_p$ sending $x_i$ to $e_{2g+1-i}$, for $i=1, \ldots, g$, and sending $x_i$ to $-e_{2g+1-i}$, for $i=g+1, \ldots, 2g$. If we write
$$
 V_{\mathrm{std}}^{k}:=(\Sym^{k_1-k_2}V_{\mathrm{std}}) \otimes_{\Q_p} (\Sym^{k_2-k_3}(\wedge^2 V_{\mathrm{std}}))\otimes_{\Q_p} \cdots \otimes_{\Q_p} (\Sym^{k_g}(\wedge^gV_{\mathrm{std}}))
$$
and
$$
 V_{\Q_p}^{k}:=(\Sym^{k_1-k_2}V_{\Q_p}) \otimes_{\Q_p} (\Sym^{k_2-k_3}(\wedge^2 V_{\Q_p}))\otimes_{\Q_p} \cdots \otimes_{\Q_p} (\Sym^{k_g}(\wedge^gV_{\Q_p})),
$$
the Hodge-Tate map induces a map $V_{\mathrm{std}}^{k}\simeq V_{\Q_p}^{k}\rightarrow \underline{\omega}_{\Iw^+}^k$. Moreover, it is well-known that $\V_{\GSp_{2g}, k}^{\alg}$ is an irreducible $\GSp_{2g}$-subrepresentation of $V_{\mathrm{std}}^{k}$ (see for example \cite[Lecture 17]{Fulton-Harris}). In particular, the highest weight vector $e^{\hst}_k$ in $\V_{\GSp_{2g}, k}^{\alg}$ corresponds to the element
$$
x_1^{k_1-k_2}\otimes (x_1\wedge x_2)^{k_2-k_3}\otimes\cdots \otimes (x_1\wedge \cdots \wedge x_g)^{k_g}
$$
in $V_{\mathrm{std}}^k$.

The composition
$$\V_{\GSp_{2g}, k}^{\alg, \vee}\xrightarrow[]{\beta}\V_{\GSp_{2g}, k}^{\alg}\hookrightarrow V_{\mathrm{std}}^k\simeq V_{\Q_p}^{k}\rightarrow  \underline{\omega}_{\Iw^+}^k$$ then induces a map \[
    \eta_{k}^{\alg}: \sheafOV_k^{\vee} \rightarrow \widehat{\underline{\omega}}_{\Iw^+}^k.
\] Eventually, we arrive at the \textbf{\textit{algebraic Eichler-Shimura morphism of weight $k$}} \begin{align*}
    \ES_k^{\alg}: H_{\et}^{n_0}(\calX_{\Iw^+}, \V_{\GSp_{2g}, k}^{\alg, \vee}) \otimes_{\Q_p}\C_p \simeq H_{\proket}^{n_0}(\overline{\calX}_{\Iw^+}, \sheafOV_k^{\vee}) \xrightarrow{\eta_k^{\alg}} H_{\proket}^{n_0}(\overline{\calX}_{\Iw^+}, \widehat{\underline{\omega}}_{\Iw^+}^k) \rightarrow H^0(\overline{\calX}_{\Iw^+}, \underline{\omega}_{\Iw^+}^{k+g+1})(-n_0),
\end{align*} where the last map follows from the same argument as in the proof of Lemma \ref{Lemma: Leray spectral sequence for the automorphic sheaf}. We remark that $\ES_k^{\alg}$ coincides with the one induced from \cite[Theorem VI. 6.2]{Faltings-Chai}. It is Hecke- and $G_{\Q_p}$-equivariant, and also surjective. 

In fact, over the $w$-ordinary locus $\overline{\calX}_{\Iw^+, w}$, the map $\eta_{k}^{\alg}$ has the following explicit description. 

\begin{Lemma}
\begin{enumerate}
\item[(i)] Let $\calV = \varprojlim_n \calV_n$ be a pro-Kummer \'etale presentation of a log affinoid perfectoid object in $\overline{\calX}_{\Iw^+, w, \proket}$ and let $\calV_{\infty} = \calV\times_{\overline{\calX}_{\Iw^+, w}}\overline{\calX}_{\Gamma(p^{\infty}), w}$. There is a well-defined $\GSp_{2g}(\Q_p)$-equivariant map
$$
\widetilde{\eta}_k^{\alg}: \V_{\GSp_{2g}, k}^{\alg, \vee} \otimes_{\Q_p} \widehat{\scrO}_{\overline{\calX}_{\Iw^+, w, \proket}}(\calV_{\infty})\rightarrow P_k(\GL_g, \widehat{\scrO}_{\overline{\calX}_{\Iw^+, w, \proket}}(\calV_{\infty}))
$$
defined by $\mu\otimes\delta \mapsto \delta f_{\mu, \frakz}^{\alg}$ where 
$$ f_{\mu, \frakz}^{\alg}(\bfgamma')=\int_{\bfalpha\in \GSp_{2g}} e_{k}^{\hst}\left(\begin{pmatrix}\trans\bfgamma' & \\ & \oneanti_g \bfgamma'^{-1}\oneanti_g\end{pmatrix}\begin{pmatrix}\one_g & \frakz\\ & \one_g\end{pmatrix} \bfalpha\right)\quad d\mu.$$
Here, the $\GSp_{2g}(\Q_p)$-action on the right hand side is given by
$$
\bfgamma.f:=\rho_k(\bfgamma_a+\frakz\bfgamma_c)(\bfgamma^*f)
$$
for every $\bfgamma=\begin{pmatrix} \bfgamma_a & \bfgamma_b\\ \bfgamma_c & \bfgamma_d\end{pmatrix}\in \GSp_{2g}(\Q_p)$ and $f\in P_k(\GL_g, \widehat{\scrO}_{\overline{\calX}_{\Iw^+, w, \proket}}(\calV_{\infty}))$.
\item[(ii)] The map $\eta_k^{\alg}$ is obtained from $\widetilde{\eta}^{\alg}_k$ by taking $\Iw^+_{\GSp_{2g}}$-invariants on both sides.
\end{enumerate}
\end{Lemma}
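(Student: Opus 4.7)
The strategy parallels the construction of the morphism $\eta_{\kappa_{\calU}}$ in the proof of Proposition \ref{Proposition: OES for sheaves on the pro-Kummer etale site}, with the space of $r$-analytic distributions on $\T_0$ replaced by the algebraic representation $\V^{\alg,\vee}_{\GSp_{2g},k}$, and the analytic highest weight function $e^{\hst}_{\kappa_{\calU}}$ replaced by the algebraic highest weight vector $e^{\hst}_k$ in $\V^{\alg}_{\GSp_{2g},k}$. Part (i) reduces to two explicit calculations with block matrices in $\GSp_{2g}$, and part (ii) reduces to a comparison on a single highest-weight generator, which propagates to the whole representation by $\GSp_{2g}(\Q_p)$-equivariance.

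For part (i), I would first verify that $f^{\alg}_{\mu,\frakz}$ lies in $P_k(\GL_g, \widehat{\scrO}_{\overline{\calX}_{\Iw^+,w,\proket}}(\calV_{\infty}))$. The key matrix identity is
$$\begin{pmatrix}\trans(\bfgamma'\bfbeta) & \\ & \oneanti_g(\bfgamma'\bfbeta)^{-1}\oneanti_g\end{pmatrix} = \bfsigma_{\bfbeta}\cdot \begin{pmatrix}\trans\bfgamma' & \\ & \oneanti_g\bfgamma'^{-1}\oneanti_g\end{pmatrix},\quad \bfsigma_{\bfbeta}:= \begin{pmatrix}\trans\bfbeta & \\ & \oneanti_g\bfbeta^{-1}\oneanti_g\end{pmatrix},$$
for $\bfbeta \in B_{\GL_g}$. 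Since $\trans\bfbeta$ is lower triangular with the same diagonal entries as $\bfbeta$, one computes that the top-left $i\times i$ principal minor of $\bfsigma_{\bfbeta}Y$ (for $i\le g$) factors as $\bfbeta_{11}\cdots\bfbeta_{ii}\cdot$(top-left $i\times i$ minor of $Y$). The product of the resulting prefactors telescopes, exactly as in the $\GL_g$-case, to yield $e^{\hst}_k(\bfsigma_{\bfbeta}Y) = k(\bfbeta)\,e^{\hst}_k(Y)$. Substituting $Y = X(\bfgamma')\bfalpha$ and integrating against $\mu$ gives the required transformation law. For $\GSp_{2g}(\Q_p)$-equivariance, I would imitate the last computation of Proposition \ref{Proposition: OES for sheaves on the pro-Kummer etale site}. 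The crucial input is the factorisation
$$\begin{pmatrix}\one_g & \frakz \\ & \one_g\end{pmatrix}\bfdelta = \begin{pmatrix}\bfdelta_a+\frakz\bfdelta_c & \bfdelta_b+\frakz\bfdelta_d \\ \bfdelta_c & \bfdelta_d\end{pmatrix}$$
for $\bfdelta=\begin{pmatrix}\bfdelta_a & \bfdelta_b \\ \bfdelta_c & \bfdelta_d\end{pmatrix}\in\GSp_{2g}(\Q_p)$, combined with $\bfdelta^{\ast}\frakz = \frakz\cdot\bfdelta = (\bfdelta_a+\frakz\bfdelta_c)^{-1}(\bfdelta_b+\frakz\bfdelta_d)$ from Lemma \ref{Lemma: Iw_GSp stabilises FL_w^times}, which allows one to move the $\bfdelta$-action across $X(\bfgamma')$ and extract the factor $\rho_k(\bfdelta_a+\frakz\bfdelta_c)$, as in the overconvergent calculation. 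The end result is $(\bfdelta^{\ast}\delta)\,f^{\alg}_{\bfdelta\cdot\mu,\frakz}(\bfgamma') = \bfdelta.(\delta\, f^{\alg}_{\mu,\frakz})(\bfgamma')$, establishing equivariance.

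For part (ii), I would use the explicit description of $\widehat{\underline{\omega}}^k_{\Iw^+}(\calV)$ as the subspace of $\Iw^+_{\GSp_{2g}}$-invariants (under the twisted action) of $P_k(\GL_g, \widehat{\scrO}(\calV_{\infty}))$ recalled just before the statement of the lemma, and compare it with the composition $\V^{\alg,\vee}\xrightarrow{\beta}\V^{\alg}\hookrightarrow V^k_{\std}\simeq V^k_{\Q_p}\to \underline{\omega}^k_{\Iw^+}$ defining $\eta^{\alg}_k$. Both $\widetilde{\eta}^{\alg}_k\big|_{\Iw^+_{\GSp_{2g}}\text{-inv}}$ and $\eta^{\alg}_k$ are $\C_p$-linear maps $\sheafOV^{\vee}_k(\calV)\to \widehat{\underline{\omega}}^{k}_{\Iw^+}(\calV)$ which are $\GSp_{2g}(\Q_p)$-equivariant (the former by part (i), the latter tautologically from the chain), so it is enough to check that they agree on a single element $\mu_{\hst}\in \V^{\alg,\vee}$ satisfying $\beta(\mu_{\hst}) = e^{\hst}_k$. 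A direct computation using that the top-left $i\times i$ principal minor of $X(\bfgamma')$ equals $\det(\trans\bfgamma')_{i\times i} = \det(\bfgamma')_{i\times i}$ gives $\widetilde{\eta}^{\alg}_k(\mu_{\hst}\otimes 1)(\bfgamma') = e^{\hst}_k(\bfgamma')$. On the other hand, under the isomorphism $V^k_{\std}\simeq V^k_{\Q_p}$ followed by $\HT^{\otimes\bullet}$, the highest weight vector $v_{\hst} = x_1^{k_1-k_2}\otimes\cdots\otimes(x_1\wedge\cdots\wedge x_g)^{k_g}$ is sent to $\fraks_1^{k_1-k_2}\otimes \cdots \otimes (\fraks_1\wedge\cdots\wedge \fraks_g)^{k_g}$; chasing through the identification of $\widehat{\underline{\omega}}^k_{\Iw^+}$ with $\Iw^+_{\GSp_{2g}}$-invariants in $P_k(\GL_g, \widehat{\scrO})$ (the analogue for the $\calM$-torsor of Proposition \ref{Proposition: explicit description of classical modular sheaf}), together with the transformation rule $\bfgamma^{\ast}\fraks = \trans(\bfgamma_a+\frakz\bfgamma_c)\fraks$ from Corollary \ref{Corollary: Iw-action on s}, shows that this fake-Hasse tensor corresponds to precisely the function $\bfgamma'\mapsto e^{\hst}_k(\bfgamma')$. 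I expect this final matching at the level of the highest weight vector to be the most delicate point: one must verify that the $\frakz$-twist implicit in the Hodge--Tate filtration on $\underline{\omega}^k_{\Iw^+}$ is exactly accounted for by the matrix $\begin{pmatrix}\one_g & \frakz \\ & \one_g\end{pmatrix}$ built into $X(\bfgamma')$. Once this is established, $\GSp_{2g}(\Q_p)$-equivariance extends the agreement from $\mu_{\hst}$ to all of $\V^{\alg,\vee}$, completing the proof.
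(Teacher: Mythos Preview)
Your proposal is correct and follows essentially the same line as the paper, but the paper streamlines both parts by introducing an intermediate factorisation that you do not use. Concretely, the paper observes that $\widetilde{\eta}_k^{\alg}$ is the composition of $\beta:\V^{\alg,\vee}_{\GSp_{2g},k}\to\V^{\alg}_{\GSp_{2g},k}$ with a map
\[
\xi_k^{\alg}:\V^{\alg}_{\GSp_{2g},k}\otimes\widehat{\scrO}(\calV_\infty)\to P_k(\GL_g,\widehat{\scrO}(\calV_\infty)),\qquad \phi\otimes\delta\mapsto\delta\,g_{\phi,\frakz},
\]
where $g_{\phi,\frakz}(\bfgamma')=\phi\Bigl(\bigl(\begin{smallmatrix}\one_g&\\ \trans\frakz&\one_g\end{smallmatrix}\bigr)\bigl(\begin{smallmatrix}\bfgamma'&\\ &\oneanti_g\trans(\bfgamma')^{-1}\oneanti_g\end{smallmatrix}\bigr)\Bigr)$. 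Since $\beta$ is already known to be $\GSp_{2g}$-equivariant, part (i) reduces to checking equivariance of $\xi_k^{\alg}$ alone, avoiding the integral manipulations you outline. For part (ii), the paper notes that $\eta_k^{\alg}$ is likewise of the form $(\text{something})\circ\beta$, so one need only compare $\xi_k^{\alg}$ with the map $\V^{\alg}_{\GSp_{2g},k}\hookrightarrow V_{\std}^k\simeq V_{\Q_p}^k\to\underline{\omega}^k_{\Iw^+}$; since $\V^{\alg}_{\GSp_{2g},k}$ is spanned by $\GSp_{2g}$-translates of $e_k^{\hst}$, it suffices to match $\xi_k^{\alg}(e_k^{\hst}\otimes1)$ with the image of $e_k^{\hst}$ under Hodge--Tate, which is exactly the fake-Hasse computation you describe. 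Your choice of an element $\mu_{\hst}\in\V^{\alg,\vee}$ with $\beta(\mu_{\hst})=e_k^{\hst}$ amounts to the same check pushed through $\beta$, so the content is identical. One small point of phrasing: when you say both maps on $\Iw^+_{\GSp_{2g}}$-invariants are ``$\GSp_{2g}(\Q_p)$-equivariant'', what you really need (and what you implicitly use) is that the maps \emph{before} taking invariants are $\GSp_{2g}(\Q_p)$-equivariant, so that agreement on a cyclic vector propagates; the invariant subspaces themselves carry no $\GSp_{2g}(\Q_p)$-action.
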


\begin{proof}
\begin{enumerate}
\item[(i)] Notice that $\widetilde{\eta}^{\alg}_k$ is the composition of $\beta$ with the map 
$$
\xi_k^{\alg}: \V_{\GSp_{2g}, k}^{\alg} \otimes_{\Q_p} \widehat{\scrO}_{\overline{\calX}_{\Iw^+, w, \proket}}(\calV_{\infty})\rightarrow P_k(\GL_g, \widehat{\scrO}_{\overline{\calX}_{\Iw^+, w, \proket}}(\calV_{\infty}))
$$
defined by $\phi\otimes \delta \mapsto \delta g_{\phi,\frakz}$
where $$g_{\phi,\frakz}(\bfgamma')=\phi\left(\begin{pmatrix}\one_g& \\ \trans\frakz&\one_g\end{pmatrix}
\begin{pmatrix}\bfgamma' & \\ & \oneanti_g \trans(\bfgamma')^{-1}\oneanti_g\end{pmatrix}\right)$$
for all $\bfgamma'\in \GL_g(\C_p)$.

Recall that $\beta$ is $\GSp_{2g}(\Q_p)$-equivariant. It remains to check that $\xi^{\alg}_k$ is $\GSp_{2g}(\Q_p)$-equivariant, which follows from a straightforward calculation.

\item[(ii)] It suffices to check that the $\Iw^+_{\GSp_{2g}}$-invariance of $\xi^{\alg}_k$ coincides with the map induced from the composition $\V_{\GSp_{2g}, k}^{\alg}\hookrightarrow V_{\mathrm{std}}^k\simeq V_{\Q_p}^{k}\rightarrow  \underline{\omega}_{\Iw^+}^k$. Notice that $\V_{\GSp_{2g}, k}^{\alg}$ is spanned by $\GSp_{2g}$-translations of the highest weight vector $e^{\hst}_k$. Therefore, we only need to check that $\xi^{\alg}_k(e^{\hst}_k\otimes 1)$ gives the correct element in $\widehat{\underline{\omega}}_{\Iw^+}^k$.

Indeed, since the Hodge--Tate map $V_p\rightarrow \underline{\omega}_{\Iw^+}$ sends $e_{2g+1-i}$ to $\fraks_i$, for $i=1, \ldots, g$, we see that the composition $\V_{\GSp_{2g}, k}^{\alg}\hookrightarrow V_{\mathrm{std}}^k\simeq V_{\Q_p}^{k}\rightarrow  \underline{\omega}_{\Iw^+}^k$ sends the highest weight vector $e^{\hst}_k$ to 
$$\fraks_1^{k_1-k_2}\otimes (\fraks_1\wedge \fraks_2)^{k_2-k_3}\otimes \cdots\otimes (\fraks_1\wedge\cdots\wedge \fraks_g)^{k_g}.$$

On the other hand, notice that the element $\fraks_1\wedge\cdots \wedge \fraks_t$ corresponds to the function $X = (X_{ij})_{1\leq i,j\leq g}\mapsto \det((X_{ij})_{1\leq i,j\leq t})$ in $P_k(\GL_g, \widehat{\scrO}_{\overline{\calX}_{\Iw^+, w, \proket}}(\calV_{\infty}))$. Therefore, $e^{\hst}_k$ is sent to the function
$$X \mapsto X_{11}^{k_1-k_2}\times \det((X_{ij})_{1\leq i,j\leq 2})^{k_2-k_3}\times\cdots \times\det((X_{ij})_{1\leq i,j\leq g})^{k_g}$$
in $P_k(\GL_g, \widehat{\scrO}_{\overline{\calX}_{\Iw^+, w, \proket}}(\calV_{\infty}))$. This element coincides with $\xi^{\alg}_k(e^{\hst}_k\otimes 1)$, as desired.
\end{enumerate}
\end{proof}

Recall the natural inclusion $M^{k, \mathrm{cl}}_{\Iw^+}=H^0(\overline{\calX}_{\Iw^+}, \underline{\omega}_{\Iw^+}^k)\hookrightarrow H^0(\overline{\calX}_{\Iw^+, w}, \underline{\omega}_w^k) = M_{\Iw^+, w}^{k}$ from Lemma \ref{Lemma: injection of classical forms}. The main result of this subsection is the following.

\begin{Theorem}\label{thm:imageclassicalweight}
Let $k = (k_1, ..., k_g) \in \Z_{\geq 0}^g$ be a dominant weight. Then the image of
$$\ES_{k}: \OC^r_{k, \C_p}  \longrightarrow M^{k+g+1}_{\Iw^+, w}(-n_0)$$
is contained in the space of the classical forms $M^{k+g+1, \cl}_{\Iw^+}(-n_0)$.
\end{Theorem}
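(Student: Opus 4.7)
The strategy is to show that at a dominant algebraic weight $k$, the overconvergent Eichler--Shimura morphism $\ES_k$ factors through the algebraic Eichler--Shimura morphism $\ES_k^{\alg}$ of \S\ref{subsection:imageOES}, whose image is already contained in $M_{\Iw^+}^{k+g+1,\cl}(-n_0)$. The key observation, already hinted at in the introduction, is that for algebraic $k$ the analytic ``highest weight vector'' $e_k^{\hst}$ used in \S\ref{subsection: OES} is nothing but the restriction to $\T_0$ of the genuinely algebraic highest weight vector on $\GSp_{2g}$ used in the construction of $\eta_k^{\alg}$.

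First, I would construct a comparison map at the level of coefficient systems. The embedding $\iota : \T_0 \hookrightarrow \Iw_{\GSp_{2g}} \subset \GSp_{2g}$ sending $(\bfgamma, \bfupsilon)$ to $\begin{pmatrix}\bfgamma & \\ \bfupsilon & \oneanti_g\trans\bfgamma^{-1}\oneanti_g\end{pmatrix}$ induces a restriction map $\iota^*: \V^{\alg}_{\GSp_{2g}, k} \rightarrow A^r_k(\T_0, R_k)$, since the $B_{\GSp_{2g}}$-equivariance of $\V^{\alg}_{\GSp_{2g}, k}$ restricts to the required $B_{\GL_g, 0}$-equivariance on $\T_0$. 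Dualising yields a continuous $\Xi$- and $\Iw_{\GL_g}$-equivariant map $\iota_* : D^r_k(\T_0, R_k) \rightarrow \V^{\alg, \vee}_{\GSp_{2g}, k}$. By Lemma \ref{Lemma: Explicit description of the sheaf OD} and the analogous explicit description of $\sheafOV_k^{\vee}$, this descends to a morphism of pro-Kummer \'etale sheaves $\iota_*: \sheafOD_k^r \rightarrow \sheafOV_k^{\vee}$, and hence to a map
$$
    \iota_*: \OC^r_{k, \C_p} \longrightarrow H^{n_0}_{\et}(\calX_{\Iw^+}, \V^{\alg, \vee}_{\GSp_{2g}, k}) \otimes_{\Q_p} \C_p.
$$

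The heart of the proof is the commutativity of the diagram of pro-Kummer \'etale sheaves on $\overline{\calX}_{\Iw^+, w}$
$$
\begin{tikzcd}
\sheafOD_k^r \ar[r, "\eta_k"] \ar[d, "\iota_*"'] & \widehat{\underline{\omega}}_w^k \\
\sheafOV_k^{\vee} \ar[r, "\eta_k^{\alg}"] & \widehat{\underline{\omega}}_{\Iw^+}^k \ar[u, hook]
\end{tikzcd}
$$
where the right-hand vertical inclusion is induced by Proposition \ref{Proposition: explicit description of classical modular sheaf} via the natural injection $P_k(\GL_g, -) \hookrightarrow C_k^{w-\an}(\Iw_{\GL_g}, -)$. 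Using the explicit description of both $\eta_k$ and $\eta_k^{\alg}$, the commutativity reduces to the identity $f_{\mu, \frakz}(\bfgamma') = f^{\alg}_{\iota_*\mu, \frakz}(\bfgamma')$ for every $\mu \in D_k^r(\T_0, R_k)$ and $\bfgamma' \in \Iw_{\GL_g}$. A direct matrix computation gives
$$
    \begin{pmatrix}\trans\bfgamma' & \\ & \oneanti_g \bfgamma'^{-1}\oneanti_g\end{pmatrix}\begin{pmatrix}\one_g & \frakz \\ & \one_g\end{pmatrix}\iota(\bfgamma, \bfupsilon) = \begin{pmatrix}\trans\bfgamma'(\bfgamma + \frakz\bfupsilon) & * \\ * & *\end{pmatrix},
$$
and since $k$ is dominant algebraic, the value of $e_k^{\hst}$ on this $2g \times 2g$ matrix depends only on the determinants of the top-left $i\times i$ minors for $1 \leq i \leq g$; it therefore equals the value of $e_k^{\hst}$ computed on the $g\times g$ block $\trans\bfgamma'(\bfgamma + \frakz\bfupsilon)$, exactly as in the definition of $f_{\mu, \frakz}$. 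Pairing with $\mu$ (or equivalently, with $\iota_*\mu$) yields the desired identity.

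Finally, I would pass the sheaf-level diagram to cohomology. Applying $H^{n_0}_{\proket}$ and composing with the edge maps of Lemma \ref{Lemma: Leray spectral sequence for the automorphic sheaf}, applied both to $\widehat{\underline{\omega}}_w^k$ on $\overline{\calX}_{\Iw^+, w}$ and to $\widehat{\underline{\omega}}_{\Iw^+}^k$ on $\overline{\calX}_{\Iw^+}$, I obtain the commutative diagram
$$
\begin{tikzcd}
\OC^r_{k, \C_p} \ar[r, "\ES_k"] \ar[d, "\iota_*"'] & M_{\Iw^+, w}^{k+g+1}(-n_0) \\
H^{n_0}_{\et}(\calX_{\Iw^+}, \V^{\alg, \vee}_{\GSp_{2g}, k}) \otimes_{\Q_p} \C_p \ar[r, "\ES_k^{\alg}"] & M_{\Iw^+}^{k+g+1, \cl}(-n_0) \ar[u, hook]
\end{tikzcd}
$$
in which the right vertical inclusion is furnished by Lemma \ref{Lemma: injection of classical forms}. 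Since the image of the bottom row lies in $M_{\Iw^+}^{k+g+1, \cl}(-n_0)$, so does the image of $\ES_k$. The principal technical obstacle I anticipate is the verification of the edge-map compatibility on the right-hand column, i.e.\ showing that the Leray edge maps for $\widehat{\underline{\omega}}_w^k$ and $\widehat{\underline{\omega}}_{\Iw^+}^k$ intertwine the sheaf inclusion with the classical-to-overconvergent inclusion of sections; this should follow from naturality of the Leray spectral sequence together with the fact that the Kodaira--Spencer identification and the computation of $R^{n_0}\nu_*$ used in the proof of Lemma \ref{Lemma: Leray spectral sequence for the automorphic sheaf} are compatible with restriction from $\overline{\calX}_{\Iw^+}$ to $\overline{\calX}_{\Iw^+, w}$.
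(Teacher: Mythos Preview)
Your proposal is correct and follows essentially the same approach as the paper: construct the comparison map $D^r_k(\T_0,\Q_p)\rightarrow \V^{\alg,\vee}_{\GSp_{2g},k}$ by dualising restriction along $\T_0\hookrightarrow\GSp_{2g}$, verify the commutative square of sheaves $\sheafOD^r_k\rightarrow\sheafOV_k^\vee$ against $\widehat{\underline{\omega}}^k_{\Iw^+}\hookrightarrow\widehat{\underline{\omega}}^k_w$, and then pass to cohomology. Your explicit matrix computation showing that $e_k^{\hst}$ on the $2g\times 2g$ product depends only on the top-left $g\times g$ block is exactly the content the paper leaves implicit in the phrase ``it follows from the construction,'' and the edge-map compatibility you flag is handled in the paper by unfolding the full chain of restriction and edge maps into one large commutative diagram.
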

\begin{proof}
Firstly, there is a natural map \[
\V_{\GSp_{2g}, k}^{\alg} \rightarrow A^{r}_{k}(\T_0, \Q_p)
\]
induced by the inclusion \[
    \T_0 \rightarrow \GSp_{2g}(\Q_p), \quad (\bfgamma, \bfupsilon)\mapsto \begin{pmatrix}\bfgamma & \\ \bfupsilon & \oneanti_g \trans\bfgamma^{-1}\oneanti_g\end{pmatrix}.
\] The dual of this map gives
\[
D_k^r(\T_0, \Q_p)\rightarrow \V^{\alg, \vee}_{\GSp_{2g},k}
\]
which then induces a map of sheaves \[
    \sheafOD_{k}^r \rightarrow \sheafOV_{k}^{\vee}
\] over $\overline{\calX}_{\Iw^+, w, \proket}$. Hence, the theorem follows once we show that the following diagram commutes
$$
\begin{tikzcd}
H_{\proket}^{n_0}(\overline{\calX}_{\Iw^+}, \sheafOD_{k}^r)\arrow[d]\arrow[r, "\ES_{k}"]  & H^0(\overline{\calX}_{\Iw^+, w}, \underline{\omega}_w^{k+g+1})(-n_0)   \\
 H_{\proket}^{n_0}(\overline{\calX}_{\Iw^+}, \sheafOV_{k}^{\vee}) \arrow[r, "\ES_k^{\alg}"] & H^0(\overline{\calX}_{\Iw^+}, \underline{\omega}_{\Iw^+}^{ k+g+1})(-n_0) \arrow[u]
\end{tikzcd}.
$$

Over $\overline{\calX}_{\Iw^+, w, \proket}$, it follows from the construction that we have a commutative diagram \[
    \begin{tikzcd}
        \sheafOD_{k}^r\arrow[r, "\eta_k"]\arrow[d] & \widehat{\underline{\omega}}_w^{k}\\
        \sheafOV_k^{\vee}\arrow[r, "\eta_k^{\alg}"] & \widehat{\underline{\omega}}_{\Iw^+}^{k}\arrow[u, hook]
    \end{tikzcd},
\] where the inclusion on the right-hand side is given by the inclusion (\ref{eq: alg. sheaf into overconvergent sheaf}). Consequently, there is a commutative diagram on the cohomology groups \[
    \begin{tikzcd}
        H_{\proket}^{n_0}(\overline{\calX}_{\Iw^+}\sheafOD_{k}^r)\arrow[r]\arrow[d, "\mathrm{Res}"] \arrow[ddd, bend right = 80, "\ES_{k}"'] & H_{\proket}^{n_0}(\overline{\calX}_{\Iw^+}, \sheafOV_{k}^{\vee})\arrow[d, "\mathrm{Res}"] \arrow[dddr, bend left = 40, "\ES_k^{\alg}"]\\
        H_{\proket}^{n_0}(\overline{\calX}_{\Iw^+, w}, \sheafOD_{k}^r)\arrow[r]\arrow[d, "\eta_k"] & H_{\proket}^{n_0}(\overline{\calX}_{\Iw^+, w}, \sheafOV_k^{\vee})\arrow[d, "\eta_{k}^{\alg}"]\\
        H_{\proket}^{n_0}(\overline{\calX}_{\Iw^+, w}, \widehat{\underline{\omega}}_{w}^{k})\arrow[d] & H_{\proket}^{n_0}(\overline{\calX}_{\Iw^+, w}, \widehat{\underline{\omega}}_{\Iw^+}^{k})\arrow[d]\arrow[l, hook']\\
        H^0(\overline{\calX}_{\Iw^+, w}, \underline{\omega}_{w}^{k+g+1})(-n_0) & H^0(\overline{\calX}_{\Iw^+, w}, \underline{\omega}_{\Iw^+}^{k+g+1})(-n_0)\arrow[l, hook'] & H^0(\overline{\calX}_{\Iw^+}, \underline{\omega}_{\Iw^+}^{k+g+1})(-n_0)\arrow[l, hook', "\mathrm{Res}"]
    \end{tikzcd}.
\] This finishes the proof.
\end{proof}

\begin{Corollary}\label{Corollary: OES is surjective on small slope part}
    Let $k=(k_1, ..., k_g)\in \Z_{\geq 0}^g$ be a dominant weight. Let $\OC_{k, \C_p}^{r, \mathrm{ss}}$ and $M_{\Iw^+, w}^{k+g+1, \mathrm{ss}}(-n_0)$ be the small-slope part of $\OC_{k, \C_p}^{r}$ and $M_{\Iw^+, w}^{k+g+1}(-n_0)$ respectively, i.e., the part on which the $U_{p,i}$-eigenvalues have slopes bounded as in \cite[Theorem 7.1.1]{AIP-2015} and the $U_p$-eigenvalue has slope bounded as in \cite[Definition 3.2.4]{Hansen-PhD}. Then, $\ES_k$ induces a surjective Hecke- and Galois-equivariant morphism \[
        \ES_k: \OC_{k, \C_p}^{r, \mathrm{ss}} \rightarrow M_{\Iw^+, w}^{k+g+1, \mathrm{ss}}(-n_0).
    \]
\end{Corollary}
\begin{proof}
    By the proof of Theorem \ref{thm:imageclassicalweight}, we have a Hecke- and Galois-equivariant commutative diagram \[
        \begin{tikzcd}
            \OC_{k, \C_p}^{r} \arrow[r] \arrow[d, "\ES_k"'] & H_{\proket}^{n_0}(\overline{\calX}_{\Iw^+}, \sheafOV_{k}^{\vee}) \arrow[d, "\ES_k^{\alg}"]\\
            M_{\Iw^+, w}^{k+g+1}(-n_0) & H^0(\overline{\calX}_{\Iw^+}, \underline{\omega}_{\Iw^+}^{k+g+1})(-n_0) \arrow[l]
        \end{tikzcd},
    \]
    where the horizontal arrows become isomorphism after passing to the small-slope parts by \cite[Theorem 3.2.5]{Hansen-PhD} and \cite[Theorem 7.1.1]{AIP-2015}. We conclude by noting that $\ES_k^{\alg}$ exhibits $H^0(\overline{\calX}_{\Iw^+}, \underline{\omega}_{\Iw^+}^{k+g+1})$ as a Hecke- and Galois-equivariant summand of $H_{\proket}^{n_0}(\overline{\calX}_{\Iw^+}, \sheafOV_{k}^{\vee})$ (\cite[Theorem VI. 6.2]{Faltings-Chai}), so we are done. 
\end{proof}
\section{The overconvergent Eichler--Shimura morphism on the cuspidal eigenvariety }\label{section:oncuspidaleigenvariety}
In this section, we glue the overconvergent Eichler--Shimura morphism over a suitable eigenvariety $\calE_0$. We begin with some preliminaries on the overconvergent cohomology groups in \S \ref{subsection: preliminaries on overconvergent cohomology}. The eigenvariety $\calE_{0}$ is constructed in \S \ref{subsection: cuspidal eigenvariety}. Finally in \S \ref{subsection: sheaves on the cuspidal eigenvariety}, we show that the overconvergent Eichler--Shimura morphism spreads out over $\calE_0$.

Throughout this section, we assume $p>2g$ so that we can apply results in \cite{AIP-2015} via the comparison in \S \ref{subsection:comparison sheaf aip}. On the other hand, we believe that the results in this section hold for smaller primes as well. In order to deal with these smaller primes, one would have to reprove several results in \cite{AIP-2015} in our context; \emph{e.g.}, the classicality result and the fact that $S_{\Iw^+}^{\kappa_{\calU}}$ has property (Pr) in the sense of \cite{Buzzard_2007}. We decide to leave these generalities to the reader in order to keep this paper within a reasonable length.

\subsection{Some preliminaries on overconvergent cohomology groups}\label{subsection: preliminaries on overconvergent cohomology} The purpose of this subsection is to review the basic constructions and properties of the overconvergent cohomology groups needed in latter subsections. Most of the materials are recorded from \cite{Hansen-PhD, CHJ-2017}. We do not claim any originality here. 

\begin{Definition}\label{Definition: open weights}
\begin{enumerate}
    \item[(i)] Let $(R_{\calU}, \kappa_{\calU})$ be a small weight. We say it is \textbf{open} if the natural map \[
        \calU^{\rig} = \Spa(R_{\calU}, R_{\calU})^{\rig} \rightarrow \calW
    \] is an open immersion.
    \item[(ii)] Let $(R_{\calU}, \kappa_{\calU})$ be an affinoid weight. We say it is \textbf{open} if the natural map \[
        \calU^{\rig} = \calU = \Spa(R_{\calU}, R_{\calU}^{\circ}) \rightarrow \calW
    \] is an open immersion. 
    \item[(iii)] A weight $(R_{\calU}, \kappa_{\calU})$ is called an \textbf{open weight} if it is either an small open weight or an affinoid open weight. 
\end{enumerate}
\end{Definition}

Given an open weight $(R_{\calU}, \kappa_{\calU})$ and an integer $r>1+r_{\calU}$, one considers the so called \emph{Borel--Serre chain complex} $C_{\bullet}(\Iw_{\GSp_{2g}}^+, A_{\kappa_{\calU}}^{r}(\T_0, R_{\calU}))$ (resp., \emph{Borel--Serre cochain complex} $C^{\bullet}(\Iw_{\GSp_{2g}}^+, D_{\kappa_{\calU}}^{r}(\T_0, R_{\calU}))$) which computes the Betti homolgy groups $H_t(X_{\Iw^+}(\C), A_{\kappa_{\calU}}^r(\T_0, R_{\calU}))$ (resp., Betti cohomology groups $H^t(X_{\Iw^+}(\C), D_{\kappa_{\calU}}^r(\T_0, R_{\calU}))$). The Borel--Serre chain complex is a finite complex as it is constructed by a fixed triangulation on the Borel--Serre compactification of the locally symmetric space $X_{\Iw^+}(\C)$. We write \begin{align*}
    & C_{\tol}^{\kappa_{\calU}, r} := \bigoplus_{t}C_t(\Iw^+_{\GSp_{2g}}, A_{\kappa_{\calU}}^r(\T_0, R_{\calU})),\\
    & C_{\kappa_{\calU}, r}^{\tol} := \bigoplus_{t}C^t(\Iw_{\GSp_{2g}}^+, D_{\kappa_{\calU}}^r(\T_0, R_{\calU})).
\end{align*} Then $C^{\kappa_{\calU}, r}_{\tol}$ is an ON-able $R_{\calU}[1/p]$-module as $A_{\kappa_{\calU}}^r(\T_0, R_{\calU})$ is ON-able (see \cite[\S 2.2, Remarks]{Hansen-PhD}). Moreover, there are naturally defined Hecke operators on $C_{\tol}^{\kappa_{\calU}, r}$ and the action of $U_p$ is compact (see [\textit{op. cit.}, \S 2.2]). We define $F_{\kappa_{\calU}, r}^{\oc}\in R_{\calU}[1/p]\llbrack T\rrbrack$ to be the Fredholm determinant of $U_p$ acting on $C^{\kappa_{\calU}, r}_{\tol}$. Notice that, for any $h\in \Q_{\geq 0}$, the existence of a slope-$\leq h$ decomposition of $C_{\tol}^{\kappa_{\calU}, r}$ is equivalent to the existence of a slope-$\leq h$ factorisation of $F_{\kappa_{\calU},r}^{\oc}$. Moreover, by [\textit{op. cit.}, Proposition 3.1.2], if $C^{\kappa_{\calU},r}_{\tol, \leq h}$ is the slope-$\leq h$ submodule of $C^{\kappa_{\calU},r}_{\tol}$ and suppose $\calU'=(R_{\calU'}, \kappa_{\calU'})$ is another open weight such that $\calU'^{\rig} \subset \calU^{\rig}$, there is a canonical isomorphism \[
    C_{\tol, \leq h}^{\kappa_{\calU}, r}\otimes_{R_{\calU}[\frac{1}{p}]}R_{\calU'}[\frac{1}{p}] \cong C_{\tol, \leq h}^{\kappa_{\calU'}, r}.
\]

\begin{Definition}
Let $\calU=(R_{\calU}, \kappa_{\calU})$ be an open weight and let $h\in \Q_{\geq 0}$. The pair $(\calU, h)$ is called a \textbf{slope datum} if $F_{\kappa_{\calU},r}^{\oc}$ admits a slope-$\leq h$ factorisation.
\end{Definition}

\begin{Proposition}\label{Proposition: functoriality of BS chain complex under slope decomposition} Let $(\calU, h)$ be a slope datum and let $(R_{\calU'}, \kappa_{\calU'})$ be an affinoid open weight such that $\calU' \subset \calU^{\rig}$. 
\begin{enumerate}
    \item[(i)] There is a canonical isomorphism \[
        H_{t}(X_{\Iw^+}(\C), A_{\kappa_{\calU}}^r(\T_0, R_{\calU}))_{\leq h} \otimes_{R_{\calU}[\frac{1}{p}]} R_{\calU'} \cong H_{t}(X_{\Iw^+}(\C), A_{\kappa_{\calU'}}^r(\T_0, R_{\calU'}))_{\leq h}
    \] for all $t\in \Z$, where the subscript ``$\leq h$'' stands for the slope-$\leq h$ submodule.
    \item[(ii)] The cochain complex $C_{\kappa_{\calU}, r}^{\tol}$ and the cohomology groups $H^t(X_{\Iw^+}(\C), D_{\kappa_{\calU}}^r(\T_0, R_{\calU}))$ admit slope-$\leq h$ decompositions. The corresponding slope-$\leq h$ submodules are denoted by $C_{\kappa_{\calU}, r}^{\tol, \leq h}$ and $H^t(X_{\Iw^+}(\C), D_{\kappa_{\calU}}^r(\T_0, R_{\calU}))^{\leq h}$, respectively. 
    \item[(iii)] There are canonical isomorphisms$$
        C_{\kappa_{\calU}, r}^{\tol, \leq h}\otimes_{R_{\calU}[\frac{1}{p}]}R_{\calU'}\cong C_{\kappa_{\calU'}, r}^{\tol, \leq h}$$
        and 
        $$ H^t(X_{\Iw^+}(\C), D_{\kappa_{\calU}}^r(\T_0, R_{\calU}))^{\leq h}\otimes_{R_{\calU}[\frac{1}{p}]}R_{\calU'} \cong H^t(X_{\Iw^+}(\C), D_{\kappa_{\calU'}}^r(\T_0, R_{\calU'}))^{\leq h}.$$
\end{enumerate}
\end{Proposition}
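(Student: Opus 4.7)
The strategy is to leverage the canonical base change isomorphism for the total Borel--Serre chain complex $C_{\tol,\leq h}^{\kappa_{\calU},r}$ already stated in the excerpt (which comes from \cite[Proposition 3.1.2]{Hansen-PhD}), and then propagate it to each term of the finite chain complex, to the dual cochain complex via continuous $R_{\calU}[\frac{1}{p}]$-duality, and finally to (co)homology. The key point throughout is that once one passes to the slope-$\leq h$ part, all modules become finitely generated and projective over $R_{\calU}[\frac{1}{p}]$, so that base change becomes exact.

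For part (i), the finite Borel--Serre chain complex $C_{\bullet}(\Iw_{\GSp_{2g}}^+, A_{\kappa_{\calU}}^{r}(\T_0, R_{\calU}))$ has $C_{\tol}^{\kappa_{\calU},r}$ as its (Hecke-equivariant) direct sum of terms, so the slope-$\leq h$ decomposition of the total complex restricts termwise. Since slope decomposition is exact on the category of Banach modules admitting it, the homology of the subcomplex $C_{\bullet,\leq h}^{\kappa_{\calU},r}$ is precisely the slope-$\leq h$ part of the Betti homology. Each term $C_{t,\leq h}^{\kappa_{\calU},r}$ is a direct summand of the finitely generated projective $R_{\calU}[\frac{1}{p}]$-module $C_{\tol,\leq h}^{\kappa_{\calU},r}$; by uniqueness of slope decomposition, the canonical base change isomorphism on the total complex restricts to canonical base change isomorphisms termwise. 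Since base change along $R_{\calU}[\frac{1}{p}] \to R_{\calU'}$ is exact on finitely generated projective modules, it commutes with taking homology of the finite complex, giving the desired isomorphism.

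For parts (ii) and (iii), the cochain complex $C^{\bullet}(\Iw_{\GSp_{2g}}^+, D_{\kappa_{\calU}}^{r}(\T_0, R_{\calU}))$ is the continuous $R_{\calU}[\frac{1}{p}]$-linear dual of the chain complex: on each term, $D_{\kappa_{\calU}}^{r}(\T_0, R_{\calU}) = \Hom_{R_{\calU}^+}^{\cts}(A_{\kappa_{\calU}}^{r,\circ}(\T_0, R_{\calU}), R_{\calU}^+)[\frac{1}{p}]$, with the $U_p$-action transposing. Since $U_p$ acts compactly on the ON-able module $C_{\tol}^{\kappa_{\calU},r}$, its transpose acts compactly on $C_{\kappa_{\calU},r}^{\tol}$, and the two Fredholm determinants agree; hence $F_{\kappa_{\calU},r}^{\oc}$ admitting a slope-$\leq h$ factorization yields slope-$\leq h$ decompositions of $C_{\kappa_{\calU},r}^{\tol}$ (and termwise of the cochain complex). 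Taking cohomology of this finite complex then produces the slope-$\leq h$ decomposition of each $H^t(X_{\Iw^+}(\C), D_{\kappa_{\calU}}^{r}(\T_0, R_{\calU}))$, proving (ii). For the base change assertions in (iii), one dualizes the canonical isomorphism $C_{\tol,\leq h}^{\kappa_{\calU},r} \otimes_{R_{\calU}[\frac{1}{p}]} R_{\calU'}[\frac{1}{p}] \cong C_{\tol,\leq h}^{\kappa_{\calU'},r}$ of finitely generated projective modules, then restricts termwise, and finally applies the cohomology functor to the finite slope-$\leq h$ cochain complex, once more using that base change is exact on its finitely generated projective terms.

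The main technical hurdle is verifying the duality relation between the slope-$\leq h$ pieces of the chain and cochain complexes in a manner that is simultaneously Hecke-equivariant and compatible with base change. In particular, one must check that taking continuous $R_{\calU}[\frac{1}{p}]$-duals commutes with slope-$\leq h$ decomposition; this is precisely where the orthonormalizability of $A_{\kappa_{\calU}}^{r}(\T_0, R_{\calU})$ is used, as it guarantees that the dual of the slope-$\leq h$ piece is the slope-$\leq h$ piece of the dual under the transposed $U_p$. Once this is granted the argument proceeds in parallel with \cite[\S 3.1]{Hansen-PhD}, to which we refer for the remaining details.
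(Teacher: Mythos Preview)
Your proposal is correct and follows essentially the same approach as the paper, which simply defers to \cite[Proposition 3.3 \& Proposition 3.4]{CHJ-2017}. You have effectively reconstructed the argument carried out in those references (and in \cite[\S 3.1]{Hansen-PhD} on which they build): pass from the total chain complex to the termwise slope-$\leq h$ decomposition, use finite projectivity of the slope-$\leq h$ pieces to commute base change with (co)homology, and transfer everything to the cochain side via continuous duality and the matching Fredholm determinants.
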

\begin{proof}
The proof follows verbatim as in the proofs of \cite[Proposition 3.3 \& Proposition 3.4]{CHJ-2017}.
\end{proof}

Notice that, if we vary the open weights $\calU$, the Fredholm determinants glue into a power series $F_{\calW}^{\oc}\in \scrO_{\calW}(\calW)\{\{T\}\}$. Here we drop the subscript ``$r$'' because the Fredholm determinant does not depend on $r$ according to \cite[Proposition 3.1.1]{Hansen-PhD}.

\subsection{The cuspidal eigenvariety}\label{subsection: cuspidal eigenvariety}
\noindent\textbf{The spectral variety.}
In the previous subsection, we obtained the Fredholm determinant $F_{\calW}^{\oc}\in \scrO_{\calW}(\calW)\{\{T\}\}$. On the other hand, given an affinoid weight $(R_{\calU}, \kappa_{\calU})$ and $w>1+r_{\calU}$, by \cite[Proposition 8.1.3.1]{AIP-2015} and Theorem \ref{Theorem: comparison with AIP} (see also [\textit{op. cit.}, Proposition 8.2.3.3]), the space of cuspforms $S_{\Iw^+,w}^{\kappa_{\calU}}=H^0(\overline{\calX}_{\Iw^+, w}, \underline{\omega}_{w, \cusp}^{\kappa_{\calU}})$ has property (Pr) in the sense of \cite{Buzzard_2007}; namely, it is a direct summand of a potentially ON-able $\C_p\widehat{\otimes}R_{\calU}$-Banach space. Also recall that $U_p$ acts compactly on the space of overconvergent Siegel modular forms. Therefore, we can define the Fredholm determinant $F_{\kappa_{\calU}, w}^{\mf}$ of $U_p$ acting on $S_{\Iw^+, w}^{\kappa_{\calU}}$. When we vary the affinoid weights, the Fredholm determinants glue together. After further taking the inductive limit over $w$, we arrive at a power series $F_{\calW}^{\mf}\in \scrO_{\calW}(\calW)\{\{T\}\}\widehat{\otimes}_{\Q_p}\C_p$.

Define $$F_{\calW}:=F_{\calW}^{\mf}F_{\calW}^{\oc}\in \scrO_{\calW}(\calW)\{\{T\}\}\widehat{\otimes}_{\Q_p}\C_p,$$ which is still a Fredholm series. Let $\mathbb{A}_{\C_p}^1$ denote the adic affine line over $(\C_p, \calO_{\C_p})$ with coordinate function $T$ and let $\bbA_{\calW}^1:=\calW\times_{\Spa(\Q_p, \Z_p)}\mathbb{A}_{\C_p}^1$. The \textit{\textbf{spectral variety}} $\calS$ is defined to be the zero locus of $F_{\calW}$ in $\bbA_{\calW}^1$.\\

\noindent\textbf{The eigenvarieties.} 

\begin{Definition}\label{Defintiion: slope data}
Let $\calU$ be an open weight so that $\calU^{\rig}\subset\calW$. Let $\calU^{\rig}_{\C_p}$ denote the base change of $\calU^{\rig}$ to $\Spa(\C_p, \calO_{\C_p})$ and consider the adic affine line $\bbA_{\calU}^1:=\calU^{\rig}\times_{\Spa(\Q_p, \Z_p)}\mathbb{A}_{\C_p}^1$ over $\calU^{\rig}_{\C_p}$. Moreover, for any $h\in \Q_{\geq  0}$, consider the closed ball $\mathbf{B}(0,p^h)$ of radius $p^h$ over $\C_p$ and define $\B_{\calU, h}:= \calU^{\rig} \times_{\Spa(\Q_p, \Z_p)}\mathbf{B}(0, p^h)$. Let $\calS_{\calU, h}:=\calS\cap \B_{\calU, h}$. We say that the pair $(\calU, h)$ is \textbf{slope-adapted} if  the natural map $\calS_{\calU, h}\rightarrow \calU^{\rig}_{\C_p}$ is finite flat.
\end{Definition}

Consider the collection $$\Cov(\calS)=\{\calS_{\calU, h}: (\calU, h)\text{ is slope-adapted}\}.$$ Let $\Cov_{\mathrm{aff}}(\calS)$ be a subcollection of $\Cov(\calS)$, consisting of those $\calS_{\calU, h}$ with $\calU$ being an affinoid weight. By \cite[Theorem 4.6]{Buzzard_2007} (see also \cite[Proposition 4.1.4]{Hansen-PhD}), $\Cov_{\mathrm{aff}}(\calS)$ forms an open covering for $\calS$ (and thus so is $\Cov(\calS)$). Using this covering, we define the following two coherent sheaves on $\calS$.

\begin{Definition}
\begin{enumerate}
\item[(i)] Recall from \S \ref{subsection:continuousfunctions} that $D_{\kappa_{\calU}}^{\dagger}(\T_0, R_{\calU})$ is defined to be the inverse limit of $D_{\kappa_{\calU}}^r(\T_0, R_{\calU})$ with respect to $r$. The coherent sheaf $\scrH_{\Par}^{\tol}$ on $\calS$ is defined by
$$\scrH_{\Par}^{\tol}(\calS_{\calU, h}):=\image\left(\bigoplus_{t}H_{c}^t(X_{\Iw^+}(\C), D_{\kappa_{\calU}}^{\dagger}(\T_0, R_{\calU}))^{\leq h}\rightarrow \bigoplus_{t}H^t(X_{\Iw^+}(\C), D_{\kappa_{\calU}}^{\dagger}(\T_0, R_{\calU}))^{\leq h}\right)\widehat{\otimes}_{\Q_p} \C_p$$
for all $\calS_{\calU, h}\in\Cov_{\mathrm{aff}}(\calS)$, where the map $$H_{c}^t(X_{\Iw^+}(\C), D_{\kappa_{\calU}}^{\dagger}(\T_0, R_{\calU}))^{\leq h}\rightarrow H^t(X_{\Iw^+}(\C), D_{\kappa_{\calU}}^{\dagger}(\T_0, R_{\calU}))^{\leq h}$$ is induced from the natural map from the compactly supported cohomology groups to the usual ones.

\item[(ii)] The coherent sheaf $\scrS_{\Iw^+}^{\dagger}$ on $\calS$ is defined by
$$\scrS_{\Iw^+}^{\dagger}(\calS_{\calU, h}):=S_{\Iw^+}^{ \kappa_{\calU}+g+1, \leq h}$$
for all $\calS_{\calU, h}\in\Cov_{\mathrm{aff}}(\calS)$, where the superscript ``$\leq h$'' stands for the slope-$\leq h$ part with respect to the $U_p$-operator.
\end{enumerate}
\end{Definition}

These are indeed well-defined coherent sheaves (see, for example, \cite[\S 4.3]{Hansen-PhD} and \cite[\S 8.1]{AIP-2015}, respectively). The Hecke algebra $\bbT$ acts on both of the coherent sheaves. The eigenvarieties we are interested in are the following.

\begin{Definition}
\begin{enumerate}
\item[(i)] For every $\calS_{\calU, h}\in \Cov_{\mathrm{aff}}(\calS)$, let $\bbT_{\calU, h}^{\oc}$ be the reduced $\scrO_{\calS_{\calU, h}}(\calS_{\calU, h})$-algebra generated by the image of $\bbT\rightarrow \End\left(\scrH_{\Par}^{\tol}(\calS_{\calU, h})\right)$. Let $\bbT_{\calU, h}^{\oc, \circ}$ be the integral closure of $\scrO_{\calS_{\calU, h}}(\calS_{\calU, h})^{\circ}$ inside $\bbT_{\calU, h}^{\oc}$. 
\item[(ii)] Let $\scrT_{\oc}$ be the coherent sheaf on $\calS$ defined by $\scrT_{\oc}(\calS_{\calU, h}):=\bbT_{\calU, h}^{\oc}$, and let $\scrT_{\oc}^{\circ}$ be the subsheaf of $\scrT_{\oc}$ defined by $\scrT_{\oc}^{\circ}(\calS_{\calU, h}):=\bbT_{\calU, h}^{\oc, \circ}$.
\item[(iii)] The \textbf{reduced cuspidal eigenvariety} $\calE_{0}^{\oc}$ is defined to be the relative adic space $\Spa_{\calS}(\scrT_{\oc}, \scrT_{\oc}^{\circ})$. 
\end{enumerate}
\end{Definition}

\begin{Definition}
\begin{enumerate}
\item[(i)] For every $\calS_{\calU, h}\in \Cov_{\mathrm{aff}}(\calS)$, let $\bbT_{\calU, h}^{\mf}$ be the reduced $\scrO_{\calS_{\calU, h}}(\calS_{\calU, h})$-algebra generated by the image of $\bbT\rightarrow \End\left(\scrS_{\Iw^+}^{\dagger}(\calS_{\calU, h})\right)$. Let $\bbT_{\calU, h}^{\mf,\circ}$ be the integral closure of $\scrO_{\calS_{\calU, h}}(\calS_{\calU, h})^{\circ}$ inside $\bbT_{\calU, h}^{\mf}$.
\item[(ii)] Let $\scrT_{\mf}$ be the coherent sheaf on $\calS$ defined by $\scrT_{\mf}(\calS_{\calU, h}):=\bbT_{\calU, h}^{\mf}$. Let $\scrT_{\mf}^{\circ}$ be the subsheaf of $\scrT_{\mf}$ defined by $\scrT_{\mf}^{\circ}(\calS_{\calU, h}):=\bbT_{\calU, h}^{\mf, \circ}$. 
\item[(iii)] The \textbf{equidimensional cuspidal eigenvariety} $\calE_0^{\mf}$ is defined to be the equidimensional locus of the  relative adic space $\Spa_{\calS}(\scrT_{\mf}, \scrT_{\mf}^{\circ})$.
\end{enumerate}
\end{Definition}

\begin{Remark}\label{Remark: eigenvarieties}
\normalfont Notice that $\calE_0^{\mf}$ is (the stricit Iwahori version of) the equidimensional cuspidal eigenvariety constructed in \cite{AIP-2015} after base change to $\C_p$. On the other hand, $\calE_{0}^{\oc}$ is the reduced cuspidal eigenvariety considered in \cite{Wu-2020} after base change to $\C_p$. We also point out that the cuspidal eigenvariety considered in \textit{op. cit.} is the cuspidal part of the eigenvariety for $\GSp_{2g}$ constructed in \cite{Johansson-Newton} (see also \cite{Hansen-PhD}).
\end{Remark}

\begin{Proposition}\label{Proposition: comparison of eigenvarieties}
There is a natural closed immersion $\calE_{0}^{\mf}\hookrightarrow \calE_{0}^{\oc}$.
\end{Proposition}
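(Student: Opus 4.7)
The plan is to exhibit, for each slope-adapted affinoid datum $(\calU, h)$, a natural surjection of reduced finite $R$-algebras $\bbT^{\oc}_{\calU, h} \twoheadrightarrow \bbT^{\mf}_{\calU, h}$, where $R := \scrO_{\calS_{\calU, h}}(\calS_{\calU, h})$, and then glue over $\Cov_{\mathrm{aff}}(\calS)$. A closed immersion of the corresponding relative adic spectra follows formally from such a surjection, and restricting to the equidimensional locus on the source gives the desired $\calE_0^{\mf} \hookrightarrow \calE_0^{\oc}$. Unwinding the definitions, the required surjection amounts to showing that any $t \in \bbT$ acting as zero on $\scrH^{\tol}_{\Par}(\calS_{\calU, h})$ also acts as zero on $\scrS^{\dagger}_{\Iw^+}(\calS_{\calU, h}) = S^{\kappa_{\calU}+g+1, \leq h}_{\Iw^+}$. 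The compatibility of these local surjections with refinements of $(\calU, h)$ will come from Proposition \ref{Proposition: functoriality of BS chain complex under slope decomposition} together with its coherent cuspidal counterpart, transported to our setting via Corollary \ref{Corollary: comparison with AIP}.

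To establish the kernel containment, I will invoke the density of classical points on $\calE_0^{\mf}$. Via Corollary \ref{Corollary: comparison with AIP}, our cuspidal eigenvariety agrees with the one of \cite{AIP-2015}, for which the points corresponding to classical algebraic Siegel cuspforms of sufficiently regular dominant weight $k+g+1$ and small slope form a Zariski dense subset of each $\calE_0^{\mf} \cap \calS_{\calU, h}$. This is the input that reduces the desired statement to a classical check at each such point.

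At such a classical point $x$, with Hecke eigensystem $\lambda_x : \bbT \to \overline{\Q}_p$ realized by a classical cuspform $f$, the key input will be Faltings's classical Eichler--Shimura decomposition for Siegel modular varieties (cf.\ \cite{Faltings-Chai} and the morphism $\ES_k^{\alg}$ recalled in \S \ref{subsection:imageOES}): $\lambda_x$ is simultaneously realized in the interior \'etale cohomology $\image\bigl(H^{n_0}_{\et, c}(X_{\Iw^+}, \V^{\alg, \vee}_{\GSp_{2g}, k}) \to H^{n_0}_{\et}(X_{\Iw^+}, \V^{\alg, \vee}_{\GSp_{2g}, k})\bigr) \otimes \C_p$, which embeds into the classical specialization of the degree-$n_0$ summand of $\scrH^{\tol}_{\Par}(\calS_{\calU, h})$ at $x$. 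It will follow that $\lambda_x$ factors through $\bbT^{\oc}_{\calU, h}$, so the vanishing of $t$ on $\scrH^{\tol}_{\Par}(\calS_{\calU, h})$ will force $\lambda_x(t) = 0$.

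Finally, since $\bbT^{\mf}_{\calU, h}$ is reduced and finite over the affinoid algebra $R$ (hence Jacobson), with adic spectrum identified with $\calE_0^{\mf} \cap \calS_{\calU, h}$, the vanishing of $\lambda_x(t)$ on a Zariski dense set of points $x$ will force $t = 0$ in $\bbT^{\mf}_{\calU, h}$, completing the argument. The main obstacle I anticipate is the bookkeeping of the slope renormalization between the coherent $U_p$-operators on cuspforms and the cohomological $U_p$-operators: these differ by explicit powers of $p$, as recorded in the remark following Corollary \ref{Corollary: comparison with AIP}, so a classical point of $\calE_0^{\mf}$ of slope $\leq h$ will contribute to $\scrH^{\tol}_{\Par}$ only at a shifted slope bound. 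This should be resolvable by enlarging the intermediate slope bound uniformly before refining back to $(\calU, h)$, but keeping the various normalization conventions consistent throughout the gluing step is the main technical point to verify.
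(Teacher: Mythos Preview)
Your approach shares the paper's underlying idea---leverage classical points together with an Eichler--Shimura input to compare Hecke actions---but the paper packages this via the interpolation theorem \cite[Theorem 5.1.2]{Hansen-PhD} rather than constructing the surjection $\bbT^{\oc}_{\calU,h}\twoheadrightarrow \bbT^{\mf}_{\calU,h}$ by hand. That theorem asks only for a very Zariski-dense set $\calS^{\cl}\subset\calS$ where the characteristic polynomial of each $Y\in\bbT$ on $\scrS^{\dagger}_{\Iw^+,\bfitx}$ divides that on $\scrH^{\tol}_{\Par,\bfitx}$; the paper then checks this divisibility fibre-by-fibre using (a) the classicality theorem \cite[Theorem 7.1.1]{AIP-2015} for the cuspform side, (b) the control theorem \cite[Theorem 4.3.2]{Wu-2020} for the overconvergent cohomology side, and (c) Hida's Eichler--Shimura injection \cite[Theorem 3.8]{Hida_2002} linking the two. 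All the density and gluing bookkeeping you propose to carry out is absorbed into Hansen's theorem.

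There is one genuine gap in your outline. Your step ``$\lambda_x$ is realised in interior \'etale cohomology, which embeds into the classical specialisation of $\scrH^{\tol}_{\Par}(\calS_{\calU,h})$ at $x$'' needs the control theorem, not just Faltings's Eichler--Shimura. The natural map goes the wrong way: one has a specialisation $D^{\dagger}_{k}\rightarrow \V^{\alg,\vee}_{\GSp_{2g},k}$ inducing $H^{n_0}_{\Par}(X_{\Iw^+}(\C),D^{\dagger}_k)^{\leq h}\rightarrow H^{n_0}_{\Par}(X_{\Iw^+}(\C),\V^{\alg,\vee}_{\GSp_{2g},k})^{\leq h}$, and you need this to be an isomorphism (for $h$ small relative to $k$) in order to pull $\lambda_x$ back into the overconvergent side. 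That is exactly what \cite[Theorem 4.3.2]{Wu-2020} supplies, and without it the embedding you assert does not follow. Once you insert this, your argument goes through and is essentially a hands-on unpacking of the paper's citation of \cite[Theorem 5.1.2]{Hansen-PhD}.

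Finally, your worry about $U_p$-normalisations is a red herring here: within this paper both $\scrH^{\tol}_{\Par}$ and $\scrS^{\dagger}_{\Iw^+}$ carry the \emph{same} operators $U_{p,i}$ from Definition \ref{Definition: Hecke algebra}, so the slope filtrations are directly comparable. The discrepancy recorded after Corollary \ref{Corollary: comparison with AIP} is between this paper's $U_{p,i}$ and Andreatta--Iovita--Pilloni's $U^{\mathrm{AIP}}_{p,i}$, which is irrelevant once one works entirely in the present framework.
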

\begin{proof}
The strategy is to apply \cite[Theorem 5.1.2]{Hansen-PhD}. To this end, we need to find a \textit{very Zariski-dense} subset $\calS^{\cl}$ of $\calS$ such that for every $\bfitx\in \calS^{\cl}$ with dominate algebraic weight $k=(k_1, ..., k_g)\in \Z_{\geq 0}^{g}$ and any $Y\in \bbT$, we have $$\det\left(1-TY|\scrS_{\Iw^+, \bfitx}^{\dagger}\right)\,\,|\,\,\det\left(1-TY|\scrH_{\Par, \bfitx}^{\tol}\right).$$ 

By \cite[Theorem 3.2.5]{Hansen-PhD}, there exists an $h_{k}\in \R_{>0}$ such that for all $h\in \Q\cap [0, h_k]$, the canonical map $$H^{n_0}_{\Par}(X_{\Iw^+}(\C), D_{k}^{\dagger}(\T_0, \Q_p))^{\leq h}\rightarrow H_{\Par}^{n_0}(X_{\Iw^+}(\C), \V_{\GSp_{2g}, k}^{\alg, \vee})^{\leq h}$$ is an isomorphism. On the other hand, let 
$$\underline{\omega}_{\Iw^+, \cusp}^{k}:=\underline{\omega}_{\Iw^+}^{k}\otimes_{\scrO_{\overline{\calX}_{\Iw^+}}}\scrO_{\overline{\calX}_{\Iw^+}}(-\calZ_{\Iw^+})$$ be the sheaf of classical cuspidal Siegel modular forms of weight $k$ on $\overline{\calX}_{\Iw^+}$. The classicality theorem \cite[Theorem 7.1.1]{AIP-2015} provides an $a_k\in \Q_{>0}$ such that for all $h\in\Q\cap[0,a_k]$, the slope-$\leq h$ overconvergent Siegel cuspforms of weight $k$ are classical; namely, $$H^0(\overline{\calX}_{\Iw^+, w}, \underline{\omega}_{w, \cusp}^{k})^{\leq h}\subset H^0(\overline{\calX}_{\Iw^+}, \underline{\omega}_{\Iw^+, \cusp}^{k}).$$ Now, let $\ell_{k}=\min\{h_k, a_k\}$ and take $h\leq \ell_k$. Applying the \emph{generalised Eichler--Shimura morphism} in \cite[Theorem 3.8]{Hida_2002}, we obtain an injection from the space of slope-$\leq h$ overconvergent Siegel cuspforms of classical weight into the slope-$\leq h$ cohomology group with coefficient in the algebraic representation. Consequently, the desired very Zariski-dense subset of $\calS$ can be taken to be $$\calS^{\cl}=\bigcup_{\calS_{\calU, h}\in \Cov_{\mathrm{aff}}(\calS)}\{\bfitx\in \calS_{\calU, h}: \bfitx\text{ has classical weight }k\in \Z_{\geq 0}^{g}\text{ and } h\leq \ell_k\}$$
Finally, \cite[Theorem 5.1.2]{Hansen-PhD} yields the result. 
\end{proof}

Given Proposition \ref{Proposition: comparison of eigenvarieties}, we may identify $\calE_0^{\mf}$ with its image in $\calE_0^{\oc}$ and denote it by $\calE_0$ for simplicity. We have a diagram $$\begin{tikzcd}
\calE_0\arrow[r, "\pi"]\arrow[rr, bend right = 30, "\wt"'] & \calS\arrow[r, "\wt_{\calS}"] & \calW
\end{tikzcd}.$$

\subsection{Sheaves on the cuspidal eigenvariety}\label{subsection: sheaves on the cuspidal eigenvariety}

Let $(R_{\calU}, \kappa_{\calU})$ be a weight and $r$ be an integer with $r>1+r_{\calU}$. If $(R_{\calU}, \kappa_{\calU})$ is a small weight, recall $\OC_{\kappa_{\calU}, \C_p}^{r, \cusp}$ from \eqref{eq: definition of OCcusp}. If $(R_{\calU}, \kappa_{\calU})$ is an affinoid weight, define \[
    \OC_{\kappa_{\calU}, \C_p}^{r, \cusp} = \image\left( H_c^{n_0}(X_{\Iw^+}(\C), D_{\kappa_{\calU}}^{r}(\T_0, R_{\calU}))\widehat{\otimes}\C_p \rightarrow H^{n_0}(X_{\Iw^+}(\C), D_{\kappa_{\calU}}^{r}(\T_0, R_{\calU}))\widehat{\otimes}\C_p \right),
\]
where the morphism is the natural morphism from the compactly supported cohomology to the usual Betti cohomology. 
We also write \[
    \OC_{\kappa_{\calU}, \C_p}^{\dagger, \cusp} := \varprojlim_{r} \OC_{\kappa_{\calU}, \C_p}^{r, \cusp}.
\]

Suppose that $(R_{\calU}, \kappa_{\calU})$ is an small open weight and recall the overconvergent Eichler--Shimura morphism for overconvergent Siegel cuspforms \[
    \ES_{\kappa_{\calU}}^{\cusp}: \OC_{\kappa_{\calU}, \C_p}^{r, \cusp} \rightarrow S_{\Iw^+, w}^{\kappa_{\calU}+g+1}(-n_0).
\] If $(\calU, h)$ slope-adapted, then the Hecke-equivariance of $\ES_{\kappa_{\calU}}^{\cusp}$ induces a $\C_p\widehat{\otimes}R_{\calU}$-linear map \[
   \ES_{\kappa_{\calU}}^{\cusp, \leq h}: \OC_{\kappa_{\calU}, \C_p}^{r \cusp, \leq h}\rightarrow S_{\Iw^+, w}^{\kappa_{\calU}+g+1,\leq h}(-n_0). 
\] of finite projective $\C_p\widehat{\otimes}R_{\calU}$-modules. 

Now, if $\calU'\subset \calU^{\rig}$ is an affinoid weight, the $\C_p\widehat{\otimes}R_{\calU'}$-linear map $\ES_{\kappa_{\calU'}}^{\cusp}$ is defined to be the composition \begin{equation}\label{eq: ES for affinoid weights}
    \ES_{\kappa_{\calU'}}^{\cusp, \leq h}: \OC_{\kappa_{\calU'}, \C_p}^{r, \cusp, \leq h} \cong \OC_{\kappa_{\calU}, \C_p}^{r, \cusp, \leq h}\otimes_{R_{\calU}[\frac{1}{p}]}R_{\calU'} \rightarrow S_{\Iw^+, w}^{\kappa_{\calU}+g+1, \leq h}(-n_0) \rightarrow S_{\Iw^+, w}^{\kappa_{\calU'}+g+1, \leq h}(-n_0),
\end{equation} where the first isomorphism follows from Proposition \ref{Proposition: functoriality of BS chain complex under slope decomposition}. 

Recall the natural map $\pi:\calE_0\rightarrow\calS$ and let $\calE_{\calU, h}$ be the preimage of $\calS_{\calU, h}$. On the cuspidal eigenvariety $\calE_0$, we consider two coherent sheaves $\sheafOC^{\dagger}_{\cusp}$ and $\scrS_{\Iw^+}^{\dagger}(-n_0)$ given by 
$$
    \sheafOC^{\dagger}_{\cusp}(\calE_{\calU, h}):=\OC_{\kappa_{\calU}, \C_p}^{\dagger, \cusp, \leq h}
    $$
and    
$$\scrS_{\Iw^+}^{\dagger}(-n_0)(\calE_{\calU, h}):= S_{\Iw^+}^{\kappa_{\calU}+g+1, \leq h}(-n_0),
$$
for all $\calS_{\calU, h}\in \Cov_{\mathrm{aff}}(\calS)$. 

\begin{Theorem}\label{Theorem: OES over the eigenvariety}
There exists a morphism \[
    \EScal: \sheafOC_{\cusp}^{\dagger}\rightarrow \scrS_{\Iw^+}^{\dagger}(-n_0)
\] of coherent sheaves over $\calE_0$ such that if $(\calU, h)$ is a slope-adapted pair, then $\EScal(\calE_{\calU, h})$ is exactly the overconvergent Eichler--Shimura morphism for overconvergent Siegel cuspforms \[
    \ES_{\kappa_{\calU}}^{\cusp, \leq h}: \OC_{\kappa_{\calU}, \C_p}^{\dagger, \cusp, \leq h} \rightarrow S_{\Iw^+}^{\kappa_{\calU}+g+1, \leq h}(-n_0).
\]
\end{Theorem}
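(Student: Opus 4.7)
The plan is to show that the overconvergent Eichler--Shimura morphism $\ES_{\kappa_{\calU}}^{\cusp}$ constructed in Remark~\ref{Remark: OES for cuspforms}, once restricted to slope-$\leq h$ parts and taken in the limit over $r$, glues into a morphism of coherent sheaves over $\calE_0$. The argument proceeds by a local construction together with a verification of compatibility under base change on the weights.

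First, I define $\EScal$ on each $\calE_{\calU, h}$ with $(\calU, h) \in \Cov_{\mathrm{aff}}(\calS)$. Choose a small open weight $\calV$ with $\calU \subset \calV^{\rig}$; such a $\calV$ exists because $\calW$ is covered by wide-open polydiscs, each of which gives a small open weight containing any prescribed affinoid subdomain. For every sufficiently large $r$, Remark~\ref{Remark: OES for cuspforms} provides a Hecke-equivariant map $\ES_{\kappa_{\calV}}^{\cusp}\colon \OC_{\kappa_{\calV}, \C_p}^{r, \cusp} \to S_{\Iw^+, w}^{\kappa_{\calV}+g+1}(-n_0)$. Extracting the slope-$\leq h$ part (possible by Hecke-equivariance with respect to $U_p$) and then base-changing from $R_{\calV}$ to $R_{\calU}$ along the lines of (\ref{eq: ES for affinoid weights}) yields a map $\OC_{\kappa_{\calU}, \C_p}^{r, \cusp, \leq h} \to S_{\Iw^+, w}^{\kappa_{\calU}+g+1, \leq h}(-n_0)$. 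Passing to the limit over $r$ gives $\EScal(\calE_{\calU, h})$. The base change on the source is justified by Proposition~\ref{Proposition: functoriality of BS chain complex under slope decomposition}, and on the target by the analogous weight base-change property for $S_{\Iw^+, w}^{\kappa, \leq h}$, which follows from property (Pr) of $S_{\Iw^+, w}^{\kappa_{\calV}+g+1}$ (cf.\ Theorem~\ref{Theorem: comparison with AIP} together with \cite[Proposition 8.1.3.1]{AIP-2015}), the compactness of $U_p$, and the slope-decomposition formalism of \cite{Buzzard_2007}.

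Next, I verify that this is independent of $\calV$ and compatible with restrictions. If $\calV_1, \calV_2$ are two small open weights each containing $\calU^{\rig}$, choose a third small open weight $\calV_3$ with $\calU \subset \calV_3^{\rig} \subset \calV_1^{\rig} \cap \calV_2^{\rig}$; the functoriality of $\ES_{\kappa_{\calV}}^{\cusp}$ in the small weight, noted at the end of Remark~\ref{Remark: OES for cuspforms}, combined with the functoriality of both base changes, reduces each $\calV_i$-construction to the $\calV_3$-construction. The same reasoning handles compatibility when $(\calU_i, h_i) \in \Cov_{\mathrm{aff}}(\calS)$ for $i=1,2$ satisfy $\calU_1 \subset \calU_2^{\rig}$ and $h_1 \leq h_2$: a common small open weight containing $\calU_2^{\rig}$ gives a single construction from which both local morphisms are obtained via base change. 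Since $\ES_{\kappa_{\calU}}^{\cusp, \leq h}$ is $\bbT$-equivariant (Proposition~\ref{Proposition: OES for sheaves on the pro-Kummer etale site}), it factors through the quotients $\bbT \twoheadrightarrow \bbT_{\calU, h}^{\oc}$ and $\bbT \twoheadrightarrow \bbT_{\calU, h}^{\mf}$, so the locally defined morphisms are genuinely $\scrO_{\calE_0}$-linear on $\calE_{\calU, h}$ and glue to a global morphism $\EScal$ on $\calE_0$.

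The principal technical obstacle is the weight base change on the cuspform side. While the cohomology base change (Proposition~\ref{Proposition: functoriality of BS chain complex under slope decomposition}) is cleanly packaged through the Borel--Serre cochain complex, the corresponding compatibility $S_{\Iw^+, w}^{\kappa_{\calV}+g+1, \leq h} \otimes_{R_{\calV}[1/p]} R_{\calU} \cong S_{\Iw^+, w}^{\kappa_{\calU}+g+1, \leq h}$ must be extracted from the description of $\underline{\omega}_w^{\kappa_{\calU}}$ as a projective Kummer \'etale Banach sheaf (Lemma~\ref{Lemma: omega is projective Banach sheaf}) together with a slope-decomposition statement for compact operators on Banach modules with property (Pr). Once this base change is in place, the gluing and the identification of $\EScal(\calE_{\calU, h})$ with $\ES_{\kappa_{\calU}}^{\cusp, \leq h}$ for slope-adapted $(\calU, h)$ are essentially formal consequences of the constructions.
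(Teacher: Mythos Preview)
Your proposal is correct and follows essentially the same approach as the paper: the paper's proof is the single sentence ``It follows from (\ref{eq: ES for affinoid weights}) and the functoriality of $\ES_{\kappa_{\calU}}^{\cusp}$ in small open weights $\calU$,'' and you have simply unpacked what this means---choosing an ambient small open weight, base-changing via (\ref{eq: ES for affinoid weights}), and using functoriality (Remark~\ref{Remark: OES for cuspforms}) to verify independence of that choice and compatibility under restriction. Your discussion of the base-change on the cuspform side is a point the paper leaves entirely implicit in its construction of the coherent sheaf $\scrS_{\Iw^+}^{\dagger}$.
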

\begin{proof}
It follows from (\ref{eq: ES for affinoid weights}) and the functoriality of $\ES_{\kappa_{\calU}}^{\cusp}$ in small open weights $\calU$.
\end{proof}

Denote by $\sheafIm$ and $\sheafKer$ the image and the kernel of $\EScal$, respectively. We obtain a short exact sequence of sheaves on $\calE_0$ $$0\rightarrow \sheafKer\rightarrow \sheafOC_{\cusp}^{\dagger}\rightarrow \sheafIm\rightarrow 0.$$ We remind the readers that this short exact sequence is Galois- and Hecke-equivariant. 

Let $\calV = \Spa(R_{\calV}, R_{\calV}^+)$ be an affinoid open subspace of $\calE_0$ such that $\sheafKer(\calV)$, $\sheafOC_{\cusp}^{\dagger}(\calV)$, and $\sheafIm(\calV)$ are free and such that the sequence \[
    0\rightarrow \sheafKer(\calV)\rightarrow \sheafOC_{\cusp}^{\dagger}(\calV)\rightarrow \sheafIm(\calV)\rightarrow 0
\] is exact. Consider \[
    \scrH(\calV):=\Hom_{R_{\calV}}(\sheafIm(\calV), \sheafKer(\calV)).
\] Recall that we have the Sen operator $\varphi_{\Sen} = \varphi_{\Sen, \calV}$ associated with $\scrH(\calV)$, which was introduced in \cite{Sen-analytic} (see also \cite{Kisin-2003}).

The following result is an analogue to \cite[Theorem 6.1(c)]{AIS-2015}. 

\begin{Theorem}\label{Theorem: local spliting of OES} Let $\calV=\Spa(R_{\calV}, R_{\calV}^+)\subset \calE_0$ be an affinoid open subspace such that $\sheafKer(\calV)$, $\sheafOC_{\cusp}^{\dagger}(\calV)$, and $\sheafIm(\calV)$ are free and such that the sequence \[
    0\rightarrow \sheafKer(\calV)\rightarrow \sheafOC_{\cusp}^{\dagger}(\calV)\rightarrow \sheafIm(\calV)\rightarrow 0
\] is exact. Suppose $\varphi_{\Sen}$ is non-vanishing. Then the short exact sequence $$0\rightarrow \sheafKer\rightarrow \sheafOC_{\cusp}^{\dagger}\rightarrow \sheafIm\rightarrow 0$$ splits locally over $\calV$.
\end{Theorem}
\begin{proof}
We follow the same strategy as in \cite[Theorem 6.1(c)]{AIS-2015}. Observe that we have an isomorphism $H^1(G_{\Q_p}, \scrH(\calV)) \simeq \text{Ext}^1_{R_{\calV}[G_{\Q_p}]}(\sheafIm(\calV), \sheafKer(\calV))$. Thus, the $G_{\Q_p}$-equivariance of the short exact sequence defines a class in $H^1(G_{\Q_p}, \scrH(\calV))$. Then by \cite[Proposition 2.3]{Kisin-2003}, $\det(\varphi_{\Sen})\in R_{\calV}$ kills the cohomology group $H^1(G_{\Q_p}, \scrH(\calV))$. On the other hand, $\det(\varphi_{\Sen})$ is non-zero. Therefore, after localising at this element, the short exact sequence splits as a sequence of semilinear $G_{\Q_p}$-representations. Since the Galois-action commutes with the Hecke-actions, the splitting can be chosen to be Hecke-equivariant.  
\end{proof}

\subsection{Application to Galois representations}

We now give an application to the Galois representations associated with overconvergent Siegel modular forms. 

Let $f$ be a classical cuspidal Siegel eigenform of weight $(k_1+g+1, \ldots, k_g+g+1)$. We make the following two hypotheses:

\begin{hyp}[M1]\label{M1}
The $f$-isotypical part of $H^{\bullet}(X_{\Iw^+}(\C), \V_{\GSp_{2g}, k}^{\alg, \vee})\otimes_{\Q_p}\C_p$ is concentrated in degree $n_0$, it is included in the parabolic cohomology (\it{i.e.}, in the image of the compactly supported cohomology) and is $2^g$-dimensional.
\end{hyp}

\begin{hyp}[Et]\label{Et}
The weight map $\wt: \calE_{0} \rightarrow \calW$ is \'etale at the point $\bfitx_f$ corresponding to $f$.
\end{hyp}

These two hypotheses are conjectured to hold in great generality, even if there will be cases when they won't hold. For example, already for $\GSp_4$, there exist some CAP representations whose corresponding eigensystem in $H^3_{\et}$ is two-dimensional (cf. \cite[Hypothesis A (7)]{Weissauer-4dimGalRep}). 

Some positive partial results, always for $\GSp_4$, can  be obtained for forms of paramodular level thanks to Robert and Schimidt \cite{RS-NewformGSp4} who developed a (local) newform theory for representations of paramodular level.  If the representation $\pi$ is generic, meaning it admits a Whittaker model (see \cite[\S 0.5]{Soudry}), then it is known that $\pi$ satisfies strong multiplicity one \cite[Theorem 1.5]{Soudry}, meaning that if one consider another generic $\pi'$ such the local components $\pi_v$ and $\pi'_v$ are isomorphic for almost all $v$, then $\pi=\pi'$. Moreover, if the level is paramodular, we know by \cite[Theorem 4.5]{RosnerWissauer} that there is no non-generic automorphic representation isomorphic to $\pi$ almost everywhere. This means that if $\pi$ is paramodular, then it satisfies our ({M1}) assumption. 

It is a folklore expectation that if $\pi$ is generic and non-endoscopic, the same strong multiplicity one result among all representations (not only generic) should hold.

When ({M1}) holds, assuming  further that the $U_p$-eigenvalues are all distinct on the $f$-isotypical part, then one can often proceeds as in \cite[Proposition 8.3.2]{AIP-2015},  to obtain \'etaleness.\\

Recall that there is an integer $h_k$ (depending on $k$) such that for all $h\in \Q\cap [0, h_k]$, the canonical map $$H^{n_0}_{\Par}(X_{\Iw^+}(\C), D_{k}^{\dagger}(\T_0, \Q_p))^{\leq h}\rightarrow H_{\Par}^{n_0}(X_{\Iw^+}(\C), \V_{\GSp_{2g}, k}^{\alg, \vee})^{\leq h}$$ is an isomorphism.
We have the following theorem.
\begin{Theorem}\label{Theorem: Locally free}
Let $f$ as above, satisfying hypotheses (M1) and (Et). Suppose moreover that it is of finite slope for the $U_p$-operators, of slope $h \leq h_k$. There exists an affinoid neighbourhood $\mathcal{U}$ of $\wt(\bfitx_f) \in \calW$ and an affinoid neighbourhood $\mathcal{V}$ of $\bfitx_f$  such that
\[
H^{n_0}(X_{\Iw^+}(\C), D_{\kappa_{\calU}}^{\dagger}(\T_0, R_{\calU}))^{\leq h} \otimes \scrT_{\mf}(\pi(\mathcal{V}))
\]
is free of rank $2^g$ over $R_{\calU}$. 
\end{Theorem}
\begin{proof}
We point out that this strategy of proof goes back to Hida, and it has been formalised in \cite[\S 2.5]{BDJ}.
Choose an open neighborhood $\calU'$ containing $\wt(\bfitx_f)$ and consider the maximal ideal $\frakm_{f,\calU'}$ in $\bbT_{\calU', h}^{\mf, \circ}$ corresponding to the point $\bfitx_f$. 
By (M1) and the slope hypothesis, the cohomology is concentrated in one degree. Therefore, we can apply \cite[Lemma 2.10]{BDJ} to the (rigid) localisation
\[ H^{n_0}(X_{\Iw^+}(\C), D_{\kappa_{\calU'}}^{\dagger}(\T_0, R_{\calU'}))^{\leq h}_{\frakm_{f,\calU'}}
\]
to conclude that there exists an open neighborhood $\calU$ of $\wt(\bfitx_f)$ such that 
\[
H^{n_0}(X_{\Iw^+}(\C), D_{\kappa_{\calU'}}^{\dagger}(\T_0, R_{\calU'}))^{\leq h}_{R_{\calU'}}\otimes R_{\calU}
\]
 is free over $R_{\calU}$. The rank must be $2^g$ by \cite[Lemma 2.9]{BDJ} and again (M1). By the \'etaleness hypothesis, there is a neighbourhood $\mathcal{V}$ of $\bfitx_f$ such that $\wt: \mathcal{V} \rightarrow \calU$ is an isomorphism, and hence   $\scrT_{\mf}(\pi(\mathcal{V})) \cong R_{\calU}$.
\end{proof}

Recall that, by Theorem \ref{Proposition: Comparison theorem of cohomologies}, when $
\calU'$ is a small weight, there is an isomorphism
$$H_{\et}^t(\calX_{\Iw^+}, \scrD_{\kappa_{\calU'}}^r)\cong H^t(X_{\Iw^+}(\C), D_{\kappa_{\calU'}}^r(\T_0, R_{\calU'}))$$
which equips the right hand side with a continuous action of $\Gal(\overline{\Q}/\Q)$. This actions commutes with the Hecke actions. Let $\calU \subset \calU'$ be an open subspace satisfying the property in Theorem \ref{Theorem: Locally free}, we obtain a continuous Galois action on 
\[
H^{n_0}(X_{\Iw^+}(\C), D_{\kappa_{\calU}}^{\dagger}(\T_0, R_{\calU}))^{\leq h} \otimes \scrT_{\mf}(\pi(\mathcal{V})).
\]
We arrive at the following theorem:
\begin{Theorem}\label{thm:GaloisRepresentations}
Let $\calU$ and $\calV$ be as in Theorem \ref{Theorem: Locally free}. Consider the Galois representation  
\[
\rho_{\calU}: \Gal(\overline{\Q}/\Q)\rightarrow \mathrm{GL}_{R_{\calU}} \left(H^{n_0}(X_{\Iw^+}(\C), D_{\kappa_{\calU}}^{\dagger}(\T_0, R_{\calU}))^{\leq h} \otimes \scrT_{\mf}(\pi(\mathcal{V}))\right)  \cong \mathrm{GL}_{2^g}(R_{\calU})
\]
defined by the Galois module $H^{n_0}(X_{\Iw^+}(\C), D_{\kappa_{\calU}}^{\dagger}(\T_0, R_{\calU}))^{\leq h} \otimes \scrT_{\mf}(\pi(\mathcal{V}))$. Then $\rho_{\calU}$ interpolates the Spin Galois representations associated with classical Siegel modular forms constructed in \cite{KretShin}, for all classical points in $\Spa(\scrT_{\mf}(\pi(\mathcal{V}), {\scrT_{\mf}(\pi(\mathcal{V})}^{\circ})$. In particular if $f'$ is an overconvergent Siegel modular form corresponding to a maximal ideal $\frakm_{f',\calU}$ of $\scrT_{\mf}(\pi(\mathcal{V}))$, then the Galois representation
\[
\rho_{f'}: \Gal(\overline{\Q}/\Q)\rightarrow \mathrm{GL}_{2^g}(\scrT_{\mf}(\pi(\mathcal{V})) / {\frakm_{f',\calU}})
\]
induced by $\rho_{\calU}$ realises the Galois representation associated with $f'$.
\end{Theorem}

Notice that our construction of families of Galois representations doesn't involve pseudo-representations or determinants at all. Nonetheless, given that pseudo-representations are determined by the corresponding Hecke/Frobenius eigenvalues, our Galois representations indeed coincide (up to semi-simplifications) with the ones that one could abstractly construct via the theory of pseudo-representations.

We also point out that the results of \cite{KretShin} indicate that the image of the spin Galois representation is contained in $\mathrm{GSpin}_{2g+1}$. We are not able to prove that the image of our Galois representation falls in $\mathrm{GSpin}_{2g+1}(R_{\calU})$. However, it seems promising to study the image of our Galois representation in more detail using the pairing on $H_{\et}^t(\calX_{\Iw^+}, \scrD_{\kappa_{\calU}}^r)$ constructed in \cite{Wu-2020}.
 
\begin{appendix}
\section{Kummer \'{e}tale and pro-Kummer \'{e}tale sites of log adic spaces}\label{Section: Kummer etale and pro-Kummer etale sites of log adic spaces}
In order to study the boundaries of various toroidal compactifications of Siegel varieties, we adopt the language of \emph{logarithmic adic spaces} established in \cite{Diao}. The purpose of \S\ref{subsection: review of Kummer etale and pro-Kummer etale sites} is to review the basic notions of log adic spaces, as well as their Kummer \'etale and pro-Kummer \'etale topologies, for convenience of the readers who are not familiar with the language. In \S\ref{subsection: main result}, we present an explicit calculation of the sheaf $R^i\nu_*\widehat{\scrO}_{X_{\proket}}$ which plays an essential role in the construction of the overconvergent Eichler--Shimura morphisms in \S\ref{subsection: OES}. Finally, in \S \ref{subsection: generalised projective formula}, we introduce the notion of \emph{Kummer \'etale Banach sheaves} and prove a (generalised) projection formula for those Kummer \'etale Banach sheaves that are \emph{admissible}. \\

\noindent\textbf{Notation.} We warn the reader that, in this section, (log) adic spaces will no longer be written in calligraphic font as we deal with more general (log) adic spaces, not only those studied in the main body of the text. 

\subsection{Review of log adic spaces}\label{subsection: review of Kummer etale and pro-Kummer etale sites}
Let $k$ be a nonarchimedean field (\emph{i.e.,} a field complete with respect to a nonarchimedean norm $|\cdot|:k\rightarrow \R_{\geq 0}$) and let $\calO_k=\{x\in k\,:\, |x|\leq 1\}$.  

\begin{Definition}
Let $X$ be a locally noetherian adic space over $\Spa(k, \calO_k)$.
\begin{enumerate}
\item[(i)] A \textbf{pre-log structure} on $X$ is a pair $(\scrM_{X}, \alpha)$ where $\scrM_{X}$ is a sheaf of monoids on $X_{\et}$ and $\alpha:\scrM_{X}\rightarrow \scrO_{X_{\et}}$ is a morphism of sheaves of (multiplicative) monoids. It is called a \textbf{log structure} if the induced morphism $\alpha^{-1}(\scrO^{\times}_{X_{\et}})\rightarrow \scrO^{\times}_{X_{\et}}$ is an isomorphism. In this case, the triple $(X, \scrM_{X}, \alpha)$ is called a \textbf{log adic space}. If the context is clear, we simply say that $X$ is a log adic space. 
\item[(ii)] For a pre-log structure $(\scrM_{X}, \alpha)$ on $X$, the \textbf{associated log structure} is $({}^{a}\!\!\!\!\scrM_X, {}^{a}\alpha)$ where ${}^{a}\!\!\!\!\scrM_X$ is given by the pushout 
$$
\begin{tikzcd}
\alpha^{-1}(\scrO^{\times}_{X_{\et}})\arrow[r] \arrow[d] & \scrM_X \arrow[d]  \\
       \scrO^{\times}_{X_{\et}}  \arrow[r] & {}^{a}\!\!\!\!\scrM_X \arrow[ul, phantom, "\ulcorner", very near start]
\end{tikzcd}
$$
and ${}^{a}\alpha:{}^{a}\!\!\!\!\scrM_{X}\rightarrow \scrO_{X_{\et}}$ is the induced morphism.
\item[(iii)] A \textbf{morphism} $f: (Y, \scrM_Y, \alpha_Y)\rightarrow (X, \scrM_X, \alpha_X)$ of log adic spaces is a morphism $f: Y\rightarrow X$ of adic spaces together with a morphism of sheaves of monoids $f^{\sharp}:f^{-1}\scrM_X\rightarrow \scrM_Y$ such that the diagram
$$
\begin{tikzcd}
{f^{-1}\scrM_X} \arrow[r, "f^{\sharp}"] \arrow[d, "f^{-1}\alpha_X"'] & \scrM_Y \arrow[d, "\alpha_Y"]  \\
        {f^{-1}\scrO_{X_{\et}}} \arrow[r] & {\scrO_{Y_{\et}}} 
\end{tikzcd}
$$
commutes. Moreover, the log structure associated with the pre-log structure $f^{-1}\scrM_X\rightarrow f^{-1}\scrO_{X_{\et}}\rightarrow \scrO_{Y_{\et}}$ is called the \textbf{pullback log structure}, denoted by $f^*\scrM_X$. We say that $f$ is \textbf{strict} if $f^*\scrM_X\xrightarrow[]{\sim} \scrM_Y$.
\end{enumerate}
\end{Definition}

\begin{Definition}
\begin{enumerate}
\item[(i)] Let $(X, \scrM_{X}, \alpha)$ be a locally noetherian log adic space as above. Let $P$ be a monoid and let $P_{X}$ denote the associated constant sheaf of monoids on $X_{\et}$. A \textbf{chart of $X$ modeled on $P$} is a morphism of sheaves of monoids $\theta: P_{X}\rightarrow \scrM_{X}$ such that $\alpha(\theta(P_{X}))\subset \scrO^+_{X_{\et}}$ and such that the log structure associated with the pre-log structure $\alpha\circ\theta:P_{X}\rightarrow \scrO_{X_{\et}}$ is isomorphic to $\scrM_{X}$. We say that the chart is \textbf{fs} if $P$ is fine and saturated. 
\item[(ii)] A locally noetherian log adic space is called an \textbf{fs log adic space} if it \'etale locally admits charts modeled on fs monoids.
\item[(iii)] Let $f: (Y, \scrM_{Y}, \alpha_{Y})\rightarrow (X, \scrM_{X}, \alpha_{X})$ be a morphism between locally noetherian log adic spaces. A \textbf{chart} of $f$ consists of charts $\theta_{X}:P_{X}\rightarrow \scrM_{X}$ and $\theta_{Y}: Q_{Y}\rightarrow \scrM_{Y}$ and a homomorphism $u:P\rightarrow Q$ such that the diagram
 \[
        \xymatrix{ {P_{Y}} \ar^-u[r] \ar^-{\theta_{X}}[d] & {Q_{Y}} \ar^-{\theta_{Y}}[d] \\
        {f^{-1}\scrM_{X}} \ar^-{f^\sharp}[r] & {\scrM_{Y}} }
    \]
commutes. We say that the chart is \textbf{fs} if both $P$ and $Q$ are fs. When the context is clear, we simply say that $u:P\rightarrow Q$ is a chart of $f$.
\end{enumerate}
\end{Definition}

Below are two typical examples of locally noetherian fs log adic spaces. In this paper, all of the toroidally compactified Siegel varieties (equipped with logarithmic structures associated with the boundary divisors) have the form as in Example \ref{Example: normal crossings}.

\begin{Example}\label{Example: unit disc}
\normalfont Let $n>0$ be an integer. Consider the \textit{\textbf{$n$-dimensional unit disc}}
\[\bbD^n:=\Spa(k\langle T_1, \ldots, T_n\rangle, \calO_k\langle T_1, \ldots, T_n\rangle),\]
equipped with the log structure associated with the pre-log structure induced by \[\Z^n_{\geq 0}\rightarrow k\langle T_1, \ldots, T_n\rangle,\,\,\,\, (a_1, \ldots, a_n)\mapsto T_1^{a_1}\cdots T_n^{a_n}.\] Clearly, $\bbD^n$ is modeled on the fs chart $\Z^n_{\geq 0}$.\qed
\end{Example}

\begin{Example}\label{Example: normal crossings}
\normalfont Let $X$ be a smooth rigid analytic variety over $k$, viewed as an adic space over $\Spa(k, \calO_k)$ via \cite[(1.1.11)]{Huber-2013}. Let $D\subset X$ be a \textit{\textbf{normal crossings divisor}} in the sense of \cite[Example 2.3.17]{Diao}. Namely, $\iota:D\hookrightarrow X$ is a closed immersion such that, analytic locally, $X$ and $D$ are of the form $S\times \bbD^n$ and $S\times \{T_1\cdots T_n=0\}$, where $S$ is a smooth connected rigid analytic variety and $\iota$ is the pullback of the natural inclusion $\{T_1\cdots T_n=0\}\hookrightarrow \bbD^n$. We equip $X$ with the log structure 
\[\scrM_{X}=\{f\in \scrO_{X_{\et}}\,|\, f \textrm{ is invertible on }X\smallsetminus D\}\]
with $\alpha:\scrM_{X}\rightarrow \scrO_{X_{\et}}$ being the natural inclusion. This is the \textit{\textbf{divisorial log structure}} associated with the divisor $D$. This log structure agrees with the pullback of the log structure on $\bbD^n$ constructed in Example \ref{Example: unit disc}. \qed
\end{Example}

Log adic spaces in Example \ref{Example: normal crossings} are special cases of \emph{log smooth} ones. For later use, we recall the definition of log smoothness.

For any monoid $P$ and any commutative ring $T$, we write $T[P]$ for the associated monoid algebra.

\begin{Definition}
Let $X$ be a locally noetherian adic space over $\Spa(k, \calO_k)$ and let $P$ be a finitely generated monoid. For any affinoid open subspace $\Spa(R, R^+)\subset X$, let $(R\langle P\rangle, R^+\langle P\rangle)$ be the completion of $(R[P], R^+[P])$. By gluing the morphisms $\Spa(R\langle P\rangle, R^+\langle P\rangle)\rightarrow \Spa(R, R^+)$, we obtain a morphism $X\langle P\rangle\rightarrow X$. Moreover, we equip $X\langle P\rangle$ with the log structure modeled on the chart $P$; \emph{i.e.}, the one locally induced by $P\rightarrow R\langle P\rangle$.
\end{Definition}

\begin{Definition}\label{Definition: log smooth}
Let $f: Y\rightarrow X$ be a morphism between locally noetherian fs log adic spaces. We say that $f$ is \textbf{log smooth} if \'etale locally $f$ admits an fs chart $u:P\rightarrow Q$ such that
\begin{enumerate}
\item[(i)] the kernel and the torsion part of the cokernel of $u^{\mathrm{gp}}:P^{\mathrm{gp}}\rightarrow Q^{\mathrm{gp}}$ are finite groups of order invertible in $\scrO_X$; and
\item[(ii)] $f$ and $u$ induce a morphism $Y\rightarrow X\times_{X\langle P\rangle}X\langle Q\rangle$ of log adic spaces whose underlying morphism of adic spaces is \'etale.
\end{enumerate}
A locally noetherian fs log adic space $X$ is \textbf{log smooth} if the structure morphism $X\rightarrow \Spa(k, \calO_k)$ is log smooth, where $\Spa(k, \calO_k)$ is equipped with the trivial log structure.
\end{Definition}

Now we introduce the notion of Kummer \'etale morphisms and the Kummer \'etale site.

\begin{Definition}\label{Definition: Kummer etale morphism}
\begin{enumerate}
\item[(i)] An injective homomorphism $u:P\rightarrow Q$ of fs monoids is called \textbf{Kummer} if for every $a\in Q$, there exists some integer $n >0$ such that $na\in u(P)$.
\item[(ii)] A morphism (resp., finite morphism) $f: Y\rightarrow X$ of locally noetherian fs log adic spaces is called \textbf{Kummer \'etale} (resp., \textbf{finite Kummer \'etale}) if \'etale locally on $X$ and $Y$, $f$ admits an fs chart $u:P\rightarrow Q$ which is Kummer with $|Q^{\mathrm{gp}}/u^{\mathrm{gp}}(P^{\mathrm{gp}})|$ invertible on $\scrO_Y$, and such that $f$ and $u$ induce a morphism $Y\rightarrow X\times_{X\langle P\rangle} X\langle Q\rangle$ of log adic spaces whose underlying morphism of adic spaces is \'etale (resp., finite \'etale).
\item[(iii)] If a Kummer \'etale (resp., finite Kummer \'etale) morphism is strict, we say it is \textbf{strictly \'etale} (resp., \textbf{strictly finite \'etale}).
\end{enumerate}
\end{Definition}

\begin{Remark}
\normalfont By \cite[Lemma 4.1.10]{Diao}, if $f:Y\rightarrow X$ is a Kummer \'etale morphism between locally noetherian fs log adic spaces and if $X$ admits a chart modeled on a sharp fs monoid $P$, then, \'etale locally on $X$ and $Y$, the morphism $f$ admits a Kummer fs chart $P\rightarrow Q$ with $Q$ being sharp.
\end{Remark}

\begin{Definition} 
Let $X$ be a locally noetherian fs log adic space. The \textbf{Kummer \'etale site} $X_{\ket}$ (resp., \textbf{finite Kummer \'etale site} $X_{\textrm{fket}}$) of $X$ is defined as follows. The underlying category is the full subcategory of the category of locally noetherian fs log adic spaces consisting of objects that are Kummer \'etale (resp., finite Kummer \'etale) over $X$. The coverings are given by the topological coverings. 

The \textbf{structure sheaf} $\scrO_{X_{\ket}}$ (resp., \textbf{integral structure sheaf} $\scrO_{X_{\ket}}^{+})$ on $X_{\ket}$ is defined to be the presheaf sending $U\mapsto \scrO_U(U)$ (resp., $U\mapsto \scrO_U^+(U)$). We also write $\scrM_{X_{\ket}}$ for the presheaf sending $U\mapsto \scrM_U(U)$. By \cite[Theorem 4.3.1, Proposition 4.3.4]{Diao}, these presheaves are indeed sheaves. 
\end{Definition}

\begin{Proposition}[Corollary 4.4.18, \cite{Diao}]\label{Proposition: Kummer etale fundamental group}
Let $X$ be a connected locally noetherian fs log adic space and let $\xi$ be a log geometric point (see \cite[Definition 4.4.2]{Diao}). Then there is an equivalence of categories
\[F_{X}: X_{\text{fk\'et}}\xrightarrow[]{\sim} \pi_1^{\ket}(X, \xi)-\FSets\] sending $Y\mapsto Y_{\xi}:=\Hom_{X}(\xi, Y)$, where the $\pi_1^{\ket}(X, \xi)-\FSets$ denotes the category of finite sets equipped with a continuous action of the \textbf{Kummer \'etale fundamental group} $\pi_1^{\ket}(X, \xi)$.

For any two log geometric points $\xi$ and $\xi'$, the fundamental groups $\pi_1^{\ket}(X, \xi)$ and $\pi_1^{\ket}(X, \xi')$ are isomorphic. Hence, we may omit ``$\xi$'' from the notation whenever the context is clear.
\end{Proposition}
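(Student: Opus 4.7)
The plan is to verify that $X_{\fket}$ is a Galois category in the sense of SGA 1 and that $F_X$ is a fibre functor; the equivalence then follows from the general formalism, with $\pi_1^{\ket}(X,\xi)$ defined as $\Aut(F_X)$. So the first task is to check the axioms of a Galois category for $X_{\fket}$: existence of a terminal object ($X$ itself), existence of finite fibre products (one needs that fibre products of finite Kummer \'etale morphisms in the category of locally noetherian fs log adic spaces exist and remain finite Kummer \'etale — this reduces to the local model $X\langle P\rangle\to X\langle Q\rangle$ and the corresponding statement for finite \'etale morphisms of adic spaces), existence of finite coproducts (disjoint unions), existence of quotients by finite groups acting admissibly, and the epi-mono factorisation. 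Each of these can be checked \'etale locally after base change to a chart and reduces to the analogous statement for finite \'etale covers of locally noetherian adic spaces, combined with the Kummer chart calculus from Definition \ref{Definition: Kummer etale morphism}.

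Next, I would construct the fibre functor $F_X$. A log geometric point $\xi$ of $X$ comes equipped, via \cite[Definition 4.4.2]{Diao}, with a strict morphism from a log point whose underlying ring is a separably closed nonarchimedean field with a chart making the log structure ``maximally ramified''. For $Y\to X$ finite Kummer \'etale, the base change $Y\times_X\xi$ is a disjoint union of copies of $\xi$, and $F_X(Y):=Y_\xi=\Hom_X(\xi,Y)$ is the finite set indexing these components. One checks directly that $F_X$ preserves the finite limits and colimits constructed above and reflects isomorphisms: both assertions are local on $X$, and on a local chart they follow from the corresponding facts for the classical fibre functor on the finite \'etale site of the generic fibre of $X\langle P\rangle$ after passing to the Kummer tower $\{X\langle \tfrac{1}{n}P\rangle\}$.

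At this point SGA 1 yields the equivalence $F_X: X_{\fket}\xrightarrow{\sim}\pi_1^{\ket}(X,\xi)\text{-}\FSets$ with $\pi_1^{\ket}(X,\xi):=\Aut(F_X)$, which is a profinite group via the pro-system of finite Galois Kummer \'etale covers of $X$ (these exist: any connected finite Kummer \'etale $Y\to X$ is dominated by its Galois closure, constructed as the connected component of the $n$-fold fibre product containing the diagonal, using the above finite limits). The independence of the basepoint, i.e.\ the existence of an isomorphism $\pi_1^{\ket}(X,\xi)\simeq\pi_1^{\ket}(X,\xi')$, follows by the standard argument: the connectedness of $X$ gives a path, namely an isomorphism of fibre functors $F_X(\xi)\simeq F_X(\xi')$, which is produced by choosing, for each finite Galois Kummer \'etale cover $Y\to X$, a bijection between the two finite sets of geometric points and showing compatibility under refinement using Zorn's lemma on the pro-system.

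The main obstacle is the verification that the Galois category axioms hold at the log level, and in particular the construction of Galois closures and quotients by finite group actions inside $X_{\fket}$; the subtlety is that quotients must be taken in the fs log category, where one must check that the quotient log structure is again fs. This is where the Kummer hypothesis enters decisively: locally on a chart $P\to Q$ with $|Q^{\gp}/P^{\gp}|$ invertible, the relevant group action on $X\langle Q\rangle$ lifts a group action on the monoid $Q$, and the quotient monoid $Q/G$ is again a sharp fs monoid by a direct argument, so the quotient $X\langle Q\rangle/G$ acquires a canonical fs log structure. Once this local picture is in place, gluing the quotients along \'etale coverings completes the axiomatic verification, and the rest of the proof is formal.
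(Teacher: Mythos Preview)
The paper does not prove this proposition at all: it is stated as a citation of \cite[Corollary 4.4.18]{Diao}, with no accompanying argument. So there is no ``paper's proof'' to compare against; the statement is simply imported from the literature.

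Your sketch is the standard Galois-category approach and is presumably what is carried out in the cited reference. The outline is sound: one checks the SGA~1 axioms for $X_{\fket}$, verifies that $F_X$ is a fibre functor, and then invokes the general formalism. The points you flag as subtle --- existence of fs fibre products, quotients by finite groups remaining fs, and the construction of Galois closures --- are indeed the nontrivial inputs, and they are handled in \cite{Diao} (see in particular \S 4.2--4.4 there). One minor caution: your description of the basepoint-independence argument via ``Zorn's lemma on the pro-system'' is a bit loose; the standard proof constructs the isomorphism of fibre functors as an inverse limit of bijections over the cofiltered system of connected Galois covers, and nonemptiness of this limit uses compactness of the profinite sets involved rather than Zorn. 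But this is a matter of phrasing, not a genuine gap.
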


\begin{Lemma}\label{Kummer etale Galois cover}
Assume $k$ is of characteristic $0$. Let $X$ and $Y$ be locally noetherian fs log adic spaces whose underlying adic spaces are smooth connected rigid analytic varieties over $k$. Suppose the log structures on $X$ and $Y$ are the divisorial log structures associated with the normal crossing divisors $D\subset X$ and $E\subset Y$ as in Example \ref{Example: normal crossings}. Let $U=X\smallsetminus D$ and $V=Y\smallsetminus E$. Suppose we have a finite Kummer \'etale surjective morphism $f:Y\rightarrow X$ such that $f^{-1}(U)=V$ and that $f|_{V}:V\rightarrow U$ is a finite \'etale Galois cover with Galois group $G$. Then $f$ is a finite Kummer \'etale Galois cover with Galois group $G$.
\end{Lemma}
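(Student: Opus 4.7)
The plan is to transfer the Galois structure from $V/U$ to $Y/X$ via fully faithfulness of the restriction functor from finite Kummer \'etale covers of $X$ to finite \'etale covers of $U$.

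First, I would invoke (or establish from \cite{Diao}) that, in our setup of locally noetherian fs log adic spaces with divisorial log structure along a normal crossings divisor, the restriction functor
\[
R \colon X_{\fket} \longrightarrow U_{\fet}, \qquad Z \longmapsto Z \times_X U,
\]
is fully faithful, with essential image the full subcategory of finite \'etale covers of $U$ that are tamely ramified along $D$. This is ultimately an Abhyankar-type lemma for log smooth adic spaces, and it may also be reformulated through Proposition \ref{Proposition: Kummer etale fundamental group} as saying that the natural map $\pi_1^{\et}(U, \bar\xi') \to \pi_1^{\ket}(X, \bar\xi)$ is surjective for compatible choices of (log) geometric points.

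Second, I would use fully faithfulness of $R$ to extend the $G$-action on $V$ to a $G$-action on $Y$. More precisely, $R$ yields a group isomorphism
\[
\Aut_X(Y) \xrightarrow{\;\sim\;} \Aut_U(V) = G,
\]
so every $g \in G$ lifts uniquely to $g_Y \in \Aut_X(Y)$ with $g_Y|_V = g$. Uniqueness forces $g \mapsto g_Y$ to be a group homomorphism, giving a genuine left $G$-action on $Y$ over $X$.

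Third, I would verify that $f\colon Y \to X$ is a Galois cover with group $G$ by showing that the natural morphism
\[
\Phi \colon Y \times G \longrightarrow Y \times_X Y, \qquad (y, g) \longmapsto (y, g_Y \cdot y),
\]
is an isomorphism. Both source and target are finite Kummer \'etale covers of $X$, and the restriction $\Phi|_U \colon V \times G \to V \times_U V$ is the familiar isomorphism expressing the Galois property of $f|_V$. Applying $R$ once more (now to the morphism $\Phi$ between finite Kummer \'etale covers of $X$) promotes this isomorphism over $U$ to an isomorphism over $X$.

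The main obstacle in this plan is the fully faithfulness of $R$ in the first step; the remaining two steps are essentially formal consequences. Fortunately, the needed framework of Kummer \'etale topology for log smooth adic spaces with divisorial log structure along a normal crossings divisor is precisely what is developed in \cite{Diao}, so identifying the precise reference should be straightforward.
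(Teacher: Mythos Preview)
Your proposal is correct and rests on the same essential input as the paper's proof: the restriction functor $X_{\fket} \to U_{\fet}$ is an equivalence in characteristic $0$ (the paper cites \cite[Proposition 4.2.1]{Diao} and \cite[Theorem 1.6]{Hansen-2020} for this). The packaging, however, differs. The paper works through the Galois-category formalism of Proposition \ref{Proposition: Kummer etale fundamental group}: the equivalences $F_X\colon X_{\fket}\xrightarrow{\sim}\pi_1^{\ket}(X)\text{-}\FSets$ and $F_U\colon U_{\fet}\xrightarrow{\sim}\pi_1^{\et}(U)\text{-}\FSets$ fit into a commutative square with the restriction equivalence and the isomorphism $\pi_1^{\ket}(X)\cong\pi_1^{\et}(U)$, so $Y$ corresponds to the $\pi_1^{\ket}(X)$-set $G$ simply because $V$ corresponds to the $\pi_1^{\et}(U)$-set $G$. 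Your route---lifting the $G$-action via $\Aut_X(Y)\cong\Aut_U(V)$ and then promoting the isomorphism $V\times G\cong V\times_U V$ to $Y\times G\cong Y\times_X Y$---is more hands-on and bypasses the fundamental-group language entirely, at the cost of one extra verification step. Both arguments are equally short once the key equivalence is in hand; the paper's is slightly more conceptual, yours slightly more elementary.
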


\begin{proof}
According to Proposition \ref{Proposition: Kummer etale fundamental group}, we have equivalences of categories
\[F_X:X_{\text{fk\'et}}\xrightarrow[]{\sim} \pi_1^{\ket}(X)-\FSets\]
and 
\[F_U: U_{\text{f\'et}}\xrightarrow[]{\sim} \pi_1^{\et}(U)-\FSets.\]
We have to show that $G$ is a finite quotient of $\pi_1^{\ket}(X)$ and, under the equivalence $F_X$, $Y$ corresponds to the finite set $G$ equipped with the natural $\pi_1^{\ket}(X)$-action.

By \cite[Proposition 4.2.1]{Diao} and \cite[Theorem 1.6]{Hansen-2020}, we have an equivalence of categories between $X_{\text{fk\'et}}$ and $U_{\text{f\'et}}$, under which $Y$ corresponds to $V$. It also induces a natural isomorphism $\pi_1^{\ket}(X)\cong \pi_1^{\et}(U)$ making the following diagram commutative. 
 \[
        \xymatrix{ {X_{\text{fk\'et}}} \ar^-{\sim}[r] \ar^{F_X}_{\sim}[d] & {U_{\text{f\'et}}} \ar^-{F_U}_{\sim}[d] \\
        {\pi_1^{\ket}(X)-\FSets} \ar^-{\sim}[r] & {\pi_1^{\et}(U)-\FSets} }
    \]
Since $V$ corresponds to the finite set $G$ under the equivalence $F_U$, we are done.
\end{proof}

Finally, we introduce the pro-Kummer \'etale site. For the rest of \S \ref{subsection: review of Kummer etale and pro-Kummer etale sites}, the nonarchimedean field $k$ is assumed to be an extension of $\Q_p$. 
\begin{Definition}
Let $X$ be a locally noetherian fs log adic space over $\Spa(k,\calO_k)$.
\begin{enumerate}
\item[(i)] The \textbf{pro-Kummer \'{e}tale site} $X_{\proket}$ of $X$ is defined as follows. The underlying category is the full subcategory of $\textrm{pro-}X_{\ket}$ consisting of cofiltered inverse limit $Y=\varprojlim_{i\in I}Y_i$ with $Y_i\in X_{\ket}$ such that the transition morphisms $Y_i\rightarrow Y_j$ are finite Kummer \'{e}tale and are surjective for sufficiently large $i$. Such an inverse limit if called a \textbf{pro-Kummer \'etale presentation} of $Y$. As for the coverings, we refer the readers to \cite[Definition 5.1.1, 5.1.2]{Diao} for details.
\item[(ii)] There is a natural projection of sites \[\nu: X_{\proket}\rightarrow X_{\ket}.\] The \textbf{structure sheaves} on $X_{\proket}$ are given by
$$
\scrO_{X_{\proket}}^+:=\nu^{-1}\scrO_{X_{\ket}}^+, \quad \scrO_{X_{\proket}}:=\nu^{-1}\scrO_{X_{\ket}}$$
and the \textbf{completed structure sheaves} are given by
$$
\quad \widehat{\scrO}_{X_{\proket}}^+:=\varprojlim_{n}\left(\scrO_{X_{\proket}}/p^n\right), \quad \widehat{\scrO}_{X_{\proket}}:=\widehat{\scrO}_{X_{\proket}}^+[1/p].
$$
We also write $\scrM_{X_{\proket}}:=\nu^{-1}(\scrM_{\ket})$ together with a natural morphism $\alpha:\scrM_{\proket}\rightarrow \scrO_{X_{\proket}}$.
\end{enumerate}
\end{Definition}
The pro-Kummer \'etale topology admits a convenient basis consisting of the \emph{log affinoid perfectoid objects}.

\begin{Definition}
An object $U$ in $X_{\proket}$ is called \textbf{log affinoid perfectoid} if it admits a pro-Kummer \'etale presentation $U = \varprojlim_{i \in I} U_i$ such that
\begin{enumerate}
 \item[(i)] There is an initial object $0 \in I$;
 \item[(ii)] Each $U_i= (\Spa(R_i, R_i^+)$ is affinoid and admits a chart modeled on a sharp fs monoid $P_i$ such that each transition morphism $U_j \rightarrow U_i$ is modeled on a Kummer chart $P_i \to P_j$;
\item[(iii)] The affinoid algebra $(R, R^+) := \big(\varinjlim_{i \in I} \, (R_i, R_i^{+})\big)^\wedge$ is a perfectoid affinoid algebra, where the completion is with respect to the $p$-adic topology;
\item[(iv)] The monoid $P := \varinjlim_{i \in I} P_i$ is \textbf{$n$-divisible}, for all $n \in \Z_{\geq 1}$. Namely, the $n$-th multiple map $[n]:P\rightarrow P$ is surjective for all $n\in\Z_{\geq 1}$.
    \end{enumerate}
    Such a presentation $U = \varprojlim_{i\in I} U_i$ is called a \textbf{perfectoid presentation} of $U$.
\end{Definition}

\begin{Proposition}[Proposition 5.3.12, \cite{Diao}]
The log affinoid perfectoid objects in $X_{\proket}$ form a basis of the pro-Kummer \'etale site.
\end{Proposition}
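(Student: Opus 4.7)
The plan is to show that every object in $X_{\proket}$ admits a pro-Kummer \'etale covering by log affinoid perfectoid objects. Working locally in $X_{\proket}$, we may replace $X$ by an affinoid open subspace and assume $X = \Spa(R, R^+)$ is affinoid and admits an fs chart modeled on a sharp fs monoid $P$ (so that every Kummer \'etale morphism over $X$ admits, \'etale locally, a chart $P \to Q$ with $Q$ sharp). Similarly, any object of $X_{\proket}$ can then, after refinement by a pro-Kummer \'etale cover, be assumed to be a cofiltered limit $U = \varprojlim_i U_i$ of affinoids $U_i = \Spa(R_i, R_i^+)$ each modeled on a sharp fs chart $P \to Q_i$, with transition maps given by Kummer maps $Q_i \to Q_j$.

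The construction of the desired cover proceeds in three independent layers, each producing a pro-Kummer \'etale tower over $U$:
\begin{enumerate}
\item[(a)] Base change to a perfectoid field. Replace $k$ by a perfectoid extension $\widetilde{k}$, e.g.\ the $p$-adic completion of $k(\mu_{p^\infty})$. This is a pro-finite \'etale (in particular pro-Kummer \'etale) cover of $\Spa(k,\calO_k)$, and hence of $X$ after base change.
\item[(b)] Extraction of roots in the monoid. For each integer $n \geq 1$, form the chart $\frac{1}{n}Q_i$ and the corresponding object $U_{i,n} := U_i \times_{X\langle Q_i\rangle} X\langle \tfrac{1}{n}Q_i\rangle$. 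Taking the cofiltered limit over both $i$ and $n$ produces a pro-Kummer \'etale cover of $U$ whose monoid chart is $P_\infty := \varinjlim_{i,n} \tfrac{1}{n}Q_i$, which is $n$-divisible for all $n \geq 1$.
\item[(c)] Extraction of $p$-power roots of coordinate functions. Choose a finite set of topologically nilpotent units $t_1,\dots,t_r \in R^\times$ whose images in $R/p$, together with the image of $P_\infty$, generate $R/p$ modulo $p$-th powers. Adjoin compatible systems of $p$-power roots $t_\ell^{1/p^m}$ via a pro-(strictly finite \'etale) tower over (b).
\end{enumerate}
Let $(R_\infty, R_\infty^+)$ denote the $p$-adic completion of the direct limit of the coordinate rings along this combined tower.

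The main verification is that $U_\infty = \Spa(R_\infty, R_\infty^+)$ is log affinoid perfectoid. By construction, the monoid $P_\infty$ is $n$-divisible for all $n$, satisfying condition (iv) of the definition. For the perfectoid property of $(R_\infty, R_\infty^+)$, one checks that the Frobenius $\varphi\colon R_\infty^+/p \to R_\infty^+/p$ is surjective: elements of $P_\infty$ have $p$-th roots in $P_\infty$ by (b); the chosen $t_\ell$'s have $p$-th roots by (c); and the base field being perfectoid handles scalars. Surjectivity of Frobenius on these generators modulo $p$ is sufficient by a standard argument in Scholze's theory of perfectoid spaces (via completing, comparing to the tilt, and using that the chosen generators topologically generate $R_\infty^+$ up to almost elements).

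The main obstacle is the compatibility between the Kummer \'etale monoid operations and the perfectoid-algebra structure, especially in step (b) when extracting $p$-power roots of the monoid: the map $X\langle Q\rangle \to X\langle \tfrac{1}{p}Q\rangle$ is Kummer but not \'etale (its ramification is supported on the log boundary), so the morphism is only Kummer \'etale rather than finite \'etale in the classical sense. Verifying that the integral structure sheaf behaves correctly along this tower, and that $(R_\infty^+, P_\infty)$ fits together into a genuine log affinoid perfectoid object, requires the logarithmic almost purity theorem \cite[Theorem 4.1.6 and the discussion in \S5]{Diao}, which ensures that the integral closures appearing in the normalizations used to build $X\langle \tfrac{1}{p^m}Q\rangle$ are almost \'etale over the base. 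With this in hand, standard limit arguments allow one to commute completion with the direct limit and conclude that $(R_\infty, R_\infty^+)$ is a perfectoid affinoid $\widetilde{k}$-algebra.
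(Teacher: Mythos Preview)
The paper does not prove this statement at all: it is quoted verbatim as \cite[Proposition 5.3.12]{Diao} with no argument supplied. So there is nothing in the present paper to compare your sketch against; the proof lives entirely in the cited reference.

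That said, your outline is broadly the right strategy and matches how the result is actually established in \cite{Diao}: reduce to an affinoid with a sharp fs chart, base-change to a perfectoid ground field, adjoin all roots of the monoid to make condition (iv) hold, and then use a pro-\'etale perfectoid tower on the underlying adic space. A couple of points deserve tightening. First, your step (c) is vague: the claim that one can choose finitely many topologically nilpotent units whose images, together with $P_\infty$, generate $R_\infty^+/p$ modulo $p$-th powers is not automatic and is not how the argument in \cite{Diao} runs; rather, one invokes the existence of affinoid perfectoid covers in the ordinary pro-\'etale site (Scholze's result for locally noetherian adic spaces over a perfectoid field) and checks compatibility with the log tower. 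Second, your citation of ``the logarithmic almost purity theorem \cite[Theorem 4.1.6]{Diao}'' is off---that reference is about Kummer \'etale Galois covers of the form $\mathbb{E}_m\to\mathbb{E}$, not an almost purity statement; the relevant input is the explicit description of the structure sheaf on log affinoid perfectoid objects (what the paper records as Proposition \ref{Proposition: almost vanishing}) combined with Scholze's almost purity in the strict case.
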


\begin{Proposition}[Theorem 5.4.3, \cite{Diao}]\label{Proposition: almost vanishing}
Let $U\in X_{\proket}$ be a log affinoid perfectoid object, with the associated perfectoid space $\widehat{U}=\Spa(R, R^+)$. Then
\begin{enumerate}
\item[(i)] For each $n\in \Z_{\geq 1}$, we have $\scrO^+_{X_{\proket}}(U)/p^n\cong R^+/p^n$, and it is canonically almost isomorphic to $(\scrO^+_{X_{\proket}}/p^n)(U)$.
\item[(ii)] For each $n\in \Z_{\geq 1}$ and $i\in \Z_{\geq 1}$, $H^i(U, \scrO^+_{X_{\proket}}/p^n)$ is almost equal to zero. Consequently, $H^i(U, \widehat{\scrO}^+_{X_{\proket}})$ is almost equal to zero.
\item[(iii)] We have $\widehat{\scrO}^+_{X_{\proket}}(U)\cong R^+$ and $\widehat{\scrO}_{X_{\proket}}(U)\cong R$. Moreover, $\widehat{\scrO}^+_{X_{\proket}}(U)$ is canonically isomorphic to the $p$-adic completion of $\scrO^+_{X_{\proket}}(U)$.
\end{enumerate}
\end{Proposition}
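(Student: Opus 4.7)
The statement to be proved is Proposition~\ref{Proposition: almost vanishing}, which is Diao's Theorem 5.4.3 and is a logarithmic analogue of Scholze's result (Prop.\ 7.13 of \cite{Scholze_2013}) on affinoid perfectoid objects in the pro-\'etale site. The plan is to follow Scholze's template but to replace affinoid perfectoid covers by log affinoid perfectoid ones, exploiting the $n$-divisibility of the colimit monoid $P$ to control the logarithmic corrections. Throughout, ``almost'' is with respect to the maximal ideal of $\calO_k$ (equivalently $\calO_{\C_p}$).

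For part (i), I would first identify $\scrO^+_{X_{\proket}}(U)$ directly from the definition: since $\scrO^+_{X_{\proket}} = \nu^{-1}\scrO^+_{X_{\ket}}$ and colimits commute with $\nu^{-1}$ on presheaves, evaluating at the pro-object $U=\varprojlim_i U_i$ gives
\[
    \scrO^+_{X_{\proket}}(U) \;=\; \varinjlim_i \scrO^+_{X_{\ket}}(U_i) \;=\; \varinjlim_i R_i^+.
\]
Modulo $p^n$, this colimit is isomorphic to $R^+/p^n$ because $R^+$ is defined to be the $p$-adic completion of $\varinjlim_i R_i^+$, and the $p$-adic completion does not affect the reduction modulo $p^n$. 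The non-trivial content is the canonical almost isomorphism between this presheaf evaluation and the sheafification $(\scrO^+_{X_{\proket}}/p^n)(U)$. Following Scholze, this reduces to showing that for a log affinoid perfectoid object, pro-Kummer \'etale Čech cohomology of $\scrO^+/p^n$ almost vanishes on coverings by log affinoid perfectoid objects; one combines this with Čech-to-derived comparisons.

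For part (ii), the main task is an almost version of Tate acyclicity in the logarithmic setting. First I would reduce, using the cofinality of log affinoid perfectoid coverings refining any pro-Kummer \'etale cover of $U$, to comparing with the (un-logarithmic) perfectoid statement. The $n$-divisibility of $P$ allows one to split off the ``torus-like'' part coming from the chart: on each $U_i$, the structure sheaf looks, after passage to the associated perfectoid space $\widehat U = \Spa(R,R^+)$, like the $p$-adic completion of a monoid algebra over the pro-\'etale structure sheaf of an affinoid perfectoid, and Scholze's almost-vanishing on affinoid perfectoids (Lemma 4.10 and Proposition 7.13 of \cite{Scholze_2013}) applies, together with almost vanishing of continuous group cohomology of $\Z_p(1)^{\mathrm{rk}(P^{\gp})}$ acting on a perfectoid ring (this handles the Kummer part). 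Feeding this into a Cartan--Leray/Čech spectral sequence yields almost vanishing of $H^i(U,\scrO^+_{X_{\proket}}/p^n)$ for $i\geq 1$, and the statement for $\widehat{\scrO}^+_{X_{\proket}}$ follows by taking $R\varprojlim_n$ and using that each transition map is almost surjective, so the Milnor $R^1\varprojlim$ term almost vanishes.

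Part (iii) is then formal: the almost vanishing in (ii) implies that the natural map $\widehat{\scrO}^+_{X_{\proket}}(U) \to \varprojlim_n (\scrO^+_{X_{\proket}}/p^n)(U)$ is an almost isomorphism, and combining with (i) gives $\widehat{\scrO}^+_{X_{\proket}}(U) \cong R^+$ almost (hence honestly, by passing to the $p$-adic completion, since $R^+$ is already $p$-adically complete). Inverting $p$ gives the statement for $\widehat{\scrO}_{X_{\proket}}$, and identification with the $p$-adic completion of $\scrO^+_{X_{\proket}}(U)=\varinjlim_i R_i^+$ is immediate from (i). The main obstacle in the whole argument is the log-theoretic almost Tate acyclicity in part (ii): one must genuinely use the $n$-divisibility of $P$ to absorb the Kummer corrections into standard perfectoid acyclicity, and this is where the logarithmic hypotheses (fs chart modeled on a sharp $n$-divisible monoid) are indispensable.
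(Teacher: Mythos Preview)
The paper does not give its own proof of this proposition: it is stated verbatim as a citation of \cite[Theorem 5.4.3]{Diao}, with no argument supplied. So there is nothing in the paper to compare your proposal against.

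That said, your sketch has one genuine confusion worth flagging. In part (ii) you invoke ``almost vanishing of continuous group cohomology of $\Z_p(1)^{\mathrm{rk}(P^{\gp})}$ acting on a perfectoid ring'' to handle the Kummer part. This is not how the $n$-divisibility hypothesis is used. The point of requiring $P=\varinjlim_i P_i$ to be $n$-divisible for all $n$ is that every Kummer chart $P\to Q$ with $Q$ fs is already dominated by $P$ itself, so Kummer \'etale morphisms to $U$ are automatically strictly \'etale. In other words, the localised site $X_{\proket}/U$ admits a basis of log affinoid perfectoid objects for which the log structure plays no further role, and one reduces directly to Scholze's almost-acyclicity for affinoid perfectoids in the pro-\'etale site; no group-cohomology computation is needed at this step. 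The Cartan--Leray computation with $\Gamma\cong\widehat{\Z}(1)^n$ that you have in mind does appear in this paper, but in a different place (Lemma \ref{Lemma: Cartan-Leray}), where one is computing $R^i\nu_*\widehat{\scrO}_X$ by descending from a log affinoid perfectoid cover $\widetilde{U}$ down to a \emph{non}-perfectoid base $U$; there the profinite Galois group $\Gamma$ genuinely enters. Conflating these two situations would make part (ii) circular or at least misdirected.

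Parts (i) and (iii) of your outline are fine, and the overall shape (reduce to Scholze's template) is correct; just replace the group-cohomology step in (ii) by the observation that divisibility of $P$ kills the Kummer direction outright.
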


\begin{Example}\label{Example: basic example of profinite Galois cover}
\normalfont We recall the following example from \cite[\S 6]{Diao}. Let $P$ be a sharp fs monoid. Consider
\[\mathbb{E}:=\Spa(\C_p\langle P\rangle, \calO_{\C_p}\langle P\rangle)\]
equipped with the natural log structure modeled on chart $P$. (If $P=\Z_{\geq 0}^n$, then $\mathbb{E}$ is just the $n$-dimensional unit disc in Example \ref{Example: unit disc}.) For each $m\in \Z_{>0}$, let $\frac{1}{m}P$ denote the sharp fs monoid containing $P$ such that the inclusion $P\hookrightarrow \frac{1}{m}P$ is isomorphic to the $m$-th multiple map $[m]:P\rightarrow P$. Define
\[\mathbb{E}_m:=\Spa(\C_p\langle \frac{1}{m}P\rangle, \calO_{\C_p}\langle \frac{1}{m}P\rangle)\]
equipped with the natural log structure modeled on the chart $\frac{1}{m}P$.  If $m|m'$, there is a natural finite Kummer \'etale morphism $\mathbb{E}_{m'}\rightarrow \mathbb{E}_{m}$ modeled on the chart $\frac{1}{m}P\hookrightarrow \frac{1}{m'}P$. According to \cite[Proposition 4.1.6]{Diao}, the morphism $\mathbb{E}_m\rightarrow \mathbb{E}$ is actually finite Kummer \'etale Galois with Galois group 
$$
\Gamma_{/m}:=\Hom\big((\frac{1}{m}P)^{\mathrm{gp}}/P^{\mathrm{gp}}, \boldsymbol{\mu}_{\infty}\big),
$$
where $\boldsymbol{\mu}_{\infty}$ denotes the group of all roots of unity in $\C_p$. Let $P_{\Q_{\geq 0}}:=\varinjlim_m (\frac{1}{m}P)$. It turns out
\[\widetilde{\mathbb{E}}:=\varprojlim_m \mathbb{E}_m\in \mathbb{E}_{\proket}\] is a log affinoid perfectoid object, with associated perfectoid space
\[\widehat{\widetilde{\mathbb{E}}}=\Spa(\C_p\langle P_{\Q_{\geq 0}}\rangle, \calO_{\C_p}\langle P_{\Q_{\geq 0}}\rangle).\]\qed
\end{Example}

Following \cite[Definition 6.1.2]{Diao}, a pro-Kummer \'{e}tale cover $Y\rightarrow X$ is called a \textit{\textbf{Galois cover with (profinite) Galois group $G$}} if there exists a presentation $Y=\varprojlim_{i} Y_i$ such that each $Y_i\rightarrow X$ is a finite Kummer \'{e}tale cover with Galois group $G_i$ and $G\cong \varprojlim_i G_i$.

For example, $\widetilde{\mathbb{E}}$ is a Galois cover over $\mathbb{E}$ with profinite Galois group \[\Gamma\cong \varprojlim_m \Gamma_{/m}=\varprojlim_m \Hom((\frac{1}{m}P)^{\mathrm{gp}}/P^{\mathrm{gp}}, \boldsymbol{\mu}_{\infty}) \cong\Hom(P^{\mathrm{gp}}_{\Q\geq 0}/P^{\mathrm{gp}}, \boldsymbol{\mu}_{\infty})\]
(cf. \cite[(6.1.4)]{Diao}).

\subsection{Calculation of \texorpdfstring{$R^i\nu_*\widehat{\scrO}_{X_{\proket}}$}{Lg}}\label{subsection: main result}
In this subsection, we study the sheaves $R^i\nu_*\widehat{\scrO}_{X_{\proket}}$ following the calculations in \cite{Scholze-perfectoid-survey},  \cite{Scholze_2013}, and \cite{CHJ-2017}, but in the context of log adic spaces. Throughout this subsection, $X$ is an fs log adic space that is log smooth over $\Spa(\C_p, \calO_{\C_p})$ (cf. Definition \ref{Definition: log smooth}). We will omit the subscript ``prok\'et'' from $\scrO_{X_{\proket}}$, $\scrO^+_{X_{\proket}}$, $\widehat{\scrO}_{X_{\proket}}$, $\widehat{\scrO}^+_{X_{\proket}}$, etc., whenever this causes no confusion.

By definition, $R^i\nu_*\widehat{\scrO}_{X}$ (resp., $R^i\nu_*\widehat{\scrO}^+_{X}$) is the sheaf on $X_{\ket}$ associated with the presheaf 
\[U\mapsto H^i(U_{\proket}, \widehat{\scrO}_{X})\,\,\,\,\,\,\,\,\big(\textrm{resp., }U\mapsto H^i(U_{\proket}, \widehat{\scrO}^+_{X})\big).\]

For every $U\in X_{\ket}$, $U$ is log smooth over $\Spa(\C_p, \calO_{\C_p})$. By \cite[Proposition 3.1.10]{Diao}, \'etale locally on $U$ there exists a \textit{\textbf{toric chart}} $U\rightarrow \mathbb{E}=\Spa(\C_p\langle P\rangle, \calO_{\C_p}\langle P\rangle)$ for some sharp fs monoid $P$,  {\it i.e.,} a strictly \'etale morphism $U\rightarrow \mathbb{E}=\Spa(\C_p\langle P\rangle, \calO_{\C_p}\langle P\rangle)$ that is a composition of rational localisations and finite \'etale morphisms. For such toric charts, we are able to calculate $H^i(U_{\proket}, \widehat{\scrO}^+_{X})$ and $H^i(U_{\proket}, \widehat{\scrO}_{X})$ in an explicit way.

\begin{Lemma}\label{Lemma: Cartan-Leray}
Suppose $U\in X_{\ket}$ is equipped with a toric chart $U\rightarrow \mathbb{E}$ as above. 
\begin{enumerate}
\item[(i)] For every $i\in \Z_{\geq 0}$ and $m\in\Z_{\geq 1}$, there is a natural injection
\[H^i_{\cts}(\Gamma, \scrO^+_{X_{\ket}}(U)/p^m)^a\hookrightarrow H^i(U_{\proket}, \scrO^+_{X}/p^m)^a\]
with cokernel killed by $p$, where $\Gamma$ is equipped with the profinite topology, $\scrO^+_{X_{\ket}}(U)/p^m$ is equipped with the discrete topology, and $\Gamma$ acts trivially on $\scrO^+_{X_{\ket}}(U)/p^m$.
\item[(ii)] 
For every $i\in\Z_{\geq 0}$, there is a natural injection
\[H^i_{\cts}(\Gamma, \scrO^+_{X_{\ket}}(U))^a\hookrightarrow H^i(U_{\proket}, \widehat{\scrO}^+_{X})^a\]
with cokernel killed by $p$, where $\Gamma$ is equipped with the profinite topology, $\scrO^+_{X_{\ket}}(U)$ is equipped with the $p$-adic topology, and $\Gamma$ acts trivially on $\scrO^+_{X_{\ket}}(U)$. By inverting $p$, we obtain an isomorphism
\[H^i_{\cts}(\Gamma, \scrO_{X_{\ket}}(U))\xrightarrow[]{\sim} H^i(U_{\proket}, \widehat{\scrO}_{X}).\]
\item[(iii)]
For every $i\in \Z_{\geq 0}$ and $m\in \Z_{\geq 1}$, by choosing an isomorphism $P^{\mathrm{gp}}\simeq \Z^n$, there is a natural almost injection $$\bigwedge^i (\scrO^+_{U_{\ket}}/p^m)^n\hookrightarrow R^i\nu_* (\scrO^+_{U_{\proket}}/p^m)$$ whose cokernel is killed by $p$. This induces a natural almost injection $$\bigwedge^i (\scrO^+_{U_{\ket}})^n\hookrightarrow R^i\nu_* \widehat{\scrO}^+_{U_{\proket}}$$
with cokernel killed by $p$. Inverting $p$, we obtain an isomorphism
$$\bigwedge^i (\scrO_{U_{\ket}})^n\simeq R^i\nu_* \widehat{\scrO}_{U_{\proket}}.$$ In particular, the sheaf $R^i\nu_*\widehat{\scrO}_{X}$ is a locally free $\scrO_{X_{\ket}}$-module. 
\end{enumerate}
\end{Lemma}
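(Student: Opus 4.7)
The plan is to reduce everything to a Cartan--Leray spectral sequence computation for the pro-Kummer \'etale Galois cover obtained by pulling back $\widetilde{\mathbb{E}}\to\mathbb{E}$ along the given toric chart, and then to identify the continuous cohomology of $\Gamma$ explicitly.

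First I would set up the cover. Given a toric chart $U\to\mathbb{E}=\Spa(\C_p\langle P\rangle,\calO_{\C_p}\langle P\rangle)$, form the fiber product $\widetilde{U}:=U\times_{\mathbb{E}}\widetilde{\mathbb{E}}$ in $X_{\proket}$. Since $\widetilde{\mathbb{E}}\to\mathbb{E}$ is a pro-Kummer \'etale Galois cover with profinite Galois group $\Gamma\cong\Hom(P^{\gp}_{\Q_{\geq 0}}/P^{\gp},\boldsymbol{\mu}_\infty)$ (Example \ref{Example: basic example of profinite Galois cover}), the map $\widetilde{U}\to U$ is a pro-Kummer \'etale Galois cover with the same group $\Gamma$. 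Moreover, because the toric chart is a composition of rational localisations and finite \'etale maps, $\widetilde{U}$ is log affinoid perfectoid; more precisely, if $U=\Spa(R,R^+)$, then $\widetilde{U}$ has associated perfectoid affinoid algebra obtained by completing $R\otimes_{\C_p\langle P\rangle}\C_p\langle P_{\Q_{\geq 0}}\rangle$.

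For part (i), the Cartan--Leray spectral sequence for $\widetilde{U}\to U$ reads
\[
E_2^{i,j}=H^i_{\cts}\bigl(\Gamma, H^j(\widetilde{U}_{\proket},\scrO_X^+/p^m)\bigr)\Longrightarrow H^{i+j}(U_{\proket},\scrO_X^+/p^m).
\]
By Proposition \ref{Proposition: almost vanishing}(ii), $H^j(\widetilde{U}_{\proket},\scrO_X^+/p^m)$ is almost zero for $j\geq 1$, and by Proposition \ref{Proposition: almost vanishing}(i) we have $\scrO_X^+(\widetilde{U})/p^m$ almost equal to $(\scrO_X^+/p^m)(\widetilde{U})$. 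Hence the spectral sequence degenerates up to almost isomorphism, yielding
\[
H^i(U_{\proket},\scrO_X^+/p^m)^a\cong H^i_{\cts}\bigl(\Gamma,\scrO_X^+(\widetilde{U})/p^m\bigr)^a.
\]
The almost purity / faithfully flat perfectoid descent argument (as in \cite{Scholze_2013} and \cite{CHJ-2017}) shows that the inclusion $\scrO_{X_{\ket}}^+(U)/p^m\hookrightarrow\scrO_X^+(\widetilde{U})/p^m$ realises the source as the $\Gamma$-invariants, with the higher continuous cohomology of the quotient trace-killed by $p$. This gives the claimed injection with cokernel killed by $p$.

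For part (ii), I would take the inverse limit over $m$. Since $\scrO_{X_{\ket}}^+(U)$ is $p$-adically complete and the $R^i\varprojlim_m$ vanish almost by cofinality (an almost analogue of \cite[Lemma 3.18]{Scholze_2013}), the statement for $\scrO_X^+/p^m$ propagates to $\widehat{\scrO}^+_X$. Inverting $p$ kills the almost-isomorphism defect and produces the stated isomorphism for $\widehat{\scrO}_X$.

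For part (iii), I would compute $H^i_{\cts}(\Gamma,\scrO_{X_{\ket}}^+(U)/p^m)$ explicitly. Since $\Gamma$ acts trivially on $\scrO_{X_{\ket}}^+(U)$ and $\Gamma\cong\Hom(P^{\gp}_{\Q_{\geq 0}}/P^{\gp},\boldsymbol{\mu}_\infty)$ is (after fixing an isomorphism $P^{\gp}\cong\Z^n$) isomorphic to $\widehat{\Z}(1)^n$, the continuous cohomology is computed by the Koszul complex and gives $H^i_{\cts}(\Gamma,M)\cong\bigwedge^i M^n$ for a $\Z_p$-module $M$ with trivial action, up to almost isomorphism in the integral setting. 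Sheafifying over $U_{\ket}$ (and checking that the identification is independent of the choice of toric chart, which follows from naturality in strictly \'etale maps and compatibility of the Kummer pro-covers) gives the asserted description of $R^i\nu_*\widehat{\scrO}_X$ as $\bigwedge^i\Omega$ for the appropriate rank-$n$ locally free module, i.e.\ $\bigwedge^i\scrO_{U_{\ket}}^n$ after choosing a trivialisation.

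The main obstacle I expect is bookkeeping of the almost-mathematics: ensuring that (a) the Cartan--Leray degeneration really goes through in the almost category for the integral coefficients, and (b) the cokernel control ``killed by $p$'' (rather than merely by $p^\epsilon$ for all $\epsilon$) is obtained with the right constants. This is where one needs the precise almost vanishing in Proposition \ref{Proposition: almost vanishing} together with the explicit Koszul-complex description of continuous cohomology of $\widehat{\Z}(1)^n$ acting trivially. Independence of the toric chart and the passage from the local statement to the locally free statement of $R^i\nu_*\widehat{\scrO}_X$ are then formal.
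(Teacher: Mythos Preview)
Your approach is correct and matches the paper's proof: Cartan--Leray for the cover $\widetilde{U}\to U$, almost vanishing on log affinoid perfectoids (Proposition \ref{Proposition: almost vanishing}), inverse limit over $m$ with Mittag--Leffler, and the explicit Koszul computation for $\Gamma\cong\widehat{\Z}(1)^n$ acting trivially. The only places where the paper is more precise are the ``cokernel killed by $p$'' step in (i), for which it cites \cite[Lemma 6.1.7, Remark 6.1.8]{Diao} rather than invoking almost purity in the abstract, and the sheafification in (iii), where instead of calling it formal the paper works over an explicit basis $\mathcal{B}_U$ of Kummer \'etale $V\to U$ whose charts are of the form $[m]:P\to P$ and checks the compatibility diagram directly.
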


\begin{proof}
\begin{enumerate}
\item[(i)] Recall the log affinoid perfectoid Galois cover $\widetilde{\mathbb{E}}\rightarrow \mathbb{E}$ (with profinite Galois group $\Gamma$) constructed in Example \ref{Example: basic example of profinite Galois cover}. Consider \[\widetilde{U}:=U\times_{\mathbb{E}}\widetilde{\mathbb{E}}\in X_{\proket}.\]
By \cite[Lemma 5.3.8]{Diao}, $\widetilde{U}$ is also log affinoid perfectoid and $\widetilde{U}\rightarrow U$ is a Galois cover with the same Galois group. We obtain the Cartan--Leray spectral sequence (see \cite[Remark 2.25]{CHJ-2017}) \[E_2^{i,j}= H_{\cts}^i(\Gamma, H^j(\widetilde{U}, \scrO_{X}^+/p^m))\Rightarrow H^{i+j}(U_{\proket}, \scrO_{X}^+/p^m).\] By Proposition \ref{Proposition: almost vanishing} (ii), $H^j(\widetilde{U}, \scrO_{X}^+/p^m)$ is almost zero for all $j\in \Z_{\geq 1}$. Therefore, we have an almost isomorphism 
\begin{equation}\label{eq: almost isomorphism Cartain-Leray}
H^i(U_{\proket}, \scrO_{X}^+/p^m)^a\simeq H_{\cts}^i(\Gamma, (\scrO_{X}^+/p^m)(\widetilde{U}))^a.
\end{equation}

On the other hand, by \cite[Lemma 6.1.7, Remark 6.1.8]{Diao}, the natural morphism 
\[H^i(\Gamma, (\scrO_{X_{\ket}}^+/p^m)(U))\rightarrow H^i(\Gamma, (\scrO_{X}^+/p^m)(\widetilde{U}))\]
is injective with cokernel killed by $p$ for all $i\in \Z_{\geq 0}$. Combining this with the almost isomorphism (\ref{eq: almost isomorphism Cartain-Leray}), we obtain the desired almost injection.

\item[(ii)] By an almost version of \cite[Lemma 3.18]{Scholze_2013} and Proposition \ref{Proposition: almost vanishing} (i) (ii), we see that the inverse system $\{\scrO_{X}^+/p^m: m\in \Z_{\geq 1}\}$ has almost vanishing higher inverse limits on the pro-Kummer \'{e}tale site. Therefore, we obtain almost isomorphisms
\[H^i(U_{\proket}, \widehat{\scrO}_{X}^+)^a\cong \varprojlim_m H^i(U_{\proket}, \scrO_{X}^+/p^m)^a \simeq \varprojlim_m H_{\cts}^i(\Gamma, (\scrO_{X}^+/p^m)(\widetilde{U}))^a.\]

On the other hand, for every $i\in \Z_{\geq 0}$, we claim that there is a natural isomorphism
\[H_{\cts}^i(\Gamma, \scrO_{X_{\ket}}^+(U))\cong \varprojlim_m H_{\cts}^i(\Gamma, (\scrO_{X_{\ket}}^+/p^m)(U)).\] Indeed, by the same arguments as in the proof of \cite[Theorem 2.7.5]{NSW-cohomology}, there is a short exact sequence \[0\rightarrow R^1\varprojlim_{m} H_{\cts}^{i-1}(\Gamma, (\scrO_{X_{\ket}}^+/p^m)(U))\rightarrow H_{\cts}^{i}(\Gamma, \scrO_{X_{\ket}}^+(U))\rightarrow \varprojlim_m H_{\cts}^{i}(\Gamma, (\scrO_{X_{\ket}}^+/p^m)(U))\rightarrow 0.\] 
It suffices to show that 
\[R^1\varprojlim_m H_{\cts}^{i-1}(\Gamma, (\scrO_{X_{\ket}}^+/p^m)(U))=0.\] 
Notice that $P^{\mathrm{gp}}$ is a finitely generated torsion-free abelian group. By choosing a $\Z$-basis of $P^{\mathrm{gp}}$, we obtain an isomorphism $\Gamma\cong \widehat{\Z}(1)^n$ of profinite groups which induces an isomorphism
\[
H^{i-1}_{\cts}(\Gamma, (\scrO^+_{X_{\ket}}/p^m)(U))\simeq \bigwedge^{i-1}(\scrO^+_{X_{\ket}}(U)/p^m)^n.
\]
Thus, for every $m'>m$, the transition map \[H_{\cts}^{i-1}(\Gamma, (\scrO_{X_{\ket}}^+/p^{m'})(U))\rightarrow H_{\cts}^{i-1}(\Gamma, (\scrO_{X_{\ket}}^+/p^{m})(U))\] is a surjection. Hence, the inverse system $\{H_{\cts}^{i-1}(\Gamma, (\scrO_{X_{\ket}}^+/p^m)(U)): m\in \Z_{>0}\}$ satisfies the Mittag-Leffler condition which yields the desired vanishing of $R^1\lim$.

Putting everything together, we obtain a natural injection
$$
H_{\cts}^i(\Gamma, \scrO_{X_{\ket}}^+(U))^a\cong \varprojlim_m H_{\cts}^i(\Gamma, (\scrO_{X_{\ket}}^+/p^m)(U))^a\hookrightarrow \varprojlim_m H_{\cts}^i(\Gamma, (\scrO_{X}^+/p^m)(\widetilde{U}))^a\cong H^i(U_{\proket}, \widehat{\scrO}_{X}^+)^a
$$
whose cokernel is killed by $p$, as desired.

\item[(iii)] We show that the restriction of $R^i\nu_*\widehat{\scrO}_X$ on $U_{\ket}$ is isomorphic to the free $\scrO_{U_{\ket}}$-module $\bigwedge^i (\scrO_{U_{\ket}})^n$. In fact, as a byproduct of the computation above, we have isomorphisms (depending on the fixed choice of the identification $\Gamma\cong \widehat{\Z}(1)^n$)
$$H^i_{\cts}(\Gamma, \scrO^+_{X_{\ket}}(U))\cong \varprojlim_m H_{\cts}^i(\Gamma, (\scrO_{X_{\ket}}^+/p^m)(U))\simeq \varprojlim_m\bigwedge^i(\scrO^+_{X_{\ket}}(U)/p^m)^n=\bigwedge^i(\scrO^+_{X_{\ket}}(U))^n.$$
Inverting $p$, we obtain an isomorphism
$$H^i(U_{\proket}, \widehat{\scrO}_X)\simeq H^i_{\cts}(\Gamma, \scrO_{X_{\ket}}(U))\simeq \bigwedge^i(\scrO_{X_{\ket}}(U))^n.$$

Consider $V\in U_{\ket}$ such that $V\rightarrow U$ admits a chart $P\rightarrow P'$ and such that $V\rightarrow U$ factors as 
$$V\rightarrow U'\times_{U'\langle P\rangle}U'\langle P'\rangle \rightarrow U' \rightarrow U$$
where 
\begin{itemize}
\item $U'\subset U$ is a strictly \'etale morphism which is a composition of finite \'etale morphisms and rational localisations;
\item $P\rightarrow P'$ is isomorphic to the $m$-th multiple map $[m]: P\rightarrow P$;
\item $V\rightarrow U'\times_{U'\langle P\rangle}U'\langle P'\rangle$ is a strictly \'etale morphism which is a composition of finite \'etale morphisms and rational localisations.
\end{itemize}

Notice that such a $V$ admits a toric chart $V\rightarrow \mathbb{E}'$ where $\mathbb{E}'=\Spa(\C_p\langle P'\rangle, \calO_{\C_p}\langle P'\rangle)$. Repeating the argument above, we arrive at an isomorphism
$$H^i_{\cts}(\Gamma', \scrO_{X_{\ket}}(V))\simeq H^i(V_{\proket}, \widehat{\scrO}_X)$$
where 
$$\Gamma':= \Hom (P'^{\mathrm{gp}}_{\Q\geq 0}/P'^{\mathrm{gp}}, \boldsymbol{\mu}_{\infty}).$$

In addition, the injection $P\rightarrow P'$ induces an injection $\Gamma'\rightarrow \Gamma$ which is isomorphic to multiplication by $m$. The fixed identification $\Gamma\cong \widehat{\Z}(1)^n$ then identifies $\Gamma'\rightarrow \Gamma$ with the $m$-th multiple map $[m]: \widehat{\Z}(1)^n\rightarrow \widehat{\Z}(1)^n$. We arrive at the following commutative diagram
$$\begin{tikzcd}
\bigwedge^i(\scrO_{X_{\ket}}(U))^n \arrow[d] \arrow[r, "\simeq"] & H^i_{\cts}(\Gamma, \scrO_{X_{\ket}}(U))\arrow[d] \arrow[r, "\simeq"] &H^i(U_{\proket}, \widehat{\scrO}_X) \arrow[d]
\\
\bigwedge^i(\scrO_{X_{\ket}}(V))^n \arrow[r, "\simeq"] & H^i_{\cts}(\Gamma', \scrO_{X_{\ket}}(V))\arrow[r, "\simeq"] &H^i(V_{\proket}, \widehat{\scrO}_X)
 \end{tikzcd}$$
 
Finally, let $\mathcal{B}_U$ denote the collection of such $V$'s. Notice that every Kummer map $P\rightarrow Q$ between sharp fs monoids factors through $[m]:P\rightarrow P$ for some $m\in \Z_{\geq 1}$. Hence, every $W\in U_{\ket}$ is covered by elements in $\mathcal{B}_U$. This is enough to conclude that the sheafification of the presheaf $W\rightarrow H^i(W_{\proket}, \widehat{\scrO}_X)$ on $U_{\ket}$ is isomorphic to the free sheaf $\bigwedge^i(\scrO_{U_{\ket}})^n$. This completes the proof. 
\end{enumerate}
\end{proof}

We also provide a coordinate-free description of $R^i\nu_*\widehat{\scrO}_{X}$. The following result is a logarithmic version of \cite[Proposition 3.23, Lemma 3.24]{Scholze-perfectoid-survey}.

\begin{Lemma}\label{Lemma: log analogue of Lemma 3.24 in Scholze's survey paper}
For every $n\in\Z_{\geq 1}$, let $\mu_{p^n}$ be the sheaf of $p^n$-th roots of unity on $X_{\ket}$. Consider the $\Z_p$-local system $\Z_p(1):=\varprojlim_{n}\mu_{p^n}$ on $X_{\ket}$ and let $\widehat{\Z}_p(1):=\nu^{-1}\Z_p(1)$ be the associated $\widehat{\Z}_p$-local system on $X_{\proket}$ (cf. \cite[Definition 6.3.2]{Diao}). The short exact sequence \[0\rightarrow \widehat{\Z}_p(1)\rightarrow \varprojlim_{x\mapsto x^p}\scrM_{X_{\proket}}\rightarrow \scrM_{X_{\proket}}\rightarrow 0\] induces a boundary map \[\scrM_{X_{\ket}}=\nu_*\scrM_{X_{\proket}}\rightarrow R^1\nu_*\widehat{\Z}_p(1).\] Then, there exists a unique $\scrO_{X_{\ket}}$-linear morphism $\Omega_{X_{\ket}}^{\log, 1}\rightarrow R^1\nu_*\widehat{\scrO}_{X}(1)$ such that the diagram 
  \[
        \xymatrix{{\scrM_{X_{\ket}}} \ar^-{}[r] \ar^{\mathrm{dlog}}[d] & {R^1\nu_*\widehat{\Z}_p(1)} \ar^-{}[d] \\
        {\Omega_{X_{\ket}}^{\log, 1}} \ar^-{}[r] & {R^1\nu_*\widehat{\scrO}_{X}(1)} }
    \]
is commutative, where $\Omega_{X_{\ket}}^{\log, 1}$  is the sheaf of log differentials defined in \cite[Definition 3.2.25]{Diao}.

Moreover, this morphism is an isomorphism. As a corollary, by taking cup product and exterior product, we obtain a canonical isomorphism $R^i\nu_*\widehat{\scrO}_{X}\cong \Omega_{X_{\ket}}^{\log,i}(-i)$ for every $i\geq 1$.
\end{Lemma}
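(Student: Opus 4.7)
The plan is to proceed in three stages: establish the existence and uniqueness of the morphism $\Omega^{\log,1}_{X_{\ket}} \to R^1\nu_*\widehat{\scrO}_X(1)$, reduce the isomorphism statement to a local computation on toric charts, and finally derive the higher degree statement by cup and exterior products.

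For the first stage, recall that $\Omega^{\log,1}_{X_{\ket}}$ is generated as an $\scrO_{X_{\ket}}$-module by $\dlog(\scrM_{X_{\ket}})$, and it is characterised by the universal property that any $\scrO_{X_{\ket}}$-linear map from $\Omega^{\log,1}_{X_{\ket}}$ to an $\scrO_{X_{\ket}}$-module $\scrF$ corresponds to a ``log derivation'' on $\scrM_{X_{\ket}}$ valued in $\scrF$. Uniqueness is therefore immediate. For existence, I need to verify that the composition $\scrM_{X_{\ket}} \to R^1\nu_*\widehat{\Z}_p(1) \to R^1\nu_*\widehat{\scrO}_X(1)$ is additive (in the sense of monoid homomorphism to the additive group $R^1\nu_*\widehat{\scrO}_X(1)$) and that it satisfies the relation $\mathrm{(boundary)}(f) = df/f$ when $f \in \scrO_{X_{\ket}}^{\times} \subset \scrM_{X_{\ket}}$. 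Additivity follows immediately from the fact that the boundary map arising from the multiplicative short exact sequence of sheaves of abelian groups is a group homomorphism. The compatibility with the invertible part requires unwinding how $\dlog$ is defined on $\scrO^{\times}$ and matches the boundary for the usual Kummer sequence.

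For the second and main stage, by Lemma~\ref{Lemma: Cartan-Leray}(iii) together with \cite[Proposition 3.1.10]{Diao}, it suffices to work \'etale locally on a toric chart $U \to \mathbb{E} = \Spa(\C_p\langle P \rangle, \calO_{\C_p}\langle P \rangle)$ with $P$ a sharp fs monoid and $P^{\mathrm{gp}} \simeq \Z^n$. On such a chart, $\Omega^{\log,1}_{U_{\ket}}$ is the free $\scrO_{U_{\ket}}$-module of rank $n$ generated by $\dlog(T_1), \ldots, \dlog(T_n)$, where $T_1, \ldots, T_n$ is a basis of $P^{\mathrm{gp}}$ under the identification $P^{\mathrm{gp}} \simeq \Z^n$. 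On the other hand, the proof of Lemma~\ref{Lemma: Cartan-Leray}(iii) produces an explicit identification $R^1\nu_*\widehat{\scrO}_U \simeq \scrO_{U_{\ket}}^n$ depending on the same choice of basis, coming from continuous group cohomology of $\Gamma \simeq \widehat{\Z}(1)^n$. Hence it suffices to compute the boundary of $\dlog(T_i) \in \scrM_{U_{\ket}}$ and verify that it corresponds, under this trivialisation, to the $i$-th standard basis vector (after the Tate twist).

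The main obstacle is this last explicit boundary computation. Concretely, on the log affinoid perfectoid cover $\widetilde{\mathbb{E}} \to \mathbb{E}$ from Example~\ref{Example: basic example of profinite Galois cover}, each $T_i$ admits a compatible system of $p^n$-th roots $T_i^{1/p^n}$ in $\scrM$, so the preimage of $T_i$ under $\varprojlim_{x \mapsto x^p}\scrM_{X_{\proket}} \to \scrM_{X_{\proket}}$ is the sequence $(T_i^{1/p^n})_n$. The boundary map in Cartan--Leray then sends $\dlog(T_i)$ to the $1$-cocycle $\gamma \mapsto (\gamma(T_i^{1/p^n})/T_i^{1/p^n})_n \in \widehat{\Z}_p(1)(\widetilde{U})$, which under the isomorphism $\Gamma \cong \widehat{\Z}(1)^n$ corresponds to the continuous character picking out the $i$-th coordinate. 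Postcomposing with the inclusion $\widehat{\Z}_p(1) \hookrightarrow \widehat{\scrO}_X(1)$ yields, after the untwist, precisely the $i$-th basis vector in $\scrO_{U_{\ket}}^n$ under the identification of Lemma~\ref{Lemma: Cartan-Leray}(iii). This matches the image of $\dlog(T_i)$ under the proposed morphism, proving that in rank one we have an isomorphism of locally free sheaves of rank $n$.

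For the final stage, once the degree one isomorphism is established, the canonical isomorphism $R^i\nu_*\widehat{\scrO}_X \cong \Omega^{\log,i}_{X_{\ket}}(-i)$ is obtained by taking the $i$-fold cup product $(R^1\nu_*\widehat{\scrO}_X(1))^{\otimes i} \to R^i\nu_*\widehat{\scrO}_X(i)$ and factoring through the antisymmetrisation, whose source becomes $\bigwedge^i \Omega^{\log,1}_{X_{\ket}} = \Omega^{\log,i}_{X_{\ket}}$. That this factored map is an isomorphism can again be checked locally on a toric chart, where it reduces to the elementary fact that the cup product in the continuous cohomology $H^*_{\cts}(\widehat{\Z}(1)^n, \C_p)$ realises $H^i \cong \bigwedge^i H^1$, which matches the computation of $R^i\nu_*\widehat{\scrO}_X$ in Lemma~\ref{Lemma: Cartan-Leray}(iii).
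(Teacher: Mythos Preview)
Your Stages 2 and 3 match the paper's approach closely: the local isomorphism check via the explicit Cartan--Leray cocycle for $T_i$, and the passage to higher degrees by cup product, are exactly what the paper does. The gap is in Stage 1, the existence of the $\scrO_{X_{\ket}}$-linear map making the square commute.

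The universal property of $\Omega^{\log,1}_{X_{\ket}}$ does not say that an $\scrO_{X_{\ket}}$-linear map out of it corresponds to a monoid homomorphism on $\scrM_{X_{\ket}}$ alone; it corresponds to a \emph{log derivation}, namely a pair $(\partial,\partial_{\log})$ with $\partial:\scrO_{X_{\ket}}\to\scrF$ a derivation and $\partial_{\log}:\scrM_{X_{\ket}}\to\scrF$ a monoid homomorphism satisfying $\alpha(m)\,\partial_{\log}(m)=\partial(\alpha(m))$. Your boundary map supplies $\partial_{\log}$, but you never produce $\partial$. The sentence ``(boundary)$(f)=df/f$'' does not typecheck: $df/f$ lives in $\Omega^{\log,1}_{X_{\ket}}$, not in $R^1\nu_*\widehat{\scrO}_X(1)$. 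Concretely, on a toric chart the commutativity of the square for a unit $u\in\scrO_X^{\times}$ demands that the Kummer class $[u]\in R^1\nu_*\widehat{\scrO}_X(1)$ equal $\sum_i (T_i\partial_i u/u)\,[T_i]$; this is exactly the content of the isomorphism you are trying to prove, so invoking it here is circular. Equivalently, if you try to glue the local maps $\dlog(T_i)\mapsto[T_i]$ across charts, compatibility under a coordinate change $T'_j=u_j\cdot T^{a_j}$ again requires $[u_j]=\phi(du_j/u_j)$, which you have not established.

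The paper sidesteps this by mapping the short exact sequence $0\to\widehat{\Z}_p(1)\to\varprojlim_{x\mapsto x^p}\scrM_{X_{\proket}}\to\scrM_{X_{\proket}}\to 0$ into the log Faltings extension $0\to\widehat{\scrO}_X(1)\to\mathrm{gr}^1\scrO\!\mathbb{B}^+_{\mathrm{dR},\log,X}\to\widehat{\scrO}_X\otimes_{\scrO_{X_{\ket}}}\Omega^{\log,1}_{X_{\ket}}\to 0$ from \cite[Corollary~2.4.5]{Diao-Lan-Liu-Zhu}, via $a\mapsto\log(\mathbf{e}^a)$ in the middle. The resulting map of boundary maps then furnishes the morphism $\Omega^{\log,1}_{X_{\ket}}\to R^1\nu_*\widehat{\scrO}_X(1)$ canonically, with the square commuting for \emph{all} $m\in\scrM_{X_{\ket}}$ by functoriality, not just for the chart generators. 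Once that is in place, your Stage~2 computation is precisely how the paper verifies the map is an isomorphism.
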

\begin{proof}
The proof follows almost verbatim from the proof of \cite[Lemma 3.24]{Scholze-perfectoid-survey}, except that we have to replace the short exact sequence in \cite[Corollary 6.14]{Scholze_2013} by the short exact sequence in \cite[Corollary 2.4.5]{Diao-Lan-Liu-Zhu}. Here we only give a sketch of the proof.

Firstly, since the question is \'etale local, we may assume that $X$ admits a toric chart $X\rightarrow \Spa(\C_p\langle P\rangle, \calO_{\C_p}\langle P\rangle)$ for some sharp fs monoid $P$. Secondly, the desired $\scrO_{X_{\ket}}$-linear morphism, if exists, must be unique because $\Omega^{\log, 1}_{X_{\ket}}$ is a locally free $\scrO_{X_{\ket}}$-module generated by the image of $\textrm{dlog}$. It remains to show the existence.

Consider the map of short exact sequences
 \[
        \xymatrix{ 0 \ar^-{}[r] & {\widehat{\Z}_p(1)} \ar^-{}[r] \ar^{}[d] & {\varprojlim_{x\mapsto x^p}\scrM_{X_{\proket}}} \ar^-{}[r] \ar^-{}[d] & {\scrM_{X_{\proket}}}  \ar^-{}[r] \ar^-{\textrm{dlog}}[d] & 0 \\
        0 \ar^-{}[r] & {\widehat{\scrO}_{X}(1)} \ar^-{}[r]  & {\textrm{gr}^1 \scrO\!\mathbb{B}^+_{\textrm{dR}, \log,X}} \ar^-{}[r] & {\widehat{\scrO}_{X}\otimes_{\scrO_{X_{\ket}}} \Omega_{X_{\ket}}^{\log, 1}}  \ar^-{}[r]  & 0 }
    \]
    where the lower sequence is from \cite[Corollary 2.4.5]{Diao-Lan-Liu-Zhu}. The vertical map in the middle sends an element $a\in \varprojlim_{x\mapsto x^p}\calM_{X_{\proket}}$ to 
\[\log(\mathbf{e}^a):=-\sum_{n=1}^{\infty}\frac{1}{n}(1-\mathbf{e}^a)^n\in \textrm{Fil}^1\scrO\!\mathbb{B}^+_{\textrm{dR}, \log,X}\] and hence maps to $\textrm{gr}^1 \scrO\!\mathbb{B}^+_{\textrm{dR}, \log,X}$ (see \cite[\S 2.2]{Diao-Lan-Liu-Zhu} for the definition of $\mathbf{e}^a$). It is straightforward to check the commutativity of the diagram. This diagram then induces a diagram of boundary maps
  \[
        \xymatrix{{H^0(X_{\proket}, \scrM_{X_{\proket}})} \ar^-{}[r] \ar^{\textrm{dlog}}[d] & {H^1(X_{\proket}, \widehat{\Z}_p(1))} \ar^-{}[d] \\
        {H^0(X_{\proket}, \widehat{\scrO}_{X}\otimes_{\scrO_{X_{\ket}}} \Omega_{X_{\ket}}^{\log, 1})} \ar^-{}[r] & {H^1(X_{\proket}, \widehat{\scrO}_{X}(1))} }
    \]
    which provides the desired morphism. 
    
    To check that this map is an isomorphism, we fix an identification $P^{\mathrm{gp}}\simeq \Z ^n=\bigoplus_{j=1}^n \Z e_j$ which induces an isomorphism
 $$\Gamma:=\Hom(P^{\mathrm{gp}}_{\Q\geq 0}/P^{\mathrm{gp}}, \boldsymbol{\mu}_{\infty})\cong \widehat{\Z}(1)^n.$$ 
Notice that each $e_j$ can be written as a $\Z$-linear combination of elements in $P$; \emph{i.e.}, $e_j=\sum_{t=1}^m a_t p_t$ for some $a_t\in \Z$ and $p_t\in P$. We define $\mathrm{dlog}(e_j):=\sum_{t=1}^m a_t\mathrm{dlog}(p_t)$ where we have identify $p_t$ with its image in $\scrM_{X_{\ket}}$. One checks that $\mathrm{dlog}(e_j)$ is independent of the choice of the $\Z$-linear combination and the $\mathrm{dlog}(e_j)$'s form a basis for the free $\scrO_{X_{\ket}}$-module $\Omega^{\log, 1}_{X_{\ket}}$.
 
 On the other hand, by the computation in the proof of Lemma \ref{Lemma: Cartan-Leray}, the identification $\Gamma\simeq \widehat{\Z}(1)^n$ induces an isomorphism $R^1\nu_* \widehat{\scrO}_X\simeq \scrO_{X_{\ket}}^n=\bigoplus_{j=1}^n \scrO_{X_{\ket}}\epsilon_j$. Direct computation shows that the map $\Omega^{\log,1}_{X_{\ket}}\rightarrow R^1\nu_* \widehat{\scrO}_X$ sends $\mathrm{dlog}(e_j)$ to $\epsilon_j$, for every $j=1, \ldots, n$. This finishes the proof.
\end{proof}

To wrap up this subsection, we include a logarithmic analogue of \cite[Proposition 6.8]{CHJ-2017} which suggests that the calculation of $R^i\nu_*\widehat{\scrO}_X$ is compatible with the ``mixed completed tensor''. 

\begin{Proposition}\label{Proposition: compatibility with completed tensor}
Let $M$ be a profinite flat $\calO_K$-module in the sense of \cite[Definition 6.1]{CHJ-2017}. Then there is a canonical isomorphism
\[R^i\nu_*(\widehat{\scrO}_{X}\widehat{\otimes}M)\cong (R^i\nu_*\widehat{\scrO}_{X})\widehat{\otimes} M,\]
where $\widehat{\otimes}$ stands for the ``mixed completed tensor'' in the sense of \cite[Definition 6.6]{CHJ-2017}. Here, the mixed completed tensor on the right hand side is with respect to the subsheaf $\mathrm{Im}(R^i\nu_*\widehat{\scrO}^+_X\rightarrow R^i\nu_*\widehat{\scrO}_{X})\subset R^i\nu_*\widehat{\scrO}_{X}$.

Consequently, by Lemma \ref{Lemma: log analogue of Lemma 3.24 in Scholze's survey paper}, we have
\[R^i\nu_*(\widehat{\scrO}_{X}\widehat{\otimes}M)\cong \Omega^{\log, i}_{X_{\ket}}(-i)\widehat{\otimes}M.\]
\end{Proposition}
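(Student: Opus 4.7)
The plan is to mimic the proof of \cite[Proposition 6.8]{CHJ-2017}, substituting its input on $R^i\nu_*\widehat{\scrO}$ for smooth adic spaces with Lemma \ref{Lemma: Cartan-Leray}, which provides the analogous local calculation in the log smooth setting. Since the assertion is Kummer \'etale local on $X$, I fix $U \in X_{\ket}$ affinoid with a toric chart $U \to \Spa(\C_p\langle P\rangle, \calO_{\C_p}\langle P\rangle)$ and, after fixing an isomorphism $P^{\mathrm{gp}} \simeq \Z^n$, the associated log affinoid perfectoid pro-Kummer \'etale Galois cover $\widetilde{U} \to U$ with profinite Galois group $\Gamma \cong \widehat{\Z}(1)^n$, as in Example \ref{Example: basic example of profinite Galois cover} and the proof of Lemma \ref{Lemma: Cartan-Leray}.

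First, I would write $M = \varprojlim_j M/I_j$ for a cofinal system of open $\Z_p$-submodules $I_j \subset M$ of the kind used to define the mixed completed tensor, so that $\widehat{\scrO}_X \widehat{\otimes} M = \varprojlim_j (\widehat{\scrO}_X \otimes_{\Z_p} (M/I_j))$ by Definition \ref{Definition: unadorned completed tensor}. For each $j$, the quotient $M/I_j$ is a finitely presented $\Z_p$-module, so tensoring with it is exact on flat modules and commutes with the formation of continuous cohomology of $\Gamma$ with respect to $p$-adic topologies. Combining the Cartan--Leray spectral sequence for $\widetilde{U} \to U$ with Proposition \ref{Proposition: almost vanishing}(ii) and the calculation $H^i_{\cts}(\Gamma, \scrO^+_{X_{\ket}}(U)) \cong \bigwedge^i \scrO^+_{X_{\ket}}(U)^n$ from the proof of Lemma \ref{Lemma: Cartan-Leray}(iii), one obtains an almost isomorphism
\[
H^i(U_{\proket}, \widehat{\scrO}_X \otimes_{\Z_p} (M/I_j))^a \cong \left(\bigwedge^i \scrO^+_{X_{\ket}}(U)^n \otimes_{\Z_p} (M/I_j)\right)^a.
\]
Sheafifying and inverting $p$ gives a canonical isomorphism $R^i\nu_*(\widehat{\scrO}_X \otimes_{\Z_p} (M/I_j)) \cong (R^i\nu_*\widehat{\scrO}_X) \otimes_{\Z_p} (M/I_j)$, functorially in $j$.

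The last step is to commute $R^i\nu_*$ with the inverse limit over $j$. The transition maps in the system $\{\scrO^+_{X_{\ket}}(U) \otimes_{\Z_p} (M/I_j)\}_j$ are surjective, so by Mittag--Leffler the $R^1\varprojlim_j$ vanishes on continuous group cohomology; the analogous almost vanishing on the pro-Kummer \'etale site follows from an almost version of \cite[Lemma 3.18]{Scholze_2013} together with Proposition \ref{Proposition: almost vanishing}, exactly as in the arguments used in Lemma \ref{Lemma: Cartan-Leray}(ii) and Proposition \ref{Proposition: overconvergent cohomology computed by the pro-Kummer etale cohomology}. This allows us to interchange $\varprojlim_j$ with $R^i\nu_*$ almost, and inverting $p$ yields a genuine isomorphism. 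The main obstacle will be bookkeeping the compatibility between the mixed completed tensor $(R^i\nu_*\widehat{\scrO}_X)\widehat{\otimes}M$ (formed with respect to the integral subsheaf $\image(R^i\nu_*\widehat{\scrO}^+_X \to R^i\nu_*\widehat{\scrO}_X)$) and the plain inverse limit $\varprojlim_j (R^i\nu_*\widehat{\scrO}_X \otimes_{\Z_p} (M/I_j))$; this is handled by the local identification $R^i\nu_*\widehat{\scrO}^+_X|_U \cong \bigwedge^i \scrO^+_{U_{\ket}}{}^n$ up to an error killed by $p$ in Lemma \ref{Lemma: Cartan-Leray}(iii), which makes the two integral topologies compatible after inverting $p$. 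The closing corollary then follows immediately by substituting the identification $R^i\nu_*\widehat{\scrO}_X \cong \Omega^{\log,i}_{X_{\ket}}(-i)$ of Lemma \ref{Lemma: log analogue of Lemma 3.24 in Scholze's survey paper}.
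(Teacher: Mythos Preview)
Your proposal is correct and follows precisely the approach the paper takes: the paper's proof simply states that one mimics \cite[Proposition 6.8]{CHJ-2017} verbatim, replacing the input \cite[Lemma 6.11(1)(2)]{CHJ-2017} by Lemma \ref{Lemma: Cartan-Leray}, and your write-up is an explicit unpacking of exactly that substitution. One small imprecision: the displayed comparison you obtain from Cartan--Leray is an almost injection with cokernel killed by $p$ (as in Lemma \ref{Lemma: Cartan-Leray}(i)(iii)), not an almost isomorphism on the nose, but since you immediately invert $p$ this does not affect the argument.
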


\begin{proof}
The proof follows verbatim as in the proof of \cite[Proposition 6.8]{CHJ-2017} as long as we replace \cite[Lemma 6.11(1)(2)]{CHJ-2017} by Lemma \ref{Lemma: Cartan-Leray}.
\end{proof}

\subsection{Banach sheaves and a (generalised) projection formula}\label{subsection: generalised projective formula} 
In this subsection, we introduce the notion of \emph{Banach sheaves} on the Kummer \'etale topology of a log adic space, generalising the ones studied in \cite[\S A]{AIP-2015} and \cite[\S 2]{Boxer--Pilloni--higherColeman}. Then, for certain \emph{admissible} Banach sheaves, we prove a projection formula which will be used in the main body of the paper.

Recall from Definition \ref{Definition: weights} that a \emph{small $\Z_p$-algebra} is a $p$-torsion free reduced ring $R$ which is also a finite $\Z_p\llbrack T_1, ..., T_d\rrbrack$-algebra for some $d\in \Z_{\geq 0}$. It is a profinite flat $\Z_p$-module in the sense of \cite[Definition 6.1]{CHJ-2017}. In particular, there exists a set of elements $\{e_\sigma: \sigma\in \Sigma\}$ in $R$ such that $R\simeq \prod_{\sigma\in \Sigma}\Z_pe_\sigma$ equipped with the product topology. This set of elements $\{e_{\sigma}: \sigma\in \Sigma\}$ is called a \emph{pseudo-basis} for $R$. Moreover, $R$ is equipped with an adic profinite topology and is complete with respect to the $p$-adic topology. 

Throughout this subsection, we keep the following notations:
\begin{itemize}
\item Let $R$ be a fixed small $\Z_p$-algebra and let $\fraka$ be a fixed ideal of definition containing $p$.
\item All (log) adic spaces are assumed to be reduced and quasi-separated. In particular, $X$ either stands for a locally noetherian reduced adic space over $(\C_p, \calO_{\C_p})$ or a locally noetherian reduced fs log adic space over $(\C_p, \calO_{\C_p})$. In the second case, we use $X_{\an}$ to denote the underlying adic space of $X$.
\item We adopt the notation of ``mixed completed tensors'' $-\,\widehat{\otimes}'R$ and $-\,\widehat{\otimes}R$ as in Definition \ref{Definition: unadorned completed tensor}.
\end{itemize}

\begin{Lemma}\label{Lemma: structure sheaf mixed completed tensor}
\begin{itemize}
\item[(i)] Let $X$ be a locally noetherian adic space over $(\C_p, \calO_{\C_p})$. Then the presheaf $\scrO^+_X\widehat{\otimes}' R$ (resp., $\scrO_X\widehat{\otimes} R$) sending any quasi-compact open subset $U\subset X$ to $\scrO^+_X(U)\widehat{\otimes}'R$ (resp., $\scrO_X(U)\widehat{\otimes}R$) is a sheaf. In particular, $\scrO_X\widehat{\otimes} R$ is a sheaf of Banach $\C_p$-algebras.
\item[(ii)] Let $X$ be a locally noetherian fs log adic space over $(\C_p, \calO_{\C_p})$. Then the presheaf $\scrO^+_{X_{\ket}}\widehat{\otimes}' R$ (resp., $\scrO_{X_{\ket}}\widehat{\otimes} R$) sending any quasi-compact $U\in X_{\ket}$ to $\scrO^+_{X_{\ket}}(U)\widehat{\otimes}'R$ (resp., $\scrO_{X_{\ket}}(U)\widehat{\otimes}R$) is a sheaf. In particular, $\scrO_{X_{\ket}}\widehat{\otimes} R$ is a sheaf of Banach $\C_p$-algebras.
\item[(iii)] Let $X$ be a locally noetherian fs log adic space over $(\C_p, \calO_{\C_p})$. Then the presheaf $\widehat{\scrO}^+_{X_{\proket}}\widehat{\otimes}' R$ (resp., $\widehat{\scrO}_{X_{\proket}}\widehat{\otimes} R$) sending any qcqs $U\in X_{\proket}$ to $\widehat{\scrO}^+_{X_{\proket}}(U)\widehat{\otimes}'R$ (resp., $\widehat{\scrO}_{X_{\proket}}(U)\widehat{\otimes}R$) is a sheaf. In particular, $\widehat{\scrO}_{X_{\proket}}\widehat{\otimes} R$ is a sheaf of Banach $\C_p$-algebras.
\end{itemize}
\end{Lemma}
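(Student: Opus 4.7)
The plan is to reduce all three statements to the sheaf property of the underlying ``unadorned'' structure sheaves, which is already known, and then verify that the functor $-\widehat{\otimes}'R$ preserves this property. The crux will be understanding $\widehat{\otimes}'R$ well enough to commute it past finite products and past the kernel that defines the sheaf condition.

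The first step is to unpack the mixed completed tensor using the hypothesis that $R$ is small. Since $p\in\fraka$ and $R$ is finite over $\Z_p\llbrack T_1,\ldots,T_d\rrbrack$, each quotient $R/\fraka^j$ is a finite abelian $p$-group, annihilated by some $p^{N_j}$. By definition,
\[
M\widehat{\otimes}'R=\varprojlim_{j}\bigl(M\otimes_{\Z_p}R/\fraka^j\bigr),
\]
and since $R/\fraka^j$ is a finite abelian $p$-group, each term is (non-canonically) a finite direct sum of the form $\bigoplus_k M/p^{a_{j,k}}M$. In particular, as a functor in $M$, the assignment $M\mapsto M\otimes_{\Z_p}R/\fraka^j$ commutes with finite direct products and is left exact on $p$-adically separated $\Z_p$-modules.

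For (i), let $U\subset X$ be quasi-compact open and let $\{U_i\}_{i\in I}$ be a quasi-compact open cover; by quasi-compactness we may replace it with a finite subcover, so $I$ is finite. The sheaf property of $\scrO_X^+$ yields exactness of
\[
0\longrightarrow \scrO_X^+(U)\longrightarrow \prod_{i\in I}\scrO_X^+(U_i)\longrightarrow \prod_{i,i'\in I}\scrO_X^+(U_i\cap U_{i'}).
\]
Applying $-\otimes_{\Z_p}R/\fraka^j$ preserves the finite products and the left-exact kernel by the first step, and then $\varprojlim_j$ is itself left exact. Hence
\[
0\longrightarrow \scrO_X^+(U)\widehat{\otimes}'R\longrightarrow \prod_{i\in I}\scrO_X^+(U_i)\widehat{\otimes}'R\longrightarrow \prod_{i,i'\in I}\scrO_X^+(U_i\cap U_{i'})\widehat{\otimes}'R
\]
is exact, which is precisely the sheaf condition for the presheaf $\scrO_X^+\widehat{\otimes}'R$ on quasi-compact opens. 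Inverting $p$ gives the sheaf property of $\scrO_X\widehat{\otimes}R$, and the resulting ring of sections is Banach over $\C_p$ with unit ball $\scrO_X^+(U)\widehat{\otimes}'R$ (cf.\ Remark \ref{Remark: uniform Banach algebra structure 1}).

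Part (ii) is formally identical, with $\scrO_X^+$ replaced by $\scrO_{X_{\ket}}^+$ and the sheaf property taken from \cite[Proposition 4.3.4]{Diao}; quasi-compactness again reduces to finite Kummer \'etale covers. For (iii), one writes $\widehat{\scrO}_{X_{\proket}}^+=\varprojlim_n\scrO_{X_{\proket}}^+/p^n$ and applies the same scheme after observing that the argument is local on a basis of qcqs log affinoid perfectoid objects (Proposition \ref{Proposition: almost vanishing}); the mod-$p^n$ reductions are almost acyclic on such objects, so one gets exact sequences up to almost isomorphism that become genuine exact sequences after the double inverse limit in $j$ and $n$. The main technical obstacle, which arises only in (iii), is commuting the two inverse limits: one resolves this either by the almost-vanishing results of Proposition \ref{Proposition: almost vanishing}(ii), or directly via a Mittag-Leffler argument of the type used in Lemma \ref{Lemma: Cartan-Leray}(ii), noting that for each fixed $j$ the tower indexed by $n$ consists of surjections because $M\otimes R/\fraka^j$ is a finite direct sum of modules of the form $M/p^{a}M$.
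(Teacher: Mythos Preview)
There is a genuine gap in the core reduction step. Your claim that the functor $M\mapsto M\otimes_{\Z_p}R/\fraka^j$ (equivalently, a finite direct sum of $M\mapsto M/p^aM$) is ``left exact on $p$-adically separated $\Z_p$-modules'' is false. For instance, the injection $p\Z_p\hookrightarrow \Z_p$ of $p$-adically separated modules becomes the zero map after reducing mod $p$. What you would actually need is that the cokernel of $\scrO_X^+(U)\hookrightarrow \prod_i\scrO_X^+(U_i)$ is $p$-torsion-free (so that $\mathrm{Tor}_1^{\Z_p}$ against $R/\fraka^j$ vanishes), and likewise in the Kummer \'etale and pro-Kummer \'etale settings. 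This is a nontrivial input---essentially an integral acyclicity statement---that you have not supplied, and it does not follow from $p$-adic separatedness alone. The same issue recurs in your treatment of (iii), where the ``almost'' exactness is used but the passage back to honest exactness after the double inverse limit is not justified.

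The paper avoids this problem entirely with a different, one-line reduction. Since $R$ is a small $\Z_p$-algebra, it is profinite flat and admits a pseudo-basis presentation $R\simeq \prod_{\sigma\in\Sigma}\Z_p e_\sigma$ with the product topology; then \cite[Proposition 6.4]{CHJ-2017} gives $M\widehat{\otimes}'R\simeq \prod_{\sigma\in\Sigma}M$ for any $\Z_p$-module $M$. Arbitrary products are left exact, so the sheaf condition for $\scrO_X^+\widehat{\otimes}'R$ (and its Kummer \'etale and pro-Kummer \'etale analogues) reduces immediately to that of the structure sheaf itself---no torsion analysis, no Mittag--Leffler argument, no mod-$p^n$ acyclicity needed. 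Your approach can be salvaged, but only by proving the requisite $p$-torsion-freeness of the \v{C}ech cokernels in each of the three topologies; as written, the argument does not go through.
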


\begin{proof}
Choosing a presentation $R\simeq \prod_{\sigma\in \Sigma} \Z_p e_\sigma$ and using \cite[Proposition 6.4]{CHJ-2017}, the statements reduce to the sheafiness of the corresponding structure presheaves.
\end{proof}

\begin{Definition}\label{Definition: Banach modules}
Let $B$ be a Banach $\Q_p$-algebra and let $B_0$ be an open and bounded $\Z_p$-submodule. 
\begin{enumerate}
\item[(i)] A topological $B$-module $M$ is called a \textbf{Banach $B$-module} if there exists an open bounded $B_0$-submodule $M_0$ which is $p$-adically complete and separated such that $M=M_0[1/p]$.
\item[(ii)] Let $J$ be an index set. Consider the $B$-module $B(J)$ consisting of sequences $\{b_j: j\in J\}$ which converge to 0 with respect to the filter in $J$ of the complement of the finite subsets of $J$.
Then $B(J)$ is a Banach $B$-module. Indeed, let $B_0(J)$ be the $p$-adic completion of the free $B_0$-module $\bigoplus_{j\in J} B_0$. Then we have $B(J)\simeq B_0(J)[1/p]$.
\item[(iii)] A topological $B$-module $M$ is called an \textbf{orthonormalisable Banach $B$-module} (or, \textbf{ON-able Banach $B$-module} for short) if there exists a topological isomorphism $M\simeq B(J)$ for some index set $J$. A topological $B$-module $M$ is called a \textbf{projective Banach $B$-module} if it is a direct summand (as a topological $B$-module) inside an orthonormalisable Banach $B$-module.
\end{enumerate}
\end{Definition}

\begin{Definition}\label{Definition: Banach sheaf}
Let $X$ be a locally noetherian adic space over $(\C_p, \calO_{\C_p})$.
\begin{enumerate}
\item[(i)] A sheaf of topological $\scrO_{X}\widehat{\otimes} R$-modules $\scrF$ is called a \textbf{Banach sheaf of $\scrO_{X}\widehat{\otimes} R$-modules} if 
\begin{itemize}
\item for every quasi-compact open subset $U\subset X$, $\scrF(U)$ is a Banach $\scrO_X(U)\widehat{\otimes} R$-module;
\item there exists an affinoid open covering $\mathfrak{U}=\{U_i: i\in I\}$ of $X$ such that for every $i\in I$ and every affinoid open subset $V\subset U_i$, the continuous restriction map
\[\scrF(U_i)\otimes_{\scrO_X(U_i)}\scrO_X(V)\rightarrow \scrF(V)\]
induces a topological isomorphism
\[\scrF(U_i)\widehat{\otimes}_{\scrO_X(U_i)}\scrO_X(V)\rightarrow \scrF(V)\]
\end{itemize}
where the completion is with respect to the $p$-adic topology. Such a covering $\mathfrak{U}$ is called an \textbf{atlas} of $\scrF$.
\item[(ii)] A sheaf $\scrF$ as in (i) is called a \textbf{projective Banach sheaf of $\scrO_X\widehat{\otimes} R$-modules} if there exists an atlas $\mathfrak{U}=\{U_i: i\in I\}$ such that $\scrF(U_i)$'s are projective Banach $\scrO_X(U_i)\widehat{\otimes}R$-modules. 
\item[(iii)] A morphism between Banach sheaves of $\scrO_X\widehat{\otimes}R$-modules is a continuous map of sheaves of topological $\scrO_X\widehat{\otimes}R$-modules.
\item[(iv)] Let $\scrF$ be a Banach sheaf of $\scrO_X\widehat{\otimes}R$-modules as in (i). An \textbf{integral model} of $\scrF$ is a subsheaf $\scrF^+$ of $\scrO^+_X\widehat{\otimes}'R$-modules such that
\begin{itemize}
\item for every quasi-compact open $U\subset X$, $\scrF^+(U)$ is open and bounded in $\scrF(U)$;
\item $\scrF=\scrF^+[1/p]$;
\item there exists an atlas $\mathfrak{U}=\{U_i:i\in I\}$ of $\scrF$ such that, for every $i\in I$ and every affinoid open subset $V\subset U_i$, the canonical map
\[\scrF^+(U_i)\widehat{\otimes}_{\scrO^+_X(U_i)}\scrO^+_X(V)\rightarrow \scrF^+(V)\]
is an isomorphism, where the completion is with respect to the $p$-adic topology.
\end{itemize}
\end{enumerate}
\end{Definition}

We are also interested in a Kummer \'etale version of Banach sheaves.

\begin{Definition}\label{Definition: Kummer etale Banach sheaf}
Let $X$ be a locally noetherian fs log adic space of $(\C_p, \calO_{\C_p})$.
\begin{enumerate}
\item[(i)] A sheaf of topological $\scrO_{X_{\ket}}\widehat{\otimes}R$-modules $\scrF$ is called a \textbf{Kummer \'etale Banach sheaf of} $\scrO_{X_{\ket}}\widehat{\otimes}R$-\textbf{modules} if
\begin{itemize}
\item for every quasi-compact open $U\in X_{\ket}$, $\scrF(U)$ is a Banach $\scrO_{X_{\ket}}(U)\widehat{\otimes} R$-module;
\item there exists an Kummer \'etale covering $\mathfrak{U}=\{U_i: i\in I\}$ of $X$ by affinoid $U_i$'s such that for every Kummer \'etale map $V\rightarrow U_i$ with affinoid $V$, the continuous restriction map
\[\scrF(U_i)\otimes_{\scrO_{X_{\ket}}(U_i)}\scrO_{X_{\ket}}(V)\rightarrow \scrF(V)\]
induces a topological isomorphism
\[\scrF(U_i)\widehat{\otimes}_{\scrO_{X_{\ket}}(U_i)}\scrO_{X_{\ket}}(V)\rightarrow \scrF(V)\]
\end{itemize}
where the completion is with respect to the $p$-adic topology. Such a covering $\mathfrak{U}$ is called a \textbf{Kummer \'etale atlas} of $\scrF$.

\item[(ii)] A sheaf as in (i) is called a \textbf{projective Kummer \'etale Banach sheaf of} $\scrO_{X_{\ket}}\widehat{\otimes} R$-\textbf{modules} if there exists a Kummer \'etale atlas $\mathfrak{U}=\{U_i: i\in I\}$ such that $\scrF(U_i)$'s are projective Banach $\scrO_{X_{\ket}}(U_i)\widehat{\otimes}R$-modules.
\item[(iii)] A morphism between Kummer \'etale Banach sheaves of $\scrO_{X_{\ket}}\widehat{\otimes}R$-modules is a continuous map of topological $\scrO_{X_{\ket}}\widehat{\otimes}R$-modules.
\item[(iv)] Let $\scrF$ be a Kummer \'etale Banach sheaf of $\scrO_{X_{\ket}}\widehat{\otimes}R$-modules as in (i). An \textbf{integral model} of $\scrF$ is a subsheaf $\scrF^+$ of $\scrO^+_{X_{\ket}}\widehat{\otimes}'R$-modules such that
\begin{itemize}
\item for every quasi-compact $U\in X_{\ket}$, $\scrF^+(U)$ is open and bounded in $\scrF(U)$;
\item $\scrF=\scrF^+[1/p]$;
\item there exists a Kummer \'etale atlas $\mathfrak{U}=\{U_i:i\in I\}$ of $\scrF$ such that, for every $i\in I$ and every affinoid $V\in U_{i,\ket}$, the canonical map
\[\scrF^+(U_i)\widehat{\otimes}_{\scrO^+_{X_{\ket}}(U_i)}\scrO^+_{X_{\ket}}(V)\rightarrow \scrF^+(V)\]
is an isomorphism, where the completion is with respect to the $p$-adic topology.
\end{itemize}
\end{enumerate}
\end{Definition}

Clearly, an analytic refinement of an atlas (resp., a Kummer \'etale refinement of a Kummer \'etale atlas) is also an atlas (resp., a Kummer \'etale atlas). Also notice that it is not true that a Banach sheaf (resp., Kummer \'etale Banach sheaf) on an affinoid adic space (resp., affinoid log adic space) is the sheaf associated with its global section. Nonetheless, we have the following result.

\begin{Lemma}\label{Lemma: Banach sheaf associated with global section}
Let $(A, A^+)$ be a complete reduced Tate algebra over $(\C_p, \calO_{\C_p})$ and let $M$ be a projective Banach $A\widehat{\otimes}R$-module.
\begin{enumerate}
\item[(i)] Let $X=\Spa(A, A^+)$ be the associated adic space. Then the presheaf $M\widehat{\otimes}_A \scrO_X$ sending an affinoid open subset $\Spa(B, B^+)\subset X$ to $M\widehat{\otimes}_A B$ is a sheaf.
\item[(ii)] Suppose $X=\Spa(A, A^+)$ is equipped with an fs log structure. Then the presheaf $M\widehat{\otimes}_A \scrO_{X_{\ket}}$ sending an affinoid open subset $\Spa(B, B^+)\in X_{\ket}$ to $M\widehat{\otimes}_A B$ is a sheaf.
\end{enumerate}
\end{Lemma}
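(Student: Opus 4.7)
The plan is to reduce the claim in two stages: first to the case when $M$ is orthonormalisable, and then to the sheaf property of $\scrO_X\widehat{\otimes}R$ (respectively $\scrO_{X_{\ket}}\widehat{\otimes}R$) already established in Lemma \ref{Lemma: structure sheaf mixed completed tensor}. Throughout, both parts (i) and (ii) admit parallel arguments, so I will describe the strategy for (i) and indicate the modification for (ii) at the end.

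First I would reduce to the ON-able case. By Definition \ref{Definition: Banach modules} (iii), $M$ is a topological direct summand, inside an ON-able Banach $A\widehat{\otimes}R$-module $N$, of the form $N = M\oplus M'$. The functor $- \widehat{\otimes}_A B$ is additive, so for every affinoid open $\Spa(B, B^+)\subset X$ the canonical decomposition yields $N \widehat{\otimes}_A B = (M\widehat{\otimes}_A B) \oplus (M'\widehat{\otimes}_A B)$, compatibly with restriction maps. A presheaf that is a direct summand of a sheaf is a sheaf, so it suffices to verify the claim after replacing $M$ by $N$; that is, we may assume $M$ is ON-able.

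Next, if $M \simeq (A\widehat{\otimes}R)(J)$ for some index set $J$, then for every affinoid open $\Spa(B, B^+)\subset X$ there is a canonical topological isomorphism
\[
M\widehat{\otimes}_A B \;\simeq\; (B\widehat{\otimes} R)(J),
\]
which is natural in $B$. Thus the claim reduces to showing that the presheaf $U \mapsto (\scrO_X(U)\widehat{\otimes}R)(J)$ on $X$ is a sheaf. By Definition \ref{Definition: Banach modules} (ii), this Banach module is $(\scrO_X^+(U)\widehat{\otimes}'R)(J)[1/p]$, where the integral piece is the $p$-adic completion of the free $\scrO_X^+(U)\widehat{\otimes}'R$-module on $J$. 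Modulo $p^n$, this $p$-adically completed direct sum becomes the ordinary direct sum $\bigoplus_{j\in J} (\scrO_X^+(U)\widehat{\otimes}'R)/p^n$, and since direct sums of sheaves of abelian groups are sheaves, the sheaf property modulo $p^n$ follows from Lemma \ref{Lemma: structure sheaf mixed completed tensor} (i). To pass from the mod $p^n$ statement to the $p$-adically completed statement, one applies the Mittag--Leffler criterion to the inverse system $\{(\scrO_X^+(U)\widehat{\otimes}'R)(J)/p^n\}_{n\geq 1}$; the relevant transition maps are surjective, so $R^1\varprojlim$ vanishes and taking $\varprojlim_n$ of the sheaf exact sequences preserves exactness. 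Inverting $p$ then yields the desired sheaf property.

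For part (ii), the same argument applies verbatim after replacing $\scrO_X$ by $\scrO_{X_{\ket}}$ and invoking part (ii) of Lemma \ref{Lemma: structure sheaf mixed completed tensor} in place of part (i); note that the Kummer \'etale topos is replete enough for the Mittag--Leffler argument to go through (alternatively, one can use the almost isomorphism to the pro-Kummer \'etale structure sheaf via Proposition \ref{Proposition: almost vanishing} and Lemma \ref{Lemma: structure sheaf mixed completed tensor} (iii), and descend). The main obstacle I anticipate is bookkeeping rather than conceptual: carefully checking that the topological isomorphism $M\widehat{\otimes}_A B \simeq (B\widehat{\otimes}R)(J)$ is functorial and compatible with the mixed completed tensor structure (for small $R$ the factor $\widehat{\otimes}'R$ involves a profinite completion, not a $p$-adic one, so one must ensure that passage to $J$-indexed convergent sequences commutes with both completions simultaneously). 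Once that is in place, the rest is a standard sheafification argument combined with the cited lemma.
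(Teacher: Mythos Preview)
Your proposal is correct and follows essentially the same route as the paper: reduce to the ON-able case by splitting off a direct summand, then reduce the sheafiness of $U\mapsto (\scrO_X(U)\widehat{\otimes}R)(J)$ to that of $\scrO_X\widehat{\otimes}R$ (resp.\ $\scrO_{X_{\ket}}\widehat{\otimes}R$) via Lemma~\ref{Lemma: structure sheaf mixed completed tensor}. The paper compresses your second reduction into the single phrase ``it then reduces to the case where $|J|=1$,'' whereas you spell out the mod-$p^n$ and Mittag--Leffler steps; just be aware that your appeal to ``direct sums of sheaves of abelian groups are sheaves'' is really using that affinoid covers can be taken finite (so that $\bigoplus_J$ commutes with the finite products appearing in the sheaf sequence), which is worth making explicit.
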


\begin{proof}
It immediately reduces to the case where $M$ is an orthonormalisable Banach $A\widehat{\otimes}R$-module; \emph{i.e.}, $M\simeq (A\widehat{\otimes}R)(J)$ for some index set $J$. It then reduces to the case where $|J|=1$. Then the lemma follows from Lemma \ref{Lemma: structure sheaf mixed completed tensor}.
\end{proof}

As a corollary, one can associate a projective Kummer \'etale Banach sheaf with every projective Banach sheaf.

\begin{Corollary}\label{Corollary: Banach induce Kummer etale Banach}
Let $X$ be a locally noetherian fs log adic space over $(\C_p, \calO_{\C_p})$ and let $\scrF$ be a projective Banach sheaf of $\scrO_{X_{\an}}\widehat{\otimes}R$-modules with atlas $\mathfrak{U}=\{U_i: i\in I\}$. Suppose $\scrF$ admits an integral model $\scrF^+$. Consider the $p$-adically completed sheaf of $\scrO_{X_{\ket}}$-modules $\scrF_{\ket}$ associated with $\scrF$; namely, 
\[\scrF_{\ket}:=\left(\varprojlim_m \scrF^+\otimes_{\scrO^+_{X_{\an}}}\scrO^+_{X_{\ket}}/p^m\right)[\frac{1}{p}].\]
Then $\scrF_{\ket}$ is a projective Kummer \'etale Banach sheaf of $\scrO_{X_{\ket}}\widehat{\otimes}R$-modules with Kummer \'etale atlas $\mathfrak{U}=\{U_i: i\in I\}$, where each $U_i$ is equipped with the induced log structure from $X$. Moreover, for every affinoid $V\in U_{i, \ket}$, we have
\[\scrF_{\ket}(V)\cong \scrF(U_i)\widehat{\otimes}_{\scrO_{X_{\an}}(U_i)}\scrO_{X_{\ket}}(V).\]
\end{Corollary}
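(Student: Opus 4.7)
The plan is to establish this corollary by a sequence of local computations and a careful identification of sheafifications. Since being a projective Kummer étale Banach sheaf and the formula for sections on affinoid $V \in U_{i,\ket}$ are local statements, I would first reduce to working on a single affinoid $U_i = \Spa(A,A^+)$ equipped with its induced log structure. Write $M := \scrF(U_i)$, $M^+ := \scrF^+(U_i)$. By the atlas property on the analytic site, for every affinoid open $\Spa(B,B^+) \subset U_i$ one has $\scrF(\Spa(B,B^+)) = M \,\widehat{\otimes}_A B$ and similarly for $\scrF^+$.

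Next, I would introduce a candidate Kummer étale Banach sheaf $\widetilde{\scrF}_{U_i}$ on $U_{i,\ket}$ directly via Lemma \ref{Lemma: Banach sheaf associated with global section}(ii): it is the sheafification of the assignment $\Spa(B,B^+) \mapsto M \,\widehat{\otimes}_A B$, together with the natural integral model $\Spa(B,B^+) \mapsto M^+ \,\widehat{\otimes}_{A^+} B^+$. The lemma guarantees sheafiness (for projective $M$ by splitting off a direct summand of some $(A\widehat{\otimes} R)(J)$, and then invoking sheafiness of the structure presheaves on $X_{\ket}$ from Lemma \ref{Lemma: structure sheaf mixed completed tensor}(ii)). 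This gives a projective Kummer étale Banach sheaf with the asserted values on affinoids.

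The main task is then to identify $\widetilde{\scrF}_{U_i}$ with $\scrF_{\ket}|_{U_i}$. By definition of $\scrF_{\ket}$, this reduces to showing that, for every affinoid $V = \Spa(B,B^+) \in U_{i,\ket}$ and every $m \geq 1$,
\[
\bigl(\scrF^+ \otimes_{\scrO^+_{X_{\an}}} \scrO^+_{X_{\ket}}/p^m\bigr)(V) \;\cong\; M^+ \otimes_{A^+} B^+/p^m.
\]
By projectivity, write $M^+$ as a direct summand of an orthonormalisable $(A^+ \widehat{\otimes}' R)(J)$. Since sheafification, the formation of $(-)/p^m$, and direct summands all commute, the problem further reduces to the orthonormalisable case, where the presheaf tensor product already evaluates, on affinoid $V \in U_{i,\ket}$, to $\bigoplus_J \scrO^+_{X_{\ket}}(V)/p^m \,\widehat{\otimes}'\, R$, and this presheaf is a sheaf by Lemma \ref{Lemma: structure sheaf mixed completed tensor}(ii); thus no further sheafification is needed and the stated formula is immediate. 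Taking $\varprojlim_m$ and inverting $p$ then produces the desired identification $\scrF_{\ket}(V) \cong M \,\widehat{\otimes}_A B$.

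With this identification established, the projective Kummer étale Banach sheaf axioms follow routinely: for $V \in U_{i,\ket}$ and a Kummer étale affinoid $V' \to V$, transitivity of the mixed completed tensor,
\[
(M \,\widehat{\otimes}_A B) \,\widehat{\otimes}_B B' \;\cong\; M \,\widehat{\otimes}_A B',
\]
shows that $\{U_i\}$ is a Kummer étale atlas, and projectivity of $\scrF_{\ket}(U_i) = M$ over $\scrO_{X_{\ket}}(U_i)\,\widehat{\otimes}\, R$ is inherited from that of $\scrF(U_i)$. The main technical obstacle lies in the boxed identification above: one must handle simultaneously the pullback along the morphism of sites $\epsilon: X_{\ket} \to X_{\an}$, the sheafification inherent in the tensor product, and the mixed completed tensor with the (possibly large) small $\Z_p$-algebra $R$. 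Reducing to the orthonormalisable case via the splitting of projective modules is the conceptual move that makes this tractable.
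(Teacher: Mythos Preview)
Your proposal is correct and follows essentially the same approach as the paper: both arguments reduce to Lemma~\ref{Lemma: Banach sheaf associated with global section}(ii) to see that the assignment $V\mapsto \scrF(U_i)\widehat{\otimes}_{\scrO_{X_{\an}}(U_i)}\scrO_{X_{\ket}}(V)$ is a sheaf, and then identify this with the $p$-adically completed pullback $\scrF_{\ket}$.

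The only organisational difference is that the paper invokes Lemma~\ref{Lemma: a convenient basis} to produce a basis $\calB_{\mathfrak{U}}$ of $X_{\ket}$ consisting of affinoids factoring through some $U_i$, and checks on this basis that the assignment is well-defined (independent of the choice of $i$) using the sheafiness of $\scrF$ on the analytic site; it then treats the identification with $\scrF_{\ket}$ rather briefly. You instead work locally on each $U_i$ and handle the gluing implicitly by identifying each $\scrF_{\ket}|_{U_i}$ with the candidate $\widetilde{\scrF}_{U_i}$; since $\scrF_{\ket}$ is globally defined, this suffices. Your treatment of the identification step---reducing to the orthonormalisable case and checking that sheafification is unnecessary modulo $p^m$---is more explicit than the paper's, which is a virtue. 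One small point to keep in mind: the tensor product $\scrF^+\otimes_{\scrO^+_{X_{\an}}}\scrO^+_{X_{\ket}}/p^m$ implicitly involves pulling $\scrF^+$ back along the morphism of sites $X_{\ket}\rightarrow X_{\an}$ before tensoring; your reduction to the orthonormalisable case handles this correctly, but it is worth stating.
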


We need an easy lemma.

\begin{Lemma}\label{Lemma: a convenient basis}
Let $X$ be a locally noetherian fs log adic space over $(\C_p, \calO_{\C_p})$ and let $\mathfrak{U}=\{U_i:i\in I\}$ be a Kummer \'etale covering of $X$ by affinoid $U_i$'s. Consider the full subcategory $\calB_{\mathfrak{U}}$ of $X_{\ket}$ consisting of those affinoid $V\in X_{\ket}$ such that the map $V\rightarrow X$ factors through $V\rightarrow U_i \rightarrow X$ for some $i\in I$. Then $\calB_{\mathfrak{U}}$ forms a basis for the site $X_{\ket}$.
\end{Lemma}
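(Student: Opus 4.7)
The plan is to show that every object in $X_{\ket}$ admits a Kummer \'etale covering by objects of $\calB_{\mathfrak{U}}$. Fix $W\in X_{\ket}$. Since being a basis is a local property, it suffices to treat the case where $W$ is affinoid; more generally, if $W$ admits an affinoid open cover $\{W_j\}$, it is enough to cover each $W_j$ by elements of $\calB_{\mathfrak{U}}$, as the combined family still covers $W$ in $X_{\ket}$.

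So assume $W$ is affinoid. The key observation is that the pulled-back family $\{W\times_X U_i \to W\}_{i\in I}$ is a Kummer \'etale covering of $W$, since Kummer \'etale morphisms are stable under base change in the category of locally noetherian fs log adic spaces and form a covering after base change. Each fibre product $W\times_X U_i$ is Kummer \'etale over the affinoid $U_i$; although it need not itself be affinoid, it admits an open covering by affinoids $\{V_{i,k}\}$. Each $V_{i,k}$ is then affinoid, Kummer \'etale over $U_i$, and the composition $V_{i,k}\to U_i\to X$ is Kummer \'etale, so $V_{i,k}\in X_{\ket}$; by construction it factors through $U_i$, hence $V_{i,k}\in \calB_{\mathfrak{U}}$.

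The family $\{V_{i,k}\to W\}_{i\in I, k}$ is then a Kummer \'etale covering of $W$ consisting of objects in $\calB_{\mathfrak{U}}$, which is exactly what is required. The argument is essentially formal once one knows that Kummer \'etale morphisms are stable under base change and composition (\cite[\S 4.1]{Diao}) and that any adic space admits affinoid open covers, so I do not anticipate any real obstacle; the only point that requires a moment of care is to verify that after taking an affinoid refinement of $W\times_X U_i$ one still lands in $X_{\ket}$, which follows immediately from the composition of Kummer \'etale morphisms.
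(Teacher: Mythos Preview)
Your argument for the covering condition is correct and matches the paper's approach: pull back the covering $\{U_i\}$ along $W\to X$, then refine to affinoids. The paper dismisses this step as ``clear'' without spelling out the base-change and affinoid-refinement argument you give, so in that sense you have supplied more detail.

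However, you have omitted one ingredient the paper explicitly lists: to be a basis for the site $X_{\ket}$ one also needs that $\calB_{\mathfrak{U}}$ is stable under fibre products (or at least that fibre products of basis objects are covered by basis objects). This is easy---if $V_1$ factors through $U_{i}$ then so does $V_1\times_X V_2$, and one refines to affinoids as before---but it should be stated, since the sheaf-extension argument in the subsequent corollary relies on it. Add a sentence to cover this and the proof is complete.
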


\begin{proof}
We have to prove that every $U\in X_{\ket}$ admits a covering by such $V$'s and that $\calB_{\mathfrak{U}}$ is closed under fibred products. Both statements are clear.
\end{proof}

\begin{proof}[Proof of Corollary \ref{Corollary: Banach induce Kummer etale Banach}]
Let $\calB_{\mathfrak{U}}$ be the basis of $X_{\ket}$ as in Lemma \ref{Lemma: a convenient basis} associated with the covering $\mathfrak{U}=\{U_i: i\in I\}$. It suffices to show that the assignment $$V\mapsto \scrF(U_i)\widehat{\otimes}_{\scrO_{X_{\an}}(U_i)}\scrO_{X_{\ket}}(V),$$
for every $V\in \calB_{\mathfrak{U}}$ which factors through $V\rightarrow U_i\rightarrow X$, defines a sheaf on $\calB_{\mathfrak{U}}$. (Notice that this assigment is independent of the choice of $i$ and hence well-defined.) The sheafiness of this assignment follows from Lemma \ref{Lemma: Banach sheaf associated with global section} and the sheafiness of $\scrF$.
\end{proof}

In what follows, we are interested in those Kummer \'etale Banach sheaves that are ``admissible''. Let us first recall the notion of coherent sheaves on a ringed site.

\begin{Definition}\label{Definition: coherent sheaf on ringed space}
Let $(Z, \scrO_Z)$ be a ringed site. A sheaf of $\scrO_Z$-modules $\scrF$ is called a \textbf{coherent $\scrO_Z$-module} if there exists a covering $\mathfrak{U}=\{U_i: i\in I\}$ for $Z$ such that for every $i\in I$, there exist positive integers $m$, $n$, and an exact sequence of $\scrO_Z|_{U_i}$-modules
\[\bigoplus_{j=1}^m\scrO_Z|_{U_i}\rightarrow \bigoplus_{k=1}^n \scrO_Z|_{U_i}\rightarrow \scrF|_{U_i}\rightarrow 0.\]
In this situation, we say that $\scrF$ is a coherent $\scrO_Z$-module \textbf{subject to the covering} $\mathfrak{U}$.
\end{Definition}

We will apply this definition to the ringed site $(X_{\ket}, \scrO^+_{X_{\ket}}\otimes_{\Z_p} (R/\fraka^m))$.

\begin{Definition}\label{Definition: admissible Banach sheaf}
Let $X$ be a locally noetherian fs log adic space over $(\C_p, \calO_{\C_p})$ and let $\scrF$ be a projective Kummer \'etale Banach sheaf of $\scrO_{X_{\ket}}\widehat{\otimes}R$-modules. Suppose it admits an integral model $\scrF^+$ and, for every $m\in \Z_{\geq 1}$, we write $\scrF^+_m:=\scrF^+/\fraka^m$. We say that $\scrF$ is \textbf{admissible} if there exist
\begin{itemize}
\item a Kummer \'etale atlas $\mathfrak{U}=\{U_i: i\in I\}$ of $X$ such that each $\scrF^+(U_i)$ is the $p$-adic completion of a free $\scrO^+_{X_{\ket}}\widehat{\otimes}'R$-module; and
\item for every $m\in \Z_{\geq 1}$ and $d\in \Z_{\geq 1}$, a subsheaf $\scrF^+_{m,d}\subset \scrF^+_m$ which is a coherent $\scrO^+_{X_{\ket}}\otimes_{\Z_p} (R/\fraka^m)$-module subject to the covering $\mathfrak{U}$,
\end{itemize}
such that we have $\scrF^+\cong\varprojlim_m \scrF^+_m$ and $\scrF_m^+\cong \varinjlim_d \scrF^+_{m,d}$ for every $m\in \Z_{\geq 1}$.

Such a Kummer \'etale atlas is called an \textbf{admissible atlas} for $\scrF$.
\end{Definition}

\begin{Lemma}\label{Lemma: pushforward along finite Kummer etale map}
Let $h: Y\rightarrow X$ be a finite Kummer \'etale morphism between locally noetherian fs log adic spaces over $(\C_p, \calO_{\C_p})$. Suppose $\scrF$ is an admissible Kummer \'etale Banach sheaf of $\scrO_{Y_{\ket}}\widehat{\otimes}R$-modules. Then $h_*\scrF$ is an admissible Kummer \'etale Banach sheaf of $\scrO_{X_{\ket}}\widehat{\otimes}R$-modules.
\end{Lemma}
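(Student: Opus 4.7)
The plan is to show that each ingredient of admissibility for $\scrF$ on $Y$ is preserved by $h_*$, after a suitable refinement of the atlas on $X$. First I would fix an admissible atlas $\{U_i\}_{i\in I}$ of $Y$ for $\scrF$ together with its integral model $\scrF^+$ and the coherent subsheaves $\scrF^+_{m,d}\subset \scrF^+_m$. Since $h$ is finite Kummer \'etale, I would produce a Kummer \'etale atlas $\{V_j\}_{j\in J}$ of $X$ enjoying two properties: (a) each preimage $h^{-1}(V_j)$ factors through some $U_{i(j)}$ in the $Y$-atlas, and (b) the integral ring extension $\scrO^+_{Y_{\ket}}(h^{-1}(V_j))$ is a finite free $\scrO^+_{X_{\ket}}(V_j)$-module. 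Property (b) can be achieved by invoking the local chart structure of finite Kummer \'etale morphisms (Definition \ref{Definition: Kummer etale morphism}): locally $h$ factors as a composition of a strictly finite \'etale map with a standard Kummer cover of the form $X\langle Q\rangle \to X\langle P\rangle$ with $Q/P$ Kummer of finite index, both of which yield finite free integral ring extensions after sufficient refinement (as illustrated for the coordinate rings in Example \ref{Example: basic example of profinite Galois cover}, where $\calO_{\C_p}\langle \tfrac{1}{m}P\rangle$ is finite free over $\calO_{\C_p}\langle P\rangle$ on coset representatives).

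Given such an atlas of $X$, I propose to take $(h_*\scrF)^+ := h_*\scrF^+$ with the subsheaves $(h_*\scrF)^+_{m,d} := h_*\scrF^+_{m,d}$. The verification breaks into three checks. Firstly, $(h_*\scrF^+)(V_j) = \scrF^+(h^{-1}(V_j))$ is, by the admissible atlas property on $Y$, the $p$-adic completion of a free $\scrO^+_{Y_{\ket}}(h^{-1}(V_j))\widehat{\otimes}'R$-module; using the finite free basis of $\scrO^+_{Y_{\ket}}(h^{-1}(V_j))$ over $\scrO^+_{X_{\ket}}(V_j)$ from property (b), one rewrites this as the $p$-adic completion of a free $\scrO^+_{X_{\ket}}(V_j)\widehat{\otimes}'R$-module, which after inverting $p$ gives the projective Banach sheaf structure on $h_*\scrF$ together with the identity $(h_*\scrF)^+[1/p]=h_*\scrF$. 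Secondly, the Banach atlas compatibility (behaviour under further Kummer \'etale refinement) follows from the corresponding property on $Y$ by the flat base change $\scrO_{Y_{\ket}}(h^{-1}(V))\widehat{\otimes}_{\scrO_{X_{\ket}}(V)}\scrO_{X_{\ket}}(W) \xrightarrow{\sim} \scrO_{Y_{\ket}}(h^{-1}(W))$, which holds for finite Kummer \'etale $h$. Thirdly, each $h_*\scrF^+_{m,d}$ is a coherent $\scrO^+_{X_{\ket}}\otimes_{\Z_p}(R/\fraka^m)$-module subject to the covering $\{V_j\}$, because $h$ is a finite morphism of ringed sites and finite pushforward preserves coherence.

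It remains to verify the structural identifications $h_*\scrF^+ \cong \varprojlim_m h_*\scrF^+_m$ and $h_*\scrF^+_m \cong \varinjlim_d h_*\scrF^+_{m,d}$. The former is immediate because $h_*$, being a right adjoint, commutes with inverse limits. For the latter, since $h$ is finite, $h^{-1}(V)$ is quasi-compact whenever $V$ is, and sections of sheaves of abelian groups on the Kummer \'etale site commute with filtered colimits over quasi-compact objects; hence the colimit condition on $Y$ transfers across $h_*$.

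I expect the principal obstacle to be the construction in property (b): guaranteeing that the refined atlas $\{V_j\}$ can be chosen so that $\scrO^+_{Y_{\ket}}(h^{-1}(V_j))$ is genuinely \emph{finite free} (not merely finite locally free or finite projective) over $\scrO^+_{X_{\ket}}(V_j)$. If necessary, this will be handled by further shrinking $V_j$ so that the finite \'etale piece splits into disjoint sections and by exploiting the explicit monoidal structure of the Kummer piece, where a basis is given by a set of representatives for $Q^{\mathrm{gp}}/P^{\mathrm{gp}}$ lifted to $Q$.
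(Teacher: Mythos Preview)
Your approach is correct but takes a more laborious route than the paper. The paper's proof rests on a single structural fact: by \cite[Proposition 4.1.6]{Diao}, a finite Kummer \'etale morphism is, Kummer \'etale locally on the base, a disjoint union of isomorphisms. So one chooses a Kummer \'etale cover $\{V_j\}$ of $X$ over which $h$ splits completely, i.e.\ $Y\times_X V_j\cong\bigsqcup V_j$, and then $h_*\scrF$ restricted to this cover is simply a finite direct sum of copies of $\scrF$ (pulled back along the various sections). Every clause in the definition of admissibility is manifestly stable under finite direct sums, so the verification is immediate.

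Your approach instead keeps $h$ nontrivial and compensates by asking only that $\scrO^+_{Y_{\ket}}(h^{-1}(V_j))$ be finite free over $\scrO^+_{X_{\ket}}(V_j)$, then checks freeness of $\scrF^+$, the Banach atlas condition, coherence of the $\scrF^+_{m,d}$, and the limit/colimit compatibilities one by one. This works, but note that your property (b) is a strictly weaker consequence of the paper's splitting (a product of $d$ copies of $\scrO^+_{X_{\ket}}(V_j)$ is certainly finite free of rank $d$), so the obstacle you flag at the end evaporates once one invokes the splitting result. One small imprecision: your property (a) asks that $h^{-1}(V_j)$ factor through a \emph{single} $U_{i(j)}$, but $h^{-1}(V_j)$ will typically be disconnected with components landing in different $U_i$'s; this is harmless since you can argue componentwise, but it is exactly the phenomenon that the paper's splitting makes transparent.
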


\begin{proof}
Suppose $\mathfrak{U}=\{U_i:i\in I\}$ is an admissible atlas for $\scrF$ on $Y$. By Definition \ref{Definition: Kummer etale morphism} and \cite[Proposition 4.1.6]{Diao}, the finite Kummer \'etale morphism $h:Y\rightarrow X$ is, Kummer \'etale locally on $X$, isomorphic to a direct sum of isomorphisms. Therefore, one can find an affinoid Kummer \'etale covering $\{V_j: j\in J\}$ of $X$ such that, for every $i\in I$ and $j\in J$, $U_i\times_X V_j$ is isomorphic to a disjoint union of finite copies of $U_i$'s. Consequently, the Kummer \'etale covering $\mathfrak{V}=\{U_i\times_X V_j: i\in I, j\in J\}$ is a desired admissible atlas for $h_*\scrF$.
\end{proof}

If $\scrF$ is a Kummer \'etale Banach sheaf of $\scrO_{X_{\ket}}\widehat{\otimes}R$-modules with an integral structure $\scrF^+$. We write
\[\widehat{\scrF}^+:=\varprojlim_m \left(\scrF^+\otimes_{\scrO^+_{X_{\ket}}}\scrO^+_{X_{\proket}}/p^m\right)\cong \varprojlim_m \left(\scrF^+\otimes_{(\scrO^+_{X_{\ket}}\widehat{\otimes}'R)}(\scrO^+_{X_{\proket}}\widehat{\otimes}'R)/p^m\right)\]
and $\widehat{\scrF}:=\widehat{\scrF}^+[1/p]$. They are sheaves of $\widehat{\scrO}^+_{X_{\proket}}\widehat{\otimes}'R$-modules and $\widehat{\scrO}_{X_{\proket}}\widehat{\otimes}R$-modules, respectively. 

Recall the natural projection of sites $\nu: X_{\proket}\rightarrow X_{\ket}$. The main result of this subsection is the following.

\begin{Proposition}[Generalised projection formula]\label{Proposition: generalised projection formula}
Let $X$ be a locally noetherian fs log adic space which is log smooth over $(\C_p, \calO_{\C_p})$ and let $\scrF$ be a projective Kummer \'etale Banach sheaf of $\scrO_{X_{\ket}}\widehat{\otimes}R$-modules. Suppose $\scrF$ is admissible. Then, for every $j\in \Z_{\geq 0}$, there is a natural isomorphism of Kummer \'etale Banach sheaves of $\scrO_{X_{\ket}}\widehat{\otimes} R$-modules
\[\scrF\otimes_{\scrO_{X_{\ket}}} R^j\nu_*\widehat{\scrO}_{X_{\proket}}\xrightarrow{\sim}R^j\nu_*\widehat{\scrF}.\]
\end{Proposition}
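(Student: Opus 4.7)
The plan is to construct the natural map first and then verify it locally, reducing to cases that are already handled in the paper. By Lemma \ref{Lemma: log analogue of Lemma 3.24 in Scholze's survey paper}, $R^j\nu_*\widehat{\scrO}_{X_{\proket}} \cong \Omega^{\log, j}_{X_{\ket}}(-j)$ is a locally free $\scrO_{X_{\ket}}$-module, so the source makes sense. Using the canonical morphism $\nu^{-1}\scrF^+ \to \widehat{\scrF}^+$ together with the $\widehat{\scrO}^+_{X_{\proket}}$-module structure on $\widehat{\scrF}^+$, one obtains a morphism $\nu^{-1}\scrF \otimes_{\nu^{-1}\scrO_{X_{\ket}}} \widehat{\scrO}_{X_{\proket}} \to \widehat{\scrF}$. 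Applying $R^j\nu_*$ and composing with the usual projection-formula map for $\nu^{-1}$ gives the desired morphism $\scrF\otimes_{\scrO_{X_{\ket}}} R^j\nu_*\widehat{\scrO}_{X_{\proket}} \to R^j\nu_*\widehat{\scrF}$.

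Next, I would check this is an isomorphism by working on an admissible atlas $\mathfrak{U}=\{U_i\}$ for $\scrF$. Fixing such a $U_i$, admissibility gives $\scrF^+_m \cong \varinjlim_d \scrF^+_{m,d}$ where each $\scrF^+_{m,d}$ is a coherent $\scrO^+_{X_{\ket}}\otimes_{\Z_p}(R/\fraka^m)$-module subject to $\mathfrak{U}$. The plan is then two stages: first, handle the coherent pieces $\scrF^+_{m,d}$ modulo $p^{m'}$ for varying $m'$ by picking a local presentation $(\scrO^+_{X_{\ket}}\otimes R/\fraka^m)^a \to (\scrO^+_{X_{\ket}}\otimes R/\fraka^m)^b \to \scrF^+_{m,d}|_{U_i}\to 0$ and reducing (by right exactness of tensor product and the fact that $R^j\nu_*$ commutes with finite direct sums) to the case of the structure sheaf itself, which is Lemma \ref{Lemma: Cartan-Leray}(iii). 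Second, take the direct limit over $d$, which commutes with $R^j\nu_*$ on qcqs objects, and then the inverse limit over $m$ combined with the additional $p$-adic inverse limit, to recover $\scrF^+$ and $\widehat{\scrF}^+$.

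The main obstacle will be commuting $R^j\nu_*$ with the inverse limits involved. For this, I would argue as in the proof of Proposition \ref{Proposition: overconvergent cohomology computed by the pro-Kummer etale cohomology}: an almost version of \cite[Lemma 3.18]{Scholze_2013}, combined with the almost vanishing of $H^i(\calV, \scrO^+_{X_{\proket}}/p^m)$ for $i\geq 1$ on log affinoid perfectoid objects (Proposition \ref{Proposition: almost vanishing}), shows that the relevant higher inverse limits almost vanish, so the inverse limits can be moved inside $R^j\nu_*$ up to almost isomorphism. Inverting $p$ removes the ambiguity and yields an honest isomorphism of Banach sheaves. At this point, the mixed-completed-tensor statement of Proposition \ref{Proposition: compatibility with completed tensor} serves as the template for identifying the inverse-limit expression with $\scrF\otimes_{\scrO_{X_{\ket}}} \Omega^{\log, j}_{X_{\ket}}(-j)$, which matches the source via Lemma \ref{Lemma: log analogue of Lemma 3.24 in Scholze's survey paper}. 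Gluing along the atlas $\mathfrak{U}$ via Corollary \ref{Corollary: Banach induce Kummer etale Banach} then produces the global isomorphism asserted.
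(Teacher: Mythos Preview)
Your overall architecture---construct the map, check it on an admissible atlas, pass through the filtered pieces $\scrF^+_{m,d}$, then take $\varinjlim_d$ and $\varprojlim_m$---matches the paper's proof closely, and your treatment of the inverse limit via almost-vanishing on log affinoid perfectoid objects is exactly what the paper packages as Lemma \ref{Lemma: proj. formula lemma 1}.

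The genuine gap is in your ``first stage''. You propose to handle each coherent piece $\scrF^+_{m,d}$ by choosing a presentation
\[
(\scrO^+_{X_{\ket}}\otimes R/\fraka^m)^a \rightarrow (\scrO^+_{X_{\ket}}\otimes R/\fraka^m)^b \rightarrow \scrF^+_{m,d}|_{U_i}\rightarrow 0
\]
and reducing to the structure sheaf by right exactness of $\otimes$ and compatibility of $R^j\nu_*$ with finite direct sums. Right exactness of $\otimes$ does give you a presentation of $\widehat{\scrF}^+_{m,d}$, but $R^j\nu_*$ is \emph{not} right exact, so applying it to this presentation does not produce an exact sequence on the target side. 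You therefore cannot conclude that the projection-formula map is an isomorphism for $\scrF^+_{m,d}$ from knowing it for finite free modules; a five-lemma argument would require control of adjacent $R^{j\pm 1}\nu_*$ terms that you do not have.

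The paper circumvents this by exploiting the \emph{first} bullet in the definition of admissibility, which you did not use: on each $U_i$, the integral model $\scrF^+(U_i)$ is the $p$-adic completion of a \emph{free} $\scrO^+_{X_{\ket}}(U_i)\widehat{\otimes}'R$-module, so each $\scrF^+_m|_{U_i}$ is free (not merely coherent). One then writes $\scrF^+_m|_{U_i}$ as a filtered colimit of \emph{finite free} submodules $\scrG^+_{i,m,\alpha}$; for these the projection formula is tautological, and the system $\{\scrG^+_{i,m,\alpha}\}$ is cofinal with the given coherent system $\{\scrF^+_{m,d}|_{U_i}\}$, which is what makes the two constructions of the map agree. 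Once you replace your presentation step by this finite-free cofinal system, the rest of your outline goes through and coincides with the paper's argument.
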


To prove the proposition, we need some preparations. 

\begin{Lemma}\label{Lemma: proj. formula lemma 1}
Let $X$ be a locally noetherian fs log adic space over $(\C_p, \calO_{\C_p})$. Let $\scrH$ be an $\widehat{\scrO}_{X_{\proket}}^+ \widehat{\otimes} R$-module and let $\scrH_m:= \scrH/\fraka^m$ for every $m\in \Z_{\geq 1}$. Suppose 
\begin{itemize}
    \item $\scrH = \varprojlim_m \scrH_m$; and
    \item for every $m\in \Z_{\geq 1}$, there exists a sequence of finite free $\widehat{\scrO}_{X_{\proket}}^+\otimes_{\Z_p} (R/\fraka^m)$-submodules $\{\scrH_{m, d}: d\in \Z_{\geq 0}\}$ of $\scrH_m$ such that $\scrH_m\cong \varinjlim_d \scrH_{m,d}$.
\end{itemize}
   Then, for every $j\in \Z_{\geq 0}$, the natural map \[
    R^j\nu_{*}\scrH \rightarrow \varprojlim_m R^j\nu_{*} \scrH_{m}
\] is an almost isomorphism.
\end{Lemma}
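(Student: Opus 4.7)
The plan is to combine a Grothendieck spectral sequence comparing $R^j\nu_*$ with $\varprojlim_m$ together with an almost Mittag--Leffler argument on the basis of log affinoid perfectoid objects in $X_{\proket}$, paralleling the strategy used in the proof of Proposition \ref{Proposition: overconvergent cohomology computed by the pro-Kummer etale cohomology}. The first step is to verify that on every log affinoid perfectoid $U \in X_{\proket}$, the higher cohomologies $H^i(U, \scrH_m)$ almost vanish for $i \geq 1$ and all $m \geq 1$. Since each $\scrH_{m,d}$ is a finite free $\widehat{\scrO}^+_{X_{\proket}}\otimes_{\Z_p}(R/\fraka^m)$-module, Proposition \ref{Proposition: almost vanishing} immediately yields $H^i(U, \scrH_{m,d})^a = 0$ for $i \geq 1$. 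Log affinoid perfectoid objects are quasi-compact quasi-separated, so cohomology commutes with filtered colimits and $H^i(U, \scrH_m)^a = 0$ then follows from $\scrH_m = \varinjlim_d \scrH_{m,d}$.

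Next, the transition maps $\scrH_{m+1} \twoheadrightarrow \scrH_m$ are surjective (being successive quotients by powers of $\fraka$), so combining the previous step with an almost version of \cite[Lemma 3.18]{Scholze_2013} (cf. \cite[Proposition 6.1.11]{Diao}) yields $(R^i\varprojlim_m \scrH_m)^a = 0$ for all $i \geq 1$. Equivalently, the natural map $\scrH \to R\varprojlim_m \scrH_m$ is an almost isomorphism in the derived category on $X_{\proket}$.

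Applying $R\nu_*$ and using that $\varprojlim \circ \nu_* = \nu_* \circ \varprojlim$ (both being right adjoints), we obtain almost isomorphisms
\[
R\nu_*\scrH \;\simeq\; R\nu_*(R\varprojlim_m \scrH_m) \;\simeq\; R\varprojlim_m(R\nu_*\scrH_m)
\]
in the derived category on $X_{\ket}$. This yields an almost convergent spectral sequence $E_2^{s,t} = R^s\varprojlim_m R^t\nu_*\scrH_m \Rightarrow R^{s+t}\nu_*\scrH$. To extract the claimed isomorphism in degree $j$, I would then show that $R^s\varprojlim_m R^t\nu_*\scrH_m$ almost vanishes for $s \geq 1$ via a second Mittag--Leffler argument, now on $X_{\ket}$: the decomposition $\scrH_m = \varinjlim_d \scrH_{m,d}$ is preserved by $R^t\nu_*$ on quasi-compact opens, each $R^t\nu_*\scrH_{m,d}$ is a finite direct sum of copies of $R^t\nu_*(\widehat{\scrO}^+_{X_{\proket}}\otimes (R/\fraka^m))$ (computable via Lemma \ref{Lemma: log analogue of Lemma 3.24 in Scholze's survey paper} and Proposition \ref{Proposition: compatibility with completed tensor}), and one checks that the transition maps in $m$ remain almost surjective on sections over a convenient basis of $X_{\ket}$.

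The main obstacle is precisely this last almost vanishing $R^s\varprojlim_m R^t\nu_*\scrH_m = 0$ for $s \geq 1$, which demands careful book-keeping of how the filtered colimit in $d$ and the inverse limit in $m$ interact after $R\nu_*$. The structural hypothesis that each $\scrH_{m,d}$ is finite free over $\widehat{\scrO}^+_{X_{\proket}}\otimes (R/\fraka^m)$ is what ultimately allows one to reduce the Mittag--Leffler check to the easier system $\{R^t\nu_*(\widehat{\scrO}^+_{X_{\proket}}\otimes (R/\fraka^m))\}_m$, whose transitions are visibly surjective modulo successive powers of $\fraka$.
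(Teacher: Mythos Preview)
Your approach is the paper's: verify the hypotheses of an almost version of \cite[Lemma 3.18]{Scholze_2013} on the basis of log affinoid perfectoid objects in $X_{\proket}$ (almost vanishing of $H^j(U,\scrH_m)$ for $j\geq 1$ via $\scrH_m=\varinjlim_d\scrH_{m,d}$ and Proposition~\ref{Proposition: almost vanishing}, plus Mittag--Leffler from the surjectivity of $\scrH_{m+1}\to\scrH_m$), deducing that $(R^i\varprojlim_m\scrH_m)^a=0$ for $i\geq 1$. The paper's write-up in fact stops here and asserts the conclusion; you go further and correctly point out that one still needs the spectral sequence $R^s\varprojlim_m R^t\nu_*\scrH_m\Rightarrow R^{s+t}\nu_*\scrH$ on $X_{\ket}$ to degenerate, a step the paper leaves tacit.

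One caution on your sketch for that second Mittag--Leffler step. The filtrations $\{\scrH_{m,d}\}_d$ are given independently for each $m$, with no stated compatibility under the transitions $\scrH_{m+1}\to\scrH_m$, so there is no direct reduction to the single system $\{R^t\nu_*(\widehat{\scrO}^+_{X_{\proket}}\otimes R/\fraka^m)\}_m$ as you suggest. A safer route is to work with the exact sequences $0\to\fraka^m\scrH_{m+1}\to\scrH_{m+1}\to\scrH_m\to 0$: the kernel is again a filtered colimit of finitely presented $\widehat{\scrO}^+_{X_{\proket}}/p$-modules and hence almost-acyclic on log affinoid perfectoids, so the Cartan--Leray computation of Lemma~\ref{Lemma: Cartan-Leray} applies to all three terms and gives the required almost surjectivity of $H^t(V_{\proket},\scrH_{m+1})\to H^t(V_{\proket},\scrH_m)$ over a toric-chart basis of $X_{\ket}$.
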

\begin{proof}
We have to show the almost vanishing of the higher inverse limit $R^j\varprojlim_{m}\scrH_m$. Applying an almost version of \cite[Lemma 3.18]{Scholze_2013}, it suffices to show that, for every log affinoid perfectoid object $U\in X_{\proket}$, there are almost isomorphisms 
$$R^1\varprojlim_{m} \scrH_m(U)^a = 0 
$$
and 
$$ H^j(U, \scrH_m)^a = 0$$
for every $j\in \Z_{\geq 0}$. The first almost vanishing follows from the Mittag-Leffler condition. To obtain the second almost isomorphism, observe that \[
    H^j(U, \scrH_m) \cong \varinjlim_{d} H^j(U, \scrH_{m, d}).
\] and each $H^j(U, \scrH_{m, d})$ is almost zero by \cite[Theorem 5.4.3]{Diao}.
\end{proof}

\begin{Lemma}\label{Lemma: proj. formula lemma 2}
Let $X$ be a locally noetherian fs log adic space which is log smooth over $(\C_p, \calO_{\C_p})$. If $\scrG$ is a projective Kummer \'etale Banach sheaf of $\scrO_{X_{\ket}}\widehat{\otimes}R$-modules, then, for every $j\in \Z_{\geq 0}$, the sheaf $R^j\nu_*\widehat{\scrG}$ is also a projective Kummer \'etale Banach sheaf of $\scrO_{X_{\ket}}\widehat{\otimes}R$-modules.
\end{Lemma}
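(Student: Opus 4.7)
The plan is to reduce to a local model where $\scrG$ is orthonormalisable, and then to extract the statement from the calculation of $R^j\nu_*\widehat{\scrO}_{X_{\proket}}$ in Lemma \ref{Lemma: log analogue of Lemma 3.24 in Scholze's survey paper} via a projection-formula argument in the spirit of Proposition \ref{Proposition: compatibility with completed tensor}.

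First, by definition $\scrG|_{U_i}$ is a direct summand of an ON-able Banach sheaf on each member $U_i$ of a Kummer \'etale atlas. Since the completed pullback $\scrG\mapsto \widehat{\scrG}$ respects the underlying idempotent and $R^j\nu_*$ is additive, $R^j\nu_*\widehat{\scrG}|_{U_i}$ is a direct summand of the corresponding sheaf built from the ON-able Banach sheaf $(\scrO_{X_{\ket}}\widehat{\otimes}R)(J_i)|_{U_i}$. Hence it suffices to handle the ON-able case, and moreover, shrinking the atlas and using log smoothness, we may assume that each $U_i$ carries a toric chart $U_i\to \mathbb{E}_i$ in the sense of Lemma \ref{Lemma: Cartan-Leray}.

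Second, fix such a $U_i$ and write $\scrG|_{U_i} = (\scrO_{X_{\ket}}\widehat{\otimes}R)(J_i)|_{U_i}$; its completed pullback is $\widehat{\scrG}|_{U_i} = (\widehat{\scrO}_{X_{\proket}}\widehat{\otimes}R)(J_i)|_{U_i}$. I would then repeat the Cartan--Leray argument of Lemma \ref{Lemma: Cartan-Leray} with these coefficients on the log affinoid perfectoid Galois cover $\tilde{U}_i\to U_i$, whose Galois group is $\Gamma_i\simeq \widehat{\Z}(1)^{n_i}$ and acts trivially on the coefficient module. Combined with Lemma \ref{Lemma: log analogue of Lemma 3.24 in Scholze's survey paper}, this should yield a natural isomorphism
\[
R^j\nu_*\widehat{\scrG}\big|_{U_i} \;\cong\; \Omega^{\log,j}_{X_{\ket}}(-j)\otimes_{\scrO_{X_{\ket}}}\scrG\big|_{U_i}.
\]
As $\Omega^{\log,j}_{X_{\ket}}$ is locally free of finite rank $\binom{n_i}{j}$, the right-hand side is locally a finite direct sum of Tate-twisted copies of $\scrG|_{U_i}$, hence is an ON-able---in particular, projective---Kummer \'etale Banach sheaf of $\scrO_{X_{\ket}}\widehat{\otimes}R$-modules. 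Glueing across the atlas and undoing the idempotent splitting from the first step would then yield the lemma.

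The main obstacle lies in justifying the displayed isomorphism in the second step: Proposition \ref{Proposition: compatibility with completed tensor} provides exactly the analogous identity, but only for coefficients of the form $\widehat{\scrO}_X\widehat{\otimes}M$ with $M$ a profinite flat $\calO_{\C_p}$-module, whereas here $\widehat{\scrG}$ involves the ON-able module $\Q_p(J_i)$ which is not profinite flat when $J_i$ is infinite. One therefore has to redo, in this generality, the almost-vanishing of higher inverse limits (via an almost version of \cite[Lemma 3.18]{Scholze_2013}) and the identification of continuous $\Gamma_i$-cohomology with an exterior algebra. This is routine but requires careful bookkeeping to track the interactions among the $p$-adic completion built into $(-)(J_i)$, the profinite topology on $R$, and the continuous cohomology of $\Gamma_i$.
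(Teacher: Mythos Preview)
Your strategy is essentially the paper's: reduce to the orthonormalisable case on a toric chart and then run the Cartan--Leray computation of Lemma \ref{Lemma: Cartan-Leray} with coefficients. The paper, however, resolves your ``main obstacle'' by a further reduction you do not make: choosing a pseudo-basis $R\simeq\prod_{\sigma}\Z_p e_{\sigma}$ lets one assume $R=\Z_p$ and $\fraka=(p)$ from the outset, so that $\scrG\simeq\scrO_{X_{\ket}}(J)$ and the interaction between the profinite topology on $R$ and the $(-)(J)$-completion disappears entirely. After this reduction the paper invokes Lemma \ref{Lemma: proj. formula lemma 1} (which packages exactly the almost-vanishing of higher inverse limits you allude to) to obtain an almost isomorphism $R^j\nu_*\widehat{\scrG}^+\xrightarrow{\sim}\varprojlim_m R^j\nu_*\widehat{\scrG}^+_m$, and then computes each $R^j\nu_*\widehat{\scrG}^+_m\simeq\bigoplus_J R^j\nu_*(\scrO^+_{X_{\proket}}/p^m)$ directly from Lemma \ref{Lemma: Cartan-Leray}(iii). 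The conclusion is that $R^j\nu_*\widehat{\scrG}\simeq\big(\bigwedge^j(\scrO_{X_{\ket}})^n\big)(J)$ on each member of the atlas, which is visibly ON-able. So your outline is right, but the pseudo-basis reduction is what turns your ``routine but careful bookkeeping'' into a genuinely short argument.
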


\begin{proof}
By considering a Kummer \'etale atlas for $\scrG$ and writing $R\simeq \prod_{\sigma\in \Sigma} \Z_p e_{\sigma}$, we immediately reduce to the case where 
\begin{itemize}
\item $X$ is affinoid and admits a toric chart $X\rightarrow \mathbb{E}=\Spa(\C_p\langle P\rangle, \calO_{\C_p}\langle P\rangle)$ for some sharp fs monoid $P$; 
\item $R=\Z_p$ and $\fraka=(p)$;
\item $\scrG$ is globally projective; \emph{i.e.}, $\scrG(X)$ is a projective Banach $\scrO_{X_{\ket}}(X)$-module and for every 
affinoid $U\in X_{\ket}$, we have a natural isomorphism
\[\scrG(X)\widehat{\otimes}_{\scrO_{X_{\ket}}(X)}\scrO_{X_{\ket}}(U)\xrightarrow[]{\sim} \scrG(U).\]
\end{itemize}
We further reduce to the case where $\scrG$ is globally orthonormalisable; namely, $\scrG\simeq \scrO_{X_{\ket}}(J)$ for some index set $J$. Let $\scrG^+$ be the $p$-adic completion of the free $\scrO^+_{X_{\ket}}$-module $\bigoplus_{J}\scrO^+_{X_{\ket}}$ and let $\scrG^+_m:=\scrG^+/p^m\simeq \bigoplus_{J} \scrO^+_{X_{\ket}}/p^m$. By Lemma \ref{Lemma: proj. formula lemma 1}, we have a natural almost isomorphism
$$R^j\nu_*\widehat{\scrG}^+\xrightarrow[]{\sim} \varprojlim_m R^j \nu_*\widehat{\scrG}^+_m$$
where $\widehat{\scrG}^+_m=\widehat{\scrG}^+/p^m\simeq \bigoplus_{J} \scrO^+_{X_{\proket}}/p^m$.

We claim that, in this case, the sheaf $R^j\nu_*\widehat{\scrG}$ is isomorphic to $\left(\bigwedge^j (\scrO_{X_{\ket}})^n\right)(J)$ for some $n\in \Z_{\geq 1}$. For this, we follow the strategy as in the proof of Lemma \ref{Lemma: Cartan-Leray}.

In order to be consistent with the notations in Lemma \ref{Lemma: Cartan-Leray}, we write $U=X$. Consider the collection $\mathcal{B}_U$ used in the proof of Lemma \ref{Lemma: Cartan-Leray}. In particular, for every $V\in \mathcal{B}_U$, the map $V\rightarrow U$ admits a Kummer chart $P\rightarrow P'$ which is isomorphic to the $m$-th multiple map $[m]:P\rightarrow P$. Moreover, the injection $P\rightarrow P'$ induces an injection $\Gamma'\rightarrow \Gamma$. If we fix an identification $\Gamma\cong \widehat{\Z}(1)^n$, the injection $\Gamma'\rightarrow \Gamma$ can be identified with the $m$-th multiple map $[m]:\widehat{\Z}(1)^n\rightarrow \widehat{\Z}(1)^n$.

By the calculation in Lemma \ref{Lemma: Cartan-Leray}, we obtain an almost injection
$$\left(\bigwedge^j (\scrO^+_{X_{\ket}}/p^m(V))^n\right)^a \simeq H^j(\Gamma, \scrO^+_{X_{\ket}}/p^m(V))^a\hookrightarrow H^j_{\proket}(V, \scrO^+_X/p^m)^a$$
with cokernel killed by $p$. Taking direct sum and then inverse limit, we obtain an almost injection
$$
\varprojlim_m \bigoplus_J \left(\bigwedge^j (\scrO^+_{X_{\ket}}/p^m(V))^n\right)^a \hookrightarrow \varprojlim_m \bigoplus_J H^j_{\proket}(V, \scrO^+_X/p^m)
$$
with cokernel killed by $p$. Inverting $p$, we obtain an isomorphism
$$
\left(\bigwedge^j (\scrO^+_{X_{\ket}}(V))^n\right)(J)\simeq \varprojlim_m \bigoplus_J H^j_{\proket}(V, \scrO^+_X/p^m).
$$
However, note that the sheaf $$R^j\nu_*\widehat{\scrG}\cong \left(\varprojlim_m R^j \nu_*\widehat{\scrG}^+_m\right)[\frac{1}{p}]$$
is just the sheafification of $W\mapsto \varprojlim_m \bigoplus_J H^j_{\proket}(W, \scrO^+_X/p^m)$. Consequently, $R^j\nu_*\widehat{\scrG}$ coincides with the sheaf $\left(\bigwedge^j (\scrO^+_{X_{\ket}})^n\right)(J)$ which is clearly an ON-able Banach sheaf of $\scrO_{X_{\ket}}\widehat{\otimes} R$-modules.
\end{proof}

\begin{proof}[Proof of Proposition \ref{Proposition: generalised projection formula}]
We split the proof into three steps.\\

\noindent\textbf{Step 1.} We first verify that both $\scrF\otimes_{\scrO_{X_{\ket}}} R^j\nu_*\widehat{\scrO}_{X_{\proket}}$ and $R^j\nu_* \widehat{\scrF}$ are projective Kummer \'etale Banach sheaf of $\scrO_{X_{\ket}}\widehat{\otimes} R$-modules.

Indeed, the statement for $\scrF\otimes_{\scrO_{X_{\ket}}} R^j\nu_*\widehat{\scrO}_{X_{\proket}}$ follows from the locally finite free-ness of $R^j\nu_*\widehat{\scrO}_{X_{\proket}}$ (cf. Lemma \ref{Lemma: Cartan-Leray}) and the statement for $R^j\nu_* \widehat{\scrF}$ follows from Lemma \ref{Lemma: proj. formula lemma 2}. In fact, we can be more precise. Consider an affinoid Kummer \'etale covering $\mathfrak{U}=\{U_i: i\in I\}$ satisfying:
\begin{itemize}
\item $\mathfrak{U}$ is an admissible atlas of $\scrF$;
\item each $U_i$ admits a toric chart $U_i\rightarrow \Spa(\C_p\langle P_i\rangle, \calO_{\C_p}\langle P_i\rangle)$ for some sharp fs monoid.
\end{itemize}
Then, by the proof of Lemma \ref{Lemma: Cartan-Leray} and Lemma \ref{Lemma: proj. formula lemma 2}, we see that $\mathfrak{U}$ is a Kummer \'etale atlas for both $\scrF\otimes_{\scrO_{X_{\ket}}} R^j\nu_*\widehat{\scrO}_{X_{\proket}}$ and $R^j\nu_* \widehat{\scrF}$. (In fact, they are both orthonormalisable on each $U_i$.) For the rest of the proof, we fix such a cover $\mathfrak{U}$.\\

\noindent\textbf{Step 2.} We construct two natural morphisms
$$
\Psi: \scrF^+\bigotimes_{\scrO^+_{X_{\ket}}\widehat{\otimes}'R} R^j\nu_*\left(\widehat{\scrO}^+_{X_{\proket}}\widehat{\otimes}'R\right)\rightarrow \varprojlim_{m}R^j\nu_{*}\widehat{\scrF}_m^+
$$
and
$$
 \Theta: R^j\nu_*\widehat{\scrF}^+ \rightarrow \varprojlim_{m}R^j\nu_{*}\widehat{\scrF}_m^+.
$$
where $$\widehat{\scrF}^+_m=\scrF^+_m\otimes_{\scrO^+_{X_{\ket}}}\widehat{\scrO}^+_{X_{\proket}}=\widehat{\scrF}^+/\fraka^m.$$
There is clearly such a map $\Theta$. It remains to construct $\Psi$.

For every $m\in \Z_{\geq 1}$ and $d\in \Z_{\geq 0}$, we write \[
    \widehat{\scrF}_{m, d}^+ := \scrF_{m, d}^+ \bigotimes_{\scrO_{X_{\ket}}^+\otimes_{\Z_p} (R/\fraka^m)} \left(\widehat{\scrO}_{X_{\proket}}^+ \otimes_{\Z_p} ( R/\fraka^m)\right).
\] 
By the usual projection formula for ringed sites (see, for example, \cite[01E6]{stacks-project}), we obtain a canonical morphism
$$
    \Psi_{m, d}: \scrF^+_{m, d} \bigotimes_{\scrO_{X_{\ket}}^+\otimes_{\Z_p} (R/\fraka^m)} R^j\nu_* \left(\widehat{\scrO}_{X_{\proket}}^+ \otimes_{\Z_p} ( R/\fraka^m)\right) \rightarrow R^j\nu_* \widehat{\scrF}_{m, d}^+.
$$
Taking direct limit with respect to $d$, followed by taking inverse limit with respect to $m$, we obtain a canonical morphism \[
    \Psi': \varprojlim_m\left(\scrF^+_m \bigotimes_{\scrO_{X_{\ket}}^+\otimes_{\Z_p} (R/\fraka^m)} R^j\nu_* \left(\widehat{\scrO}_{X_{\proket}}^+ \otimes_{\Z_p} (R/\fraka^m)\right) \right) \rightarrow \varprojlim_mR^j\nu_* \widehat{\scrF}_{m}^+.
\]
On the other hand, we have natural morphisms
\begin{align*}
\Psi'': &\scrF^+\bigotimes_{\scrO^+_{X_{\ket}}\widehat{\otimes}'R} R^j\nu_*\left(\widehat{\scrO}^+_{X_{\proket}}\widehat{\otimes}'R\right) \rightarrow \scrF^+\bigotimes_{\scrO^+_{X_{\ket}}\widehat{\otimes}'R} \varprojlim_m \left(R^j\nu_*\left(\widehat{\scrO}^+_{X_{\proket}}\otimes_{\Z_p} (R/\fraka^m)\right)\right)\\
=& (\varprojlim_m\scrF^+_m)\bigotimes_{\scrO^+_{X_{\ket}}\widehat{\otimes}'R} \varprojlim_m \left(R^j\nu_*\left(\widehat{\scrO}^+_{X_{\proket}}\otimes_{\Z_p} (R/\fraka^m)\right)\right)\\
\rightarrow& \varprojlim_m\left(\scrF^+_m\bigotimes_{\scrO^+_{X_{\ket}}\otimes_{\Z_p} (R/\fraka^m)}  R^j\nu_*\left(\widehat{\scrO}^+_{X_{\proket}}\otimes_{\Z_p} (R/\fraka^m)\right)\right)
\end{align*}
Composing with $\Psi'$, we obtain the desired morphism
$$
\Psi: \scrF^+\bigotimes_{\scrO^+_{X_{\ket}}\widehat{\otimes}'R} R^j\nu_*\left(\widehat{\scrO}^+_{X_{\proket}}\widehat{\otimes}'R\right)\rightarrow \varprojlim_{m}R^j\nu_{*}\widehat{\scrF}_m^+.
$$

\noindent\textbf{Step 3.} For simplicity, we write $\scrG_i$, $\scrG^+_i$, and $\scrG^+_{i,m}$ for $\scrF|_{U_i}$, $\scrF^+|_{U_i}$, and $\scrF^+_{i,m}|_{U_i}$, respectively. Since $\scrG^+_{i,m}$ is a free $\scrO^+_{U_{i,\ket}}\otimes (R/\fraka^m)$-module, we can express $\scrG^+_{i,m}$ as a filtered direct limit of finite free submodules $\scrG^+_{i,m,\alpha}$.

We can repeat the construction in Step 2 to $\scrG^+_i$, $\scrG^+_{i,m}$, and $\scrG^+_{i,m,\alpha}$. In particular, we obtain maps
$$ \Psi_i: \scrG^+_i \bigotimes_{\scrO_{U_{i,\ket}}^+ \widehat{\otimes}' R} R^j\nu_*\left(\widehat{\scrO}_{U_{i,\proket}}^+\widehat{\otimes}' R\right) \rightarrow \varprojlim_{m} R^j\nu_*\widehat{\scrG}_{i,m}^+$$
and
$$\Theta_i: R^j\nu_*\widehat{\scrG}^+_i \rightarrow \varprojlim_{m}R^j\nu_{*}\widehat{\scrG}_{i,m}^+$$
where $$\widehat{\scrG}^+_{i,m}=\scrG^+_{i,m}\otimes_{\scrO^+_{U_{i,\ket}}}\widehat{\scrO}^+_{U_{i,\proket}}=\widehat{\scrG}^+_i/\fraka^m.$$

Moreover, we have a commutative diagram
$$ \begin{tikzcd}
        \scrF^+|_{U_i} \bigotimes_{(\scrO^+_{U_{i, \ket}}\widehat{\otimes}' R)}R^j\nu_*\left(\widehat{\scrO}_{U_{i, \proket}}^+\widehat{\otimes}' R\right)\arrow[r, "\Psi|_{U_i}"]\arrow[d, "\cong "'] & \varprojlim_{m} R^j\nu_* \widehat{\scrF}_m^+|_{U_i}\arrow[d, "\cong"] & R^j\nu_*\widehat{\scrF}^+|_{U_i} \arrow[l, "\Theta|_{U_i}"']\arrow[d, "\cong"]\\
        \scrG_{i}^+ \bigotimes_{(\scrO^+_{U_{i, \ket}}\widehat{\otimes}' R)}R^j\nu_*\left(\widehat{\scrO}_{U_{i, \proket}}^+\widehat{\otimes}' R\right)\arrow[r, "\Psi_i"] & \varprojlim_{m} R^j\nu_* \widehat{\scrG}_{i,m}^+ & R^j\nu_*\widehat{\scrG}_i^+\arrow[l, "\Theta_i"']
    \end{tikzcd}
    $$
   The square on the left is commutative because the cofiltered systems $\{\scrF^+_{m,d}|_{U_i}\}$ and $\{\scrG^+_{i,m,\alpha}\}$ are cofinal to each other. By Lemma \ref{Lemma: proj. formula lemma 1}, $\Theta_i=\Theta|_{U_i}$ is an almost isomorphism. This implies that $\Theta[1/p]$ is an isomorphism of projective Kummer \'etale Banach sheaves of $\scrO_{X_{\ket}}\widehat{\otimes}R$-modules.

We claim that $\Psi_i$ also becomes an isomorphism after inverting $p$. By construction, $\Psi_i$ factors as the composition of
$$
\Psi''_i:\scrG_{i}^+ \bigotimes_{\scrO^+_{U_{i, \ket}}\widehat{\otimes}' R}R^j\nu_*\left(\widehat{\scrO}^+_{U_{i, \proket}}\widehat{\otimes}' R\right)\rightarrow \varprojlim_m\left(\scrG^+_{i,m}\bigotimes_{\scrO^+_{U_{i,\ket}}\otimes_{\Z_p}(R/\fraka^m)}\left(R^j\nu_*\widehat{\scrO}^+_{U_{i,\proket}}\otimes_{\Z_p} (R/\fraka^m)\right)\right)
$$
and a canonical isomorphism
\begin{align*}
\Psi'_i: & \varprojlim_m\left(\scrG^+_{i,m}\bigotimes_{\scrO^+_{U_{i,\ket}}\otimes_{\Z_p}(R/\fraka^m)}\left(R^j\nu_*\widehat{\scrO}^+_{U_{i,\proket}}\otimes_{\Z_p} (R/\fraka^m)\right)\right)\\ = &\varprojlim_m\left(\varinjlim_{\alpha}\scrG^+_{i,m,\alpha}\bigotimes_{\scrO^+_{U_{i,\ket}}\otimes_{\Z_p}(R/\fraka^m)}\left(R^j\nu_*\widehat{\scrO}^+_{U_{i,\proket}}\otimes_{\Z_p} (R/\fraka^m)\right)\right)\\
\cong & \varprojlim_m\varinjlim_{\alpha}R^j\nu_*\left(\scrG^+_{i,m,\alpha}\bigotimes_{\scrO^+_{U_{i,\ket}}\otimes_{\Z_p}(R/\fraka^m)}\left(\widehat{\scrO}^+_{U_{i,\proket}}\otimes_{\Z_p} (R/\fraka^m)\right)\right)\\
=&\varprojlim_m\varinjlim_{\alpha} R^j\nu_* \widehat{\scrG}^+_{i,m,\alpha}=\varprojlim_mR^j\nu_*\widehat{\scrG}^+_{i,m}
\end{align*}
where the second isomorphism follows from the fact that each $\scrG^+_{i,m,\alpha}$ is a finite free $\scrO^+_{U_{i,\ket}}\otimes_{\Z_p} (R/\fraka^m)$-module.

It remains to prove that $\Psi''_i$ becomes an isomorphism after inverting $p$. Recall that $U_i$ admits a toric chart $U_i\rightarrow \Spa(\C_p\langle P\rangle, \calO_{\C_p}\langle P\rangle)$ for some sharp fs monoid $P$. By choosing an identification $\Gamma:=\Hom(P^{\mathrm{gp}}_{\Q_{\geq 0}}/P^{\mathrm{gp}}, \boldsymbol{\mu}_{\infty})\simeq \widehat{\Z}(1)^n$, Lemma \ref{Lemma: Cartan-Leray} yields an isomorphism $R^j\nu_* \widehat{\scrO}_{U_{i,\proket}}\simeq \bigwedge^j (\scrO_{U_{i,\ket}})^n$.

On one hand, by Proposition \ref{Proposition: compatibility with completed tensor}, we have
\begin{align*}
\scrG_{i}^+ \bigotimes_{\scrO^+_{U_{i, \ket}}\widehat{\otimes}' R}R^j\nu_*\left(\widehat{\scrO}^+_{U_{i, \proket}}\widehat{\otimes}' R\right)[\frac{1}{p}] & =\scrG_{i} \bigotimes_{\scrO_{U_{i, \ket}}\widehat{\otimes} R}R^j\nu_*\left(\widehat{\scrO}_{U_{i, \proket}}\widehat{\otimes} R\right)\\
& \simeq \scrG_{i} \bigotimes_{\scrO_{U_{i, \ket}}\widehat{\otimes} R}\left((R^j\nu_*\widehat{\scrO}_{U_{i, \proket}})\widehat{\otimes} R\right)\\
& =\scrG_i \bigotimes_{\scrO_{U_{i,\ket}}}R^j\nu_*\widehat{\scrO}_{U_{i,\proket}}\simeq \scrG_i \bigotimes_{\scrO_{U_{i,\ket}}} \bigwedge^j (\scrO_{U_{i,\ket}})^n.
\end{align*}

On the other hand, if we write $R/\fraka^m\simeq \bigoplus_{\sigma\in \Sigma_m} \Z/p^{\sigma}$, then we have
$$R^j\nu_*\widehat{\scrO}^+_{U_{i,\proket}}\otimes_{\Z_p} (R/\fraka^m)\simeq \bigoplus_{\sigma\in \Sigma_m} R^j\nu_* (\scrO^+_{U_{i,\proket}}/p^{\sigma}).$$
By Lemma \ref{Lemma: Cartan-Leray} (iii), there is an almost injection
$$\bigwedge^j (\scrO^+_{U_{i,\ket}}/p^{\sigma})^n\hookrightarrow R^j\nu_*(\scrO^+_{U_{i,\proket}}/p^{\sigma})$$ 
whose cokernel is killed by $p$. This yields an almost injection
\begin{align*}
&\scrG^+_i\bigotimes_{\scrO^+_{U_{i,\ket}}}\left(\bigwedge^j(\scrO^+_{U_{i,\ket}})^n\right)=\varprojlim_m\scrG^+_{i,m}\bigotimes_{\scrO^+_{U_{i,\ket}}\otimes_{\Z_p}(R/\fraka^m)}\left(\bigwedge^j (\scrO^+_{U_{i,\ket}}\otimes_{\Z_p}(R/\fraka^m))^n\right)\\
=& \varprojlim_m\scrG^+_{i,m}\bigotimes_{\scrO^+_{U_{i,\ket}}\otimes_{\Z_p}(R/\fraka^m)}\left(\bigoplus_{\sigma\in \Sigma_m}\bigwedge^j (\scrO^+_{U_{i,\ket}}/p^{\sigma})^n\right)\hookrightarrow \varprojlim_m\scrG^+_{i,m}\bigotimes_{\scrO^+_{U_{i,\ket}}\otimes_{\Z_p}(R/\fraka^m)}\left(\bigoplus_{\sigma\in \Sigma_m} R^j\nu_*(\scrO^+_{U_{i,\proket}}/p^{\sigma})\right)\\
\simeq & \varprojlim_m\scrG^+_{i,m}\bigotimes_{\scrO^+_{U_{i,\ket}}\otimes_{\Z_p}(R/\fraka^m)}\left(R^j\nu_*\widehat{\scrO}^+_{U_{i,\proket}}\otimes_{\Z_p} (R/\fraka^m)\right)
\end{align*}
with cokernel killed by $p$. 

Consequently, both sides of $\Psi''_i$ are isomorphic to $\scrG_i\bigotimes_{\scrO_{U_{i,\ket}}}\left(\bigwedge^j(\scrO_{U_{i,\ket}})^n\right)$ after inverting $p$, and one checks that $\Psi''_i[1/p]$ is just the identity map on $\scrG_i\bigotimes_{\scrO_{U_{i,\ket}}}\left(\bigwedge^j(\scrO_{U_{i,\ket}})^n\right)$. This finishes the proof.
\end{proof}

\begin{Corollary}\label{Corollary: generalised projection formula with invariants}
Let $X$ be a locally noetherian fs log adic space which is log smooth over $(\C_p, \calO_{\C_p})$. Let $\scrF$ be an admissible projective Kummer \'etale Banach sheaf of $\scrO_{X_{\ket}}\widehat{\otimes} R$-modules, with the corresponding integral structure $\scrF^+$. Suppose $\scrF^+$ is equipped with an $\scrO^+_{X_{\ket}}\widehat{\otimes}'R$-linear action of a finite group $G$. This induces an $\scrO_{X_{\ket}}\widehat{\otimes}R$-linear action of $G$ on $\scrF$. Then the subsheaf of $G$-invariants $\scrF^G$ also satisfies the generalised projection formula. More precisely, we have a natural isomorphism\[
    \scrF^G \otimes_{\scrO_{X_{\ket}}}R^i\nu_*\widehat{\scrO}_{X_{\proket}}\xrightarrow{\sim} R^i\nu_*\widehat{\scrF^G}
\]
\end{Corollary}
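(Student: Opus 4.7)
The plan is to deduce the corollary directly from Proposition \ref{Proposition: generalised projection formula} applied to $\scrF$ itself, by extracting the $G$-invariant parts on both sides of that isomorphism. The key observation is that $|G|$ is a nonzero integer and therefore invertible in $\C_p$, so the averaging element $e_G := \frac{1}{|G|}\sum_{g \in G} g$ acts as an $\scrO_{X_{\ket}}\widehat{\otimes}R$-linear idempotent endomorphism of $\scrF$ whose image is $\scrF^G$. This yields a direct sum decomposition $\scrF = \scrF^G \oplus (1-e_G)\scrF$ in the category of sheaves of $\scrO_{X_{\ket}}\widehat{\otimes}R$-modules, and every additive $\scrO_{X_{\ket}}\widehat{\otimes}R$-linear functor automatically respects this splitting.

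Concretely, the $G$-action on $\scrF$ extends by naturality of completion to a $G$-action on $\widehat{\scrF}$, and the decomposition above yields $\widehat{\scrF} = \widehat{\scrF^G} \oplus \widehat{(1-e_G)\scrF}$. Likewise, with $G$ acting trivially on $R^i\nu_*\widehat{\scrO}_{X_{\proket}}$, the sheaf $\scrF \otimes_{\scrO_{X_{\ket}}} R^i\nu_*\widehat{\scrO}_{X_{\proket}}$ splits as the direct sum of $\scrF^G \otimes_{\scrO_{X_{\ket}}} R^i\nu_*\widehat{\scrO}_{X_{\proket}}$ and its $e_G$-complement, and applying the additive functor $R^i\nu_*$ to the decomposition of $\widehat{\scrF}$ gives $R^i\nu_*\widehat{\scrF} = R^i\nu_*\widehat{\scrF^G} \oplus R^i\nu_*\widehat{(1-e_G)\scrF}$. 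Since the isomorphism supplied by Proposition \ref{Proposition: generalised projection formula} is natural in $\scrF$, it is $G$-equivariant; restricting it to the $e_G$-summands (equivalently, passing to $G$-invariants, which is exact here) produces the desired isomorphism $\scrF^G \otimes_{\scrO_{X_{\ket}}} R^i\nu_*\widehat{\scrO}_{X_{\proket}} \xrightarrow{\sim} R^i\nu_*\widehat{\scrF^G}$.

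The main point requiring care is to confirm that the formation of the completed pro-Kummer \'etale sheaf $\widehat{(-)}$ commutes with the idempotent decomposition, so that $\widehat{\scrF^G}$ genuinely equals the $e_G$-summand of $\widehat{\scrF}$ (and not merely a subsheaf thereof). This follows because $e_G$ is $\scrO_{X_{\ket}}\widehat{\otimes}R$-linear and acts as an idempotent on each finite-level quotient $\scrF^+ \otimes_{\scrO^+_{X_{\ket}}} \scrO^+_{X_{\proket}}/p^m$, hence on the inverse limit defining $\widehat{\scrF}$, and the resulting invariant summand agrees with $\widehat{\scrF^G}$ by construction. Once this compatibility is in place, the remainder of the argument is a formal manipulation with the splitting and requires no technical input beyond Proposition \ref{Proposition: generalised projection formula}.
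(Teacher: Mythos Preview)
Your approach matches the paper's: apply Proposition \ref{Proposition: generalised projection formula} to $\scrF$ and then pass to $G$-invariants, using that $(-)^G$ is exact because $|G|$ is invertible in $\C_p$. The paper packages the commutation of $(-)^G$ with $R^i\nu_*$ via a Grothendieck spectral sequence for the two compositions $\nu_*\circ (-)^G$ and $(-)^G\circ \nu_*$, noting that the higher derived functors of $(-)^G$ vanish; your idempotent $e_G$ is an equivalent and arguably more transparent formulation of the same fact.

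One point to correct: you assert that $e_G = \frac{1}{|G|}\sum_{g\in G} g$ acts on each integral-level quotient $\scrF^+ \otimes_{\scrO^+_{X_{\ket}}} \scrO^+_{X_{\proket}}/p^m$, but $\frac{1}{|G|}$ is not available over $\calO_{\C_p}/p^m$ when $p\mid |G|$ --- and this is exactly the situation in the paper's application, where $G = \Iw^+_{\GSp_{2g}}/\Gamma(p^n)$ has order divisible by $p$. The repair is easy: the group $G$ itself (without dividing) acts $\scrO^+$-linearly on each $\scrF^+ \otimes_{\scrO^+_{X_{\ket}}} \scrO^+_{X_{\proket}}/p^m$ and hence on $\widehat{\scrF}^+$, so $e_G$ is well-defined on $\widehat{\scrF} = \widehat{\scrF}^+[1/p]$ and yields the splitting $\widehat{\scrF} = (\widehat{\scrF})^G \oplus (1-e_G)\widehat{\scrF}$ there. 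The remaining identification $(\widehat{\scrF})^G = \widehat{\scrF^G}$ (i.e.\ that completing the integral model $(\scrF^+)^G$ and inverting $p$ recovers the $G$-invariants of $\widehat{\scrF}$) is the genuine content of your ``main point requiring care''; both your argument and the paper's treat this step lightly, and you should supply it directly rather than via the integral idempotent.
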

\begin{proof}
By Proposition \ref{Proposition: generalised projection formula}, we have an isomorphism \[
    \scrF\otimes_{\scrO_{X_{\ket}}}R^i\nu_*\widehat{\scrO}_{X_{\proket}}\xrightarrow{\sim}R^i\nu_*\widehat{\scrF}.
\] Taking the $G$-invariants, we obtain an isomorphism
\[
    \scrF^G\otimes_{\scrO_{X_{\ket}}}R^i\nu_*\widehat{\scrO}_{X_{\proket}}\xrightarrow{\sim} \left(R^i\nu_*\widehat{\scrF}\right)^G.
\] 

It remains to show $\left(R^i\nu_*{\widehat{\scrF}}\right)^G \cong  R^i\nu_*{\widehat{\scrF^G}}$. Indeed, consider the following commutative diagram
$$ \begin{tikzcd}
      \scrO_{X_{\proket}}[G]-\Mod \arrow[r, "\nu_*"]\arrow[d, "(-)^G"'] &  \scrO_{X_{\ket}}[G]-\Mod\arrow[d, "(-)^G"] \\
       \scrO_{X_{\proket}}-\Mod \arrow[r, "\nu_*"] & \scrO_{X_{\ket}}-\Mod
    \end{tikzcd}
    $$
Notice that the higher right derived functors of both of the vertical arrows vanish as $G$ is a finite group and the base field is of characteristic zero. Now, applying the standard Grothendieck spectral sequence argument to both compositions $\nu_*\circ (-)^G$ and $(-)^G\circ \nu_*$, we obtain the desired commutativity of $R^i\nu_*$ and $(-)^G$.
\end{proof}
\section{Toroidal compactifications of the Siegel modular varieties}\label{section: boundary}
In this section, we study the toroidal compactifications of the Siegel modular varieties following \cite{Stroh-TorComp} and \cite{Pilloni-Stroh-CoherentCohomologyandGaloisRepresentations}. In particular, Pilloni and Stroh construct the (toroidally compactified) perfectoid Siegel modular variety of infinite level (\`a la Scholze in \cite{Scholze-2015}) by introducing the \emph{modified integral structures} of the toroidal compactifications on the finite levels.

This section is organised as follows. In \S \ref{subsection: boundary strata}, we study the notion of toroidal compactification of Siegel modular varieties at finite level. Then, in \S \ref{subsection: perfectoid Siegel modular variety}, we recall the construction of the perfectoid Siegel modular variety of infinite level and the associated Hodge--Tate period map by following \cite{Pilloni-Stroh-CoherentCohomologyandGaloisRepresentations}. We also show that this perfectoid object serves as a pro-Kummer \'etale Galois cover over the Siegel modular varieties at the finite levels. In order to be consistent with the notations in the main text of this paper, our notations are slightly different from the ones in \cite{Stroh-TorComp} and \cite{Pilloni-Stroh-CoherentCohomologyandGaloisRepresentations}. 

Throughout this section, we fix the following notations: \begin{enumerate}
    \item[$\bullet$] Let $V=\Z^{2g}$ and $V_p=\Z_p^{2g}$, equipped with the symplectic pairings defined in \S \ref{subsection: Algebraic and p-adic groups}. We denote by $\frakC$ the collection of all totally isotropic direct summands of $V$.
    \item[$\bullet$] For any totally isotropic direct summand $V'\subset V$, let $C(V/V'^{\perp})$ denote the cone of symmetric bilinear forms on $(V/V'^{\perp})\otimes_{\Z}\R$ which are positive semi-definite and whose kernel is defined over $\Q$.
    \item[$\bullet$] Observe that if $V', V''\in \frakC$ such that $V'\subset V''$, there is a natural inclusion $C(V/V'^{\perp})\subset C(V/V''^{\perp})$. We define $$\calC:=\big(\bigsqcup_{V'\in \frakC}C(V/V'^{\perp})\big)/\sim$$ where the equivalence relation is given by the aforementioned inclusions.
    \item[$\bullet$] Let $\frakS$ be a fixed $\GSp_{2g}(\Z)$-admissible smooth rational polyhedral cone decomposition of $\calC$ (see \cite[Definition 3.2.3.1]{Stroh-TorComp}). This means $\frakS$ consists of a smooth rational polyhedral cone decomposition of $C(V/V'^{\perp})$ (in the sense of \cite[Chapter IV, \S 2]{Faltings-Chai}) for every $V'\in \frakC$ such that 
    \begin{enumerate}
    \item[(i)] The decomposition of $C(V/V'^{\perp})$ coincides with the restriction of the decomposition of $C(V/V''^{\perp})$ whenever $V'\subset V''$, and
    \item[(ii)] $\frakS$ is $\GSp_{2g}(\Z)$-invariant and $\frakS/\GSp_{2g}(\Z)$ is a finite set.
    \end{enumerate}
    \item[$\bullet$] For every $n\in \Z_{\geq 1}$, let 
    $$\Gamma(p^n)=\{\bfgamma\in \GSp_{2g}(\Z_p): \bfgamma\equiv \one_{2g}\mod p^n\}$$
    as in \S \ref{subsection: Algebraic and p-adic groups}. Let us abuse the notation and write $\Gamma(p^0):=\GSp_{2g}(\Z_p)$. 
    \item[$\bullet$] For simplicity, let $\Iw$ and $\Iw^+$ denote the $p$-adic groups $\Iw_{\GSp_{2g}}$ and $\Iw^+_{\GSp_{2g}}$ as in \S \ref{subsection: Algebraic and p-adic groups}, respectively.
    \item[$\bullet$] For the rest of the section, let $\Gamma$ denote either $\Gamma(p^n)$ (for some $n\geq 0$), $\Iw$, or $\Iw^+$ which indicates the level structures of the Siegel modular varieties that we concern. We also write $$\widetilde{\Gamma}:=\GSp_{2g}(\Z)\cap\Gamma.$$ 
    \end{enumerate}

\subsection{Toroidal compactifications and boundary strata}\label{subsection: boundary strata}
Let $N\geq 3$ be a fixed integer coprime to $p$. Let $X_0$ be the moduli scheme over $\calO_{\C_p}$ of principally polarised abelian schemes of dimension $g$ equipped with a principal $N$-level structure. The fixed choice of polyhedral cone decomposition $\frakS$ gives rise to a toroidal compactification $\overline{X}_0$ (see, for example, \cite[Chapter IV, \S 4]{Faltings-Chai} or \cite[\S 3.2]{Stroh-TorComp}). Let $X$ and $\overline{X}$ be the base change of $X_0$ and $\overline{X}_0$ to $\C_p$, respectively. We view $\overline{X}$ as an fs log scheme equipped with the divisorial log structure defined by the boundary divisor.

Let $\Gamma$ denote either $\Gamma(p^n)$ (for some $n\geq 0$), $\Iw$, or $\Iw^+$. Let $X_{\Gamma}$ be the finite \'etale cover of $X$ parameterising $\Gamma$-level structure, as defined in Definition \ref{Definition: Siegel modular varieties of (strict) Iwahoris level}. More precisely, 
\begin{enumerate}
\item[(i)] $X_{\Gamma(p^n)}$ parameterises $(A, \lambda, \psi_N, \psi_{p^n})$ where $(A, \lambda)$ is a principally polarised abelian variety over $\C_p$ and $$\psi_N:V\otimes_{\Z}(\Z/N\Z)\xrightarrow[]{\sim} A[N]$$ and $$\psi_{p^n}:V\otimes_{\Z}(\Z/p^n\Z)\xrightarrow[]{\sim}A[p^n]$$
are symplectic isomorphisms.
\item[(ii)] $X_{\Iw}$ parameterises $(A, \lambda, \psi_N, \Fil_{\bullet}A[p])$ where $(A, \lambda, \psi_N)$ is as in (i) and $\Fil_{\bullet}A[p]$ is a full flag of $A[p]$ that satisfies
$$(\Fil_{\bullet}A[p])^{\perp}\cong \Fil_{2g-\bullet}A[p]$$
with respect to the Weil pairing.
\item[(iii)] $X_{\Iw^+}$ parameterises $(A, \lambda, \psi_N, \{C_i:i=1, \ldots, g\})$ where $(A, \lambda, \psi_N)$ is as in (i) and $\{C_i:i=1, \ldots, g\}$ is a collection of subgroups $C_i\subset A[p]$ of order $p$ such that
$$C_i \cap C_j = 0$$
for all $i\neq j$.
\end{enumerate}

We know that $X_{\Gamma(p^n)} \rightarrow X$ (resp., $X_{\Gamma(p)} \rightarrow X_{\Iw}$; resp., $X_{\Gamma(p)} \rightarrow X_{\Iw^+}$) is Galois with Galois group $\GSp_{2g}(\Z/p^n\Z)$ (resp., $B_{\GSp_{2g}}(\Z/p\Z)$; resp., $T_{\GSp_{2g}}(\Z/p\Z)$). 
The goal of this subsection is to construct the \emph{toroidal compactification} $\overline{X}_{\Gamma}$ of $X_{\Gamma}$ determined by the fixed polyhedral decomposition $\frakS$. It is an fs log scheme satisfying the following properties:
\begin{enumerate}
    \item[(Tor1)] $\overline{X}_{\Gamma}$ is finite Kummer \'etale over $\overline{X}$; 
    \item[(Tor2)] There is a cartesian diagram \[
        \begin{tikzcd}
            X_{\Gamma} \arrow[r, hook]\arrow[d] & \overline{X}_{\Gamma}\arrow[d]\\
            X\arrow[r, hook] & \overline{X}
        \end{tikzcd}
    \] and that the log structure on $\overline{X}_{\Gamma}$ is the divisorial log structure defined by the divisor $Z_{\Gamma}:=\overline{X}_{\Gamma}\smallsetminus X_{\Gamma}$;
    \item[(Tor3)] \begin{enumerate}
        \item[(i)] If $\Gamma = \Gamma(p^n)$, then \[
            \overline{X}_{\Gamma} \rightarrow \overline{X}
        \] is Galois with Galois group $\GSp_{2g}(\Z/p^n\Z)$. 
        \item[(ii)] If $\Gamma = \Iw$, then \[
            \overline{X}_{\Gamma(p)} \rightarrow \overline{X}_{\Iw}
        \] is Galois with Galois group $B_{\GSp_{2g}}(\Z/p\Z)$. 
        \item[(iii)] If $\Gamma = \Iw^+$, then \[
            \overline{X}_{\Gamma(p)} \rightarrow \overline{X}_{\Iw^+}
        \] is Galois with Galois group $T_{\GSp_{2g}}(\Z/p\Z)$.
    \end{enumerate}
\end{enumerate}

The construction of the toroidal compactification in the case $\Gamma=\Gamma(p^n)$ is well-known. For completeness, we briefly review the construction of $\overline{X}_{\Gamma(p^n)}$ following \cite{Pilloni-Stroh-CoherentCohomologyandGaloisRepresentations}.

Notice that every $\sigma\in \frakS$ necessarily lives in the interior of $C(V/V'^{\perp})$ for a unique $V'\in \frakC$ of some rank $r\leq g$. We have the following diagram from \cite[4.1.A]{Pilloni-Stroh-CoherentCohomologyandGaloisRepresentations}: 
$$
\begin{tikzcd}
M_{V', n}\arrow[r]\arrow[rd] & M_{V', n, \sigma}\arrow[r]\arrow[d] & M_{V', n, \frakS}\arrow[ld]\\
& B_{V', n}\arrow[d]\\ & X_{V', n}
\end{tikzcd}.
$$ 
We briefly describe the objects in the diagram and refer to \cite[Appendice A]{Pilloni-Stroh-CoherentCohomologyandGaloisRepresentations} for details:\begin{enumerate}
\item[$\bullet$] Let $X_{0,V'}$ be the moduli scheme parameterising principally polarised abelian schemes over $\calO_{\C_p}$ of dimension $g-r$ equipped with a principal $N$-level structure. Let $X_{V'}$ denote the base change of $X_{0,V'}$ to $\C_p$.
\item[$\bullet$] Let $X_{V', n}$ be the finite \'etale cover of $X_{V'}$ parameterising principal $p^n$-level structures. Over $X_{V', n}$, there is a universal abelian variety $A_{V'}$. 
\item[$\bullet$] Roughly speaking, the algebraic variety $B_{V', n}$ over $X_{V', n}$ parameterises semiabelian varieties with ``principal $N$- and $p^n$-level structures'' where the semiabelian variety is an extension of $A_{V'}$ by the torus $T_{V'}:=V'\otimes_{\Z}\mathbb{G}_m$. In particular, over $B_{V', n}$, there is a universal semiabelian variety
$$0\rightarrow T_{V'}\rightarrow G_{V'}\rightarrow A_{V'}\rightarrow 0$$
together with a universal isogeny of semiabelian varieties
$$\begin{tikzcd}
T_{V'}\arrow[d, "\text{id}"] \arrow[r]& G_{V'}\arrow[d] \arrow[r] & A_{V'}\arrow[d, "p^n"]\\
T_{V'}\arrow[r] &G_{V'} \arrow[r] & A_{V'}
\end{tikzcd}$$
whose kernel induces a natural inclusion $A_{V'}[p^n]\subset G_{V'}[p^n]$. This yields a decomposition
$$G_{V'}[p^n]\simeq (V'/p^nV'\otimes\mu_{p^n})\oplus A_{V'}[p^n].$$

\item[$\bullet$] Roughly speaking, the algebraic variety $M_{V',n}$ over $B_{V', n}$ parameterises principally polarised 1-motives of type $[V/V'^{\perp}\rightarrow G_{V'}]$ together with a ``principal $p^n$-level structure''. In particular, over $M_{V',n}$, there is a universal 1-motive
$$\widetilde{M}_{V'}=[V/V'^{\perp}\rightarrow G_{V'}]$$ 
together with a universal decomposition
$$\widetilde{M}_{V'}[p^n]\simeq (V'/p^nV'\otimes\mu_{p^n})\oplus A_{V'}[p^n]\oplus (V/V'^{\perp}\otimes \Z/p^n\Z).$$
It turns out $M_{V', n}$ is a torus over $B_{V', n}$ with the torus $$\Hom\left(\frac{1}{Np^n}\Sym^2(V/V'^{\perp}), \bbG_m\right).$$ 

\item[$\bullet$] The morphism $M_{V', n}\rightarrow M_{V', n, \sigma}$ is the affine toroidal embedding attached to the cone $\sigma\in C(V/V'^{\perp})$. Let $Z_{V', n, \sigma}:=M_{V', n, \sigma}\smallsetminus M_{V',n}$ denote the closed stratum of $M_{V', n, \sigma}$. Since $\sigma$ uniquely determines $V'$, we might simply write $Z_{n, \sigma}$.
    \item[$\bullet$] The morphism $M_{V', n}\rightarrow M_{V', n, \frakS}$ is the toroidal embedding attached to the polyhedral decomposition $\frakS$. Let $Z_{V', n, \frakS}:=M_{V', n, \frakS}\smallsetminus M_{V', n}$ denote the closed stratum of $M_{V', n, \frakS}$.
\end{enumerate} 

\begin{Theorem}[$\text{\cite[Th\'{e}or\`{e}me 4.1]{Pilloni-Stroh-CoherentCohomologyandGaloisRepresentations}}$]
We have 
\begin{enumerate}
    \item[(i)] The toroidal compactification $\overline{X}_{\Gamma(p^n)}$ admits a stratification indexed by the finite set $\frakS/\widetilde{\Gamma}(p^n)$. For any $\sigma\in \frakS$, the corresponding stratum in $\overline{X}_{\Gamma(p^n)}$ is isomorphic to $Z_{V', n, \sigma}$.
    \item[(ii)] The boundary $\overline{X}_{\Gamma(p^n)}\smallsetminus X_{\Gamma(p^n)}$ is given by a normal crossing divisor. The codimension-one strata $Z_{V', n, \sigma}$ are in bijection with the irreducible components of the normal crossing divisor. Such $V'$ necessarily has rank 1.
    \item[(iii)] The toroidal compactification is compatible with change of levels. In particular, there are natural finite morphisms $\overline{X}_{\Gamma(p^n)}\rightarrow \overline{X}_{\Gamma(p^m)}$ for $n\geq m$. 
    \item[(iv)] There is a natural action of $\GSp_{2g}(\Z_p)/\Gamma(p^n)$ on $\overline{X}_{\Gamma(p^n)}$. It permutes the boundary strata accordingly.
   \end{enumerate}
\end{Theorem}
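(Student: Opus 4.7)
The plan is to follow the Mumford--Faltings--Chai strategy of constructing toroidal compactifications via formal gluing of local charts, adapted to the finer $p^n$-level structure. The first step is to construct $\overline{X}_{\Gamma(p^n)}^{\tor}$ as the normalisation of $\overline{X}^{\tor}$ in the finite \'etale cover $X_{\Gamma(p^n)} \to X$. This is automatically finite over $\overline{X}^{\tor}$ and contains $X_{\Gamma(p^n)}$ as a dense open subscheme, so much of the easy functoriality (including the maps in (iii)) follows from the analogous statements without level structure at $p$. What remains is to identify the local structure near the boundary.

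The core of the argument is the verification that, near the stratum corresponding to a pair $(V', \sigma)$, the space $\overline{X}_{\Gamma(p^n)}^{\tor}$ is formally isomorphic to the formal completion of $M_{V', n, \sigma}$ along its closed stratum $Z_{V', n, \sigma}$. This uses the theory of degenerating abelian varieties: near such a boundary, the universal semi-abelian scheme degenerates and is described via the Mumford construction by a 1-motive of type $[V/V'^{\perp} \to G_{V'}]$. The $p^n$-level structure then corresponds precisely to the decomposition $\widetilde{M}_{V'}[p^n] \simeq (V'/p^n V' \otimes \mu_{p^n}) \oplus A_{V'}[p^n] \oplus (V/V'^{\perp} \otimes \Z/p^n\Z)$ built into the definition of $M_{V', n}$. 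Once these charts are in place, (i) follows by gluing: two cones $\sigma, \sigma' \in \frakS$ yield the same stratum in $\overline{X}_{\Gamma(p^n)}^{\tor}$ exactly when they are in the same $\widetilde{\Gamma}(p^n)$-orbit, because arithmetic self-isogenies act on the cusp data through $\widetilde{\Gamma}(p^n)$.

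Statement (ii) is then a consequence of the smoothness of $\frakS$: each smooth cone $\sigma$ of dimension $d$ contributes a stratum of codimension $d$, and these fit together into a normal crossings divisor. A codimension-one stratum corresponds to a one-dimensional cone, and such a cone lies in the interior of $C(V/V'^{\perp})$ only when $\dim_{\R} C(V/V'^{\perp}) = 1$, i.e., when $\rank V' = 1$. For (iii), the map $\overline{X}_{\Gamma(p^n)}^{\tor} \to \overline{X}_{\Gamma(p^m)}^{\tor}$ for $n \geq m$ is obtained by extending the forgetful map between the open parts using the universal property of normalisation, with local description the natural $M_{V', n, \sigma} \to M_{V', m, \sigma}$ coming from forgetting $p^n$-level structure down to $p^m$. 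For (iv), the right action of $\GSp_{2g}(\Z_p)/\Gamma(p^n)$ on $X_{\Gamma(p^n)}$ extends to $\overline{X}_{\Gamma(p^n)}^{\tor}$ because the cusp label $V'$ depends only on data away from $p$, while the induced action on the $p^n$-torsion of the degenerating 1-motive permutes the local charts $M_{V', n, \sigma}$ in a manner compatible with the gluing.

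The main obstacle is precisely the verification of the formal local model with full $p^n$-level structure---that is, showing that the degeneration picture is faithfully encoded by $M_{V', n, \sigma}$ as a torus bundle over $B_{V', n}$ over $X_{V', n}$. This is the technical heart of the argument and is essentially the content of Chapters III--IV of Faltings--Chai, refined to the $p^n$-level setting: one must carefully track how the level structure on the generic abelian part $A_{V'}$ and the level structure on the toric part $T_{V'}$ interact under the Mumford construction, and check that the resulting universal object is precisely the one classified by $M_{V', n, \sigma}$. The equivariance claim in (iv) at the boundary is also subtle, since the $\GSp_{2g}(\Z_p)$-action must be shown to respect the decomposition of $\widetilde{M}_{V'}[p^n]$ in a uniform way across all cusp labels.
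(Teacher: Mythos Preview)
The paper does not give its own proof of this theorem: it is stated with an explicit citation to \cite[Th\'eor\`eme 4.1]{Pilloni-Stroh-CoherentCohomologyandGaloisRepresentations} and no argument is supplied beyond the description of the objects $X_{V',n}$, $B_{V',n}$, $M_{V',n}$, $M_{V',n,\sigma}$, $M_{V',n,\frakS}$ that precedes it. So there is nothing in the paper to compare your proposal against; the authors are simply quoting a known result.

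That said, your outline is a faithful summary of the standard Faltings--Chai/Stroh/Pilloni--Stroh approach: construct $\overline{X}_{\Gamma(p^n)}^{\tor}$ as the normalisation of $\overline{X}^{\tor}$ in $X_{\Gamma(p^n)}$, then identify its formal completion along each boundary stratum with the formal completion of $M_{V',n,\sigma}$ along $Z_{V',n,\sigma}$ via Mumford's construction and the degeneration theory of Chapters III--IV of Faltings--Chai, with the $p^n$-level structure tracked through the canonical three-step filtration on $\widetilde{M}_{V'}[p^n]$. Your identification of the ``main obstacle'' is exactly right and is precisely what the cited reference carries out. One small correction: a one-dimensional cone $\sigma$ need not lie in the \emph{interior} of a one-dimensional $C(V/V'^{\perp})$; rather, the unique $V'$ for which $\sigma$ lies in the interior of $C(V/V'^{\perp})$ is the one with $\mathrm{rank}\,V'=1$, because the interior of $C(V/V'^{\perp})$ consists of forms whose radical is exactly $V'^{\perp}/V'$, and a ray generated by a rank-one form forces this. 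The conclusion is correct but the phrasing ``only when $\dim_{\R} C(V/V'^{\perp})=1$'' is not quite the right reason.
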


On the other hand, the case for $\Gamma = \Iw$ is carefully studied in \cite{Stroh-TorComp}. However, instead of following \emph{loc. cit.}, we propose an alternative way to obtain $\overline{X}_{\Gamma}$ with the desired properties (Tor1), (Tor2) and (Tor3). To this end, we recall a theorem of K. Fujiwara and K. Kato (\cite[Theorem 7.6]{Illusie}):

\begin{Theorem}[Fujiwara--Kato]\label{Theorem: Fujiwara--Kato}
Let $Y$ be a regular scheme, $D$ an effective divisor of $Y$ with normal crossing and $U := Y \smallsetminus D$. Equip $Y$ with the divisorial log structure defined by $D$. Then, the restriction functor \[
    \left[\begin{array}{c}
        \text{finite Kummer \'etale}  \\
        \text{cover over $Y$} 
    \end{array}\right] \rightarrow \left[\begin{array}{c}
        \text{finite \'etale}  \\
         \text{cover over $U$}
    \end{array}\right], \quad T \mapsto T\times_Y U
\] if fully faithful. The essential image of this functor consists of those finite \'etale covers over $U$ which are tamely ramified along $D$.
\end{Theorem}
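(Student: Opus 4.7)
The plan is to establish both the full faithfulness of the restriction functor and the characterization of its essential image through local analysis, with Abhyankar's lemma playing the central role.

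For full faithfulness, the key observation is that any finite Kummer \'etale cover $T \to Y$ of a regular scheme $Y$ (endowed with the divisorial log structure from a normal crossing divisor) is itself normal. Indeed, the definition of Kummer \'etale covers says that \'etale locally on $Y$ where $D$ is given by $x_1 \cdots x_r = 0$, the cover $T$ is obtained by a composition of a chart modeled on a Kummer map $\mathbb{Z}_{\geq 0}^r \to \frac{1}{m}\mathbb{Z}_{\geq 0}^r$ (with $m$ invertible on $T$) and a strictly \'etale morphism, so locally $T$ is a disjoint union of spectra of rings of the form $\calO_Y[T_1,\ldots,T_r]/(T_i^m - x_i)$, which are regular, hence normal. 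Since $U \subset Y$ is schematically dense and $T \to Y$ is finite, any two morphisms $f, f' \colon T \to T'$ over $Y$ agreeing on $T \times_Y U$ must coincide; this gives full faithfulness.

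For the containment of the essential image in tamely ramified covers, I would check \'etale locally that the restriction $T \times_Y U \to U$ has ramification of order dividing $m$ along each branch of $D$, where $m$ is the Kummer index; since $m$ is invertible on the special fibers of $T$ over $D$ by definition of Kummer \'etale, the cover is tame. For the reverse containment, start with a tamely ramified finite \'etale cover $V \to U$. Working \'etale locally on $Y$ around a point where $D$ is given by $x_1 \cdots x_r = 0$, Abhyankar's lemma (in Grothendieck--Murre's form) says $V$ becomes trivial after base change along a Kummer cover $Y' \to Y$ adjoining $m$-th roots of $x_1, \ldots, x_r$ for some $m$ coprime to residue characteristics. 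This shows $V$ extends locally on $Y$ to a Kummer \'etale cover, and the local extensions glue uniquely by the full faithfulness already established.

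The main obstacle is the essential surjectivity step, specifically the gluing of the local Kummer \'etale extensions. One needs to ensure that the integer $m$ appearing in Abhyankar's lemma can be chosen consistently (or that the local extensions are canonical enough to patch), and that the Kummer \'etale descent data built from the \'etale-local extensions satisfy the cocycle condition on double overlaps. This is precisely where the full-faithfulness part is used as a lemma: it reduces the cocycle condition on Kummer \'etale covers of $Y$ to the corresponding cocycle condition on finite \'etale covers of $U$, which holds by assumption. Once this gluing is in place, the resulting global Kummer \'etale cover of $Y$ restricts to $V$ by construction, completing the proof.
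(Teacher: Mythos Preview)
The paper does not give its own proof of this statement; it is quoted as a theorem of Fujiwara--Kato with a reference to \cite[Theorem 7.6]{Illusie} and then applied immediately. Your outline is the standard argument and is essentially correct.

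One point deserves tightening. Your local description of a finite Kummer \'etale cover is too special: a Kummer chart $\Z_{\geq 0}^r \to Q$ with $Q$ sharp fs need not have $Q = \frac{1}{m}\Z_{\geq 0}^r$; one only knows $\Z_{\geq 0}^r \subset Q \subset \frac{1}{m}\Z_{\geq 0}^r$ for some $m$, and $Q$ is in general not free. So $T$ is \'etale-locally strictly \'etale over $Y \times_{Y\langle \Z_{\geq 0}^r\rangle} Y\langle Q\rangle$, not over $\calO_Y[T_1,\dots,T_r]/(T_i^m-x_i)$. The conclusion you want (normality of $T$) is still true, but the clean way to get it is Kato's theorem that log \'etale morphisms preserve log regularity, together with the fact that log regular fs log schemes are normal. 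Once $T$ is normal and $T\times_Y U$ is dense in $T$, your density argument gives full faithfulness.

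Your treatment of the essential image is fine: Kummer \'etale covers restrict to tame covers essentially by definition (the Kummer index is invertible), and Abhyankar's lemma in the Grothendieck--Murre form produces the local Kummer \'etale extensions of a tame cover, which then glue by the full faithfulness already established. This is exactly how the argument in the cited reference proceeds.
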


In particular, when $Y$ is further a variety over a field of characteristic $0$, every finite \'etale cover over $U$ is tamely ramified along $D$. That is, one obtains an isomorphism between the finite Kummer \'etale site $Y_{\fket}$ and the finite \'etale site $U_{\fet}$.

\begin{Proposition}\label{Proposition: toridal compactification of algebraic Siegel varieties}
Let $\Gamma$ denote either $\Gamma(p^n)$ (for some $n>0$), $\Iw$, or $\Iw^+$. There exists a unique fs log scheme $\overline{X}_{\Gamma}$ over $\overline{X}$ satisfying (Tor1),  (Tor2), and (Tor3).
\end{Proposition}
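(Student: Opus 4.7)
The plan is to invoke the Fujiwara--Kato theorem (Theorem \ref{Theorem: Fujiwara--Kato}) applied to $Y=\overline{X}^{\tor}$ with $D=\overline{X}^{\tor}\smallsetminus X$. Since $\C_p$ has characteristic zero, every finite \'etale cover of $X$ is automatically tamely ramified along $D$, so the restriction functor yields an equivalence between finite Kummer \'etale covers of $\overline{X}^{\tor}$ and finite \'etale covers of $X$. The finite \'etale cover $X_{\Gamma}\to X$ therefore corresponds under this equivalence to a finite Kummer \'etale cover $\overline{X}^{\tor}_{\Gamma}\to\overline{X}^{\tor}$ that is unique up to unique isomorphism; this simultaneously produces (Tor1) and the uniqueness claim, and the Cartesian identity $\overline{X}^{\tor}_{\Gamma}\times_{\overline{X}^{\tor}}X=X_{\Gamma}$ is built into the construction.

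Property (Tor3) is obtained by transporting Galois actions through the equivalence. For $\Gamma=\Gamma(p^n)$, the $\GSp_{2g}(\Z/p^n\Z)$-action on $X_{\Gamma(p^n)}\to X$ lifts uniquely, by full faithfulness, to a Galois action of $\GSp_{2g}(\Z/p^n\Z)$ on $\overline{X}^{\tor}_{\Gamma(p^n)}\to\overline{X}^{\tor}$. For (Tor3)(ii) and (iii), the $B_{\GSp_{2g}}(\Z/p\Z)$- and $B^{+}_{\GSp_{2g}}(\Z/p\Z)$-actions on $\overline{X}^{\tor}_{\Gamma(p)}$ exist by the same extension principle; the associated quotients restrict to $X_{\Iw}$ and $X_{\Iw^{+}}$ on the open part, so by uniqueness in Fujiwara--Kato they must be $\overline{X}^{\tor}_{\Iw}$ and $\overline{X}^{\tor}_{\Iw^{+}}$, respectively, exhibiting these morphisms as the required Galois covers. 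In the case $\Gamma=\Gamma(p^n)$, the same uniqueness identifies our $\overline{X}^{\tor}_{\Gamma(p^n)}$ with the toroidal compactification recalled from \cite{Pilloni-Stroh-CoherentCohomologyandGaloisRepresentations}, since both are finite Kummer \'etale extensions of $\overline{X}^{\tor}$ whose restrictions to $X$ agree with $X_{\Gamma(p^n)}$.

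The main technical point, which I expect to be the principal obstacle, is the portion of (Tor2) asserting that the log structure on $\overline{X}^{\tor}_{\Gamma}$ coming from the Kummer \'etale structure over $\overline{X}^{\tor}$ coincides with the divisorial log structure associated with $Z_{\Gamma}:=\overline{X}^{\tor}_{\Gamma}\smallsetminus X_{\Gamma}$. This is local on $\overline{X}^{\tor}_{\Gamma}$. Around a boundary stratum, $\overline{X}^{\tor}$ \'etale locally admits an fs chart modeled on a sharp fs monoid $P$, read off from the local description of the toroidal boundary via $M_{V',n,\frakS}$, with the boundary divisor cut out by $P\smallsetminus\{0\}$. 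By Definition \ref{Definition: Kummer etale morphism}, the Kummer \'etale cover $\overline{X}^{\tor}_{\Gamma}$ correspondingly admits \'etale locally an fs chart modeled on a sharp fs monoid $P'$ together with a Kummer homomorphism $P\hookrightarrow P'$. A direct calculation on the toric model $\Spec(\C_p[P'])\to\Spec(\C_p[P])$ shows that the pullback of the divisorial log structure from the base agrees with the divisorial log structure of $\Spec(\C_p[P'])$ defined by the complement of its open torus, and the same identification then propagates to $\overline{X}^{\tor}_{\Gamma}$. Combined with the Cartesian identity above, this completes the verification of (Tor2) and hence the proof.
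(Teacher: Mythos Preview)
Your proposal is correct and follows essentially the same approach as the paper: both invoke Fujiwara--Kato (Theorem \ref{Theorem: Fujiwara--Kato}) to produce $\overline{X}^{\tor}_{\Gamma}$ as the unique finite Kummer \'etale extension of $X_{\Gamma}\to X$, and both deduce (Tor3) by transporting the Galois structure through the resulting equivalence of categories (the paper packages this step as a scheme-theoretic analogue of Lemma \ref{Kummer etale Galois cover}). Your local verification that the induced log structure is the divisorial one is in fact more careful than the paper's, which simply asserts (Tor2) as a consequence of the construction.
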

\begin{proof}
Recall that $\overline{X}$ is equipped with the divisorial log structure given by the boundary divisor 
$Z = \overline{X}\smallsetminus X$ of normal crossing (by \cite[Chapter IV, Theorem 6.7 (1)]{Faltings-Chai}). Theorem \ref{Theorem: Fujiwara--Kato} yields a unique log scheme $\overline{X}_{\Gamma}$, which is finite Kummer \'etale over $\overline{X}$, extending the finite \'etale morphism $X_{\Gamma} \rightarrow X$. This shows that $\overline{X}_{\Gamma}$ satisfies (Tor1) and (Tor2). Finally, by applying a scheme-theoretic version of Lemma \ref{Kummer etale Galois cover}, we conclude that $\overline{X}_{\Gamma}$ also satisfies (Tor3). 
\end{proof}

\begin{Remark}\label{Remark: comparison of constructions of toroidal compactification}
\normalfont When $\Gamma \in \{\Gamma(p^n), \Iw\}$, one should ask whether our construction of $\overline{X}_{\Gamma}$ coincides with the ones constructed in \cite{Pilloni-Stroh-CoherentCohomologyandGaloisRepresentations} and \cite{Stroh-TorComp}. The answer to this question is affirmative. When $\Gamma = \Gamma(p^n)$, \cite[Chapter IV, Theorem 6.7(6)]{Faltings-Chai} implies that $\overline{X}_{\Gamma(p^n)}$ is finite Kummer \'etale over $\overline{X}$ with Galois group $\GSp_{2g}(\Z/p^n\Z)$. The uniqueness of $\overline{X}_{\Gamma}$ then yields the identification. For $\Gamma = \Iw$, it follows similarly by applying \cite[Théorème 3.2.7.1]{Stroh-TorComp}. 
\end{Remark}

To wrap up the subsection, we pass to the realm of adic spaces. Let $\calX_{\Gamma}$ (resp., $\overline{\calX}_{\Gamma}$) denote the adic space over $\Spa(\C_p, \calO_{\C_p})$ associated with $X_{\Gamma}$ (resp., $\overline{X}_{\Gamma}$). In particular, we refer $\overline{\calX}_{\Gamma}$ as the \emph{toroidal compactification} of $\calX_{\Gamma}$ determined by the fixed polyheral decomposition $\frakS$. It satisfies the following analogues of (Tor1), (Tor2), and (Tor3):

\begin{enumerate}
    \item[(Tor1')] The log adic space $\overline{\calX}_{\Gamma}$, equipped with the divisorial log structure given by the boundary divisor $\calZ_{\Gamma} = \overline{\calX}_{\Gamma}\smallsetminus \calX_{\Gamma}$, is finite Kummer \'etale over $\overline{\calX}$; 
    \item[(Tor2')] There is a cartesian diagram \[
        \begin{tikzcd}
            \calX_{\Gamma} \arrow[r, hook]\arrow[d] & \overline{\calX}_{\Gamma}\arrow[d]\\
            \calX\arrow[r, hook] & \overline{\calX}
        \end{tikzcd}
    \]
    \item[(Tor3')] \begin{enumerate}
        \item[(i)] If $\Gamma = \Gamma(p^n)$, then \[
            \overline{\calX}_{\Gamma} \rightarrow \overline{\calX}
        \] is Galois with Galois group $\GSp_{2g}(\Z/p^n\Z)$. 
        \item[(ii)] If $\Gamma = \Iw$, then \[
            \overline{\calX}_{\Gamma(p)} \rightarrow \overline{\calX}_{\Iw}
        \] is Galois with Galois group $B_{\GSp_{2g}}(\Z/p\Z)$. 
        \item[(iii)] If $\Gamma = \Iw^+$, then \[
            \overline{\calX}_{\Gamma(p)} \rightarrow \overline{\calX}_{\Iw^+}
        \] is Galois with Galois group $T_{\GSp_{2g}}(\Z/p\Z)$.
    \end{enumerate}
\end{enumerate}

\subsection{The perfectoid Siegel modular variety}\label{subsection: perfectoid Siegel modular variety}
Let $\frakX$ (resp., $\overline{\frakX}$) be the formal completion of $X_0$ (resp., $\overline{X}_0$) along its special fibre. Let $\frakX_{\Gamma(p^n)}$ (resp., $\overline{\frakX}_{\Gamma(p^n)}$) be the normalisation of $\frakX$ (resp., $\overline{\frakX}$) inside the rigid analytic space associated with $X_{\Gamma(p^n)}$ (resp., $\overline{X}_{\Gamma(p^n)}$). 

In order to work with the toroidal compactification at the infinite level, the authors of \cite{Pilloni-Stroh-CoherentCohomologyandGaloisRepresentations} consider modified versions $\overline{\frakX}_{\Gamma(p^n)}^{\text{mod}}$ of the formal schemes $\overline{\frakX}_{\Gamma(p^n)}$, which we briefly recall. 

Let $n\in \Z_{\geq 0}$ and let $\frakG$ be the tautological semiabelian scheme over $\overline{\frakX}_{\Gamma(p^n)}$. Let $$\pi:\frakG\rightarrow \overline{\frakX}_{\Gamma(p^n)}$$
be the natural projection with identity section $e$ and let
$$\underline{\Omega}_{\Gamma(p^n)}:=e^*\Omega^1_{\frakG/\overline{\frakX}_{\Gamma(p^n)}}. \footnote{The sheaf $\underline{\Omega}_{\Gamma(p^n)}$ is denoted by $\omega_A$ in \cite{Pilloni-Stroh-CoherentCohomologyandGaloisRepresentations}.}$$
Over $\frakX_{\Gamma(p^n)}$, composing the dual of the universal trivialisation
$$\psi_{p^n}: V\otimes_{\Z}(\Z/p^n\Z)\simeq \frakG[p^n]$$
(which becomes an isomorphism on the rigid generic fibre) and the Hodge--Tate map
$$\frakG[p^n]^{\vee}\rightarrow \underline{\Omega}_{\Gamma(p^n)}/p^n\underline{\Omega}_{\Gamma(p^n)}$$
we obtain
$$\HT_{\Gamma(p^n)}: V^{\vee}\otimes_{\Z}(\Z/p^n\Z)\rightarrow \underline{\Omega}_{\Gamma(p^n)}/p^n\underline{\Omega}_{\Gamma(p^n)}$$ 
which induces
$$\HT_{\Gamma(p^n)}\otimes \id: \big(V^{\vee}\otimes_{\Z}(\Z/p^n\Z)\big)\otimes_{\Z} \scrO_{\frakX_{\Gamma(p^n)}}\rightarrow \underline{\Omega}_{\Gamma(p^n)}/p^n\underline{\Omega}_{\Gamma(p^n)}.$$ 
According to \cite[Proposition 1.2]{Pilloni-Stroh-CoherentCohomologyandGaloisRepresentations}, this map extends to the toroidal compactification:
\begin{equation}\label{eq: extended HT map}
    \HT_{\Gamma(p^n)}\otimes \id: \big(V^{\vee}\otimes_{\Z}(\Z/p^n\Z)\big)\otimes_{\Z} \scrO_{\overline{\frakX}_{\Gamma(p^n)}}\rightarrow \underline{\Omega}_{\Gamma(p^n)}/p^n\underline{\Omega}_{\Gamma(p^n)}.
\end{equation}
More precisely, in terms of the explicit description in \S \ref{subsection: boundary strata}, \'etale locally at the boundary stratum, there is a universal semiabelian scheme $G_{V'}$ with constant toric rank sitting in an exact sequence \[
    0 \rightarrow T_{V'} \rightarrow G_{V'} \rightarrow A_{V'} \rightarrow 0
\] as well as a principally polarised 1-motive $\widetilde{M}_{V'} = [V'^{\perp}/V' \rightarrow G_{V'}]$. We consider the composition \[
    \widetilde{M}_{V'}[p^n]^{\vee} \twoheadrightarrow G_{V'}[p^n]^{\vee} \xrightarrow{\HT_{G_{V'}[p^n]^{\vee}}} \underline{\omega}_{G_{V'}}/p^n.
\] Composing this with the dual of the universal trivialisation of $\widetilde{M}_{V'}[p^n]$ and tensoring with the structure sheaf, we arrive at the desired morphism (\ref{eq: extended HT map}).

Consider the image of $\HT_{\Gamma(p^n)}\otimes \id$ and then consider its preimage inside $\underline{\Omega}_{\Gamma(p^n)}$. This yields a subsheaf $\underline{\Omega}_{\Gamma(p^n)}^{\text{mod}}\subset \underline{\Omega}_{\Gamma(p^n)}$. In fact, $\underline{\Omega}_{\Gamma(p^n)}^{\text{mod}}$ does not depend on $n$; \emph{i.e.,} if $n\geq m$ and $\overline{\frakX}_{\Gamma(p^n)} \rightarrow \overline{\frakX}_{\Gamma(p^m)}$ is the natural projection, then the pullback of $\underline{\Omega}_{\Gamma(p^m)}^{\text{mod}}$ coincides with $\underline{\Omega}_{\Gamma(p^n)}^{\text{mod}}$.

Now, let $n$ be any positive integer greater than $\frac{g}{p-1}$. Consider ideals $\scrI_1, \ldots, \scrI_g\subset \scrO_{\overline{\frakX}_{\Gamma(p^n)}}$ generated by the lifts of the determinants of the minors of rank $g,\ldots, 1$ of the map
$$\HT_{\Gamma(p^n)}\otimes \id: \big(V^{\vee}\otimes_{\Z}(\Z/p^n\Z)\big)\otimes_{\Z} \scrO_{\overline{\frakX}_{\Gamma(p^n)}}\rightarrow \underline{\Omega}_{\Gamma(p^n)}/p^n\underline{\Omega}_{\Gamma(p^n)}.$$ 
Notice that these ideals are invertible on the rigid generic fibre. Let $\widetilde{\frakX}_{\Gamma(p^n)}$ be the formal scheme obtained by consecutive formal blowups of $\overline{\frakX}_{\Gamma(p^n)}$ along these ideals. In particular, $\underline{\Omega}_{\Gamma(p^n)}^{\text{mod}}$ becomes locally free over $\widetilde{\frakX}_{\Gamma(p^n)}$.  

Let $\overline{\frakX}_{\Gamma(p^n)}^{\text{mod}}$ be the normalisation of $\widetilde{\frakX}_{\Gamma(p^n)}$ inside its adic generic fibre. We remark that the adic generic fibre of $\overline{\frakX}_{\Gamma(p^n)}^{\text{mod}}$ coincides with the one of $\overline{\frakX}_{\Gamma(p^n)}$. For any $m\geq n>\frac{g}{p-1}$, there is a natural finite morphism
$$\overline{\frakX}_{\Gamma(p^{m})}^{\text{mod}}\rightarrow \overline{\frakX}_{\Gamma(p^n)}^{\text{mod}}.$$

Notice that the adic generic fibre of $\overline{\frakX}_{\Gamma(p^n)}^{\text{mod}}$ coincides with $\overline{\calX}_{\Gamma(p^n)}$. The locally free sheaf $\underline{\Omega}_{\Gamma(p^n)}^{\text{mod}}$ gives rise to a locally free $\scrO^+_{\overline{\calX}_{\Gamma(p^n)}}$-module $\underline{\omega}^{\text{mod},+}_{\Gamma(p^n)}$ on $\overline{\calX}_{\Gamma(p^n)}$. Inverting $p$, we obtain the locally free $\scrO_{\overline{\calX}_{\Gamma(p^n)}}$-module $\underline{\omega}_{\Gamma(p^n)}$. Notice that $\underline{\omega}_{\Gamma(p^n)}$ is just the usual sheaf of invariant differentials defined using the universal semiabelian varieties.

Consider the projective limit
$$\overline{\frakX}_{\Gamma(p^{\infty})}^{\text{mod}}:=\varprojlim \overline{\frakX}_{\Gamma(p^n)}^{\text{mod}}$$
in the category of $p$-adic formal schemes. Let $\overline{\calX}_{\Gamma(p^{\infty})}$ be its adic generic fibre in the sense of \cite{Scholze-Weinstein}.

\begin{Proposition}[$\text{\cite[Proposition 4.9 \& Corollaire 4.14]{Pilloni-Stroh-CoherentCohomologyandGaloisRepresentations}}$]\label{Proposition: perfectoid toroidal compactification}
 We have \begin{enumerate}
   \item[(i)] The adic generic fibre $\overline{\calX}_{\Gamma(p^{\infty})}$ is a perfectoid space such that  $$\overline{\calX}_{\Gamma(p^{\infty})}\sim\varprojlim_{n}\overline{\calX}_{\Gamma(p^n)}$$ in the sense of \cite[Definition 2.4.1]{Scholze-Weinstein}.
   \item[(ii)] For every $n\in \Z_{\geq 0}$, the natural morphism
   $$\overline{\calX}_{\Gamma(p^{\infty})}\rightarrow \overline{\calX}_{\Gamma(p^n)}$$ is a pro-Kummer \'{e}tale Galois cover with Galois group $\Gamma(p^n)$. (Here we have abused the notation and identify $\overline{\calX}_{\Gamma(p^{\infty})}$ with the object $\varprojlim_{n}\overline{\calX}_{\Gamma(p^n)}$ in the pro-Kummer \'etale site.) Simiarly, the natural morphism
   $$\overline{\calX}_{\Gamma(p^{\infty})}\rightarrow \overline{\calX}_{\Iw} \quad (\text{resp., }\overline{\calX}_{\Gamma(p^{\infty})}\rightarrow \overline{\calX}_{\Iw^+})$$ is a pro-Kummer \'{e}tale Galois cover with Galois group $\Iw_{\GSp_{2g}}$ (resp., $\Iw_{\GSp_{2g}}^+$).
    \end{enumerate}
\end{Proposition}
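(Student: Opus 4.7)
\textbf{Proof proposal for Proposition \ref{Proposition: perfectoid toroidal compactification}.}

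The plan is to treat (i) as an essentially direct citation and devote the real work to (ii). For (i), one observes that by construction $\overline{\frakX}^{\tor-\mathrm{mod}}_{\Gamma(p^\infty)} = \varprojlim_n \overline{\frakX}^{\tor-\mathrm{mod}}_{\Gamma(p^n)}$ in the category of $p$-adic formal schemes, where the transition maps are finite. The key perfectoidness assertion is \cite[Proposition 4.9 \& Corollaire 4.14]{Pilloni-Stroh-CoherentCohomologyandGaloisRepresentations}; once one knows the adic generic fibre is perfectoid, the compatibility $\overline{\calX}_{\Gamma(p^\infty)} \sim \varprojlim_n \overline{\calX}_{\Gamma(p^n)}$ in the sense of \cite[Definition 2.4.1]{Scholze-Weinstein} follows from the explicit description of sections via $(\varinjlim_n \scrO^+_{\overline{\calX}_{\Gamma(p^n)}})^\wedge$ on affinoid perfectoid opens, together with the fact that the underlying topological space is the inverse limit. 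I would simply quote the cited statement rather than reproduce its proof.

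The real content is (ii). First I would verify that the system $\{\overline{\calX}_{\Gamma(p^n)}\}_{n \geq m}$ defines an object of $\overline{\calX}_{\Gamma(p^m), \proket}$ in the sense of \cite{Diao}: by (Tor1') each transition map $\overline{\calX}_{\Gamma(p^n)} \to \overline{\calX}_{\Gamma(p^m)}$ is finite Kummer \'etale for $n \geq m$, so the inverse system is cofiltered with finite Kummer \'etale transition morphisms, and these are surjective. The resulting pro-object in $\overline{\calX}_{\Gamma(p^m), \proket}$ has underlying perfectoid space $\overline{\calX}_{\Gamma(p^\infty)}$ by (i), justifying the notational abuse. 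Next, by (Tor3')(i), each $\overline{\calX}_{\Gamma(p^n)} \to \overline{\calX}_{\Gamma(p^m)}$ is finite Kummer \'etale Galois with group $\Gamma(p^m)/\Gamma(p^n)$ (upon fixing compatible base points), since the Galois group of $\overline{\calX}_{\Gamma(p^n)} \to \overline{\calX}$ is $\GSp_{2g}(\Z/p^n\Z)$ and the subgroup stabilising the lower level structure is $\Gamma(p^m)/\Gamma(p^n)$. Passing to the inverse limit, $\Gamma(p^m) \cong \varprojlim_{n \geq m} \Gamma(p^m)/\Gamma(p^n)$, so the cover is pro-Kummer \'etale Galois with group $\Gamma(p^m)$ in the sense of \cite[Definition 6.1.2]{Diao}.

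For the $\Iw$ and $\Iw^+$ cases, I would reduce to the $\Gamma(p^n)$ case by factoring the projection through $\overline{\calX}_{\Gamma(p^n)} \to \overline{\calX}_{\Iw^+} \to \overline{\calX}_{\Iw}$. Using (Tor3')(ii) and (iii), $\overline{\calX}_{\Gamma(p)} \to \overline{\calX}_{\Iw^+}$ (resp.\ $\overline{\calX}_{\Iw}$) is finite Kummer \'etale Galois with group $B_{\GSp_{2g}}^+(\Z/p\Z)$ (resp.\ $B_{\GSp_{2g}}(\Z/p\Z)$). Combining with $\overline{\calX}_{\Gamma(p^n)} \to \overline{\calX}_{\Gamma(p)}$ being Galois with group $\Gamma(p)/\Gamma(p^n)$, the composition $\overline{\calX}_{\Gamma(p^n)} \to \overline{\calX}_{\Iw^+}$ is finite Kummer \'etale Galois with group $\Iw^+/\Gamma(p^n)$, and similarly for $\Iw$. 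Passing to the inverse limit yields the desired Galois covers with profinite Galois groups $\Iw^+$ and $\Iw$, respectively.

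The main technical obstacle is reconciling the two different constructions of $\overline{\calX}_{\Gamma(p^\infty)}$: as the adic generic fibre of $\varprojlim_n \overline{\frakX}^{\tor-\mathrm{mod}}_{\Gamma(p^n)}$ in formal schemes versus as an object in the pro-Kummer \'etale site $\varprojlim_n \overline{\calX}_{\Gamma(p^n)}$. The identification amounts to showing that on each log affinoid perfectoid open, the two notions of sections agree up to $p$-adic completion; this follows from the fact that the modification procedure (formal blowups along $\scrI_1, \ldots, \scrI_g$) does not alter the adic generic fibre, together with the $\sim$-statement from (i). Once this matching is in place, the Galois assertion is essentially formal from the finite-level Galois statements and the identification of profinite Galois groups.
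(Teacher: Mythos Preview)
Your proposal is correct, but there is nothing to compare it against: the paper does not supply a proof of this proposition. It is stated as a direct citation of \cite[Proposition 4.9 \& Corollaire 4.14]{Pilloni-Stroh-CoherentCohomologyandGaloisRepresentations}, with no proof environment following.

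That said, your elaboration of (ii) is a useful and accurate unpacking of what the paper leaves implicit. Part (i) is genuinely just the cited result. Part (ii), however, is phrased in the pro-Kummer \'etale language of \cite{Diao}, which is not the language of Pilloni--Stroh; the paper is tacitly relying on the finite-level Galois statements (Tor3') established just above, together with \cite[Definition 6.1.2]{Diao}, to justify calling $\varprojlim_n \overline{\calX}_{\Gamma(p^n)}$ a pro-Kummer \'etale Galois cover. Your argument---verifying the transition maps are finite Kummer \'etale surjective via (Tor1'), identifying the finite-level Galois groups via (Tor3'), and passing to the profinite limit---is exactly how one fills this in. The final paragraph on reconciling the formal-scheme and pro-site constructions is perhaps more cautious than necessary (the $\sim$-statement in (i) already encodes this), but it does no harm.
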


\begin{Remark}
\normalfont Induced from the stratification on the finite levels, the perfectoid Siegel modular variety $\overline{\calX}_{\Gamma(p^{\infty})}$ admits a stratification by the profinite set
$$\hat{\frakS}:=\varprojlim_n \frakS/\widetilde{\Gamma}(p^n).$$
For each $\hat{\sigma}=(\sigma_n)_{n\geq 0}\in \hat{\frakS}$, the $\hat{\sigma}$-stratum is canonically isomorphic to 
$$\calZ_{\infty, \hat{\sigma}}:=\varprojlim_n\calZ_{n, \sigma_n}$$
where $\calZ_{n,\sigma_n}$ is the adic spaces given by the analytification of $Z_{n,\sigma_n}$.
\end{Remark}

Finally, we recall the construction of the Hodge--Tate period map in the case of toroidal compactification. By definition of $\underline{\omega}^{\text{mod},+}_{\Gamma(p^n)}$, the Hodge--Tate map $\HT_{\Gamma(p^n)}$ induces a map (which we abuse the notation and still denote by $\HT_{\Gamma(p^n)}$)
$$\HT_{\Gamma(p^n)}: V^{\vee}\otimes_{\Z}(\Z/p^n\Z)\rightarrow \underline{\omega}^{\text{mod},+}_{\Gamma(p^n)}/p^n\underline{\omega}^{\text{mod},+}_{\Gamma(p^n)}.$$ 
Let $\underline{\omega}^{\text{mod},+}_{\Gamma(p^{\infty})}$ and $\underline{\omega}_{\Gamma(p^{\infty})}$ denote the pullbacks of $\underline{\omega}^{\text{mod},+}_{\Gamma(p^n)}$ and $\underline{\omega}_{\Gamma(p^n)}$, respectively, to $\overline{\calX}_{\Gamma(p^{\infty})}$. Pulling back $\HT_{\Gamma(p^n)}$ to the infinite level and taking inverse limit, we obtain 
$$\HT_{\Gamma(p^{\infty})}: V^{\vee}_p\rightarrow \underline{\omega}^{\text{mod},+}_{\Gamma(p^{\infty})}$$
which induces a surjection
$$\HT_{\Gamma(p^{\infty})}\otimes \id: V^{\vee}_p\otimes_{\Z_p} \scrO^+_{\overline{\calX}_{\Gamma(p^{\infty})}}\rightarrow \underline{\omega}^{\text{mod},+}_{\Gamma(p^{\infty})}.$$
Inverting $p$, the surjection
$$\HT_{\Gamma(p^{\infty})}\otimes \id: V^{\vee}_p\otimes_{\Z_p} \scrO_{\overline{\calX}_{\Gamma(p^{\infty})}}\rightarrow \underline{\omega}_{\Gamma(p^{\infty})}$$
induces the \textbf{\textit{Hodge--Tate period map}}
$$\pi_{\HT}: \overline{\calX}_{\Gamma(p^{\infty})}\rightarrow \adicFL$$
where $\adicFL$ is the (adic) flag variety parameterising the maximal lagrangians of $V_p$.
\end{appendix}

\bibliographystyle{amsalpha}
\bibliography{Reference.bib}







\vspace{15mm}

\begin{tabular}{l}
    H.D.\\
    Tsinghua University,   \\
    Yau Mathematical Sciences Center\\
    Beijing, China\\
    \textit{E-mail address: }\texttt{hdiao@mail.tsinghua.edu.cn }\\
    \\
    G.R. \\
    Concordia University   \\
    Department of Mathematics and Statistics\\
    Montr\'{e}al, Qu\'{e}bec, Canada\\
    \textit{E-mail address: }\texttt{giovanni.rosso@concordia.ca }\\
    \\
    J.-F.W.\\
    School of Mathematics and Statistics\\
    University College Dublin\\
    Belfield, Dublin 4, Ireland\\
    \textit{E-mail address: }\texttt{ju-feng.wu@ucd.ie}
\end{tabular}

\end{document}